\documentclass{amsart}

\usepackage{cancel}
\usepackage{soul}
\usepackage[normalem]{ulem}

\usepackage{amssymb,amsmath,graphicx}
\usepackage{imakeidx}
\usepackage{xcolor}
\usepackage{enumerate}

\makeindex[title=Subject index]

\newtoks\prt
 \newtheorem{thma}{Theorem}

\newtheorem{thm}{Theorem}[section]

\newtheorem{ques}[thm]{Question}
\newtheorem{lemma}[thm]{Lemma}
\newtheorem{prop}[thm]{Proposition}
\newtheorem{cor}[thm]{Corollary}

\newtheorem{obs}[thm]{Observation}
\newtheorem{example}[thm]{Example}

\newtheorem{claim}{Claim}

\theoremstyle{definition}
\newtheorem{notation}[thm]{Notation}
\newtheorem{example2}[thm]{Example}
\newtheorem{remark}[thm]{Remark}

\newtheorem{definition}[thm]{Definition}
\newtheorem{remarks}[thm]{Remarks}

\def\eqn#1$$#2$${\begin{equation}\label#1#2\end{equation}}

\def\1{\boldsymbol{1}}
\def\A{\mathcal A}
\def\B{\mathcal B}
\def\C{\mathcal C}
\def\D{\mathcal D}
\def\E{\mathcal E}
\def\R{\mathcal R}
\def\F{\mathcal F}

\def\ms{\mathcal S}

\def\H{\mathcal H}

\def\Z{\mathcal Z}

\def\ce{\mathbb C}

\def\frd{\mathfrak{D}}

\def\lin{Lindel\"of}
\def\diam{\operatorname{diam}}
\def\co{\operatorname{conv}}
\def\ep{\varepsilon}

\def\en{\mathbb N}
\def\er{\mathbb R}
\def\qe{\mathbb Q}

\def\TT{\mathbb T}

\def\II{\mathbb I}
\def\wt{\widetilde}

\def\ov{\overline}
\def \Ch {\operatorname{Ch}}

\def \Bo {\operatorname{Bo}}
\def \Fr {\operatorname{Fr}}
\def \Lb {\operatorname{Lb}}
\def \Ba {\operatorname{Ba}}

\def \ext {\operatorname{ext}}

\def\spt{\operatorname{spt}}
\def\span{\operatorname{span}}

\def\wh{\widehat}
\def \reg {\partial _{\kern1pt\text{reg}}}

\def\Coz{\operatorname{Coz}}
\def\Zer{\operatorname{Zer}}
\def\FpG{(F\wedge G)}
\def\FsG{(F\vee G)}

\def\di{\,\mbox{\rm d}}

\newcommand{\norm}[1]{\left\|#1\right\|}

\newcommand{\wscl}[1]{\overline{#1}^{w^*}}

\newcommand{\nesubset}{\rotatebox{45}{$\,\subset\ $}}

\newcommand{\sesubset}{\rotatebox{315}{$\,\subset\ $}}

\newcommand{\abs}[1]{\left|#1\right|}
\newcommand{\setsep}{;\,}

\numberwithin{equation}{section}

\usepackage[automake,symbols]{glossaries}

\newglossaryentry{AcX}{%
  name={\ensuremath{A_c(X)}},
  description={the space of affine continuous functions on $X$},
  sort={Ac(x)},
  type=symbols
}

\newglossaryentry{ZAcX}{%
  name={\ensuremath{Z(A_c(X))}},
  description={the center of $A_c(X)$},
  sort={Z(Ac(X))},
  type=symbols
}

\newglossaryentry{AbX}{%
  name={\ensuremath{A_b(X)}},
  description={the space of bounded affine functions on $X$},
  sort={Ab(X)},
  type=symbols
}

\newglossaryentry{extX}{%
  name={\ensuremath{\ext X}},
  description={the set of extreme points of $X$},
  sort={extX},
  type=symbols
}

\newglossaryentry{C(X)}{%
  name={\ensuremath{C(X)}},
  description={the space of real-valued continuous functions on $X$},
  sort={C(X)},
  type=symbols
}

\newglossaryentry{M1(X)}{%
  name={\ensuremath{M_1(X)}},
  description={the space of Radon probability measures on $X$},
  sort={M1(X)},
  type=symbols
}

\newglossaryentry{1B}{%
  name={\ensuremath{1_B}},
  description={the characteristic function of the set $B$},
  sort={1B},
  type=symbols
}

\newglossaryentry{Fsigma}{%
  name={\ensuremath{F^\sigma}},
  description={the smallest sequentially closed family containing $F$},
  sort={Fsigma},
  type=symbols
}

\newglossaryentry{Fmu}{%
  name={\ensuremath{F^\mu}},
  description={the monotone sequential closure of $F$},
  sort={Fmu},
  type=symbols
}

\newglossaryentry{Fup}{%
  name={\ensuremath{F^\uparrow}},
  description={the set of limits of bounded non-decreasing sequences from $F$},
  sort={Fup},
  type=symbols
}

\newglossaryentry{Fdown}{%
  name={\ensuremath{F^\downarrow}},
  description={the set of limits of bounded non-increasing sequences from $F$},
  sort={Fdown},
  type=symbols
}

\newglossaryentry{Adelta}{%
  name={\ensuremath{\A_\delta}},
  description={the family of countable intersections of elements of $\A$},
  sort={Adelta},
  type=symbols
}

\newglossaryentry{Asigma}{%
  name={\ensuremath{\A_\sigma}},
  description={the family of countable unions of elements of $\A$},
  sort={Asigma},
  type=symbols
}

\newglossaryentry{sigma(A)}{%
  name={\ensuremath{\sigma(\A)}},
  description={the $\sigma$-algebra generated by $\A$},
  sort={sigma(A)},
  type=symbols
}

\newglossaryentry{AwedgeB}{%
  name={\ensuremath{\A\wedge\B}},
  description={the family $\{A\cap B\setsep A\in\A, B\in \B\}$},
  sort={AwedgeB},
  type=symbols
}

\newglossaryentry{AveeB}{%
  name={\ensuremath{\A\vee\B}},
  description={the family $\{A\cup B\setsep A\in\A, B\in \B\}$},
  sort={AveeB},
  type=symbols
}

\newglossaryentry{Ba1(Y)}{%
  name={\ensuremath{\Ba_1(Y)}},
  description={the space of all Baire-one functions on $Y$},
  sort={Ba1(Y)},
  type=symbols
}

\newglossaryentry{Ba1b(Y)}{%
  name={\ensuremath{\Ba_1^b(Y)}},
  description={the space of all bounded Baire-one functions on $Y$},
  sort={Ba1b(Y)},
  type=symbols
}

\newglossaryentry{Ba(Y)}{%
  name={\ensuremath{\Ba(Y)}},
  description={the space of all Baire functions on $Y$},
  sort={Ba(Y)},
  type=symbols
}

\newglossaryentry{Bab(Y)}{%
  name={\ensuremath{\Ba^b(Y)}},
  description={the space of all bounded Baire functions on $Y$},
  sort={Bab(Y)},
  type=symbols
}

\newglossaryentry{Bo(Y)}{%
  name={\ensuremath{\Bo(Y)}},
  description={the space of all Borel functions on $Y$},
  sort={Bo(Y)},
  type=symbols
}

\newglossaryentry{Bo1(Y)}{%
  name={\ensuremath{\Bo_1(Y)}},
  description={the space of all functions of the first Borel class on $Y$},
  sort={Bo1(Y)},
  type=symbols
}

\newglossaryentry{Coz}{%
  name={\ensuremath{\Coz}},
  description={the family of cozero sets},
  sort={Coz},
  type=symbols
}

\newglossaryentry{Zer}{%
  name={\ensuremath{\Zer}},
  description={the family of zero sets},
  sort={Zer},
  type=symbols
}

\newglossaryentry{H}{%
  name={\ensuremath{\H}},
  description={the family of resolvable sets},
  sort={H},
  type=symbols
}

\newglossaryentry{Fr(X)}{%
  name={\ensuremath{\Fr(X)}},
  description={the space of all fragmented functions on $X$},
  sort={Fr(X)},
  type=symbols
}

\newglossaryentry{Frb(X)}{%
  name={\ensuremath{\Fr^b(X)}},
  description={the space of all bounded fragmented functions on $X$},
  sort={Frb(X)},
  type=symbols
}

\newglossaryentry{r(mu)}{%
  name={\ensuremath{r(\mu)}},
  description={the barycenter of the measure $\mu$},
  sort={r(mu)},
  type=symbols
}

\newglossaryentry{mu(f)}{%
  name={\ensuremath{\mu(f)}},
  description={a shortcut for $\int f\di\mu$},
  sort={mu(f)},
  type=symbols
}

\newglossaryentry{muprecnu}{%
  name={\ensuremath{\mu\prec\nu}},
  description={dilation order on $M_1(X)$},
  sort={muprecnu},
  type=symbols
}

\newglossaryentry{lambdaA}{%
  name={\ensuremath{\lambda_A}},
  description={the affine function associated to a split (or parallel) face $A$},
  sort={lambdaA},
  type=symbols
}

\newglossaryentry{A1(X)}{%
  name={\ensuremath{A_1(X)}},
  description={the space of all affine Baire-one functions on $X$},
  sort={A1(X)},
  type=symbols
}

\newglossaryentry{Al(X)}{%
  name={\ensuremath{A_l(X)}},
  description={the cone of all affine lower semicontinuous functions on $X$},
  sort={Al(X)},
  type=symbols
}

\newglossaryentry{As(X)}{%
  name={\ensuremath{A_s(X)}},
  description={the space $A_l(X)-A_l(X)$},
  sort={As(X)},
  type=symbols
}

\newglossaryentry{Af(X)}{%
  name={\ensuremath{A_f(X)}},
  description={the space of all affine fragmented functions on $X$},
  sort={Af(X)},
  type=symbols
}

\newglossaryentry{Asa(X)}{%
  name={\ensuremath{A_{sa}(X)}},
  description={the space of all strongly affine functions on $X$},
  sort={Asa(X)},
  type=symbols
}

\newglossaryentry{S(A)}{%
  name={\ensuremath{S(A)}},
  description={the state space of $A$},
  sort={S(A)},
  type=symbols
}

\newglossaryentry{D(W)}{%
  name={\ensuremath{\frd(W)}},
  description={the ideal center of $L(W)$},
  sort={D(W)},
  type=symbols
}

\newglossaryentry{Z(A)}{%
  name={\ensuremath{Z(A)}},
  description={the center of $A$},
  sort={Z(A)},
  type=symbols
}

\newglossaryentry{phi}{%
  name={\ensuremath{\phi}},
  description={the evaluation mapping from a compact space into the state space of a function space},
  sort={phi},
  type=symbols
}

\newglossaryentry{Phi}{%
  name={\ensuremath{\Phi}},
  description={the canonical isometry of a function space $E$ onto $A_c(S(E))$},
  sort={Phi},
  type=symbols
}

\newglossaryentry{Mx(E)}{%
  name={\ensuremath{M_x(E)}},
  description={the set of probabilities representing $x$},
  sort={Mx(E)},
  type=symbols
}

\newglossaryentry{ChE(K)}{%
  name={\ensuremath{\Ch_E K}},
  description={the Choquet boundary of $E$},
  sort={ChEK}
  type=symbols
}

\newglossaryentry{iota}{%
  name={\ensuremath{\iota}},
  description={the evaluation map from $X$ into $S(H)$},
  sort={iota},
  type=symbols
}

\newglossaryentry{pi}{%
  name={\ensuremath{\pi}},
  description={the canonical surjection of $S(H)$ onto $X$},
  sort={pi},
  type=symbols
}

\newglossaryentry{V}{%
  name={\ensuremath{V}},
  description={the isometry of $\ell^\infty(K)\cap E^{\perp\perp}$ onto the space of strongly affine functions on $S(E)$},
  sort={V},
  type=symbols
}

\newglossaryentry{M(H)}{%
  name={\ensuremath{M(H)}},
  description={the space of multipliers on $H$},
  sort={M(H)},
  type=symbols
}

\newglossaryentry{Ms(H)}{%
  name={\ensuremath{M^s(H)}},
  description={the space of strong multipliers on $H$},
  sort={Ms(H)},
  type=symbols
}

\newglossaryentry{m(X)}{%
  name={\ensuremath{m(X)}},
  description={the set of those points at which the multiplication formula holds},
  sort={m(X)},
  type=symbols
}

\newglossaryentry{f*}{%
  name={\ensuremath{f^*}},
  description={the upper envelope of $f$},
  sort={f*},
  type=symbols
}

\newglossaryentry{tauCh}{%
  name={\ensuremath{\tau_{\Ch}}},
  description={the Choquet topology},
  sort={tauCh},
  type=symbols
}

\newglossaryentry{Sigma'}{%
  name={\ensuremath{\Sigma'}},
  description={the $\sigma$-algebra on $\ext X$ generated by traces of closed extremal sets and traces of Baire sets},
  sort={Sigma'},
  type=symbols
}

\newglossaryentry{mu'}{%
  name={\ensuremath{\mu'}},
  description={the measure on $\Sigma'$ associated to a maximal measure $\mu$},
  sort={mu'},
  type=symbols
}

\newglossaryentry{R}{%
  name={\ensuremath{R}},
  description={the restriction map $H\to\ell^\infty(\ext X)$},
  sort={R},
  type=symbols
}

\newglossaryentry{AH}{%
  name={\ensuremath{\A_H}},
  description={the family of subsets of $\ext X$ defined using $M(H)$},
  sort={AH},
  type=symbols
}

\newglossaryentry{AHs}{%
  name={\ensuremath{\A_H^s}},
  description={the family of subsets of $\ext X$ defined using $M^s(H)$},
  sort={AHs},
  type=symbols
}

\newglossaryentry{SH}{%
  name={\ensuremath{\ms_H}},
  description={the family of traces of split faces $F$ with $\lambda_F\in H^\uparrow$},
  sort={SH},
  type=symbols
}

\newglossaryentry{ZH}{%
  name={\ensuremath{\Z_H}},
  description={the family of traces of sets $[f=1]$ for $f\in H^\uparrow$, $f$ attaining just the values $0,1$ on $\ext X$},
  sort={ZH},
  type=symbols
}

\newglossaryentry{Upsilon}{%
  name={\ensuremath{\Upsilon}},
  description={the extension operator from a split face},
  sort={Upsilon},
  type=symbols
}

\newglossaryentry{Lb(K)}{%
  name={\ensuremath{\Lb(K)}},
  description={the space of differences of bounded lower semicontinuous functions on $K$},
  sort={Lb(K)},
  type=symbols
}

\newglossaryentry{KLA}{%
  name={\ensuremath{K_{L,A}}},
  description={the compact space with the porcupine topology induced by $L$ and $A$},
  sort={KLA},
  type=symbols
}

\newglossaryentry{ELA}{%
  name={\ensuremath{K_{L,A}}},
  description={the canonical function space on $K_{L,A}$},
  sort={ELA},
  type=symbols
}

\newglossaryentry{XLA}{%
  name={\ensuremath{X_{L,A}}},
  description={Stacey simplex, the state space of $E_{L,A}$},
  sort={XLA},
  type=symbols
}

\newglossaryentry{jmath}{%
  name={\ensuremath{\jmath}},
  description={the canonical injection of $L$ into $K_{L,A}$},
  sort={jmath},
  type=symbols
}

\newglossaryentry{psi}{%
  name={\ensuremath{\psi}},
  description={the canonical surjection of $K_{L,A}$ onto $L$},
  sort={psi},
  type=symbols
}

\newglossaryentry{ftilde}{%
  name={\ensuremath{\widetilde{f}}},
  description={the modification of a function $f$ on $K_{L,A}$ satisfying the averages condition from the definition of $E_{L,A}$},
  sort={ftilde},
  type=symbols
}

\makeglossaries

\definecolor{green}{rgb}{0,0.5,0}
\title[Boundary integral representation of multipliers]{Boundary integral representation of multipliers of fragmented affine functions and other intermediate function spaces}
\author{Ondřej F.K. Kalenda, Jakub Rondo\v s and Ji\v r\'\i\ Spurn\'y}

\address{Ondřej F.K. Kalenda\\
Charles University\\
Faculty of Mathematics and Physics\\
Department of Mathematical Analysis \\
Sokolovsk\'{a} 83, 186 \ 75\\Praha 8, Czech Republic}
\email{kalenda@karlin.mff.cuni.cz}

\address{Jakub Rondo\v s\\
Charles University\\
Faculty of Mathematics and Physics\\
Department of Mathematical Analysis \\
Sokolovsk\'{a} 83, 186 \ 75\\Praha 8, Czech Republic}
\email{jakub.rondos@gmail.com}

\address{Ji\v r\'\i\ Spurn\'y\\
Charles University\\
Faculty of Mathematics and Physics\\
Department of Mathematical Analysis \\
Sokolovsk\'{a} 83, 186 \ 75\\Praha 8, Czech Republic}
\email{spurny@karlin.mff.cuni.cz}

\keywords{fragmented affine function, intermediate function space determined by extreme points, multiplier, center, split face, boundary integral representation}

\subjclass[2010]{46A55; 46J25; 54C30; 54C08; 54H05; 26A21}

\thanks{O.F.K. Kalenda and J. Spurn\'y were partially supported by the Research grant GA\v{C}R 23-04776S}

\begin{document}

\begin{abstract} We develop a theory of abstract intermediate function spaces on a compact convex set $X$ and study the behaviour of multipliers and centers of these spaces. In particular, we
provide some criteria for coincidence of the center with the space of multipliers and a general theorem on boundary integral representation of multipliers.
We apply the general theory in several concrete cases, among others to strongly affine Baire functions, to the space $A_f(X)$ of fragmented affine functions, to the space 
$(A_f(X))^\mu$, the monotone sequential closure of $A_f(X)$, to their natural subspaces formed by Borel functions, or, in some special cases, to the space of all strongly affine functions. 
In addition, we prove that the space $(A_f(X))^\mu$ is determined by extreme points and provide a large number of illustrating examples and counterexamples.
\end{abstract}

\maketitle

\newpage

\tableofcontents
\clearpage

\section{Introduction}

We investigate centers, spaces of multipliers and their integral representation for distinguished spaces of affine functions on compact convex sets. The story starts by results on affine continuous functions.
If $X$ is a compact convex set in a (Hausdorff) locally convex space, we denote by \gls{AcX} the space of all real-valued continuous affine functions on $X$. This space equipped with the supremum norm and pointwise order is an example of a unital $F$-space (see the next section for the notions unexplained here). Therefore, it possesses the \emph{center}\index{center!of $A_c(X)$} \gls{ZAcX}, i.e., the set of all elements of the form $T1_X$, where $T\colon A_c(X)\to A_c(X)$ is an operator satisfying $-\alpha I\le T\le \alpha T$ for some $\alpha\ge 0$. This notion may be viewed as a generalization of the center of a unital C$^*$-algebra. Indeed, if $A$ is a unital C$^*$-algebra, its state space $S(A)$ is a compact convex set and $A_c(S(A))$ is canonically identified with the self-adjoint part of $A$. Moreover, by \cite[Lemma 4.4]{wils} the center $Z(A_c(S(A)))$ is in this way identified with the self-adjoint part of the standard center of $A$.

It is shown in \cite[Theorem II.7.10]{alfsen} that the center of $A_c(X)$ has two more natural representations. On one hand it coincides with the spaces of \emph{multipliers}\index{multiplier!in $A_c(X)$} in $A_c(X)$, i.e., the functions $m\in A_c(X)$ such that for each $a\in A_c(X)$ there exists $b\in A_c(X)$ with $m(x)a(x)=b(x)$ for each $x\in \ext X$, where \gls{extX} stands for the set of all extreme points of $X$. On the second hand, $Z(A_c(X))$ may be identified with the space of functions on  $\ext X$, that are continuous in the facial topology. This identification is given by the restriction mapping. 

In \cite{edwards} these notions were studied for the monotone sequential closure $(A_c(X))^\mu$ of $A_c(X)$ and analogous results were proved. In \cite{smith-london} these results were extended to the sequential closure $(A_c(X))^\sigma$ of $A_c(X)$ and also to the monotone closure of the space of differences of semicontinuous affine functions (this space is denoted by $(A_s(X))^\mu$). It is shown in these papers that all these spaces are $F$-spaces with unit such that their center coincides with the space of multipliers. A crucial property of these spaces is that they are determined by the values on $\ext X$, i.e., that every function $u$ from these spaces satisfies $$ \inf u(\ext X)\le u\le \sup u(\ext X).$$

In addition, a central integral representation is available for these spaces.
More precisely, the following result is proved in \cite[Theorem 5.5]{smith-london} (see also \cite[Theorem 1.2]{smith-pacjm})

\begin{thma}\label{T:smith} There exists a $\sigma$-algebra $\R$ of subsets of $\ext X$ such that the restriction mapping $\pi$ maps the center $Z((A_s(X))^\mu)$ onto the space of all bounded $\R$-measurable functions on $\ext X$. Moreover, there exists a unique affine map $x\mapsto \nu_x$ from $X$ into the set of probability measures on $\R$ satisfying, for $z\in Z((A_s(X))^\mu)$, $z(x)=\int_{\ext X} \pi(z)\di\nu_x$.
\end{thma}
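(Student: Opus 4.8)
The plan is to exploit the structural facts recalled above for $H:=(A_s(X))^\mu$: it is a unital $F$-space, it is determined by its values on $\ext X$, and, by \cite{smith-london}, its center coincides with the space of multipliers, so $Z(H)=M(H)$. Since $H$ is determined by extreme points, $\inf u(\ext X)\le u\le\sup u(\ext X)$ for every $u\in H$; hence the restriction map $\pi\colon H\to\ell^\infty(\ext X)$ is an isometric order embedding, in particular injective on $Z(H)$, and it suffices to identify the range $\pi(Z(H))$ and to produce the representing measures.

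First I would check that $\pi(Z(H))$ is a norm-closed subalgebra of $\ell^\infty(\ext X)$ containing the constants: constants are multipliers, $Z(H)$ is norm-closed, and for $m_i=T_i1_X$ with $T_i$ in the ideal center the operator $T_1T_2$ is again in the ideal center and $(T_1T_2)1_X$ restricts on $\ext X$ to the pointwise product $\pi(m_1)\pi(m_2)$, which therefore lies in $\pi(Z(H))$; by the usual uniform approximation of $t\mapsto|t|$ such a subalgebra is automatically a sublattice. The technical core is then to show that $\pi(Z(H))$ is stable under bounded monotone sequential limits. If $m_n\in Z(H)$ satisfy $m_n\le m_{n+1}$ on $\ext X$ with $\sup_n m_n$ bounded there, then by the order embedding the $m_n$ increase and stay bounded on all of $X$, hence converge pointwise on $X$ to some $m$, and $m\in H$ because $H$ is monotone sequentially closed. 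To see $m\in M(H)$, take $a\in H$ with $a\ge0$ and choose $b_n\in H$ with $b_n=(m_n-m_1)a$ on $\ext X$; these increase and are bounded, so $b:=\lim_n b_n\in H$ and $b=(m-m_1)a$ on $\ext X$; writing a general $a\in H$ as $(a+\|a\|)-\|a\|$ and using that $m-m_1\in H$ together with the fact that constants are multipliers, one obtains a representative in $H$ of the function $ma|_{\ext X}$, so $m$ is a multiplier and hence $m\in Z(H)$.

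Next put $\R:=\{B\subseteq\ext X:1_B\in\pi(Z(H))\}$. This is a $\sigma$-algebra: complements via $1_{\ext X\setminus B}=1-1_B$, finite unions via the sublattice structure, and countable unions via $1_{\bigcup_n B_n}=\lim_n 1_{B_1\cup\dots\cup B_n}$ combined with monotone stability. Every bounded $\R$-measurable function is a uniform limit of $\R$-simple functions, hence lies in the norm-closed space $\pi(Z(H))$; conversely, for $f\in\pi(Z(H))$ and $t\in\er$ the functions $\min(1,n(f-t)^+)$ lie in $\pi(Z(H))$ and increase to $1_{[f>t]}$, so $[f>t]\in\R$ and $f$ is $\R$-measurable. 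Thus $\pi|_{Z(H)}$ is a bijection of $Z(H)$ onto the space of bounded $\R$-measurable functions on $\ext X$, which is the first assertion.

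Finally, for $x\in X$ define $L_x\colon\pi(Z(H))\to\er$ by $L_x(\pi(z))=z(x)$. It is a well-defined positive linear functional with $L_x(1)=1$, and it is $\sigma$-smooth, since monotone sequential limits in $H$ converge pointwise on $X$ and therefore $z_n\nearrow z$ in $Z(H)$ forces $z_n(x)\to z(x)$. Consequently $\nu_x(B):=L_x(1_B)$ is a countably additive probability measure on $\R$ (countable additivity comes from $\sigma$-smoothness applied to $1_{B_1\cup\dots\cup B_n}\nearrow 1_{\bigcup_n B_n}$), and $z(x)=\int_{\ext X}\pi(z)\di\nu_x$ holds first for $\R$-simple $z$ and then, by uniform approximation and continuity of $L_x$, for all $z\in Z(H)$. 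Uniqueness of $\nu_x$ is immediate because $\nu_x(B)$ must equal $z_B(x)$ where $\pi(z_B)=1_B$, and the same identity shows that $x\mapsto\nu_x(B)$ is affine for every $B\in\R$, i.e.\ $x\mapsto\nu_x$ is affine. The genuine obstacle lies in the structural input — establishing $Z(H)=M(H)$ together with the stability of the center under products and bounded monotone sequential limits; once these are granted (and this is exactly where the deeper theory of \cite{smith-london} enters), the remainder is a monotone-class argument together with a Daniell/Riesz-type construction.
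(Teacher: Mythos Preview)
Your proposal is correct and follows essentially the same route the paper takes in its abstract version, Theorem~\ref{T:integral representation H}: one shows that $\pi(M(H))$ is a closed subalgebra and sublattice of $\ell^\infty(\ext X)$ stable under bounded monotone (hence bounded sequential) limits, applies Lemma~\ref{L:sigmaalgebra} to extract the $\sigma$-algebra and the measurability characterization, and then builds $\nu_x$ from the evaluation functional, checking $\sigma$-additivity via monotone convergence. The only cosmetic difference is that the paper obtains a finitely additive measure first (via \cite[Theorem IV.5.1]{DS1}) and then upgrades it, whereas you argue $\sigma$-smoothness directly; and the algebra/lattice facts you sketch are isolated in Proposition~\ref{P:algebra ZH} and Proposition~\ref{p:multi-pro-mu}.
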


These results have profound consequences in the theory of $C^*$-algebras, where in fact this line of research has originated (see \cite{davies,pedersen-scand,pedersen-weak,pedersen-appl} or \cite[Section 5]{edwards}). Later on, the concept of the center was considered for arbitrary Banach spaces, see \cite{edwards-banach} and \cite{behrends}.

The original motivation of our research contained in the present paper was to extend the theory of \cite{edwards} and \cite{smith-london} to the case of the so-called \emph{fragmented} affine functions on $X$ (denoted by $A_f(X)$). 
Fragmented functions on a compact space $K$ form a natural space (denoted by $\Fr(K)$) with very interesting properties. If $K$ is metrizable, $\Fr(K)$ coincide with the functions of the first Baire class (see Theorem~\ref{T:b} below). If $K$ is not metrizable, this space may be much larger. It includes both functions of the first Baire class and semicontinuous functions and also the functions of the first Borel class. In particular, fragmented functions are not necessarily obtained from continuous ones using countable or monotone limit procedures.
However, they still enjoy a continuity property (see Theorem~\ref{T:a} below)
and hence they may be used as a starting space of a whole hierarchy.

In the affine setting the situation is even more interesting. Affine fragmented functions are automatically bounded and strongly affine (see \cite[Theorem 4.21]{lmns}). A surprising result obtained in  \cite{dostal-spurny} says that the space $A_f(X)$ is determined by extreme points. This result was rather surprising because it is not possible to derive it from the result on continuous affine functions by a limit procedure, but a new method using directly continuity properties of fragmented affine functions needed to be developed.  One of our results is the extension of this theorem to the space $(A_f(X))^\mu$, the monotone sequential closure  of $A_f(X)$ in Theorem~\ref{extdeter} below.

Moreover, it has appeared that it is natural and useful to develop a more abstract theory of \emph{intermediate functions spaces}\index{intermediate function space} which are closed spaces $H$ satisfying $A_c(X)\subset H\subset A_b(X)$ (here \gls{AbX} stands for the space of all affine bounded functions on $X$). In this abstract context we investigate the relationship of the center and the space of multipliers, preservation of extreme points, characterizations by a measurability condition on $\ext X$ and related results on integral representation. We also collect a number of examples and counterexamples.

We point out that the proofs of Theorem~\ref{T:smith} and its predecessors from \cite{edwards} use the
same abstract scheme. We unify this approach in Theorem~\ref{T:integral representation H} and extend it in Theorems~\ref{t:aha-regularita} and~\ref{T: meritelnost H=H^uparrow cap H^downarrow} to a more general context. This not only covers the results of \cite{edwards,smith-london} but can be applied in many other cases, in particular to fragmented functions and their limits. Moreover, in some cases (for example for strongly affine Baire functions) we provide a more concrete description of the analogues of the $\sigma$-algebra $\R$ from Theorem~\ref{T:smith}.

The paper is organized as follows:

In the following section we collect basic definitions, facts and notation from the Choquet theory, topology and descriptive set theory needed throughout the paper. In Section~\ref{s:IFS} 
we introduce intermediate function spaces and collect their basic properties and representations.

Section~\ref{s:multi-atp} is devoted to centers and multipliers. We introduce and compare these notions. In addition, we introduce a new notion of strong multipliers and relate it to the other notions. It should be stressed that for $A_c(X)$, the space of affine continuous functions, all multipliers are strong and both generalizations are natural.

In Section~\ref{sec:mult-acx} we provide a characterization of multipliers on $A_c(X)$ which is, in many cases, easy to apply.

Section~\ref{s:preserveext} is devoted to the investigation of preservation of extreme points to larger intermediate function spaces. This is closely related to the coincidence of the center and the spaces of multipliers. We provide some sufficient conditions and also some counterexamples.

In Section~\ref{s:determined} we use the results of \cite{teleman} on boundary integral representation of semicontinuous functions to show that the space $(A_s(X))^\sigma$ is determined by extreme points. We also prove the above-mentioned result that the space $(A_f(X))^\mu$ is determined by extreme points.

Section~\ref{s:reprez-abstraktni} contains an abstract version of Theorem~\ref{T:smith} on integral representation of spaces of multipliers and also its generalization to a more general context.

In Section~\ref{s:meas sm} we give a better version of some results of Section~\ref{s:reprez-abstraktni} for strong multipliers, using suitable families of split faces.

Section~\ref{sec:splifaces} is devoted to extension results for strongly affine functions on split faces which are later used to formulate
concrete versions of the abstract results from Section~\ref{s:reprez-abstraktni} in a more descriptive way. 

In Section~\ref{sec:baire} we give concrete versions of the abstract results for spaces of strongly affine Baire functions and Section~\ref{sec:beyond} is devoted to the results on larger spaces (defined using semicontinuous, Borel or fragmented functions).

The last two sections are devoted to examples and counterexamples. Section~\ref{sec:strange} contains several
strange counterexamples showing that some natural inclusions are not automatic. Finally, the last section is devoted to an analysis of a special class of simplices defined using the porcupine topology and considered by Stacey \cite{stacey}. This analysis illustrates that some sufficient conditions for certain representations are natural and inspires several open problems.

\section{Basic notions}

In this section we fix the notation, collect definitions of basic notions and provide some background results needed in the paper. We start by pointing out that we will deal only with real vector spaces. In particular, we consider only real-valued affine functions.

\subsection{Compact spaces and important classes of sets and functions}
\label{ssc:csp}

If $X$ is a compact  (Hausdorff) topological space, we denote by \gls{C(X)} the Banach space of all real-valued continuous functions on $X$ equipped with the sup-norm. The dual of $C(X)$ will be identified (by the Riesz representation theorem) with $M(X)$, the space of signed (complete) Radon measures on $X$ equipped with the total variation norm and the respective weak$^*$ topology. Let  \gls{M1(X)} stand for the set of all probability Radon measures on $X$. A set $B\subset X$ is \emph{universally measurable}\index{set!universally measurable} if it is measurable with respect to any probability Radon measure on $X$. If $B\subset X$ is universally measurable, we write $M_1(B)$ for the set of all $\mu\in M_1(X)$ with $\mu(X\setminus B)=0$. Let \gls{1B} denote  the characteristic function of $B\subset X$, we often write 
$1=1_X$ for the constant function $1$.

Before coming to specific classes of functions, let us introduce some general notation for families of functions. Let $F$ be a family of bounded functions on an~abstract set $\Gamma$. We set
\begin{itemize}
    \item \gls{Fsigma} to be the smallest family of functions on $\Gamma$ that contains $F$ and is closed with respect to taking limits of pointwise converging bounded sequences;
    \item \gls{Fmu} to be the smallest family of functions on $X$ that contains $F$ and is closed with respect to taking pointwise limits of monotone bounded sequences;
    \item \gls{Fdown} to be the set of all pointwise limits of bounded non-increasing sequences from $F$;
    \item \gls{Fup} to be the set of all pointwise limits of bounded non-decreasing sequences from $F$.
\end{itemize}
Obviously $F^\mu\subset F^\sigma$. The converse holds whenever $F$ is a lattice (with pointwise operations), but not in general (see, e.g., Proposition~\ref{P:Baire-srovnani}(ii) below). We will also repeatedly use the following easy observation.

\begin{lemma}\label{L:muclosed is closed}
 Let $\Gamma$ be a set and let $F\subset \ell^\infty(\Gamma)$ be a linear subspace containing constant functions such that $F^\mu=F$. Then $F$ is uniformly closed.   
\end{lemma}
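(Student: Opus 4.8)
The plan is to show that any uniform limit of members of $F$ arises as a \emph{pointwise} limit of a bounded \emph{monotone} sequence from $F$; such a limit lies in $F^\downarrow\subseteq F^\mu=F$, and hence $F$ is uniformly closed.

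\medskip

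\textbf{Step 1 (reduction to a fast subsequence).} Suppose $f_n\in F$ and $f_n\to f$ uniformly for some $f\in\ell^\infty(\Gamma)$. Passing to a subsequence, I may assume $\|f_n-f\|_\infty\le 2^{-n}$ for every $n$.

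\textbf{Step 2 (correcting to a monotone sequence).} Put $c_n=\sum_{k=n}^\infty 2^{-k+1}=2^{-n+2}$, a finite constant, and set $p_n=f_n+c_n\cdot 1$. Since $F$ is a linear subspace containing the constant functions, $p_n\in F$. I would then check that $(p_n)$ is non-increasing: for each $\gamma\in\Gamma$,
$$ p_n(\gamma)-p_{n+1}(\gamma)=\bigl(f_n(\gamma)-f(\gamma)\bigr)-\bigl(f_{n+1}(\gamma)-f(\gamma)\bigr)+(c_n-c_{n+1})\ge -2^{-n}-2^{-n-1}+2^{-n+1}=2^{-n-1}>0. $$
Moreover $(p_n)$ is uniformly bounded, since $\|p_n\|_\infty\le\|f\|_\infty+\|f_n-f\|_\infty+c_n\le\|f\|_\infty+2^{-n}+2^{-n+2}$, and $p_n\to f$ pointwise (indeed uniformly), since $\|p_n-f\|_\infty\le 2^{-n}+c_n\to 0$.

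\textbf{Step 3 (conclusion).} Thus $f$ is the pointwise limit of the bounded non-increasing sequence $(p_n)$ of elements of $F$, so $f\in F^\downarrow$. By definition of $F^\mu$ one has $F^\downarrow\subseteq F^\mu=F$, whence $f\in F$.

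\medskip

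The only point needing care is that the original sequence $(f_n)$ need not be monotone; the remedy is to shift each term upward by a constant $c_n\downarrow 0$ large enough to absorb the oscillation $\|f_n-f\|_\infty+\|f_{n+1}-f\|_\infty$ between consecutive terms. This is precisely where the hypothesis that $F$ contains the constant functions enters, and no further subtlety is involved; one could equally subtract constants to land in $F^\uparrow$.
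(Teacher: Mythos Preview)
Your proof is correct and uses essentially the same idea as the paper: shift each term by a suitable constant (which stays in $F$ since $F$ contains constants) so as to obtain a bounded monotone sequence converging to the target, then invoke $F^\mu=F$. The only cosmetic difference is packaging: the paper uses the ``absolutely summable series converge'' criterion for closedness and shifts summands $f_n\mapsto f_n+\|f_n\|$ to make partial sums non-decreasing (landing in $F^\uparrow$), whereas you work directly with a fast Cauchy subsequence and shift by $c_n\downarrow 0$ to land in $F^\downarrow$.
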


\begin{proof} This follows from the beginning of the proof of \cite[Lemma 3.5]{edwards}. But the quoted lemma has stronger assumptions (even though they are not used at this step), so we provide the easy argument.

It is enough to show that, given sequence $(f_n)$ in $F$ such that $\sum_n\norm{f_n}<\infty$, the sum $f=\sum_n f_n$ (which exists in $\ell^\infty(\Gamma)$) belongs to $F$. So, let $(f_n)$ be such a sequence
and denote $k=\sum_n\norm{f_n}\in[0,\infty)$ and $f=\sum_n f_n\in\ell^\infty(\Gamma)$. Set $g_n=f_n+\norm{f_n}$. By our assumptions $g_n\in F$ and $g_n\ge0$ for each $n\in\en$. The sequence of partial sums $(\sum_{k=1}^n g_k)_n$ is a non-decreasing sequence in $F$ uniformly bounded by $2k$, hence
$g=\sum_{k=1}^{\infty} g_k\in F$. Then $f=g-k\in F$ as well.
    
\end{proof}

If $\A$ and $\B$ are two families of subsets of a set $\Gamma$, we denote, as usually, by
\begin{itemize}
    \item \gls{Asigma} the family of sets which may be expressed as a countable union of elements of $\A$;
    \item \gls{Adelta} the family of sets which may be expressed as a countable intersection of elements of $\A$;
    \item \gls{sigma(A)} the $\sigma$-algebra generated by $\A$;
    \item \gls{AwedgeB} the system $\{A\cap B\setsep A\in\A,B\in\B\}$;
    \item  \gls{AveeB} the system $\{A\cup B\setsep A\in\A,B\in\B\}$.
\end{itemize}

If $Y$ is a Tychonoff (i.e., completely regular and Hausdorff) space, we denote by \gls{Ba1(Y)} the space of all \emph{Baire-one functions}\index{function!Baire-one} on $Y$ (i.e., pointwise limits of sequences of continuous functions) and by \gls{Ba1b(Y)} the space of all bounded Baire-one functions on $Y$. The space of \emph{Baire functions}\index{function!Baire} on $Y$ (i.e., the smallest space containing continuous functions and closed under pointwise limits of sequences) is denoted by \gls{Ba(Y)}, bounded Baire functions are denoted by \gls{Bab(Y)}.

Baire functions may be characterized using a measurability condition. Recall that a \emph{zero set}\index{set!zero} is a set $Z\subset Y$ of the form
$$Z=[f=0]=\{y\in Y\setsep f(y)=0\}$$ for a continuous function $f$. Complements of zero sets are called \emph{cozero sets}\index{set!cozero}. The families of zero and cozero sets will be denoted by \gls{Zer} and \gls{Coz}, respectively. It is known that $f\in \Ba_1(Y)$ if and only if it is $\Zer_\sigma$-measurable (see \cite[Exercise 3.A.1]{lmz}).
Further, the $\sigma$-algebra generated by zero (or cozero) sets is called \emph{Baire $\sigma$-algebra}\index{sigma-algebra@$\sigma$-algebra!Baire} and its elements are \emph{Baire sets}\index{set!Baire}. Another known result says that $f\in\Ba(Y)$ if and only if it is Baire measurable (i.e., $f^{-1}(U)$ is a Baire set for any open set $U\subset\er$).

As usually, \emph{Borel sets}\index{set!Borel} are elements of the $\sigma$-algebra generated by open sets and a function is called \emph{Borel}\index{function!Borel} if the inverse image of any open set is a Borel set. We write \gls{Bo(Y)} for the family of all Borel functions on $Y$. An~important subclass of Borel functions is the family \gls{Bo1(Y)} of  functions of the \emph{first Borel class}\index{function!of the first Borel class}, i.e., $\FpG_\sigma$-measurable functions, where $F\wedge G$ denote the sets obtained as the intersection of a closed and open set.
(This class of functions coincide with the family $\operatorname{Bof}_1(Y,\er)$ from \cite{spurny-amh}.)

Now we come to fragmented functions and related classes of functions and sets. A function $f:Y\to\er$ 
\begin{itemize}
    \item is called \emph{fragmented}\index{function!fragmented}, if for any $\epsilon>0$ and nonempty closed set $F\subset Y$ there exists a relatively open nonempty set $U\subset F$ such that $\diam f(U)<\epsilon$;
    \item is said to have the \emph{point of continuity property}, if $f|_F$ has a point of continuity for any $F\subset X$ nonempty closed.
\end{itemize}
Further, a set $A\subset Y$ is \emph{resolvable}\index{set!resolvable} if the characteristic function $1_A$ is fragmented. This definition is used in \cite[Section 2]{koumou}, where such sets are also called \emph{$H$-sets}. The term resolvable is used in \cite[Chapter I, \S12]{kuratowski} where an alternative definition is used. The two definitions are equivalent by \cite[Lemma 2.2]{koumou} or \cite[Chapter I, \S12.V]{kuratowski}. We will denote the family of resolvable sets by \gls{H}.

The following theorem revealing relationships of these classes follows from \cite[Theorem 2.3]{koumou}.

\begin{thma}\label{T:a} Let $Y$ be a Tychonoff space and  let $f:Y\to\er$ be a function. Then we have the following. 
\begin{enumerate}[$(a)$]
\item $f$ has the point of continuity property $\Longrightarrow$
$f$ is fragmented $\Longrightarrow$ $f$ is $\H_\sigma$-measurable
\item   If $Y$ is compact (or, more generally, hereditarily Baire), then the three conditions from $(a)$ are equivalent.
\end{enumerate}
\end{thma}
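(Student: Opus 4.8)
The statement to prove is Theorem~\ref{T:a}, which asserts (a) the implications ``point of continuity property $\Rightarrow$ fragmented $\Rightarrow$ $\H_\sigma$-measurable'' for an arbitrary Tychonoff $Y$, and (b) the equivalence of all three when $Y$ is hereditarily Baire (in particular compact). Since the result is attributed to \cite[Theorem 2.3]{koumou}, the plan is essentially to assemble the pieces from the definitions, using the cited equivalence of the two notions of resolvable set \cite[Lemma 2.2]{koumou} where convenient.

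First I would prove the first implication in (a). Suppose $f$ has the point of continuity property and fix $\epsilon>0$ and a nonempty closed $F\subseteq Y$. By hypothesis $f|_F$ has a point of continuity $y_0$, so there is a relatively open neighbourhood $U$ of $y_0$ in $F$ with $\diam f(U)\le\diam f(\overline{U}^F)<\epsilon$ (shrink $U$ so that $f(U)$ lies in an $\epsilon$-ball around $f(y_0)$); this $U$ witnesses fragmentability. Next, the second implication: if $f$ is fragmented, I would build an $\H_\sigma$-measurable representation by the standard transfinite "scattered" decomposition. Fix $\epsilon>0$; by transfinite recursion define a decreasing family of closed sets $F_\alpha$ with $F_0=Y$, $F_{\alpha+1}=F_\alpha\setminus U_\alpha$ where $U_\alpha\subseteq F_\alpha$ is relatively open nonempty with $\diam f(U_\alpha)<\epsilon$, and $F_\lambda=\bigcap_{\alpha<\lambda}F_\alpha$ at limits; the process stops at some $\alpha_0$ with $F_{\alpha_0}=\emptyset$. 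Each $U_\alpha$ is resolvable (it is relatively open in the closed set $F_\alpha$, hence a difference of two closed subsets of $Y$, which one checks directly has fragmented characteristic function), the $U_\alpha$ partition $Y$, and on each $U_\alpha$ the oscillation of $f$ is $<\epsilon$. Taking $\epsilon=1/n$ and intersecting the resulting partitions, one obtains for each $n$ a countable (by a cardinality/pressing-down argument, or simply by noting the construction yields an $\H_\sigma$ decomposition — this is exactly where I would lean on \cite{koumou}) cover of $Y$ by resolvable sets on which $f$ varies by less than $1/n$; writing $f$ as a uniform limit of $\H_\sigma$-simple functions gives $\H_\sigma$-measurability. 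The one genuinely delicate point here, and the main obstacle, is controlling the length of the transfinite recursion so that the partition is expressible via $\H_\sigma$ sets — this is where the Tychonoff (rather than metric) setting requires care, and is precisely the content extracted from \cite[Theorem 2.3]{koumou}; I would quote it rather than reprove it.

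For part (b), it remains to close the loop: when $Y$ is hereditarily Baire one must show $\H_\sigma$-measurable $\Rightarrow$ point of continuity property. Let $f$ be $\H_\sigma$-measurable and let $F\subseteq Y$ be nonempty closed; since $Y$ is hereditarily Baire, $F$ is a Baire space. Write, for each $n$, $Y=\bigcup_k A^n_k$ with $A^n_k\in\H$ and $\diam f(A^n_k)\le 1/n$ (this is what $\H_\sigma$-measurability, together with the standard fact that an $\H_\sigma$-measurable function has $\H_\sigma$ level-set preimages of intervals, provides). Each resolvable set is, by the Kuratowski characterization, a countable union of sets each of which is the intersection of an open set with a closed set; hence each $A^n_k$, and a fortiori its relative version in $F$, is a countable union of sets with nonempty interior relative to their closures. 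A Baire-category argument inside $F$ then yields, for each $n$, a nonempty relatively open $V_n\subseteq F$ with $\diam f(V_n)\le 1/n$; intersecting a decreasing modification of the $V_n$ (using again that $F$ is Baire so a countable intersection of dense open sets is dense) produces a point at which $f|_F$ is continuous. Combining with part (a) gives the three-way equivalence. Throughout I would cite \cite[Lemma 2.2]{koumou} or \cite[Chapter~I,~\S12]{kuratowski} for the equivalence of the two definitions of resolvable set, and \cite[Theorem 2.3]{koumou} for the transfinite-length bookkeeping in the forward direction.
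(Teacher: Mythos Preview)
The paper does not give its own proof of this statement; it simply attributes the result to \cite[Theorem 2.3]{koumou}. So there is no ``paper's proof'' to compare against, and your sketch is essentially a reconstruction of the argument behind that citation. It is broadly correct.

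Two remarks on the details. In the implication ``fragmented $\Rightarrow$ $\H_\sigma$-measurable'', your worry about the length of the transfinite recursion is unnecessary. For the partition $\{U_\alpha:\alpha<\alpha_0\}$ produced by the recursion, \emph{any} union $V=\bigcup_{\alpha\in S}U_\alpha$ (with $S\subset\alpha_0$ arbitrary) is already resolvable: given a nonempty closed $H\subset Y$, the least $\alpha$ with $H\cap U_\alpha\ne\emptyset$ satisfies $H\subset F_\alpha$, so $H\cap U_\alpha$ is relatively open in $H$ and $1_V$ is constant there. Hence for each $\varepsilon=1/n$ one directly gets resolvable preimages without any countability bookkeeping, and the $\H_\sigma$-measurability follows by sandwiching $f^{-1}((a,b))$ between such unions.

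In the backward direction of (b), the phrase ``intersecting a decreasing modification of the $V_n$'' is too loose. The clean way is to show that for each $n$ the closed set $D_n=\{x\in F:\osc(f|_F,x)\ge 1/n\}$ is nowhere dense in $F$: if some nonempty open $V\subset F$ were contained in $D_n$, apply your Baire-plus-resolvable argument inside $\overline{V}^F$ (which is again Baire, being closed in the hereditarily Baire $Y$) with a finer parameter to produce a nonempty relatively open set of small oscillation, contradicting $V\subset D_n$. Then $\bigcup_n D_n$ is meager in the Baire space $F$, so $f|_F$ has a point of continuity.
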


With few exceptions we will consider fragmented functions only on a compact space $X$. 
 In such a case we write \gls{Fr(X)} (or \gls{Frb(X)}) for the space of all  fragmented (or bounded fragmented) functions on $X$.
It easily follows from Theorem~\ref{T:a} that fragmented functions form a Banach lattice and algebra, see \cite[Theorem~5.10]{lmns}. It also readily follows that any semicontinuous function $f\colon X\to \er$ is fragmented.

Further, it follows from \cite[Lemma 4.1]{koumou} that any resolvable set is universally measurable. Therefore also any fragmented function on a compact space is universally measurable (in the usual sense that the preimage of any open set is universally measurable).

We further note that fragmented functions may be viewed as a generalization of Baire-one functions. This is witnessed by the following known result:

\begin{thma}\label{T:b}
Let $X$ be a compact Hausdorff space and let $f:X\to\er$ be a function. Then we have the following.
\begin{enumerate}[$(a)$]
    \item $f$ is Baire-one $\Longrightarrow$ $f$ is of the first Borel class $\Longrightarrow$ $f$ is fragmented.
    \item If $X$ is metrizable, then the three conditions from (a) are equivalent.
\end{enumerate}
\end{thma}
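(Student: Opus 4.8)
\textbf{Proof plan for Theorem~\ref{T:b}.}

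The plan is to treat the two parts separately, with part (b) being essentially a corollary of part (a) together with classical facts about metrizable spaces.

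For part (a), I would argue through the two implications in turn. For the first implication, suppose $f\colon X\to\er$ is Baire-one, i.e. a pointwise limit of continuous functions. I want to show $f$ is of the first Borel class, which by the definition recalled above means $f$ is $\FpG_\sigma$-measurable (where $\FpG$ denotes intersections of a closed and an open set). It suffices to check that $f^{-1}((a,\infty))$ and $f^{-1}((-\infty,a))$ are $\FpG_\sigma$ for every $a\in\er$, since these half-lines generate the Borel $\sigma$-algebra of $\er$ and the target class is closed under the relevant operations. If $f=\lim_n f_n$ pointwise with $f_n$ continuous, then $f^{-1}((a,\infty))=\bigcup_{k}\bigcup_{N}\bigcap_{n\ge N} f_n^{-1}([a+\tfrac1k,\infty))$; each $f_n^{-1}([a+\tfrac1k,\infty))$ is closed, a countable intersection of closed sets is closed (hence in $\FpG$), and the outer countable union stays in $\FpG_\sigma$. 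The analogous expression handles $f^{-1}((-\infty,a))$. For the second implication, suppose $f$ is of the first Borel class on the compact space $X$; I want $f$ fragmented. By Theorem~\ref{T:a}(b) (with $Y=X$ compact, hence hereditarily Baire) it is enough to show $f$ is $\H_\sigma$-measurable, and in fact it suffices to show that every set in $\FpG$ is resolvable, i.e. every intersection of a closed set and an open set lies in $\H$: then $\FpG_\sigma\subset\H_\sigma$ and $f$ is $\H_\sigma$-measurable, hence fragmented. That closed sets and open sets (and their finite intersections) are resolvable is immediate from the definition of resolvable via fragmentedness of the characteristic function: for $A=F\cap U$ with $F$ closed and $U$ open, on any nonempty closed $E\subset X$ one finds a relatively open piece on which $1_A$ is constant — if $E\cap F\cap U\neq\emptyset$ pick $E\cap U$ (relatively open, and intersecting it with $F^c$ if needed, or rather: use that $E\setminus F$ is relatively open when nonempty, and otherwise $E\subset F$ and then $E\cap U$ works, while $E\setminus\overline{U}$, if nonempty and relatively open, gives $1_A\equiv0$) — this is a routine case analysis that I will spell out, the point being that $\H$ contains all closed sets and is closed under complementation and finite intersection, so $\FpG\subset\H$.

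For part (b), assume $X$ is metrizable. The implications of (a) already give Baire-one $\Rightarrow$ first Borel class $\Rightarrow$ fragmented, so it remains to prove fragmented $\Rightarrow$ Baire-one. Here I would invoke the classical theorem, valid on metric spaces, that a function is of the first Borel class if and only if it is Baire-one (this is the Lebesgue--Hausdorff theorem; it can also be read off from the cited references, e.g. the Borel-class characterization in \cite{spurny-amh} combined with the metric-space version in \cite{lmz}). Combined with the observation that on a metrizable compact space every $\FpG$ set is an $F_\sigma$ (indeed open sets in a metric space are $F_\sigma$, so $F\cap U$ is $F_\sigma$) and hence the first-Borel-class functions coincide with the usual $F_\sigma$-measurable (= Baire-one) functions, a fragmented function, being $\H_\sigma$-measurable with $\H\subset$ Borel, is Borel of the first class and therefore Baire-one. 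One should check that fragmentedness on a compact metric space does not yield something strictly larger than first Borel class: but resolvable sets in a metric space are exactly the sets that are simultaneously $F_\sigma$ and $G_\delta$ (ambiguous class two), hence Borel of the first class, so $\H_\sigma$-measurable forces first-Borel-class measurability.

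The main obstacle, and the only part requiring genuine care rather than bookkeeping, is the equivalence in the metrizable case — specifically the direction fragmented $\Rightarrow$ Baire-one, which rests on the Lebesgue--Hausdorff theorem and on pinning down exactly which measurability classes coincide in a metric space. Everything in part (a) is a direct unwinding of definitions together with Theorem~\ref{T:a}(b), so the risk there is only in getting the case analysis for "$\FpG$-sets are resolvable" completely right; I expect to present that as a short self-contained lemma or to simply cite the fact that $\H$ is an algebra containing the open sets. Since the statement is labelled as "known" and attributed implicitly to the cited literature, I would keep the write-up brief, giving the two chains of implications explicitly and referring to \cite{koumou}, \cite{lmns}, \cite{lmz} for the standard inputs.
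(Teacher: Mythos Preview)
Your proposal is correct and follows essentially the same route as the paper: for (a) you show $\FpG\subset\H$ (the paper cites \cite[Proposition~2.1(i)]{koumou} for the fact that $\H$ is an algebra containing open sets, which you also suggest as the clean option) and then apply Theorem~\ref{T:a}(b); for (b) you use the metric-space characterization of resolvable sets as $F_\sigma\cap G_\delta$ (the paper cites Kuratowski for this) to get $\H_\sigma$-measurable $\Rightarrow$ $F_\sigma$-measurable $\Rightarrow$ Baire-one. The only minor difference is that for the first implication in (a) the paper invokes the known $\Zer_\sigma$-measurability characterization of Baire-one functions (so $\Zer_\sigma\subset F_\sigma\subset\FpG_\sigma$ in one line), whereas you compute the preimages directly from the pointwise limit---both are fine.
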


Indeed, any Baire-one function is $\Zer_\sigma$-measurable, and thus $F_\sigma$-measurable, so it is of the first Borel class. Further, resolvable sets form an algebra containing open sets (by \cite[Proposition 2.1(i)]{koumou}), so any $\FpG_\sigma$ set belongs to $\H_\sigma$ and thus any function of the first Borel class is $\H_\sigma$-measurable.
 Further, if $X$ is metrizable, then a subset of $X$ is resolvable if and only if it is both $F_\sigma$ and $G_\delta$ (see, e.g., \cite[Chapter II, \S24.III]{kuratowski}), hence any fragmented function is $F_\sigma$-measurable, thus Baire-one.

We finish this section by collecting known results on transferring of topological properties of mappings by continuous surjections between compact spaces.

\begin{lemma}\label{L:kvocient}
  Let $X$ and $Y$ be compact spaces, let $\varphi:X\to Y$ be a continuous surjection and let $f:Y\to\er$ be a function.
  Then we have the following equivalences:
  \begin{enumerate}[$(a)$]
      \item $f$ is continuous if and only if $f\circ \varphi$ is continuous.
      \item $f$ is lower semicontinuous if and only if $f\circ \varphi$ is lower semicontinuous.
      \item $f\in\Ba_1(Y)\Longleftrightarrow f\circ \varphi\in\Ba_1(X)$.
      \item $f\in\Ba(Y)\Longleftrightarrow f\circ \varphi\in\Ba(X)$.
      \item $f\in\Bo_1(Y)\Longleftrightarrow f\circ \varphi\in\Bo_1(X)$.
      \item $f\in\Bo(Y)\Longleftrightarrow f\circ \varphi\in\Bo(X)$.
      \item $f$ is fragmented if and only if $f\circ \varphi$ is fragmented.
      \item $f$ is $\sigma(\H)$-measurable if and only if $f\circ \varphi$ is $\sigma(\H)$-measurable.
  \end{enumerate}
\end{lemma}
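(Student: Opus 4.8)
The plan is to prove each equivalence separately, though (a)–(b) are standard and (c)–(f) follow the same inductive template, so the real content lies in (g)–(h). In every case the implication "$f$ nice $\Rightarrow$ $f\circ\varphi$ nice" is the easy direction: it follows from the fact that preimages behave well under composition and that $\varphi$ is continuous. For instance, for (g): given $\epsilon>0$ and a nonempty closed $F\subset X$, the set $\varphi(F)\subset Y$ is closed (continuous image of a compact set) and nonempty, so fragmentedness of $f$ gives a relatively open nonempty $V\subset\varphi(F)$ with $\diam f(V)<\epsilon$; then $U=F\cap\varphi^{-1}(V)$ is relatively open in $F$, nonempty (pick any point of $F$ mapping into $V$), and $\diam (f\circ\varphi)(U)\le\diam f(V)<\epsilon$. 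For (h), note that a function is $\sigma(\H)$-measurable iff preimages of open sets lie in $\sigma(\H)$, and $(f\circ\varphi)^{-1}(U)=\varphi^{-1}(f^{-1}(U))$, so it suffices to check that $\varphi^{-1}$ carries resolvable sets to resolvable sets — which is itself the set-version of (g) applied to $1_A$, since $1_A\circ\varphi=1_{\varphi^{-1}(A)}$.

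The harder direction is "$f\circ\varphi$ nice $\Rightarrow$ $f$ nice", and here the key tool is that $\varphi$, being a continuous surjection of compact Hausdorff spaces, is a closed quotient map, so $f$ is continuous (resp. lower semicontinuous) iff $f\circ\varphi$ is — this gives (a) and (b) directly. For (a): if $f\circ\varphi$ is continuous then for closed $C\subset\er$ the set $\varphi^{-1}(f^{-1}(C))=(f\circ\varphi)^{-1}(C)$ is closed, hence $f^{-1}(C)=\varphi(\varphi^{-1}(f^{-1}(C)))$ is closed by surjectivity and closedness of $\varphi$; (b) is analogous with $C=(-\infty,t]$. For (c): if $f\circ\varphi=\lim_n g_n$ pointwise with $g_n\in C(X)$, one wants to descend the $g_n$ along $\varphi$; the standard trick is to replace $g_n$ by its "fiber average" or rather to use that one may assume $g_n$ constant on fibers of $\varphi$ after passing to a suitable modification — more precisely, one argues that $f\in\Ba_1(Y)$ iff $f$ is $\Zer_\sigma(Y)$-measurable, and pulls back the measurability along $\varphi$ using that $\varphi^{-1}$ maps zero sets of $Y$ to zero sets of $X$ and, crucially, that $\varphi(\text{zero set})$ need not be zero but one works with the generated structure. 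The cleanest route for (c)–(f) is the characterization by measurability: $\Ba_1$ is $\Zer_\sigma$-measurability, $\Ba$ is Baire-$\sigma$-algebra measurability, $\Bo_1$ is $\FpG_\sigma$-measurability, $\Bo$ is Borel measurability; and in each case one shows a set $B\subset Y$ lies in the relevant family iff $\varphi^{-1}(B)$ does, the nontrivial direction using that for $B$ with $\varphi^{-1}(B)$ in the family, the complement-stable, countable-union-stable structure descends because $\varphi^{-1}$ is injective as a Boolean map and $\varphi$ surjective guarantees $\varphi(\varphi^{-1}(B))=B$.

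For (g) in the hard direction: suppose $f\circ\varphi$ is fragmented; I want $f$ fragmented. Given $\epsilon>0$ and nonempty closed $F\subset Y$, set $\widetilde F=\varphi^{-1}(F)$, a nonempty closed subset of $X$; fragmentedness of $f\circ\varphi$ yields a nonempty relatively open $\widetilde U\subset\widetilde F$ with $\diam(f\circ\varphi)(\widetilde U)<\epsilon$. The obstacle is that $\varphi(\widetilde U)$ need not be relatively open in $F$. The remedy is a Baire-category / minimality argument: among all nonempty closed subsets of $\widetilde F$ of the form $\varphi^{-1}(C)$ with $C\subset F$ closed, pick (by Zorn, using compactness for the chain condition) a minimal one $\widetilde F_0=\varphi^{-1}(F_0)$; applying fragmentedness inside $\widetilde F_0$ and using minimality one forces the good open set to be saturated, i.e. of the form $\varphi^{-1}(V)$ with $V$ relatively open in $F_0$ — alternatively, and more simply, invoke Theorem~\ref{T:a}: on a compact space fragmented $=$ point-of-continuity property, and if $(f\circ\varphi)|_{\widetilde F}$ has a point of continuity $x_0$, then $f|_F$ has the point of continuity $\varphi(x_0)$ because $\varphi$ is a closed (hence quotient) map, so continuity of $f|_F$ at $\varphi(x_0)$ is equivalent to continuity of $f|_F\circ\varphi|_{\widetilde F}=(f\circ\varphi)|_{\widetilde F}$ at every point of the fiber, in particular follows from continuity at $x_0$ together with the closed-map property ensuring $\varphi|_{\widetilde F}$ is quotient. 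The main obstacle overall is exactly this descent of the point of continuity / open saturated set along a non-open map; once Theorem~\ref{T:a} reduces fragmentedness to the point-of-continuity property on compacta, this becomes a clean exercise about closed quotient maps, and (h) then follows since $\sigma(\H)$-measurability is equivalent (via the characterizations above and Theorem~\ref{T:a}) to a property preserved in both directions, or directly from (g) applied to characteristic functions together with the fact that $\sigma(\H(X))$-measurability of $1_{\varphi^{-1}(A)}$ pulls back to $\sigma(\H(Y))$-measurability of $1_A$ by the surjectivity-plus-closedness of $\varphi$.
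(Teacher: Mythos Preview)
Your sketch correctly handles the easy directions and parts (a)--(b), and correctly identifies that (c)--(f) reduce to descent of the relevant set families along $\varphi$. However, you never actually prove that descent: the remark that ``$\varphi^{-1}$ is injective as a Boolean map and $\varphi(\varphi^{-1}(B))=B$'' holds for \emph{any} surjection and does not by itself show that $\varphi^{-1}(B)$ Borel (or Baire, or $\FpG_\sigma$, \ldots) implies $B$ is. That implication is the nontrivial content of the lemma, and it is exactly what the paper outsources to \cite[Theorem~A]{affperf}; your proposal does not supply the missing argument.

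More seriously, your argument for the hard direction of (g) via the point-of-continuity property contains a genuine error. You claim that continuity of $(f\circ\varphi)|_{\widetilde F}$ at $x_0$ yields continuity of $f|_F$ at $\varphi(x_0)$, invoking the closed-map property. But, as you yourself note, continuity of $f|_F$ at $y_0=\varphi(x_0)$ is equivalent to continuity of $(f\circ\varphi)|_{\widetilde F}$ at \emph{every} point of the fiber $\varphi^{-1}(y_0)\cap\widetilde F$, not just at one point. A single point does not suffice: take $X=\{0\}\cup\{1/n:n\ge1\}\cup\{2\}\subset\er$, $Y=\{0\}\cup\{1/n:n\ge1\}$, $\varphi(2)=0$ and $\varphi|_Y=\mathrm{id}$, and $f(0)=0$, $f(1/n)=1$. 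Then $(f\circ\varphi)$ is continuous at the isolated point $2\in\varphi^{-1}(0)$, yet $f$ is not continuous at $0$. The closed-map property gives you the equivalence with continuity on the whole fiber, but nothing bridges the gap from one point to the whole fiber. Your alternative Zorn/minimality approach is gestured at but not developed enough to evaluate; in any case the paper does not argue this way either, instead deriving (g) from the cited external theorem together with Theorem~\ref{T:a}$(b)$.
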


\begin{proof}
    Assertions $(a)$ and $(b)$ follow easily from the fact that $\varphi$ is a closed mapping (and hence a quotient map).
    The remaining assertions follow for example from \cite[Theorem A]{affperf}. (Assertions $(e)$, $(f)$ and $(h)$ follow directly from the quoted theorem. To get assertions $(c)$ and $(d)$ we need to use moreover the above-mentioned facts that Baire-one functions are characterized by $\Zer_\sigma$-measurability and Baire functions are characterized by measurability with respect to the Baire $\sigma$-algebra. To prove $(g)$ we use in addition Theorem~\ref{T:a}$(b)$.)
\end{proof}

\subsection{Compact convex sets}
\label{ssc:ccs}
Let $X$ be a compact convex set in a locally convex (Hausdorff) topological vector space.  Given a Radon probability measure $\mu$ on $X$, we write \gls{r(mu)} for the \emph{barycenter of $\mu$}\index{barycenter}, i.e., the unique point $x\in X$ satisfying $a(x)=\int_X a \di\mu$ for each affine continuous function on $X$ (see \cite[Proposition~I.2.1]{alfsen} or \cite[Chapter 7, \S\,20]{lacey}). Conversely, for a point $x\in X$, we denote by $M_{x}(X)$ the set of all Radon probability measures on $X$ with the barycenter $x$ (i.e., the set of all probabilities \emph{representing}\index{measure!representing a point} $x$).

We recall that  $x \in X$ is an \emph{extreme point}\index{extreme point} of $X$ if whenever $x=\frac12(y+ z)$ for some $y, z \in X$, then $x=y=z$. We write $\ext X$ for the set of extreme points of $X$.

The usual dilation order $\prec$ on the set $M_1(X)$ of Radon probability measures on $X$ is defined as \gls{muprecnu} if and only if $\mu(f)\le \nu(f)$ for any real-valued convex continuous function $f$ on $X$. (Recall that \gls{mu(f)} is a shortcut for $\int f\di\mu$.)  A measure $\mu\in M_1(X)$ is said to be \emph{maximal}\index{measure!maximal} if it is maximal with respect to the dilation order.
If $B\supset \ext X$ is a Baire set, then $\mu(B)=1$ for any maximal measure $\mu\in M_1(X)$ (this follows from \cite[Corollary I.4.12]{alfsen}). Also, maximal measures are supported by $\ov{\ext X}$, see \cite[Theorem 3.79(c)]{lmns}.

Further, in case $X$ is metrizable, maximal probability measures are exactly the probabilities carried by the $G_\delta$ set $\ext X$ of extreme points of $X$ (see, e.g., \cite[p. 35]{alfsen} or \cite[Corollary 3.62]{lmns}).

By the Choquet representation theorem, for any $x\in X$ there exists a maximal representing measure (see \cite[p. 192, Corollary]{lacey} or \cite[Theorem I.4.8]{alfsen}).
A compact convex set $X$ is termed \emph{simplex}\index{simplex} if this maximal measure is uniquely determined for each $x\in X$. It is a \emph{Bauer simplex}\index{simplex!Bauer} if moreover the set of extreme points is closed. In this case, the set $X$ is affinely  homeomorphic with the set $M_1(\ext X)$ (see \cite[Corollary II.4.2]{alfsen}).

We will also need the following generalization of extreme points.
A subset $A\subset X$ is called \emph{extremal}\index{set!extremal} if  $\frac12(y+ z)\in A$ for some $y, z \in X$, then $y,z\in A$. 
A convex extremal set is called \emph{face}\index{face}.

Let $A\subset X$ be a face. The \emph{complementary set}\index{set!complementary to a face} $A'$ is the union of all faces disjoint from $A$.
Then $A'$ is clearly extremal, so it is a face as soon as it is convex. In such a case it is called the \emph{complementary face}\index{face!complementary} of $A$. Further, a face $A$ is said to be
\begin{itemize}
    \item a \emph{parallel face}\index{face!paralel} if $A'$ is convex and for each $x\in X\setminus (A\cup A')$ there is a unique $\lambda\in (0,1)$ such that $x=\lambda a+(1-\lambda)a'$ for some $a\in A$, $a'\in A'$;
   \item a \emph{split face}\index{face!split} if $A'$ is convex and for each $x\in X\setminus (A\cup A')$ there is a unique $\lambda\in (0,1)$ and a unique pair $a\in A$, $a'\in A'$ such that $x=\lambda a+(1-\lambda)a'$.  
\end{itemize}
Clearly, any split face is parallel but the converse is not true. If $A$ is a split face (or at least a parallel face), there is a canonical mapping assigning to each $x\in X\setminus (A\cup A')$ the unique value $\lambda$ from the definition. If we extend it by the values $1$ on $A$ and $0$ on $A'$, we clearly obtain an affine function on $X$. We will denote it by \gls{lambdaA}. We finish this section by noting
that in a simplex any closed face is split by \cite[Theorem II.6.22]{alfsen}.

\subsection{Distinguished spaces of affine functions}\label{ssc:meziprostory}

Given a compact convex set $X$, we denote by \gls{AbX} the space of all real-valued bounded affine functions on $X$. This space equipped with the supremum norm and the pointwise order is an  
ordered Banach space. It will serve as the basic surrounding space for our investigation.
 
We will further consider the following distinguished subsets of $A_b(X)$:

\begin{itemize}
\item \gls{AcX} stands for the space of all affine continuous functions on $X$. It is a closed linear subspace of $A_b(X)$.

\item \gls{A1(X)} stands for the space of all affine Baire-one functions on $X$. It is a closed subspace of $A_b(X)$. We also note that by the Mokobodzki theorem (see, e.g., \cite[Theorem 4.24]{lmns}) any affine Baire-one function is a pointwise limit of a bounded sequence of affine continuous functions, i.e., it is \emph{of the first affine class}.\index{function!of the first affine class} 

\item \gls{Al(X)} denotes the set of all real-valued lower semicontinuous affine functions on $X$. This is not a linear space, but it is a convex cone contained in $A_b(X)$.
\item \gls{As(X)} denotes the space $A_l(X)-A_l(X)$. It is a linear subspace of $A_b(X)$. This space need not be closed in $A_b(X)$ by Proposition~\ref{P:Baire-srovnani}(b) below. So, we will consider also its closure $\overline{A_s(X)}$.

\item $A_b(X)\cap\Bo_1(X)$ is the space of all affine functions of the first Borel class on $X$ (note that any such function is automatically bounded). It is a closed subspace of $A_b(X)$ which contains $A_1(X)\cup\overline{A_s(X)}$.

\item \gls{Af(X)} stands for the space of all fragmented affine functions on $X$. Recall that any fragmented affine function on $X$ has a point continuity (due to Theorem~\ref{T:a}) and thus it is bounded on $X$ by\cite[Lemma 4.20]{lmns}. It easily follows from Theorem~\ref{T:a} that $A_f(X)$ is a closed subspace of $A_b(X)$. Moreover, by Theorem~\ref{T:b} we deduce that $A_b(X)\cap \Bo_1(X)\subset A_f(X)$.
\item \gls{Asa(X)} denotes the space of all \emph{strongly affine} functions\index{function!strongly affine}, i.e., the space of all universally measurable functions $f\colon X\to\er$ satisfying $\mu(f)=f(r(\mu))$ for each $\mu\in M_1(X)$. It is clear that any strongly affine function is affine and it follows from the proof of \cite[Satz 2.1.(c)]{krause} that any strongly affine function is bounded. Hence, $A_{sa}(X)$ is a closed subspace of $A_b(X)$. Moreover, any fragmented affine function is strongly affine, see \cite[Theorem 4.21]{lmns}.
\end{itemize}
We summarize the above-mentioned inclusions:
\begin{equation}\label{eq:prvniinkluze}
\begin{array}{ccccccccc}
A_c(X)&\subset& A_1(X)&\subset& A_b(X)\cap\Bo_1(X)&\subset&A_f(X)&\subset&A_{sa}(X) \\
\cap&&&&\cup &&&&\cap \\
A_l(X)&\subset& A_s(X) &\subset& \overline{A_s(X)}& && &A_b(X).
\end{array}
\end{equation}

In case $X$ is metrizable, the situation is simpler. More specifically, in this case we have:
\begin{equation}\label{eq:prvniinkluze-metriz}
\begin{aligned}
A_c(X)&\subset A_l(X)\subset A_s(X)\subset\overline{A_s(X)}\subset A_1(X)=A_b(X)\cap \Bo_1(X)=A_f(X)\\&\subset A_{sa}(X)\subset A_b(X).\end{aligned}
\end{equation}

We will further consider spaces 
$$(A_c(X))^\mu, (A_1(X))^\mu, (A_c(X))^\sigma, (A_s(X))^\mu, (A_b(X)\cap\Bo_1(X))^\mu, (A_f(X))^\mu.$$
By Lemma~\ref{L:muclosed is closed} all these families are closed linear subspaces of $A_b(X)$, in particular $\overline{A_s(X)}\subset (A_s(X))^\mu$.

\subsection{$F$-spaces and their centres}
\label{ssc:fspaces}

An \emph{$F$-space with unit}\index{F-space with unit@$F$-space with a unit} is a partially ordered Banach space $A$ with closed positive cone $A^+$ together with an element $e\in A$ of unit norm such that the closed unit ball of $A$ satisfies
\[
B_A=\{a\in A\setsep -e\le a\le e\}.
\]
The element $e$ is then called \emph{the unit} of $A$. The norm in such a space may be computed by the formula
$$\norm{a}=\min \{ c\ge 0\setsep -ce\le a\le ce\}.$$
An abstract theory of $F$-spaces and related types of ordered Banach spaces is developed in \cite{perdrizet}.
Examples of $F$-spaces with unit include spaces $A_c(X)$, $A_b(X)$ and other spaces defined in Section~\ref{ssc:meziprostory} above. The role of the element $e$ plays the constant function equal to $1$.

In fact, any $F$-space with unit may be represented as $A_c(X)$ for a suitable $X$. Indeed, let $A$ be an $F$-space with unit $e$. Denote
$$\gls{S(A)}=\{\varphi\in A^*\setsep \norm{\varphi}\le 1\ \&\ \varphi(e)=1\}.$$
Then $S(A)$ is obviously a convex weak$^*$-compact subset of $A^*$. It is called the \emph{state space}\index{state space!of an $F$-space} of $A$. Then $A$ is identified with $A_c(S(A))$ by the following lemma which is essentially known in the theory of $F$-spaces.

\begin{lemma}\label{L:F-spaces}
 Let $A$ be an $F$-space with unit $e$. Then the following assertions hold.
 \begin{enumerate}[$(i)$]
     \item $B_{A^*}=\co(S(A)\cup (-S(A)))$.
     \item The operator $T:A\to A_c(S(A))$ defined by
    $$T(a)(\varphi)=\varphi(a),\quad a\in A, \varphi\in S(A)$$
    is a linear order-preserving isometry of $A$ onto $A_c(S(A))$.
 \end{enumerate}
  \end{lemma}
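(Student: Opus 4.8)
The plan is to prove Lemma~\ref{L:F-spaces} directly from the definition of an $F$-space with unit, treating $(i)$ first since $(ii)$ relies on it. For $(i)$, the inclusion $\co(S(A)\cup(-S(A)))\subseteq B_{A^*}$ is immediate because $S(A)\subseteq B_{A^*}$, $-S(A)\subseteq B_{A^*}$, and $B_{A^*}$ is convex. The reverse inclusion is where the unit structure enters: the set $C:=\co(S(A)\cup(-S(A)))$ is weak$^*$-compact (being the convex hull of the union of two weak$^*$-compact convex sets), hence weak$^*$-closed, so if some $\varphi\in B_{A^*}\setminus C$ existed, the Hahn--Banach separation theorem in $(A^*,w^*)$ would give an $a\in A$ and a real $c$ with $\psi(a)\le c<\varphi(a)$ for all $\psi\in C$. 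Since $C$ is symmetric, $|\psi(a)|\le c$ for all $\psi\in C$, in particular $|\chi(a)|\le c$ for all states $\chi\in S(A)$. I would then argue that this forces $-c\,e\le a\le c\,e$ in $A$: indeed if $a\not\le c\,e$, then $c\,e-a\notin A^+$, and since $A^+$ is a closed cone one separates to obtain a state-like functional witnessing $\chi(a)>c\chi(e)=c$; the precise passage from "$b\notin A^+$" to "some $\chi\in S(A)$ has $\chi(b)<0$" is the technical heart and uses that $B_A=[-e,e]$ together with a Hahn--Banach argument in $A$ (or one quotes the standard fact that $A^+=\{a:\chi(a)\ge0\ \forall\chi\in S(A)\}$ for an $F$-space with unit, which is part of the basic theory in \cite{perdrizet}). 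From $-c\,e\le a\le c\,e$ and the norm formula we get $\norm{a}\le c$, so $\varphi(a)\le\norm\varphi\norm a\le c$, contradicting $c<\varphi(a)$. Hence $B_{A^*}=C$.

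For $(ii)$, first note $T$ is well-defined: for fixed $a$, the map $\varphi\mapsto\varphi(a)$ is weak$^*$-continuous and affine on $S(A)$, so $T(a)\in A_c(S(A))$; linearity of $T$ is clear. Order preservation in the forward direction is immediate since every $\varphi\in S(A)$ is a positive functional (if $a\ge0$ then, writing $\norm a\,e-a\ge0$ and using $B_A=[-e,e]$, one gets $\varphi(a)\ge0$; again this is the standard positivity of states). For the isometry and the reverse order implication I would use $(i)$: for any $a\in A$,
\[
\norm{T(a)}_{A_c(S(A))}=\sup_{\varphi\in S(A)}|\varphi(a)|=\sup_{\psi\in C}\psi(a)=\sup_{\psi\in B_{A^*}}\psi(a)=\norm a,
\]
the middle equality because $C$ is symmetric and the third by $(i)$. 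So $T$ is an isometry, in particular injective with closed range. Moreover if $T(a)\ge0$ on $S(A)$, i.e. $\varphi(a)\ge0$ for all states $\varphi$, then by the characterization of $A^+$ used above we get $a\ge0$; thus $T$ is an order isomorphism onto its range.

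It remains to show $T$ is \emph{onto} $A_c(S(A))$; this is the main obstacle. Surjectivity does not follow from isometry alone — one must exhibit, for each $g\in A_c(S(A))$, an $a\in A$ with $\varphi(a)=g(\varphi)$ for all $\varphi\in S(A)$. The clean way is: $T(A)$ is a closed subspace of $A_c(S(A))$ containing the constants ($T(e)=1$) and, crucially, separating the points of $S(A)$ (if $\varphi_1\ne\varphi_2$ in $S(A)$ then some $a\in A$ has $\varphi_1(a)\ne\varphi_2(a)$, by definition of $S(A)\subseteq A^*$); since $A_c(S(A))$ is, by \cite[Corollary II.1.4]{alfsen} (or the standard Stone--Weierstrass-type result for affine functions on a compact convex set), the closed linear span of the point-separating constant-containing space it lives in, one concludes $T(A)=A_c(S(A))$. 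Alternatively, and perhaps more self-containedly, one verifies directly that $T(A)$ is a closed subspace which is also a sublattice in the sense needed (or invokes \cite[Theorem II.7.10]{alfsen}'s setup), but I expect the point-separation plus closedness argument via Alfsen's representation of $A_c$ of a state space to be the intended route. Throughout, the genuinely nontrivial inputs are the separation arguments establishing that states detect the positive cone and the norm, both of which are exactly the defining features of an $F$-space with unit and are developed in \cite{perdrizet}; everything else is routine.
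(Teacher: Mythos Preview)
Your argument is correct, but it diverges from the paper's in two places worth noting.

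For $(i)$, the paper does not separate $B_{A^*}$ from $C$ and then invoke the lemma ``states detect positivity''. Instead it shows directly that $S(A)\cup(-S(A))$ is $1$-norming: given $a\ne 0$ with $\norm{a}=\beta>0$ (so $\beta$ is minimal with $a\le\beta e$), one has $\norm{a+e}=\beta+1$; Hahn--Banach produces $\varphi\in B_{A^*}$ with $\varphi(a+e)=\beta+1$, and then $\beta\ge\varphi(a)\ge\beta+1-\varphi(e)$ forces $\varphi(e)=1$ and $\varphi(a)=\beta$. This trick with $a+e$ is self-contained, whereas your route outsources the heart of the matter (the passage from $b\notin A^+$ to a state negative on $b$) to \cite{perdrizet}. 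Both are valid; the paper's is shorter and avoids the auxiliary lemma.

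For surjectivity in $(ii)$, the paper does not use a density argument at all. Given $f\in A_c(S(A))$ it extends $f$ explicitly to $\tilde f$ on $B_{A^*}$ by $\tilde f(t\varphi_1-(1-t)\varphi_2)=tf(\varphi_1)-(1-t)f(\varphi_2)$; well-definedness relies on the additivity of the norm on the positive cone of $A^*$ (quoted from \cite{alfsen-effros-ann}), and weak$^*$-continuity follows by a quotient argument. Then $\tilde f(0)=0$ and the Banach--Dieudonn\'e theorem gives $a\in A$ with $T(a)=f$. Your route via ``closed subspace of $A_c(Y)$ containing constants and separating points equals $A_c(Y)$'' is a legitimate shortcut, but it leans on the density result from \cite{alfsen} rather than proving surjectivity from scratch; the paper's construction is more explicit. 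The sublattice alternative you float at the end is not relevant here and should be dropped.
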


\begin{proof}
     $(i)$: Inclusion `$\supset$' is obvious. Further, the set on the right-hand side is clearly a convex symmetric weak$^*$-compact set. So, to prove the equality it is enough to show that $S(A)\cup (-S(A))$ is a $1$-norming subset of $B_{A^*}$. To this end fix any $a\in A\setminus\{0\}$. We may find maximal $\alpha\in\er$ and minimal $\beta\in\er$ such that $\alpha e\le a\le \beta e$. Then $\norm{a}=\max\{-\alpha,\beta\}$. Up to passing to $-a$ if necessary we may and shall assume that $\norm{a}=\beta>0$. We get $\norm{a+e}=\beta+1$. The Hahn-Banach theorem yields some $\varphi\in B_{A^*}$ with $\varphi(a+e)=\beta+1$. It follows that
     $$\beta=\norm{a}\ge\varphi(a)\ge \beta+1-\varphi(e),$$
     hence $\varphi(e)=1$ (thus $\varphi\in S(A)$) and $\varphi(a)=\beta=\norm{a}$. This completes the argument.

   $(ii)$:  It is clear that $T$ is a well-defined linear operator on $A$ with values in $A_c(S(A))$ and that it is  order-preserving. By the very definition we get $\norm{T}\le 1$. In the proof of (i) we have showed that
   $S(A)\cup (-S(A))$ is a $1$-norming subset of $B_{A^*}$. It follows that $T$ is an isometry. It remains to prove that $T$ is surjective. To this end fix any $f\in A_c(S(A))$. Then we define $\widetilde{f}:B_{A^*}\to \er$ by
 $$\widetilde{f}(t\varphi_1-(1-t)\varphi_2)=t f(\varphi_1)-(1-t) f(\varphi_2),\quad \varphi_1,\varphi_2\in S(A), t\in[0,1].$$
 By (i) we know that each $\varphi\in B_{A^*}$ may be expressed as $t\varphi_1-(1-t)\varphi_2$ for some $\varphi_1,\varphi_2\in S(A)$ and $t\in[0,1]$. Further,
$\widetilde{f}$ is well defined as whenever
$$t\varphi_1-(1-t)\varphi_2=s\psi_1-(1-s)\psi_2,$$
we get
$$t\varphi_1+(1-s)\psi_2=s\psi_1+(1-t)\varphi_2.$$
By \cite[Part II, Theorem 6.2]{alfsen-effros-ann} the norm is additive on the positive cone of $A^*$, so $t+1-s=s+1-t$, in other words, $s=t$. Now, using the affinity of $f$, we easily deduce that
$$t f(\varphi_1)+(1-s)f(\psi_2)=sf(\psi_1)+(1-t)f(\varphi_2).$$
It follows that
$\widetilde{f}$ is a well-defined affine function. Moreover, $\widetilde{f}(0)=0$ and $\widetilde{f}$ is weak$^*$-continuous by a quotient argument. (Indeed, the mapping $q:S(A)\times S(A)\times [0,1]\to B_{A^*}$ defined by $q(\varphi_1,\varphi_2,t)=t\varphi_1-(1-t)\varphi_2$ is a quotient mapping and $\widetilde{f}\circ q$ is continuous.) It follows by the Banach-Dieudonn\'e theorem that $f=T(a)$ for some $a\in A$. 
\end{proof}

If $W$ is a partially ordered Banach space, we denote by $\frd(W)$ the \emph{ideal center}\index{ideal center} of the ordered algebra $L(W)$ of bounded linear operators on $W$, i.e., 
$$\gls{D(W)}=\{T\in L(W)\setsep \exists\lambda \ge0\colon -\lambda I\le T\le \lambda I\}.$$
Then $\frd(W)$ is an algebra.
 We refer the reader to \cite{alfsen-effros-ann} or \cite{wils} for a detailed discussion on properties of these algebras. We note that in \cite[\S7]{alfsen}, elements of $\frd(W)$ are called \emph{order-bounded} operators, but we do not use this term in order to avoid confusion with one of the standard notions from the theory of Banach lattices.

If $A$ is an $F$-space with unit $e$, the \emph{center}\index{center!of an $F$-space} $Z(A)$ of $A$ is defined by 
$$\gls{Z(A)}=\{Te\setsep T\in\frd(A)\}$$ 
(see \cite[page 159]{alfsen} for details).

\begin{lemma}\label{L:extenze na bidual}
    Let $A$ be a Banach space. For any bounded linear operator $T:A\to A^{**}$ we define 
    $$\widehat{T}=\left(T^*\circ \kappa_{A^*}\right)^*,$$
    where $\kappa_E$ denotes the canonical embedding of a Banach space $E$ into $E^{**}$.  Then the following assertions hold.
    \begin{enumerate}[$(a)$]
        \item $\widehat{T}$ is a bounded linear operator on $A^{**}$ such that $\widehat{T}\circ \kappa_A=T$ (i.e., $\widehat{T}$ extends $T$).
        \item $\widehat{\kappa_A}=I_{A^{**}}$.
        \item If $A$ is an ordered Banach space and $T\ge0$, then $\widehat{T}\ge0$.
        \item Assume that $A$ is an $F$-space with unit $e$ and $T$ satisfies $-\lambda\kappa_A\le T\le\lambda\kappa_A$ for some $\lambda>0$. Then $\widehat{T}\in\frd(A^{**})$ and hence $T(e)\in Z(A^{**})$.
    \end{enumerate}
\end{lemma}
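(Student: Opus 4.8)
The plan is to verify the four assertions essentially by unwinding the definition of $\wh T = (T^*\circ\kappa_{A^*})^*$ and using standard duality bookkeeping. For $(a)$, one first observes that $T^*\colon A^{***}\to A^*$ composed with $\kappa_{A^*}\colon A^*\to A^{***}$ gives a bounded operator $A^*\to A^*$, whose adjoint $\wh T$ is a bounded operator on $A^{**}$ with $\|\wh T\|\le\|T\|$. To see that $\wh T$ extends $T$, fix $a\in A$ and $\varphi\in A^*$ and compute $\langle\wh T\kappa_A a,\varphi\rangle=\langle\kappa_A a,T^*\kappa_{A^*}\varphi\rangle=\langle\kappa_{A^*}\varphi,Ta\rangle=\langle Ta,\varphi\rangle$, using the definition of the various canonical embeddings and adjoints; hence $\wh T\circ\kappa_A=T$. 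Assertion $(b)$ is the special case $T=\kappa_A$: here $T^*\circ\kappa_{A^*}=\kappa_A^*\circ\kappa_{A^*}=I_{A^*}$ (the standard identity $\kappa_A^*\circ\kappa_{A^*}=I_{A^*}$), so $\wh{\kappa_A}=I_{A^*}^*=I_{A^{**}}$.

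For $(c)$, assume $T\ge 0$ as an operator $A\to A^{**}$, meaning $Ta\ge 0$ in $A^{**}$ whenever $a\ge 0$ in $A$. I would argue in two steps: first, $T^*\circ\kappa_{A^*}$ is a positive operator on $A^*$ — indeed for $\varphi\in (A^*)^+$ and $a\in A^+$ we have $\langle T^*\kappa_{A^*}\varphi,a\rangle=\langle\kappa_{A^*}\varphi,Ta\rangle=\langle Ta,\varphi\rangle\ge 0$ since $Ta\ge 0$ and $\varphi\ge 0$ — and then the adjoint of a positive operator on $A^*$ is a positive operator on $A^{**}$, so $\wh T\ge 0$. (Here one uses that the positive cone of $A^{**}$ is the set of functionals on $A^*$ nonnegative on $(A^*)^+$, which is the natural ordering making the bidual an ordered Banach space; this is the standard convention and is implicitly in force throughout the paper.)

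For $(d)$, suppose $-\lambda\kappa_A\le T\le\lambda\kappa_A$, i.e. $\lambda\kappa_A-T\ge 0$ and $\lambda\kappa_A+T\ge 0$ as operators $A\to A^{**}$. Since the map $S\mapsto\wh S$ is linear (clear from the formula), $(b)$ and $(c)$ give $\lambda I_{A^{**}}-\wh T=\wh{\lambda\kappa_A-T}\ge 0$ and $\lambda I_{A^{**}}+\wh T\ge 0$, that is $-\lambda I_{A^{**}}\le\wh T\le\lambda I_{A^{**}}$, so $\wh T\in\frd(A^{**})$. Finally, since $A^{**}$ is again an $F$-space with unit $\kappa_A(e)$ (the bidual of an $F$-space with unit is an $F$-space with the same unit — a standard fact, cf. \cite{perdrizet}), we get $\wh T(\kappa_A e)\in Z(A^{**})$ by definition of the center; but $\wh T(\kappa_A e)=\kappa_A(T e)$ by $(a)$, so, identifying $A^{**}$ with $A_c(S(A^{**}))$ in the usual way, $T(e)\in Z(A^{**})$. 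The only point requiring a little care is the direction of the orderings and the choice of the dual cone on $A^{**}$; once that is fixed, everything reduces to the elementary fact that adjoints of positive operators are positive and the linearity of $S\mapsto\wh S$, so I do not expect any genuine obstacle here.
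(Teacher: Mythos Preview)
Your proposal is correct and follows essentially the same approach as the paper: unwind the definition of $\wh T$ via duality, verify $(a)$ and $(b)$ by direct computation (your use of the standard identity $\kappa_A^*\circ\kappa_{A^*}=I_{A^*}$ in $(b)$ is a slight shortcut over the paper's elementwise verification, but the content is identical), prove $(c)$ by showing $T^*\circ\kappa_{A^*}$ is positive and taking adjoints, and combine $(b)$, $(c)$ and linearity of $S\mapsto\wh S$ for $(d)$.

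One small slip to fix: in $(d)$ you write $\wh T(\kappa_A e)=\kappa_A(Te)$, but $Te$ already lies in $A^{**}$ (since $T\colon A\to A^{**}$), so $\kappa_A(Te)$ is a type mismatch. What $(a)$ actually gives is $\wh T(\kappa_A e)=Te\in A^{**}$, which is exactly what you need.
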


\begin{proof}
    $(a)$: It is clear that $\widehat{T}$ is a bounded linear operator on $A^{**}$. To prove the equality fix any $a\in A$ and $a^*\in A^*$. Then
    $$\begin{aligned}
        \widehat{T}(\kappa_A(a))(a^*)&=(T^*\circ\kappa_{A^*})^*(\kappa_A(a))(a^*)=\kappa_A(a)(T^*(\kappa_{A^*}(a^*)))\\&=T^*(\kappa_{A^*}(a^*))(a)=\kappa_{A^*}(a^*)(T(a))=T(a)(a^*), \end{aligned}$$
    which completes the argument.
    
    $(b)$: Fix $a^{**}\in A^{**}$ and $a^*\in A^*$ and compute:
    $$\widehat{\kappa_A}(a^{**})(a^*)=((\kappa_A)^*\circ \kappa_{A^*})^*(a^{**})(a^*)
    =a^{**}((\kappa_A)^*(\kappa_{A^*}(a^*))).$$
    Given $a\in A$ we have
    $$(\kappa_A)^*(\kappa_{A^*}(a^*))(a)
    =\kappa_{A^*}(a^*)(\kappa_A(a))=
    \kappa_A(a)(a^*)=a^*(a),$$
    so $(\kappa_A)^*(\kappa_{A^*}(a^*))=a^*$ and the proof is complete.

    $(c)$: Assume $T\ge 0$. Fix $a^{**}\in A^{**}$ such that $a^{**}\ge0$ and $a^*\in A^*$ with $a^*\ge0$. Then
    $$\widehat{T}(a^{**})(a^*)=(T^*\circ \kappa_{A^*})^*(a^{**})(a^*)
    =a^{**}(T^*(\kappa_{A^*}(a^*))).$$
    Given $a\in A$ with $a\ge0$ we have
    $$T^*(\kappa_{A^*}(a^*))(a)=\kappa_{A^*}(a^*)(T(a))=T(a)(a^*)\ge0,$$
    hence $T^*(\kappa_{A^*}(a^*))\ge0$ and thus also $a^{**}(T^*(\kappa_{A^*}(a^*)))\ge0$.

    $(d)$: Under the given assumptions we deduce from $(b)$ and $(c)$ that $-\lambda I_{A^{**}}\le \widehat{T}\le\lambda I_{A^{**}}$. Since $\kappa_A(e)$ is the unit of $A^{**}$ and by (a) we have $\widehat{T}(\kappa_A(e))=T(e)$, the proof is complete.
\end{proof}

We will use several times the following easy abstract lemma on continuity.

\begin{lemma}\label{L:spojitost monot}
    Let $A$ be an ordered vector space. Let $T:A\to A$ be a linear operator such that $0\le T\le I$.
    If $(x_\nu)$ is a non-decreasing net in $A$ with supremum $x\in A$, then $(Tx_\nu)$ is a non-decreasing net in $A$ with supremum $Tx$.
    I.e., we have
    $$x_\nu\nearrow x\mbox{ in }A\Longrightarrow T(x_\nu)\nearrow T(x) \mbox{ in }A.$$
\end{lemma}

\begin{proof}
    Assume that $(x_\nu)$ is a non-decreasing net with supremum $x$. Since $T\ge0$, $(Tx_\nu)$ is a non-decreasing net and $Tx$ is its upper bound. It remains to prove that it is the least upper bound.
    
    So, let $y$ be any upper bound of $(Tx_\nu)$. 
    For each $\nu$ we have $x-x_\nu\ge0$, hence
    $$0\le T(x)-T(x_\nu)\le x-x_\nu,$$
    so $$y\ge T(x_\nu)\ge T(x)-x+x_\nu.$$
    Hence, $y$ is an upper bound of $(T(x)-x+x_\nu)$. But this net has supremum $T(x)-x+x=T(x)$, so $y\ge T(x)$. 

    Therefore $T(x)$ is the least upper bound of $(T(x_\nu))$ and the proof is complete.
\end{proof}

\subsection{Function spaces}\label{ssc:ch-fs}

An important source of examples of compact convex sets is provided by state spaces of function spaces. Therefore we recall basic facts from the theory of function spaces described in detail in \cite[Chapter 3]{lmns}.

If $K$ is a compact (Hausdorff) space and $E\subset C(K)$ is a subspace of $C(K)$ containing constant functions and separating points of $K$, we consider $E$ to be a \emph{function space}\index{function space}. Function spaces generalize spaces of affine continuous functions -- if $X$ is a compact convex set, then $E=A_c(X)$ is a function space. 

Let 
\[
S(E)=\{\varphi\in E^*\setsep \norm{\varphi}=\varphi(1)=1\}
\]
endowed with the weak$^*$ topology. Then $S(E)$ is a compact convex set. Its elements are called \emph{states on $E$} and $S(E)$ is called the \emph{state space}\index{state space!of a function space} of $E$. We also note that $\varphi\in S(E)$ if and only if $\varphi$ is a positive functional of norm one.

Let $\gls{phi}\colon K\to S(E)$ be defined as the evaluation mapping. Then $\phi$ is a homeomorphic injection. Further, define $\Phi\colon E\to A_c(S(E))$ by 
\[
\gls{Phi}(h)(\varphi)=\varphi(h),\quad \varphi\in S(E), h\in E.
\]
Then $\Phi$ is an isometric isomorphism of $E$ into $A_c(S(E))$. If $E$ is closed, $\Phi$ is moreover surjective (see \cite[Propostion 4.26]{lmns}).

For each $x\in K$, let
\[
\gls{Mx(E)}=\{\mu\in M_1(K)\setsep \mu(h)=h(x) \mbox{ for each } h\in E\}.
\]
Then $M_x(E)$ is nonempty since it contains at least the Dirac measure $\ep_x$.
Further, we define the Choquet boundary $\Ch_E K$\index{Choquet boundary} as 
\[
\gls{ChE(K)}=\{x\in K\setsep M_x(E)=\{\ep_x\}\}.
\]
Then $\phi(\Ch_E K)=\ext S(E)$ (see \cite[Proposition 4.26(d)]{lmns}).

If $K=X$ is a compact convex set and $E=A_c(X)$, we obtain that $\Ch_{E} X=\ext X$ (see \cite[Theorem 2.40]{lmns}).

\begin{remark}
    Let $E$ be a closed function space on a compact space $K$. Then $E$ (equipped with the inherited norm and the pointwise order) is an $F$-space with unit (the unit is the constant function $1_K$).
    The state space defined in the present section coincide with the state space from Section~\ref{ssc:fspaces}. Moreover, the above defined operator $\Phi$ coincide with the operator $T$ from Lemma~\ref{L:F-spaces}(ii).
\end{remark}

\section{Intermediate function spaces}\label{s:IFS}

Let $X$ be a compact convex set. 
Any closed subspace $H\subset A_b(X)$ containing $A_c(X)$ will be called \emph{intermediate function space}\index{intermediate function space}. 
Further, we say that an intermediate function space $H$ is \emph{determined by extreme points}\index{intermediate function space!determined by extreme points} if 
$$\forall u\in H\colon \inf u(\ext X)\le u\le \sup u(\ext X).$$

Examples of intermediate function spaces include the closed subspaces of $A_b(X)$ mentioned in Section~\ref{ssc:meziprostory}.
The presents section is devoted mainly to various representations and general properties of intermediate function spaces. We start by observing that
any intermediate function space is an $F$-space with unit $1_X$ in the terminology from Section~\ref{ssc:fspaces} above. Therefore we may consider the state space $S(H)$\index{state space!of an intermediate function space}. The following lemma is a more precise version of Lemma~\ref{L:F-spaces} in this context.

\begin{lemma}\label{L:intermediate}
Let $H$ be an intermediate function space.
\begin{enumerate}[$(a)$]
    \item The operator $T:H\to A_c(S(H))$ defined by
    $$T(a)(\varphi)=\varphi(a),\quad a\in H, \varphi\in S(H)$$
    is a linear order-preserving isometry of $H$ onto $A_c(S(H))$.
    \item For $x\in X$ define
    $$\gls{iota}(x)(a)=a(x),\quad a\in H.$$
    Then $\iota(x)\in S(H)$. Moreover, $\iota:X\to S(H)$ is a one-to-one affine mapping and $\iota(X)$ is dense in $S(H)$.
    \item If $H=A_c(X)$, then $\iota$ is an affine homeomorphism of $X$ onto $S(A_c(H))$.
    \item $H$ is determined by extreme points if and only if $\ext S(H)\subset\overline{\iota(\ext X)}$.
    \item For any $\varphi\in S(H)$ there is a unique $\gls{pi}(\varphi)\in X$ such that
    $$\varphi(a)=a(\pi(\varphi)),\quad a\in A_c(X).$$
    Moreover, $\pi:S(H)\to X$ is a continuous affine surjection.
    \item $\pi\circ\iota=\mbox{\rm id}_X$.
\end{enumerate}
\end{lemma}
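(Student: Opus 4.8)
The plan is to deduce all six assertions from the identification of $H$ with $A_c(S(H))$, treating them roughly in the order stated. To begin, note that $H$, being a uniformly closed subspace of $A_b(X)$ containing $1_X$, is an $F$-space with unit $1_X$: its positive cone is uniformly closed, and its closed unit ball is exactly $\{a\in H\setsep -1_X\le a\le 1_X\}$. Hence $(a)$ is just Lemma~\ref{L:F-spaces}$(ii)$ applied with $A=H$, the state space of $H$ as an $F$-space coinciding with $S(H)$.

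For $(b)$: the functional $\iota(x)$ is linear and positive (if $a\ge 0$ on $X$ then $a(x)\ge 0$), has norm $\le 1$, and satisfies $\iota(x)(1_X)=1$, so $\iota(x)\in S(H)$; affinity of $\iota$ is clear, and $\iota$ is injective since $A_c(X)\subseteq H$ separates the points of $X$. For density I would argue by contradiction: if some $\varphi_0\in S(H)$ lay outside the weak$^*$-closed convex set $\wscl{\iota(X)}$, Hahn--Banach separation in $(H^*,w^*)$ would give $a\in H$ and $\alpha\in\er$ with $a(x)=\iota(x)(a)\le\alpha<\varphi_0(a)$ for every $x\in X$; then $a\le\alpha 1_X$ on $X$, and applying the positive unital functional $\varphi_0$ would yield $\varphi_0(a)\le\alpha$, a contradiction. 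Assertions $(c)$, $(e)$, $(f)$ then come quickly: if $H=A_c(X)$ every member of $H$ is continuous on $X$, so $\iota$ is weak$^*$-continuous, whence its range $\iota(X)$ is compact, thus closed in $S(A_c(X))$, and dense by $(b)$, so $\iota$ is an affine homeomorphism of $X$ onto $S(A_c(X))$, which is $(c)$. For $(e)$, the restriction $\varphi\mapsto\varphi|_{A_c(X)}$ maps $S(H)$ weak$^*$-continuously into $S(A_c(X))$; composing with the inverse of the homeomorphism from $(c)$ produces a continuous affine $\pi\colon S(H)\to X$ with $\varphi(a)=a(\pi(\varphi))$ for $a\in A_c(X)$, and $\pi(\varphi)$ is the unique such point because $A_c(X)$ separates points of $X$. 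Since $\iota(x)|_{A_c(X)}$ is the evaluation at $x$, we get $\pi(\iota(x))=x$, which yields $(f)$ as well as the surjectivity of $\pi$.

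The substantive part is $(d)$. Put $C:=\wscl{\iota(\ext X)}\subseteq S(H)$. Using the isometry $T$ of $(a)$, the density of $\iota(X)$ in $S(H)$, and continuity of $T(u)\in A_c(S(H))$, for every $u\in H$ one has
$$\sup_X u=\sup_{S(H)}T(u)=\sup_{\ext S(H)}T(u)\quad\text{and}\quad\sup_{\ext X}u=\sup_{\iota(\ext X)}T(u)=\sup_{C}T(u),$$
where the middle equality of the first chain is the Bauer maximum principle for the affine continuous function $T(u)$; the analogous identities hold for infima. Hence $H$ is determined by extreme points exactly when $\sup_{\ext S(H)}T(u)=\sup_C T(u)$ (and the corresponding identity for infima) holds for all $u\in H$. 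If $\ext S(H)\subseteq C$ this is clear, since then $\sup_{\ext S(H)}T(u)\le\sup_C T(u)\le\sup_{S(H)}T(u)=\sup_{\ext S(H)}T(u)$; this gives one implication. For the converse, assume $H$ is determined by extreme points, fix $\varphi\in\ext S(H)$, and set $D:=\wscl{\co C}\subseteq S(H)$. If $\varphi\notin D$, Hahn--Banach separation gives $u\in H$ with $\varphi(u)>\sup_D T(u)\ge\sup_C T(u)=\sup_{\ext X}u=\sup_X u\ge\varphi(u)$, which is absurd; so $\varphi\in D$, and being extreme in the larger convex set $S(H)$ it is extreme in $D$, hence $\varphi\in C$ by Milman's theorem. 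Thus $\ext S(H)\subseteq C$.

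I expect the converse implication in $(d)$ to be the only delicate point: the set $C=\wscl{\iota(\ext X)}$ need not be convex, so a separation theorem cannot be applied to it directly; the fix is to separate $\varphi$ from the closed convex hull $D$ and then return to $C$ via Milman's theorem, using that $\varphi$ is extreme in $S(H)$. Everything else is routine once the identification $H\cong A_c(S(H))$ from $(a)$ is in hand.
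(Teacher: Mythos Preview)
Your proof is correct and follows essentially the same route as the paper's. The only cosmetic differences are that for the density in $(b)$ the paper observes directly that $\iota(X)$ is norming for $A_c(S(H))$ (via the isometry $T$) rather than arguing by separation, and for the forward implication in $(d)$ the paper shows globally that $S(H)=\wscl{\co\,\iota(\ext X)}$ before invoking Milman, whereas you fix an extreme point $\varphi$ and argue it lies in $D=\wscl{\co C}$; both are the same separation--Milman argument organized slightly differently.
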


\begin{proof} Assertion $(a)$ follows from Lemma~\ref{L:F-spaces}.
  
 Define $\iota$ as in $(b)$. Obviously $\iota(x)\in S(H)$ for $x\in X$ and $\iota$ is clearly an affine mapping. Since $A_c(X)$ separates points of $X$ by the Hahn-Banach theorem and $H\supset A_c(X)$, we deduce that $\iota$ is one-to-one. Further, given any $f\in A_c(S(H))$, by $(a)$ we find $a\in H$ with $f=T(a)$.
 Since $T$ is an isometry, we deduce
 $$
 \begin{aligned}
    \norm{f}&=\norm{a}=\sup\{\abs{a(x)}\setsep x\in X\}
=\sup\{\abs{\iota(x)(a)}\setsep x\in X\}\\
&=\sup\{\abs{f(\iota(x))}\setsep x\in X\}.
 \end{aligned}
 $$
It follows that $\iota(X)$ is dense in $S(H)$ and the proof of $(b)$ is completed.

  $(c)$: If $H=A_c(X)$, then $\iota$ is continuous (from $X$ to the weak$^*$-topology), hence $\iota(X)$ is compact. We conclude using $(b)$.
 
 $(d)$: Assume first that $H$ is determined by extreme points. It follows that for each $a\in H$ we have
 $$\sup\{ \iota(x)(a)\setsep x\in \ext X\}=\sup\{ \iota(x)(a)\setsep x\in X\}=\sup\{ \varphi(a)\setsep \varphi\in S(H)\},$$
  where the second equality follows from the density of $\iota(X)$ in $S(H)$. Hence, the Hahn-Banach separation theorem implies that
  $S(H)=\overline{\co \iota(\ext X)}$.
  Milman's theorem now yields
  $\ext S(H)\subset \overline{\iota(\ext X)}$.
  
  Conversely, assume  $\ext S(H)\subset \overline{\iota(\ext X)}$. Let $a\in H$ and $c\in\er$ be such that $a\le c$ on $\ext X$. Then $T(a)\le c$ on $\iota(\ext X)$. Since $T(a)\in A_c(S(H))$, we deduce that $T(a)\le c$ on $\overline{\iota(X)}\supset\ext S(H)$. Thus $T(a)\le c$ on $S(H)$  and we deduce that $a\le c$ on $X$.
  
  $(e)$: Let $\varphi\in S(H)$. Then $\varphi|_{A_c(X)}\in S(A_c(X))$ and clearly $\varphi\mapsto \varphi|_{A_c(X)}$ is a continuous affine surjection of $S(H)$ onto $S(A_c(H))$. We may conclude using $(c)$.
  
  Assertion $(f)$ is obvious.
\end{proof}

As a consequence we get the following representation of states on $H$.

\begin{lemma}\label{L:reprezentace}
Let $H$ be an intermediate function space. Then for each $\varphi\in S(H)$ there is a net $(x_\nu)$ in $X$ such that
$$\varphi(a)=\lim_\nu a(x_\nu),\quad a\in H.$$
Moreover, the net $(x_\nu)$ converges in $X$ to $\pi(\varphi)$.

Conversely, if $(x_\nu)$ is a net in $X$ such that $\lim_\nu a(x_\nu)$ exists for each $a\in H$, then 
$$\varphi(a)=\lim_\nu a(x_\nu),\quad a\in H$$
defines a state on $H$ and $\lim_\nu x_\nu=\pi(\varphi)$.
\end{lemma}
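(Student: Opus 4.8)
The plan is to exploit the density of $\iota(X)$ in $S(H)$ established in Lemma~\ref{L:intermediate}(b) together with the fact that $S(H)$ carries the weak$^*$ topology, for which convergence is tested coordinatewise against elements of $H$. For the first part, fix $\varphi\in S(H)$. Since $\iota(X)$ is dense in $S(H)$ by Lemma~\ref{L:intermediate}(b), there is a net $(x_\nu)$ in $X$ with $\iota(x_\nu)\to\varphi$ in the weak$^*$ topology of $H^*$; unravelling what weak$^*$ convergence means, this says precisely that $\iota(x_\nu)(a)\to\varphi(a)$, i.e. $a(x_\nu)\to\varphi(a)$, for every $a\in H$. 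This already gives the displayed formula. For the ``moreover'' clause, apply the previous line to functions $a\in A_c(X)\subset H$: then $a(x_\nu)\to\varphi(a)=a(\pi(\varphi))$ for every $a\in A_c(X)$ by the defining property of $\pi$ in Lemma~\ref{L:intermediate}(e). Since $A_c(X)$ separates the points of $X$ and is stable under the relevant operations, the topology of $X$ (inherited from the locally convex space) coincides with the weak topology induced by $A_c(X)$ on the compact set $X$; hence $a(x_\nu)\to a(\pi(\varphi))$ for all $a\in A_c(X)$ forces $x_\nu\to\pi(\varphi)$ in $X$.

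For the converse, suppose $(x_\nu)$ is a net in $X$ such that $\lim_\nu a(x_\nu)$ exists for each $a\in H$, and define $\varphi(a)=\lim_\nu a(x_\nu)$. Linearity of $\varphi$ is immediate from linearity of the limit. Positivity is clear: if $a\ge0$ on $X$ then $a(x_\nu)\ge0$ for all $\nu$, so $\varphi(a)\ge0$. Finally $\varphi(1_X)=\lim_\nu 1=1$, and $\abs{\varphi(a)}=\lim_\nu\abs{a(x_\nu)}\le\norm{a}$, so $\norm{\varphi}\le1$; combined with $\varphi(1_X)=1$ and $\norm{1_X}=1$ we get $\norm{\varphi}=1$, hence $\varphi\in S(H)$. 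Applying the formula to $a\in A_c(X)$ and using Lemma~\ref{L:intermediate}(e) again, $a(x_\nu)\to\varphi(a)=a(\pi(\varphi))$ for all $a\in A_c(X)$, and as above this yields $x_\nu\to\pi(\varphi)$ in $X$.

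I do not expect any genuine obstacle here; the statement is essentially a repackaging of Lemma~\ref{L:intermediate}. The only point requiring a word of care is the passage from ``$a(x_\nu)\to a(\pi(\varphi))$ for all $a\in A_c(X)$'' to ``$x_\nu\to\pi(\varphi)$ in $X$''. This is standard: the evaluation maps $a\in A_c(X)$ separate points of $X$ (Hahn--Banach), the original topology on the compact set $X$ is the coarsest making all $a\in A_c(X)$ continuous, and a net in a compact space converging in a coarser Hausdorff topology that separates points converges in the original topology as well (any subnet has a convergent sub-subnet whose limit must be $\pi(\varphi)$). Alternatively one can phrase this via $\iota$: the net $\iota(x_\nu)$ converges to $\varphi$ in $S(H)$, hence its image $\pi(\iota(x_\nu))=x_\nu$ converges to $\pi(\varphi)$ by continuity of $\pi$ (Lemma~\ref{L:intermediate}(e),(f)), which is perhaps the cleanest route and is the one I would write up.
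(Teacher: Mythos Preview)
Your proof is correct and follows essentially the same route as the paper: density of $\iota(X)$ in $S(H)$ from Lemma~\ref{L:intermediate}(b) for the first part, and the identification of convergence in $X$ via $A_c(X)$ (equivalently, continuity of $\pi$) for the ``moreover'' clause. The paper dispatches the converse with ``obvious'', while you spell out the verification that $\varphi$ is a state; your suggested clean route via $x_\nu=\pi(\iota(x_\nu))\to\pi(\varphi)$ is exactly what the paper has in mind when it invokes Lemma~\ref{L:intermediate}(c) and (e).
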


\begin{proof}
Let $\varphi\in S(H)$. By Lemma~\ref{L:intermediate}$(b)$ there is a net $(x_\nu)$ in $X$ such that $\iota(x_\nu)\to\varphi$ in $S(H)$. This proves the existence of $(x_\nu)$. Using moreover assertion $(e)$ of Lemma~\ref{L:intermediate} we get that
$$\lim_\nu a(x_\nu)=a(\pi(\varphi)),\quad a\in A_c(X).$$
Thus it follows from assertion $(c)$ that $x_\nu\to \pi(\varphi)$.

The converse is obvious.
\end{proof}

Lemma~\ref{L:intermediate} shows, in particular, that given an intermediate function space $H$ on a compact convex set $X$, there is a compact convex set $Y=S(H)$ and two affine mappings $\iota$ and $\pi$ with certain properties. 
The next lemma provides a converse -- given a pair of compact convex sets $X,Y$ and two affine mappings with certain properties, 
we may reconstruct the respective intermediate function space.
Therefore there is a canonical correspondence between pairs $(X,H)$, where $X$ is a compact convex set and $H$ an intermediate function space, and pairs of compact convex sets accompanied by compatible pairs of affine mappings.

\begin{lemma}\label{L:dva kompakty}
Let $X$ and $Y$ be compact convex sets. Assume there is an affine continuous surjection $\varpi:Y\to X$ and an affine injection $\jmath:X\to Y$ with $\jmath(X)$ dense in $Y$ and such that $\varpi\circ\jmath=\mbox{\rm id}_X$. Then the following assertions are valid.
\begin{enumerate}[$(a)$]
    \item The operator $U:A_c(Y)\to A_b(X)$ defined by $U(f)=f\circ\jmath$ is an isometric injection and $H=U(A_c(Y))$ is an intermediate function space.
    \item $H$ is determined by extreme points if and only if $\ext Y\subset\overline{\jmath(\ext X)}$.
    \item Let $T$, $\pi$ and $\iota$ be the mappings associated to $H$ by Lemma~\ref{L:intermediate}. Further, let $\iota_Y:Y\to S(A_c(Y))$ be the mapping associated to $A_c(Y)$ (in place of $H$) by Lemma~\ref{L:intermediate}. Then the following holds:
    \begin{enumerate}[$(i)$]
        \item The dual operator $U^*$ maps $S(H)$ homeomorphically onto $S(A_c(Y))$;
        \item $\iota=(U^*|_{S(H)})^{-1}\circ \iota_Y\circ\jmath$;
        \item $\pi=\varpi\circ (\iota_Y)^{-1}\circ U^*|_{S(H)}$;
        \item $Tf=U^{-1}(f)\circ(\iota_Y)^{-1}\circ U^*|_{S(H)}$ for $f\in H$.
    \end{enumerate}
\end{enumerate}
\end{lemma}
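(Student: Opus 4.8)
The lemma is proved by unwinding the definitions of $U,\iota,\pi,T,\iota_Y$; no real idea is involved, but the pieces must be assembled in a definite order, since (b) and the formulas in (c)(ii)--(iv) rely on the homeomorphism produced in (c)(i). The plan is to prove (a), then (c)(i), then (c)(ii)--(iv), and finally (b).

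\emph{Part (a).} As $\jmath$ and each $f\in A_c(Y)$ are affine, $f\circ\jmath$ is affine, and it is bounded since $Y$ is compact; so $U$ maps $A_c(Y)$ linearly into $A_b(X)$. For $f\in A_c(Y)$, the density of $\jmath(X)$ in $Y$ together with continuity of $f$ gives $\|U(f)\|=\sup_{x\in X}|f(\jmath(x))|=\sup_{\jmath(X)}|f|=\sup_{Y}|f|=\|f\|$, so $U$ is an isometry, hence an isometric isomorphism of the Banach space $A_c(Y)$ onto $H=U(A_c(Y))$; thus $H$ is complete, hence closed in $A_b(X)$. Since $g\circ\varpi\in A_c(Y)$ and $U(g\circ\varpi)=g\circ\varpi\circ\jmath=g$ for every $g\in A_c(X)$ (using $\varpi\circ\jmath=\mathrm{id}_X$), we get $A_c(X)\subset H$, so $H$ is an intermediate function space and the objects $S(H),\iota,\pi,T$ from Lemma~\ref{L:intermediate} are available.

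\emph{Parts (c)(i)--(iv).} Viewing $U$ as a surjective isometry $A_c(Y)\to H$, its adjoint $U^*\colon H^*\to A_c(Y)^*$ is an isometric isomorphism and is weak$^*$-weak$^*$ continuous. Because $U(1_Y)=1_X$, for $\varphi\in H^*$ one has $U^*(\varphi)(1_Y)=\varphi(1_X)$ and $\|U^*(\varphi)\|=\|\varphi\|$, so $U^*$ restricts to a bijection of $S(H)$ onto $S(A_c(Y))$ which, being a continuous bijection from a compact space onto a Hausdorff space, is a homeomorphism; this is (i). For (ii): for $x\in X$ and $f\in A_c(Y)$ one has $U^*(\iota(x))(f)=\iota(x)(f\circ\jmath)=f(\jmath(x))=\iota_Y(\jmath(x))(f)$, i.e.\ $U^*|_{S(H)}\circ\iota=\iota_Y\circ\jmath$. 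For (iii): given $\varphi\in S(H)$ put $y=(\iota_Y)^{-1}(U^*(\varphi))$; then for $a\in A_c(X)$, using $U(a\circ\varpi)=a$, we get $a(\varpi(y))=\iota_Y(y)(a\circ\varpi)=U^*(\varphi)(a\circ\varpi)=\varphi(a)$, whence $\varpi(y)=\pi(\varphi)$ by the uniqueness clause of Lemma~\ref{L:intermediate}(e). For (iv): with the same $\varphi,y$ and $g=U^{-1}(f)\in A_c(Y)$, $g(y)=\iota_Y(y)(g)=U^*(\varphi)(g)=\varphi(U(g))=\varphi(f)=Tf(\varphi)$, the last equality by the definition of $T$ in Lemma~\ref{L:intermediate}(a).

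\emph{Part (b) and the main obstacle.} By Lemma~\ref{L:intermediate}(c) applied to $A_c(Y)$ and by (c)(i), the map $\Psi=(\iota_Y)^{-1}\circ U^*|_{S(H)}\colon S(H)\to Y$ is an affine homeomorphism, so it carries $\ext S(H)$ onto $\ext Y$; by (c)(ii) it satisfies $\Psi\circ\iota=\jmath$, hence $\Psi(\overline{\iota(\ext X)})=\overline{\jmath(\ext X)}$. Therefore $\ext S(H)\subset\overline{\iota(\ext X)}$ if and only if $\ext Y\subset\overline{\jmath(\ext X)}$, and by Lemma~\ref{L:intermediate}(d) the former is precisely the assertion that $H$ is determined by extreme points. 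I anticipate no substantial obstacle; the only points requiring care are to regard $U$ as a map \emph{onto} $H$ (so that $U^*$ is defined on $H^*$ and invertible there) and to keep track of the directions of $\iota,\iota_Y,\jmath,\varpi,U^*$ so that the composite formulas in (c) type-check.
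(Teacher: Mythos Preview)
Your proof is correct and follows precisely the approach of the paper, which merely states that (c) ``follows by a straightforward calculation'' and that (b) ``is a consequence of (c) and Lemma~\ref{L:intermediate}(d)''; you have supplied exactly those calculations and that deduction. Your treatment of (a) is also essentially identical to the paper's.
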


\begin{proof}
$(a)$: Any $f\in A_c(Y)$ is bounded and $\jmath(X)$ is dense in $Y$, hence $U$ is an isometric embedding of $A_c(Y)$ to $A_b(X)$. Moreover, if $a\in A_c(X)$, then $a\circ\varpi\in A_c(Y)$ and 
$$U(a\circ\varpi)=a\circ\varpi\circ\jmath=a,$$
so $A_c(X)\subset U(A_c(Y))$. Hence, $H=U(A_c(Y))$ is indeed an intermediate function space.

Assertion $(c)$ follows by a straightforward calculation and assertion $(b)$ is a consequence of $(c)$ and Lemma~\ref{L:intermediate}$(d)$.
\end{proof}

There is one more view on intermediate function spaces. We may look at them as  spaces in between $A_c(X)$ and $(A_c(X))^{**}$. The following lemma explains it.

\begin{lemma}\label{L:bidual}
    Let $X$ be a compact convex set. Let $\iota:X\to S(A_c(X))$ be the mapping from Lemma~\ref{L:intermediate}. Set $Y=(B_{(A_c(X))^*},w^*)$.
    Then the following assertions hold.
    \begin{enumerate}[$(a)$]
        \item There is a (unique) linear surjective isometry $\Psi:A_b(X)\to (A_c(X))^{**}$ such that
        $$\Psi(f)(\iota(x))=f(x),\quad x\in X, f\in A_b(X).$$
        Moreover, $\Psi$ is order-preserving and a homeorphism from the topology of pointwise convergence to the weak$^*$ topology.
        \item $\Psi(f)|_Y$ is continuous if and only if $f\in A_c(X)$;
       \item $\Psi(f)|_Y\in (A_c(Y))^\sigma$ if and only if $f\in (A_c(X))^\sigma$;
        \item $\Psi(f)|_Y$ is Baire-one if and only if $f\in A_1(X)$;
        \item $\Psi(f)|_Y$ is a Baire function if and only if $f$ is a Baire function;
         \item $\Psi(f)|_Y$ is of the first Borel class if and only if $f\in A_b(X)\cap\Bo_1(X)$;
         \item $\Psi(f)|_Y$ is fragmented if and only if $f\in A_f(X)$.
          \item  $\Psi(f)|_Y$ is strongly affine if and only if $f$ is strongly affine.
    \end{enumerate}
\end{lemma}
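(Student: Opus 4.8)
The plan is to establish part (a) first, as it provides the identification $\Psi\colon A_b(X)\to (A_c(X))^{**}$ that makes sense of all the remaining equivalences, and then to derive (b)--(h) by reducing each to a transfer of a topological/descriptive property along the restriction of $\Psi$ to $Y=(B_{(A_c(X))^*},w^*)$. For (a), recall that $\iota$ embeds $X$ into $S(A_c(X))\subset Y$, and by Lemma~\ref{L:intermediate}(c) this is an affine homeomorphism onto $S(A_c(X))$. The natural candidate for $\Psi$ sends $f\in A_b(X)$ to the function on $Y$ that is affine, weak$^*$-continuous on... no — rather, given $f\in A_b(X)$, one defines $\Psi(f)$ on $Y$ by declaring $\Psi(f)(\iota(x))=f(x)$ for $x\in X$ and extending to all of $B_{(A_c(X))^*}$ by the formula $\Psi(f)(t\varphi_1-(1-t)\varphi_2)=tf(r(\varphi_1))-(1-t)f(r(\varphi_2))$ once one identifies $S(A_c(X))$ with $X$ via $\iota^{-1}$; here $t\varphi_1-(1-t)\varphi_2$ is the decomposition furnished by Lemma~\ref{L:F-spaces}(i). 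One checks this is well defined exactly as in the proof of Lemma~\ref{L:F-spaces}(ii) (using additivity of the norm on the positive cone of $(A_c(X))^*$), that it is linear in $f$, order-preserving, and an isometry since $\|f\|=\sup_{x\in X}|f(x)|=\sup_{\varphi\in S(A_c(X))}|\Psi(f)(\varphi)|$ by density of $\iota(X)$ in $S(A_c(X))$ combined with affinity. Surjectivity onto $(A_c(X))^{**}$: an element of $(A_c(X))^{**}$ restricts to a bounded affine function on $S(A_c(X))\cong X$, hence to an element of $A_b(X)$, and conversely; the bidual norm matches the sup norm on $X$. The pointwise-to-weak$^*$ homeomorphism claim follows because weak$^*$ convergence in $(A_c(X))^{**}$ tested against $\kappa_{(A_c(X))^*}(\varphi)$ for $\varphi\in S(A_c(X))$ is exactly pointwise convergence on $X$ after transporting along $\iota$.

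For (b)--(h), the unifying observation is that $Y$ is a compact space, $\iota\colon X\to S(A_c(X))\subset Y$ is a homeomorphic embedding onto a closed (indeed compact convex) subset, and $Y$ is the ``symmetric convex hull'' $\co(S(A_c(X))\cup(-S(A_c(X))))$ via the quotient map $q\colon S(A_c(X))\times S(A_c(X))\times[0,1]\to Y$, $q(\varphi_1,\varphi_2,t)=t\varphi_1-(1-t)\varphi_2$, which is a continuous surjection between compact spaces. Since $\Psi(f)|_Y\circ q$ is, up to the affine reshuffling, built from two copies of $f\circ\iota^{-1}$ on the factors $S(A_c(X))\cong X$, Lemma~\ref{L:kvocient} lets one pull each property back and forth across $q$: a function on $Y$ has property $P$ (continuity, lower semicontinuity absent here, Baire-one, Baire, first Borel class, fragmented, $\sigma(\H)$-measurability) iff its composition with $q$ does, for $P$ in the list covered by Lemma~\ref{L:kvocient}(a),(c)--(h). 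Then $\Psi(f)|_Y\circ q$ has property $P$ iff $f\circ\iota^{-1}$ has property $P$ on $S(A_c(X))$ (the two coordinate projections are continuous surjections, and affine combinations of continuous real functions preserve each class), iff $f$ has property $P$ on $X$ since $\iota$ is a homeomorphism. This handles (b) (continuity), (d) (Baire-one), (e) (Baire), (f) (first Borel class), and (g) (fragmented, via Theorem~\ref{T:a}(b) as $Y$ is compact). For (c), $(A_c(X))^\sigma$ is not directly in Lemma~\ref{L:kvocient}, but one uses part (b): $\Psi$ is a pointwise-to-weak$^*$ homeomorphism and a linear isometry, so it maps bounded pointwise-convergent sequences to bounded pointwise-convergent sequences and hence carries $(A_c(X))^\sigma$ (computed inside $A_b(X)$) onto the sequential closure of $\Psi(A_c(X))|_Y$; since $\Psi(A_c(X))|_Y$ consists exactly of the weak$^*$-continuous functions on $Y$ by (b), this is $(A_c(Y))^\sigma$ (noting $A_c(Y)$ means affine continuous functions on the compact convex set $Y$, which coincide on $Y$ with $\Psi(A_c(X))$). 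Part (h): a function on $Y$ is strongly affine iff for every $\mu\in M_1(Y)$ it satisfies the barycenter formula; using that $\pi_Y\colon Y\to S(A_c(X))\cong X$ induced by restriction pushes measures forward and that affine functions on $X$ lift to affine functions on $Y$, one checks $\Psi(f)|_Y$ strongly affine $\iff$ $f$ strongly affine directly from the definition via $r(\mu)$ and the affine structure, or alternatively by invoking that strongly affine functions are preserved under the correspondences already set up (this is essentially the content of Lemma~\ref{L:intermediate} applied to $H=A_b(X)$, whose state space is $Y$).

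The main obstacle I expect is the careful bookkeeping in (a): verifying well-definedness of $\Psi(f)$ on all of $Y$ (the overlap of representations $t\varphi_1-(1-t)\varphi_2$), confirming it lands in $(A_c(X))^{**}$ rather than merely in the bounded affine functions on $Y$, and pinning down that the pointwise topology on $A_b(X)$ transports precisely to the weak$^*$ topology on the bidual — this last point needs that the evaluations $\kappa(\varphi)$, $\varphi\in S(A_c(X))$, are weak$^*$-dense enough in $B_{(A_c(X))^{***}}$ in the relevant sense, which follows from Lemma~\ref{L:F-spaces}(i) applied one level up. Once (a) is in place with the stated homeomorphism property, (b)--(h) are, as sketched, a routine but somewhat lengthy application of Lemma~\ref{L:kvocient} together with the characterizations of Baire-one ($\Zer_\sigma$-measurability), Baire (Baire $\sigma$-algebra measurability), first Borel class ($\FpG_\sigma$-measurability), and fragmentation (Theorem~\ref{T:a}); the only non-mechanical steps are (c), where one must recognize the sequential closure is transported by the linear pointwise-weak$^*$ homeomorphism $\Psi$, and (h), where one works from the definition of strongly affine rather than a descriptive characterization.
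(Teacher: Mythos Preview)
Your proposal is correct and follows essentially the same route as the paper. For (a) you reconstruct directly what the paper obtains by citing \cite[Proposition 4.32]{lmns}; for (b), (d)--(g) your quotient argument via $q\colon S(A_c(X))\times S(A_c(X))\times[0,1]\to Y$ together with Lemma~\ref{L:kvocient} is exactly the paper's method (the paper writes $q$ with domain $X\times X\times[0,1]$ after identifying $X$ with $S(A_c(X))$ via $\iota$, and computes $(\Psi(f)\circ q)(x,y,t)=tf(x)-(1-t)f(y)$ directly); for (c) both you and the paper deduce it from (b) via the pointwise--weak$^*$ homeomorphism in (a).

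One small correction: in your sketch for (h) you write that Lemma~\ref{L:intermediate} applied to $H=A_b(X)$ has state space $Y$, but $S(A_b(X))\subset (A_b(X))^*$ is not the same object as $Y=B_{(A_c(X))^*}$, so that parenthetical does not work as stated. Your first suggestion---verifying the barycentric formula directly using the quotient $q$ and pushing measures forward/back---is the right idea and is what underlies the reference \cite[Lemma 5.39(b)]{lmns} that the paper invokes; it just needs more care than your sketch indicates (lifting a measure on $Y$ along the surjection $q$ and unwinding the barycenter).
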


\begin{proof}
    Assertion $(a)$ is just a reformulation of \cite[Proposition 4.32]{lmns}.

Let us continue by proving assertions $(b)$, $(d)$--$(g)$.
    To prove the `only if' parts  it is enough to observe that $f=\Psi(f)\circ \iota$ and that $\iota$ is continuous (see Lemma~\ref{L:intermediate}).
        To prove the `if parts' we will use a quotient argument as in the proof of \cite[Lemma 5.39(a)]{lmns} (which in fact covers all the cases except for $(g)$). Let $q:X\times X\times [0,1]\to B_{(A_c(X))^*}$ be defined by
    $$q(x,y,t)=t\iota(x)-(1-t)\iota(y),\quad x,y\in X, t\in[0,1].$$
    By combining Lemma~\ref{L:intermediate}(c) with Lemma~\ref{L:F-spaces}(i) we see that $q$ is a continuous surjection. Moreover, given any $f\in A_b(X)$ we have
    $$(\Psi(f)\circ q)(x,y,t)=\Psi(f)(t\iota(x)-(1-t)\iota(y))=t f(x)-(1-t) f(y).$$
    Hence, the properties of $f$ easily transfers to $\Psi(f)\circ q$ and then by Lemma~\ref{L:kvocient} to $\Psi(g)|_Y$.

Assertions $(c)$ follows from $(b)$ using assertion $(a)$.

Assertion $(h)$ follows from \cite[Lemma 5.39(b)]{lmns}.
 \end{proof}

In \cite[Proposition 3.2 and Proposition 4.1]{edwards} it is proved that $Z(A_c(X))\subset Z((A_c(X))^\mu)\subset Z(A_b(X))$. This result was extended in \cite[Proposition 4.7 and Proposition 4.11]{smith-london} to get $Z(A_c(X))\subset Z((A_c(X))^\mu)\subset Z((A_c(X))^\sigma)\subset Z(A_b(X))$ and $Z(A_c(X))\subset Z((A_s(X))^\mu)\subset Z(A_b(X))$.  The next proposition says, in particular, that the fact that some centers are contained in $Z(A_b(X))$ is not specific for the concrete spaces but holds for any intermediate function space. 

\begin{prop}\label{P:ZH subset ZAbX}
    Let $X$ be a compact convex set.
    \begin{enumerate}[$(i)$]
        \item Let $T:A_c(X)\to A_b(X)$ be a linear operator satisfying
        $$-\lambda f\le T(f)\le \lambda f,\quad f\in A_c(X), f\ge0,$$
        for some $\lambda>0$. Then $T$ may be extended to an operator $\widetilde{T}\in \frd(A_b(X))$.
        \item  $Z(H)\subset Z(A_b(X))$ for any  intermediate function space $H$ on $X$.
    \end{enumerate} 
\end{prop}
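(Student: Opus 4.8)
The plan is to prove (i) first and then derive (ii) as an easy consequence. For (i), the natural idea is to use the bidual construction together with Lemma~\ref{L:extenze na bidual} and the canonical isometry $\Psi$ from Lemma~\ref{L:bidual}. Concretely, given $T\colon A_c(X)\to A_b(X)$ with $-\lambda f\le T(f)\le\lambda f$ for $f\ge0$ in $A_c(X)$, first compose with $\Psi$ to view $T$ as an operator $S:=\Psi\circ T\colon A_c(X)\to (A_c(X))^{**}$. The order condition $-\lambda f\le T(f)\le\lambda f$ for positive $f$ is equivalent (testing against $\pm f$ and using that $A_c(X)$ is an $F$-space with unit) to $-\lambda\|f\|\le T(f)\le\lambda\|f\|$, i.e.\ to $-\lambda\kappa_{A_c(X)}\le S\le\lambda\kappa_{A_c(X)}$ in the ordered-operator sense, since $\Psi$ is order-preserving and sends the constant function $1_X$ to the unit of $(A_c(X))^{**}$. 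Then Lemma~\ref{L:extenze na bidual}$(d)$ yields $\widehat S\in\frd((A_c(X))^{**})$ with $\widehat S\circ\kappa_{A_c(X)}=S$. Pulling back through the isometry $\Psi$ (i.e.\ setting $\widetilde T:=\Psi^{-1}\circ\widehat S\circ\Psi|_{A_b(X)}$, noting $\Psi|_{A_c(X)}=\kappa_{A_c(X)}$ under the identification), we obtain $\widetilde T\in\frd(A_b(X))$ extending $T$, because $\Psi$ is an order isomorphism carrying $1_X$ to the unit.

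The one point that needs care — and is the main obstacle — is to verify cleanly that the order-theoretic inequalities transport correctly through $\Psi$ and through the canonical embedding $\kappa_{A_c(X)}$, in particular identifying $\Psi|_{A_c(X)}$ with $\kappa_{A_c(X)}$. By Lemma~\ref{L:bidual}$(a)$, $\Psi\colon A_b(X)\to (A_c(X))^{**}$ is the unique linear surjective isometry with $\Psi(f)(\iota(x))=f(x)$; restricting to $A_c(X)$ and using that $\iota(X)$ determines $A_c(X)^*$, one checks $\Psi(a)=\kappa_{A_c(X)}(a)$ for $a\in A_c(X)$ via the defining formula of $\kappa$ evaluated on $w^*$-dense functionals $\iota(x)$. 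Since $\Psi$ is order-preserving with order-preserving inverse and $\Psi(1_X)$ is the unit of $(A_c(X))^{**}$, the inequality $-\lambda\kappa_{A_c(X)}\le S\le\lambda\kappa_{A_c(X)}$ holds iff $-\lambda f\le T(f)\le\lambda f$ for $f\ge0$ in $A_c(X)$; this is the place to be explicit and avoid sign or direction errors.

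For (ii), let $H$ be an intermediate function space and $z\in Z(H)$, say $z=Te$ with $T\in\frd(H)$, $-\lambda I\le T\le\lambda I$, where $e=1_X$. Restrict $T$ to $A_c(X)\subset H$: the composition $A_c(X)\hookrightarrow H\xrightarrow{T} H\hookrightarrow A_b(X)$ is a linear operator $A_c(X)\to A_b(X)$ satisfying, for $f\ge0$ in $A_c(X)$, $-\lambda f\le T(f)\le\lambda f$ (the inequalities are inherited because the order on $H$ and on $A_b(X)$ is the pointwise order and $A_c(X)\subset H$ as an ordered subspace). By part (i) this operator extends to some $\widetilde T\in\frd(A_b(X))$. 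Then $\widetilde T(1_X)$ lies in $Z(A_b(X))$ by definition of the center. It remains to observe $\widetilde T(1_X)=T(1_X)=z$, since $1_X\in A_c(X)$ and $\widetilde T$ extends $T|_{A_c(X)}$; hence $z\in Z(A_b(X))$, proving $Z(H)\subset Z(A_b(X))$.
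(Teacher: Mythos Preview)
Your approach is essentially identical to the paper's: for $(i)$ you set $S=\Psi\circ T$, invoke Lemma~\ref{L:extenze na bidual}$(d)$ to obtain $\widehat S\in\frd((A_c(X))^{**})$, and pull back via $\widetilde T=\Psi^{-1}\circ\widehat S\circ\Psi$; for $(ii)$ you restrict $T\in\frd(H)$ to $A_c(X)$ and apply $(i)$, noting that $1_X\in A_c(X)$ so the extension agrees with $T$ there. This is exactly the paper's argument, and your added detail on identifying $\Psi|_{A_c(X)}$ with $\kappa_{A_c(X)}$ is a useful clarification the paper leaves implicit.

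One sentence is garbled, however: the claim that ``$-\lambda f\le T(f)\le\lambda f$ for positive $f$ is equivalent \dots\ to $-\lambda\|f\|\le T(f)\le\lambda\|f\|$'' is false (the first bounds $T(f)$ by multiples of $f$, the second by constants), and the ``i.e.'' linking this to $-\lambda\kappa_{A_c(X)}\le S\le\lambda\kappa_{A_c(X)}$ compounds the confusion, since $\kappa_{A_c(X)}(f)$ is not a constant. Fortunately you state the correct equivalence in the following paragraph --- $-\lambda\kappa_{A_c(X)}\le S\le\lambda\kappa_{A_c(X)}$ holds iff $-\lambda f\le T(f)\le\lambda f$ for $f\ge0$ --- and that is all that is needed. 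Simply delete the erroneous intermediate sentence and the proof is clean.
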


\begin{proof}
    $(i)$: Set $S=\Psi\circ T$ where $\Psi$ is from Lemma~\ref{L:bidual}. Then $S$ satisfies the assumptions of Lemma~\ref{L:extenze na bidual}(d). The just quoted lemma provides an operator $\widehat{S}$. It remains to set $\widetilde{T}=\Psi^{-1}\circ\widehat{S}\circ\Psi$.

    $(ii)$: Let $H$ be any intermediate function space and let $h\in Z(H)$. Then there is $T\in \frd(H)$ with $T(1)=h$. Note that $T|_{A_c(X)}$ satisfies the assumptions of $(i)$, hence there is $S\in\frd(A_b(X))$ with $S|_{A_c(X)}=T|_{A_c(X)}$. In particular $h=T(1)=S(1)\in Z(A_b(X))$.
\end{proof}

We note that in the proof of (ii) we do not claim that $S$ extends $T$, just $S$ extends the restriction of $T$ to $A_c(X)$. We further note that inclusion $Z(A_c(X))\subset Z(H)$ is not automatic as witnessed by Example~\ref{ex:inkluzeZ} below.

We present one more way of constructing intermediate function spaces, starting from a function space in the sense of Section~\ref{ssc:ch-fs}. This will be useful mainly to simplify constructions of concrete examples.

\begin{lemma}\label{L:function space}
Let $K$ be a compact space and let $E\subset C(K)$ be a closed function space. Denote 
\[
\ell^\infty(K)\cap E^{\perp\perp}=\left\{h\in\ell^\infty(K)\text{ universally measurable}\setsep
\int h\di\mu=0\mbox{ if }\mu\in E^\perp\right\},
\]
where 
$$E^\perp=\{\mu\in M(K)\setsep \int f\di\mu=0\mbox{ for each }f\in E\}$$
is the annihilator of $E$ in $M(K)=C(K)^*$. Let $X=S(E)$ denote the state space of $E$ and $\phi\colon K\to X$ be the evaluation mapping.

Given $f\in \ell^\infty(K)\cap E^{\perp\perp}$ and $\varphi\in X$, we set 
$$\gls{V}(f)(\varphi)=\int f\di\mu\mbox{ whenever }\mu\in M_1(K)\mbox{ and }\varphi(a)=\int a\di\mu\mbox{ for }a\in E.$$

Then the following assertions are valid:
\begin{enumerate}[$(a)$]
    \item  $V$ is a well-defined linear isometry of $\ell^\infty(K)\cap E^{\perp\perp}$ onto $A_{sa}(X)$. Its inverse is given by the mapping $f\mapsto f\circ \phi$, $f\in A_{sa}(X)$.

    Moreover, $V$ is order-preserving and $V(f_n)\to V(f)$ pointwise whenever $(f_n)$ is a bounded sequence in $ \ell^\infty(K)\cap E^{\perp\perp}$ pointwise converging to $f$.
    \item $V|_E=\Phi$, and thus $V(E)=A_c(X)$. 
    Moreover,
    $$\begin{gathered}
           V(\Ba^b_1(K)\cap E^{\perp\perp})=A_1(X), V(\Ba^b(K)\cap E^{\perp\perp})=A_{sa}(X)\cap \Ba(X), \\
    V(\Bo^b_1(K)\cap E^{\perp\perp})=A_b(X)\cap\Bo_1(X),
    V(\Fr^b(K)\cap E^{\perp\perp})=A_f(X),\\ 
    V(\{f\in \ell^\infty(K)\cap E^{\perp\perp}\setsep f\mbox{ is lower semicontinuous on }K\})=A_l(X). \end{gathered}
    $$
    
    \item Let $H\subset \ell^\infty(K)\cap E^{\perp\perp}$ be a closed subspace containing $E$. Then $V(H)$ is an intermediate function space on $X$. If $\iota\colon X\to S(V(H))$ is the mapping provided by Lemma~\ref{L:intermediate} and $\imath\colon K\to S(V(H))$ is defined by $\imath(x)(Vh)=h(x)$ for $h\in H$ and $x\in K$, then
    $$\iota(\phi(x))=\imath(x),\quad x\in K.$$
    \item The following assertions are equivalent:
    \begin{enumerate}[$(i)$]
        \item   $V(H)$ is determined by extreme points of $S(E)$;
        \item  $H$ is determined by the Choquet boundary of $E$, i.e., for each $h\in H$ we have $\inf h(\Ch_E K)\le h\le \sup h(\Ch_E K)$;
        \item  $\ext S(V(H))\subset\overline{\imath (\Ch_E K)}.$
    \end{enumerate}
   
\end{enumerate}
\end{lemma}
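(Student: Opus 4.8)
The plan is to prove the chain of equivalences in part $(d)$ by combining the structural results already available: Lemma~\ref{L:intermediate}$(d)$ characterizes "determined by extreme points" for a general intermediate function space $G$ on a compact convex set in terms of $\ext$ of the state space lying in the closure of the image of $\ext X$, and Lemma~\ref{L:function space}$(c)$ identifies the relevant evaluation maps when $X = S(E)$ and $G = V(H)$. So the heart of the matter is to transfer these statements through the evaluation map $\phi\colon K\to S(E)$ and the fact (recalled in Section~\ref{ssc:ch-fs}) that $\phi(\Ch_E K) = \ext S(E)$.

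First I would prove $(i)\iff(iii)$: apply Lemma~\ref{L:intermediate}$(d)$ with the compact convex set $S(E)$ in place of $X$ and the intermediate function space $V(H)$ in place of $H$. That lemma gives $V(H)$ determined by extreme points $\iff \ext S(V(H))\subset\overline{\iota(\ext S(E))}$, where $\iota\colon S(E)\to S(V(H))$ is the canonical map. Since $\ext S(E) = \phi(\Ch_E K)$ and, by Lemma~\ref{L:function space}$(c)$, $\iota\circ\phi = \imath$ on $K$, we get $\iota(\ext S(E)) = \iota(\phi(\Ch_E K)) = \imath(\Ch_E K)$, so the right-hand condition becomes exactly $(iii)$. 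This step is essentially bookkeeping once the correct identifications are in place.

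Next I would prove $(i)\iff(ii)$ directly from the definitions. By definition $V(H)$ is determined by extreme points of $S(E)$ iff every $g\in V(H)$ satisfies $\inf g(\ext S(E))\le g\le\sup g(\ext S(E))$ on $S(E)$. Writing $g = V(h)$ for $h\in H$ and using that $V$ is an order-preserving isometry with inverse $g\mapsto g\circ\phi$ (Lemma~\ref{L:function space}$(a)$), together with $\phi(\Ch_E K)=\ext S(E)$ and density of $\phi(K)$ in $S(E)=\overline{\phi(K)}$ (note: one must be slightly careful — $S(E)$ need not equal $\overline{\phi(K)}$ in general, so I would instead argue via $g = V(h)$ affine on $S(E)$ and the fact that values of $g$ on $S(E)$ are determined by representing measures on $K$), the two constant-bound conditions match: $g(\phi(x)) = h(x)$ for $x\in K$ gives $g(\ext S(E)) = h(\Ch_E K)$, and the inequality $\inf h(\Ch_E K)\le h\le\sup h(\Ch_E K)$ on $K$ is equivalent to the corresponding inequality for $g$ on $S(E)$ because any $\varphi\in S(E)$ is the barycenter of a measure $\mu\in M_1(K)$ with $V(h)(\varphi)=\int h\,d\mu$, so $V(h)(\varphi)$ lies between $\inf$ and $\sup$ of $h$ over $\spt\mu\subset K$.

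The main obstacle I anticipate is the $(ii)\Rightarrow(i)$ direction in the last argument: from control of $h$ on the Choquet boundary $\Ch_E K$ one must deduce control of $V(h)$ on all of $S(E)$, which requires knowing that the integral $\int h\,d\mu$ is governed by a representing measure that "sees" only $\Ch_E K$-relevant data. This is not automatic for a general $h\in H\subset\ell^\infty(K)\cap E^{\perp\perp}$, since such $h$ need not be strongly affine on $K$ in a way that pushes mass to the Choquet boundary. The clean way around this is to reduce $(ii)\Rightarrow(i)$ to $(iii)\Rightarrow(i)$ (which is just the easy implication in Lemma~\ref{L:intermediate}$(d)$) by first establishing $(ii)\Rightarrow(iii)$: if $h$ is controlled on $\Ch_E K$ for all $h\in H$, then for $\psi\in\ext S(V(H))$ and $g=V(h)$ one has $g\circ\iota \in A_c(S(E))$ with $\sup (g\circ\iota)(\ext S(E)) = \sup g(\imath(\Ch_E K))$, and a Hahn--Banach/Milman argument as in the proof of Lemma~\ref{L:intermediate}$(d)$ forces $\ext S(V(H))\subset\overline{\imath(\Ch_E K)}$. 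Thus the genuinely nontrivial content is all concentrated in already-proven lemmas, and the proof is a careful cycle $(i)\Rightarrow(ii)\Rightarrow(iii)\Rightarrow(i)$ threading the identifications $\iota\circ\phi=\imath$ and $\phi(\Ch_E K)=\ext S(E)$.
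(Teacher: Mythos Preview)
Your argument for $(i)\Leftrightarrow(iii)$ is correct and identical to the paper's: apply Lemma~\ref{L:intermediate}$(d)$ to the intermediate function space $V(H)$ on $S(E)$, then use $\phi(\Ch_E K)=\ext S(E)$ and $\iota\circ\phi=\imath$ from part~$(c)$.

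Your worry about $(ii)\Rightarrow(i)$ is unfounded, and the detour through $(iii)$ you propose is unnecessary. You already gave the complete argument one paragraph earlier. Condition $(ii)$ asserts $\inf h(\Ch_E K)\le h(x)\le\sup h(\Ch_E K)$ for \emph{every} $x\in K$, not merely for $x\in\Ch_E K$; in particular $\inf_K h=\inf h(\Ch_E K)$ and $\sup_K h=\sup h(\Ch_E K)$. Hence for any $\varphi\in S(E)$, writing $V(h)(\varphi)=\int_K h\,d\mu$ with $\mu\in M_1(K)$, the integral automatically lies between these two numbers. Since $h(\Ch_E K)=V(h)(\phi(\Ch_E K))=V(h)(\ext S(E))$, this is exactly $(i)$. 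No appeal to maximal measures or to ``pushing mass to the Choquet boundary'' is needed; you seem to have momentarily misread $(ii)$ as controlling $h$ only on $\Ch_E K$ rather than on all of $K$. The paper's proof of $(i)\Leftrightarrow(ii)$ is precisely this direct observation, stated in one sentence.
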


\begin{proof} $(a)$: The fact that $V$ is well defined and maps $\ell^\infty\cap E^{\perp\perp}$ onto $A_{sa}(X)$ is proved in \cite[Theorem 5.40 and Corollary 5.41]{lmns}. But we recall the scheme leading to the proof because it will be needed to prove the remaining assertions.

Let $U\colon \ell^\infty(K)\cap E^{\perp\perp}\to A_{b}(M_1(K))$ be defined by
$$U(h)(\mu)=\int h\di\mu,\quad \mu\in M_1(K), h\in H.$$
Since elements of $\ell^\infty(K)\cap E^{\perp\perp}$ are universally measurable and bounded, $U$ is a well-defined linear operator and $\norm{U}\le 1$. Since $M_1(K)$ contains Dirac measures, $U$ is an isometric injection.
By \cite[Proposition 5.30]{lmns} or \cite[Proposition 3.1]{spurnyrepre} the range of $U$ is contained in $A_{sa}(M_1(K))$. 

Let $\rho\colon M_1(K)\to X$ be defined by
$$\rho(\mu)(f)=\int f\di\mu,\quad f\in E, \mu\in M_1(K).$$
Then $\rho$ is an affine continuous surjection of $M_1(K)$ onto $X$. (This is an easy consequence of the Hahn-Banach theorem, cf. \cite[Section 4.3]{lmns}.)

Fix $h\in \ell^\infty(K)\cap E^{\perp\perp}$. If $\mu_1,\mu_2\in M_1(K)$ are such that $\rho(\mu_1)=\rho(\mu_2)$, then $\mu_1-\mu_2\in E^\perp$ and hence $\int h\di\mu_1=\int h\di\mu_2$.
It follows that there is a (unique) function $V(h)\colon X\to\er$ with $U(h)=V(h)\circ \rho$. Since $\rho$ is an affine continuous surjection, we deduce that $V(h)$ is a strongly affine function and $\norm{V(h)}=\norm{U(h)}$ (see \cite[Proposition 5.29]{lmns}). The linearity of $V$ is clear.

Thus $V$ maps $\ell^\infty(K)\cap E^{\perp\perp}$ into $A_{sa}(X)$. For the proof of its surjectivity, let $f\in A_{sa}(X)$ be given. Then $h=f\circ \phi$ is a bounded universally measurable function on $K$. If $\mu_1,\mu_2\in M_1(K)$ satisfy $\mu_1-\mu_2\in E^\perp$, then $\phi(\mu_1),\phi(\mu_2)$ are probability measures on $X$ with the same barycenter $\rho(\mu_1)=\rho(\mu_2)$ (see \cite[Proposition 4.26(c)]{lmns}). Hence
\[
\begin{aligned}
\int_K h\di\mu_1&=\int_K f\circ\phi\di\mu_1=\int_X f\di(\phi(\mu_1))=f(\rho(\mu_1))=f(\rho(\mu_2))=\cdots\\
&=\int_K h\di\mu_2.
\end{aligned}
\]
Thus $h\in E^{\perp\perp}$. 
Further, for $\varphi\in X$ we find a measure $\mu\in M_1(K)$ with $\rho(\mu)=\varphi$. Then 
\[
Vh(\varphi)=Uh(\mu)=\mu(h)=\mu(f\circ \phi)=(\phi(\mu))(f)=f(r(\phi(\mu)))=f(\rho(\mu))=f(\varphi).
\]
Hence $Vh=f$ and $V$ is surjective.

So far we have proved that $V$ is a well defined surjective isometry and the mapping $f\mapsto f\circ \phi$, $f\in A_{sa}(X)$, is its inverse.

 It is clear that $V$ is order-preserving. The sequential continuity follows from the Lebesgue dominated convergence theorem.

$(b)$: The equality $V|_E=\Phi$ follows from the definitions. To prove the remaining equalities 
 we will use the scheme recalled in the proof of $(a)$. 

To prove inclusions `$\supset$' we fix $f$ in the space on the right-hand side. Then $f$ is strongly affine (see \cite[Theorem 4.21]{lmns}) and hence $f\circ \phi\in E^{\perp\perp}$. Moreover, since $\phi$ is a homeomorphic injection, $f\circ \phi$ shares the descriptive properties of $f$.

Conversely, assume $h$ belongs to the space on the left-hand side. Then $V(h)\in A_{sa}(X)$ by $(a)$. Moreover, $U(h)$ belongs to the respective descriptive class on $M_1(K)$ by  \cite[Lemma 3.2]{lusp} in the case of fragmented functions and by \cite[Proposition 5.30]{lmns} in the remaining cases. Hence $V(h)$ belongs to the same class on $X$ by Lemma~\ref{L:kvocient}.

$(c)$: Let $\iota\colon S(E)\to S(V(H))$ and $\imath\colon K\to S(V(H))$ be as in the statement. Then for $x\in K$ and $h\in H$ we have
\[
\imath(x)(Vh)=h(x)=Uh(\ep_x)= Vh(\rho(\ep_x))=Vh(\phi(x))=(\iota(\phi(x)))(Vh),
\]
and thus $\imath(x)=\iota(\phi(x))$, $x\in K$.

$(d)$: By Lemma~\ref{L:intermediate}(d), $V(H)$ is determined by extreme points of $S(E)$ if and only if $
\ext S(V(H))\subset \ov{\iota (\ext S(E))}$. Since 
\[
\iota(\ext S(E))=\iota(\phi(\Ch_E K))=\imath(\Ch_E K),
\]
we have the equivalence of $(i)$ and $(iii)$.

Finally, for each $x\in K$ and $h\in H$ we have 
\[
Vh(\phi(x))=Vh(\rho(\ep_x))=Uh(\ep_x)=h(x).
\]
Further, $\phi(\Ch_E K)=\ext S(E)$ (see \cite[Proposition 4.26(d)]{lmns}). From these observations it is easy to see that $(i)$ is equivalent to $(ii)$.
\end{proof}

\section{Multipliers and central elements in intermediate function spaces}\label{s:multi-atp}

Let $X$ be a compact convex set and let $H$ be an intermediate function space on $X$. Since $H$ is an $F$-space, we know from Section~\ref{ssc:fspaces} what is $Z(H)$, its center. There is another important subspace of $H$ -- that of multipliers. In this section we investigate the structure of this subspace and its relationship to the center. This is motivated by \cite[Theorem II.7.10]{alfsen} where this relationship is clarified for $H=A_c(X)$ and by its extensions in \cite[Proposition 4.4 and Proposition 4.9]{smith-london} to $H=(A_c(X))^\sigma$ and $H=(A_s(X))^\mu$. 

We start by two definitions.
A function $u\in H$ is a \emph{multiplier}\index{multiplier!of an intermediate function space} if for any $a\in H$ there is some $b\in H$ such that $b=u\cdot a$ on $\ext X$. The set of multipliers is denoted by $M(H)$. I.e., we have
$$\gls{M(H)} =\{ u\in H \setsep \forall a\in H\;\exists b\in H\colon b=u\cdot a \mbox{ on }\ext X\}.$$
Further, we define the family of \emph{strong multipliers}\index{multiplier!strong} of $H$ as 
\begin{equation*}
\begin{aligned}
\gls{Ms(H)} = \{ u\in H \setsep \forall a\in H\;\exists b\in H\colon b&=u\cdot a\quad  \mu\text{-almost everywhere}  \\& \text{ for each maximal } \mu \in M_1(X)\}.
\end{aligned}
\end{equation*}
Since for each $x\in\ext X$ the Dirac measure $\ep_x$ is maximal, $M^s(H) \subset M(H)$ for each intermediate function space $H$. The converse implication is not true in general (see Example~\ref{ex:dikous-mezi-new} below). However, it holds in many important cases, as we show below. 
First we recall a notion of a standard compact convex set.

\begin{definition}
\label{d:standard}
A compact convex set $X$ is called a \emph{standard compact convex set}\index{standard compact convex set}, provided $\mu(A)=1$ for each maximal $\mu\in M_1(X)$ and each universally measurable $A\supset \ext X$.
\end{definition}

 It follows from \cite[Theorem 3.79(b)]{lmns} that a compact convex set $X$ is standard whenever $\ext X$ is \lin.

\begin{prop}\label{P:rovnostmulti}
Let $H$ be an intermediate function space on a compact convex set $X$. Assume that
\begin{itemize}
\item either $H\subset \Ba^b(X)$,
\item or functions in $H$ are universally measurable and $X$ is a standard compact convex set.
\end{itemize}
Then $M^s(H)=M(H)$.
\end{prop}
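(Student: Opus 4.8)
The plan is to prove the nontrivial inclusion $M(H)\subset M^s(H)$; the reverse inclusion $M^s(H)\subset M(H)$ is already noted in the text (Dirac measures at extreme points are maximal). So fix $u\in M(H)$ and $a\in H$; we must produce $b\in H$ with $b=u\cdot a$ $\mu$-almost everywhere for every maximal $\mu\in M_1(X)$. By the definition of $M(H)$ we already have some $b\in H$ with $b=u\cdot a$ on $\ext X$. The goal is thus to upgrade ``equality on $\ext X$'' to ``equality $\mu$-a.e.\ for every maximal $\mu$''. In the first case ($H\subset\Ba^b(X)$), both $b$ and $u\cdot a$ are bounded Baire functions, hence Baire measurable, so the set $Z=[b=u\cdot a]$ is a Baire set; since $Z\supset\ext X$, it is a Baire set containing $\ext X$, and by the fact recalled in Section~\ref{ssc:ccs} (following from \cite[Corollary I.4.12]{alfsen}), $\mu(Z)=1$ for every maximal $\mu\in M_1(X)$. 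That gives exactly $b=u\cdot a$ $\mu$-a.e.\ for every maximal $\mu$, so $u\in M^s(H)$.

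In the second case (functions in $H$ universally measurable, $X$ standard), the argument is the same with ``Baire set'' replaced by ``universally measurable set'': $b$ and $u\cdot a$ are universally measurable, hence so is $Z=[b=u\cdot a]$, and $Z\supset\ext X$. By the very definition of a standard compact convex set (Definition~\ref{d:standard}), $\mu(Z)=1$ for every maximal $\mu\in M_1(X)$. Again this yields $b=u\cdot a$ $\mu$-a.e., so $u\in M^s(H)$ and $M(H)\subset M^s(H)$.

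The only point requiring a word of care is that $u\cdot a$ (pointwise product) is a legitimate member of the relevant measurability class: in the first case one uses that $\Ba^b(X)$ is closed under products (products of Baire measurable bounded functions are Baire measurable), and in the second case that pointwise products of universally measurable functions are universally measurable. Neither $u\cdot a$ nor $b$ need lie in $H$ individually for this step---we only need their common measurability class and the fact that their difference vanishes on a set containing $\ext X$. There is essentially no obstacle here; the entire content is the measurability of the coincidence set together with the stated property of maximal measures (Baire sets over $\ext X$ are full, resp.\ the defining property of standardness). I would present the two cases in parallel, isolating the single sentence ``$Z=[b=u\cdot a]$ is a measurable set containing $\ext X$, hence $\mu(Z)=1$ for all maximal $\mu$'' as the common core.
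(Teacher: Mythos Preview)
Your proof is correct and follows essentially the same approach as the paper: both argue that the coincidence set $[b=u\cdot a]$ is a Baire (resp.\ universally measurable) set containing $\ext X$, and then invoke the fact that maximal measures give full mass to such sets. The paper's proof is slightly terser but identical in substance.
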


\begin{proof}
Let $u \in M(H)$ and $a \in H$ be given. We find $b \in H$ such that $b=u \cdot a$ on $\ext X$. We fix a maximal measure $\mu \in M_1(X)$, and we shall show that $b=u \cdot a$ $\mu$-almost everywhere. Assuming that $H$ consists of Baire functions, the set $[b=u \cdot a]$ is a Baire set containing $\ext X$, and thus $\mu([b=u \cdot a])=1$ by \cite[Theorem 3.79(a)]{lmns}. If, on the other hand, $X$ is a standard compact convex set and the functions from $H$ are universally measurable,  then $[b=u \cdot a]$ is a universally measurable set containing $\ext X$, and thus $\mu([b=u \cdot a])=1$ as well. The proof is finished. 
\end{proof}

We note that unlike the notion of a multiplier, the notion of a strong multiplier seems to be completely new. This is not surprising, since it follows from the above proposition that in previous works devoted to centers and multipliers of spaces $(A_c(X))^\mu$ and $(A_c(X))^\sigma$, the notions of a multiplier and a strong multiplier would be the same. In Proposition~\ref{P:multi strong pro As} below we complement the previous proposition by showing equality $M^s(H)=M(H)$ for $H\subset (A_s(X))^\sigma$. However, when dealing with abstract intermediate function spaces, both these notions seem to be useful. As we have already remarked, they may differ by Example~\ref{ex:dikous-mezi-new} below, but we know no counterexample within `natural' intermediate functions spaces (cf. Question~\ref{q:m=ms} below).

If $H$ is determined by extreme points, the function $b$ from the definition of a multiplier is uniquely determined by $a$, so any multiplier defines an operator. This is the content of the following proposition.

\begin{prop}\label{P:mult}
Let $H$ be an intermediate function space on $X$ determined by extreme points. Let $u\in M(H)$. Then there is a unique mapping $T:H\to H$ such that
$$T(a)(x)=u(x)\cdot a(x),\quad x\in\ext X, a\in H.$$
Moreover, $T$ is a linear operator, $T(1)=u$ and
$$\inf u(X)\cdot I\le T\le \sup u(X)\cdot I,$$
where $I$ is the identity operator on $H$.

In particular, $M(H)\subset Z(H)$.
 \end{prop}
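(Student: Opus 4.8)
The plan is to first establish uniqueness of $T$, which is immediate: if $T$ and $\widetilde T$ both satisfy the defining identity, then $T(a) = \widetilde T(a)$ on $\ext X$ for every $a\in H$, so $T(a) - \widetilde T(a)$ is a function in $H$ vanishing on $\ext X$; since $H$ is determined by extreme points, $\inf(T(a)-\widetilde T(a))(\ext X)\le T(a)-\widetilde T(a)\le \sup(T(a)-\widetilde T(a))(\ext X)$ forces $T(a)=\widetilde T(a)$ on all of $X$. The same "determined by extreme points" principle shows that for each $a\in H$ there is \emph{at most} one $b\in H$ with $b = u\cdot a$ on $\ext X$, and by the definition of $M(H)$ there is at least one; so setting $T(a) := b$ gives a well-defined map $T\colon H\to H$.

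Next I would verify linearity. Given $a_1,a_2\in H$ and scalars $\alpha_1,\alpha_2$, the function $\alpha_1 T(a_1) + \alpha_2 T(a_2)$ lies in $H$ and equals $u\cdot(\alpha_1 a_1 + \alpha_2 a_2)$ on $\ext X$ (since multiplication by the fixed function $u$ is pointwise linear on $\ext X$); by the uniqueness just established, this function must be $T(\alpha_1 a_1 + \alpha_2 a_2)$. Taking $a = 1_X$, on $\ext X$ we have $T(1)(x) = u(x)\cdot 1 = u(x)$, and since both $T(1)$ and $u$ belong to $H$ and agree on $\ext X$, we get $T(1) = u$ on $X$ by the same argument.

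For the order bounds, set $\alpha = \inf u(X)$ and $\beta = \sup u(X)$ (finite because $u\in H\subset A_b(X)$). For $a\in H$ with $a\ge 0$ on $X$, and any $x\in\ext X$, we have $\alpha\, a(x) \le u(x) a(x) = T(a)(x) \le \beta\, a(x)$, so the function $T(a) - \alpha a \in H$ is nonnegative on $\ext X$, hence nonnegative on $X$ because $H$ is determined by extreme points (apply the defining inequality with $\inf$ over $\ext X$ being $\ge 0$); similarly $\beta a - T(a) \ge 0$ on $X$. Thus $\alpha a \le T(a) \le \beta a$ for all $a\ge 0$ in $H$, i.e. $\alpha I \le T \le \beta I$ in the operator order, which in particular shows $T$ is bounded and $T\in\frd(H)$. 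Consequently $u = T(1)\in Z(H)$, giving $M(H)\subset Z(H)$.

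I do not expect any genuine obstacle here: every step is a direct application of the hypothesis that $H$ is determined by extreme points, which converts "agrees on $\ext X$" into "agrees on $X$" and "nonnegative on $\ext X$" into "nonnegative on $X$". The only mild point to be careful about is that the operator inequality $\alpha I \le T \le \beta I$ must be read in the order of $\frd(H)$, i.e. tested against positive elements of $H$, so one should phrase the argument via $a\ge 0$ rather than via general $a$; once that is done, membership in $\frd(H)$ (with $\lambda = \max\{|\alpha|,|\beta|\} = \|u\|$) and hence in the center follows immediately from the definition of $Z(H)$.
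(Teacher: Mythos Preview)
Your proof is correct and is precisely the straightforward verification the paper has in mind; the paper's own proof consists solely of the sentence ``The proof is completely straightforward.'' You have filled in exactly the details one would expect: well-definedness and uniqueness from determinacy by extreme points, linearity by uniqueness, and the order bounds by testing on positive elements.
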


\begin{proof} The proof is completely straightforward. 
\end{proof}


Now we focus on the converse inclusion, i.e., on the validity of equality $Z(H)=M(H)$. It holds in some special cases, for example if $H=A_c(X)$ (see \cite[Theorem II.7.10]{alfsen}), $H=(A_c(X))^\sigma$ or $H=(A_s(X))^\mu$ (see \cite{smith-london}). We will study its validity in the abstract setting. 

The first tool is the following lemma.

\begin{lemma}\label{L:nasobeni}
Let $X$  be a compact convex set and $H_1,H_2$ two intermediate function spaces on $X$. Assume that $H_1\subset H_2$, 
 $H_2$ is determined by extreme points and for any $x\in\ext X$ the functional $\iota_1(x)$ is an extreme point of $S(H_1)$. (Note that $\iota_1:X\to S(H_1)$ is the mapping provided by Lemma~\ref{L:intermediate}.) Then for any  operator $T\in\frd(H_2)$ we have
$$\forall a\in H_1\colon T(a)=T(1)\cdot a\mbox{ on }\ext X.$$
\end{lemma}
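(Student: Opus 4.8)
The plan is to exploit the fact that an operator $T\in\frd(H_2)$ restricts to a bounded operator on $H_1$ which induces, by duality, a nice operator on states. First I would normalize: replacing $T$ by $\frac12(T+\lambda I)/\lambda$ for suitable $\lambda$, we may assume $0\le T\le I$ on $H_2$. Then $T|_{H_1}\colon H_1\to H_2$ is order-bounded, and since $H_1\subset H_2$ with $H_2$ determined by extreme points, the values of $T(a)$ on $\ext X$ for $a\in H_1$ determine $T(a)$ on all of $X$ — so it suffices to prove the identity $T(a)=T(1)\cdot a$ pointwise on $\ext X$.

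The key step is to fix $x\in\ext X$ and consider the functional $\varphi_x\colon H_1\to\er$ defined by $\varphi_x(a)=T(a)(x)$. Since $0\le T\le I$ we get $0\le\varphi_x\le\iota_1(x)$ in the dual order of $H_1^*$, i.e. $0\le\varphi_x(a)\le a(x)$ for $a\in H_1$, $a\ge0$, and $\varphi_x(1)=T(1)(x)\in[0,1]$. The assumption is precisely that $\iota_1(x)\in\ext S(H_1)$; equivalently, $\iota_1(x)$ generates an extreme ray of the positive cone of $H_1^*$. A standard fact (essentially the characterization of extreme points of the positive part of the unit ball, cf.\ the discussion around Lemma~\ref{L:F-spaces}(i) and \cite[Part II]{alfsen-effros-ann}) gives that any positive functional $\psi$ with $0\le\psi\le\iota_1(x)$ must be a scalar multiple of $\iota_1(x)$: indeed $\iota_1(x)=\psi+(\iota_1(x)-\psi)$ expresses the extreme point (after suitable normalization of the two summands to states) as a convex combination, forcing $\psi=t\cdot\iota_1(x)$ for some $t\in[0,1]$. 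Applying this to $\psi=\varphi_x$ yields $\varphi_x=t_x\cdot\iota_1(x)$, and evaluating at $1$ gives $t_x=\varphi_x(1)=T(1)(x)$. Hence $T(a)(x)=\varphi_x(a)=T(1)(x)\cdot a(x)$ for every $a\in H_1$, which is exactly the claimed equality on $\ext X$.

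I expect the main obstacle to be the extreme-ray argument: one must be careful that $\iota_1(x)$ being an extreme point of the \emph{convex set} $S(H_1)$ (states, i.e.\ norm-one positive functionals) really does force the cone-theoretic conclusion that every $\psi$ with $0\le\psi\le\iota_1(x)$ is proportional to $\iota_1(x)$. The bridge is that if $0<\varphi_x(1)<1$ then writing $\iota_1(x)=\varphi_x(1)\cdot\frac{\varphi_x}{\varphi_x(1)}+(1-\varphi_x(1))\cdot\frac{\iota_1(x)-\varphi_x}{1-\varphi_x(1)}$ exhibits $\iota_1(x)$ as a proper convex combination of two states (both summands are positive functionals of norm one, since the norm is additive on the positive cone of $H_1^*$, again by \cite[Part II, Theorem 6.2]{alfsen-effros-ann}), contradicting extremality unless the two states coincide with $\iota_1(x)$; the boundary cases $\varphi_x(1)\in\{0,1\}$ are handled directly ($\varphi_x=0$ or $\varphi_x=\iota_1(x)$ respectively, using $0\le\varphi_x\le\iota_1(x)$). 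Everything else — the normalization of $T$, the reduction via $H_2$ being determined by extreme points — is routine.
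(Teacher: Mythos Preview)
Your proof is correct and follows essentially the same approach as the paper: normalize to $0\le T\le I$, fix $x\in\ext X$, and use extremality of $\iota_1(x)$ in $S(H_1)$ to force the positive functional $a\mapsto T(a)(x)$ (dominated by $\iota_1(x)$) to be a scalar multiple of $\iota_1(x)$. The paper phrases the key step as writing $\iota_1(x)=z(x)\varphi_1+(1-z(x))\varphi_2$ with $\varphi_1(a)=T(a)(x)/z(x)$ and $\varphi_2(a)=(I-T)(a)(x)/(1-z(x))$, which is exactly your convex-combination argument; the boundary cases $T(1)(x)\in\{0,1\}$ are also handled the same way (the paper just spells out the case $z(x)=0$ via the order bounds rather than via positivity of the functional, but the content is identical).
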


\begin{proof}
First assume that $0\le T\le I$.
Set $z=T(1)$. Fix $x\in \ext X$. We wish to show that $T(a)(x)=z(x)\cdot a(x)$ for $a\in H_1$.

Since $0\le T\le I$, we deduce that $z(x)\in [0,1]$. Assume first $z(x)=0$. Given $a\in H_1$, we get
$$(\inf a) T(1)=T(\inf a)\le T(a)\le T(\sup a)=(\sup a)T(1),$$ 
hence $T(a)(x)=0$.
If $z(x)=1$, replace $T$ by $S=I-T$ and deduce that $S(a)(x)=0$, hence $T(a)(x)=a(x)=a(x)z(x)$ for $a\in H_1$.

Assume  $z(x)\in(0,1)$. Then
$$a(x)=Ta(x) + (I-T)(a)(x)=z(x)\cdot\frac{T(a)(x)}{z(x)}+(1-z(x))\frac{(I-T)a(x)}{1-z(x)}, \quad a\in H_1.$$
Set
$$\varphi_1(a)=\frac{T(a)(x)}{z(x)}\quad\mbox{and}\quad\varphi_2(a)=\frac{(I-T)a(x)}{1-z(x)}$$
for $a\in H_1$. Then $\varphi_1,\varphi_2\in S(H_1)$ and
$\iota_1(x)=z(x)\varphi_1+(1-z(x))\varphi_2$. By the assumption we deduce that $\varphi_1=\varphi_2=\iota_1(x)$, hence $T(a)(x)=a(x)\cdot z(x)$ for $a\in H_1$ and the proof is complete.

Now let $T$ be a general operator in $\frd(H_2)$. Then there is a linear mapping $S$ with $0\le S\le I$ and $\alpha,\beta\in\er$ such that $T=\alpha I+\beta S$. Then, given any $a\in H_1$, 
$$T(a)=\alpha a+\beta S(a)= \alpha a +\beta S(1)\cdot a=(\alpha +\beta S(1))a= T(1)\cdot a$$
on $\ext X$.
\end{proof}

For $H_1=H_2=A_c(X)$ we get the above-mentioned known result covered by \cite[Theorem II.7.10]{alfsen}. There are two important more general cases where the previous lemma may be applied covered by the following corollary.

\begin{cor}\label{cor:rovnost na ext}
Let $H$ be an intermediate function space determined by extreme points.
\begin{enumerate}[$(a)$]
    \item If $T\in\frd(H)$ is an operator, then 
    $$\forall a\in A_c(X)\colon T(a)=T(1)\cdot a\mbox{ on }\ext X.$$
    \item If $\iota(x)\in\ext S(H)$ for each $x\in \ext X$, then $Z(H)=M(H)$.
\end{enumerate}
\end{cor}

The next lemma shows that the connection between multipliers and  operators in ideal center is very tight.

\begin{lemma}\label{L:ob-soucin}
Let $H$ be an intermediate function space determined by extreme points.
Let $T\in\frd(H)$ be an operator. Then $$T(1)\in M(H)  
\Longleftrightarrow \forall a\in H\colon T(a)=T(1)\cdot a\mbox{ on }\ext X.$$ 
\end{lemma}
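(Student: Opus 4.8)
The plan is to prove both implications of the equivalence, noting that the backward direction is essentially trivial while the forward direction requires a genuine argument.

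For the ($\Leftarrow$) direction: assume that $T(a) = T(1)\cdot a$ on $\ext X$ for each $a\in H$. To show $T(1)\in M(H)$, fix $a\in H$; we need $b\in H$ with $b = T(1)\cdot a$ on $\ext X$. But $b := T(a)$ works by hypothesis, since $T(a)\in H$. Also $T(1)\in H$ since $1\in H$. So $T(1)\in M(H)$ immediately.

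For the ($\Rightarrow$) direction: assume $T(1)\in M(H)$, write $u = T(1)$, and fix $a\in H$. By the definition of a multiplier there is $b\in H$ with $b = u\cdot a$ on $\ext X$. We want to show $T(a) = b$ on $\ext X$, equivalently $T(a) = u\cdot a$ there. The natural approach is to compare the two operators $T$ and the ``multiplication by $u$'' operator. By Proposition~\ref{P:mult} (since $H$ is determined by extreme points and $u\in M(H)$), there is a unique linear operator $S\colon H\to H$ with $S(c)(x) = u(x)\cdot c(x)$ for $x\in\ext X$, $c\in H$, and $S\in\frd(H)$ with $S(1) = u$. Thus $T - S\in\frd(H)$ (the ideal center is a linear subspace) and $(T-S)(1) = u - u = 0$. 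Now $T - S$ is an order-bounded operator annihilating the unit, and it remains to deduce that it annihilates every $a\in H$ on $\ext X$; this would give $T(a) = S(a) = u\cdot a = b$ on $\ext X$, as desired.

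The main obstacle is therefore the last step: showing that an operator $R\in\frd(H)$ with $R(1) = 0$ satisfies $R(a) = 0$ on $\ext X$ for all $a\in H$, when $H$ is determined by extreme points. I expect this to follow by the same squeezing argument as in the proof of Lemma~\ref{L:nasobeni}: pick $\lambda\ge 0$ with $-\lambda I\le R\le\lambda I$; for $a\in H$ write $a$ between its constant bounds on $\ext X$, namely $(\inf a(\ext X))\cdot 1\le a\le (\sup a(\ext X))\cdot 1$ on $X$ (using that $H$ is determined by extreme points), apply $R$ and use $R(1)=0$ together with order-boundedness to sandwich $R(a)$ — but this only controls $R(a)$ in norm by $\lambda\cdot\operatorname{diam} a(\ext X)$, not pointwise at a given extreme point. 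So instead I would mimic the decomposition-of-state argument of Lemma~\ref{L:nasobeni} directly: reduce to $0\le R'\le I$ form and, at a fixed $x\in\ext X$, split $\iota(x)$ as a convex combination coming from $R'$ and $I-R'$, using that $\iota(x)\in\ext S(H)$. The subtlety is that $\iota(x)$ need not be extreme in $S(H)$ in general. This is circumvented by applying Lemma~\ref{L:nasobeni} with $H_1 = H_2 = H$ is \emph{not} directly available (it needs $\iota(x)\in\ext S(H)$), so the cleanest route is: use Proposition~\ref{P:mult} to produce $S$ as above, observe $S\in\frd(H)$, and then invoke Corollary~\ref{cor:rovnost na ext}(a) is again unavailable for general $a\in H$. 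Hence the honest argument is the algebraic one: $T(a) - S(a)\in H$, and for $x\in\ext X$ both $T(a)(x)$ and $S(a)(x) = u(x)a(x) = b(x)$; to identify them, apply the definition of multiplier to $u$ once more — since $u\in M(H)$ and $b - T(a)\in H$ vanishes... this is circular. I therefore expect the author's actual proof to run the Lemma~\ref{L:nasobeni}-style state-decomposition argument a second time: reduce to $0\le T\le I$, set $z = T(1) = u$, fix $x\in\ext X$, handle $u(x)\in\{0,1\}$ by the boundedness squeeze $(\inf a)T(1)\le T(a)\le(\sup a)T(1)$, and for $u(x)\in(0,1)$ write $\iota(x) = u(x)\varphi_1 + (1-u(x))\varphi_2$ with $\varphi_1(a) = T(a)(x)/u(x)$, $\varphi_2(a) = (I-T)(a)(x)/(1-u(x))$ — but now conclude $\varphi_1 = \iota(x)$ not from extremality of $\iota(x)$ but from the fact that $u$ being a multiplier forces the map $a\mapsto T(a)(x)$ to be multiplicative-like, i.e. $\varphi_1(a) = u(x)^{-1}\cdot(\text{the }H\text{-function }b_a)(x)$ where $b_a = u\cdot a$ on $\ext X$; since $x\in\ext X$ this gives $\varphi_1(a) = a(x) = \iota(x)(a)$. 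That final identification is the real content, and pinning it down carefully is the hard part.
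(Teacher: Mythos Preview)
Your setup is right: define $S\in\frd(H)$ via Proposition~\ref{P:mult} so that $S(a)=u\cdot a$ on $\ext X$ and $S(1)=u=T(1)$, then try to conclude $T=S$. But every attempt you make to compare $T$ and $S$ directly on $\ext X$ either stalls or becomes circular. In particular, your final identification ``$\varphi_1(a)=u(x)^{-1}b_a(x)=a(x)$'' presupposes $T(a)(x)=b_a(x)$, which is precisely the claim to be proved; and the state-decomposition argument of Lemma~\ref{L:nasobeni} genuinely needs $\iota(x)\in\ext S(H)$, which you correctly note may fail.

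The missing idea is to change the arena: use the identification $H\cong A_c(S(H))$ from Lemma~\ref{L:intermediate}$(a)$. Both $T$ and $S$ are then operators in $\frd(A_c(S(H)))$, and on the compact convex set $S(H)$ the classical result (Corollary~\ref{cor:rovnost na ext} applied with $H_1=H_2=A_c(S(H))$, where the extremality hypothesis is automatic) gives $T(a)=T(1)\cdot a$ and $S(a)=S(1)\cdot a$ on $\ext S(H)$ for every $a\in A_c(S(H))$. Since $T(1)=S(1)$, the two operators agree on $\ext S(H)$, hence everywhere on $S(H)$, so $T=S$ as operators on $H$. Now evaluate at points of $\ext X$. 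The point is that one does not need to know how $\iota(\ext X)$ sits inside $\ext S(H)$; it suffices that $T$ and $S$ are globally equal.
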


\begin{proof}
Implication `$\Longleftarrow$' is obvious. To prove the converse assume that $T(1)\in M(H)$.
By Proposition~\ref{P:mult}
 we obtain an operator $S\in\frd(H)$ such that
$$S(a)=T(1)\cdot a\mbox{ on }\ext X \mbox{ for }a\in H.$$
Note that $S(1)=T(1)$.

Now we use the identification $H=A_c(S(H))$ provided by Lemma~\ref{L:intermediate}$(a)$. Then both $S$ and $T$ are  operators in $\frd(A_c(S(H)))$. Hence by Corollary~\ref{cor:rovnost na ext},
$$\begin{gathered}T(a)=T(1)\cdot a\mbox{ on }\ext S(H)\mbox{ for }a\in A_c(S(H)),\\
S(a)=S(1)\cdot a\mbox{ on }\ext S(H)\mbox{ for }a\in A_c(S(H)).\end{gathered}$$
Since $S(1)=T(1)$, we deduce $S=T$. Thus $T(a)=T(1)\cdot a$ on $\ext X$ for each $a\in H$
and the proof is complete.
\end{proof}

This lemma, together with the identification from Lemma~\ref{L:intermediate}$(a)$ used in the proof, inspires a characterization of intermediate function spaces $H$ satisfying the equality $Z(H)=M(H)$.
To formulate the characterization we introduce the following notation.
For a compact convex set $X$ we set
\begin{equation}\label{eq:m(X)}   
\gls{m(X)}=\{x\in X\setsep \forall T\in\frd(A_c(X))\,\forall a\in A_c(X)\colon T(a)(x)=T(1)(x)\cdot a(x)\}.\end{equation}
Obviously, $m(X)$ is a closed subset of $X$. By the above we know it contains $\ext X$.

The promised characterization of equality $Z(H)=M(H)$ is contained in the following proposition which follows from Lemma~\ref{L:ob-soucin} using the identifications $H=A_c(S(H))$ provided by Lemma~\ref{L:intermediate}.

\begin{prop}\label{p:zh-mh}
Let $H$ be an intermediate function space determined by extreme points. Then $Z(H)=M(H)$ if and only if $\iota(\ext X)\subset m(S(H))$ (where $\iota$ is the mapping from Lemma~\ref{L:intermediate}).
\end{prop}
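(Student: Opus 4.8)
The plan is to obtain the statement as a direct translation of Lemma~\ref{L:ob-soucin} through the canonical identification of $H$ with $A_c(S(H))$. Write $\Lambda\colon H\to A_c(S(H))$ for the order-preserving linear isometry of Lemma~\ref{L:intermediate}$(a)$ (renamed here to avoid a clash with operators $T\in\frd(H)$). Two elementary facts are needed. First, since every $\varphi\in S(H)$ satisfies $\varphi(1_X)=1$, the function $\Lambda(1_X)$ is the constant $1$ on $S(H)$; and for $a\in H$, $x\in X$ we have $\Lambda(a)(\iota(x))=\iota(x)(a)=a(x)$. Second, because $\Lambda$ is a linear order-isomorphism, conjugation $T\mapsto\Lambda T\Lambda^{-1}$ is a bijection of $\frd(H)$ onto $\frd(A_c(S(H)))$.

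First I would note that $M(H)\subset Z(H)$ always holds (Proposition~\ref{P:mult}), so $Z(H)=M(H)$ is equivalent to $Z(H)\subset M(H)$. Since by definition $Z(H)=\{T(1_X)\setsep T\in\frd(H)\}$, and since Lemma~\ref{L:ob-soucin} tells us that for $T\in\frd(H)$ one has $T(1_X)\in M(H)$ exactly when $T(a)=T(1_X)\cdot a$ on $\ext X$ for every $a\in H$, the inclusion $Z(H)\subset M(H)$ is equivalent to
\[
\forall T\in\frd(H)\ \forall a\in H\ \forall x\in\ext X\colon\quad T(a)(x)=T(1_X)(x)\cdot a(x).
\]

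Finally I would rewrite this condition on the side of $A_c(S(H))$. Putting $\widetilde T=\Lambda T\Lambda^{-1}$ and $g=\Lambda(a)$, the elementary facts above give $T(a)(x)=\widetilde T(g)(\iota(x))$, $T(1_X)(x)=\widetilde T(1)(\iota(x))$ and $a(x)=g(\iota(x))$. As $T$ ranges over $\frd(H)$ the operator $\widetilde T$ ranges over all of $\frd(A_c(S(H)))$, and as $a$ ranges over $H$ the function $g$ ranges over all of $A_c(S(H))$; hence the displayed condition is equivalent to
\[
\forall x\in\ext X\ \forall S\in\frd(A_c(S(H)))\ \forall g\in A_c(S(H))\colon\quad S(g)(\iota(x))=S(1)(\iota(x))\cdot g(\iota(x)),
\]
which, by the definition of $m(\cdot)$ in \eqref{eq:m(X)} applied to the compact convex set $S(H)$, says precisely that $\iota(\ext X)\subset m(S(H))$.

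This argument involves no serious obstacle — it is essentially a substitution — but the step requiring care is to keep the identification $\Lambda$ honest: one must use that it is simultaneously linear and order-preserving (so that it transports $\frd(H)$ onto $\frd(A_c(S(H)))$) and that it maps the unit $1_X$ to the unit of $A_c(S(H))$. It is also worth noting that Lemma~\ref{L:ob-soucin} is applied only to $H$ itself (while the definition of $m$ is applied to $S(H)$), so there is no circularity, and that the hypothesis that $H$ be determined by extreme points is used precisely to invoke Lemma~\ref{L:ob-soucin} and Proposition~\ref{P:mult}.
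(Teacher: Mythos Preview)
Your proof is correct and follows essentially the same approach as the paper, which simply states that the result follows from Lemma~\ref{L:ob-soucin} via the identification $H=A_c(S(H))$ of Lemma~\ref{L:intermediate}. You have carefully spelled out the details of this identification (the transport of $\frd(H)$ to $\frd(A_c(S(H)))$ by conjugation, and the translation of the pointwise product condition through $\iota$), which the paper leaves implicit.
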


The next section will be devoted to a description of $m(X)$ which enables us to apply Proposition~\ref{p:zh-mh} in concrete cases.

We continue by an example pointing out that in some cases there are only trivial multipliers.

\begin{example}\label{ex:symetricka}
 Let $X$ be a centrally symmetric compact convex subset of a locally convex space which is not a segment and let $H$ be an intermediate function space on $X$. Then the following assertions hold.
 \begin{enumerate}[$(a)$]
     \item $Z(H)$ contains only constant functions.
     \item If $H$ is determined by extreme points, then $M(H)$ contains only constant functions.
 \end{enumerate}   
\end{example}

\begin{proof}
 $(b)$: By shifting the set $X$ if necessary we may assume that $X$ is symmetric around $0$. Assume there is a non-constant $u\in M(H)$. Up to adding a constant function, we may assume that $u(0)=0$. Since $H$ is determined by extreme points, there is $x\in\ext X$ with $u(x)\ne0$. Up to multiplying $u$ by a non-zero constant we may assume $u(x)=1$. Then clearly $u(-x)=-1$.

 Let $y\in \ext X\setminus\{x,-x\}$. Such a point exists as $X$ is not a segment. Set $\alpha=u(y)$. Then $u(-y)=-\alpha$. Since $u$ is a multiplier, there is $b\in H$ such that $b=u^2$ on $\ext X$. It follows that $\alpha^2=1$. So, up to replacing $y$ by $-y$ if necessary, we may assume that $u(y)=1$ (and $u(-y)=-1$). 

 Let $a_0$ be the linear functional on the linear span of $\{x,y\}$ such that $a_0(x)=1$ and $a_0(y)=0$. By the Hahn-Banach extension theorem it may be extended to some $a\in A_c(X)$.
 Since $A_c(X)\subset H$, we deduce that $a\in H$. Thus there is $v\in H$ such that $v=ua$ on $\ext X$. It follows that $0=v(0)=1$, a contradiction.

 $(a)$: Since $Z(H)$ is canonically identified with $M(A_c(S(H)))$, it is enough to show that $S(H)$ is centrally symmetric and then use $(b)$. Let $\iota:X\to S(H)$ be the mapping from Lemma~\ref{L:intermediate}. We claim that $\iota(0)$ is the center of symmetry of $S(H)$. To prove this it is enough to observe that $2\iota(0)-\varphi\in S(H)$ whenever $\varphi\in S(H)$. So, take any $\varphi\in S(H)$ and let $\psi=2\iota(0)-\varphi$. Then $\psi$ is clearly a continuous linear functional on $H$. Moreover, $\psi(1_X)=2-\varphi(1_X)=1$. Further, assume $f\in H$, $f\ge0$. Then for any $x\in X$ we have
 $f(x)\le f(x)+f(-x)=2f(0)$, hence $2f(0)-f\ge0$. So,
 $$\psi(f)=2f(0)-\varphi(f)=\varphi(2f(0)-f)\ge0.$$
 We deduce that $\psi\in S(H)$ and the proof is complete.
\end{proof}

Note, that if $X$ is a segment, necessarily $M(H)=H=A_c(X)=A_b(X)$.

Now we pass to some basic properties of centers and spaces of multipliers. We start by proving they are closed subspaces.

\begin{lemma}\label{L:uzavrenost}
Let $X$ be a compact convex set and let $H$ be an intermediate function space on $X$.
\begin{enumerate}[$(i)$]
    \item $Z(H)$ is a closed linear subspace of $H$.
    \item If $H$ is determined by extreme points, then both $M(H)$ and $M^s(H)$ are closed linear subspaces of $H$.
   
\end{enumerate}
 \end{lemma}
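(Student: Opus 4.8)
The plan is to prove the three closedness statements separately, using in each case that an operator bound passes to a norm limit.

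\textbf{Closedness of $Z(H)$.} First I would take a sequence $(h_n)$ in $Z(H)$ converging uniformly to some $h\in H$; I may even reduce to the case $\sum_n\norm{h_n-h_{n-1}}<\infty$ by passing to a subsequence. For each $n$ pick $T_n\in\frd(H)$ with $T_n(1)=h_n$ and, say, $-\lambda_n I\le T_n\le\lambda_n I$. The issue is that the $T_n$ need not converge, so I cannot just pass to a limit operator directly. Instead I would use the identification $H=A_c(S(H))$ from Lemma~\ref{L:intermediate}(a): then $Z(H)=Z(A_c(S(H)))$, and by \cite[Theorem II.7.10]{alfsen} the center $Z(A_c(S(H)))$ coincides with the space of multipliers on $A_c(S(H))$, which is known to be uniformly closed (being the range of the restriction map onto the uniformly closed space of facially continuous functions, or directly: a uniform limit of multipliers is a multiplier since the multiplication operators $a\mapsto u_n a$ on $\ext S(H)$ converge in operator norm, hence the limiting product again lies in $A_c(S(H))$). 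Alternatively, and perhaps more cleanly, one observes that $z\in Z(H)$ iff $z=T(1)$ for some $T$ with $0\le T\le I$ after affine rescaling, and the set of such $T(1)$ is easily seen to be closed because the condition ``$\alpha 1\le z\le\beta 1$ and for every decomposition of states the associated functionals behave affinely'' survives uniform limits; I would phrase this via the operator characterization on $A_c(S(H))$ to avoid technicalities.

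\textbf{Closedness of $M(H)$.} Suppose $H$ is determined by extreme points and $u_n\in M(H)$ with $u_n\to u$ uniformly, $u\in H$. By Proposition~\ref{P:mult} each $u_n$ induces an operator $T_n\in\frd(H)$ with $T_n(1)=u_n$ and $\inf u_n(X)\, I\le T_n\le\sup u_n(X)\, I$; moreover by Lemma~\ref{L:ob-soucin} we have $T_n(a)=u_n\cdot a$ on $\ext X$ for all $a\in H$. Fix $a\in H$. I claim $(T_n a)$ is uniformly Cauchy in $H$: indeed $T_n-T_m\in\frd(H)$ with $(T_n-T_m)(1)=u_n-u_m$, and since $H$ is determined by extreme points, $\norm{(T_n-T_m)(1)\cdot a}\le\norm{u_n-u_m}\norm{a}$ controls the values of $(T_n-T_m)(a)$ on $\ext X$, hence $\norm{T_n a-T_m a}=\sup_{\ext X}|(T_n-T_m)(a)|\le\norm{u_n-u_m}\norm{a}$ (using that the sup over $\ext X$ equals the sup over $X$ by the determining property). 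Thus $T_n a\to b_a$ uniformly for some $b_a\in H$ (using that $H$ is closed), and on $\ext X$ we have $b_a=\lim u_n\cdot a=u\cdot a$. Hence $u\in M(H)$.

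\textbf{Closedness of $M^s(H)$.} This is analogous but with ``$\mu$-almost everywhere for each maximal $\mu$'' replacing ``on $\ext X$''. If $u_n\to u$ uniformly with $u_n\in M^s(H)$, fix $a\in H$; for each $n$ pick $b_n\in H$ with $b_n=u_n\cdot a$ $\mu$-a.e.\ for every maximal $\mu$. The key point is that $\norm{b_n-b_m}$ is still controlled: since $H$ is determined by extreme points and each maximal measure on a standard-type argument sees $\ext X$ \emph{enough}---more precisely, for maximal $\mu$ the barycenter formula and the fact that $b_n-b_m$ agrees $\mu$-a.e.\ with $(u_n-u_m)\cdot a$ forces $\norm{b_n-b_m}=\sup_X|b_n-b_m|$ to be bounded by $\norm{u_n-u_m}\norm{a}$ once one knows $b_n-b_m\in H$ is determined by its values on $\ext X$ and those values are $\mu$-a.e.\ equal to $(u_n-u_m)\cdot a$ for a rich enough supply of $\mu$ (Dirac measures at extreme points are maximal). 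So $(b_n)$ is uniformly Cauchy, converges to some $b\in H$, and $b=u\cdot a$ $\mu$-a.e.\ for each maximal $\mu$ by passing to the limit. Hence $u\in M^s(H)$.

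\textbf{Main obstacle.} The delicate point throughout is the norm estimate $\norm{(T_n-T_m)(a)}\le\norm{u_n-u_m}\norm{a}$: this is exactly where ``determined by extreme points'' is essential, since it lets us compute the $H$-norm of $(T_n-T_m)(a)$ from its values on $\ext X$, where it equals $(u_n-u_m)\cdot a$. Without that hypothesis the operators $T_n$ need not be uniformly close even though the functions $u_n$ are, which is why the second assertion of the lemma carries the extra hypothesis. For part (i), since no such hypothesis is assumed, I would route the argument through the canonical identification $H\cong A_c(S(H))$ and the classical closedness of the center of a space of affine continuous functions.
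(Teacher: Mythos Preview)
Your proof is correct and follows essentially the same route as the paper. The paper's version is slightly leaner: it proves (ii) first by working directly with functions $b_n\in H$ satisfying $b_n=u_n a$ on $\ext X$ (the operator layer $T_n$ you introduce is unnecessary, since Proposition~\ref{P:mult} already hands you $T_n a$ as a function equal to $u_n a$ on $\ext X$), and then deduces (i) in one line by applying (ii) to $A_c(S(H))$ via the identification of Lemma~\ref{L:intermediate}(a)---exactly the reduction you describe in your final paragraph.
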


\begin{proof}
    $(ii)$: Assume that $H$ is determined by extreme points. It is clear that $M(H)$ and $M^s(H)$ are linear subspaces of $H$. 
    
    Let $(m_n)$ be a sequence in $M(H)$ converging in $H$ to some $m\in H$. We are going to prove that $m\in M(H)$.
    Let $a\in H$. For each $n\in \en$ there is $b_n\in H$ such that $b_n=m_n \cdot a$ on $\ext X$. Then $b_n|_{\ext X}$ converge uniformly to $m a|_{\ext X}$. Since $H$ is determined by extreme points, we deduce that $(b_n)$ is uniformly Cauchy on $X$. Thus there is $b\in H$ such that $b_n\to b$ in $H$. Clearly $b=m \cdot a$ on $\ext X$, and thus $M(H)$ is closed. 

    To show that $M^s(H)$ is closed, we proceed similarly. Let $(m_n)$ be a sequence in $M^s(H)$ converging in $H$ to $m\in H$. Let $a\in H$ be arbitrary. Let $b_n$ and $b$ be as above. Fix any maximal measure $\mu \in M_1(X)$. Since $m_n\in M^s(H)$ and $H$ is determined by extreme points, we deduce that
    $b_n=m_n \cdot a$ $\mu$-almost everywhere. Hence (using the $\sigma$-additivity of $\mu$) we find a $\mu$-null set $N\subset X$ such that
    $$\forall x\in X\setminus N\;\forall n\in\en\colon b_n(x)=m_n(x)\cdot a(x),$$
    thus $b(x)=m(x)\cdot a(x)$ for $x\in X\setminus N$. Therefore $b=m\cdot a$ $
    \mu$-almost everywhere. 
    This completes the proof.

    $(i)$: Let $T:H\to A_c(S(H))$ be the operator from Lemma~\ref{L:intermediate}.
    Then $T(Z(H))=Z(A_c(S(H)))=M(A_c(S(H)))$ which is a closed subspace of $A_c(S(H))$ by $(ii)$.

\end{proof}

When $H$ is an intermediate function space, the spaces $H^\mu$ and $H^\sigma$ are closed subspaces of $A_b(X)$ by Lemma~\ref{L:muclosed is closed}. Hence, they are intermediate function spaces as well. Let us now look at the relationship between multipliers of $H$, $H^\mu$ and $H^\sigma$. The following result partially settles this question.

\begin{prop}
    \label{p:multi-pro-mu}
 Let $H$ be an intermediate function space.
 \begin{enumerate}[$(i)$]
     \item Assume that $H^\mu$ is determined by extreme points. Then $(M(H))^\mu\subset M(H^\mu)$ and $(M^s(H))^\mu\subset M^s(H^\mu)$.
          In particular, if $H=H^\mu$, then $M(H)=(M(H))^\mu$ and  $M^s(H)=(M^s(H))^\mu$.
     \item Assume that $H \subset A_{sa}(X)$ and $H^\sigma$ is determined by extreme points. Then $(M^s(H))^\sigma\subset M^s(H^\sigma)$. In particular,
     \[M^s(H) \subset M^s(H^\mu) \subset M^s(H^\sigma).\] 
     If, additionally, $H=H^\sigma$, then $M^s(H)=(M^s(H))^\sigma$.
      \end{enumerate}
\end{prop}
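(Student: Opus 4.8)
The plan is to prove $(i)$ and $(ii)$ by the same mechanism: a multiplier of $H$ gives rise to an operator $T\in\frd(H)$ (Proposition~\ref{P:mult}), this operator extends to an operator on the larger space $H^\mu$ (resp. $H^\sigma$) by monotone sequential continuity (Lemma~\ref{L:spojitost monot}), and the extended operator witnesses that the original multiplier, or its monotone limit, is a multiplier of the larger space. The passage from "$T(1)$ is a multiplier of the larger space" back to "the multiplication formula holds on $\ext X$" is handled by Lemma~\ref{L:ob-soucin} together with the identification $H^\mu=A_c(S(H^\mu))$ from Lemma~\ref{L:intermediate}.

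For $(i)$, first I would fix $u\in M(H)$ and, using Proposition~\ref{P:mult} (applicable since $H\subset H^\mu$ is determined by extreme points, hence so is $H$), obtain $T\in\frd(H)$ with $T(1)=u$ and $\inf u(X)\cdot I\le T\le\sup u(X)\cdot I$; after an affine normalization assume $0\le T\le I$. The key step is to extend $T$ to an operator $\widetilde T$ on $H^\mu$ with $0\le\widetilde T\le I$. Here I would argue that the class of $a\in H^\mu$ for which a value $\widetilde T(a)\in H^\mu$ can be consistently defined — satisfying $0\le\widetilde T\le I$ and commuting with monotone bounded limits — contains $H$ and is closed under such limits by Lemma~\ref{L:spojitost monot} applied in the ordered Banach space $H^\mu$ (whose monotone bounded sequences have suprema in $H^\mu$ by definition of $(\cdot)^\mu$ and Lemma~\ref{L:muclosed is closed}); by minimality of $H^\mu$ this class is all of $H^\mu$. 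Well-definedness of $\widetilde T$ on limits is the delicate point: if $f_n\nearrow f$ and $g_n\nearrow f$ with $f_n,g_n\in H$, one needs $\lim\widetilde T(f_n)=\lim\widetilde T(g_n)$, which follows by the standard interlacing argument ($h_n=\max$ of initial segments, or rather a diagonal comparison using monotonicity of $T$). Once $\widetilde T\in\frd(H^\mu)$ is in hand, $\widetilde T(1)=u\in Z(H^\mu)$; since $\iota(\ext X)\subset\ext S(H^\mu)$ need not hold, I instead invoke Lemma~\ref{L:ob-soucin}: $\widetilde T(a)=\widetilde T(1)\cdot a$ on $\ext X$ for all $a\in H$ already (by Lemma~\ref{L:nasobeni}, whose hypothesis "$\iota_1(x)\in\ext S(H)$" holds with $H_1=H_2=H$... ) — more directly, for $a\in H$ the formula $T(a)=u\cdot a$ on $\ext X$ is Proposition~\ref{P:mult} itself. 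Then for general $a\in H^\mu$, write $a=\lim a_n$ monotonically with $a_n\in H$; monotone convergence on $X$ gives $\widetilde T(a)=\lim\widetilde T(a_n)=\lim u\cdot a_n=u\cdot a$ pointwise on $\ext X$ (the right side converges since $u$ is bounded), so $u\in M(H^\mu)$. For a general $m\in(M(H))^\mu$, write $m=\lim m_n$ with $m_n\in M(H)\subset M(H^\mu)$ monotonically; since $M(H^\mu)$ is a closed subspace of $H^\mu$ (Lemma~\ref{L:uzavrenost}(ii), using that $H^\mu$ is determined by extreme points) and monotone bounded limits of elements of a uniformly closed subspace that is itself sequentially monotone-closed stay inside — actually $M(H^\mu)=(M(H^\mu))^\mu$ is exactly what we want for $H^\mu=(H^\mu)^\mu$, so this is circular; instead I would note $m\in H^\mu$ and that for each $a\in H^\mu$ the functions $b_n$ with $b_n=m_n\cdot a$ on $\ext X$ converge monotonically (up to sign bookkeeping) and their limit $b$ lies in $H^\mu$ with $b=m\cdot a$ on $\ext X$. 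The strong-multiplier statement is identical, replacing "on $\ext X$" by "$\mu$-a.e. for each maximal $\mu$", using $\sigma$-additivity exactly as in the proof of Lemma~\ref{L:uzavrenost}(ii) to pass a single null set through the countable procedure.

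For $(ii)$, the structure is the same but with $(\cdot)^\sigma$ in place of $(\cdot)^\mu$, and the extra hypothesis $H\subset A_{sa}(X)$ is what makes it work: strong multipliers are defined via maximal measures, and for $u\in M^s(H)$ and $a\in H\subset A_{sa}(X)$ the witness $b$ with $b=u\cdot a$ $\mu$-a.e. for all maximal $\mu$ can be iterated through pointwise-bounded-sequence limits because a countable family of maximal measures still only forces countably many null sets. Concretely I would show: if $m\in M^s(H)$ and $a\in H^\sigma$, pick $a_n\in H$ with $a_n\to a$ pointwise boundedly; get $b_n\in H$ with $b_n=m\cdot a_n$ $\mu$-a.e. for all maximal $\mu$ — wait, $m$ need not be a strong multiplier relative to $a_n\in H$ unless we know $m\cdot a_n\in$ range, which it is since $a_n\in H$ — then $b_n\to$ some $b\in H^\sigma$ pointwise and for each fixed maximal $\mu$ a single $\mu$-null set handles all $n$, giving $b=m\cdot a$ $\mu$-a.e.; hence $m\in M^s(H^\sigma)$. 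This gives $M^s(H)\subset M^s(H^\sigma)$, hence (applying it with $H$ replaced by $H^\mu$, noting $(H^\mu)^\sigma=H^\sigma$ by Lemma~\ref{L:muclosed is closed} and $H^\mu\subset A_{sa}(X)$ since $A_{sa}(X)$ is monotone-sequentially closed) the chain $M^s(H)\subset M^s(H^\mu)\subset M^s(H^\sigma)$. For a general element of $(M^s(H))^\sigma$ one combines this with closedness of $M^s(H^\sigma)$ under bounded pointwise sequential limits, which is a mild strengthening of Lemma~\ref{L:uzavrenost}(ii) proved by the same null-set argument. When $H=H^\sigma$ the inclusion $(M^s(H))^\sigma\subset M^s(H^\sigma)=M^s(H)$ combined with the trivial $M^s(H)\subset(M^s(H))^\sigma$ gives equality.

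**Main obstacle.** The crux is the extension of the operator $T\in\frd(H)$ to $\frd(H^\mu)$ (for $(i)$) in a well-defined way — verifying that the value on a monotone limit does not depend on the approximating sequence, and that the extended operator still satisfies $0\le\widetilde T\le I$ on all of $H^\mu$. Lemma~\ref{L:spojitost monot} gives the monotonicity bookkeeping once a candidate operator exists, but the inductive/transfinite construction establishing existence and consistency is the technical heart; in $(ii)$ the analogous obstacle is controlling arbitrary bounded pointwise limits (not just monotone ones), which is why the hypothesis $H\subset A_{sa}(X)$ and the maximal-measure formulation of strong multipliers are essential — an ordinary-multiplier version of $(ii)$ would fail because "equality on $\ext X$" is not preserved under general bounded pointwise limits of the witnesses without a measurability hypothesis tying $\ext X$ to a $\sigma$-algebra.
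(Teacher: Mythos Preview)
The paper takes a more direct route that avoids your main obstacle entirely. Rather than extending the operator $T\in\frd(H)$ to $H^\mu$, the paper fixes $m\in M(H)$ (normalized so that $m\ge 0$) and defines
\[
\F=\{a\in H^\mu\setsep \exists b\in H^\mu\colon b=ma\text{ on }\ext X\}.
\]
This $\F$ is a linear subspace of $H^\mu$ containing $H$, and the key step is that $\F$ is closed under monotone bounded limits: if $a_n\nearrow a$ with $a_n\in\F$, take witnesses $b_n$, observe $b_{n+1}-b_n=(a_{n+1}-a_n)m\ge 0$ on $\ext X$, and use determinacy by extreme points to conclude $(b_n)$ is nondecreasing and bounded on $X$, so $b=\lim b_n\in H^\mu$ witnesses $a\in\F$. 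By minimality of $H^\mu$ one gets $\F=H^\mu$, i.e.\ $m\in M(H^\mu)$. A second, analogous witness argument then shows $M(H^\mu)$ itself is closed under monotone bounded limits, giving $(M(H))^\mu\subset M(H^\mu)$.

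Your operator-extension approach runs into a genuine difficulty at exactly the point you flagged. The interlacing argument you sketch (``$h_n=\max$ of initial segments'') is unavailable because $H^\mu$ need not be a lattice. More fundamentally, you repeatedly assume that an arbitrary $a\in H^\mu$ can be written as a monotone limit of a sequence from $H$; this is false, since $H^\mu$ is built by transfinite iteration of $(\cdot)^\uparrow$ and $(\cdot)^\downarrow$, and elements at higher levels need not be one-step limits from $H$. The paper's $\F$-device sidesteps both issues: one never needs to know \emph{how} $a$ was built, only that $\F$ satisfies the closure property defining $H^\mu$.

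For $(ii)$ your proposal has a specific gap: you assert ``$b_n\to$ some $b\in H^\sigma$ pointwise'' without justification. This is precisely where $H\subset A_{sa}(X)$ is used. Given bounded pointwise convergence $a_n\to a$ and witnesses $b_n\in H^\sigma$ with $b_n=ma_n$ $\nu$-a.e.\ for all maximal $\nu$, the paper fixes $x\in X$ and a maximal $\nu\in M_x(X)$; strong affinity gives $b_n(x)=\nu(b_n)=\nu(ma_n)\to\nu(ma)$ by dominated convergence, so $(b_n)$ converges pointwise on $X$. Without strong affinity there is no reason for the abstract witnesses $b_n$ to converge at all. You also make the same one-step-limit error here (``pick $a_n\in H$ with $a_n\to a$''); the paper again uses the $\F$-family argument, showing the relevant class contains $H$ and is closed under bounded pointwise limits.
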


\begin{proof}
$(i)$: We start by showing that $M(H)\subset M(H^\mu)$. To this end let $m\in M(H)$ be given. We assume that $m\ge0$, otherwise we would just add a suitable constant to it. Let
\[
\F=\{a\in H^\mu\setsep \exists b\in H^\mu\colon b=am\text{ on }\ext X\}.
\]
Then $\F$ is clearly a linear subspace of $H^\mu$ containing $H$. Further, $\F$ is stable with respect to taking pointwise limits of monotone bounded sequences. Indeed, let $(a_n)$ be a non-decreasing bounded sequence in $\F$ with the limit $a\in A_b(X)$. Then we find functions $b_n\in H^\mu$ such that $b_n=a_nm$ on $\ext X$. Then $b_{n+1}-b_n=(a_{n+1}-a_n)m\ge 0$ on $\ext X$. Since $H^\mu$ is determined by extreme points, we deduce that $(b_n)$ is non-decreasing and bounded. It follows that there exists its limit $b=\lim b_n$, which belongs to $H^\mu$. Since $b=am$ on $\ext X$, the function $a$ belongs to $\F$. 
We conclude that $\F=H^\mu$ and therefore $m\in M(H^\mu)$. 

To finish the proof that $(M(H))^\mu\subset M(H^\mu)$, it remains to show that  $M(H^\mu)$ is closed with respect to pointwise limits of bounded monotone sequences.
 To this end fix a sequence $(m_n)$ in $M(H^\mu)$ such that $m_n\nearrow m\in A_b(X)$. Clearly $m\in H^\mu$. 

    We continue by proving that $m\in M(H^\mu)$. To this end we fix $a\in H^\mu$ and we are looking for $b\in H^\mu$ such that $b=ma$ on $\ext X$.
   Since all the functions in question are bounded and $1\in H\subset H^\mu$, we may assume without loss of generality that $a\ge 0$ and $m_1\ge0$.
   By the assumptions we find $b_n\in H^\mu$ such that $b_n=m_na$ on $\ext X$. We deduce that 
   $$b_n=m_n a\nearrow m a\mbox{ on }\ext X.$$
   Since $H^\mu$ is determined by extreme points, 
the sequence $(b_n)$ is nondecreasing.
Using one more determinacy by extreme points we deduce that each $b_n$ is bounded from above by $\norm{m}\cdot\norm{a}$, so $b_n\nearrow b$ for some $b\in H^\mu$.  Since $b=ma$ on $\ext X$, we conclude that $m\in M(H^\mu)$.

This settles the case of $M(H)$, including the `in particular' part. The case of $M^s(H)$ is similar:
The first step is again to prove that $M^s(H)\subset M^s(H^\mu)$. We assume that $m\in M^s(H)$ is positive, and we consider the family
\begin{equation*}
\begin{aligned}
\F^s=\{a\in H^\mu\setsep \exists b\in H^\mu\colon b=am \ &\mu\text{-almost everywhere}  \\ &\text{ for each maximal } \mu \in M_1(X)\}.
\end{aligned}
\end{equation*}
Again, $\F^s$ is a linear subspace of $H^\mu$ containing $H$. As above, it is enough to show that the family $\F^s$ is stable with respect to taking pointwise limits of monotone bounded sequences. Thus we pick a non-decreasing bounded sequence $(a_n)$ in $\F$ with the limit $a\in A_b(X)$. 
We find $b_n$ and $b$ as above. Fix any maximal measure $\mu \in M_1(X)$. Since $m\in M^s(H)$ and $H$ is determined by extreme points, we deduce that
    $b_n=m \cdot a_n$ $\mu$-almost everywhere. Hence (using the $\sigma$-additivity of $\mu$) we find a $\mu$-null set $N\subset X$ such that
    $$\forall x\in X\setminus N\;\forall n\in\en\colon b_n(x)=m(x)\cdot a_n(x),$$
    hence $b(x)=m(x)\cdot a(x)$ for $x\in X\setminus N$. Therefore $b=ma$ $\mu$-almost everywhere.
    Hence $a\in\F^s$.

The second step is again to show that $M^s(H^\mu)$ is closed with respect to taking pointwise limits of bounded monotone sequences. We proceed as above, we just assume that the sequence $(m_n)$ belongs to $M^s(H^\mu)$ instead of $M(H^\mu)$. We fix a maximal measure $\mu \in M_1(X)$. Then  $b_n=m_na$ $\mu$-almost everywhere for each $n\in\en$, 
hence $b=ma$ $\mu$-almost everywhere. The proof is finished.

$(ii)$: Assume that $H$ consists of strongly affine functions and $H^\sigma$ is determined by extreme points. Then $H^{\sigma}$ consists of strongly affine functions as well. As above, the first step is to prove that $M(H)\subset M(H^\sigma)$. 
Let $m\in M(H)$ be given and let
\begin{equation*}
\begin{aligned}
\F=\{a\in H^\sigma\setsep \exists b\in H^\sigma\colon b=am \ &\mu\text{-almost everywhere} \\& \text{ for each maximal } \mu \in M_1(X) \}.
\end{aligned}
\end{equation*}
Then $\F$ is a linear subspace of $H^\sigma$ containing $H$. Further, $\F$ is closed with respect to taking pointwise limits of bounded sequences. Indeed, let $(a_n)$ be such a sequence  in $\F$ with limit $a\in A_b(X)$. Then clearly $a\in H^\sigma$. For each $n\in\en$ we find $b_n\in H^\sigma$ such that $b_n=a_nm$ $\mu$-almost everywhere for each maximal measure $\mu \in M_1(X)$. 

Further, we fix a point $x\in X$ and a maximal measure $\nu \in M_x(X)$. Since the functions $b_n$ are strongly affine, $b_n(x)=\nu(b_n)$. Since $b_n=a_nm$ $\nu$-almost everywhere, we deduce $b_n(x)=\nu(a_nm)$. By the Lebesgue dominated convergence theorem we deduce that $b_n(x)\to \nu(am)$.
So, the sequence $(b_n)$ pointwise converges to some $b\in A_b(X)$. Then $b\in H^\sigma$ is strongly affine. Further, let $\mu\in M_1(X)$ be any maximal measure. Since $b_n\to b$, $a_nm\to am$ and $b_n=a_nm$ $\mu$-almost everywhere for each $n\in\en$, we deduce  that $b=am$ $\mu$-almost everywhere. 
Hence $a\in \F$. It follows that $\F=H^\sigma$ and thus $m\in M^s(H^\sigma)$. 

The second step is to show that $M(H^\sigma)$ is closed with respect to pointwise limits of bounded sequences.
 To this end fix a bounded sequence $(m_n)$ in $M(H^\sigma)$ such that $m_n\to m\in A_b(X)$. Clearly $m\in H^\sigma$. 
    We continue by proving $m\in M(H^\sigma)$. To this end we fix $a\in H^\sigma$. For each $n\in\en$ there is $b_n\in H^\sigma$ such that $b_n=m_na$ $\mu$-almost everywhere for any maximal measure $\mu\in M_1(X)$. Fix $x\in X$ and take a maximal measure $\nu\in M_x(X)$. Then
    $$b_n(x)=\nu(b_n)=\nu(m_na)\to\nu(ma).$$
    Thus the sequence $(b_n)$ pointwise converges to some $b\in A_b(X)$. Then $b\in H^\sigma$. Further, similarly as above we show that $b=ma$ $\mu$-almost everywhere for any maximal $\mu\in M_1(X)$. Hence $m\in M(H^\sigma)$.

The `in particular' part follows from $(i)$ and the above combined with the fact that $(H^\mu)^\sigma=H^\sigma$. The proof is finished.
\end{proof}

\begin{remarks}\label{rem:Ms(H1)}
(1) In assertion $(i)$ of Proposition~\ref{p:multi-pro-mu} it is not sufficient to assume that just $H$ is determined by extreme points. Indeed, Example~\ref{ex:deter-ext-body} below shows that there is an intermediate function space $H$ determined by extreme points such that the space $H^\mu$ is not.

(2) Assertion $(ii)$ of Proposition~\ref{p:multi-pro-mu} may be refined (using essentially the same proof): Assume that $H\subset A_{sa}(X)$. Let $H_1$ be the space of all limits of bounded pointwise converging sequences from $H$. Then $H_1$ is a linear subspace of $A_{sa}(X)$ (not necessarily closed) and $(M^s(H))_1\subset M^s(H_1)$. 
\end{remarks}

We finish this section by asking the following open question on validity of assertion $(ii)$ of the previous proposition  for $M(H)$ instead of $M^s(H)$.

\begin{ques}
    Assume that $H$ is an intermediate function space such that $H^\sigma$ is determined by extreme points. Is $M(H)^\sigma\subset M(H^\sigma)$?
\end{ques}

\section{On multipliers for $A_c(X)$}\label{sec:mult-acx}

In this section we will deal only with continuous affine functions. Our aim to describe $m(X)$ and to characterize multipliers on $A_c(X)$. A description of $m(X)$ is given in Proposition~\ref{P:m(X)} below, a characterization of multipliers is provided in Proposition~\ref{P:mult-charact} in the general case and in Proposition~\ref{P:simplex mult} in case $X$ is a simplex. These characterizations are related to the characterization from \cite[Theorem II.7.10]{alfsen}  using the continuity with respect to the facial topology, but our version is more understandable and easier to apply in concrete cases (as witnessed by the examples at the end of   
this section). 

We start by the following lemma which is a key step to a description of $m(X)$.

\begin{lemma}\label{L:platnost soucinu}
Let $T\in\frd(A_c(X))$ be an operator and $u=T(1)$.
Let $x\in X$. Then the  following assertions are equivalent.
\begin{enumerate}[$(1)$]
    \item $T(a)(x)=u(x)\cdot a(x)$ for each $a\in A_c(X)$.
    \item $T(u)(x)=u(x)^2$.
    \item $u$ is constant on $\spt\mu$ for any probability measure $\mu$ supported by $\overline{\ext X}$ such that $r(\mu)=x$.
    \item $u$ is constant on $\spt\mu$ for some probability measure $\mu$ supported by $\overline{\ext X}$ such that $r(\mu)=x$.
\end{enumerate}
\end{lemma}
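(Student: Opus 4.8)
I would prove the cycle of implications $(1)\Rightarrow(2)\Rightarrow(3)\Rightarrow(4)\Rightarrow(1)$, the whole argument resting on two preliminary observations that I would record first. The first is that $T(a)=u\cdot a$ holds not only on $\ext X$ but on its closure $\overline{\ext X}$: since $A_c(X)$ is an intermediate function space determined by extreme points, Corollary~\ref{cor:rovnost na ext}(a) applied with $H=A_c(X)$ gives $T(a)=T(1)\cdot a=u\cdot a$ on $\ext X$ for every $a\in A_c(X)$, and as both $T(a)$ and $u\cdot a$ are continuous while $\ext X$ is dense in $\overline{\ext X}$, this identity extends to $\overline{\ext X}$. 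The second observation is that every $x\in X$ has a maximal representing measure, that maximal measures — and, more generally, all the measures occurring in $(3)$ and $(4)$ — are carried by $\overline{\ext X}$, and that for any probability measure $\mu$ carried by $\overline{\ext X}$ with $r(\mu)=x$ one has $\int_X b\,d\mu=b(x)$ for every $b\in A_c(X)$ (we shall use this for $b=u$ and for $b=T(a)$).

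Granting these, $(1)\Rightarrow(2)$ is immediate by taking $a=u$. For $(2)\Rightarrow(3)$ I would fix an arbitrary probability measure $\mu$ carried by $\overline{\ext X}$ with $r(\mu)=x$; since $T(u)=u^2$ on $\overline{\ext X}\supseteq\spt\mu$, integrating against $\mu$ and using the barycentric formula gives
\[
\int_X u^2\,d\mu=\int_X T(u)\,d\mu=T(u)(x)=u(x)^2=\Bigl(\int_X u\,d\mu\Bigr)^2 .
\]
Hence $\int_X\bigl(u-u(x)\bigr)^2\,d\mu=\int_X u^2\,d\mu-u(x)^2=0$, so $u$ equals the constant $c:=u(x)$ $\mu$-almost everywhere; as $[u=c]$ is then a closed set of full $\mu$-measure it contains $\spt\mu$, and therefore $u\equiv c$ on $\spt\mu$. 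The implication $(3)\Rightarrow(4)$ is trivial once we recall that $x$ admits at least one maximal representing measure and that such a measure is carried by $\overline{\ext X}$.

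For $(4)\Rightarrow(1)$, let $\mu$ be carried by $\overline{\ext X}$ with $r(\mu)=x$ and $u\equiv c$ on $\spt\mu$; integrating $u$ gives $c=u(x)$. For an arbitrary $a\in A_c(X)$ the first observation yields $T(a)=u\cdot a$ on $\overline{\ext X}\supseteq\spt\mu$, whence
\[
T(a)(x)=\int_X T(a)\,d\mu=\int_X u\cdot a\,d\mu=c\int_X a\,d\mu=u(x)\,a(x),
\]
which is exactly $(1)$.

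I do not expect a serious obstacle here: the proof is essentially bookkeeping around the product formula on $\ext X$ and Choquet's theorem. The one step that deserves attention is upgrading the identity $T(a)=u\cdot a$ from $\ext X$ to $\overline{\ext X}$ by continuity — it is precisely this that makes the \emph{support} formulation in $(3)$ and $(4)$ (rather than a mere almost-everywhere condition) accessible, and that allows non-maximal measures carried by $\overline{\ext X}$ to be handled on exactly the same footing as maximal ones.
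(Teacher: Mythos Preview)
Your proof is correct and follows essentially the same route as the paper's: the same cycle $(1)\Rightarrow(2)\Rightarrow(3)\Rightarrow(4)\Rightarrow(1)$, the same continuity extension of $T(a)=u\cdot a$ from $\ext X$ to $\overline{\ext X}$ via Corollary~\ref{cor:rovnost na ext}(a), the same variance computation $\int(u-u(x))^2\,d\mu=0$ for $(2)\Rightarrow(3)$, and the same integration argument for $(4)\Rightarrow(1)$. Your explicit verification that the constant value of $u$ on $\spt\mu$ must equal $u(x)$ (by integrating $u$) is a detail the paper leaves implicit.
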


\begin{proof}
$(1)\implies(2)$: This is trivial.

$(2)\implies (3)$: Assume $T(u)(x)=u(x)^2$. Let $\mu$ be a probability measure supported by $\overline{\ext X}$ with $r(\mu)=x$. Then
$$\begin{aligned}
\int (u-u(x))^2\di\mu&= \int u^2\di\mu - 2 \int u(x)u\di\mu+\int u(x)^2\di\mu =\int u^2\di\mu-u(x)^2\\
&=\int T(u)\di\mu -u(x)^2=T(u)(x)-u(x)^2=0.
\end{aligned}$$
In the second equality we used the assumption $r(\mu)=x$ and $u\in A_c(X)$. In the third equality we used the fact that $T(u)=u^2$ on $\ext X$  (by Corollary\ref{cor:rovnost na ext}(a)) and so by continuity also on $\overline{\ext X}$. The fourth equality uses again that $r(\mu)=x$ and the last one follows from (2).

It follows that $u=u(x)$ $\mu$-a.e. Hence, by continuity, $u=u(x)$ on $\spt\mu$.

$(3)\implies(4)$: This is trivial.

$(4)\implies(1)$: Let $\mu$ be such a measure. Then for each $a\in A_c(X)$ we have
$$T(a)(x)=\int T(a)\di\mu=\int ua\di\mu=\int u(x)a\di\mu=u(x)\int a\di\mu=u(x)a(x).$$
In the second equality we used the fact that  $T(a)=T(1)\cdot a=u \cdot a$ on $\ext X$ (Corollary~\ref{cor:rovnost na ext}(a)) and so by continuity also on $\overline{\ext X}$. The third one follows from the assumption $u=u(x)$ on $\spt\mu$.
\end{proof}

As a corollary we get the following characterization of $m(X)$.

\begin{prop}\label{P:m(X)}
Let $X$ be a compact convex set and $x\in X$. Then the  following assertions are equivalent.
\begin{enumerate}[$(1)$]
    \item $x\in m(X)$.
    \item There is a probability measure $\mu$ supported by $\overline{\ext X}$ with barycenter $x$ such that any $u\in M(A_c(X))$ is constant on $\spt\mu$.
    \item If $\mu$ is any probability measure  supported by $\overline{\ext X}$ with barycenter $x$,  then any $u\in M(A_c(X))$ is constant on $\spt\mu$.
\end{enumerate}
\end{prop}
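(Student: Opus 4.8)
The plan is to deduce everything from Lemma~\ref{L:platnost soucinu}, which already links, for a fixed $T\in\frd(A_c(X))$ with $u=T(1)$, the multiplication formula at a point $x$ to the behaviour of $u$ on the supports of representing measures of $x$ carried by $\overline{\ext X}$. To move from a single operator to the whole set $m(X)$, I would first record the identification $M(A_c(X))=Z(A_c(X))$: by Lemma~\ref{L:intermediate}$(c)$ the map $\iota$ is an affine homeomorphism of $X$ onto $S(A_c(X))$, hence it carries $\ext X$ onto $\ext S(A_c(X))$, so Corollary~\ref{cor:rovnost na ext}$(b)$ applies; combined with Proposition~\ref{P:mult} this gives the equality. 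Consequently $u\in M(A_c(X))$ if and only if $u=T(1)$ for some $T\in\frd(A_c(X))$. I would also use that, by the Choquet representation theorem, every $x\in X$ has a maximal representing measure, and that maximal measures are supported by $\overline{\ext X}$; in particular the measures featuring in $(2)$ and $(3)$ always exist, so these statements are not vacuous.

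Then I would prove the cycle $(1)\Rightarrow(3)\Rightarrow(2)\Rightarrow(1)$. For $(1)\Rightarrow(3)$: fix $x\in m(X)$ and $u\in M(A_c(X))$, and write $u=T(1)$ with $T\in\frd(A_c(X))$; the defining property of $m(X)$ is exactly condition $(1)$ of Lemma~\ref{L:platnost soucinu} for $T$ at $x$, so condition $(3)$ of that lemma holds, i.e.\ $u$ is constant on $\spt\mu$ for every probability measure $\mu$ supported by $\overline{\ext X}$ with $r(\mu)=x$. The implication $(3)\Rightarrow(2)$ is then immediate, since at least one such measure exists by the remark above.

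For $(2)\Rightarrow(1)$: let $\mu$ be the measure provided by $(2)$. Given an arbitrary $T\in\frd(A_c(X))$, put $u=T(1)$; then $u\in Z(A_c(X))=M(A_c(X))$, so by hypothesis $u$ is constant on $\spt\mu$. This is precisely condition $(4)$ of Lemma~\ref{L:platnost soucinu} for this $T$ and $x$ (with this particular $\mu$), whence condition $(1)$: $T(a)(x)=u(x)\,a(x)=T(1)(x)\,a(x)$ for all $a\in A_c(X)$. As $T$ was arbitrary, $x\in m(X)$.

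I do not expect a genuine obstacle here, since the substantive content is already packaged in Lemma~\ref{L:platnost soucinu}; the only steps requiring care are the bookkeeping identification $M(A_c(X))=Z(A_c(X))$ (so that ``multiplier'' and ``$T(1)$ for some $T\in\frd(A_c(X))$'' are interchangeable) and the observation that a maximal representing measure of $x$ exists and is carried by $\overline{\ext X}$, ensuring that assertions $(2)$ and $(3)$ quantify over a nonempty collection of measures.
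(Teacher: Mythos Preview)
Your proof is correct and follows the same approach as the paper, which simply states that the proposition follows immediately from Lemma~\ref{L:platnost soucinu}. You have spelled out the details that make this immediate: the identification $M(A_c(X))=Z(A_c(X))$ (needed in both directions to pass between ``multiplier $u$'' and ``$u=T(1)$ for some $T\in\frd(A_c(X))$'') and the existence of at least one representing measure on $\overline{\ext X}$.
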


This proposition follows immediately from Lemma~\ref{L:platnost soucinu}.
Although this proposition provides a complete characterization of $m(X)$, its use is limited by our knowledge of $M(A_c(X))$. Multipliers on $A_c(X)$ can be characterized by continuity in the facial topology (see \cite[Theorem II.7.10]{alfsen}) but this is hard to use in concrete cases. In the rest of this section we provide some more accessible necessary conditions and  characterizations.

\begin{lemma}\label{L:mult-nutne}
Let $X$ be a compact convex set and let $u\in M(A_c(X))$. Then the following conditions are satisfied.
\begin{enumerate}[$(i)$]
    \item If $\mu\in M_1(\overline{\ext X})$ has the barycenter in $\overline{\ext X}$, then  $u$ is constant on $\spt\mu$.
  \item  If $\mu,\nu\in M_1(\overline{\ext X})$ have the same barycenter and $B\subset \er$ is a Borel set, then $\mu(u^{-1}(B))=\nu(u^{-1}(B))$ and, moreover, if this number is strictly positive, then the probabilities
$$\frac{\mu_{|u^{-1}(B)}}{\mu(u^{-1}(B))}\mbox{ and }\frac{\nu_{|u^{-1}(B)}}{\nu(u^{-1}(B))}$$
have the same barycenter.
\item If $a,b,c,d\in\overline{\ext X}$ are four distinct points and the segments $[a,b]$ and $[c,d]$ intersect in a point which is internal in both segments, then $u(a)=u(b)=u(c)=u(d)$.
\item  If $a,b,c,d,e\in\overline{\ext X}$ are five distinct points and
$$t_1a+t_2b+t_3c=sd+(1-s)e$$
for some $t_1,t_2,t_3,s\in(0,1)$ such that $t_1+t_2+t_3=1$, then $u(a)=u(b)=u(c)=u(d)=u(e)$.
\end{enumerate}
\end{lemma}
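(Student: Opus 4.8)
The plan is to replace the multiplier $u$ by an operator and then reduce all four assertions to a single moment computation. Since $A_c(X)$ is determined by extreme points, Proposition~\ref{P:mult} supplies an operator $T\in\frd(A_c(X))$ with $T(1)=u$ and $T(h)=u\cdot h$ on $\ext X$ for every $h\in A_c(X)$. The preliminary step I would carry out is the identity
$$T^k(h)=u^k\cdot h\quad\text{on }\overline{\ext X}\,,\qquad k\ge 0,\ h\in A_c(X).$$
This follows by induction on $k$: first $T^k(1)=u^k$ on $\ext X$ (using $T(g)=T(1)\cdot g=u\cdot g$ on $\ext X$ for $g\in A_c(X)$, from Corollary~\ref{cor:rovnost na ext}(a), together with the inductive hypothesis applied to $g=T^{k-1}(1)$), and then $T^k(h)=T^k(1)\cdot h=u^kh$ on $\ext X$ by Corollary~\ref{cor:rovnost na ext}(a) applied to the operator $T^k$; the passage from $\ext X$ to $\overline{\ext X}$ is by continuity, since all the functions occurring are continuous on $X$.

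Given the displayed identity, part $(i)$ is immediate: if $\mu\in M_1(\overline{\ext X})$ has barycenter $x\in\overline{\ext X}$, then the case $k=1$ gives $T(u)=u^2$ on $\spt\mu$ and $T(u)(x)=u(x)^2$, and since $u,T(u)\in A_c(X)$ and $r(\mu)=x$, expanding $\int (u-u(x))^2\di\mu$ yields $\int T(u)\di\mu-u(x)^2=T(u)(x)-u(x)^2=0$, so $u\equiv u(x)$ on $\spt\mu$ (this is just the implication $(2)\Rightarrow(3)$ of Lemma~\ref{L:platnost soucinu}). For part $(ii)$, let $\mu,\nu\in M_1(\overline{\ext X})$ share the barycenter $x$. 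For every $k\ge0$ and $h\in A_c(X)$, integrating the displayed identity and using that $T^k(h)\in A_c(X)$ is affine gives $\int u^k h\di\mu=T^k(h)(x)=\int u^k h\di\nu$. Fixing a compact interval $I\supset u(X)$, the finite signed Borel measures on $I$ obtained by pushing $h\di\mu$ and $h\di\nu$ forward along $u$ then have the same moments, hence coincide by Weierstrass approximation and the uniqueness part of the Riesz theorem; consequently $\int_{u^{-1}(B)}h\di\mu=\int_{u^{-1}(B)}h\di\nu$ for every Borel $B\subset\er$ and every $h\in A_c(X)$. Taking $h=1$ gives $\mu(u^{-1}(B))=\nu(u^{-1}(B))$, and when this common value $c$ is positive, dividing the identity for general $h$ by $c$ shows that $\mu_{|u^{-1}(B)}/c$ and $\nu_{|u^{-1}(B)}/c$ integrate every element of $A_c(X)$ to the same value, i.e., have the same barycenter.

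Parts $(iii)$ and $(iv)$ I would deduce from $(ii)$. For $(iii)$, let $p$ be the common internal point and set $\mu=\lambda\ep_a+(1-\lambda)\ep_b$ and $\nu=\kappa\ep_c+(1-\kappa)\ep_d$ with $\lambda,\kappa\in(0,1)$; both lie in $M_1(\overline{\ext X})$ and have barycenter $p$, so by $(ii)$ with $h=1$ the pushforwards $u_*\mu$ and $u_*\nu$ coincide. If $u(a)\ne u(b)$ this measure has exactly two atoms, which forces $u(c)\ne u(d)$ and $\{u(c),u(d)\}=\{u(a),u(b)\}$ with matching weights; applying the barycenter statement of $(ii)$ to $B=\{u(a)\}$ then makes $\ep_a$ and $\ep_c$ (or $\ep_d$) have equal barycenter, i.e. $a\in\{c,d\}$, contradicting distinctness. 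Hence $u(a)=u(b)$, so $u_*\mu$ is a point mass and $u(c)=u(d)=u(a)$. Part $(iv)$ runs the same way with $\mu=t_1\ep_a+t_2\ep_b+t_3\ep_c$ and $\nu=s\ep_d+(1-s)\ep_e$: since $\nu$ is carried by two points, $u_*\mu=u_*\nu$ has at most two atoms, so at least two of $u(a),u(b),u(c)$ agree; if they were not all equal, the point-mass argument applied to $B=\{$the value of the third point$\}$ would identify that third point with $d$ or $e$, a contradiction, so $u(a)=u(b)=u(c)$, whence $u_*\mu$ is a point mass and $u(d)=u(e)$ equals this common value.

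The substance of the proof is the preliminary identity $T^k(h)=u^k h$ on $\overline{\ext X}$ together with the moment/approximation step in $(ii)$; once $(ii)$ is available, parts $(i)$, $(iii)$, $(iv)$ are short bookkeeping. I expect the only mildly fussy points to be keeping the inductions straight and the measurability checks — but $u^{-1}(B)$ is Borel because $u$ is continuous, so the restricted measures $\mu_{|u^{-1}(B)}$, $\nu_{|u^{-1}(B)}$ are genuine Radon measures on the compact convex set $X$ and have well-defined barycenters, which is all that is needed.
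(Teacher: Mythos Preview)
The proposal is correct and takes essentially the same approach as the paper. Both arguments hinge on showing that the products $u^k h$ (for $h\in A_c(X)$) coincide on $\overline{\ext X}$ with elements of $A_c(X)$, then use polynomial density to obtain $(ii)$, and deduce $(iii)$ and $(iv)$ from $(ii)$ by discrete casework; your operator formulation $T^k(h)=u^k h$ is a slight repackaging of the paper's algebra $E=\overline{\span}\{u^n\}=\{f\circ u:f\in C(u(X))\}$, and your moment/Riesz step replaces the paper's passage through $C(u(X))$ followed by the dominated convergence theorem, but the substance is the same.
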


\begin{proof}
$(i)$: Any $x\in\overline{\ext X}$ belongs to $m(X)$, hence the assertion follows from Proposition~\ref{P:m(X)}, implication $(1)\implies(3)$.

$(ii)$: Assume $\mu,\nu\in M_1(\overline{\ext X})$ have the same barycenter $x$.

Set
$$E=\overline{\span}\{u^n\setsep n\ge 0\}\subset C(X).$$
By the Stone-Weierstrass theorem we get
$$E=\{f\circ u\setsep f\in C(u(X))\}.$$
Moreover, since $u$ is a multiplier, we claim that  $(f\cdot g)|_{\overline{\ext X}}$ may be extended to an affine continuous function on $X$ for any $f\in A_c(X)$ and $g\in E$.
Indeed, the mapping
$$R:f\mapsto f|_{\overline{\ext X}}$$
is an isometric linear injection of $A_c(X)$ into $C(\overline{\ext X})$. So, its range is a closed linear subspace of $C(\overline{\ext X})$. Hence, given any $f\in A_c(X)$, the set
$$\{g\in C(X)\setsep (f\cdot g)|_{\overline{\ext X}}\in R(A_c(X))\}$$
is a closed linear subspace of $C(X)$, containing constant functions and stable under the  multiplication by $u$. It follows that this subspace contains $E$.

We deduce
$$\int a\cdot g\di\mu=\int a\cdot g\di\nu,\quad a\in A_c(X), g\in E,$$
hence 
\begin{equation}\label{eq:aaa}
    \int a\cdot f\circ u\di\mu=\int a\cdot f\circ u\di\nu,\quad a\in A_c(X), f\in C(u(X)).\end{equation}
Applying to $a=1$ we get
$$\int  f\circ u\di\mu=\int  f\circ u\di\nu,\quad  f\in C(u(X)),$$
so 
$$\int f\di u(\mu)=\int f\di u(\nu),\quad f\in C(u(X)),$$
i.e., $u(\mu)=u(\nu)$. I.e., $\mu(u^{-1}(B))=\nu(u^{-1}(B))$ for any Borel set $B\subset \er$.
This proves the first part of the assertion.

To prove the second part, observe that \eqref{eq:aaa} implies (using the Lebesgue dominated convergence theorem) that
$$\int a\cdot f\circ u\di\mu=\int a\cdot f\circ u\di\nu,\quad a\in A_c(X), f\in \Ba^b(u(X)).$$
In particular, applying to $f=1_B$ for a Borel set $B\subset \er$ (note that Borel and Baire subsets of $\er$ coincide) we get
$$\int a\cdot 1_B\circ u\di\mu=\int a\cdot 1_B\circ u\di\nu,\quad a\in A_c(X),$$
i.e.,
$$\int_{u^{-1}(B)} a\di\mu=\int_{u^{-1}(B)} a\di\nu,\quad a\in A_c(X).$$
By the first part we know that $\mu(u^{-1}(B))=\nu(u^{-1}(B))$. If this number is zero, the above equality holds trivially; if it is strictly positive, the above equality means exactly that
the probabilities
$$\frac{\mu_{|u^{-1}(B)}}{\mu(u^{-1}(B))}\mbox{ and }\frac{\nu_{|u^{-1}(B)}}{\nu(u^{-1}(B))}$$
have the same barycenter. This completes the proof of assertion $(ii)$.

$(iii)$: By the assumption there are $s,t\in(0,1)$ such that
$$sa+(1-s)b=tc+(1-t)d.$$
Define measures
$$\mu=s\varepsilon_a+(1-s)\varepsilon_b,\quad \nu=t\varepsilon_c+(1-t)\varepsilon_d.$$
These two measures satisfy the assumptions of $(ii)$, so $u(\mu)=u(\nu)$.

If $u(a)=u(b)=\lambda$, then $u(\mu)=\varepsilon_\lambda$. Since then $u(\nu)=\varepsilon_\lambda$ as well, necessarily $u(c)=u(d)=\lambda$.
If $u(c)=u(d)$, we proceed similarly.

So assume $u(a)\ne u(b)$ and $u(c)\ne u(d)$. Up to relabeling the points we may assume $u(a)<u(b)$ and $u(c)<u(d)$. Fix $\omega\in(u(a),u(b))$ and let $B=(-\infty,\omega)$. Then $\mu|_{u^{-1}(B)}=s\varepsilon_a$ and $\mu(u^{-1}(B))=s$. Since $\nu(u^{-1}(B))=\mu(u^{-1}(B))=s$ and simultaneously $\nu(u^{-1}(B))\in\{0,t,1\}$, we deduce $t=s$. But then the barycenters of the normalized measures are $a$ and $c$. By $(ii)$, these barycenters must be equal, which is a contradiction.
This completes the proof.

$(iv)$: Define measures
$$\mu=t_1\varepsilon_a+t_2\varepsilon_b+t_3\varepsilon_c,\quad \nu=s\varepsilon_d+(1-s)\varepsilon_e.$$
These two measures satisfy the assumptions of $(ii)$, so $u(\mu)=u(\nu)$.

If $u(d)=u(e)=\lambda$, then $u(\nu)=\varepsilon_\lambda$. Since then $u(\mu)=\varepsilon_\lambda$ as well, necessarily $u(a)=u(b)=u(c)=\lambda$.

Assume $u(d)\ne u(e)$. Then 
$$
\begin{aligned}
\mu(u^{-1}(\er\setminus \{u(d),u(e)\})&=\nu(u^{-1}(\er\setminus \{u(d),u(e)\})=0,\\
\mu(u^{-1}(\{u(d)\})&=\nu(u^{-1}(\{u(d)\})=s>0,\\
\mu(u^{-1}(\{u(e)\})&=\nu(u^{-1}(\{u(e)\})=1-s>0.
\end{aligned}
$$
It follows that $u(\{a,b,c\})=\{u(d),u(e)\}$.
So, at some two of points $a,b,c$ function $u$ attains the same value and at the third point it attains a different value. Up to relabelling we may assume
$$u(a)=u(b)=u(d)\ne u(c)=u(e).$$ 
Then $t_1+t_2=s$, $t_3=1-s$. Since the normalized measures $\frac{\mu|_{u^{-1}(\{u(e)\}}}{\mu(u^{-1}(\{u(e)\})}$ and 
$\frac{\nu|_{u^{-1}(\{u(e)\}}}{\nu(u^{-1}(\{u(e)\})}$ have the same barycenter (by $(ii)$), we deduce that $c=e$. This contradiction completes the proof.
\end{proof}

\begin{remark}\label{rem:nutne}
(1) Condition $(i)$ of the previous lemma follows from condition $(ii)$ applied to $\mu$ and the Dirac measure carried at the barycenter of $\mu$ in place of $\nu$. However, we formulate this condition separately because it is easier and, moreover, in case $X$ is a simplex it serves as a characterization (see Proposition \ref{P:simplex mult} below).

(2) Condition $(ii)$ of the previous lemma can be reformulated using measures annihilating $A_c(X)$ as follows:
\begin{enumerate}[$(ii')$]
    \item If $\mu\in M(\overline{\ext X})\cap A_c(X)^\perp$ and $B\subset \er$ is a Borel set, then $\mu(u^{-1}(B))=0$ and $\mu|_{u^{-1}(B)}\in A_c(X)^\perp$.
\end{enumerate}

(3) In conditions $(iii)$ and $(iv)$ of the previous lemma it is essential that we consider either two pairs of points or a pair and a triple of points. Example \ref{E:more points}(4) below illustrates that a similar condition does not hold for larger families of points.
\end{remark}

Next we are going to show that condition $(ii)$ from Lemma~\ref{L:mult-nutne} characterizes multipliers. To this end we will use the following lemma.  

\begin{lemma}\label{L:extending}
Assume that $X$ is a compact convex set. Let $f\in C(\overline{\ext X})$. Then $f$ may be extended to an affine continuous function on $X$ if and only if
\[
\forall\mu,\nu\in M_1(\overline{\ext X})\colon r(\mu)=r(\nu) \Rightarrow \int_{\overline{\ext X}} f\di\mu=\int_{\overline{\ext X}} f\di\nu.
\]
\end{lemma}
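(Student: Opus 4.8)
The forward implication is immediate: if $f$ extends to some $\tilde f\in A_c(X)$, then for any $\mu,\nu\in M_1(\overline{\ext X})$ with $r(\mu)=r(\nu)=x$ we have $\int f\di\mu=\int\tilde f\di\mu=\tilde f(x)=\int\tilde f\di\nu=\int f\di\nu$, using the defining property of the barycenter and the fact that $\tilde f\in A_c(X)$. So the content is the converse, and the plan is a Hahn--Banach separation argument combined with the representation of functionals on $C(\overline{\ext X})$ by measures.

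The key step I would carry out: let $L=\overline{\ext X}$, and consider the restriction map $R\colon A_c(X)\to C(L)$, $Rg=g|_L$, which is a linear isometric injection (every $g\in A_c(X)$ attains its sup and inf on $\ext X$, hence on $L$ by continuity), so $R(A_c(X))$ is a closed subspace of $C(L)$. I want to show $f\in R(A_c(X))$. By Hahn--Banach it suffices to show that $f$ is annihilated by every functional in $R(A_c(X))^\perp\subset C(L)^*=M(L)$. So fix $\lambda\in M(L)$ annihilating $R(A_c(X))$; write $\lambda=\lambda^+-\lambda^-$ via the Jordan decomposition, and put $c=\lambda^+(L)=\lambda^-(L)$ (these are equal since $1\in A_c(X)$, so $\lambda(1)=0$). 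If $c=0$ then $\lambda=0$ and there is nothing to prove; otherwise set $\mu=\tfrac1c\lambda^+$ and $\nu=\tfrac1c\lambda^-$, which are probability measures on $L$. For every $g\in A_c(X)$ we have $\int g\di\mu=\tfrac1c\int g\di\lambda^++\ldots$; more precisely $c\int g\di\mu-c\int g\di\nu=\int g\di\lambda=0$, so $\int g\di\mu=\int g\di\nu$ for all $g\in A_c(X)$, and taking $g$ to range over $A_c(X)$ this forces $r(\mu)=r(\nu)$ (the barycenter is characterized by integrals against affine continuous functions, and distinct points are separated by $A_c(X)$). The hypothesis then gives $\int f\di\mu=\int f\di\nu$, i.e. $\int f\di\lambda^+=\int f\di\lambda^-$, i.e. $\lambda(f)=\int f\di\lambda=0$. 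Since $\lambda\in R(A_c(X))^\perp$ was arbitrary, $f\in R(A_c(X))$, i.e. $f$ extends to an affine continuous function on $X$.

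One routine point to record carefully: the barycenter $r(\mu)$ of a measure $\mu\in M_1(\overline{\ext X})$ is well-defined as an element of $X$ — since $\overline{\ext X}\subset X$ and $X$ is compact convex, every Radon probability on $\overline{\ext X}$, viewed as a Radon probability on $X$, has a barycenter in $X$ by the standard existence result (cf. \cite[Proposition~I.2.1]{alfsen}). The only mild subtlety, which I do not expect to cause real trouble, is making sure the Jordan decomposition of $\lambda$ stays supported on $L$ (it does: $\lambda^\pm$ are mutually singular measures on the compact space $L$) and that one is allowed to normalize by $c$ — handled by the separate trivial case $c=0$. The main obstacle, such as it is, is simply assembling the separation argument cleanly; there is no deep difficulty, and no appeal to facial topology or to multiplier theory is needed for this lemma itself.
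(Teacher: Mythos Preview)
Your argument is correct. The paper itself does not give a proof but simply cites \cite[Theorem II.4.5]{alfsen} for the `if' part; your Hahn--Banach/Jordan-decomposition argument is exactly the standard route to that theorem, so you have supplied a self-contained proof where the paper defers to the literature.
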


\begin{proof}
The `only if part' is obvious and the `if part' follows from \cite[Theorem II.4.5]{alfsen}.
\end{proof}

\begin{prop}\label{P:mult-charact}
Let $u\in A_c(X)$. Then the  following assertions are equivalent.
\begin{enumerate}[$(1)$]
    \item $u$ is a multiplier for $A_c(X)$.
    \item If $\mu,\nu\in M_1(\overline{\ext X})$ have the same barycenter and $B\subset \er$ is a Borel set, then $\mu(u^{-1}(B))=\nu(u^{-1}(B))$ and, moreover, if this number is strictly positive, then the probabilities
$$\frac{\mu_{|u^{-1}(B)}}{\mu(u^{-1}(B))}\mbox{ and }\frac{\nu_{|u^{-1}(B)}}{\nu(u^{-1}(B))}$$
have the same barycenter.
\end{enumerate}
\end{prop}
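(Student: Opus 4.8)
The implication $(1)\Rightarrow(2)$ is exactly Lemma~\ref{L:mult-nutne}(ii), so the only thing to prove is $(2)\Rightarrow(1)$. The plan for that direction is to reduce the multiplier property to an extension question and then apply Lemma~\ref{L:extending}. Fix $a\in A_c(X)$. Since $u$ and $a$ are continuous, $ua$ is continuous, so a function $b\in A_c(X)$ with $b=ua$ on $\ext X$ automatically satisfies $b=ua$ on $\overline{\ext X}$; hence it suffices to extend the continuous function $(ua)|_{\overline{\ext X}}$ to an affine continuous function on $X$. By Lemma~\ref{L:extending}, this extension exists if and only if $\int_{\overline{\ext X}}ua\di\mu=\int_{\overline{\ext X}}ua\di\nu$ whenever $\mu,\nu\in M_1(\overline{\ext X})$ have the same barycenter. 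So the whole argument boils down to deducing this integral identity from condition $(2)$.

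To do that I would approximate $ua$ by simple functions measurable with respect to $u$. Given $\ep>0$, since $u$ has compact (hence bounded) range, I would partition that range into finitely many disjoint Borel pieces $B_1,\dots,B_n$, each of diameter less than $\ep$, choose $c_i\in B_i$, and set $E_i=u^{-1}(B_i)\cap\overline{\ext X}$. These are pairwise disjoint Borel sets covering $\overline{\ext X}$ with $|u-c_i|<\ep$ on $E_i$, so $\bigl|\int_{\overline{\ext X}}ua\di\mu-\sum_i c_i\int_{E_i}a\di\mu\bigr|\le\ep\norm{a}$, and likewise for $\nu$. The key use of $(2)$ comes next: when $\mu(E_i)>0$, the normalized restriction $\mu|_{E_i}/\mu(E_i)$ is a probability on $\overline{\ext X}$, so it has a barycenter $x_i\in X$, and because $a$ is affine and continuous, $\int_{E_i}a\di\mu=\mu(E_i)\,a(x_i)$ (and the integral is $0$ when $\mu(E_i)=0$). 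Condition $(2)$ gives $\mu(E_i)=\nu(E_i)$ and, when this common value is positive, equality of the two barycenters; in all cases $\int_{E_i}a\di\mu=\int_{E_i}a\di\nu$. Summing, $\bigl|\int_{\overline{\ext X}}ua\di\mu-\int_{\overline{\ext X}}ua\di\nu\bigr|\le 2\ep\norm{a}$, and letting $\ep\to0$ yields the desired identity; Lemma~\ref{L:extending} then produces $b$, and $u$ is a multiplier.

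I do not expect a genuine obstacle: the only substantial external input is Lemma~\ref{L:extending}, and the rest is a routine simple-function approximation. The points that need care are purely bookkeeping — invoking the ``same barycenter'' clause of $(2)$ only on the pieces $E_i$ of positive mass (so as to match its precise formulation), and noting that the barycenter of $\mu|_{E_i}/\mu(E_i)$ automatically exists in $X$ since $\overline{\ext X}$ is a compact subset of $X$. The mild conceptual step worth spelling out is the passage from $\ext X$ to $\overline{\ext X}$, which is harmless precisely because $ua$, $b\in A_c(X)$ are continuous.
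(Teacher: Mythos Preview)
Your proof is correct and follows essentially the same route as the paper: both reduce $(2)\Rightarrow(1)$ to Lemma~\ref{L:extending} and then derive $\int ua\di\mu=\int ua\di\nu$ from the observation that condition $(2)$ gives $\int_{u^{-1}(B)}a\di\mu=\int_{u^{-1}(B)}a\di\nu$ for every Borel $B$. The only stylistic difference is that the paper phrases the last step as ``the equality extends to all bounded $g$ measurable with respect to the $\sigma$-algebra generated by $u^{-1}(B)$, in particular $g=u$,'' whereas you unpack this by an explicit simple-function approximation of $u$; these are two descriptions of the same elementary fact.
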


\begin{proof}
$(1)\implies(2)$ is proved in Lemma~\ref{L:mult-nutne}$(ii)$.

$(2)\implies(1)$: Let $h\in A_c(X)$. It is enough to prove that $h\cdot u|_{\overline{\ext X}}$ may be extended to an affine continuous function. We will check the condition from Lemma~\ref{L:extending}. Take two probability measures $\mu,\nu$ on $\overline{\ext X}$ with the same barycenter. By $(2)$ we get that
$$\int h\cdot g\di\mu=\int h\cdot g\di\nu,$$
whenever $g$ is the characteristic function of $u^{-1}(B)$, where $B\subset \er$ is a Borel set. It follows that this equality holds for any bounded function $g$ measurable with respect to the $\sigma$-algebra generated by
$$u^{-1}(B), B\subset\er \mbox{ Borel}.$$
In particular, the choice $g=u$ is possible, which completes the proof.
\end{proof}

Now we are going to show that condition $(i)$ from Lemma~\ref{L:mult-nutne} characterizes multipliers on simplices. To this end we need the following lemma:

\begin{lemma}\label{L:simplex extension}
Assume that $X$ is a simplex. Let $f\in C(\overline{\ext X})$. Then $f$ may be extended to an affine continuous function on $X$ if and only if
$$\forall\mu\in M_1(\overline{\ext X})\colon r(\mu)\in\overline{\ext X}\Rightarrow f(r(\mu))=\int f\di\mu.$$
\end{lemma}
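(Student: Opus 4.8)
The plan is to follow the same strategy as in the proof of Proposition~\ref{P:mult-charact}, namely to reduce the statement to the general extension criterion of Lemma~\ref{L:extending} and then to exploit the simplex structure to weaken the hypothesis from "all pairs $\mu,\nu$ with $r(\mu)=r(\nu)$" to "all $\mu$ with barycenter in $\overline{\ext X}$, compared with a Dirac measure". The `only if part' is immediate: if $h\in A_c(X)$ extends $f$ and $r(\mu)\in\overline{\ext X}$, then $f(r(\mu))=h(r(\mu))=\int h\di\mu=\int f\di\mu$ since $h$ is affine continuous and $\mu$ is carried by $\overline{\ext X}$ where $h=f$.

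For the `if part', assume the stated condition. By Lemma~\ref{L:extending} it suffices to show that $\int f\di\mu=\int f\di\nu$ whenever $\mu,\nu\in M_1(\overline{\ext X})$ have a common barycenter $x$. Here is where the simplex hypothesis enters. First I would pass to maximal measures: since maximal measures are supported by $\overline{\ext X}$ (as recalled in Section~\ref{ssc:ccs}), and since in a simplex the maximal representing measure of each point is unique, I claim it is enough to handle the case where $\mu$ is maximal. Indeed, any $\mu\in M_1(\overline{\ext X})$ is dominated in the dilation order by a maximal measure $\hat\mu$, and one would like to conclude $\int f\di\mu=\int f\di\hat\mu$. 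This, however, is not automatic for a merely continuous $f$ on $\overline{\ext X}$ — so the genuine content of the lemma must be extracted differently.

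The cleaner route, which I expect to be the intended one, is: given $\mu,\nu\in M_1(\overline{\ext X})$ with $r(\mu)=r(\nu)=x$, the hypothesis applied pointwise inside a disintegration will do the job. Concretely, disintegrate $\mu$ through the barycentric map: write $\mu$ as a "sum" of its pieces, or rather, approximate $\mu$ in the weak$^*$ topology by finitely supported measures $\sum_i t_i\varepsilon_{x_i}$ with $x_i\in\overline{\ext X}$; the barycenter of such an approximant is some point close to $x$, and one wants to replace each $\varepsilon_{x_i}$-piece... this gets delicate. A more robust argument: consider the maximal measure $\lambda$ representing $x$. Since $X$ is a simplex, $\lambda$ is the unique maximal measure $\succ \varepsilon_x$, and both $\mu\prec\lambda'$ and $\nu\prec\lambda''$ for maximal $\lambda',\lambda''$ representing $x$, whence $\lambda'=\lambda''=\lambda$. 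Now use that $\mu\prec\lambda$ means $\lambda$ is obtained from $\mu$ by "spreading out": there is a dilation $T$ with $T\mu=\lambda$, i.e. $\mu=\int \varepsilon_y\,d\lambda$-style decomposition is reversed. The key point is that for $y\in\spt\mu\subset\overline{\ext X}$, the component measures are carried by sets with barycenter $y\in\overline{\ext X}$, so the hypothesis $f(r(\sigma))=\int f\di\sigma$ applies to each component $\sigma$ and yields, after integration, $\int f\di\mu=\int f\di\lambda$; symmetrically $\int f\di\nu=\int f\di\lambda$, and we are done.

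The main obstacle I anticipate is making the last step rigorous: one needs a disintegration of the maximal measure $\lambda$ over $\mu$ (respectively over $\nu$) whose fibres are probability measures $\sigma_y$ on $\overline{\ext X}$ with $r(\sigma_y)=y$ for $\mu$-a.e.\ $y$. Such a disintegration exists precisely because $\mu\prec\lambda$ (this is the standard characterization of the dilation order by dilations/Markov kernels, cf.\ the theory in \cite[Chapter 7]{lacey} or \cite{lmns}), and the fibres can be taken carried by $\overline{\ext X}$ since $\lambda$ itself is. Granting this, Fubini gives $\int f\di\lambda=\int\!\!\int f\di\sigma_y\,d\mu(y)=\int f(r(\sigma_y))\,d\mu(y)=\int f(y)\,d\mu(y)=\int f\di\mu$, using the hypothesis on the inner integral (legitimate since $r(\sigma_y)=y\in\overline{\ext X}$). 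The same computation with $\nu$ finishes the proof via Lemma~\ref{L:extending}. The only care needed is measurability of $y\mapsto\sigma_y$ and of $y\mapsto\int f\di\sigma_y$, which is part of the standard disintegration package.
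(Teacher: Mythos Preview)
Your approach is correct but takes a different route from the paper. You reduce to Lemma~\ref{L:extending} and then, given $\mu,\nu\in M_1(\overline{\ext X})$ with common barycenter $x$, pass to the unique maximal measure $\delta_x$ (using that $X$ is a simplex so $\mu\prec\delta_x$ and $\nu\prec\delta_x$), invoke the dilation characterization of the Choquet order to disintegrate $\delta_x$ over $\mu$, and apply the hypothesis fibrewise to obtain $\int f\di\mu=\int f\di\delta_x=\int f\di\nu$. This works; the fibres $\sigma_y$ are carried by $\overline{\ext X}$ for $\mu$-a.e.\ $y$ because $\delta_x$ is, and the measurability issues are handled by the Cartier--Fell--Meyer theorem, which is available also in the non-metrizable setting (see e.g.\ \cite{lmns}).

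The paper bypasses both Lemma~\ref{L:extending} and any disintegration. It constructs the extension directly: take a Tietze extension $\tilde f\in C(X)$ of $f$, set $h(x)=\int\tilde f\di\delta_x$, and then cite two external facts --- that $h$ is strongly affine (\cite[Theorem 6.8(c)]{lmns}) and that a strongly affine function continuous on $\overline{\ext X}$ is automatically continuous on $X$ (\cite[Theorem 3.5]{lusp}). The hypothesis is used only once, to verify $h=f$ on $\overline{\ext X}$ (since $\delta_x$ is carried by $\overline{\ext X}$ and has barycenter $x\in\overline{\ext X}$). The paper's argument is shorter and sidesteps disintegration entirely, at the cost of importing two black-box results specific to simplices; your argument stays within the framework already set up in Section~\ref{sec:mult-acx} but needs the dilation theorem as its black box.
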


\begin{proof} The `only if part' is obvious, let us prove the `if part'. For each $x\in X$, let $\delta_x$ denote the unique maximal measure representing $x$. Let $\tilde{f}\in C(X)$ be an extension of $f$ provided by the Tietze theorem.
Then the function $h(x)=\int_{X} \tilde{f}\di\delta_x$, $x\in X$ is a strongly affine function on $X$ by  \cite[Theorem 6.8(c)]{lmns}.
Since maximal measures are supported by $\ov{\ext X}$ and $\tilde{f}|_{\ov{\ext X}}=f$, we deduce that $h=f$ on $\ov{\ext X}$ by the assumption.
Since $h$ is strongly affine and continuous on $\ov{\ext X}$, $h$ is continuous on $X$ by \cite[Theorem 3.5]{lusp}.
\end{proof}

\begin{prop}\label{P:simplex mult} Assume that $X$ is a simplex. Let $u\in A_c(X)$. Then $u\in M(A_c(X))$ if and only if
$$\forall\mu\in M_1(\overline{\ext X})\colon r(\mu)\in\overline{\ext X}\Rightarrow u\mbox{ is constant on }\spt\mu.$$
\end{prop}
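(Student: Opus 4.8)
The plan is to handle the two implications separately, with the forward direction essentially already available. If $u\in M(A_c(X))$, then the asserted conclusion — that $u$ is constant on $\spt\mu$ for every $\mu\in M_1(\overline{\ext X})$ with $r(\mu)\in\overline{\ext X}$ — is precisely condition $(i)$ of Lemma~\ref{L:mult-nutne}, so here I would simply cite that lemma and nothing more is needed.

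For the converse I would assume that $u$ is constant on $\spt\mu$ whenever $\mu\in M_1(\overline{\ext X})$ has $r(\mu)\in\overline{\ext X}$, and fix an arbitrary $h\in A_c(X)$. By the definition of $M(A_c(X))$ it suffices to produce $b\in A_c(X)$ with $b=h\cdot u$ on $\ext X$, and for this it is enough to extend the continuous function $f:=(h\cdot u)|_{\overline{\ext X}}$ to an element of $A_c(X)$, since $\ext X\subset\overline{\ext X}$. To obtain such an extension I would invoke the simplex extension criterion of Lemma~\ref{L:simplex extension} applied to $f$.

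Verifying the criterion is short. Let $\mu\in M_1(\overline{\ext X})$ with $x:=r(\mu)\in\overline{\ext X}$. By hypothesis there is $c\in\er$ with $u\equiv c$ on $\spt\mu$, hence $\mu(u)=c$; since $u\in A_c(X)$ and $\mu$ represents $x$, this forces $u(x)=\mu(u)=c$ — and it is exactly here that affinity of $u$ is used, because $x$ itself need not lie in $\spt\mu$. Consequently
\[
\mu(h\cdot u)=c\,\mu(h)=c\,h(x)=u(x)\,h(x)=(h\cdot u)(x),
\]
where the middle equality uses $h\in A_c(X)$ and $r(\mu)=x$. Thus $f$ satisfies the hypothesis of Lemma~\ref{L:simplex extension}, which yields $b\in A_c(X)$ with $b=h\cdot u$ on $\overline{\ext X}$, and in particular on $\ext X$, so $u\in M(A_c(X))$.

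I do not expect a serious obstacle: once Lemma~\ref{L:mult-nutne}$(i)$ and Lemma~\ref{L:simplex extension} are in hand the computation is immediate, and all the real work (continuity of strongly affine functions continuous on $\overline{\ext X}$, uniqueness of maximal measures) is already packaged inside Lemma~\ref{L:simplex extension}. The only point that needs care is the one flagged above — that $r(\mu)$ need not belong to $\spt\mu$, so the value $u(r(\mu))$ must be read off from $\mu(u)$ rather than from the constancy of $u$ on the support. It may also be worth remarking that in the simplex case this proposition refines Proposition~\ref{P:mult-charact}: condition $(i)$ of Lemma~\ref{L:mult-nutne} here replaces the stronger condition $(ii)$ used there.
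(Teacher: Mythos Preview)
Your proof is correct and follows essentially the same approach as the paper: both directions cite Lemma~\ref{L:mult-nutne}$(i)$ and Lemma~\ref{L:simplex extension} respectively, and the verification of the extension criterion is the same computation. Your version is slightly more explicit in isolating why the constant value $c$ equals $u(r(\mu))$, but this is a presentational difference only.
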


\begin{proof}
Implication $\implies$ is proved in Lemma~\ref{L:mult-nutne}$(i)$. Let us prove the converse implication. Assume that $u$ satisfies the given property and fix $f\in A_c(X)$. We will show that there is $g\in A_c(X)$ coinciding with $u\cdot f$ on $\overline{\ext X}$.

We will check the condition provided by Lemma~\ref{L:simplex extension}.
To this end take any $\mu\in M_1(\overline{\ext X})$ such that $r(\mu)\in\overline{\ext X}$. Then
$$\int uf\di\mu=\int u(r(\mu)) f\di\mu= u(r(\mu)) \int f\di\mu= u(r(\mu)) f(r(\mu)),$$
where in the first equality we used that $u$ is constant on $\spt\mu$. 
This completes the argument.
\end{proof}

We now collect some examples illustrating the use of the above characterizations on concrete compact convex sets. Most of them are defined as the state spaces of certain function space, hence we use the notation from  Section~\ref{ssc:ch-fs}.

\begin{example2}\label{E:more points}
(1) Let $X$ be a Bauer simplex, i.e., $X=M_1(K)$ for some compact topological space $K$.
Then $M(A_c(X))=A_c(X)\, (=C(K))$.  This follows easily from the definitions. Alternatively, it follows from Proposition~\ref{P:simplex mult} as $\ext X$ is closed  and the only probabilities carried by $\overline{\ext X}=\ext X$ having barycenter in $\overline{\ext X}=\ext X$ are the Dirac measures.
Moreover, in this case $m(X)=\ext X\ (=\{\varepsilon_t\setsep t\in K\})$. This follows from Proposition~\ref{P:m(X)} together with the Urysohn lemma.

(2) Let $K=[0,1]$,
$$E=\{f\in C([0,1])\setsep f(\tfrac12)=\tfrac12(f(0)+f(1))\}.$$
Then $E$ is a function space, $\Ch_E K=[0,\frac12)\cup(\frac12,1]$. Moreover, from \cite[Theorem 2]{stacey} it follows that $X=S(E)$ is a simplex. Indeed, it is not difficult to verify that the mapping
\[
t\mapsto\begin{cases}\ep_t,& t\in K\setminus\{\frac12\},\\
\frac12(\ep_0+\ep_1),&t=\frac12,
\end{cases}
\]
satisfies the assumptions of the aforementioned theorem.

Using Proposition~\ref{P:simplex mult} we see that  
\begin{equation}
\label{eq:priklad-multi-konst}
M(A_c(X))=\{\Phi(f)\setsep f\in E, f(0)=f(1)=f(\tfrac12)\}.
\end{equation}
Using Proposition~\ref{P:m(X)} we deduce 
$$m(X)=\{\phi(t)\setsep t\in [0,1]\} \cup [\phi(0),\phi(1)],$$
where $[\phi(0),\phi(1)]$ denotes the respective segment in $X$.

Indeed, the inclusion `$\supset$' follows from \eqref{eq:priklad-multi-konst} and Proposition~\ref{P:m(X)}$(2)$.

To see the inclusion `$\subset$', let $s\in m(X)$ be given. Then there exists a probability measure $\mu\in M_s(X)\cap M_1(\ov{\ext X})$. Then $\mu$ is supported by $\{\phi(t)\setsep t\in [0,1]\}$. By Proposition~\ref{P:m(X)}$(3)$ we obtain that $\mu$ is either the Dirac measure at some $\phi(t)$, $t\in [0,1]\setminus\{\frac12\}$ or $\mu$ is supported by the set $\{\phi(0),\phi(\frac12),\phi(1)\}$.
Then it follows that $s\in [\phi(0),\phi(1)]$.

(3) Let $K=[0,1]$ and 
$$E=\left\{f\in C([0,1])\setsep f(0)=\int_0^1 f\right\}.$$
Then it is easy to check that $E$ a function space and $\Ch_E K=(0,1]$. Using \cite[Theorem 2]{stacey} we obtain that $X=S(E)$ is a simplex. By Proposition~\ref{P:simplex mult} we deduce that $M(A_c(X))$ consists only of constant functions. In particular, $m(X)=X$.

(4) Let $K=[0,5]$,
$$E=\{f\in C([0,5])\setsep f(1)=\tfrac12(f(0)+f(2)),f(4)=\tfrac12(f(3)+f(5))\}.$$
Then $E$ is a function space, $\Ch_E K=[0,5]\setminus\{1,4\}$ and $X=S(E)$ is a simplex (again we use \cite[Theorem 2]{stacey}). Using Proposition~\ref{P:simplex mult} we see that  
$$M(A_c(X))=\{\Phi(f)\setsep f\in E, f(0)=f(1)=f(2)\ \&\  f(3)=f(4)=f(5)\}.$$
Using Proposition~\ref{P:m(X)} we deduce 
$$m(X)=\{\phi(t)\setsep t\in [0,5]\} \cup [\phi(0),\phi(2)]\cup[\phi(3),\phi(5)].$$

This example is just a more complicated variant of example (2) above. But it may be used to illustrate the optimality of assertions $(iii)$ and $(iv)$ of Lemma~\ref{L:mult-nutne}.
Indeed, set
$$\mu_1=\tfrac14(\varepsilon_{\phi(0)}+\varepsilon_{\phi(2)})+\tfrac12\varepsilon_{\phi(4)}, \quad\nu_1=\tfrac14(\varepsilon_{\phi(3)}+\varepsilon_{\phi(5)})+\tfrac12\varepsilon_{\phi(1)}.$$
Then $\mu_1$ and $\nu_1$ are mutually orthogonal probabilities supported by $\overline{\ext X}$ with the same barycenter. Their supports have exactly three points, but there are multipliers which are not constant on the union of these supports.

Further, set
$$
\mu_2=\tfrac14(\varepsilon_{\phi(0)}+\varepsilon_{\phi(2)}+\varepsilon_{\phi(3)}+\varepsilon_{\phi(5)}), \quad 
\nu_2=\tfrac12(\varepsilon_{\phi(1)}+\varepsilon_{\phi(4)}).$$
Again, $\mu_2$ and $\nu_2$ are mutually orthogonal probabilities supported by $\overline{\ext X}$ with the same barycenter. The support of $\mu_2$ has four points and the support of $\nu_2$ has two points, but there are multipliers which are not constant on the union of these supports.

(5) Let $K=[0,1]$ and
$$E=\{f\in C([0,1])\setsep f(0)+f(\tfrac13)=f(\tfrac23)+f(1)\}.$$
Then $E$ is a  function space with $\Ch_E K=[0,1]$ (this is easy to see from the fact that for each $t\in K$ there exists a nonnegative function $f\in E$ that attains $0$ precisely at $t$). The measures $\frac12(\varepsilon_{\phi(0)}+\varepsilon_{\phi(1/3)})$ and $\frac12(\varepsilon_{\phi(2/3)}+\varepsilon_{\phi(1)})$ are then two maximal measures on $X=S(E)$ with the same barycenter in $S(E)$, and hence $X$ is not a simplex.
We claim that
$$M(A_c(X))=\{\Phi(f)\setsep f\in E, f(0)=f(\tfrac13)=f(\tfrac23)=f(1)\}.$$
Inclusion `$\subset$' follows from Lemma~\ref{L:mult-nutne}$(iii)$. To prove the converse we will use Proposition~\ref{P:mult-charact} using the reformulation from Remark~\ref{rem:nutne}(2).

Take any $\mu\in M(K)$ such that $\mu\in E^\perp$. It follows from the definition of $E$ and the bipolar theorem that $\mu$ is a multiple of
$$\varepsilon_0+\varepsilon_{1/3}-\varepsilon_{2/3}-\varepsilon_1.$$
Assume that $f\in E$ is such that $f(0)=f(\frac13)=f(\frac23)=f(1)$. Then clearly $\mu(f)=0$. Since $f$ is constant on the support of $\mu$, given any Borel set $B\subset\er$, the set $f^{-1}(B)$ either contains the support of $\mu$ or is disjoint with the support of $\mu$. In both cases $\mu|_{f^{-1}(B)}\in E^\perp$.

Using Proposition~\ref{P:m(X)} we get
$$m(X)=\{\phi(t)\setsep t\in[0,1]\}\cup\co\{\phi(0),\phi(1/3),\phi(2/3),\phi(1)\}.$$

(6) Let $K=[0,1]$ and $\mu,\nu\in M_1(K)$ be two mutually orthogonal probabilities such that $\mu,\nu\notin \{\ep_t\setsep t\in K\}$. Let
$$E=\left\{f\in C([0,1])\setsep \int f\di\mu=\int f\di\nu\right\}.$$
We claim that $E$ is a function space and $\Ch_E K=[0,1]$. 

Obviously, $E$ contains constant functions. If $t,s\in K$ different satisfy $f(t)=f(s)$ for every $f\in E$, then by the bipolar theorem there exists $c\in\er$ such that $\ep_t-\ep_s=c(\mu-\nu)$. Up to relabelling $s$ and $t$ we may and shall assume that $c\ge0$. Then by the orthogonality of $\mu$ and $\nu$ we have
\[
2=\norm{\ep_t-\ep_s}=c\norm{\mu-\nu}=2c,
\]
i.e., $c=1$ and hence $\ep_t+\nu=\mu+\ep_s$. Then 
\[
1+\nu(\{t\})=\mu(\{t\}).
\]
Since $\mu(\{t\})\le 1$, we have $\nu(\{t\})=0$ and $\mu=\ep_t$. But this contradicts the choice of $\mu$. Hence $E$ separates the points of $K$, so it is a function space.

Further we check that $M_t(E)=\{\ep_t\}$ for every $t\in K$. Indeed, let $t\in K$ be arbitrary and $\lambda\in M_t(E)\setminus\{\ep_t\}$. Then we may assume without loss of generality that $\lambda(\{t\})=0$.  Then $\ep_t-\lambda$ is again by the bipolar theorem a multiple of $\mu-\nu$, hence $\ep_t-\lambda=c(\mu-\nu)$ for some $c\in\er$. Up to relabelling $\mu$ and $\nu$ we may and shall assume that $c\ge0$. As above we have
\[
2=\norm{\ep_t-\lambda}=c\norm{\mu-\nu}=2c,
\]
which gives $c=1$. Hence $\ep_t+\nu=\mu+\lambda$, which yields
\[
1+\nu(\{t\})=\mu(\{t\})\le 1.
\]
We obtain $\nu(\{t\})=0$ and $\mu(\{t\})=1$. Hence $\mu=\ep_t$ and $\nu=\lambda$. But this contradicts our assumptions on measures $\mu$ and $\nu$. This completes the proof that  $\Ch_E K=[0,1]$.

Next we verify that $X=S(E)$ is not a simplex.
Indeed, the measures $\phi(\mu)$ and $\phi(\nu)$ are different measures supported by $\ext X=\phi(K)$ (and hence maximal) with the same barycenter. Thus $X$ is not a simplex.

We claim that
$$M(A_c(X))=\{\Phi(f)\setsep f\in E, f\mbox{ is constant on }\spt\mu\cup\spt\nu\}.$$
Indeed, by the bipolar theorem $E^\perp$ is formed by the multiples of $\mu-\nu$. Inclusion `$\supset$' follows from Proposition~\ref{P:mult-charact} similarly as in example (5) above.
To prove the converse assume that $f\in E$ but $f$ is not constant on $\spt\mu\cup\spt\nu$. It follows that there is 
an open set $U\subset\er$ such that $0<\mu(f^{-1}(U))+\nu(f^{-1}(U))<2$. Then $\lambda=\mu-\nu\in E^\perp$, but  $\lambda|_{f^{-1}(U)}$ is not a multiple of $\mu-\nu$ and hence it does not belong to $E^\perp$. It thus follows from Lemma~\ref{L:mult-nutne} that $\Phi(f)\notin M(A_c(X))$.

Using Proposition~\ref{P:m(X)} we now deduce that
$$m(X)=\{\phi(t)\setsep t\in [0,1]\}\cup \overline{\co}\{\phi(t)\setsep t\in\spt\mu\cup\spt\nu\}.$$

(7) Let $K=[0,7]$ and 
$$E=\{f\in C([0,7])\setsep f(0)+f(1)=f(2)+f(3)\ \&\ f(4)+f(5)=f(6)+f(7)\}.$$
Then $E$  is a function space, $\Ch_E K=[0,7]$ and $X=S(E)$ is not a simplex (the reasoning is similar as in example (6)). 
By the bipolar theorem $E^\perp$ is formed by linear combinations of $\varepsilon_0+\varepsilon_1-\varepsilon_2-\varepsilon_3$ and $\varepsilon_4+\varepsilon_5-\varepsilon_6-\varepsilon_7$. Similarly as in (5) and (6) we get
$$\begin{aligned}
    M(A_c(X))=\{\Phi(f), f\in E, f(0)=f(1)&=f(2)=f(3)\\
    &\&\ f(4)=f(5)=f(6)=f(7)\}.
    \end{aligned}$$
In particular,
$$\varepsilon_{\phi(0)}+\varepsilon_{\phi(1)}+\varepsilon_{\phi(4)}+\varepsilon_{\phi(5)}\quad\mbox{ and } \quad\varepsilon_{\phi(2)}+\varepsilon_{\phi(3)}+\varepsilon_{\phi(6)}+\varepsilon_{\phi(7)}$$
are two mutually orthogonal maximal measures on $X$ with the same barycenter but there are multipliers which are not constant on the supports of these measures.   \end{example2}

\section{On preservation of extreme points}\label{s:preserveext}

In this section we investigate the relationship between $\ext X$ and $\ext S(H)$ for an intermediate function space $H$ on a compact convex set $X$. This is motivated, among others, by the need to clarify when Corollary~\ref{cor:rovnost na ext}$(b)$ may be applied.

Given a compact convex set $X$ and an intermediate function space $H$ on $X$,  $\iota\colon X\to S(H)$ and $\pi\colon S(H)\to X$ will be the mappings provided by Lemma~\ref{L:intermediate}. 

We start by the following easy lemma.

\begin{lemma}\label{L:ext a fibry} Let $X$ be a compact convex set and let $H$ be an intermediate function space.
\begin{enumerate}[$(a)$]
    \item Let $x\in \ext X$. Then $\pi^{-1}(x)$ is a closed face of $S(H)$, so it contains an extreme point of $S(H)$. In particular, $\pi(\ext S(H))\supset\ext X$.
    \item If $H$ is determined by extreme points, then $\pi(\ext S(H))\subset \overline{\ext X}$.
\end{enumerate}
\end{lemma}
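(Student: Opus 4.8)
The plan is to treat the two assertions separately, both relying on the properties of $\pi$ and $\iota$ recorded in Lemma~\ref{L:intermediate}. For part $(a)$, recall from Lemma~\ref{L:intermediate}$(e)$ that $\pi\colon S(H)\to X$ is a continuous affine surjection. Fixing $x\in\ext X$, the set $\pi^{-1}(x)$ is nonempty by surjectivity, closed by continuity and convex by affinity, hence a compact convex set. First I would verify that $\pi^{-1}(x)$ is in fact a \emph{face} of $S(H)$: if $\varphi,\psi\in S(H)$ satisfy $\tfrac12(\varphi+\psi)\in\pi^{-1}(x)$, then $x=\pi(\tfrac12(\varphi+\psi))=\tfrac12(\pi(\varphi)+\pi(\psi))$, and since $x$ is extreme in $X$ this forces $\pi(\varphi)=\pi(\psi)=x$, i.e.\ $\varphi,\psi\in\pi^{-1}(x)$. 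By the Krein--Milman theorem $\pi^{-1}(x)$ has an extreme point $\varphi_0$, and since $\pi^{-1}(x)$ is a face, any extreme point of it is automatically an extreme point of $S(H)$ (the elementary transitivity property of faces). Thus $\varphi_0\in\ext S(H)$ with $\pi(\varphi_0)=x$, which proves $\ext X\subseteq\pi(\ext S(H))$.

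For part $(b)$, I would assume $H$ is determined by extreme points. By Lemma~\ref{L:intermediate}$(d)$ this gives $\ext S(H)\subseteq\overline{\iota(\ext X)}$. Applying the continuous map $\pi$, using the inclusion $f(\overline{A})\subseteq\overline{f(A)}$ valid for any continuous $f$, together with $\pi\circ\iota=\mathrm{id}_X$ from Lemma~\ref{L:intermediate}$(f)$, one obtains
$$\pi(\ext S(H))\subseteq\pi\bigl(\overline{\iota(\ext X)}\bigr)\subseteq\overline{\pi(\iota(\ext X))}=\overline{\ext X},$$
which is exactly the claim.

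The argument is entirely routine and I do not expect a genuine obstacle; the only two points requiring any care are the verification that the fibre $\pi^{-1}(x)$ over an extreme point is a face (this is the sole place where extremality of $x$ is used), and the invocation of the elementary fact that an extreme point of a face of a convex set is an extreme point of the whole set, which allows one to promote the Krein--Milman extreme point of the fibre to an extreme point of $S(H)$.
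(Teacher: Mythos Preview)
Your proof is correct and follows exactly the approach the paper intends: the paper's own proof simply says assertion $(a)$ is obvious (using the Krein--Milman theorem) and assertion $(b)$ follows easily from Lemma~\ref{L:intermediate}$(d)$, and you have supplied precisely the details behind those two remarks.
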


\begin{proof}
Assertion $(a)$ is obvious (using the Krein-Milman theorem). Assertion $(b)$ follows easily from Lemma~\ref{L:intermediate}$(d)$.
\end{proof}

\begin{example2}
It may happen that for some $\varphi\in\ext S(H)$ we have $\pi(\varphi)\notin\ext X$, even if $H$ is determined by extreme points:

Let $K=[0,1]$ and
$$E=\{f\in C([0,1])\setsep f(\tfrac{1}{2})=\tfrac{1}{2}(f(0)+f(1))\}.$$
Then $X=S(E)$ is a simplex 
and  $\Ch_E K=[0,\tfrac12)\cup(\tfrac12,1]$ (see Example~\ref{E:more points}(2)).

Let
$$\widetilde{H}=\{f\in \Ba_1^b([0,1])\setsep f(\tfrac{1}{2})=\tfrac{1}{2}(f(0)+f(1))\}.$$
Then $\widetilde{H}$ is formed exactly by pointwise limits of uniformly bounded sequences from $E$. In particular, $\widetilde{H}$ is determined by $\Ch_EK$. Clearly, $\widetilde{H}$ fits to the scheme from Lemma~\ref{L:function space}$(c)$. Hence, using notation from the quoted lemma,  $H=V(\widetilde{H})$ is an intermediate function space on $X$. 
Let 
$$F=\left\{\varphi\in S(H)\setsep \varphi\left(V\left(1_{\left[0,\tfrac12-\delta\right]\cup\left\{\tfrac12\right\}\cup\left[\tfrac12+\delta,1\right]}\right)\right)=0\mbox{ for each }\delta\in(0,\tfrac12)\right\}.$$
Then $F$ is nonempty, it contains for example any cluster point of the sequence $\imath(\frac12-\tfrac{1}{n+1})$ in $S(H)$ (where $\imath\colon K\to S(H)$ is the mapping from Lemma~\ref{L:function space}$(c)$). Further, it is clear that $F$ is a closed face of $S(H)$, so it contains some extreme points of $S(H)$. 

Finally observe that $\pi(F)=\{\phi({\frac12})\}$, where $\phi\colon K\to X$ is the evaluation mapping from Section~\ref{ssc:ch-fs}. 
Indeed, assume $\varphi\in F$. By Lemma~\ref{L:reprezentace} there is a net $(x_\alpha)$ in $X=S(E)$ such that $\iota(x_\alpha)\to\varphi$. By the Hahn-Banach theorem each $x_\alpha$ may be extended to a state on $C([0,1])$, represented by a probability measure $\mu_\alpha$ on $[0,1]$.
Then for each $\delta\in(0,\frac12)$ we have
$$\mu_\alpha([0,\tfrac12-\delta]\cup\{\tfrac12\}\cup[\tfrac12+\delta,1])\to0,$$
i.e.,
$$\mu_\alpha((\tfrac12-\delta,\tfrac12)\cup(\tfrac12,\tfrac12+\delta))\to1.$$
Let $\mu$ be any weak$^*$-cluster point of the net $(\mu_\alpha)$. Then $\mu([\tfrac12-\delta,\tfrac12+\delta])=1$ for each $\delta\in(0,\tfrac{1}{2})$. Thus $\mu=\varepsilon_{\frac12}$. It follows that $\mu_\alpha\to\varepsilon_{\frac12}$ in the weak$^*$-topology. We conclude that $\pi(\varphi)=\phi(\frac12)$.

Since $\phi(\frac12)\notin\ext X=\phi(\Ch_E K)$, the proof is complete.
\end{example2}

In view of Lemma~\ref{L:platnost soucinu} it is important to know when $\iota(x)$ is an extreme point of $S(H)$ whenever $x\in \ext X$. A general characterization is given in the following lemma.

\begin{lemma}\label{L:zachovavani ext}
Let $X$ be a compact convex set and let $H$ be an intermediate function space.
Let $x\in \ext X$. Then the following are equivalent:
\begin{enumerate}[$(1)$]
    \item $\iota(x)\in \ext S(H)$.
    \item $\iota(x)\in\ext\pi^{-1}(x)$.
    \item If $(x_i)$ and $(y_i)$ are two nets in $X$ converging to $x$ such that $$a(\tfrac{x_i+y_i}2)\to a(x) \mbox{ for each }a\in H,$$ then $a(x_i)\to a(x)$ and $a(y_i)\to a(x)$ for each  $a\in H$.
\end{enumerate}
 \end{lemma}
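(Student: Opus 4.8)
The plan is to prove the three equivalences $(1)\iff(2)$ and $(2)\iff(3)$ separately, working with the identification $H = A_c(S(H))$ from Lemma~\ref{L:intermediate}$(a)$ and using that $\pi^{-1}(x)$ is a closed face of $S(H)$ (Lemma~\ref{L:ext a fibry}$(a)$).

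\textbf{Proof of $(1)\iff(2)$.} The implication $(1)\implies(2)$ is immediate: if $\iota(x)$ is extreme in the whole set $S(H)$, it is a fortiori extreme in the smaller convex subset $\pi^{-1}(x)$ (which contains $\iota(x)$, since $\pi\circ\iota=\mathrm{id}_X$ by Lemma~\ref{L:intermediate}$(f)$). For the converse $(2)\implies(1)$, suppose $\iota(x)=\tfrac12(\varphi+\psi)$ with $\varphi,\psi\in S(H)$. Apply $\pi$: since $\pi$ is affine, $x=\pi(\iota(x))=\tfrac12(\pi(\varphi)+\pi(\psi))$, and because $x\in\ext X$ this forces $\pi(\varphi)=\pi(\psi)=x$, i.e.\ $\varphi,\psi\in\pi^{-1}(x)$. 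Now $(2)$ gives $\varphi=\psi=\iota(x)$, so $\iota(x)\in\ext S(H)$.

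\textbf{Proof of $(2)\iff(3)$.} The idea is to translate the net condition in $(3)$ into a statement about convex decompositions inside the fiber $\pi^{-1}(x)$, using Lemma~\ref{L:reprezentace}. For $(3)\implies(2)$: suppose $\iota(x)=\tfrac12(\varphi+\psi)$ with $\varphi,\psi\in\pi^{-1}(x)$. By Lemma~\ref{L:reprezentace} pick nets $(x_i)$, $(y_i)$ in $X$ with $a(x_i)\to\varphi(a)$ and $a(y_i)\to\psi(a)$ for every $a\in H$; passing to a common directed index set (the product of the two, or a diagonal refinement) we may assume they share the index set. Since $\varphi,\psi\in\pi^{-1}(x)$, Lemma~\ref{L:reprezentace} also gives $x_i\to x$ and $y_i\to x$ in $X$. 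Moreover $a(\tfrac{x_i+y_i}{2})=\tfrac12(a(x_i)+a(y_i))\to\tfrac12(\varphi(a)+\psi(a))=\iota(x)(a)=a(x)$ for each $a\in H$ (using that the functions in $H$ are affine). So the hypothesis of $(3)$ is met, whence $a(x_i)\to a(x)$ and $a(y_i)\to a(x)$ for all $a\in H$; this says $\varphi=\psi=\iota(x)$, proving $(2)$. For $(2)\implies(3)$: given nets $(x_i)$, $(y_i)$ in $X$ with $x_i,y_i\to x$ and $a(\tfrac{x_i+y_i}{2})\to a(x)$ for all $a\in H$, it suffices to show $a(x_i)\to a(x)$ (the statement for $(y_i)$ is symmetric). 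Take an arbitrary subnet along which $(\iota(x_i))$ and $(\iota(y_i))$ both converge in $S(H)$, say to $\varphi$ and $\psi$ respectively (possible by weak$^*$-compactness). Then $\varphi,\psi\in S(H)$ and, restricting to $a\in A_c(X)$ and using $x_i\to x$, we get $\pi(\varphi)=\pi(\psi)=x$, so $\varphi,\psi\in\pi^{-1}(x)$. The averaging hypothesis gives $\tfrac12(\varphi(a)+\psi(a))=a(x)=\iota(x)(a)$ for each $a\in H$, i.e.\ $\iota(x)=\tfrac12(\varphi+\psi)$; by $(2)$ this yields $\varphi=\psi=\iota(x)$. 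Thus every convergent subnet of $(\iota(x_i))$ converges to $\iota(x)$, and by compactness the whole net $\iota(x_i)\to\iota(x)$ in $S(H)$, which is exactly $a(x_i)\to a(x)$ for every $a\in H$.

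\textbf{Main obstacle.} The only genuinely delicate point is the subnet/compactness bookkeeping in $(2)\implies(3)$: one must extract a subnet making \emph{both} $(\iota(x_i))$ and $(\iota(y_i))$ converge, verify the limits lie in the fiber $\pi^{-1}(x)$ (which is where $x_i,y_i\to x$ and the continuity of $\pi$ enter), and then run the standard ``every subnet has a further subnet converging to the same limit'' argument to conclude convergence of the full net. Everything else is a routine application of the affinity of functions in $H$, the representation of states by nets (Lemma~\ref{L:reprezentace}), and the extreme-point definition; I would keep those parts terse.
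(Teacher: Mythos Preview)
Your proof is correct and follows essentially the same approach as the paper. The only cosmetic differences are that the paper proves $(2)\Rightarrow(1)$ by invoking that $\pi^{-1}(x)$ is a face of $S(H)$ (rather than your direct argument via $x\in\ext X$), and the paper routes the last implication as $(1)\Rightarrow(3)$ instead of $(2)\Rightarrow(3)$; your handling of the subnet/compactness bookkeeping is actually a bit more explicit than the paper's.
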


\begin{proof}
Implication $(1)\implies(2)$ is trivial. To show $(2)\implies(1)$ we recall that $\pi^{-1}(x)$ is a closed face of $S(H)$. 


$(3)\implies(2)$: Assume that $\iota(x)=\frac12(\varphi_1+\varphi_2)$ for some $\varphi_1,\varphi_2\in \pi^{-1}(x)$. 
By Lemma~\ref{L:reprezentace} there are nets $(x_i)$ and $(y_i)$ in $X$ converging to $x$ such that $\iota(x_i)\to\varphi_1$ and $\iota(y_i)\to\varphi_2$ in $S(H)$.

Further, $$\frac12(\iota(x_i)+\iota(y_i))
\to \frac12(\varphi_1+\varphi_2)=\iota(x)\mbox{ in }S(H),$$
hence
$a(\frac{x_i+y_i}2)\to a(x)$ for each $a\in H$.
The assumption yields $a(x_i)\to a(x)$ and $a(y_i)\to a(x)$ for each $a\in H$, i.e., $\iota(x_i)\to \iota(x)$ and  $\iota(y_i)\to \iota(x)$ in $S(H)$. In other words, $\varphi_1=\varphi_2=\iota(x)$. 
Hence, $\iota(x)\in\ext \pi^{-1}(x)$.

$(1)\implies(3)$: Assume that  $(x_i)$ and $(y_i)$ are nets satisfying the given properties. 
Up to passing to subnets we may assume that $\iota(x_i)\to \varphi_1$ and $\iota(y_i)\to\varphi_2$ in $S(H)$. Then, given $a\in H$ we have
$$\frac12(\varphi_1(a)+\varphi_2(a))=\lim_i a(\tfrac{x_i+y_i}2)=a(x),$$
so $\frac12(\varphi_1+\varphi_2)=\iota(x)$. By the assumption we get that $\varphi_1=\varphi_2=\iota(x)$, hence
$a(x_i)\to a(x)$ and $a(y_i)\to a(x)$ for each $a\in H$.
This completes the proof.
\end{proof}

The following two observations provide a sufficient condition for a point $x\in\ext X$ to satisfy $\iota(x)\in\ext S(H)$.

\begin{lemma}\label{lem:contin}
Let $X$ be a compact convex set and $H$ an intermediate function space. Let $x\in\ext X$. Assume that each $f\in H$ is continuous at $x$. Then $\pi^{-1}(x)=\{\iota(x)\}$, in particular, $\iota(x)\in \ext S(H)$.
\end{lemma}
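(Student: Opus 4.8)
The plan is to combine the representation of states from Lemma~\ref{L:reprezentace} with the assumed continuity at $x$. Note first that $\iota(x)\in\pi^{-1}(x)$ by Lemma~\ref{L:intermediate}$(f)$, so the fibre $\pi^{-1}(x)$ is nonempty and contains $\iota(x)$. For the reverse containment I would take an arbitrary $\varphi\in\pi^{-1}(x)$ and apply Lemma~\ref{L:reprezentace}: there is a net $(x_\nu)$ in $X$ with $\varphi(a)=\lim_\nu a(x_\nu)$ for every $a\in H$, and moreover $x_\nu\to\pi(\varphi)=x$ in $X$. Since by hypothesis every $f\in H$ is continuous at $x$ (with respect to the topology of $X$, which is exactly the topology in which $x_\nu\to x$), we obtain $a(x_\nu)\to a(x)$ for each $a\in H$, hence $\varphi(a)=a(x)=\iota(x)(a)$ for all $a\in H$, i.e. $\varphi=\iota(x)$. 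This shows $\pi^{-1}(x)=\{\iota(x)\}$.

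For the `in particular' clause, recall from Lemma~\ref{L:ext a fibry}$(a)$ that $\pi^{-1}(x)$ is a (closed) face of $S(H)$ because $x\in\ext X$. A face consisting of a single point automatically consists of an extreme point: if $\iota(x)=\tfrac12(\psi_1+\psi_2)$ for some $\psi_1,\psi_2\in S(H)$, then the extremality of $\pi^{-1}(x)$ forces $\psi_1,\psi_2\in\pi^{-1}(x)=\{\iota(x)\}$, so $\psi_1=\psi_2=\iota(x)$. Hence $\iota(x)\in\ext S(H)$.

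I do not expect a serious obstacle here; the only point to be careful about is the matching of the two notions of convergence used above, but this is immediate once one observes that the net produced by Lemma~\ref{L:reprezentace} converges to $x$ precisely in the (inherited locally convex) topology of $X$, with respect to which the functions in $H$ are assumed continuous at $x$.
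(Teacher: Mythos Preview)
Your proof is correct and follows essentially the same approach as the paper: both use the net from Lemma~\ref{L:reprezentace}, which converges to $\pi(\varphi)=x$ in $X$, together with the continuity of each $f\in H$ at $x$ to conclude $\varphi=\iota(x)$. You have merely spelled out the ``in particular'' clause via Lemma~\ref{L:ext a fibry}$(a)$ more explicitly than the paper does.
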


\begin{proof}
Let $\varphi\in \pi^{-1}(x)$. Let $(x_\nu)$ be a net provided by Lemma~\ref{L:reprezentace}. 
Then for each $f\in H$ we have
$$\varphi(f)=\lim_\nu f(x_\nu)=f(x),$$
so $\varphi=\iota(x)$.
\end{proof}

\begin{cor}\label{c:spoj-ext} Let $X$ be a compact convex set and $H$ an intermediate function space. Assume that each function from $H$ is strongly affine. Let $x\in \ext X$.
\begin{enumerate}[$(i)$]
    \item Assume that for each $f\in H$ its restriction $f|_{\overline{\ext X}}$ is continuous at $x$. Then $\pi^{-1}(x)=\{\iota(x)\}$, in particular, $\iota(x)\in \ext S(H)$.
    \item If $x$ is an isolated extreme point, then  $\pi^{-1}(x)=\{\iota(x)\}$, in particular, $\iota(x)\in \ext S(H)$.
\end{enumerate}
\end{cor}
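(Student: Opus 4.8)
The plan is to follow the proof of Lemma~\ref{lem:contin}, trading continuity on $X$ for continuity on $\overline{\ext X}$ at the cost of strong affinity. For $(i)$, I would take an arbitrary $\varphi\in\pi^{-1}(x)$ and aim to show $\varphi=\iota(x)$, i.e.\ $\varphi(f)=f(x)$ for every $f\in H$. Lemma~\ref{L:reprezentace} supplies a net $(x_\nu)$ in $X$ with $x_\nu\to x$ and $\varphi(f)=\lim_\nu f(x_\nu)$ for all $f\in H$. Since $H$ consists of strongly affine functions, I would use the Choquet representation theorem to choose, for each $\nu$, a maximal measure $\mu_\nu\in M_{x_\nu}(X)$; such a measure is carried by $\overline{\ext X}$, and strong affinity gives $f(x_\nu)=\mu_\nu(f)$ for every $f\in H$.

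The heart of the argument is the claim that $\mu_\nu\to\varepsilon_x$ in the weak$^*$ topology of $M_1(X)$. By weak$^*$-compactness it suffices to show that $\varepsilon_x$ is the only cluster point of $(\mu_\nu)$. If $\mu$ is such a cluster point, then $\mu$ is a probability carried by the closed set $\overline{\ext X}$, and continuity of the barycenter map forces $r(\mu)=\lim_\nu x_\nu=x$; thus $\mu\in M_1(\overline{\ext X})$ has the extreme barycenter $x$. Here I would invoke the standard fact that an extreme point admits only its own Dirac measure as a representing measure — concretely: if $\mu\neq\varepsilon_x$, pick $y\in\spt\mu\setminus\{x\}$ and, since $X$ is Hausdorff, an open $V\ni y$ with $x\notin\overline V$; splitting $\mu$ along the compact set $C=\overline V\cap\overline{\ext X}$ and using that $x$ is extreme forces a probability carried by $C$ to have barycenter $x$, so $x\in\overline{\co}\,C$ and Milman's theorem gives the contradiction $x\in C$. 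I expect this passage to be the main obstacle, chiefly because the cluster point $\mu$ need not itself be maximal, so I must rely on the uniqueness statement for \emph{all} representing measures rather than a statement restricted to maximal ones. Granting $\mu_\nu\to\varepsilon_x$, the proof finishes quickly: given $f\in H$ and $\varepsilon>0$, continuity of $f|_{\overline{\ext X}}$ at $x$ yields an open $U\ni x$ in $X$ with $|f-f(x)|<\varepsilon$ on $U\cap\overline{\ext X}$; since $U$ is open, $\mu_\nu(U)\to1$, and because each $\mu_\nu$ lives on $\overline{\ext X}$ we get $|f(x_\nu)-f(x)|=|\mu_\nu(f-f(x))|\le\varepsilon\,\mu_\nu(U)+2\norm{f}\,(1-\mu_\nu(U))$, whence $\limsup_\nu|f(x_\nu)-f(x)|\le\varepsilon$. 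Letting $\varepsilon\to0$ gives $\varphi(f)=f(x)$. This proves $\pi^{-1}(x)=\{\iota(x)\}$, and then $\iota(x)\in\ext S(H)$ exactly as in Lemma~\ref{lem:contin}, since a one-point face is an extreme point.

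For $(ii)$ I would note that an isolated point $x$ of $\ext X$ is automatically isolated in $\overline{\ext X}$: if $U$ is open in $X$ with $U\cap\ext X=\{x\}$, then any point of $U\cap\overline{\ext X}$ is the limit of a net in $\ext X$ eventually lying in $U$, hence eventually equal to $x$, so $U\cap\overline{\ext X}=\{x\}$. Thus $\{x\}$ is relatively open in $\overline{\ext X}$, so $f|_{\overline{\ext X}}$ is trivially continuous at $x$ for every $f\in H$, and $(i)$ applies; the same reasoning works verbatim if ``isolated'' is read as isolated in $X$ or directly in $\overline{\ext X}$. Apart from the representing-measure step flagged above, everything is a routine adaptation of the proof of Lemma~\ref{lem:contin} together with the portmanteau inequality $\liminf_\nu\mu_\nu(U)\ge\mu(U)$ for open sets $U$.
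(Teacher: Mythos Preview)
Your proof is correct. The route differs from the paper's only in a technical detail, but the difference is worth noting. The paper observes that under the hypothesis of $(i)$ each $f\in H$ is actually continuous at $x$ as a function on all of $X$ --- not merely on $\overline{\ext X}$ --- and then simply invokes Lemma~\ref{lem:contin}. To pass from continuity on $\overline{\ext X}$ to continuity on $X$ the paper uses \cite[Lemma~2.2]{rs-fragment}, which says that given any open $U\ni x$ there is a neighbourhood $V$ of $x$ such that $\mu(U)>1-\varepsilon$ for \emph{every} $\mu\in M_y(X)$ and every $y\in V$; combined with strong affinity and a representing measure carried by $\overline{\ext X}$, this yields the same integral estimate you wrote down, but uniformly over a neighbourhood rather than along a single net.

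Your weak$^*$-compactness argument establishes the weaker statement that $\mu_\nu(U)\to1$ for the particular net of maximal measures you chose, which is exactly what is needed to conclude $\varphi=\iota(x)$. So you avoid the external reference at the cost of not obtaining the intermediate fact that $f$ itself is continuous at $x$. Both approaches hinge on the same idea (representing measures of points near an extreme point $x$ concentrate near $x$); the paper packages this as a quotable lemma and reuses Lemma~\ref{lem:contin}, while you unfold the argument in place.
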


\begin{proof}
Since $(ii)$ follows from $(i)$, it is enough to verify $(i)$. To this end we need to prove that any function $f\in H$ is continuous at $x\in\ext X$ provided its restriction $f|_{\ov{\ext X}}$ is continuous at $x$. So let $\ep>0$ be given. We find an open neighbourhood $U$ containing $x$ such that $\abs{f(y)-f(x)}<\ep$ for every $y\in U\cap \ov{\ext X}$. By \cite[Lemma 2.2]{rs-fragment} there exists a neighbourhood $V$ of $X$ such that for each $y\in V$ and $\mu\in M_y(X)$ it holds $\mu(U)>1-\ep$. Then for arbitrary $y\in V$ we pick a measure $\mu_y\in M_y(X)\cap M_1(\ov{\ext X})$. Then
\[
\begin{aligned}
\abs{f(y)-f(x)}&=\abs{\mu_y(f)-\mu_y(f(x))}\le \int_{\ov{\ext X}}\abs{f-f(x)}\di\mu_y\\
&\le\int_{\ov{\ext X}\cap U} \abs{f-f(x)}\di\mu_y+2\norm{f}\mu_y(X\setminus U)\\
&\le \ep+2\ep\norm{f}.
\end{aligned}
\]
Hence $f$ is continuous at $x$ and Lemma~\ref{lem:contin} applies.
\end{proof}

\begin{example}\label{ex:Bauer}
    There are metrizable Bauer simplices $X_1, X_2$ and  intermediate function spaces $H_1,H_2$ on $X_1, X_2$, respectively, such that the following properties are satisfied (for $i=1,2$):
    \begin{enumerate}[$(i)$]
         \item $H_i\subset A_1(X_i)$.
         \item $Z(H_i)=H_i$ and $M(H_i)=A_c(X_i)$.
        \item There is $x\in\ext X_1$ with $\iota(x)\notin \ext S(H_1)$ and $\iota(\ext X_1\setminus\{x\})\subset\ext S(H_1)$.
        \item $\iota(\ext X_2)\cap \ext S(H_2)=\emptyset$.
        \item $S(H_i)$ is a Bauer simplex and $S(H_1)$ is metrizable.
    \end{enumerate}
    \end{example}

\begin{proof} We will use the approach of Lemma~\ref{L:function space}, together with the respective notation. We construct first $X_1$ and $H_1$ and then the second pair. 

(1) Let $K_1=[0,1]$ and $E_1=C(K_1)$ and set $X_1=S(E_1)=M_1(K_1)$. Then $X_1$ is a metrizable Bauer simplex and we have $\Ch_{E_1} K_1=[0,1]$.

Set
$$\begin{aligned}
\widetilde{H_1}=\Bigl\{f:[0,1]\to\er\setsep f&\mbox{ is continuous on }[0,\tfrac12)\cup(\tfrac12,1], \\ & \lim_{t\to\frac12-} f(t) \mbox{ and } \lim_{t\to\frac12+} f(t)\mbox{ exist in }\er \\& \mbox{ and }
f(\tfrac12)=\tfrac12(\lim_{t\to\frac12-} f(t)+\lim_{t\to\frac12+} f(t)) \Bigr\}.\end{aligned}$$
Clearly $\widetilde{H_1}\subset\Ba_1^b(K_1)$. Further,
$\widetilde{H_1}$ fits into the scheme from Lemma~\ref{L:function space} and $H_1=V(\widetilde{H_1})\subset A_1(X_1)$. In particular, $H_1$ is determined by 
$$\ext X_1=\phi(\Ch_{E_1} K_1)=\{\ep_t\setsep t\in [0,1]\}.$$

Let us characterize $\ext S(H_1)$.  We fix $t\in [0,1]$ and describe $\pi^{-1}(\phi(t))$. If $t\ne\frac12$, then any $f\in H$ is continuous at $t$ when restricted to $\ov{\ext X}$, hence $\pi^{-1}(\phi(t)) =\{\imath(t)\}$ by Corollary~\ref{c:spoj-ext}. In particular, in this case $\imath(t)=\iota(\ep_t)\in\ext S(H_1)$.

Next assume that $t=\frac12$. Let $\varphi$ be an extreme point of $\pi^{-1}(\phi(t))$. 
 It follows from Lemma~\ref{L:function space}$(d)$ that there is a net $(t_\nu)$ in $[0,1]$ converging to $\frac{1}{2}$ such that $\varphi=\lim_\nu\imath(t_\nu)$. Up to passing to a subnet we may assume that one of the following possibilities takes place:
 \begin{itemize}
     \item $t_\nu=\frac12$ for all $\nu$. Then $\varphi=\imath(\frac12)$.
     \item $t_\nu<\frac12$ for all $\nu$. Then
     $$\varphi(Vf)=\lim_{s\to\frac12-} f(s),\quad f\in \widetilde{H_1}.$$
     Denote this functional by $\varphi_-$.
     \item $t_\nu>\frac12$ for all $\nu$. Then
     $$\varphi(Vf)=\lim_{s\to\frac12+} f(s),\quad f\in \widetilde{H_1}.$$
     Denote this functional by $\varphi_+$.
 \end{itemize}
Thus, $\ext\pi^{-1}(\phi(\frac12))\subset\{\imath(\frac12),\varphi_-,\varphi_+\}$. Since $\imath(\frac12)=\frac12(\varphi_-+\varphi_+)$, we deduce that  $\ext\pi^{-1}(\phi(\frac12))=\{\varphi_-,\varphi_+\}$ and $\pi^{-1}(\phi(\frac12))$ is the segment connecting $\varphi_-$ and $\varphi_+$.

We deduce that 
$$\ext S(H_1)=\{\imath(t)\setsep t\in [0,1]\setminus \{\tfrac12\}\}\cup\{\varphi_-,\varphi_+\}.$$
Indeed, inclusion `$\supset$' follows from Lemma~\ref{L:ext a fibry}$(a)$ using the above analysis of the fibers of $\pi$. The converse inclusion follows by using moreover Lemma~\ref{L:ext a fibry}$(b)$.

Therefore, $\ext S(H_1)$ is a metrizable compact set homeomorphic to the union of two disjoint closed intervals. Since $H_1$ is canonically linearly isometric to the space $C(\ext S(H_1))$, we deduce that $S(H_1)$ is also a metrizable Bauer simplex.

It follows that
$$Z(A_c(S(H_1)))=M(A_c(S(H_1))=A_c(S(H_1))\quad\mbox{and}\quad m(S(H_1))=\ext S(H_1).$$
In particular,  $Z(H_1)=H_1$.

Further, $\frac12\in\Ch_{E_1}K_1$, but $\imath(\frac12)\notin \ext S(H_1)$. Hence, not only the extreme point $\phi(\frac12)$ is not preserved, but $\imath(\frac12)\notin m(S(H_1))$, so $M(H_1)\subsetneqq Z(H_1)$. It follows easily from Lemma~\ref{L:ob-soucin} and Lemma~\ref{L:platnost soucinu} that
$M(H_1)=A_c(X_1)$  (and hence it can be identified with $E_1$).

(2) Let $K_2=\TT=\{z\in\ce\setsep \abs{z}=1\}$, $E_2=C(\TT)$ and set $X_2=S(E_2)$. Then $X_2$ is a metrizable Bauer simplex and $\Ch_{E_2} \TT=\TT$.

Set
\[
\begin{aligned}
\widetilde {H_2}=\Bigl\{f\in\ell^\infty(\TT)\setsep
\forall z\in \TT \colon &
\lim_{w\to z-} f(w) \mbox{ and } \lim_{w\to z+} f(w)\mbox{ exist in }\er \\& \mbox{ and }
f(z)=\tfrac12(\lim_{w\to z-} f(w)+\lim_{w\to z+} f(w)) \Bigr\}.
\end{aligned}
\]
To unify the meaning of one-sided limits we assume that the circle is oriented counterclockwise.

It is clear that $E_2\subset \widetilde{H_2}$. Further, given any $f\in \widetilde{H_2}$ we define the gap at a point $z\in\TT$ as the difference of the one-sided limits at that point. It follows from the existence of one-sided limits at each point that, given $\varepsilon>0$, there are only finitely many points where the absolute value of the gap is above $\varepsilon$. Therefore, $f$ is continuous except at countably many points and, hence $f$ is a Baire-one function.

Thus $\widetilde{H_2}$ fits into the scheme from Lemma~\ref{L:function space}. Then $H_2=V(\widetilde{H_2})$  is an intermediate function space on $X_2$ contained in $A_1(X_2)$. Similarly as in (1) above we see that
$$\ext S(H_2)=\{\varphi_{z+},\varphi_{z-}\setsep z\in \TT\},$$
where 
$$\begin{aligned}
\varphi_{z+}(Vf)&=\lim_{w\to z+} f(w),\quad f\in \widetilde{H_2},\\
\varphi_{z-}(Vf)&=\lim_{w\to z-} f(w),\quad f\in \widetilde{H_2}.
\end{aligned}$$
The set $\ext S(H_2)$ is compact -- it is a circular variant of the double arrow space (cf. \cite[Theorem 2.3.1]{fabian-kniha} or \cite{kalenda-stenfra}). Indeed, the topology on $\ext S(H_2)$ is generated by `open arcs', i.e., by the sets of the form
$$\{\varphi_{e^{ia}-}\}\cup \{\varphi_{e^{it}+},\varphi_{e^{it}-}\setsep t\in (a,b)\} \cup \{\varphi_{e^{ib}+}\},\quad a,b\in\er,a<b.$$
We again get that $S(H_2)$ is a Bauer simplex (this time non-metrizable) and hence
$$Z(A_c(S(H_2)))=A_c(S(H_2)) \mbox{ and } m(S(H_2))=\ext S(H_2).$$ 
In particular, $Z(H_2)=H_2$.
 For each $z\in \TT$ we have
$$\imath(z)=\frac12(\varphi_{z-}+\varphi_{z+}),$$
hence $\imath(K_2)\cap \ext S(H_2)=\emptyset$, i.e., no extreme point is preserved.

Finally, using Lemma~\ref{L:ob-soucin} and Lemma~\ref{L:platnost soucinu} we easily get $M(H_2)=A_c(X_2)$.
\end{proof}

The previous examples show that, given $x\in\ext X$, $\iota(x)$ need not be an extreme point of $S(H)$, even if $X$ is a Bauer simplex. Moreover, in these cases we even have $M(H)\subsetneqq Z(H)$. Next we focus on some sufficient conditions. The first result is a refinement of \cite[Lemma 3]{cc}.

To formulate it we use the notion of a split face recalled in Section~\ref{ssc:ccs} above together with the respective notation.  
We further need to recall that, given a bounded function $f\colon X\to \er$ (where $X$ is a compact convex set), its \emph{upper envelope}\index{upper envelope} $f^*$ is defined as 
\[
\gls{f*}(x)=\inf\left\{a(x)\setsep a\in A_c(X),a\ge f\right\},\quad x\in X
\]
(see also \cite[p. 4]{alfsen} where the upper envelope is denoted by $\widehat{f}$). The relationship of upper envelopes and split faces is revealed by the following observation which follows from \cite[Proposition II.6.5]{alfsen}:
\begin{equation}\label{eq:lambda=1*}
 F\subset X\mbox{ a closed split face}\implies \lambda_F=1_F^*.   
\end{equation}

\begin{lemma}\label{l:lemma-split}
Let $X$ be a compact convex set and let $H$ be an intermediate function space. Let $x\in \ext X$. 
 If $\{x\}$ is a split face of $X$ and $1_{\{x\}}^*\in H$, then $\{\iota(x)\}$ is a split face of $S(H)$. In particular, $\iota(x)$ is an extreme point of $S(H)$.
\end{lemma}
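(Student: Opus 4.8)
The plan is to transport the split-face structure of $\{x\}$ in $X$ along the map $\iota\colon X\to S(H)$, using that $\lambda_{\{x\}}=1_{\{x\}}^*$ lies in $H$ and hence corresponds (via the isometry $T\colon H\to A_c(S(H))$ of Lemma~\ref{L:intermediate}$(a)$) to an affine continuous function on $S(H)$. First I would record the relevant data: since $\{x\}$ is a split face, its complementary face $F=\{x\}'$ is a closed convex face, the affine function $\lambda=\lambda_{\{x\}}=1_{\{x\}}^*$ takes value $1$ at $x$, value $0$ on $F$, and satisfies $\lambda\in H$ by hypothesis; moreover every $y\in X\setminus(\{x\}\cup F)$ has a unique representation $y=\lambda(y)x+(1-\lambda(y))y'$ with $y'\in F$. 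Put $g=T(\lambda)\in A_c(S(H))$, so $g(\iota(z))=\lambda(z)$ for all $z\in X$; in particular $g(\iota(x))=1$.

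Next I would identify the candidate complementary face in $S(H)$ as $G=\{\psi\in S(H)\setsep g(\psi)=0\}$, which is automatically a closed face of $S(H)$ because $g$ is affine continuous and $0=\inf g(S(H))$ (the latter since $0\le\lambda\le 1$ on $X$ and $\iota(X)$ is dense in $S(H)$). The key claim is that $\{\iota(x)\}$ and $G$ are complementary split faces of $S(H)$, i.e. every $\psi\in S(H)$ with $g(\psi)\in(0,1)$ can be written uniquely as $g(\psi)\iota(x)+(1-g(\psi))\psi'$ with $\psi'\in G$. For existence I would use Lemma~\ref{L:reprezentace}: pick a net $(z_\nu)$ in $X$ with $\iota(z_\nu)\to\psi$; then $\lambda(z_\nu)=g(\iota(z_\nu))\to g(\psi)=:t\in(0,1)$, so for $\nu$ large $z_\nu\notin\{x\}\cup F$ and we may write $z_\nu=\lambda(z_\nu)x+(1-\lambda(z_\nu))z_\nu'$ with $z_\nu'\in F$. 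Passing to a subnet, $\iota(z_\nu')\to\psi'$ for some $\psi'\in S(H)$, and from $\iota(z_\nu)=\lambda(z_\nu)\iota(x)+(1-\lambda(z_\nu))\iota(z_\nu')$ we get in the limit $\psi=t\iota(x)+(1-t)\psi'$; applying $g$ gives $g(\psi')=0$, so $\psi'\in G$, establishing the decomposition. For uniqueness, suppose $\psi=t\iota(x)+(1-t)\psi'=s\eta+(1-s)\psi''$ with $\eta$ ranging over $S(H)$ in the first slot — actually for a split face one only needs uniqueness given that the first component is forced to be $\iota(x)$ once $t=g(\psi)$ is fixed: apply $g$ to see any such representation has first coefficient $g(\psi)=t$, and then since $\{x\}$ being a \emph{split} (not merely parallel) face forces uniqueness downstairs, I would argue uniqueness upstairs by the same net-approximation, or more cleanly: if $\psi=t\iota(x)+(1-t)\psi'=t\iota(x)+(1-t)\psi''$ with $\psi',\psi''\in G$, then $\psi'=\psi''$ trivially, so the only real content is that the first component must be $\iota(x)$, which is immediate since if $\psi=t\varphi+(1-t)\chi$ with $g(\varphi)=1$, $g(\chi)=0$ then I must show $\varphi=\iota(x)$; here I would use that $g$ attains its maximum $1$ on $S(H)$ only where $\lambda$ does on $\overline{\iota(X)}$, and that $\{x\}$ being a split face of $X$ with $\lambda$ continuous-on-$\iota(X)$-extension forces the fibre $\{\psi\setsep g(\psi)=1\}$ to collapse — this last point needs care.

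The main obstacle I anticipate is precisely showing that the ``level set'' $\{\psi\in S(H)\setsep g(\psi)=1\}$ equals $\{\iota(x)\}$ (and symmetrically that $G=\{\psi\setsep g(\psi)=0\}$ behaves like the complementary face, i.e. is exactly the set $\overline{\iota(F)}$-closure and is convex, which it is). For the maximum set: if $g(\psi)=1$, take a net $z_\nu\to\psi$ as above with $\lambda(z_\nu)\to 1$; writing $z_\nu=\lambda(z_\nu)x+(1-\lambda(z_\nu))z_\nu'$ we get $\iota(z_\nu)=\lambda(z_\nu)\iota(x)+(1-\lambda(z_\nu))\iota(z_\nu')$, and since the coefficients converge to $1$ and $0$ and $S(H)$ is bounded, $\iota(z_\nu)\to\iota(x)$, so $\psi=\iota(x)$. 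This uses only boundedness of $S(H)$ and is clean. Given this, the decomposition $\psi=g(\psi)\iota(x)+(1-g(\psi))\psi'$ has forced first coefficient and forced first component $\iota(x)$ (apply $g$; the component with $g=1$ must be $\iota(x)$), and the second component lies in the closed face $G$; uniqueness of the pair then follows because once $t=g(\psi)\in(0,1)$ and the $\iota(x)$-component are fixed, $\psi'=(\psi-t\iota(x))/(1-t)$ is determined. Hence $\{\iota(x)\}$ is a split face of $S(H)$ with complementary face $G$, and in particular a face consisting of a single point, so $\iota(x)\in\ext S(H)$. Finally I would remark that this simultaneously gives $1_{\{\iota(x)\}}^*=g=T(\lambda_{\{x\}})$, matching \eqref{eq:lambda=1*}, though that is not needed for the statement.
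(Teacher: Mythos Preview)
Your proposal is correct and follows essentially the same approach as the paper: define the candidate complementary face $G=\{\psi\in S(H)\setsep \psi(1_{\{x\}}^*)=0\}$ (your $G=[g=0]$), then use density of $\iota(X)$ via a net $(z_\nu)$ together with the split decomposition $z_\nu=\lambda(z_\nu)x+(1-\lambda(z_\nu))z_\nu'$ in $X$ to produce the decomposition $\psi=g(\psi)\iota(x)+(1-g(\psi))\psi'$ in $S(H)$. The paper is slightly more economical in two places: it solves directly $\iota(z_\alpha')\to(\varphi-\lambda\iota(x))/(1-\lambda)$ (so no subnet is needed), and it stops once $\co(\{\iota(x)\}\cup G)=S(H)$ is established, noting that since $G$ is a closed face disjoint from $\iota(x)$ the split-face property (including $\{\iota(x)\}'=G$ and uniqueness) then follows formally---your extended discussion of uniqueness is correct but not needed.
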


\begin{proof}
Assume that $\{x\}$ is a split face. By \eqref{eq:lambda=1*} we know that $\lambda_{\{x\}}=1_{\{x\}}^*$ and hence
$F=(1_{\{x\}}^*)^{-1}(0)$ is the complementary face.
Set
$$\widetilde{F}=\{\varphi\in S(H)\setsep \varphi(1_{\{x\}}^*)=0\}.$$
Since $1_{\{x\}}^*\in H$, $\widetilde{F}$ is a closed face of $S(H)$ containing $\iota(F)$. We are going to show that $\{\iota(x)\}$ is a split face of $S(H)$ and its complementary face is $\widetilde{F}$.

To this end fix an arbitrary $\varphi\in S(H)$. By Lemma~\ref{L:reprezentace} there is a net $(y_\alpha)$ in $X$ such that $\iota(y_\alpha)\to\varphi$ in $S(H)$. We know that for each $\alpha$ we have
$$y_\alpha=\lambda_\alpha x+(1-\lambda_\alpha)z_\alpha, \mbox{ where }\lambda_\alpha=1_{\{x\}}^*(y_\alpha)\mbox{ and }z_\alpha\in F.$$
Then
$$\varphi(1_{\{x\}}^*)=\lim_\alpha \iota(y_\alpha)(1_{\{x\}}^*)=\lim_\alpha 1_{\{x\}}^*(y_\alpha)=\lim_\alpha \lambda_\alpha,$$
i.e., 
$$\lambda_\alpha\to \lambda:= \varphi(1_{\{x\}}^*).$$
If $\lambda=1$, then
$$\iota(y_\alpha)=\lambda_\alpha\iota(x)+(1-\lambda_\alpha)\iota(z_\alpha)\to \iota(x),$$
hence $\varphi=\iota(x)$.

If $\lambda<1$, then
$$\lim_\alpha \iota(z_\alpha)=\lim_\alpha\frac{\iota(y_\alpha)-\lambda_\alpha\iota(x)}{1-\lambda_\alpha}=\frac{\varphi-\lambda\iota(x)}{1-\lambda}.$$
So, $\iota(z_\alpha)\to\psi\in S(H)$ such that 
$$\varphi=\lambda\iota(x)+(1-\lambda)\psi.$$
Since $\varphi(1_{\{x\}}^*)=\lambda$, necessarily $\psi(1_{\{x\}}^*)=0$, so $\psi\in \tilde{F}$.

It follows that $\co(\{\iota(x)\}\cup \widetilde F)=S(H)$. Since $\widetilde F$ is a face, this completes the proof.
\end{proof}

\begin{cor}\label{cor:iotax}
Let $X$ be a compact convex set such that each $x\in\ext X$ forms a split face (in particular, this takes place if $X$ is a simplex). Let $H$ be an intermediate function space containing all affine semicontinuous functions (i.e., $A_s(X)\subset H$). Then $\iota(x)\in\ext S(H)$ for any $x\in\ext X$. 

This applies among others in the following cases:
\begin{enumerate}[$(a)$]
    \item $A_b(X)\cap\Bo_1(X)\subset H$;
    \item $X$ is metrizable and $A_1(X)\subset H$.
\end{enumerate}

If $H$ is moreover determined by extreme points, then $Z(H)=M(H)$.
\end{cor}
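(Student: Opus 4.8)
The plan is to deduce the statement almost entirely from Lemma~\ref{l:lemma-split}, Corollary~\ref{cor:rovnost na ext}, and the inclusion diagrams \eqref{eq:prvniinkluze} and \eqref{eq:prvniinkluze-metriz}. First I would fix $x\in\ext X$ and check that the hypotheses of Lemma~\ref{l:lemma-split} are met, namely that $\{x\}$ is a split face of $X$ (granted by assumption) and that $1_{\{x\}}^*\in H$. Granting the latter for a moment, Lemma~\ref{l:lemma-split} immediately gives that $\{\iota(x)\}$ is a split face of $S(H)$, and in particular $\iota(x)\in\ext S(H)$; since $x\in\ext X$ was arbitrary, the first assertion follows. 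The parenthetical claim about simplices is then clear: if $X$ is a simplex, every closed face is split by \cite[Theorem II.6.22]{alfsen}, and for $x\in\ext X$ the singleton $\{x\}$ is a closed face, so the standing hypothesis is automatically satisfied.

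The one point that needs a word of justification is the membership $1_{\{x\}}^*\in H$. Since $\{x\}$ is a split face and, being a singleton in a Hausdorff space, is closed, \eqref{eq:lambda=1*} yields $\lambda_{\{x\}}=1_{\{x\}}^*$; in particular $1_{\{x\}}^*$ is affine, being the function $\lambda_{\{x\}}$ associated to the split face. On the other hand, as an upper envelope it is an infimum of continuous functions, hence upper semicontinuous, so $-1_{\{x\}}^*\in A_l(X)$ and therefore $1_{\{x\}}^*\in A_l(X)-A_l(X)=A_s(X)\subset H$. This is the only place where the hypothesis $A_s(X)\subset H$ is used.

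For assertions $(a)$ and $(b)$ it then suffices to verify the inclusion $A_s(X)\subset H$. In case $(a)$, the diagram \eqref{eq:prvniinkluze} gives $A_s(X)\subset\overline{A_s(X)}\subset A_b(X)\cap\Bo_1(X)\subset H$; in case $(b)$, when $X$ is metrizable, \eqref{eq:prvniinkluze-metriz} gives $A_s(X)\subset\overline{A_s(X)}\subset A_1(X)\subset H$. Finally, if $H$ is moreover determined by extreme points, then, having established $\iota(x)\in\ext S(H)$ for every $x\in\ext X$, Corollary~\ref{cor:rovnost na ext}$(b)$ applies and yields $Z(H)=M(H)$. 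There is no substantial obstacle in this argument — it is a chain of already-proved facts — the only subtlety being the observation that $1_{\{x\}}^*$ is not merely a bounded affine function but in fact lies in $A_s(X)$, which is exactly what makes the hypothesis on $H$ the right one.
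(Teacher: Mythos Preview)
Your proof is correct and follows essentially the same approach as the paper: verify $1_{\{x\}}^*\in A_s(X)\subset H$ via \eqref{eq:lambda=1*} and upper semicontinuity of the upper envelope, then apply Lemma~\ref{l:lemma-split}. The only cosmetic difference is that for the final equality $Z(H)=M(H)$ you invoke Corollary~\ref{cor:rovnost na ext}$(b)$ directly, whereas the paper cites Proposition~\ref{p:zh-mh}; since $\ext S(H)\subset m(S(H))$, these amount to the same thing.
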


\begin{proof}
Assume each $x\in \ext X$ forms a split face. By the very definition of the upper envelope we see that functions $1_{\{x\}}^*$ are upper semicontinuous. By \eqref{eq:lambda=1*} they are also affine and hence they belong to $H$ by our assumptions.
We then conclude by Lemma~\ref{l:lemma-split}. 

The `in particular' part follows from \cite[Theorem II.6.22]{alfsen}.

The equality $Z(H)=M(H)$ then follows from Proposition~\ref{p:zh-mh}.
\end{proof}

Note that Corollary~\ref{cor:iotax} does not cover (among others) the case $H=A_1(X)$ if $X$ is a non-metrizable simplex.
However, in this case we can use a separable reduction method. This is the content of assertion $(a)$ of the  
 following proposition. Assertion $(b)$ shows the equality $Z(A_1(X))=M(A_1(X))$ in another special case. 

\begin{prop}\label{p:postacproa1}
Let $X$ be a a compact convex set and let $H$ be an intermediate function space on $X$. 
\begin{enumerate}[$(a)$]
    \item  Assume that $X$ is a simplex and $A_1(X)\subset H\subset (A_{c}(X))^\sigma$. Then $\iota(x)\in\ext S(H)$ whenever $x\in\ext X$. Hence $Z(H)=M(H)$.
    \item  If $H=A_1(X)$ and $\ext X$ is a \lin\ resolvable set, then $Z(H)=M(H)$.
    \end{enumerate}
\end{prop}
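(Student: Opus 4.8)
The plan is to derive both parts from Corollary~\ref{cor:rovnost na ext}$(b)$. Observe first that $H\subset(A_c(X))^\sigma$ in both cases (in $(a)$ by hypothesis; in $(b)$ because by the Mokobodzki theorem an affine Baire-one function is a bounded pointwise limit of functions in $A_c(X)$), so $H$, being a subspace of $(A_c(X))^\sigma$, is determined by extreme points (\cite{smith-london}); hence in $(a)$ it suffices to prove $\iota(x)\in\ext S(H)$ for every $x\in\ext X$. To do so I would fix $x\in\ext X$ and check condition $(3)$ of Lemma~\ref{L:zachovavani ext}: given nets $(p_\nu),(q_\nu)$ in $X$ converging to $x$ with $a(\tfrac{p_\nu+q_\nu}{2})\to a(x)$ for all $a\in H$, and given $f\in H$, one must show $f(p_\nu)\to f(x)$ and $f(q_\nu)\to f(x)$. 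Since $f\in(A_c(X))^\sigma$ is generated by a countable subset of $A_c(X)$, a separable reduction for simplices (see \cite{lmns}) yields a metrizable simplex $X_0$, an affine continuous surjection $\varpi\colon X\to X_0$ carrying $x$ to an extreme point of $X_0$, and a function $\bar f$ on $X_0$ with $f=\bar f\circ\varpi$.

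Set $H_0=\{g\in A_b(X_0)\setsep g\circ\varpi\in H\}$. Because $g\mapsto g\circ\varpi$ is an isometry of $A_b(X_0)$ into $A_b(X)$ with $A_c(X_0)\circ\varpi\subset A_c(X)\subset H$ and $A_1(X_0)\circ\varpi\subset A_1(X)\subset H$ (Lemma~\ref{L:kvocient}$(c)$), $H_0$ is an intermediate function space on $X_0$ with $A_1(X_0)\subset H_0$, and $\bar f\in H_0$ since $\bar f\circ\varpi=f$. By Corollary~\ref{cor:iotax}$(b)$ applied on the metrizable simplex $X_0$ we get $\iota_{X_0}(\varpi(x))\in\ext S(H_0)$, hence condition $(3)$ of Lemma~\ref{L:zachovavani ext} holds for $(X_0,H_0,\varpi(x))$. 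Since $\varpi(p_\nu),\varpi(q_\nu)\to\varpi(x)$ and for $g\in H_0$ one has $g(\tfrac{\varpi(p_\nu)+\varpi(q_\nu)}{2})=(g\circ\varpi)(\tfrac{p_\nu+q_\nu}{2})\to(g\circ\varpi)(x)=g(\varpi(x))$, that condition gives $g(\varpi(p_\nu))\to g(\varpi(x))$ and $g(\varpi(q_\nu))\to g(\varpi(x))$ for all $g\in H_0$; taking $g=\bar f$ yields $f(p_\nu)\to f(x)$ and $f(q_\nu)\to f(x)$. Thus $\iota(x)\in\ext S(H)$, and $Z(H)=M(H)$ by Corollary~\ref{cor:rovnost na ext}$(b)$. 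I expect the main obstacle to be the separable reduction itself: arranging a single metrizable quotient that is simultaneously a simplex, factors the chosen function, and sends the chosen extreme point to an extreme point.

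For $(b)$, take $u\in Z(A_1(X))$ and $T\in\frd(A_1(X))$ with $T1=u$, normalized so $0\le T\le I$. By Corollary~\ref{cor:rovnost na ext}$(a)$, $T(a)=u\cdot a$ on $\ext X$ for $a\in A_c(X)$, so by the definition of $M(A_1(X))$ it suffices, given $a\in A_1(X)$, to exhibit $b\in A_1(X)$ with $b=u\cdot a$ on $\ext X$. Since $\ext X$ is \lin, $X$ is standard, so maximal measures are carried by $\ext X$. Choosing by Mokobodzki a bounded sequence $(a_n)$ in $A_c(X)$ with $a_n\to a$ pointwise, each $T(a_n)\in A_1(X)$ is strongly affine and equals $u\cdot a_n$ on $\ext X$, whence $T(a_n)(y)=\mu_y(T(a_n))=\int_{\ext X}u\,a_n\di\mu_y$ for $y\in X$ and any maximal $\mu_y\in M_y(X)$; letting $n\to\infty$ (dominated convergence) shows $T(a_n)(y)$ tends to $\int_{\ext X}u\,a\di\mu_y$, a value independent of $\mu_y$, which I call $w(y)$. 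Then $w$ is strongly affine, $w=\lim_n T(a_n)$ pointwise, and $w(x)=u(x)a(x)$ for $x\in\ext X$ (as $\mu_x=\ep_x$); so $w|_{\ext X}=(u\cdot a)|_{\ext X}$ is a bounded Baire-one function on $\ext X$ (Baire-one functions form an algebra). It remains to see that $w\in A_1(X)$, and then $b=w$ works, giving $u\in M(A_1(X))$ and (with Proposition~\ref{P:mult}) $Z(A_1(X))=M(A_1(X))$. The crux — and the main obstacle of $(b)$ — is precisely this last step: a strongly affine function whose restriction to the resolvable \lin\ set $\ext X$ is Baire-one must be Baire-one on $X$, and it is here that resolvability (as opposed to mere universal measurability of $\ext X$) is indispensable.
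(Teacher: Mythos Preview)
Your proposal is correct and follows essentially the same route as the paper: in $(a)$ you reduce to a metrizable simplex via separable reduction (the paper's \cite[Theorem 9.12]{lmns}), pass to the pullback space $H_0$, and apply Corollary~\ref{cor:iotax}$(b)$ together with Lemma~\ref{L:zachovavani ext}, exactly as the paper does; in $(b)$ you approximate $a\in A_1(X)$ by a bounded sequence in $A_c(X)$, use Corollary~\ref{cor:rovnost na ext}$(a)$ on the approximants, pass to the pointwise limit via strong affinity and standardness, and invoke the transfer result (the paper's \cite[Theorem 6.4]{lusp}) that a strongly affine function Baire-one on a Lindel\"of resolvable $\ext X$ is Baire-one on $X$. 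Your identification of the two main obstacles---the separable reduction in $(a)$ and the Baire-one transfer in $(b)$---matches precisely the points where the paper defers to external references.
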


\begin{proof}
$(a)$: Let $x\in \ext X$ be given. We want to verify condition (3) in Lemma~\ref{L:zachovavani ext}. Let $(x_i)$, $(y_i)$ be nets converging to $x$ such that $a(\tfrac{x_i+y_i}{2})\to a(x)$ for each $a\in H$. Let $b\in H$ be given. We shall verify that $b(x_i)\to b(x)$ and $b(y_i)\to b(x)$. 

By \cite[Theorem 9.12]{lmns}, there exists a metrizable simplex $Y$, an affine continuous surjection $\varphi\colon X\to Y$ and a function $\tilde{b}\in (A_c(X))^\sigma$ such that $\varphi(x)\in\ext Y$ and $b=\tilde{b}\circ \varphi$.
Set
$$\widetilde{H}=\{\tilde{f}\in (A_c(Y))^\sigma \setsep \tilde{f}\circ\varphi\in H\}.$$
Then $\widetilde{H}$ is a closed subspace of $(A_c(Y))^\sigma$. Further, clearly $A_1(Y)\subset \widetilde{H}$.

We have $\varphi(x_i)\to\varphi(x)$, $\varphi(y_i)\to \varphi(x)$ and for each $\tilde{f}\in \widetilde{H}$
\[
\tilde{f}(\tfrac{\varphi(x_i)+\varphi(y_i)}{2})=(\tilde{f}\circ \varphi)(\tfrac{x_i+y_i}{2})\to
(\tilde{f}\circ \varphi)(x)=\tilde{f}(\varphi(x)).
\]
By Corollary~\ref{cor:iotax}$(b)$ and Lemma~\ref{L:zachovavani ext}, $\tilde{f}(\varphi(x_i))\to \tilde{f}(\varphi(x))$ as well as $\tilde{f}(\varphi(y_i))\to \tilde{f}(\varphi(x))$ for each $\tilde{f}\in \widetilde{H}$. In particular,
\[
b(x_i)=\tilde{b}(\varphi(x_i))\to \tilde{b}(\varphi(x))=b(x)
\]
for our function $b$. Similarly we have $b(y_i)\to b(x)$. Hence $\iota(x)\in \ext S(H)$ by Lemma~\ref{L:zachovavani ext}. The equality $Z(A_1(X))=M(A_1(X))$ then follows from Proposition~\ref{p:zh-mh}.

$(b)$: Since $H=A_1(X)$ is determined by $\ext X$, we have immediately $M(H)\subset Z(H)$ (see Proposition~\ref{P:mult}). To show the converse inclusion, let $T\in\frd(H)$ be given and $m=T(1)$. We want to show that $m\in M(A_1(X))$. Using Lemma~\ref{L:nasobeni} we infer that $T(a)=m\cdot a$ on $\ext X$ for each $a\in A_c(X)$. Let $a\in H=A_1(X)$ be given. We will find a function $b\in A_1(X)$ such that $b=m\cdot a$ on $\ext X$.

To this end, let $\{a_n\}$ be a bounded sequence in $A_c(X)$ converging to $a$ on $X$. Then the sequence of $\{T(a_n)\}$ in $A_1(X)$ satisfies $T(a_n)=m\cdot a_n$ on $\ext X$. Since $\{m\cdot a_n\}$ converges pointwise on $\ext X$, the bounded sequence $\{T(a_n)\}$  converges pointwise on $X$, say to a function $b$ on $X$. Then $b$ is a strongly affine Baire function on $X$, which equals $m\cdot a$ on $\ext X$. Since $m\cdot a$ is a Baire-one function on $\ext X$, from \cite[Theorem 6.4]{lusp} we obtain that $b\in A_1(X)$. Thus $b$ is the desired function and $m=T(1)\in M(A_1(X))$. Thus $Z(H)\subset M(H)$ and the proof is complete. 
\end{proof}

\begin{example}\label{ex:4prostory}
 There is a (non-metrizable) Bauer simplex $X$ and four intermediate function spaces on $X$ satisfying
 $$A_c(X)\subsetneqq H_1\subsetneqq H_2=A_1(X)\subsetneqq H_3\subsetneqq H_4=A_b(X)\cap\Bo_1(X)$$
 such that the following assertions hold (where $\iota_i:X\to S(H_i)$ is the mapping from Lemma~\ref{L:intermediate}):
 \begin{enumerate}[$(i)$]
     \item $M(H_1)=A_c(X)$,  $Z(H_1)=H_1$, $\iota_1(\ext X)\setminus\ext S(H_1)\ne\emptyset$;
     \item $M(H_2)=Z(H_2)=H_2$, $\iota_2(\ext X)\subset \ext S(H_2)$;
     \item $M(H_3)=H_2$,  $Z(H_3)=H_3$, $\iota_3(\ext X)\setminus\ext S(H_3)\ne\emptyset$;
     \item $M(H_4)=Z(H_4)=H_4$, $\iota_4(\ext X)\subset \ext S(H_4)$.
 \end{enumerate}
 \end{example}

\begin{proof}
We start by noticing that assertions $(ii)$ and $(iv)$ are valid for the given choices of intermediate function spaces as soon as $X$ is a simplex (by Proposition~\ref{p:postacproa1}$(a)$ and Corollary~\ref{cor:iotax}$(b)$). So, we need to construct $X$
and $H_1$ and $H_3$ such that $(i)$ and $(iii)$ are fulfilled.

We will again proceed using Lemma~\ref{L:function space} and the respective notation. 
Let $K=\omega_1+1+\omega_1^{-1}$, equipped with the order topology. By $\omega_1$ we mean the set of all countable ordinals with the standard well order, by $\omega_1^{-1}$ the same set with the inverse order. Then $K$ is a compact space (it coincides with the space from \cite[Example 1.10(iv)]{kalenda-survey}). Set $E=C(K)$ and $X=S(E)$. Then $X$ is a Bauer simplex which may be canonically identified with $M_1(K)$. The evaluation mapping $\phi:K\to X$ from Section~\ref{ssc:ch-fs} assigns to each $x\in K$ the Dirac measure $\ep_x$.

We fix two specific points in $K$ -- by $a$ we will denote the first accumulation point (from the left, it is usually denoted by $\omega$) and by $b$ the `middle point', i.e., the unique point of uncountable cofinality.
Let us define the following spaces:
$$\begin{alignedat}{4}
\widetilde{H_1}&=\Bigl\{f\in \ell^\infty(K)\setsep &&f\mbox{ is continuous at each }x\in K\setminus\{a\} \\ &&& \lim_{n<\omega} f(2n) \mbox{ and }\lim_{n<\omega} f(2n+1) \mbox{ exist in }\er \\ &&& \mbox{ and } f(a)=\tfrac12(\lim_{n<\omega} f(2n)+\lim_{n<\omega} f(2n+1))\Bigr\},
\\ \widetilde{H_2}&=\Ba_1^b(K), && 
\\ \widetilde{H_3}&=\Bigl\{f\in \ell^\infty(K)\setsep && \lim_{x\to b-} f(x) \mbox{ and }\lim_{x\to b+} f(x) \mbox{ exist in }\er \\ &&& \mbox{ and } f(b)=\tfrac12(\lim_{x\to b-} f(x)+\lim_{x\to b+} f(x))\Bigr\},
\\ \widetilde{H_4}&=\Bo_1^b(K).&&
\end{alignedat}$$
Then clearly
$$E\subsetneqq \widetilde{H_1}\subsetneqq \widetilde{H_2}\subsetneqq \widetilde{H_3}.$$
Moreover, any function on $K$ with finite one-sided limits at $b$ must be constant on the sets $(\alpha,b)$ and $(b,\beta)$ for some $\alpha\in\omega_1$ and $\beta\in\omega_1^{-1}$. So, any such function is of the first Borel class. Thus $\widetilde{H_3}\subsetneqq\widetilde{H_4}$.
It follows that $\widetilde{H_4}$ (and hence also the smaller spaces) fits into the scheme from Lemma~\ref{L:function space}. So, we set $H_j=V(\widetilde{H_j})$ for $j=1,2,3,4$.
Then the chain of inclusions and equalities is satisfied and, moreover, assertions $(ii)$ and $(iv)$ are fulfilled as explained above.

Similarly as in the proof of Example~\ref{ex:Bauer} (part (1)) we see that 
$$\ext S(H_1)=\{\imath_1(x)\setsep x\in K\setminus \{a\}\}\cup\{\varphi_o,\varphi_e\},$$
where
$$\varphi_o(Vf)=\lim_{n<\omega} f(2n+1)\quad\mbox{and}\quad\varphi_e(Vf)=\lim_{n<\omega} f(2n)$$
for $f\in \widetilde{H_1}$. And in the same way we deduce that
$\imath_1(a)\notin m(S(H_1))$ and hence assertion $(i)$ is fulfilled.

Finally, let us prove assertion $(iii)$. It follows from Lemma~\ref{l:lemma-split} that $\imath_3(x)\in \ext S(H_3)$ for $x\in K\setminus\{b\}$. On the other hand, $\imath_3(b)=\frac12(\varphi_-+\varphi_+)$, where
$$\varphi_-(Vf)=\lim_{x\to b-} f(x)\quad\mbox{and}\quad\varphi_+(Vf)=\lim_{x\to b+} f(x)$$
for $f\in \widetilde{H_3}$. Since $\varphi_-$ and $\varphi_+$ are distinct points of $S(H_3)$, we deduce that $\imath_3(b)\notin \ext S(H_3)$.

Further, it is easy to check that
$$M(H_3)=V(\{f\in\widetilde{H_3}\setsep f\mbox{ is continuous at }b\})=H_2.$$
It remains to check that $Z(H_3)=H_3$. To this end we observe that
$$\forall f,g\in \widetilde{H_3}\, \exists h\in\widetilde{H_3}\colon h=fg\mbox{ on }K\setminus\{b\}.$$
In other words,
$$\forall f,g\in H_3\, \exists h\in H_3\colon h=fg\mbox{ on }\ext X\setminus\{\ep_b\}.$$
Let $T:H_3\to A_c(S(H_3))$ be the mapping from Lemma~\ref{L:intermediate}. Then the above formula may be rephrased as
\begin{equation}\label{eq:H3}
    \forall u,v\in A_c(S(H_3))\, \exists w\in A_c(S(H_3))\colon w=uv\mbox{ on }\iota_3(\ext X\setminus\{\ep_b\}).\end{equation}
Since $H_3$ is determined by extreme points, Lemma~\ref{L:intermediate}$(d)$ yields that
$$\ext S(H_3)\subset\overline{\iota_3(\ext X)} =\overline{\iota_3(\ext X\setminus\{\ep_b\})}\cup\{\iota_3(\ep_b)\}.
$$
Since $\iota_3(\ep_b)\notin \ext S(H_3)$ (as we have shown above), we deduce that 
$$\ext S(H_3)\subset \overline{\iota_3(\ext X\setminus\{\ep_b\})}.$$
Using this inclusion and \eqref{eq:H3} we deduce that
$$ \forall u,v\in A_c(S(H_3))\, \exists w\in A_c(S(H_3))\colon w=uv\mbox{ on }\ext S(H_3),$$
so
$M(A_c(S(H_3)))=A_c(S(H_3))$, which means that $Z(H_3)=H_3$.
\end{proof}

The message of Example~\ref{ex:4prostory} is twofold. Firstly, it shows that the upper bound on $H$ in Proposition~\ref{p:postacproa1}$(a)$ cannot be dropped.
And secondly, it shows that the equality $Z(H)=M(H)$ and the preservation of extreme points depends not only on the size of $H$ -- when $H$ is enlarged, these properties may be fixed (like in passing from $H_1$ to $H_2$ or from $H_3$ to $H_4$) or spoiled (like in passing from $A_c(X)$ to $H_1$ or from $H_2$ to $H_3$).

The next example shows that the assignments $H\mapsto Z(H)$, $H\mapsto M(H)$ and $H\mapsto M^s(H)$ are not monotone.

\begin{example}\label{ex:inkluzeZ}
    There is a metrizable Bauer simplex $X$ and two intermediate function spaces $H_1, H_2$ on $X$ such that the following properties are fulfilled:
    \begin{enumerate}[$(i)$]
        \item $H_1\subset H_2\subset A_1(X)$;
        \item $S(H_2)$ is a metrizable Bauer simplex;
        \item $M(H_2)=Z(H_2)=H_2$;
        \item $M(H_1)=Z(H_1)\subsetneqq A_c(X)=Z(A_c(X))=M(A_c(X))$.
    \end{enumerate}
   Moreover, for these spaces all multipliers are strong. 
\end{example}

\begin{proof}
 Let $K=[0,2]\cup[3,5]$, $E=C(K)$ and $X=S(E)=M_1(K)$. Then $X$ is a metrizable Bauer simplex. In particular, $A_c(X)=Z(A_c(X))=M(A_c(X))$.
 
 We further set
 $$\begin{alignedat}{3}
 \widetilde{H_2}&=\{f\in\ell^\infty(K)\setsep &&\mbox{ the restrictions }f|_{[0,1]}, f|_{(1,2]}, f|_{[3,4]}, f|_{(4,5]} \mbox{ are continuous},\\
 &&& \lim_{t\to 1+} f(t), \lim_{t\to 4+} f(t) \mbox{ exist in }\er\} \\ 
 \widetilde{H_1}&=\{f\in \widetilde{H_2} \setsep && f(1)+f(4)= \lim_{t\to 1+} f(t) + \lim_{t\to 4+} f(t)\}
 \end{alignedat}$$

Then  $\widetilde{H_1}$ and $\widetilde{H_2}$ fit into the scheme of Lemma~\ref{L:function space} and $\widetilde{H_1}\subset \widetilde{H_2}\subset\Ba_1^b(K)$. Set $H_1=V(\widetilde{H_1})$ and $H_2=V(\widetilde{H_2})$. Thus $(i)$ is valid.

Similarly as in Example~\ref{ex:Bauer} (part (1)) we see that
the extreme points of $S(H_2)$ are
$$\ext S(H_2)=\{\imath_2(t)\setsep t\in K\}\cup \{\varphi_1,\varphi_2\},$$
where 
$$\varphi_1(f)=\lim_{t\to 1+} f(t),\quad \varphi_2(t)=\lim_{t\to 4+} f(t).$$
So, $\ext S(H_2)$ is homeomorphic to the union of four disjoint closed intervals. Since $H_2$ is canonically identified with $C(\ext S(H_2))$, we deduce that $S(H_2)$ is a metrizable Bauer simplex. So, $(ii)$ is proved and, moreover, $Z(H_2)=H_2$. By Proposition~\ref{p:zh-mh} we get $Z(H_2)=M(H_2)$, so $(iii)$ is proved as well.

The restriction map $\pi_{21}:S(H_2)\to S(H_1)$ is clearly one-to-one on $\ext S(H_2)$ and maps $\ext S(H_2)$ onto $\ext S(H_1)$. By definition of $H_1$ we see that
$$\pi_{21}(\varphi_1)+\pi_{21}(\varphi_2)=\imath_1(1)+\imath_1(4).$$
Using Lemma~\ref{L:mult-nutne} we see that
$$M(H_1)=V(\{f\in C(K)\setsep f(1)=f(4)\})\subsetneqq V(E)=A_c(X).$$
Finally, by Proposition~\ref{p:zh-mh} we get $M(H_1)=Z(H_1)$.
Taking into account that $X$ is a Bauer simplex, the proof of $(iv)$ is complete.

The `moreover statement' follows from Proposition~\ref{P:rovnostmulti}.
  \end{proof}

The next example shows that extreme points are not automatically preserved even if $H$ is one of the spaces $A_1(X)$, $A_b(X)\cap\Bo_1(X)$, $A_f(X)$.

\begin{example}\label{ex:slozity}
There is a compact convex set $X$ such that for any intermediate function space $H$ on $X$ satisfying $A_b(X)\cap\Bo_1(X)\subset H\subset A_f(X)$ the following assertions hold:
\begin{enumerate}[$(a)$]
\item There is some $y\in\ext X$ such that $\iota(y)\notin \ext S(H)$; 
    \item $Z(H)=M(H)$.
\end{enumerate}
Moreover, there are both metrizable and non-metrizable variants of such $X$. (Note that, in case $X$ is metrizable, $H=A_1(X)$.)
\end{example}

\begin{proof} The proof is divided to several steps:

\smallskip

\noindent{\tt Step 1:} The construction of $X$.

\smallskip

Fix any compact space $K$ which is not scattered and which contains a one-to-one convergent sequence.
Let $(x_n)$ be a one-to-one sequence in $K$ converging to $x\in K$. Without loss of generality we may assume that $x\notin\{x_n\setsep n\in\en\}$. Further, fix $y\in K\setminus(\{x_n\setsep n\in\en\}\cup \{x\})$.

Since $K$ is not scattered, there is a continuous Radon probability $\mu$ on $K$.
Since $\mu$ is atomless, using \cite[215D]{fremlin2} it is easy to construct by induction Borel sets
$$B_s, s\in \bigcup_{n\in\en\cup\{0\}} \{0,1\}^n$$ 
satisfying the following properties for each $s$:
\begin{itemize}
    \item[(i)] $B_\emptyset=K$;
    \item[(ii)] $B_s=B_{s,0}\cup B_{s,1}$;
    \item[(iii)] $B_{s,0}\cap B_{s,1}=\emptyset$;
    \item[(iv)] $\mu(B_s)=2^{-n}$ where $n$ is the length of $s$.
\end{itemize}
For $n\in\en$ we define a function $f_n$ on $K$ by setting
$$f_n(t)=\begin{cases}
    1 & x\in B_s \mbox{ for some $s$ of length $n$ ending by }0,\\ 
    -1 & x\in B_s \mbox{ for some $s$ of length $n$ ending by }1.
\end{cases}$$
Then $(f_n)$ is an orthonormal sequence in $L^2(\mu)$, so $f_n\to0$ weakly in $L^2(\mu)$ and hence also in $L^1(\mu)$.
Note that $\norm{f_n}_1=1$ for each $n$. 
Let $\mu_n$ be the measure on $K$ with density $f_n$ with respect to $\mu$.
Then $(\mu_n)$ is a sequence in the unit sphere of $C(K)^*$ weakly converging to $0$. Note that $\mu_n$ are continuous measures and  $\mu_n(K)=0$ for each $n$.

For $n\in\en$ set 
$$u_n=\varepsilon_y+\mu_n+\varepsilon_{x_n}-\varepsilon_x\quad\mbox{and}\quad v_n=\varepsilon_y+\mu_n+\varepsilon_{x}-\varepsilon_{x_n}.$$
The sought $X$ is now defined by
$$X=\wscl{\co(M_1(K)\cup\{u_n,v_n\setsep n\in\en\})}.$$
It is obviously a compact convex set. 

\smallskip

\noindent{\tt Step 2:} A representation of elements of $X$.

\smallskip

Set $$L=\{\varepsilon_y\}\cup\{u_n,v_n\setsep n\in\en\}.$$
Since $u_n\to \varepsilon_y$ and $v_n\to\varepsilon_y$ in $X$, we deduce that $L$
is a countable weak$^*$-compact set. Hence
\[
    \wscl{\co L}=\left\{\lambda\varepsilon_y+(1-\lambda)\sum_{n=1}^\infty (s_n u_n+t_n v_n)\setsep \lambda\in[0,1], s_n,t_n\ge 0,
    \sum_{n=1}^\infty (s_n+t_n)=1\right\}.
\]
We note that 
    $$X=\co(M_1(K)\cup\wscl{\co L}),$$ hence any element of $X$ is of the form
$$\lambda_1\sigma + (1-\lambda_1)(\lambda_2\varepsilon_y+(1-\lambda_2)\sum_{n=1}^\infty (s_n u_n+t_n v_n)),$$
where $\sigma\in M_1(K)$, $\lambda_1,\lambda_2\in [0,1]$, $s_n,t_n\ge0$ and $\sum_{n=1}^\infty (s_n+t_n)=1$. Given such a representation, we set
$\lambda=\lambda_1+(1-\lambda_1)\lambda_2$. Then $\lambda\in[0,1]$ and $1-\lambda=(1-\lambda_1)(1-\lambda_2)$. Thus, any element of $X$ is represented as
\begin{equation}\label{eq:X-reprez}
    \lambda\nu+(1-\lambda)\sum_{n=1}^\infty (s_n u_n+t_n v_n),\end{equation}
where $\nu\in M_1(K), \lambda\in[0,1], s_n,t_n\ge 0, \sum_{n=1}^\infty (s_n+t_n)=1$. It is not hard to check that this representation is not unique.

\smallskip

\noindent{\tt Step 3:} A representation of $A_f(X)$.

\smallskip

Set $F=\{\varepsilon_t\setsep t\in K\}\cup\{u_n,v_n\setsep n\in\en\}$. Then $F$ is a closed subset of $X$ and $X$ is the closed convex hull of $F$. It thus follows from the Milman theorem that $\ext X\subset F$. In fact, the equality holds, as we will see later. Currently only the inclusion is essential.

Recall that affine fragmented functions are strongly affine (by \eqref{eq:prvniinkluze}) and determined by extreme points (by \cite{dostal-spurny}, cf. Theorem~\ref{extdeter} below). So, any $f\in A_f(X)$ may be reconstructed from its restriction to $F$ by integration with respect to suitable probability measures.

Hence, any function $f\in A_f(X)$ is represented by a bounded fragmented function $h$ on $K$ and a pair of bounded sequences $(a_n),(b_n)$ of real numbers, in such a way that
$$f(\lambda\nu+(1-\lambda)\sum_n(s_n u_n+t_n v_n))=\lambda\int h\di\nu+(1-\lambda)\sum_n(s_n a_n+t_n b_n).$$
Not all choices of $h$, $(a_n)$, $(b_n)$ are possible. 
In fact, $f$ is determined by its values on $\{\varepsilon_t\setsep t\in K\}$, i.e., by the function $h$. Indeed, if $\nu\in M_1(K)\subset X$, then
necessarily
$f(\nu)=\int h\di\nu$. 

Further, fix $n\in\en$. Then $\nu_n:=\frac23(\mu_n^-+\varepsilon_{x})\in M_1(K)$ and
$$\tfrac25\left(u_n+\tfrac32\nu_n\right)=\tfrac25(\varepsilon_y+\mu_n^++\varepsilon_{x_n})\in M_1(K).$$
Thus
$$\tfrac25(f(u_n)+\tfrac32f(\nu_n))=f\left(\tfrac25(\varepsilon_y+\mu_n^++\varepsilon_{x_n})\right),$$
so
$$\begin{aligned}f(u_n)&=\tfrac52 f\left(\tfrac25(\varepsilon_y+\mu_n^++\varepsilon_{x_n})\right)-\tfrac32f(\nu_n)\\&
=\tfrac52\left(\tfrac25\left(h(y)+\int h\di\mu_n^++h(x_n)\right)\right)-\tfrac32\left(\tfrac23\left(\int h\di\mu_n^-+h(x)\right)\right)
\\&=h(y)+\int h\di\mu_n^++h(x_n)-\int h\di\mu_n^--h(x) =\int h\di u_n
\end{aligned}$$
Similarly, 
$$f(v_n)=\int h\di v_n.$$

Hence, there is a one-to-one correspondence between $A_f(X)$ and $\Fr^b(K)$. If $h\in \Fr^b(K)$, then the corresponding function from $A_f(X)$ is given by
$$\mu\mapsto \int h\di\mu.$$
Note that $h\ge0$ does not imply that the corresponding function is positive.

\smallskip

\noindent{\tt Step 4:} Set 
$$\widetilde{H}=\left\{ h\in\Fr^b(K)\setsep \mbox{the function }\mu\mapsto \int h\di\mu\mbox{ belongs to }H\right\}.$$
Then  $\widetilde{H}$ is a closed subspace of $\Fr^b(K)$ containing $\Bo_1^b(K)$ and there is a canonical one-to-one correspondence between $\widetilde{H}$ and $H$.

\smallskip

\noindent{\tt Step 5:} If $t\in K\setminus\{y,x,x_n\setsep n\in\en\}$, then $\{\varepsilon_{t}\}$ is a split face of $X$ and, consequently, $\iota(\varepsilon_{t})\in\ext S(H)$.

\smallskip
The function 
$$f_t:\sigma\mapsto\sigma(\{t\})$$
is an affine function on $X$ with values in $[0,1]$ such that $f_t(\varepsilon_t)=1$.

Further, fix any $\sigma\in X\setminus\{\varepsilon_t\}$. Consider its representation \eqref{eq:X-reprez}. Then
$$f_t(\sigma)=\sigma(\{t\})=\lambda\nu(\{t\})<1.$$ 
Indeed, if $\lambda\nu(\{t\})=1$, then both $\lambda=1$ and $\nu(\{t\})=1$, i.e., $\sigma=\nu=\varepsilon_t$. 

So, $f_t(\varepsilon_{t})=1$ and $f_t(\sigma)<1$ for $\sigma\in X\setminus\{\varepsilon_{t}\}$, hence $\varepsilon_{t}\in \ext X$. 

Let us continue analyzing properties of $\sigma\in X\setminus\{\varepsilon_{t}\}$ represented by \eqref{eq:X-reprez}. 
Set $\alpha=\nu(\{t\})$. Then $\alpha\in[0,1]$ and there is $\widetilde{\nu}\in M_1(K)$ such that $\widetilde{\nu}(\{t\})=0$ and 
$$\nu=\alpha\varepsilon_{t}
+ (1-\alpha) \widetilde{\nu}.$$
It follows that
$$\begin{aligned}
\sigma&=  \lambda\nu+(1-\lambda)\sum_{n=1}^\infty (s_n u_n+t_n v_n)\\
&=\lambda(\alpha\varepsilon_{t}
+ (1-\alpha) \widetilde{\nu})+(1-\lambda)\sum_{n=1}^\infty (s_n u_n+t_n v_n)
\\&=\lambda\alpha \varepsilon_{t}
+(1-\lambda\alpha)\left(\frac{\lambda(1-\alpha)}{1-\lambda\alpha}\widetilde{\nu}+
\frac{1-\lambda}{1-\lambda\alpha}\sum_{n=1}^\infty (s_n u_n+t_n v_n)\right)
\\&=\lambda\alpha \varepsilon_{t}
+(1-\lambda\alpha)\widetilde{\sigma}=f_t(\sigma) \varepsilon_{t}
+(1-f_t(\sigma))\widetilde{\sigma} .
\end{aligned}$$
Note that $\widetilde{\sigma}\in X$ and $f_t(\widetilde{\sigma})=\widetilde{\sigma}(\{t\})=0$. It now easily follows that $\{\varepsilon_{t}\}$ is a split face and the zero level $[f_t=0]$ is the complementary face. 
Since $1_{\varepsilon_t}^*=f_t$ is upper semicontinuous and hence of the first Borel class, we deduce 
$1_{\varepsilon_t}^*\in H$, thus Lemma~\ref{l:lemma-split} yields $\iota(\varepsilon_{t})\in \ext S(H)$.

\smallskip

\noindent{\tt Step 6:} $\{\varepsilon_{x}\}\cup\{\varepsilon_{x_n},u_n,v_n\setsep n\in\en\}\subset\ext X.$

\smallskip

Let us consider functions $f_x$ and $f_{x_n}$ for $n\in\en$ defined in the same way as $f_t$ in Step 5 above. They are affine and, if $\sigma\in X$ is represented by \eqref{eq:X-reprez}, we get
$$\begin{aligned}
f_{x_n}(\sigma)&=\lambda\nu(\{x_n\}) + (1-\lambda)(s_n-t_n),\\
f_x(\sigma)&=\lambda\nu(\{x\}) + (1-\lambda)\left(\sum_n t_n-\sum_n s_n\right).
\end{aligned}$$

Observe that $f_{x_n}$ attains values from $[-1,1]$. 
We get:
\begin{itemize}
    \item $\{v_n\}=[f_{x_n}=-1]$,
    hence $v_n\in\ext X$.
    \item  $[f_{x_n}=1]$ is the segment $[\varepsilon_{x_n},u_n]$. So, this segment is a face, thus $u_n,\varepsilon_{x_n}\in\ext X$.
\end{itemize}

Finally, $f_x$ also attains values in $[-1,1]$ and 
$[f_x=1]$ consists of infinite convex combinations of elements $\varepsilon_x,v_n\setsep n\in\en$. Note that $f_{x_n}$ attains values from $[-1,0]$ on  $[f_x=1]$ and 
$$\{\varepsilon_{x}\}=[f_x=1]\cap\bigcap_{n\in\en}[f_{x_n}=0],$$
thus  $\varepsilon_x\in\ext X$.

\smallskip

\noindent{\tt Step 7:} $\{\iota(\varepsilon_{x})\}\cup\{\iota(\varepsilon_{x_n}),\iota(u_n),\iota(v_n)\setsep n\in\en\}\subset\ext S(H).$

\smallskip

Elements $u_n,v_n$ are isolated extreme points, so for them we conclude using Corollary~\ref{c:spoj-ext}$(ii)$.

 Next we fix $n\in\en$ and we are going to show that $\iota(\varepsilon_{x_n})\in \ext S(H)$. So, let $\varphi_1,\varphi_2\in\pi^{-1}(\varepsilon_{x_n})$ be such that $\frac12(\varphi_1+\varphi_2)=\iota(\varepsilon_{x_n})$. If we plug there $f_{x_n}$ (note that $f_{x_n}\in H$, as by \cite[Lemma 3.1]{miryanalznam} it is of the first Borel class), we deduce that
 $$1=f_{x_n}(\varepsilon_{x_n})=\iota(\varepsilon_{x_n})(f_{x_n})=\tfrac12(\varphi_1(f_{x_n})+\varphi_2(f_{x_n})).$$
 Since $f_{x_n}$ has values in $[-1,1]$, we deduce that $\varphi_1(f_{x_n})=\varphi_2(f_{x_n})=1$.
 
 Let us analyze first $\varphi_1$. 
 By Lemma~\ref{L:reprezentace} we may find
a net $(\sigma_i)$ in $X$ converging to $\varepsilon_{x_n}$ such that 
$\iota(\sigma_i)\to \varphi_1$. 
 Assume that $\sigma_i$ is represented as in \eqref{eq:X-reprez} and the coefficients have additional upper index $i$. Then
$$\lambda^i\nu^i(\{x_n\})+(1-\lambda^i)(s_n^i-t_n^i)=\sigma_i(\{x_n\})=f_{x_n}(\sigma_i)=\iota(\sigma_i)(f_{x_n})\to \varphi_1(f_{x_n})=1.$$
Up to passing to a subnet we may assume that $\lambda^i\to\lambda\in[0,1]$. Let us distinguish some cases:

Case 1: $\lambda=1$. Then necessarily $\nu^i(\{x_n\})\to 1$. Thus for any $h\in \widetilde{H}$ and the corresponding $h'\in H$ we get
$$\begin{aligned}
\varphi_1(h')&=\lim_i \int h\di\sigma_i \\
&= \lim_i \left(
\lambda_i\int h\di\nu^i + (1-\lambda_i) \sum_n \left(s_n^i\int h\di u_n+ t_n^i\int h\di v_n\right)\right)\\
&=\lim_i \int h\di\nu^i= \lim_i \left(h(x_n)\nu^i(\{x_n\})+\int_{K\setminus\{x_n\}}h\di\nu^i\right)=h(x_n),
\end{aligned}$$
so $\varphi_1=\iota(x_n)$.

Case 2: $\lambda\in(0,1)$. Then necessarily $\nu^i(\{x_n\})\to1$, $s_n^i\to 1$ and $t_n^i\to0$. 
Thus for any $h\in \widetilde{H}$ and the corresponding $h'\in H$ we get
$$\begin{aligned}
\varphi_1(h')&=\lim_i \int h\di\sigma_i \\
&= \lim_i \left(
\lambda_i\int h\di\nu^i + (1-\lambda_i) \sum_n \left(s_n^i\int h\di u_n+ t_n^i\int h\di v_n\right)\right)\\
&= \lim_i \left(\lambda_i h(x_n)\nu^i(\{x_n\})
+ (1-\lambda_i)s_n^i \int h\di u_n\right)\\
&=
\lambda h(x_n)+(1-\lambda)\int h\di u_n, 
\end{aligned}$$
so $\varphi_1=\iota(\lambda \varepsilon_{x_n}+(1-\lambda)u_n)$. This contradicts the assumption that $\varphi_1\in \pi^{-1}(\varepsilon_{x_n})$.

Case 3: $\lambda=0$. Using a similar computation we get that $\varphi_1=\iota(u_n)$, which again contradicts the assumption.

Summarizing, Case 1 must take place, hence $\varphi_1=\iota(\varepsilon_{x_n})$.
The same works for $\varphi_2$, so $\iota(\varepsilon_{x_n})\in\ext S(H)$.

We continue by showing that also $\iota(\varepsilon_x)\in \ext S(H)$. We proceed similarly as above.
Let $\varphi_1,\varphi_2\in\pi^{-1}(\varepsilon_{x})$ be such that $\frac12(\varphi_1+\varphi_2)=\iota(\varepsilon_{x})$. If we plug there $f_{x}$, we deduce that
 $$1=f_{x}(\varepsilon_{x})=\iota(\varepsilon_{x})(f_{x})=\tfrac12(\varphi_1(f_{x})+\varphi_2(f_{x})).$$
 Since $f_{x}$ has values in $[-1,1]$, we deduce that $\varphi_1(f_{x})=\varphi_2(f_{x})=1$.
 
 Let us analyze first $\varphi_1$. 
 By Lemma~\ref{L:reprezentace} we may find
a net $(\sigma_i)$ in $X$ converging to $\varepsilon_{x}$ such that 
$\iota(\sigma_i)\to \varphi_1$. 
 Assume that $\sigma_i$ is represented as in \eqref{eq:X-reprez} and coefficients have additional upper index $i$. Then
$$\lambda^i\nu^i(\{x\})+(1-\lambda^i)(\sum_n t_n^i-\sum_n s_n^i)\to 1.$$
Up to passing to a subnet we may assume that $\lambda^i\to\lambda\in[0,1]$. Let us distinguish some cases:

Case 1: $\lambda=1$. Then we get, similarly as above, that $\varphi_1=\iota(\varepsilon_x)$.

Case 2: $\lambda\in(0,1)$. Then 
$$\nu^i(\{x\})\to 1, \sum_n t_n^i\to 1, \sum_n s_n^i\to 0.$$
Thus for any $h\in \widetilde{H}$ and the corresponding $h'\in H$ we get
$$\begin{aligned}
\varphi_1(h')&=\lim_i \int h\di\sigma_i \\
&= \lim_i \left(
\lambda_i\int h\di\nu^i + (1-\lambda_i) \sum_n \left(s_n^i\int h\di u_n+ t_n^i\int h\di v_n\right)\right)\\
&= \lim_i \left(\lambda_i h(x)\nu^i(\{x\})
+ (1-\lambda_i)\sum_n t_n^i \int h\di v_n\right)\\&=
\lambda h(x)+(1-\lambda)\lim_i \sum_n t_n^i \int h\di v_n. 
\end{aligned}$$
Set
$$\tau_n^i=\frac{t_n^i}{\sum_k t_k^i}.$$
Since $\sum_k t_k^i\to 1$, this definition has a sense for $i$ large enough. Then
$$\begin{aligned}
\varphi_1(h')&=\lambda h(x)+(1-\lambda)\lim_i \sum_n \tau_n^i \int h\di v_n \\
&= \lim_i \left(\lambda h(x)+(1-\lambda) \sum_n \tau_n^i \int h\di v_n\right),
\end{aligned}$$
hence
$$\iota(\lambda \varepsilon_x+(1-\lambda)\sum_n \tau_n^i v_n)\to \varphi_1 \mbox{ in }S(H).$$
It follows
$$
\lambda\varepsilon_x+(1-\lambda)\sum_n \tau_n^i v_n\to \varepsilon_x\mbox{ in }X.$$
But this is a contradiction as
any cluster point of $(\sum_n \tau_n^i v_n)_i$ must be in $\wscl{\co L}$. So Case 2 cannot take place.

Case 3: $\lambda=0$. We get a contradiction similarly as in Case 2.

Thus $\varphi_1=\iota(\varepsilon_x)$. The same applies to $\varphi_2$, hence $\iota(\varepsilon_x)\in\ext S(H)$.

\smallskip 

\noindent{\tt Step 8:} $\varepsilon_y\in\ext X$.

\smallskip

Consider the function $f_y(\sigma)=\sigma(\{y\})$, $\sigma\in X$. Then $f_y$ is affine and attains values in $[0,1]$. Further, $f_y(\sigma)=1$ if and only if $\sigma\in \wscl{\co L}$ (see the notation in Step 2). Hence $\wscl{\co L}$ is a face. A general element of this set is
$$\sigma=\lambda\varepsilon_y+(1-\lambda)\sum_{n=1}^\infty (s_n u_n+t_n v_n).$$
To prove that $\varepsilon_y$ is an extreme point, it is enough to show
$$\varepsilon_y=\lambda\varepsilon_y+(1-\lambda)\sum_{n=1}^\infty (s_n u_n+t_n v_n)\Rightarrow \lambda=1.$$
So, assume that $\varepsilon_y$ represented as above. Then
$$0=f_{x_n}(\varepsilon_y)=(1-\lambda)(s_n-t_n).$$
Hence, if $\lambda\ne1$, we get $s_n=t_n$.
Since $u_n+v_n=2(\varepsilon_y+\mu_n)$, we deduce that
$$\varepsilon_y=\lambda\varepsilon_y+2(1-\lambda)\sum_{n=1}^\infty s_n(\varepsilon_y+\mu_n)=\varepsilon_y+2(1-\lambda)\sum_n s_n\mu_n,$$
hence
$$\sum_n s_n\mu_n=0.$$
Since $\sum_n s_n=\frac12$ and $(\mu_n)$ is orthogonal in $L^2(\mu)$ (see the construction in Step 1), this is a contradiction.

This completes the proof that $\varepsilon_{y}\in \ext X$.


\smallskip

\noindent{\tt Step 9:} $\iota(\varepsilon_y)\notin\ext S(H)$.

\smallskip

Recall that
$u_n\to \varepsilon_y$ and $v_n\to\varepsilon_y$ in $X$ and $\frac12(u_n+v_n)=\varepsilon_y+\mu_n\to \varepsilon_y$ weakly. In particular, $f(\tfrac12(u_n+v_n))\to f(\varepsilon_{y})$ for each $f\in H$. But $f_x\in H$, $f_x(u_n)=-1$, $f_x(v_n)=1$ and $f_x(\varepsilon_y)=0$. We conclude by Lemma~\ref{L:zachovavani ext}.

\smallskip

\noindent{\tt Step 10:} Summary and conclusion.

\smallskip

If we summarize the results on extreme points, we have proved that 
$$\ext X=\{\varepsilon_{t}\setsep t\in K\}\cup \{u_n,v_n\setsep n\in\en\},$$ 
$\iota(\varepsilon_y)\notin \ext S(H)$ and $\iota(\sigma)\in \ext S(H)$ for $\sigma\in \ext X\setminus\{\varepsilon_y\}$.

We further observe that $\iota(\varepsilon_y)\in m(S(H))$. Indeed, let $\psi$ be any cluster point of $(\iota(x_n))$ in $S(X)$. Then
$$\varphi_1=\iota(\varepsilon_y)+\psi-\iota(\varepsilon_x)\quad\mbox{and}\quad\varphi_2=\iota(\varepsilon_y)+\iota(\varepsilon_x)-\psi$$
belong to $\overline{\ext S(H)}$ and $\iota(\varepsilon_y)=\frac12(\varphi_1+\varphi_2)$. Since there are many different cluster points of $(\iota(x_n))$, we deduce using Lemma~\ref{L:mult-nutne}$(c)$ and Proposition~\ref{P:m(X)} that $\iota(\varepsilon_y)\in m(S(H))$.
In particular, in this case we get $Z(H)=M(H)$.

Finally note that $X$ may be metrizable -- it is enough to start with an uncountable metrizable compact $K$, for example $K=[0,1]$. A non-metrizable example $X$ is obtain if we start, for example, with $K=[0,1]^\Gamma$ for an uncountable $\Gamma$, or, $K=(B_H,w)$, the closed unit ball of a non-separable Hilbert space equipped with the weak topology.
\end{proof}

It seems that the following question remains open.

\begin{ques}
    Let $X$ be a compact convex set and $H$ be one of the spaces $A_1(X)$, $A_b(X)\cap\Bo_1(X)$, $A_f(X)$. Is $M(H)=Z(H)$?
\end{ques}


\section{Spaces determined by extreme points}
\label{s:determined}

In this section we prove two results on determination by extreme points. Let us start by explaining the context. The space $A_c(X)$ is determined by extreme points by the Krein-Milman theorem. More generally, the space $A_{sa}(X)\cap\Ba(X)$ is determined by extreme points because any Baire set containing $\ext X$ carries all maximal measures (see, e.g., the proof of \cite[Lemma 3.3]{smith-london}. Hence, a fortiori, spaces $(A_c(X))^\mu$ and $(A_c(X))^\sigma$ are determined by extreme points. Further, the same argument shows that $A_{sa}(X)$ is determined by extreme points whenever $X$ is a standard compact convex set. However, for a general compact convex set $X$ it is not true, a counterexample is given in \cite{talagrand}. 

Therefore it is natural to ask which subspaces of $A_{sa}(X)$ are determined by extreme points in general. In  \cite[Theorem 3.7]{smith-london}) this was proved for $(A_s(X))^\mu$ and in \cite{dostal-spurny} for $A_f(X)$ (hence for $A_b(X)\cap\Bo_1(X)$). We are going to extend these results to $(A_s(X))^\sigma$ and to $(A_f(X))^\mu$.

Before passing to the results we point out that even though 
determinacy by extreme points seems to be in some relation with strong affinity of functions, there is no implication between these two properties. It is witnessed on one hand by the above-mentioned counterexample from \cite{talagrand} and on the other hand by examples from Section~\ref{sec:strange}.

Now we pass to the result on $(A_s(X))^\sigma$.  It is based on the integral representation of semicontinuous affine functions proved in \cite{teleman}. Let us recall the related notions and constructions.
If $X$ is a compact convex set,  the \emph{Choquet topology}\index{Choquet topology} is the topology $\tau_{\Ch}$ on $\ext X$ such that \gls{tauCh}-closed sets are precisely the sets of the form $F\cap \ext X$, where $F\subset X$ is a closed extremal set.
As explained in \cite[Section 2]{batty-cambr} this topology coincides with the topology $\tau_{\ext}$ from \cite[Chapter 9]{lmns}. It is easy to check that $(\ext X, \tau_{\Ch})$ is a $T_1$ compact (in general non-Hausdorff -- see \cite[Theorem 9.10]{lmns}) topological space. 
The topology $\tau_{\Ch}$ is an important tool to define a canonical correspondence between maximal measures and certain measures on extreme points. It is proved in \cite[Theorem 9.19]{lmns}.

\begin{lemma}\label{L:miry na ext}
Let $X$ be a compact convex set.   Let $\Sigma$ denote the $\sigma$-algebra on $X$ generated by all Baire sets in $X$ and all closed extremal sets in $X$. Let 
$$\gls{Sigma'}=\{A\cap\ext X\setsep A\in\Sigma\}.$$
Then $\Sigma'$
is a $\sigma$-algebra on $\ext X$ and for any maximal measure $\mu\in M_1(X)$ there is a (unique) probability measure \gls{mu'} on the measurable space $(\ext X, \Sigma')$ such that, 
\begin{equation}
 \label{eq:mira1}
\begin{aligned}
\mu(A)&=\mu'(A\cap \ext X),\quad A\in\Sigma,\\
\mu'(A)&=\sup\{\mu'(F)\setsep F\subset A\text{ is }\tau_{\Ch}\text{-closed}\},\quad A\in \Sigma',\\
\mu'(F)&=\inf\{\mu'(B\cap \ext X)\setsep B\cap \ext X\supset F, B\text{ Baire in }X\},\\
&\qquad\qquad F\subset \ext X\text{ is }\tau_{\Ch}\text{-closed}.
 \end{aligned}
 \end{equation}  
\end{lemma}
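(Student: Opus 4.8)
The plan is to invoke the quoted result \cite[Theorem 9.19]{lmns} as the substantial input and to organize the argument around verifying that its hypotheses and conclusions match the formulation above. First I would fix a maximal measure $\mu\in M_1(X)$. The first observation is that $\Sigma'$ really is a $\sigma$-algebra on $\ext X$: this is a purely set-theoretic fact, since $\Sigma$ is a $\sigma$-algebra on $X$, the trace $\{A\cap\ext X\setsep A\in\Sigma\}$ is automatically a $\sigma$-algebra on $\ext X$ (traces of $\sigma$-algebras are $\sigma$-algebras). I would also record that $\tau_{\Ch}$-closed subsets of $\ext X$, being precisely the sets $F\cap\ext X$ with $F\subset X$ closed extremal, lie in $\Sigma'$, and that traces of Baire sets of $X$ lie in $\Sigma'$ as well; together these generate $\Sigma'$.

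The core of the proof is to transport the measure. Define $\mu'$ on $\Sigma'$ by the first line of \eqref{eq:mira1}, i.e. $\mu'(A\cap\ext X)=\mu(A)$ for $A\in\Sigma$; the point requiring proof is that this is well defined, namely that $A_1\cap\ext X=A_2\cap\ext X$ with $A_i\in\Sigma$ forces $\mu(A_1)=\mu(A_2)$. This is exactly where maximality of $\mu$ enters: the symmetric difference $A_1\triangle A_2$ belongs to $\Sigma$ and is disjoint from $\ext X$, and by \cite[Theorem 9.19]{lmns} (which describes how $\Sigma$-sets disjoint from $\ext X$ are $\mu$-null, building on \cite[Corollary I.4.12]{alfsen} for Baire sets and the fact that maximal measures are supported by $\ov{\ext X}$) one gets $\mu(A_1\triangle A_2)=0$. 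Countable additivity and the probability normalization of $\mu'$ are then inherited from $\mu$ via this quotient description. Uniqueness of $\mu'$ on $\Sigma'$ follows because any measure satisfying the first line of \eqref{eq:mira1} is forced on a generating algebra of $\Sigma'$, hence everywhere by the $\pi$-$\lambda$ theorem.

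It remains to establish the regularity formulas, the second and third lines of \eqref{eq:mira1}. Here I would again lean on \cite[Theorem 9.19]{lmns}: the inner regularity of $\mu'$ with respect to $\tau_{\Ch}$-closed sets, and the outer approximation of $\tau_{\Ch}$-closed sets by traces of Baire sets, are precisely the content of that theorem once the measure $\mu'$ has been identified as above. Concretely, for a $\tau_{\Ch}$-closed $F=G\cap\ext X$ with $G\subset X$ closed extremal, the $\tau_{\Ch}$-closed sets contained in $F$ and the Baire sets of $X$ whose trace contains $F$ correspond under the trace map to $\Sigma$-sets, and the sup/inf identities reduce to the measure-theoretic regularity of $\mu$ relative to closed extremal sets and Baire sets on $X$, which is part of the cited theorem.

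The main obstacle, and the step I would treat most carefully, is the well-definedness of $\mu'$ — i.e. pinning down precisely why $\Sigma$-measurable sets disjoint from $\ext X$ are $\mu$-null for maximal $\mu$. The subtlety is that $\Sigma$ is generated by Baire sets \emph{and} closed extremal sets, so one cannot simply quote the classical Baire-set statement; one needs the structural description from \cite[Theorem 9.19]{lmns} that closed extremal sets behave, modulo maximal measures, like their Baire hulls. Everything else — the $\sigma$-algebra bookkeeping, the quotient construction, uniqueness via a generating system — is routine once that fact is in hand.
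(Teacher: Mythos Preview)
Your proposal is correct and aligns with the paper's treatment: the paper does not give an independent proof at all but simply cites \cite[Theorem 9.19]{lmns} as the source of the lemma. Your write-up unpacks the verification in more detail than the paper does, but the substantive input is the same cited result, so there is nothing further to add.
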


We continue by a representation theorem for $(A_s(X))^\sigma$.
(We note that the measurability and integral is tacitly considered with respect to the completion of the respective measure.)

\begin{thm}\label{T:Assigma-reprez}
    Let $X$ be a compact convex set. Let $\mu\in M_1(X)$ be a maximal measure and $\mu'$ the measure provided by Lemma~\ref{L:miry na ext}. Then for each $f\in (A_s(X))^\sigma$ the restriction $f|_{\ext X}$ is $\mu'$-measurable
    $$\int_{\ext X} f\di\mu'=f(r(\mu)).$$ 
\end{thm}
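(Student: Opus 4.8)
The plan is to establish the representation formula first for affine lower semicontinuous functions, then extend it by linearity to $A_s(X)$, and finally propagate it through sequential limits to all of $(A_s(X))^\sigma$. The key input is the result of Teleman \cite{teleman} on the integral representation of semicontinuous affine functions over the Choquet boundary, together with the correspondence $\mu\mapsto\mu'$ from Lemma~\ref{L:miry na ext}.

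First I would treat the case $f\in A_l(X)$, i.e.\ $f$ affine and lower semicontinuous. Such an $f$ is bounded and strongly affine, and by the monotone convergence theorem it is the pointwise increasing limit of a sequence (or net) in $A_c(X)$; more to the point, Teleman's theorem gives that $f|_{\ext X}$ is measurable with respect to the relevant $\sigma$-algebra on $\ext X$ and that the barycentric formula $f(r(\mu))=\int_{\ext X}f\di\mu'$ holds for every maximal $\mu$. I would quote this, checking that the $\sigma$-algebra and the measure $\mu'$ appearing in \cite{teleman} coincide (up to completion) with the ones described in Lemma~\ref{L:miry na ext}; this identification is exactly what \cite[Theorem 9.19]{lmns} and the discussion around $\tau_{\Ch}$ in Section~\ref{s:determined} are set up to provide. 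An alternative, self-contained route for this step: for $a\in A_c(X)$ the formula is just the definition of the barycenter together with $\mu(A)=\mu'(A\cap\ext X)$ from \eqref{eq:mira1}; then for $f\in A_l(X)$ write $f=\sup_n a_n$ with $a_n\in A_c(X)$ increasing, apply the monotone convergence theorem on $(\ext X,\Sigma')$ and use that $f$ is strongly affine so $f(r(\mu))=\mu(f)=\lim_n\mu(a_n)$.

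Next, by taking differences, the formula extends verbatim to $f\in A_s(X)=A_l(X)-A_l(X)$: both $f|_{\ext X}$ is $\mu'$-measurable and $\int_{\ext X}f\di\mu'=f(r(\mu))$, using that $r(\mu)$ is a fixed point and linearity of the integral. Finally, for the sequential closure: let $\mathcal G$ be the set of all $g\in A_b(X)$ such that $g|_{\ext X}$ is $\mu'$-measurable and $\int_{\ext X}g\di\mu'=g(r(\mu))$. We have just shown $A_s(X)\subset\mathcal G$. I claim $\mathcal G$ is closed under pointwise limits of uniformly bounded sequences: if $g_n\in\mathcal G$, $\sup_n\norm{g_n}<\infty$ and $g_n\to g$ pointwise, then $g|_{\ext X}$ is $\mu'$-measurable as a pointwise limit of measurable functions, the dominated convergence theorem gives $\int_{\ext X}g\di\mu'=\lim_n\int_{\ext X}g_n\di\mu'=\lim_n g_n(r(\mu))=g(r(\mu))$. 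Hence $\mathcal G$ contains the smallest such sequentially closed family containing $A_s(X)$, which is $(A_s(X))^\sigma$, and we are done.

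The main obstacle is the very first step: verifying that Teleman's integral representation of lower semicontinuous affine functions is stated with respect to precisely the measure $\mu'$ and $\sigma$-algebra $\Sigma'$ furnished by Lemma~\ref{L:miry na ext}, since different sources set up the ``measure on the extreme points'' with slightly different conventions (inner regularity with respect to $\tau_{\Ch}$-closed sets, completions, etc.). Once that bookkeeping is pinned down — which is exactly the content of the characterization \eqref{eq:mira1} — the remaining steps (differences and the dominated-convergence bootstrap) are routine. A secondary point to be careful about is that $A_s(X)$ need not be closed, so one genuinely needs the sequential-closure argument rather than a uniform-approximation argument; but the class $\mathcal G$ above is designed to absorb exactly that.
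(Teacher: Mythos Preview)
Your approach is essentially identical to the paper's: cite Teleman for the lower semicontinuous case, then extend by linearity and dominated convergence. One small caveat on your alternative self-contained route for the $A_l(X)$ step: an affine lower semicontinuous function is the supremum of affine continuous functions, but in the non-metrizable case this is a net rather than a sequence, so the monotone convergence argument on $(\ext X,\Sigma',\mu')$ does not go through directly---this is precisely why Teleman's theorem is being invoked rather than derived.
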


\begin{proof}
 If $f$ is semicontinuous, the result follows from \cite[Theorem 1]{teleman}. The general case then follows easily by linearity of integral and Lebesgue dominated convergence theorem.
\end{proof}

The following result on determinacy by extreme points is an easy consequence of the previous theorem.

\begin{thm}
  \label{t:assigma-deter}  
Let $X$ be a compact convex set.
Then the intermediate function space  $(A_s(X))^\sigma$ is determined by extreme points.
\end{thm}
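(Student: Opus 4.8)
The plan is to reduce the statement to Theorem~\ref{T:Assigma-reprez} applied to a suitably chosen maximal measure. Recall that, by definition, an intermediate function space $H$ is determined by extreme points precisely when $\inf u(\ext X)\le u\le\sup u(\ext X)$ for every $u\in H$. So fix $f\in(A_s(X))^\sigma$ and a point $x\in X$; it suffices to show $f(x)\le\sup f(\ext X)$ (the lower bound follows by applying this to $-f$, since $(A_s(X))^\sigma$ is a linear space). Put $c=\sup f(\ext X)\in\er$ (finite, since $f$ is bounded, being an element of $A_b(X)$).

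First I would invoke the Choquet representation theorem (quoted in Section~\ref{ssc:ccs}) to pick a maximal measure $\mu\in M_1(X)$ with barycenter $x$, i.e. $\mu\in M_x(X)$. Let $\mu'$ be the probability measure on $(\ext X,\Sigma')$ associated to $\mu$ by Lemma~\ref{L:miry na ext}. By Theorem~\ref{T:Assigma-reprez}, the restriction $f|_{\ext X}$ is $\mu'$-measurable and
$$f(x)=f(r(\mu))=\int_{\ext X} f\di\mu'.$$
Since $\mu'$ is a probability measure and $f\le c$ on $\ext X$ by the choice of $c$, monotonicity of the integral gives $\int_{\ext X} f\di\mu'\le c\cdot\mu'(\ext X)=c$, hence $f(x)\le\sup f(\ext X)$. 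Running the same argument with $-f$ in place of $f$ yields $f(x)\ge\inf f(\ext X)$, and since $x\in X$ was arbitrary this shows that $(A_s(X))^\sigma$ is determined by extreme points.

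There is essentially no obstacle here: all the work sits in Theorem~\ref{T:Assigma-reprez} (and behind it, in \cite{teleman} and Lemma~\ref{L:miry na ext}), so the present proof is just the routine deduction of determinacy from the integral formula via a maximal representing measure. The only minor points to check are that $c$ is finite (clear from boundedness of elements of $A_b(X)$) and that the measurability asserted in Theorem~\ref{T:Assigma-reprez} is exactly what is needed to make the integral meaningful, which it is.

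\begin{proof}
Let $f\in(A_s(X))^\sigma$ and $x\in X$. Since $f$ is bounded, $c:=\sup f(\ext X)\in\er$. By the Choquet representation theorem there is a maximal measure $\mu\in M_x(X)$. Let $\mu'$ be the measure on $(\ext X,\Sigma')$ associated to $\mu$ by Lemma~\ref{L:miry na ext}. By Theorem~\ref{T:Assigma-reprez} the function $f|_{\ext X}$ is $\mu'$-measurable and
$$f(x)=f(r(\mu))=\int_{\ext X} f\di\mu'\le c\cdot\mu'(\ext X)=c=\sup f(\ext X),$$
where we used that $\mu'$ is a probability measure and $f\le c$ on $\ext X$. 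Applying this to $-f\in(A_s(X))^\sigma$ yields $f(x)\ge\inf f(\ext X)$. As $x\in X$ was arbitrary, $(A_s(X))^\sigma$ is determined by extreme points.
\end{proof}
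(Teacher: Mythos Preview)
Your proof is correct and follows exactly the approach the paper intends: the paper states that the result is an easy consequence of Theorem~\ref{T:Assigma-reprez} and gives no further argument, and your deduction via a maximal representing measure and monotonicity of the integral is precisely that easy consequence.
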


We continue by the promised result for the space $(A_f(X))^\mu$.

\begin{thm}
\label{extdeter}
Let $X$ be a compact convex set. Then the space
$(A_f(X))^\mu$
 is determined by extreme points.
\end{thm}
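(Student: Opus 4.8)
\textbf{Proof plan for Theorem~\ref{extdeter}.} The plan is to reduce the statement about $(A_f(X))^\mu$ to the already-established determinacy of $A_f(X)$ from \cite{dostal-spurny} by a transfinite induction along the construction of the monotone sequential closure, using Teleman-type integral representation machinery as the engine. Concretely, I would try to prove the following stronger assertion by transfinite induction on the level in the monotone sequential hierarchy: for every maximal measure $\mu\in M_1(X)$ there is a probability measure $\mu'$ on $(\ext X,\Sigma')$ (the one furnished by Lemma~\ref{L:miry na ext}) such that every $f\in(A_f(X))^\mu$ has $\mu'$-measurable restriction $f|_{\ext X}$ and satisfies the barycentric formula $\int_{\ext X} f\di\mu' = f(r(\mu))$. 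Once this representation is in hand, determinacy by extreme points is immediate: given $u\in(A_f(X))^\mu$ with $u\le c$ on $\ext X$, for any $x\in X$ pick a maximal $\mu\in M_x(X)$ and compute $u(x)=\int_{\ext X}u\di\mu'\le c\cdot\mu'(\ext X)=c$, and symmetrically for lower bounds.

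First I would establish the base case: every $f\in A_f(X)$ is strongly affine (by \cite[Theorem 4.21]{lmns}, cited in the excerpt) and is determined by extreme points (by \cite{dostal-spurny}); I would want to upgrade this to the integral formula $\int_{\ext X}f\di\mu'=f(r(\mu))$. The natural route is to show $f|_{\ext X}$ is $\Sigma'$-measurable — fragmented functions are $\sigma(\H)$-measurable by Theorem~\ref{T:a}, and one needs traces of resolvable sets to land in $\Sigma'$, which should follow because closed extremal sets generate enough of the Choquet topology; then the barycentric identity for strongly affine fragmented functions on $\ext X$ should come out of the defining property $\mu(f)=f(r(\mu))$ combined with the transfer $\mu(A)=\mu'(A\cap\ext X)$ from \eqref{eq:mira1} and the fact that maximal measures are carried by $\overline{\ext X}$. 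For the inductive step, suppose $f_n\nearrow f$ (or $f_n\searrow f$) with $f_n$ at lower levels and each $f_n$ satisfying the representation with respect to the fixed $\mu'$. Then $f|_{\ext X}=\lim_n f_n|_{\ext X}$ is $\mu'$-measurable as a monotone limit of $\mu'$-measurable functions, the $f_n$ are uniformly bounded (since $(A_f(X))^\mu\subset A_{sa}(X)$ consists of bounded functions), so the Lebesgue monotone/dominated convergence theorem gives $\int_{\ext X}f\di\mu'=\lim_n\int_{\ext X}f_n\di\mu'=\lim_n f_n(r(\mu))=f(r(\mu))$, the last equality because $(A_f(X))^\mu\subset A_{sa}(X)$ so $f$ itself is strongly affine, or more directly because pointwise monotone bounded limits of the values at the single point $r(\mu)$ converge to $f(r(\mu))$.

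The main obstacle I anticipate is the base case, specifically verifying that the restriction to $\ext X$ of a fragmented affine function is measurable with respect to the $\sigma$-algebra $\Sigma'$ (generated by traces of Baire sets and traces of closed extremal sets) rather than merely with respect to $\sigma(\H)$ on $X$, and that the barycentric identity $\int_{\ext X}f\di\mu'=f(r(\mu))$ genuinely holds — this is exactly the content that Theorem~\ref{T:Assigma-reprez} provides for semicontinuous (hence $(A_s(X))^\sigma$) functions via \cite{teleman}, but fragmented affine functions need not be semicontinuous, so one cannot simply quote it. The likely fix is to show that on $\ext X$ every fragmented function agrees $\mu'$-almost everywhere with a function built from closed extremal sets (using that resolvable sets restricted to $\ext X$ can be approximated from inside by $\tau_{\Ch}$-closed sets as in \eqref{eq:mira1}), together with the strong affinity to pin down the integral; alternatively, one combines the result of \cite{dostal-spurny} (which already encodes the needed continuity-type properties of fragmented affine functions on $\ext X$) with Lemma~\ref{L:miry na ext} directly. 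A secondary technical point is that the induction is along a genuinely transfinite hierarchy of uncountable length, so I would phrase it as: the class of $f\in A_b(X)$ for which $f|_{\ext X}$ is $\mu'$-measurable and $\int_{\ext X}f\di\mu'=f(r(\mu))$ holds is closed under bounded monotone sequential limits and contains $A_f(X)$, hence contains $(A_f(X))^\mu$ — no explicit ordinal bookkeeping needed once the closure property is checked.
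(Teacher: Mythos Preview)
Your approach has a genuine gap at the base case, and it is not repairable along the lines you suggest. The integral representation $\int_{\ext X} f\di\mu' = f(r(\mu))$ with respect to the Teleman measure $\mu'$ from Lemma~\ref{L:miry na ext} simply fails for fragmented affine functions in general. The paper constructs an explicit counterexample (Example~\ref{ex:teleman-ne}): for $L=A=[0,1]$ and $X=X_{L,A}$ the function $f=V(1_{L\times\{1\}}-1_{L\times\{-1\}})$ lies in $A_b(X)\cap\Bo_1(X)\subset A_f(X)$, yet $[f>0]\cap\ext X$ is not $\mu'$-measurable for $\mu=\phi(\jmath(\lambda))$ ($\lambda$ Lebesgue). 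The point is that $\Sigma'$ is generated by traces of Baire sets and closed extremal sets, and the only closed extremal sets contained in $\phi(L\times\{1\})$ are finite; so both this set and its complement have inner $\mu'$-measure zero. Your proposed fix---approximating resolvable sets from inside by $\tau_{\Ch}$-closed sets---breaks down precisely here, because resolvable subsets of $X$ need not trace to anything $\Sigma'$-measurable on $\ext X$. Neither does invoking \cite{dostal-spurny} help: that result gives determinacy by $\ext X$, not the integral formula against $\mu'$, and the paper's discussion after Theorem~\ref{extdeter} explains why the two are genuinely different.

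The paper's actual proof avoids any boundary integral representation. It adapts the envelope technique from \cite[Theorem 3.7]{smith-london}: one introduces a cone $L$ of concave, lower-bounded, universally measurable functions satisfying $l(r(\mu))\ge\int l\di\mu$ and having residual sets of lower semicontinuity on every closed subset, and then a sandwich class $U$ of bounded affine functions squeezed pointwise between elements of $-L$ and $L$. One shows $A_f(X)\subset L\cap(-L)$ (this is where the point-of-continuity property of fragmented functions enters), that $L$ is closed under nondecreasing limits, that $U$ is closed under bounded monotone limits, and that functions in $-L$ satisfy a maximum principle on $\ext X$ via a Baire-category argument on $\ext E$ for suitable $E$. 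Determinacy of $(A_f(X))^\mu\subset U$ then follows from the sandwich. The key difference from your plan is that this argument works entirely with pointwise envelopes and topological/semicontinuity properties, never attempting to integrate over $\ext X$.
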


\begin{proof} 
We will adapt some ideas of the proof of \cite[Theorem 3.7]{smith-london}. To this end we define set $L$ by
$$\begin{aligned}
 L=\{ f:X\to\er\cup\{+\infty\}\setsep& l\mbox{ is concave and lower bounded}, \\ & l\mbox{ is universally measurable},
 \\ & l(r(\mu))\ge \int_X l\di\mu \mbox{ for }\mu\in M_1(X), \\
 & \{x\in F\setsep l|_F \text{ lower semicontinuous at $x$}\} 
 \\&\qquad\mbox{is  residual in $F$ whenever $F\subset X$ is closed}\}.
\end{aligned}$$
Further, let
\[
U=\{a\in A_b(X)\setsep \forall \ep>0\,\forall x\in X\,\exists\, l,-u\in L: u\le a\le l\ \&\  l(x)-u(x)<\ep \}.
\]
The formula for $U$ is taken from \cite[p. 102]{smith-london}, but in our case the set $L$ is much larger.
We proceed by a series of claims.

\begin{claim}
If $u\in -L$ and $x\in \ext X$  such that $u|_{\ov{\ext X}}$ is upper semicontinuous at $x$, then $u$ is upper semicontinuous at $x$.
\end{claim}

\begin{proof}[Proof of Claim 1]
 This claim is an ultimate generalization of \cite[Lemma 2.4]{rs-fragment} and the proof is quite similar:
The function $u$ is convex and upper bounded, say by a constant $M\ge 0$. Further, for each $\mu\in M_1(X)$ we have $u(r(\mu))\le \mu(u)$.
Let $h\in\er$ satisfying $h>u(x)$ be given. We aim to find a neighbourhood $V$ of $x$ such that  $u(y)< h$ for each $y\in V$. We select $h'\in (u(x),h)$. Next we find using the assumption a neighbourhood $U\subset X$ of $x$ such that $u(t)<h'$ for each $t\in U\cap \ov{\ext X}$. Let $\ep>0$ be such that $h'+\ep M<h$. By \cite[Lemma 2.2]{rs-fragment} there exists a neighbourhood $V$ of $x$ such that for each $t\in  V$ and $\mu\in M_t(X)$ it holds $\mu(U)>1-\ep$.
Then for arbitrary $y\in V$ we pick a measure $\mu_y\in M_y(X)$ with support in $\ov{\ext X}$. Then
\[
u(y)\le \mu_y(u)=\int_{\ov{\ext X}} u\di\mu_y=\int_{\ov{\ext X}\cap U}u\di\mu_y+\int_{\ov{\ext X}\setminus U}u\di\mu_y
\le h'+\ep M<h.
\]
Thus $u$ is upper semicontinuous at $x$.
\end{proof}

\begin{claim}
If $u\in -L$ and $c\in\er$ are such that $u\le c$ on $\ext X$, then $u\le c$ on $X$.
\end{claim}

\begin{proof}[Proof of Claim 2]
This claim is a generalization of \cite[Corollary 2.6]{rs-fragment} and the proof is similar to that of \cite[Theorem 2.5]{rs-fragment}.
Let $u\in -L$ satisfy $u\le c$ on $\ext X$. Assuming that $u(t)>c$ for some $t\in X$, let $\eta\in (c,u(t))$ and let $H=[u\ge \eta]$. Since $u$ is convex, $H$ is semi-extremal subset of $X$ (i.e., $X\setminus H$ is convex), which does not intersect $\ext X$. Let $E=\ov{\co} H$, then by the Milman theorem $\ext E\subset \ov{H}$. Let
\[
C=\{x\in \ov{\ext E}\setsep u|_{\ov{\ext E}}\text{ is upper semicontinuous at }x\}.
\]
Since $u\in -L$, the set $C$ is residual in $\ov{\ext E}$. Let $G\subset C$ be a dense $G_\delta$ set in $\ov{\ext E}$. If $U\subset \ov{\ext E}$ is open and dense in $\ov{\ext E}$, then $U\cap \ext E$ is open and dense in $\ext E$. Since $\ext E$ is a Baire space (see \cite[Theorem I.5.13]{alfsen}), the set $G\cap \ext E$ is nonempty. Thus there exists a point $x\in C\cap \ext E$, which means that $x$ is an extreme point of $E$ such that $u|_{\ov{\ext E}}$ is upper semicontinuous at $x$. By Claim 1, $u|_{E}$ is upper semicontinous at $x$. Since $\ext E\subset \ov{H}$, we obtain that $u(x)\ge \eta$, i.e., $x\in H$. Thus $x\in H\cap \ext E$. In particular, $x$ is not an interior point of any segment with endpoints in $\overline{H}$, i.e., $x\in H\cap \ext \ov{H}$ in the terminology of \cite{rs-fragment}. But this contradicts \cite[Lemma 2.1]{rs-fragment}.
\end{proof}

\begin{claim}
The set $L$ is a convex cone (i.e., $f+g, \alpha f\in L$ whenever $f,g\in L$ and $\alpha\ge 0$) containing $A_f(X)$ and closed with respect to taking pointwise limits of nondecreasing sequences.
\end{claim}

\begin{proof}[Proof of Claim 3]
It is clear that $L$ is a convex cone.
Further, given $f\in A_f(X)$, the function $f$ is strongly affine (cf.\ Section~\ref{ssc:meziprostory}), hence concave, bounded and $\mu(f)=f(r(\mu))$ for each $\mu\in M_1(X)$. If $F$ is nonempty closed set in $X$, $f|_F$ has the point of continuity property (by Theorem~\ref{T:a}), and thus the set $C_f=\{x\in F\setsep f|_F \text{ continuous at }x\}$ is a dense $G_\delta$ set in $F$ due to \cite[Theorem 2.3]{koumou}. Hence the set $L_f=\{x\in F\setsep f|_F \text{ lower semicontinuous at }x\}\supset C_f$ is residual in $F$. Thus, indeed, $f\in L$.

Let now $\{l_n\}$ be a nondecreasing sequence of functions in $L$. Then $l=\lim l_n$ is lower bounded, concave and has values in $\er\cup\{+\infty\}$. If $\mu\in M_1(X)$, $l$ is $\mu$-measurable (since the functions $l_n$, $n\in\en$, are $\mu$-measurable) and due to the Lebesgue monotone convergence theorem we have
\[
\mu(l)=\int_X (\lim_{n\to\infty} l_n)\di\mu=\lim_{n\to \infty} \int_X l_n\di\mu\le \lim_{n\to\infty} l_n(r(\mu))=l(r(\mu)).
\]
Finally, let $F\subset X$ be a nonempty closed set. Then the sets
\[
C_{l_n}=\{x\in F\setsep l_n|_F \text{ lower semicontinuous at }x\},\quad n\in\en
\]
 are residual in $F$. Hence $C=\bigcap_{n=1}^\infty C_{l_n}$ is residual in $F$ and for a point $x\in C$ we have $x\in C_l=\{x\in F\setsep l|_F \text{ lower semicontinuous at }x\}$. Indeed, if $c<l(x)$ is given, let $n\in \en$ satisfy $c<l_n(x)\le l(x)$. Let $U\subset X$ be a neighbourhood of $x$ such that $c<l_n(y)$ for each $y\in U\cap F$. Then for $y\in Y\cap F$ holds
$l(y)\ge l_n(y)>c$, which gives that $l$ is lower semicontinuous at $x$. Thus $C_l$ is residual as well and $l\in L$.
\end{proof}

\begin{claim}
The set $U$ is a vector space containing $A_f(X)$ and closed with respect to taking pointwise limits of bounded monotone sequences.
\end{claim}

\begin{proof}[Proof of Claim 4]
We proceed similarly as in the proof of \cite[Proposition 3.5]{smith-london}:
It is obvious from the definition and Claim 3 that $U$ is a vector space. Further, since $A_f(X)\subset L\cap -L$, we have $A_f(X)\subset U$.

We continue by showing that $U$ is closed with respect to taking pointwise limits of bounded monotone sequences. Since we already know that $U$ is a vector space, it is enough to consider nondecreasing sequences. 
So, let $\{a_n\}$ be a bounded nondecreasing sequence of functions from $U$ with pointwise limit $a\in A_b(X)$. Let $x\in X$ and $\ep>0$ be given. 
Fix $n\in\en$ such that $a(x)-a_n(x)<\frac{\ep}{4}$. Since $a_n\in U$, there exists $u\in -L$ with $u\le a_n$ and $a_n(x)-u(x)<\frac{\ep}{4}$. Then $u\le a_n\le a$ and $a(x)-u(x)=(a(x)-a_n(x))+(a_n(x)-u(x))<\frac{\ep}{2}$.

Further, given $n\in\en$, set $b_n=a_{n+1}-a_n$. Then $b_n\in U$ and thus there exists a function $l_n\in L$ such that
\[
b_n\le l_n\quad\text{and} \quad l_n(x)-b_n(x)<\frac{\ep}{2^{n+2}}.
\]
Then the partial sums $m_k=l_1+\cdots+l_k$, $k\in\en$, are contained in $L$ due to Claim 3 and the sequence $\{m_k\}$ is nondecreasing as $l_n\ge b_n=a_{n+1}-a_n\ge 0$ for each $n\in\en$. Hence $m=\lim_{k\to\infty} m_k$ belongs to $L$ by Claim 3. Moreover,
\[
m_k=l_1+\cdots+l_k\ge (a_2-a_1)+(a_3-a_2)+\cdots+(a_{k+1}-a_k)=a_{k+1}-a_1,\quad k\in\en
\]
and
\[
\begin{aligned}
&m_k(x)-\left(a_{k+1}(x)-a_1(x)\right)=\\
&=\left(l_1(x)+\cdots+l_k(x)\right)-\left((a_2(x)-a_1(x))+\cdots+(a_{k+1}(x)-a_k(x)\right)\\
&=(l_1(x)-b_1(x))+(l_2(x)-b_2(x))+\cdots +(l_k(x)-b_k(x))\\
&<\sum_{n=1}^k \frac{\ep}{2^{n+2}}<\frac{\ep}{4}.
\end{aligned}
\]
For the function $m$ we thus obtain inequalities
\[
m\ge a-a_1\quad\text{and}\quad m(x)-\left(a(x)-a_1(x)\right)\le\frac{\ep}{4}.
\]
Since $a_1\in U$, we can find $n\in L$ such that $a_1\le n$ and $n(x)-a_1(x)<\frac{\ep}{4}$.
Then $l=m+n\in L$ satisfies
\[
l=m+n\ge a-a_1+a_1=a
\]
and
\[
l(x)-a(x)=(m(x)-a(x)+a_1(x))+(n(x)-a_1(x))<\frac{\ep}{4}+\frac{\ep}{4}=\frac{\ep}{2}.
\]
Thus $l(x)-u(x)<\ep$ and $a\in U$.
\end{proof}

\begin{claim}
The space $(A_f(X))^\mu$ is determined by its values on $\ext X$.
\end{claim}

\begin{proof}[Proof of Claim 5]
Since $(A_f(X))^\mu\subset U$ by Claim 4, it is enough to show that $U$ is determined by its values on $\ext X$.
Let $a\in U$ satisfies $\alpha\le a(x)\le \beta$ for each $x\in \ext X$.

Fix any $x\in X$ and $\ep>0$. Let $-u,l\in L$ be such that $u\le a\le l$ and $l(x)-u(x)<\varepsilon$. Since $u\le \beta$ and $l\ge \alpha$ on $\ext X$, using Claim 2 we deduce $l(x)\ge \alpha$ and $u(x)\le \beta$, so
$$\alpha-\ep\le l(x)-\ep <u(x)\le a(x)\le l(x)< u(x)+\varepsilon\le \beta+\ep.$$
Since $\ep>0$ is arbitrary, we conclude $\alpha\le a(x)\le \beta$ which completes the proof.
\end{proof}
\end{proof}

We continue by an obvious consequence.

\begin{cor}
    Let $X$ be a compact convex set. Then each of the spaces
    $$A_b(X)\cap\Bo_1(X), (A_b(X)\cap\Bo_1(X))^\mu, A_f(X)$$
    is determined by extreme points.
\end{cor}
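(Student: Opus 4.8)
The plan is to derive the corollary directly from Theorem~\ref{extdeter} together with the inclusions recorded in Section~\ref{ssc:meziprostory}. The three spaces in question all sit inside $(A_f(X))^\mu$: indeed, by \eqref{eq:prvniinkluze} we have $A_b(X)\cap\Bo_1(X)\subset A_f(X)$, and $A_f(X)\subset(A_f(X))^\mu$ trivially, while $(A_b(X)\cap\Bo_1(X))^\mu\subset (A_f(X))^\mu$ follows by applying the monotone sequential closure operator to the inclusion $A_b(X)\cap\Bo_1(X)\subset A_f(X)$. Since by Theorem~\ref{extdeter} the larger space $(A_f(X))^\mu$ is determined by extreme points, and the property of being determined by extreme points is obviously inherited by subspaces (the defining condition $\inf u(\ext X)\le u\le\sup u(\ext X)$ is simply quantified over all $u$ in the space), each of the three subspaces is determined by extreme points as well.

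Concretely, I would write: each of $A_b(X)\cap\Bo_1(X)$, $(A_b(X)\cap\Bo_1(X))^\mu$ and $A_f(X)$ is a (closed, by Lemma~\ref{L:muclosed is closed} in the second case) linear subspace of $(A_f(X))^\mu$; if $u$ belongs to any of them then $u\in(A_f(X))^\mu$, so $\inf u(\ext X)\le u\le\sup u(\ext X)$ by Theorem~\ref{extdeter}; hence the subspace is determined by extreme points.

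There is essentially no obstacle here — the corollary is labelled "obvious consequence" in the text, and the only thing to be careful about is recording which inclusions are being invoked (those from \eqref{eq:prvniinkluze} and the monotonicity of $F\mapsto F^\mu$). One could also observe that the corollary for $(A_b(X)\cap\Bo_1(X))^\mu$ formally implies the cases of $A_b(X)\cap\Bo_1(X)$ and $A_f(X)$ only if one already knows $A_f(X)\subset(A_b(X)\cap\Bo_1(X))^\mu$, which need not hold; so it is cleanest to route all three through $(A_f(X))^\mu$ as above rather than through one another.

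\begin{proof}
By Lemma~\ref{L:muclosed is closed} the space $(A_b(X)\cap\Bo_1(X))^\mu$ is a closed linear subspace of $A_b(X)$, and by \eqref{eq:prvniinkluze} we have $A_b(X)\cap\Bo_1(X)\subset A_f(X)$. Applying the monotone sequential closure to this inclusion gives $(A_b(X)\cap\Bo_1(X))^\mu\subset (A_f(X))^\mu$, and of course $A_f(X)\subset (A_f(X))^\mu$ as well. Thus all three spaces
$$A_b(X)\cap\Bo_1(X),\quad (A_b(X)\cap\Bo_1(X))^\mu,\quad A_f(X)$$
are subspaces of $(A_f(X))^\mu$. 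If $u$ belongs to any of them, then $u\in (A_f(X))^\mu$, so $\inf u(\ext X)\le u\le \sup u(\ext X)$ by Theorem~\ref{extdeter}. Hence each of these spaces is determined by extreme points.
\end{proof}
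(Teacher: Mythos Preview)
Your proof is correct and matches the paper's approach exactly: the paper states the corollary as an ``obvious consequence'' of Theorem~\ref{extdeter}, and you have simply spelled out the obvious reason, namely that all three spaces are contained in $(A_f(X))^\mu$ and determinacy by extreme points passes to subspaces.
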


We note that the case of $A_f(X)$ (and hence also that of $A_b(X)\cap\Bo_1(X)$) is covered already by \cite{dostal-spurny}.

If we compare Theorem~\ref{t:assigma-deter} with Theorem~\ref{extdeter} (and its corollary), the following question seems to be natural.

\begin{ques}\label{q:sigma-deter}
    Let $X$ be a compact convex set. Are the spaces $(A_b(X)\cap \Bo_1(X))^\sigma$ and $(A_f(X))^\sigma$ determined by extreme points?
\end{ques}

In fact, even the following question seems to be open.

\begin{ques}
    Let $X$ be a compact convex set. Is the space $A_{sa}(X)\cap\Bo(X)$ of Borel strongly affine functions determined by extreme points?
\end{ques}

Note that the methods of proving Theorems~\ref{t:assigma-deter} and~\ref{extdeter} are very different. A natural attempt to answer Question~\ref{q:sigma-deter} in the positive would be to try to extend Theorem~\ref{T:Assigma-reprez} to larger classes of functions. There are two possible ways of such an extension. Either we take the statement as it is and we just consider more general functions. I.e., we require that $f$ is $\mu'$-measurable for each maximal probability measure $\mu$ and the formula holds.
It is not hard to find $X$ and $f\in A_b(X)\cap\Bo_1(X)$ such that this condition is not satisfied (see Example~\ref{ex:teleman-ne} below).

The second possibility is to weaken our requirements -- for any $x\in X$ there should be some probability measure $\mu$ defined on some $\sigma$-algebra on $\ext X$ such that each $f$ from the respective class is $\mu$-measurable and satisfies $\mu(f)=f(x)$. Even this weaker condition would be sufficient to derive determinacy by extreme points. However, under continuum hypothesis 
it is not satisfied for $A_b(X)\cap \Bo_1(X)$ in general (see Example~\ref{ex:AMC}$(1)$ below). But it is not clear whether we may
find a counterexample without additional set-theoretical assumptions (cf.\ Example~\ref{ex:AMC}$(2)$).

This is also related to \cite[Question 1' on p. 97]{teleman} asking essentially whether Theorem~\ref{T:Assigma-reprez} holds for strongly affine functions. The negative answer (even for the weaker variant) follows already from \cite{talagrand}, where a strongly affine function not determined by extreme points is constructed.



\section{Boundary integral representation of spaces of multipliers}\label{s:reprez-abstraktni}

Main results of \cite{edwards,smith-london} include boundary integral representation of centers of spaces $(A_c(X))^\mu$, $(A_c(X))^\sigma$ and $(A_s(X))^\sigma$. If we use the approach of \cite[Theorem 1.2]{smith-pacjm}, this means that centers of these spaces are characterized by a measurability condition on $\ext X$ and, moreover, a mapping assigning to each $x\in X$ a representing measure is constructed. The proofs of these results follow the same pattern. This inspired us to provide a common roof of these results. This is the content of the present section, the promised common roof is Theorem~\ref{T:integral representation H}. We also investigate possible generalizations in Theorems~\ref{T: meritelnost H=H^uparrow cap H^downarrow} and~\ref{t:aha-regularita}.

There is, however, one difference in comparison with the results of \cite{edwards,smith-london}. One of the key ingredients of the quoted results was coincidence of the center and the spaces  of multipliers. In general these two spaces are different, so we focus on spaces of multipliers.

We start by an abstract lemma on function algebras. The lemma is known, but we give a proof for the sake of completeness.

\begin{lemma}\label{L:lattice}
    Let $\Gamma$ be a nonempty set and let $A\subset \ell^\infty(\Gamma)$ be a  linear subspace.
    \begin{enumerate}[$(a)$]
        \item Assume that $A$ is norm-closed subalgebra of $\ell^\infty(\Gamma)$. Then:
        \begin{itemize}
            \item[$\circ$] $A$ is also a sublattice of $\ell^\infty(\Gamma)$.
            \item[$\circ$] If $f\in A$ and $f\ge c$ for some strictly positive $c\in\er$, then $\frac1f\in A$. (In particular, in this case $A$ contains constant functions.)
        \end{itemize}
        \item If $A$ is a subalgebra closed with respect to pointwise limits of bounded monotone sequences, then $A$ is norm-closed and, moreover, closed 
        with respect to limits of bounded pointwise converging sequences.
        \item Assume that $A$ is a norm-closed sublattice of $\ell^\infty(\Gamma)$ containing constant functions. Then it is also a subalgebra of $\ell^\infty(\Gamma)$.
    \end{enumerate}
\end{lemma}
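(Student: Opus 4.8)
For part $(a)$, the plan is to use standard approximation arguments from the theory of uniform algebras. To show $A$ is a sublattice, it suffices to prove $|f|\in A$ whenever $f\in A$, since $f\vee g=\frac12(f+g+|f-g|)$ and $f\wedge g=\frac12(f+g-|f-g|)$. Given $f\in A$ with $\|f\|\le M$, I would approximate the function $t\mapsto|t|$ uniformly on $[-M,M]$ by polynomials $p_n$ with $p_n(0)=0$ (Weierstrass), so that $p_n(f)\in A$ (here we use that $A$ is an algebra, so polynomials without constant term applied to $f$ stay in $A$) and $p_n(f)\to|f|$ uniformly; norm-closedness of $A$ finishes this. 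For the reciprocal, if $f\ge c>0$ and $\|f\|\le M$, I would approximate $t\mapsto\frac1t$ uniformly on $[c,M]$ by polynomials $q_n$; then $q_n(f)\to\frac1f$ uniformly, but $q_n$ may have a nonzero constant term, so I should first note that once we know $|f|\in A$ we get $\||f|\|\cdot 1=\frac12(f+|f|)+\frac12(|f|-f)$ is not quite constant — instead, the cleaner route is: having shown $A$ is a sublattice, observe that for any $f\in A$ the constant $\|f\|$ equals $\sup_\Gamma|f|$, and $|f|\wedge\|f\|=|f|$, which does not yet give constants. So I would instead directly exploit that $A$ is a closed subalgebra: if $A$ contains some $f$ with $f\ge c>0$, then $f^2\ge c^2$, and more to the point, I note that polynomials $q_n$ approximating $1/t$ on $[c,M]$ can be written $q_n(t)=q_n(0)+t\,r_n(t)$; then $q_n(f)=q_n(0)\cdot 1+f\,r_n(f)$. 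This shows $q_n(0)\cdot 1\in A$ for all $n$; since $q_n(0)\to$ some finite limit and not all $q_n(0)$ need be zero, we conclude $1\in A$, and hence $\frac1f=\lim q_n(f)\in A$. This settles $(a)$.

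For part $(b)$, I would argue that if $A$ is a subalgebra closed under pointwise limits of bounded monotone sequences, then in particular it is closed under limits of bounded non-decreasing sequences, and Lemma~\ref{L:muclosed is closed} already gives norm-closedness provided $A$ contains constants and is linear. To see $A$ contains constants: pick any $f\in A$; then $f^2\in A$, and one can reach constants by the machinery of $(a)$ once norm-closedness is in hand — but this is circular, so instead I would note that a subalgebra closed under bounded monotone limits must contain $1$ directly: the partial sums of $\sum 2^{-n}g^2$ for suitable $g$ — actually the simplest observation is that if $A\neq\{0\}$ contains $f$, then for large $k$ the functions $1\wedge(kf^2)$ increase pointwise, but $\wedge$ is not yet available. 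Given the delicacy, the intended reading is surely that $A$ is assumed to contain constants (as in the applications), so I would state: assuming $1\in A$, norm-closedness follows from Lemma~\ref{L:muclosed is closed}, and then closure under arbitrary bounded pointwise limits follows since a bounded pointwise convergent sequence $(f_n)$ has $\liminf$ and $\limsup$ obtainable as monotone limits of the norm-closed (hence by $(a)$ lattice-closed) families $g_m=\inf_{n\ge m}f_n=\lim_k(f_m\wedge\cdots\wedge f_{m+k})\in A$, and then $\lim f_n=\lim_m g_m\in A$.

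For part $(c)$: a norm-closed sublattice containing constants is a subalgebra because for $f,g\in A$ we have $fg=\frac14((f+g)^2-(f-g)^2)$, so it suffices to show $f^2\in A$ for $f\in A$; and $t\mapsto t^2$ on $[-M,M]$ is a uniform limit of piecewise-linear functions, each of which is expressible as a finite lattice combination of affine functions of $f$ (which lie in $A$ since $A$ contains constants and is closed under scalar multiples, sums, $\vee$, $\wedge$); norm-closedness then gives $f^2\in A$. The main obstacle throughout is the bookkeeping around when constant functions are available — this is the step I would be most careful about, making explicit in each item whether $1\in A$ is hypothesized or derived.
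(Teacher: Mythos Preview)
Your arguments for the first bullet of $(a)$ and for $(c)$ are correct and match the paper's proof. Your treatment of $(b)$ is also essentially the paper's: the paper simply invokes Lemma~\ref{L:muclosed is closed} for norm-closedness (which, as you rightly notice, formally requires constants --- the paper does not comment on this, and in every application $1\in A$), then uses $(a)$ to obtain the lattice structure and recovers pointwise limits from monotone limits of finite maxima/minima exactly as you describe.

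The genuine gap is in your argument for the second bullet of $(a)$. You write $q_n(f)=q_n(0)\cdot 1+f\,r_n(f)$ and claim ``This shows $q_n(0)\cdot 1\in A$ for all $n$''. That step is circular: you know $f\,r_n(f)\in A$, but you do \emph{not} know $q_n(f)\in A$ unless $1\in A$ already, so you cannot subtract to get $q_n(0)\cdot 1\in A$. Moreover, even granting that, there is no reason for $q_n(0)$ to converge --- uniform approximation of $t\mapsto 1/t$ on $[c,M]$ places no constraint whatsoever on the values $q_n(0)$. The paper's device avoids this entirely: rather than approximate $1/t$ on $[c,M]$, extend it to a continuous function $g$ on $[0,M]$ with $g(0)=0$ (take $g(t)=t/c^2$ on $[0,c]$ and $g(t)=1/t$ on $[c,M]$), and then approximate $g$ uniformly on $[0,M]$ by polynomials $p_n$ with $p_n(0)=0$. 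Now $p_n\circ f\in A$ because no constant term is needed, and $p_n\circ f\to g\circ f=1/f$ uniformly since $f$ takes values in $[c,M]$. The ``in particular'' clause is then immediate: $1=f\cdot\frac{1}{f}\in A$.
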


\begin{proof}
    $(a)$:  Assume $A$ is a norm-closed subalgebra. To show it is a sublattice, it is enough to prove that $\abs{f}\in A$ whenever $f\in A$. So, fix $f\in A$. Then $f$ is bounded, so there is some $r>0$ such that the range of $f$ is contained in $[-r,r]$. By the classical Weierstrass theorem there is a sequence $(p_n)$ of polynomials such that $p_n(t)\rightrightarrows \abs{t}$ on $[-r,r]$. Up to replacing $p_n$ by $p_n-p_n(0)$ we may assume that $p_n(0)=0$. Since $A$ is a subalgebra, we deduce $p_n\circ f\in A$ for each $n$. Since $p_n\circ f\rightrightarrows \abs{f}$ on $\Gamma$ and $A$ is closed, we conclude that $\abs{f}\in A$.

  The second assertion is similar. Since $f$ is bounded, there is some $r>c$ such that the range of $f$ is contained in $[c,r]$. Let
  $g:[0,r]\to \er$ be defined by
  $$g(t)=\begin{cases}
      \frac t{c^2} & t\in[0,c],\\ \frac1t & t\in [c,r].
  \end{cases}$$
  Then $g$ is continuous on $[0,r]$ and $g(0)=0$. By the classical Weierstrass theorem we find a sequence $(p_n)$ of polynomials converging to $g$ uniformly on $[0,r]$. 
  Up to replacing $p_n$ by $p_n-p_n(0)$ we may assume that $p_n(0)=0$. Since $A$ is an algebra, we deduce that $p_n\circ f\in A$ and this sequence uniformly converges to $\frac1f$. Thus $\frac1f\in A$. To see the `in particular part', observe that $1=f\cdot\frac1f\in A$ whenever both $f\in A$ and $\frac1f\in A$.

    $(b)$:  $A$ is norm-closed by Lemma~\ref{L:muclosed is closed}.  By $(a)$ we deduce it is a sublattice. It follows that $A$ is closed to pointwise suprema of bounded countable sets. Indeed, if $(a_n)$ is a bounded sequence, the lattice property yields that $b_n=\max\{a_1,\dots,a_n\}\in A$ for each $n$. Since $b_n\nearrow \sup_n a_n$, we deduce that the supremum belongs to $A$. Finally, it follows that $A$ is closed with respect to taking pointwise limsup of bounded sequences, which completes the proof.

    $(c)$: Assume that $A$ is a norm-closed sublattice containing constant functions. To prove it is a subalgebra, it is enough to show that $f^2\in A$ whenever $f\in A$. (Indeed, we have $fg=\frac12((f+g)^2-f^2-g^2)$.)  So, fix $f\in A$. Then $f$ is bounded, so there is some $r>0$ such that the range of $f$ is contained in $[-r,r]$. By a classical result there is a sequence of piecewise linear functions $l_n$ such that $l_n(t)\rightrightarrows t^2$ on $[-r,r]$. The assumptions yield that $l_n\circ f\in A$ for each $n$. Since $l_n\circ f\rightrightarrows f^2$ on $\Gamma$ and $A$ is norm-closed, we conclude that $f^2\in A$.
\end{proof}

We continue by the following result which is a generalization of \cite[Proposition II.7.9]{alfsen} and of some important steps in the proof of main results of \cite{edwards,smith-london}.

\begin{prop}\label{P:algebra ZH}
    Let $H$ be an intermediate function space determined by extreme points.  Let us define the restriction operator
    $\gls{R}:H\to \ell^\infty(\ext X)$  by $R(a)=a|_{\ext X}$ for $a\in H$. 
    Then the following assertions are valid:
    \begin{enumerate}[$(a)$]
        \item $R$ is a linear order-isomorphic isometric inclusion and its range is a closed linear subspace of $\ell^\infty(\ext X)$ containing constant functions.
        \item $R(H)$ is a sublattice $\Longleftrightarrow$ $R(H)$ is a subalgebra $\Longleftrightarrow$ $M(H)=H$.
        \item $R(M(H))$ is a  closed subalgebra and sublattice of $\ell^\infty(\ext X)$. If, moreover, $H^\mu=H$, then  $R(H)$ is closed with respect to pointwise limits of bounded sequences.
        \item All the properties from $(c)$ are fulfilled also by $R(M^s(H))$.
    \end{enumerate}
  \end{prop}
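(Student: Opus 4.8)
The plan is to establish the four assertions in order, using throughout the identification $H=A_c(S(H))$ from Lemma~\ref{L:intermediate}(a) and the determinacy hypothesis, which says precisely that $\ext S(H)\subset\overline{\iota(\ext X)}$ (Lemma~\ref{L:intermediate}(d)).

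For $(a)$: injectivity of $R$ and the fact that it is an order-isomorphic isometry onto its range are immediate consequences of determinacy — indeed $\inf u(\ext X)\le u\le\sup u(\ext X)$ for every $u\in H$ gives $\norm{u}=\norm{R(u)}$ and $u\ge 0\iff R(u)\ge 0$. By Lemma~\ref{L:uzavrenost}(i) (applied via the isometry $R$, or directly since $R$ is an isometry onto its range and $H$ is complete) the range is norm-closed; it contains the constants since $H\supset A_c(X)\ni 1_X$.

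For $(b)$: the implication ``$M(H)=H\Rightarrow R(H)$ is a subalgebra'' is just the definition of multiplier — if every $u\in H$ is a multiplier then for any $a,b\in H$ there is $c\in H$ with $c=ab$ on $\ext X$, i.e. $R(a)R(b)\in R(H)$. Conversely, if $R(H)$ is a subalgebra, then for any $u,a\in H$ we have $R(u)R(a)\in R(H)$, so $R(u)R(a)=R(b)$ for some $b\in H$, which is exactly $b=ua$ on $\ext X$; hence $u\in M(H)$. The equivalence with ``$R(H)$ is a sublattice'' follows from Lemma~\ref{L:lattice}(a) and (c): a norm-closed subalgebra of $\ell^\infty(\ext X)$ is automatically a sublattice, and a norm-closed sublattice containing the constants is automatically a subalgebra; $R(H)$ is norm-closed with constants by $(a)$.

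For $(c)$: the main point is that $R(M(H))$ is a subalgebra. Fix $u_1,u_2\in M(H)$. By Proposition~\ref{P:mult} each $u_i$ induces $T_i\in\frd(H)$ with $T_i(1)=u_i$ and $T_i(a)=u_i\cdot a$ on $\ext X$. Then $T_1\circ T_2\in\frd(H)$, and on $\ext X$ one computes $(T_1T_2)(1)=T_1(u_2)$. The subtle step — and I expect this to be the main obstacle — is to check that $T_1(u_2)=u_1u_2$ on $\ext X$, i.e. that $u_1u_2|_{\ext X}$ extends to an element of $H$, so that $u_1u_2\in M(H)$. To see this I would argue: $T_1(a)=u_1\cdot a$ on $\ext X$ for every $a\in H$, in particular $R(T_1(a))=R(u_1)R(a)$; applying with $a=u_2$ gives $R(T_1(u_2))=R(u_1)R(u_2)$, and $T_1(u_2)\in H$, so $R(u_1)R(u_2)\in R(H)$, and moreover $T_1(u_2)\in M(H)$ because for any $a\in H$ one has $(T_1\circ T_a)$ composed appropriately... — more cleanly, since $M(H)=Z(H)$ is identified (via $R$ or via Lemma~\ref{L:intermediate}(a)) with $M(A_c(S(H)))=Z(A_c(S(H)))$, which is a subalgebra of $A_c(S(H))$ by \cite[Proposition II.7.9]{alfsen}, the product of two multipliers is a multiplier. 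So $R(M(H))$ is a subalgebra of $\ell^\infty(\ext X)$, and it is norm-closed by Lemma~\ref{L:uzavrenost}(ii) transported along $R$; Lemma~\ref{L:lattice}(a) then upgrades it to a sublattice. If additionally $H^\mu=H$, then $R(H)$ is a subalgebra (by $(b)$, since $H^\mu=H$ forces $M(H)=H$ — this is where one invokes that a $\mu$-closed function space which is also determined by extreme points satisfies $M(H)=H$; alternatively cite Lemma~\ref{L:lattice}(b) directly, noting $R(H)$ is a subalgebra closed under bounded monotone pointwise limits hence under all bounded pointwise limits).

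For $(d)$: the argument is verbatim the same as for $(c)$, using that $M^s(H)\subset M(H)$ induces operators in $\frd(H)$ via Proposition~\ref{P:mult} (the operator attached to a strong multiplier is the same operator), that $M^s(H)$ is closed under composition of the induced operators, that $M^s(H)$ is norm-closed by Lemma~\ref{L:uzavrenost}(ii), and finally that $M^s(H)=(M^s(H))^\mu$ when $H=H^\mu$ by Proposition~\ref{p:multi-pro-mu}(i), so Lemma~\ref{L:lattice}(b) applies to $R(M^s(H))$ in the case $H^\mu=H$ and to $R(H)$ as well since then $M^s(H)=M(H)=H$.

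The hard part, as flagged, is the closure-under-products step in $(c)$: one must make sure that the naive formula ``$R(u_1)R(u_2)=R(T_1u_2)$ with $T_1u_2\in M(H)$'' is justified, i.e. that the product of two multipliers is again a multiplier and not merely an element of $R(H)$. Routing through the identification $H=A_c(S(H))$ and the classical fact that $Z(A_c(Y))$ is a subalgebra of $A_c(Y)$ (\cite[Proposition II.7.9]{alfsen}) is the cleanest way to close this gap; the determinacy hypothesis is exactly what makes $R$ an isometry so that this transported structure descends to $R(M(H))\subset\ell^\infty(\ext X)$.
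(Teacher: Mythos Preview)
Your arguments for $(a)$ and $(b)$ are correct and essentially match the paper.

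For $(c)$, your ``cleaner'' route contains a genuine error: you assert that $M(H)=Z(H)$ and identify this with $M(A_c(S(H)))=Z(A_c(S(H)))$, but the equality $M(H)=Z(H)$ does \emph{not} hold for a general intermediate function space determined by extreme points --- the paper devotes considerable effort to this question and exhibits explicit counterexamples (Example~\ref{ex:Bauer}) where $M(H)\subsetneqq Z(H)$. So you cannot route through \cite[Proposition~II.7.9]{alfsen}. The paper's argument is entirely elementary and bypasses both operators and the center: given $m_1,m_2\in M(H)$, since $m_1$ is a multiplier there is $m\in H$ with $m=m_1m_2$ on $\ext X$; to check $m\in M(H)$, take any $a\in H$, find $b_1\in H$ with $b_1=m_1a$ on $\ext X$ (as $m_1\in M(H)$), then $b_2\in H$ with $b_2=m_2b_1$ on $\ext X$ (as $m_2\in M(H)$), and observe $b_2=m_2m_1a=ma$ on $\ext X$. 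Your abandoned first approach via the operators $T_i$ from Proposition~\ref{P:mult} was in fact on track and amounts to the same computation: $T_1(T_2(a))=u_1\cdot T_2(a)=u_1u_2a$ on $\ext X$ for every $a\in H$, so $T_1(u_2)\in H$ (which equals $u_1u_2$ on $\ext X$) is a multiplier. You should have completed that line rather than switching.

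For the second sentence of $(c)$, note first that the statement as printed has a typo: ``$R(H)$'' should read ``$R(M(H))$'' (the paper's own proof works with $R(M(H))$ throughout, and this is how the assertion is used later, e.g.\ in Theorem~\ref{T:integral representation H}). Your attempt to prove the literal statement leads you to the false claim that $H^\mu=H$ forces $M(H)=H$; there is no such implication. The correct argument is: from $H=H^\mu$ and Proposition~\ref{p:multi-pro-mu}(i) one gets $M(H)=(M(H))^\mu$, hence $R(M(H))$ is closed under bounded monotone pointwise limits; since $R(M(H))$ is already a closed subalgebra, Lemma~\ref{L:lattice}(b) upgrades this to closure under all bounded pointwise limits. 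Your ``alternatively'' clause becomes exactly this once $R(H)$ is replaced by $R(M(H))$ --- but as written it presupposes that $R(H)$ is a subalgebra, which again would require $M(H)=H$. The same corrections apply verbatim to $(d)$.
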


\begin{proof}
$(a)$: It is clear that $R$ is a linear mapping. Since $H$ is determined by extreme points, it is both order-isomorphism and isometry, so its range is a closed linear subspace of $\ell^\infty(\ext X)$. Obviously it contains constant functions.

$(c)$: By Lemma~\ref{L:uzavrenost}$(ii)$ $M(H)$ is closed, so $R(M(H))$ is closed as well (by $(a)$).
Let us continue by showing that $R(M(H))$ is a subalgebra.
Let $a_1,a_2\in R(M(H))$. Let $m_1,m_2\in M(H)$ be such that $R(m_1)=a_1$ and $R(m_2)=a_2$. Since $m_1\in M(H)$, there is $m\in H$ such that $m=m_1m_2$ on $\ext X$. Then clearly $m|_{\ext X}=a_1a_2$. It remains to show that $m\in M(H)$. To this end fix $a\in H$. Since $m_1\in M(H)$, there is $b_1\in H$ such that $b_1=m_1a$ on $\ext X$. Since $m_2\in M(H)$, there is $b_2\in H$ such that $b_2=m_2b_1$ on $\ext X$. Then $b_2=m_2m_1a$ on $\ext X$, hence $b_2=ma$ on $\ext X$. This completes the proof that $m\in M(H)$. 
Thus $R(M(H))$ is a closed subalgebra of $\ell^\infty(\ext X)$,
so by Lemma~\ref{L:lattice}$(a)$ we deduce that it is also a sublattice.

Finally, assume that moreover $H=H^\mu$. Then by Proposition~\ref{p:multi-pro-mu}$(i)$ we get $M(H)=(M(H))^\mu$. It follows that $R(H)$ is closed with respect to pointwise limits of bounded monotone sequences. Indeed, let $(g_n)$ be a non-decreasing sequence in $R(M(H))$ pointwise converging to some $g\in\ell^\infty(\ext X)$. Find $m_n\in M(H)$ with $R(m_n)=g_n$. Since $H$ is determined by extreme points, the sequence $(m_n)$ is non-decreasing and bounded. So $m=\lim_n m_n\in M(H)$. Then $g=R(m)\in R(M(H))$. We conclude by Lemma~\ref{L:lattice}$(b)$.

$(d)$: This is completely analogous to $(c)$.

$(b)$: It follows from $(a)$ and Lemma~\ref{L:lattice} that $R(H)$ is a subalgebra if and only if it is a sublattice. If $M(H)=H$, then $R(H)$ is a subalgebra by $(c)$. Finally, if $R(H)$ is a subalgebra, the very definition of multipliers yields that $M(H)=H$. 
\end{proof}

The following lemma is proved in \cite[Lemma 3.5]{edwards}.

\begin{lemma}\label{L:sigmaalgebra}
Let $\Gamma$ be a set and let $A\subset\ell^\infty(\Gamma)$ be a subalgebra containing constant functions and closed with respect to taking limits of pointwise converging bounded sequences. Then
 $$\A=\{E\subset \Gamma\setsep 1_E\in A\}$$
 is a $\sigma$-algebra and
 $$A=\{f\in\ell^\infty(\Gamma)\setsep f^{-1}(U)\in\A\mbox{ for each }U\subset\er\mbox{ open}\},$$
 i.e.,  $A$ is formed exactly by bounded $\A$-measurable functions on $\Gamma$.
\end{lemma}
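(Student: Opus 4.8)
\textbf{Proof plan for Lemma~\ref{L:sigmaalgebra}.} The statement is a clean measure-theoretic fact about subalgebras of $\ell^\infty(\Gamma)$ that are closed under bounded pointwise sequential limits; the plan is to verify first that $\A$ is a $\sigma$-algebra and then that $A$ consists exactly of the bounded $\A$-measurable functions.

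First I would check that $\A=\{E\subset\Gamma\setsep 1_E\in A\}$ is a $\sigma$-algebra. We have $\Gamma\in\A$ because $1_\Gamma=1\in A$ (constants lie in $A$), and $\emptyset\in\A$ because $1_\emptyset=0\in A$. Closure under complement: if $E\in\A$ then $1_{\Gamma\setminus E}=1-1_E\in A$ since $A$ is a linear subspace containing constants. Closure under finite intersection: $1_{E\cap F}=1_E\cdot 1_F\in A$ because $A$ is a subalgebra; combined with complements this gives closure under finite unions. For countable unions, given $E_1,E_2,\dots\in\A$, put $F_n=E_1\cup\dots\cup E_n\in\A$; then $(1_{F_n})_n$ is a bounded (by $1$) sequence in $A$ converging pointwise to $1_{\bigcup_n E_n}$, so the union lies in $\A$ by the assumed closure under bounded pointwise limits. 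Hence $\A$ is a $\sigma$-algebra.

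Next I would prove the two inclusions in the displayed equality. For $A\subset\{f\setsep f^{-1}(U)\in\A\ \forall U\ \text{open}\}$: fix $f\in A$; it suffices to show $f^{-1(}(-\infty,c))\in\A$ for every $c\in\er$ (these generate the Borel, hence open, preimages), i.e.\ that $1_{[f<c]}\in A$. By Lemma~\ref{L:lattice}(b), $A$ is norm-closed and is also a sublattice closed under bounded pointwise limits, so it is closed under the operations $f\mapsto\min(f,g)$, truncation, and bounded pointwise limits. Approximating the indicator of $(-\infty,c)$ by the continuous piecewise-linear functions $g_n(t)=\min(1,\max(0,n(c-t)))$ and noting $g_n\circ f\in A$ (each $g_n$ is a uniform limit on the bounded range of $f$ of functions obtained from $f$ by lattice operations, scaling and adding constants, all inside $A$, and $A$ is norm-closed), we get $g_n\circ f\nearrow 1_{[f<c]}$ pointwise and boundedly, whence $1_{[f<c]}\in A$; so $[f<c]\in\A$ and $f$ is $\A$-measurable. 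For the reverse inclusion: let $f\in\ell^\infty(\Gamma)$ be bounded and $\A$-measurable, with range in $[-r,r]$. Any simple $\A$-measurable function $\sum_{i=1}^k c_i 1_{E_i}$ with $E_i\in\A$ lies in $A$ since $1_{E_i}\in A$ and $A$ is a linear subspace. A standard approximation writes $f$ as a uniform limit of such simple functions (partition $[-r,r]$ into finitely many intervals and use that preimages of intervals are in $\A$), and since $A$ is norm-closed we conclude $f\in A$. This gives the desired equality, and the proof is complete.

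\textbf{Main obstacle.} No serious obstacle is expected; this is a routine consolidation. The one point that requires the hypotheses to be used carefully — rather than just the subalgebra/constants structure — is producing the indicator $1_{[f<c]}$ inside $A$, where closure under bounded pointwise limits (together with Lemma~\ref{L:lattice}(b) giving norm-closedness and the lattice property) is essential; a subalgebra containing constants alone would not suffice.
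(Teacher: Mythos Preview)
Your proof is correct. The paper does not actually prove this lemma; it simply cites \cite[Lemma 3.5]{edwards}, so there is nothing to compare against beyond noting that your direct argument is the standard one.

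Two minor cosmetic remarks. First, once you know $A$ is a norm-closed subalgebra containing constants, Lemma~\ref{L:lattice}(a) gives the lattice property directly, so $g_n\circ f=\min(1,\max(0,n(c-f)))$ lies in $A$ by a single lattice computation---the uniform approximation step you mention is unnecessary. Second, norm-closedness of $A$ follows immediately from the hypothesis (a uniformly convergent sequence is in particular bounded and pointwise convergent), so the appeal to Lemma~\ref{L:lattice}(b) can be streamlined. Neither point affects correctness.
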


Now we are ready to formulate the promised common roof of the main results of \cite{edwards,smith-london}.

\begin{thm}\label{T:integral representation H}
    Let $H$ be an intermediate function space determined by extreme points satisfying $H^\mu=H$. Then the following assertions hold.
    \begin{enumerate}[$(i)$]
        \item The systems
        $$\gls{AH}=\{E\subset \ext X\setsep \exists m\in M(H)\colon m|_E=1 \ \&\ m|_{\ext X\setminus E}=0\}\index{sigma-algebra@$\sigma$-algebra!AH@$\A_H$}$$
        and 
         $$\gls{AHs}=\{E\subset \ext X\setsep \exists m\in M^s(H)\colon m|_E=1 \ \&\ m|_{\ext X\setminus E}=0\}\index{sigma-algebra@$\sigma$-algebra!AHs@$\A_H^s$}$$
        are $\sigma$-algebras.
        \item Let $u\in \ell^\infty(\ext X)$. Then $u$ may be extended to an element of $M(H)$ or $M^s(H)$ if and only if $u$ is $\A_H$ or $\A^s_H$-measurable, respectively.
        \item For any $x\in X$ there is a unique probability $\mu=\mu_{H,x}$ on $(\ext X,\A_H)$ such that
        $$\forall m\in M(H)\colon m(x)=\int m\di\mu.$$
    \end{enumerate}
\end{thm}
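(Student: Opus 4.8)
The plan is to deduce Theorem~\ref{T:integral representation H} from the algebraic structure established in Proposition~\ref{P:algebra ZH} together with the abstract measure-theoretic Lemma~\ref{L:sigmaalgebra} and a Riesz-type representation argument. Throughout I identify $H$ with $A_c(S(H))$ via Lemma~\ref{L:intermediate}$(a)$ when convenient, but the cleanest route works directly with the restriction operator $R\colon H\to\ell^\infty(\ext X)$.

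First I would prove $(i)$ and $(ii)$ simultaneously. By Proposition~\ref{P:algebra ZH}$(c)$, since $H^\mu=H$, the set $R(M(H))$ is a norm-closed subalgebra of $\ell^\infty(\ext X)$ containing constants and closed under pointwise limits of bounded sequences; the same holds for $R(M^s(H))$ by part $(d)$. Lemma~\ref{L:sigmaalgebra} applied to $A=R(M(H))$ then yields that $\A_H=\{E\subset\ext X\setsep 1_E\in R(M(H))\}$ is a $\sigma$-algebra and that $R(M(H))$ consists exactly of the bounded $\A_H$-measurable functions on $\ext X$; likewise for $\A_H^s$ and $R(M^s(H))$. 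This is precisely assertion $(i)$, once one observes that $1_E\in R(M(H))$ means exactly that there is $m\in M(H)$ with $m|_E=1$ and $m|_{\ext X\setminus E}=0$ (using that $H$, hence $M(H)$, is determined by extreme points so such $m$ is unique if it exists). For $(ii)$: if $u\in\ell^\infty(\ext X)$ is $\A_H$-measurable, then $u\in R(M(H))$ by the description just obtained, i.e.\ $u$ extends to a member of $M(H)$; conversely if $u$ extends to $m\in M(H)$ then $u=R(m)\in R(M(H))$, which is exactly the class of bounded $\A_H$-measurable functions, so $u$ is $\A_H$-measurable. The strong-multiplier case is verbatim with $\A_H^s$.

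For $(iii)$ I would fix $x\in X$ and consider the linear functional $\Lambda_x$ on $R(M(H))$ defined by $\Lambda_x(R(m))=m(x)$; this is well defined because $R$ is injective on $H$ (determinacy by extreme points), it is positive (if $m|_{\ext X}\ge0$ then $m\ge0$ on $X$ again by determinacy, so $m(x)\ge0$), and it satisfies $\Lambda_x(1)=1$. Now $R(M(H))$ is a closed subalgebra of $\ell^\infty(\ext X)$ closed under bounded pointwise limits, hence — by the description via $\A_H$ — it is uniformly isometric to the space of bounded $\A_H$-measurable functions, equivalently to $C(Z)$ for $Z$ the Gelfand-type compactification, but more directly one argues: $\Lambda_x$ is a positive normalized functional on the sup-normed algebra of bounded $\A_H$-measurable functions that is continuous under bounded pointwise (monotone) limits. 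Such a functional is given by integration against a finitely additive probability on $\A_H$, and the bounded-pointwise-limit continuity upgrades finite additivity to $\sigma$-additivity by the standard argument (if $E_n\downarrow\emptyset$ in $\A_H$ then $1_{E_n}\downarrow0$ pointwise and boundedly, so $\mu(E_n)=\Lambda_x(1_{E_n})\to0$). Thus there is a unique probability measure $\mu_{H,x}$ on $(\ext X,\A_H)$ with $m(x)=\Lambda_x(R(m))=\int_{\ext X}m\di\mu_{H,x}$ for all $m\in M(H)$, and uniqueness of $\mu_{H,x}$ follows since $\A_H$-measurable characteristic functions already lie in $R(M(H))$, so the values $\mu_{H,x}(E)=\int 1_E\di\mu_{H,x}$ are forced.

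The main obstacle I anticipate is the passage in $(iii)$ from a positive normalized linear functional on the algebra of bounded $\A_H$-measurable functions to a genuine $\sigma$-additive measure: one must be careful that the relevant continuity property transported from $H^\mu=H$ (closedness of $R(M(H))$ under bounded pointwise limits of \emph{monotone} sequences, from Proposition~\ref{P:algebra ZH}$(c)$) is exactly what is needed, and that $\Lambda_x$ itself is continuous along such sequences — this uses Lemma~\ref{L:spojitost monot} (or the order-continuity of point evaluations on $A_c(S(H))$ under monotone limits, which holds because a monotone pointwise limit of affine continuous functions that stays in $H$ is the supremum in the ordered space $H$). Once that continuity is in hand, the Daniell–Stone / Carathéodory machinery produces $\mu_{H,x}$ routinely; I do not expect difficulties in the algebraic parts, which are handled wholesale by Proposition~\ref{P:algebra ZH} and Lemma~\ref{L:sigmaalgebra}.
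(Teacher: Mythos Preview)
Your proposal is correct and follows essentially the same route as the paper: parts $(i)$ and $(ii)$ are obtained by feeding $R(M(H))$ and $R(M^s(H))$ (closed subalgebras with the bounded-sequential-closure property, by Proposition~\ref{P:algebra ZH}$(c)$,$(d)$) into Lemma~\ref{L:sigmaalgebra}; part $(iii)$ comes from the evaluation functional $\Lambda_x=\varphi$, represented first by a finitely additive probability and then upgraded to $\sigma$-additivity via monotone continuity.

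One small remark on your ``main obstacle'': you are making this step harder than it is. The continuity of $\Lambda_x$ along monotone sequences requires no abstract order-continuity principle and no appeal to Lemma~\ref{L:spojitost monot} (which concerns operators $0\le T\le I$, not point evaluations). If $m_n\in M(H)$ and $m_n\nearrow m$ pointwise on $X$ (which is what monotone convergence in $H$ means, since $H\subset A_b(X)$), then in particular $m_n(x)\to m(x)$, i.e.\ $\Lambda_x(R(m_n))\to\Lambda_x(R(m))$, tautologically. The paper exploits exactly this: given disjoint $E_n\in\A_H$ with witnesses $m_n\in M(H)$, the partial sums $f_n=m_1+\dots+m_n$ increase to some $f\in M(H)$ (by $M(H)^\mu=M(H)$), and $\mu(\bigcup E_n)=f(x)=\lim f_n(x)=\sum\mu(E_n)$. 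Your decreasing-to-$\emptyset$ variant is equally valid and equally elementary.
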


\begin{proof}
    Let $R$ be the restriction map from Proposition~\ref{P:algebra ZH}. It follow from the quoted proposition that $R(M(H))$ and $R(M^s(H))$ satisfy assumptions of Lemma~\ref{L:sigmaalgebra}. By applying this lemma we obtain assertions $(i)$ and $(ii)$.

    $(iii)$: Fix $x\in X$. Define a functional $\varphi:R(M(H))\to \er$ by
    $$\varphi(g)=R^{-1}(g)(x),\quad g\in R(M(H)).$$
    Since $H$ is determined by extreme points, it is a positive linear functional of norm one. By $(ii)$, $R(M(H))$ coincides with bounded $\A_H$-measurable functions on $\ext X$, hence by \cite[Theorem IV.5.1]{DS1} the functional $\varphi$ is represented by a finitely additive probability measure $\mu$ on $(\ext X,\A_H)$.
    It remains to observe that $\mu$ is $\sigma$-additive. To this end fix $(E_n)$, a disjoint sequence in $\A_H$. Let $m_n\in M(H)$ be such that $m_n|_{E_n}=1$ and $m_n|_{\ext X\setminus E_n}=0$ and set $f_n=m_1+\dots+m_n$. Then $(f_n)$ is a non-decresing sequence in $M(H)$ upper bounded by $1$, so $f=\lim_n f_n\in M(H)$. Observe that $f=1$ on $\bigcup_n E_n$ and $f=0$ on $\ext X\setminus \bigcup_n E_n$. Then
    $$\begin{aligned}
    \mu\left(\bigcup_n E_n\right)&=\varphi(1_{\bigcup_n E_n})=f(x)=\lim_n f_n(x)=\lim_n\varphi(1_{E_1\cup\dots E_n})
    \\&=\lim_n \mu(E_1\cup\dots\cup E_n)
    =\sum_{n=1}^\infty \mu(E_n).\end{aligned}$$ 
    This proves the existence of $\mu$. The uniqueness is obvious, for $E\in\A_H$ we necessarily have
    $$\mu(E)=\int 1_E\di\mu=R^{-1}(1_E)(x).$$
\end{proof} 

\begin{cor}
     Let $H$ be an intermediate function space determined by extreme points satisfying $H^\mu=H$.
     Then $(M(H))^\sigma=M(H)$ and $(M^s(H))^\sigma=M^s(H)$.
\end{cor}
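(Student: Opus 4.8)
The goal is to deduce the Corollary from Theorem~\ref{T:integral representation H}. The plan is to show $M(H)$ is closed under limits of bounded pointwise convergent sequences; the statement for $M^s(H)$ is entirely analogous, using $\A_H^s$ and $M^s(H)$ in place of $\A_H$ and $M(H)$ throughout. Recall that, since $H$ is determined by extreme points, the restriction map $R\colon H\to\ell^\infty(\ext X)$ of Proposition~\ref{P:algebra ZH} is an isometric order-isomorphism onto a closed subspace, and by Proposition~\ref{P:algebra ZH}(c) the image $R(M(H))$ is a closed subalgebra of $\ell^\infty(\ext X)$ containing constants, which (using $H^\mu=H$ and Proposition~\ref{p:multi-pro-mu}(i)) is closed under pointwise limits of bounded monotone sequences. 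By Lemma~\ref{L:lattice}(b), $R(M(H))$ is therefore closed under limits of bounded pointwise convergent sequences.

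First I would take a bounded sequence $(m_n)$ in $M(H)$ converging pointwise on $X$ to some $m\in A_b(X)$. Since $H=H^\mu\subset H^\sigma=H$ is closed under pointwise limits of bounded sequences (by Lemma~\ref{L:muclosed is closed} together with Lemma~\ref{L:lattice}, or directly since $H^\sigma=H$), we have $m\in H$. Now $R(m_n)=m_n|_{\ext X}$ is a bounded sequence in $R(M(H))$ converging pointwise on $\ext X$ to $m|_{\ext X}=R(m)$. By the closedness property of $R(M(H))$ just recalled, $R(m)\in R(M(H))$, and since $R$ is injective with $R(M(H))=R(M(H))$, this forces $m\in M(H)$. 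Hence $(M(H))^\sigma=M(H)$. The same argument with $M^s(H)$ and Proposition~\ref{P:algebra ZH}(d) gives $(M^s(H))^\sigma=M^s(H)$.

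Alternatively, and perhaps more cleanly, one can argue directly from Theorem~\ref{T:integral representation H}(ii): $M(H)$ (restricted to $\ext X$) consists exactly of the bounded $\A_H$-measurable functions on $\ext X$, and the class of bounded measurable functions with respect to a fixed $\sigma$-algebra is automatically closed under bounded pointwise sequential limits; transporting back via $R^{-1}$ and using that $H$ is closed under bounded pointwise limits gives the claim. There is no real obstacle here — the work has all been done in Proposition~\ref{P:algebra ZH} and Theorem~\ref{T:integral representation H}; the only point requiring a word of care is that a pointwise-convergent sequence in $M(H)\subset H$ has its limit in $H$ in the first place, which is immediate from $H^\sigma=H$ (a consequence of $H^\mu=H$ via Lemma~\ref{L:lattice}(b) applied to $H$ itself, noting $H$ is an algebra when $M(H)=H$ — but in general one uses that $H^\mu=H$ implies $H$ is norm-closed and, being also $\mu$-closed, equals $H^\sigma$ by the lattice argument, or simply that the pointwise limit of functions in $H^\mu$ lies in $H^\sigma$ and one restricts attention to the measurable-function description).
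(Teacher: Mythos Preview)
Your argument has a genuine gap at the point where you claim $m\in H$. You assert that $H^\mu=H$ implies $H^\sigma=H$, but this is false in general: the paper itself notes (just after the definition of $F^\sigma$ and $F^\mu$) that $F^\mu=F^\sigma$ holds when $F$ is a lattice but not in general, and Proposition~\ref{P:Baire-srovnani}(ii) gives an explicit compact convex set $X$ with $(A_c(X))^\mu\subsetneqq (A_c(X))^\sigma$. Taking $H=(A_c(X))^\mu$ there, one has $H^\mu=H$ yet $H^\sigma\neq H$. Your attempted justifications in the last paragraph do not rescue this: Lemma~\ref{L:lattice}(b) requires the subspace to be a subalgebra, and $H$ need not be one (indeed, $R(H)$ is a subalgebra only when $M(H)=H$, by Proposition~\ref{P:algebra ZH}(b)).

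The paper's proof avoids this trap by not attempting to show $m\in H$ directly. Instead, once one knows (as you correctly observe) that $R(m)=m|_{\ext X}\in R(M(H))$, one takes $g\in M(H)$ with $g|_{\ext X}=m|_{\ext X}$ and then uses the integral representation of Theorem~\ref{T:integral representation H}(iii): for each $x\in X$,
\[
m(x)=\lim_n m_n(x)=\lim_n\int_{\ext X}m_n\,d\mu_{H,x}=\int_{\ext X}m\,d\mu_{H,x}=\int_{\ext X}g\,d\mu_{H,x}=g(x),
\]
by dominated convergence. Thus $m=g\in M(H)$, and the membership $m\in H$ comes out as a conclusion rather than being assumed. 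This is the missing idea in your proposal.
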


\begin{proof}
    We show the proof for $M(H)$, the case of $M^s(H)$ is basically the same. Let $R$ be the restriction map from Proposition~\ref{P:algebra ZH}. Let $(f_n)$ be a bounded sequence in $M(H)$ pointwise converging to some $f\in A_b(X)$. By Proposition~\ref{P:algebra ZH}$(c)$ then $f|_{\ext X}$ belongs to $R(M(H))$, so there is some $g\in M(H)$ such that $f|_{\ext X}=g|_{\ext X}$. Fix $x\in X$ and let $\mu=\mu_{H,x}$ be the probability measure provided by Theorem~\ref{T:integral representation H}$(iii)$. Then
    $$f(x)=\lim_n f_n(x)=\lim_n \int_{\ext X} f_n\di\mu=\int_{\ext X}f\di\mu=\int_{\ext X}g\di\mu=g(x),$$
    where we used the Lebesgue dominated convergence theorem.
    So, $f=g$ and thus $f\in M(H)$.
\end{proof}

\begin{remark}\label{rem:intrepH} 
(1) Theorem~\ref{T:integral representation H} is an abstract common roof of several results. 
        The case $H=(A_c(X))^\mu$ is addressed in \cite{edwards}. The results are formulated in a different language using the spectrum $\widehat{A_c(X)}$ in place of $\ext X$, but this approach is equivalent due to \cite[Proposition 2.3]{edwards}. Assertions $(i)$ and $(ii)$ of Theorem~\ref{T:integral representation H} then correspond to \cite[Theorem 3.6]{edwards} and assertion $(iii)$ corresponds to \cite[Proposition 4.9]{edwards}.
   This research was continued in \cite{smith-london}. The case $H=(A_c(X))^\sigma$ corresponds to \cite[Theorems 5.2 and 5.3]{smith-london} and the case $H=(A_s(X))^\mu$ corresponds to \cite[Theorems 5.4 and 5.5]{smith-london}. In \cite{smith-london} the terminology from \cite{edwards} is used, in \cite[Theorem 1.2]{smith-pacjm} the case $H=(A_s(X))^\mu$ is reformulated using functions on $\ext X$, which corresponds to our setting.

   Our result is abstract, the only assumption is that $H$ is an intermediate function space determined by extreme points and satisfying $H^\mu=H$. By results of Section~\ref{s:determined} the natural spaces to which Theorem~\ref{T:integral representation H} applies include $(A_s(X))^\sigma$, $(A_f(X))^\mu$ and $(A_b(X)\cap\Bo_1(X))^\mu$. If $X$ is a standard compact convex set (for example if it is metrizable), the theorem applies also to $A_{sa}(X)$.

 (2) Let us compare Theorem~\ref{T:integral representation H} with Theorem~\ref{T:Assigma-reprez}. Although both theorems are devoted to boundary integral representation, they have different meaning and they apply in different situations.

 In Theorem~\ref{T:Assigma-reprez} one canonical $\sigma$-algebra is considered and the representing measures are derived from the respective maximal measures representing in the usual way. And the theorem says that this kind of a canonical representation holds for certain class of affine functions (more precisely, functions from $(A_s(X))^\sigma$). In some cases the same representation holds for a larger class of functions (for example for strongly affine functions if $X$ is metrizable).

 On the other hand, Theorem~\ref{T:integral representation H} provides a $\sigma$-algebra depending on $H$, the representing measures result from an abstract theorem on representation of dual spaces and are not directly related to the measures representing in the usual sense. Further, the representation works only for multipliers. However, there is an additional ingredient (not present in Theorem~\ref{T:Assigma-reprez}) -- a characterization of multipliers (and strong multipliers) using a kind of measurability of the restriction to $\ext X$. This becomes even more interesting if there is a more descriptive characterization of the $\sigma$-algebra. This task is addressed in Sections~\ref{s:meas sm} and~\ref{sec:baire} and also in concrete examples in Section~\ref{sec:stacey}.

    (3) We note that the results in \cite{edwards,smith-london,smith-pacjm} are formulated for $Z(H)$ in place of $M(H)$, but in all three cases equality $Z(H)=M(H)$ holds (see \cite[Propositions 4.4 and 4.9]{smith-london} for proofs for $(A_c(X))^s$ and $(A_s(X))^\mu$. The case of $(A_c(X))^\mu$ may be proved using the method of \cite[Proposition 4.9]{smith-london}.)
    Our results work for a general intermediate function space $H$ which is determined  by extreme points and satisfies $H^\mu=H$, but they are formulated for $M(H)$. If $Z(H)=M(H)$, the results may be obviously formulated for $Z(H)$ in place of $M(H)$. 
    
    If $M(H)\subsetneqq Z(H)$, then $Z(H)$ still admits a structure of an algebra and a lattice since it is isometric to $Z(A_c(S(H)))=M(A_c(S(H)))$. But the respective algebraic and lattice operations are not necessarily pointwise on $\ext X$ and, moreover, it seems not to be clear whether the assumption $H^\mu=H$ would guarantee some kind of monotone completeness of $Z(H)$. 

    (4) It is natural to ask whether in assertion $(iii)$ of Theorem~\ref{T:integral representation H} the converse holds as well. I.e., given a probability $\mu$ on $(\ext X,\A_H)$, is there some $x\in X$ such that $\mu=\mu_{H,x}$? We note that probability measures on $(\ext X,\A_H)$ represent exactly
    `$\sigma$-normal states' on $M(H)$ (cf. \cite[Proposition 4.7]{edwards} for a special case, the general case may be proved in the same way). So, the question is whether the only `$\sigma$-normal states' on $M(H)$ are the evaluation functionals in points of $X$.
    In \cite[paragraph before Proposition 4.11]{edwards} it is claimed that this is not clear for $H=(A_c(X))^\mu$.
\end{remark}

Another easy consequence of Theorem~\ref{T:integral representation H} is the following description of extreme points in spaces of multipliers.


\begin{cor}
\label{c:ext body multiplikatoru}
Let $X$ be a compact convex set and $H=H^{\mu}$ be an intermediate function space determined by extreme points. Let $E=M(H)$ or $E=M^s(H)$ and let $B^+$ denote the positive part of the unit ball of $E$.
Then
$$\ext B^+=\{m\in E\setsep  m(\ext X) \subset \{0, 1\}\}
\mbox{ and }
B^+=\overline{\co\ext B^+}.$$
\end{cor}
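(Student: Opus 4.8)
The plan is to use the structure established in Theorem~\ref{T:integral representation H}, namely that $E=M(H)$ (resp. $M^s(H)$) restricted to $\ext X$ coincides with the bounded $\A_H$-measurable (resp. $\A_H^s$-measurable) functions, and that $R$ is an order-isometric isomorphism of $E$ onto this space of measurable functions on the measurable space $(\ext X,\A_H)$. Thus, identifying $E$ with $\ell^\infty(\ext X,\A_H)$ via $R$, the positive part $B^+$ of the unit ball becomes the set of $\A_H$-measurable functions $g$ with $0\le g\le 1$, and the claim reduces to a statement about this concrete function space.

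First I would identify $\ext B^+$. The easy inclusion: if $m\in E$ with $m(\ext X)\subset\{0,1\}$, then $g=R(m)=1_A$ for $A=[m|_{\ext X}=1]\in\A_H$; such a characteristic function cannot be written as a proper convex combination of two distinct elements of $B^+$, since at every point of $\ext X$ the value is an extreme point of $[0,1]$ — more precisely, if $1_A=\frac12(g_1+g_2)$ with $0\le g_i\le 1$, then at each point $g_1=g_2$, so $m\in\ext B^+$. For the converse, suppose $m\in\ext B^+$ but $g=R(m)$ attains a value $t\in(0,1)$ somewhere; then, using that $\{g^{-1}((a,b))\}$ are $\A_H$-measurable, I would split: consider $g_1=\min\{2g,1\}$ and $g_2=\max\{2g-1,0\}$, both $\A_H$-measurable with values in $[0,1]$, satisfying $g=\frac12(g_1+g_2)$, and $g_1\ne g_2$ at the point where $g=t\in(0,1)$ (there $g_1=\min\{2t,1\}$ and $g_2=\max\{2t-1,0\}$ differ). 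This exhibits $g$ as a proper convex combination, contradicting extremality; hence $g(\ext X)\subset\{0,1\}$, i.e. $m(\ext X)\subset\{0,1\}$.

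Next I would prove $B^+=\overline{\co}\,\ext B^+$. The inclusion $\supset$ is clear since $B^+$ is closed and convex (it is a norm-closed subset of the closed subspace $E\subset H$, and an intersection of $B^E$ with the positive cone and with $\{m\le 1\}$). For $\subset$, take $m\in B^+$, so $g=R(m)$ is $\A_H$-measurable with $0\le g\le 1$. The standard approximation of a bounded measurable function by simple functions gives, for each $n$, a function $g_n=\sum_{k=1}^{2^n}\frac{k-1}{2^n}1_{A_{n,k}}$ (with $A_{n,k}=g^{-1}([\frac{k-1}{2^n},\frac{k}{2^n}))\in\A_H$, suitably adjusting the top cell) such that $\|g-g_n\|_\infty\le 2^{-n}$. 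Each $g_n$ is a convex combination of characteristic functions of $\A_H$-sets — indeed one writes $g_n$ as an average of functions each taking only values $0$ and $1$ (a "layer cake"-type decomposition: $g_n = \frac{1}{2^n}\sum_{j=1}^{2^n-1} 1_{[g\ge j/2^n]}$, and each $1_{[g\ge j/2^n]}$ lies in $\ext B^+$ since $[g\ge j/2^n]\in\A_H$), hence $g_n\in\co\,\ext B^+$. Pulling back through $R^{-1}$, which is an isometry, $R^{-1}(g_n)\to m$ in norm, so $m\in\overline{\co}\,\ext B^+$.

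I expect the main obstacle to be purely bookkeeping: making sure that all the sets $A_{n,k}$, $[g\ge j/2^n]$ genuinely lie in $\A_H$ (they do, by Theorem~\ref{T:integral representation H}$(i)$–$(ii)$, since $g$ is $\A_H$-measurable and these are preimages of Borel sets), and that the layer-cake decomposition of $g_n$ is an honest convex combination with nonnegative coefficients summing to $1$ (one may need to include the zero function $1_\emptyset$ or pad with $1_{\ext X}$ to normalize — but $\emptyset,\ext X\in\A_H$, so both are available in $\ext B^+$). The case $E=M^s(H)$ is handled verbatim, replacing $\A_H$ by $\A_H^s$ throughout and invoking the part of Proposition~\ref{P:algebra ZH}$(d)$ and Theorem~\ref{T:integral representation H} concerning strong multipliers. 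No genuinely new idea beyond the identification $E\cong\ell^\infty(\ext X,\A_H)$ is needed; the result is the familiar description of the order unit ball of a space of bounded measurable functions.
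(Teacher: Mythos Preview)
Your proposal is correct and takes essentially the same approach as the paper: the paper invokes Proposition~\ref{P:algebra ZH} and Theorem~\ref{T:integral representation H} to identify $E$ with the bounded $\A_H$-measurable (resp.\ $\A_H^s$-measurable) functions on $\ext X$, and then simply states that ``from this identification the statement easily follows.'' You have just spelled out the standard details of that last step (extreme points of the positive unit ball of $\ell^\infty$ of a measurable space are characteristic functions, and simple-function approximation gives the density of their convex hull), which the paper leaves to the reader.
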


\begin{proof}
By Proposition \ref{P:algebra ZH} and Theorem \ref{T:integral representation H}, there exist $\sigma$-algebras $\A_H$ and $\A^s_H$ on $\ext X$ such that the spaces $M(H)$ and $M^s(H)$ are canonically isometric to subspaces of $\ell^{\infty}(\ext X)$ consisting of $\A_H$ or $\A^s_H$-measurable functions, respectively. From this  identification the statement easily follows.
\end{proof}

Assumption $H^\mu=H$ is one of the key ingredients of Theorem~\ref{T:integral representation H} -- it enables us to use Lemma~\ref{L:sigmaalgebra} to get assertions $(i)$ and $(ii)$. However, for $H=A_c(X)$ there is an analogue of assertion $(ii)$ in \cite[Theorem II.7.10]{alfsen}. The quoted theorem says (among others) that $u\in \ell^\infty(\ext X)$ may be extended to an element of $M(A_c(X))$ if and only if it is continuous with respect to the facial topology. It is natural to ask whether a similar statement is valid for more general intermediate function spaces. We continue by analyzing this question in the abstract setting. The first step is the following abstract lemma.

\begin{lemma}\label{L:jen algebra}
Let $\Gamma$ be a set and let $A\subset\ell^\infty(\Gamma)$ be a norm-closed subalgebra containing constant functions. 
 Set
    $$\A=\{ [f>0]\setsep f\in A\}.$$
\begin{enumerate}[$(a)$]
    \item
    Then $\A$ is a family of subsets of $\Gamma$ containing $\emptyset$ and $\Gamma$, and closed with respect to finite intersections and countable unions.
    \item Set
        $$B=\{f\in\ell^\infty(\Gamma)\setsep f^{-1}(U)\in\A\mbox{ for each }U\subset\er\mbox{ open}\}.$$
       Then $B$ is a norm-closed subalgebra of $\ell^\infty(\Gamma)$ containing $A$.
     \item The following assertions are equivalent:
     \begin{enumerate}[$(i)$]
         \item $A=B$;
         \item $\frac fg\in A$ whenever $f,g\in A$, $0\le f\le g$ and $g$ does not attain zero;
         \item given $E,F\subset\Gamma$ disjoint such that $\Gamma\setminus E,\Gamma\setminus F\in \A$, there is $f\in A$ with $0\le f\le 1$ such that $f=0$ on $E$ and $f=1$ on $F$.
     \end{enumerate}
\end{enumerate}
\end{lemma}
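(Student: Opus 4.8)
The plan is to verify the three parts in order, with part (c) being the real content. For part (a), I would note that $\emptyset = [{-1}>0]$ and $\Gamma = [1>0]$ since $A$ contains constants. For closure under finite intersections: given $f,g\in A$ with $f,g\ge 0$ (we may replace $f$ by $f^+$, which lies in $A$ by Lemma~\ref{L:lattice}(a), since $A$ is a norm-closed subalgebra; note $[f>0]=[f^+>0]$), we have $[f>0]\cap[g>0]=[fg>0]$ and $fg\in A$. For countable unions: given $f_n\in A$ with $0\le f_n\le 1$ (rescale, using that $A$ is a lattice and contains constants, so we may truncate), put $f=\sum_n 2^{-n} f_n$; the series converges uniformly, hence $f\in A$ since $A$ is norm-closed, and $[f>0]=\bigcup_n [f_n>0]$. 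For part (b), I would apply Lemma~\ref{L:lattice} directly after observing that $B$ is the set of bounded functions measurable with respect to the family $\A$ in the sense that preimages of open sets lie in $\A$; one checks $B$ is a norm-closed linear subspace and a sublattice containing constants, hence by Lemma~\ref{L:lattice}(c) it is a subalgebra, and $A\subset B$ because for $f\in A$ and an open interval $(a,b)$ we have $f^{-1}((a,b))=[(f-a)^+\cdot(b-f)^+>0]\in\A$ using again that $A$ is an algebra and a lattice.

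For part (c), I would prove $(i)\Rightarrow(ii)\Rightarrow(iii)\Rightarrow(i)$. The implication $(i)\Rightarrow(ii)$ is the easy direction: if $f,g\in A$ with $0\le f\le g$ and $g\ge c>0$ (boundedness of $g$ away from $0$ follows from compactness-free reasoning: $g$ does not attain $0$ and is... actually one must be careful — ``does not attain zero'' does not give a positive lower bound on a general set $\Gamma$), so here I would instead invoke $(i)$ together with the fact that $\tfrac1g\in A$ when $g$ is bounded below by a positive constant (Lemma~\ref{L:lattice}(a)); if $g$ merely does not attain $0$, write $h=f/g$, note $h$ is bounded with values in $[0,1]$, and check $h^{-1}(U)\in\A$ for open $U\subset\er$ by expressing such preimages through sets $[\alpha g - f>0]$ and $[f-\beta g>0]$, which lie in $\A$ since $\alpha g-f, f-\beta g\in A$; hence $h\in B=A$. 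For $(ii)\Rightarrow(iii)$: given disjoint $E,F$ with $\Gamma\setminus E=[f>0]$ and $\Gamma\setminus F=[g>0]$ for some $f,g\in A$ which we may take with $0\le f,g\le 1$, set $h=\tfrac{f}{f+g}$ where the denominator $f+g$ is strictly positive (if $x\in E$ then $x\notin[f>0]$... wait, $E=\Gamma\setminus[f>0]$, so on $E$ we have $f(x)=0$ but $g(x)>0$; on $F$, $g(x)=0$ but $f(x)>0$; and on the complement of $E\cup F$ both... could vanish simultaneously). To fix this I would instead take $f,g\in A$ with $[f>0]=\Gamma\setminus E$ and $[g>0]=\Gamma\setminus F$, replace $f$ by $f^+$ and $g$ by $g^+$, and consider $f_1 = f^+$, $g_1=g^+$; then on $E$, $f_1=0$; on $F$, $g_1=0$; disjointness of $E,F$ does not prevent both from vanishing elsewhere, so I would modify: put $f_2=f_1+\mathbf{1}$? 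No—$\mathbf 1\notin$ helps only if we add a constant, destroying the vanishing on $E$. The correct device: replace $g_1$ by $g_1 \vee (\text{something positive on } E)$. Concretely, since $\Gamma\setminus E=[f_1>0]$, the function $g_2 := g_1 + $ nothing... I would use $h = f_1/(f_1+g_1+c\cdot k)$ for a suitable $k\in A$ vanishing on $E\cup F$—but there need not be one.

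This points to the \textbf{main obstacle}: constructing the Urysohn-type function in $(iii)$, i.e. showing $f+g$ (or a suitable modification) stays strictly positive, which forces one to be careful about points outside $E\cup F$. The clean fix is to set $h := \dfrac{f_1}{f_1 \vee g_1}$ where $f_1=f^+, g_1=g^+\in A$: the denominator $f_1\vee g_1\in A$ by Lemma~\ref{L:lattice}(a), it never vanishes because at each point at least one of $[f>0],[g>0]$ holds (their union is $(\Gamma\setminus E)\cup(\Gamma\setminus F)=\Gamma$ since $E\cap F=\emptyset$), and $0\le f_1\le f_1\vee g_1$, so by $(ii)$, $h\in A$; moreover $h=0$ on $E$ and $h=1$ on $F$ (where $g_1=0<f_1$), and $0\le h\le1$. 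Finally $(iii)\Rightarrow(i)$: I would show $B\subset A$. Take $f\in B$ with $0\le f\le 1$; approximate $f$ uniformly by simple $\A$-measurable functions and use $(iii)$ to realize the indicators within $A$ up to arbitrarily small error — more precisely, for a dyadic partition, the sets $[f\ge (k+1)/2^n]$ and $[f\le k/2^n]$ have complements in $\A$ and are disjoint, so $(iii)$ furnishes $g_{k,n}\in A$ with $0\le g_{k,n}\le 1$ separating them, and $\tfrac1{2^n}\sum_{k} g_{k,n}$ approximates $f$ uniformly to within $2^{-n}$; since $A$ is norm-closed, $f\in A$. Hence $B=A$, completing the cycle. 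The delicate points throughout are the replacements $f\rightsquigarrow f^+$ and the use of $f_1\vee g_1$ as a strictly positive denominator; once those are in place the rest is routine bookkeeping with Lemma~\ref{L:lattice}.
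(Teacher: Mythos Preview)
Your argument is correct, but two of your steps diverge from the paper's in ways worth noting.

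For $(ii)\Rightarrow(iii)$, you overcomplicate matters. After replacing $f,g$ by $f^+,g^+$ you have $E=[f^+=0]$ and $F=[g^+=0]$, and since $E\cap F=\emptyset$, the sum $f^++g^+$ \emph{never} vanishes --- contrary to your worry that ``disjointness of $E,F$ does not prevent both from vanishing elsewhere''. The paper simply takes $h=f^+/(f^++g^+)$ and applies $(ii)$. Your detour through $f_1\vee g_1$ as denominator also works, but the obstacle you tried to circumvent was illusory.

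For $(iii)\Rightarrow(i)$, your layer-cake approximation (summing separating functions for $[f\ge(k+1)/2^n]$ and $[f\le k/2^n]$) is a genuinely different and arguably cleaner route than the paper's. The paper covers $f(\Gamma)$ by finitely many $\varepsilon$-intervals centered at points $t_j$, uses $(iii)$ to produce $g_j\in A$ with $0\le g_j\le1$, $g_j=1$ on $[t_j-\varepsilon/2\le f\le t_j+\varepsilon/2]$ and $g_j=0$ on $[|f-t_j|\ge\varepsilon]$, then invokes Lemma~\ref{L:lattice}(a) to form $\tfrac{1}{g_1+\dots+g_n}\in A$ (since the sum is $\ge1$) and builds a partition of unity $h_j=g_j/\sum g_k$, approximating $f$ by $\sum f(t_j)h_j$. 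Your approach avoids the reciprocal altogether; the paper's approach is closer in spirit to a partition-of-unity argument. Both give uniform approximation and use only norm-closedness of $A$ to conclude.
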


\begin{proof}
 First observe that by Lemma~\ref{L:lattice} $A$ is necessarily a sublattice of $\ell^\infty(\Gamma)$.

$(a)$: Since $A$ contains constant functions $0$ and $1$, we get $\emptyset\in\A$ and $\Gamma\in\A$.

 Given $f,g\in A$, we get $f^+\cdot g^+\in A$
and 
$$[f>0]\cap [g>0]=[f^+>0]\cap [g^+>0]=[f^+\cdot g^+>0]\in\A,$$
so $\A$ is closed with respect to finite intersections.
 Given a sequence $(f_n)$ in $A$, the functions $g_n=\min\{1,f_n^+\}$ belong to $A$ and hence also $g=\sum_{n=1}^\infty2^{-n}g_n$ belong to $A$. Since 
 $$\bigcup_{n\in\en}[f_n>0]=\bigcup_{n\in\en}[g_n>0]=[g>0]\in\A,$$ we conclude that $\A$ is closed with respect to countable unions. 

$(b)$: Using $(a)$ we deduce that
$$B=\{f\in\ell^\infty(\Gamma)\setsep [f>c]\in\A\mbox{ and }[f<c]\in\A\mbox{ for each }c\in\er\}.$$
Given $f\in A$ and $c\in\er$, then
$$[f>c]=[f-c>0]\in\A\quad\mbox{and}\quad[f<c]=[c-f>0]\in\A,$$
thus $A\subset B$. 

If $f,g\in B$ and $c>0$, then
$$[f+g>c]=\bigcup\{ [f>p]\cap [g>q]\setsep p,q\in\qe, p+q>c\},$$
which belongs to $\A$ by (a). Similarly we see that $[f+g<c]\in \A$. Hence $f+g\in B$. Similarly we may show that $B$ is stable under the multiplication, so it is an algebra. 

To see that $B$ is uniformly closed, fix a sequence $(f_n)$ in $B$ uniformly converging to $f\in\ell^\infty(\Gamma)$. Up to passing to a subsequence we may assume that $\norm{f_n-f}_\infty<\frac1n$. Observe that
$$[f>c]=\bigcup_{n\in\en}[f_n>c+\tfrac1n]\in \A,$$
and similarly $[f<c]\in\A$, thus $f\in B$.

$(c)$: Let us prove the individual implications:

$(i)\Rightarrow(ii)$: Let $f,g$ be as in $(ii)$. Then $\frac fg$ is well defined and bounded. Moreover, we easily see that $\frac fg\in B$, thus assuming $(i)$ we deduce $\frac fg\in A$.

$(ii)\Rightarrow(iii)$: Let $E,F$ be as in $(iii)$. Then there are $f,g\in A$ such that $E=[f\le 0]$ and $F=[g\le0]$. Since $A$ is a sublattice, up to replacing $f,g$ by $f^+$ and $g^+$, we may assume that $f,g$ are non-negative.
Then $E=[f=0]$ and $F=[g=0]$. Since $E,F$ are disjoint, $f+g$ is a strictly positive function from $A$. By $(ii)$ we get that $\frac{f}{f+g}\in A$ and this function has the required properties.

$(iii)\Rightarrow(i)$: Fix $f\in B$ and $\varepsilon>0$. Since $f$ is bounded, there are $t_1,\dots,t_n\in\er$ such that
$$f(\Gamma)\subset\bigcup_{j=1}^n (t_j-\tfrac\varepsilon2,t_j+\tfrac\ep2).$$
For each $j$ set
$$E_j=[t_j-\tfrac\ep2\le f\le t_j+\tfrac\ep2]\quad\mbox{and}\quad F_j=[f\le t_j-\ep]\cup[f\ge t_j+\ep].$$ 
By $(iii)$ there is $g_j\in A$ such that $0\le g_j\le 1$, $g_j|_{F_j}=0$ and $g_j|_{E_j}=1$. Since 
 $E_1\cup\dots\cup E_n=\Gamma$, we get $g_1+\dots+g_n\ge 1$. By Lemma~\ref{L:lattice}$(a)$ we deduce that $\frac{1}{g_1+\dots+g_n}\in A$. Set $h_j=\frac{g_j}{g_1+\dots +g_n}$. Then $h_j\in A$, thus
 $$g=f(t_1)h_1+\dots+f(t_n)h_n\in A$$
 and $\norm{g-f}\le\varepsilon$.
Since $\varepsilon>0$ is arbitrary and $A$ is closed, we deduce that $f\in A$.
\end{proof}

Assertion $(c)$ of the above lemma says that it is not automatic that a closed subalgebra of $\ell^\infty(\Gamma)$ containing constants may be described using a kind of measurability. In some cases it is true -- for functions continuous with respect to some topology or for Baire-one functions with respect to a Tychonoff topology. In some cases it fails -- for example for the algebra $c$ of converging sequences considered as a subalgebra of $\ell^\infty$.

Let us now discuss how Lemma~\ref{L:jen algebra} may be applied to intermediate functions spaces.
Such an analysis is contained in the following proposition.

\begin{prop} \label{p:system-aha}
Let $H$ be an intermediate function space determined by extreme points. Then the following assertions hold.
\begin{enumerate}[$(a)$]
    \item Set
    $$\gls{AH}=\{[m>0]\cap\ext X\setsep m\in M(H)\}. \index{system of sets!AH@$\A_H$}$$
    The family $\A_H$ contains the empty set, the whole set $\ext X$ and is closed with respect to finite intersections and countable unions.
    Moreover, $m|_{\ext X}$ is $\A_H$-measurable for each $m\in M(H)$.
    \item The following assertions are equivalent:
    \begin{enumerate}[$(i)$]
        \item A bounded function on $\ext X$ can be extended to an element of $M(H)$ if and only if it is $\A_H$-measurable.
        \item If $u,v\in M(H)$ satisfy $0\le u\le v$ and $v$ does not attain $0$ on $\ext X$, then there is $m\in M(H)$ such that $m=\frac uv$ on $\ext X$.
        \item If $u,v\in M(H)$ are such that $u\ge0$, $v\ge0$ and the sets $E=[u=0]\cap \ext X$ and $F=[v=0]\cap \ext X$ are disjoint, there is $m\in M(H)$ such that $0\le m\le 1$, $m=0$ on $E$ and $m=1$ on $F$.
    \end{enumerate}
    \item $\A_H=\{E \subset \ext X\setsep \exists m\in (M(H))^\uparrow \colon 0\le m\le 1\ \&\ m|_E=1 \ \&\ m|_{\ext X\setminus E}=0\}.$
    \item For each $E\in\A_H$ there are (not necessarily closed) faces $F_1,F_2$ of $X$ such that
    $E=F_1\cap \ext X$ and $\ext X\setminus E= F_2\cap \ext X$.
    \item If $H^\mu=H$, then $\A_H$ coincide with the $\sigma$-algebra from Theorem~\ref{T:integral representation H}.
\end{enumerate}
The same statements are valid when $M(H)$ is replaced by $M^s(H)$ and $\A_H$ is replaced by 
    $$\gls{AHs}=\{[m>0]\cap \ext X\setsep m\in M^s(H)\}. \index{system of sets!AHs@$\A_H^s$}$$
    \end{prop}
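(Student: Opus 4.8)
The plan is to assemble Proposition~\ref{p:system-aha} from the abstract results on closed subalgebras (Lemma~\ref{L:lattice} and Lemma~\ref{L:jen algebra}), the structural facts about $R(M(H))$ from Proposition~\ref{P:algebra ZH}, and the characterization machinery from Theorem~\ref{T:integral representation H}. The key point throughout is that, since $H$ is determined by extreme points, the restriction map $R\colon H\to\ell^\infty(\ext X)$ is an isometric order-isomorphism onto its range, and $R(M(H))$ is a \emph{norm-closed subalgebra} of $\ell^\infty(\ext X)$ containing constants by Proposition~\ref{P:algebra ZH}(c). So all four parts are really statements about this concrete subalgebra $A:=R(M(H))\subset\ell^\infty(\ext X)$, and the work is to translate.

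First, for $(a)$: apply Lemma~\ref{L:jen algebra}(a) to $A=R(M(H))$. The family $\A_H=\{[m>0]\cap\ext X\setsep m\in M(H)\}$ is exactly $\{[f>0]\setsep f\in A\}$, so it contains $\emptyset$ (take $m=0$) and $\ext X$ (take $m=1$), and is closed under finite intersections and countable unions. The measurability claim, that $m|_{\ext X}$ is $\A_H$-measurable for $m\in M(H)$, follows because for $c\in\er$ one has $[m>c]\cap\ext X=[m-c>0]\cap\ext X\in\A_H$ and $[m<c]\cap\ext X=[c-m>0]\cap\ext X\in\A_H$, and $M(H)$ is a linear space containing constants, so $m-c,\,c-m\in M(H)$. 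For $(b)$: this is a direct transcription of Lemma~\ref{L:jen algebra}(c) applied to $A=R(M(H))$; the set $B$ of Lemma~\ref{L:jen algebra}(b) is precisely the space of bounded $\A_H$-measurable functions on $\ext X$, and condition $(i)$ there ($A=B$) says exactly that a bounded function on $\ext X$ extends to $M(H)$ iff it is $\A_H$-measurable, while $(ii)$ and $(iii)$ translate verbatim (using that $R$ is an order-isomorphism, so $0\le u\le v$ and $v$ bounded away from / vanishing conditions pull back correctly, and that $E=[u=0]\cap\ext X$, $F=[v=0]\cap\ext X$ are the complements in $\ext X$ of sets in $\A_H$).

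For $(c)$: the inclusion $\supset$ is clear since if $m\in(M(H))^\uparrow$ with $m|_E=1$, $m|_{\ext X\setminus E}=0$, then $m\in H$ (as $H^\uparrow\subset H$ because $H$ is uniformly closed and, more to the point, closed under increasing bounded limits is not assumed — but $(M(H))^\uparrow\subset H^\uparrow$, and since $H$ is an intermediate function space it need not contain $H^\uparrow$; however we only need $m\in M(H)$ here). Wait — this needs care: one must check that such an $m$ actually lies in $M(H)$, not merely in $H$. Here the argument is: write $m=\lim m_k$ with $m_k\in M(H)$ nondecreasing; then $[m>0]\cap\ext X=\bigcup_k[m_k>0]\cap\ext X$? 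That is not quite it either, since $m$ need not equal $1_E$. The cleaner route: if $m\in(M(H))^\uparrow$ takes only values $0,1$ on $\ext X$, then $1_E=m$ on $\ext X$ where $E=[m=1]\cap\ext X$; one shows $1_E\in R(M(H))$ directly — indeed $R(M(H))$ is closed under bounded increasing limits by Proposition~\ref{P:algebra ZH}(c) only when $H^\mu=H$, which we are \emph{not} assuming in $(c)$. So instead I would argue: $m|_{\ext X}$ is a bounded $\A_H$-measurable function? No. The correct and available tool is to observe $[m>0]\cap\ext X$, for $m\in(M(H))^\uparrow$ with $m=\lim m_k$ nondecreasing ($m_k\in M(H)$, WLOG $m_k\ge 0$), equals $\bigcup_k([m_k>0]\cap\ext X)\in\A_H$ by part $(a)$; and conversely any $E\in\A_H$ has the form $[m'>0]\cap\ext X$ for $m'\in M(H)$, $m'\ge 0$, and then $\min\{1,k\,m'\}\nearrow 1_E$ pointwise on $X$, with $\min\{1,k\,m'\}\in M(H)$ (lattice operations stay in $M(H)$ since $R(M(H))$ is a sublattice by Proposition~\ref{P:algebra ZH}(c), and $R$ is an order-isomorphism), so $1_E\in(M(H))^\uparrow$ with the stated values, using $0\le\min\{1,k m'\}\le 1$. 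For $(d)$: each $E\in\A_H$ equals $[m>0]\cap\ext X$ for some $m\in M(H)\subset A_{sa}(X)$-like affine function; put $F_1=[m>0]$ and $F_2=[m\le 0]$ (equivalently use the complementary $m'$ with $m+m'$ positive as in Lemma~\ref{L:jen algebra}(c)'s proof), and check these are faces: since $m$ is affine and $\inf m(\ext X)\le m\le\sup m(\ext X)$, the sets $[m>0]$ and $[m=0]$ are faces — more carefully, $[m>0]$ is a face because if $\frac12(y+z)\in[m>0]$ with $y,z\in X$ then $m(y)+m(z)>0$ so not both $\le 0$, but we need both $>0$; this requires $m\ge 0$, so first replace $m$ by $m^+$ which is in $M(H)$ and has $[m^+>0]=[m>0]$, then $[m^+>0]$ and $[m^+=0]$ are genuine faces. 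Finally $(e)$: when $H^\mu=H$, Proposition~\ref{P:algebra ZH}(c) gives that $R(M(H))$ is closed under bounded pointwise limits, hence by Lemma~\ref{L:sigmaalgebra} it is the algebra of bounded $\A$-measurable functions for the $\sigma$-algebra $\A=\{E\subset\ext X\setsep 1_E\in R(M(H))\}$; and $1_E\in R(M(H))$ iff there is $m\in M(H)$ with $m|_E=1$, $m|_{\ext X\setminus E}=0$, which is the $\A_H$ of Theorem~\ref{T:integral representation H}. One then checks $\{[m>0]\cap\ext X\setsep m\in M(H)\}$ coincides with this $\sigma$-algebra: $\subset$ holds since $[m>0]\cap\ext X=[1_{[m>0]}>0]\cap\ext X$ and $1_{[m>0]}=\lim\min\{1,k m^+\}\in R(M(H))$; $\supset$ is immediate since $E=[1_E>0]\cap\ext X$.

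The main obstacle — and the only place where genuine care rather than bookkeeping is needed — is the verification in $(c)$ and $(e)$ that the functions $\min\{1,k m^+\}$ (or $m^+$, or the characteristic functions obtained as their increasing limits) actually remain in $M(H)$, respectively in $(M(H))^\uparrow$. For $(e)$ this is clean because $H^\mu=H$ forces $M(H)$ to be closed under bounded monotone limits (Proposition~\ref{p:multi-pro-mu}(i) together with the determinacy assumption). For $(c)$, where $H^\mu=H$ is \emph{not} assumed, one must only claim membership in $(M(H))^\uparrow$, which is fine: $\min\{1,km^+\}$ is obtained from $m\in M(H)$ by lattice operations and multiplication, all of which preserve $M(H)$ because $R(M(H))$ is a closed subalgebra and sublattice (Proposition~\ref{P:algebra ZH}(c)) and $R$ is an isometric order-isomorphism, and then its increasing limit lies in $(M(H))^\uparrow$ by definition. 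The entire argument for $M^s(H)$ in place of $M(H)$ is identical, word for word, since Proposition~\ref{P:algebra ZH}(d) asserts that $R(M^s(H))$ enjoys exactly the same structural properties (closed subalgebra, sublattice, and closed under bounded pointwise limits when $H^\mu=H$), and Theorem~\ref{T:integral representation H} treats $\A_H^s$ on the same footing as $\A_H$.
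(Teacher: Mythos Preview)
Your overall strategy is exactly the paper's: everything reduces to applying Lemma~\ref{L:jen algebra} to the closed subalgebra $A=R(M(H))$, using Proposition~\ref{P:algebra ZH}(c). Parts $(a)$, $(b)$, $(c)$ and $(e)$ are essentially correct, though in $(c)$ the phrase ``$\min\{1,k\,m'\}\nearrow 1_E$ pointwise on $X$'' should read ``on $\ext X$'' (the limit on $X$ is $1_{[m'>0]}$, which is not affine; what matters is that the sequence in $M(H)$ is nondecreasing and bounded, hence its pointwise limit is in $(M(H))^\uparrow$ and restricts to $1_E$ on $\ext X$). Your proof of $(e)$ is also more elaborate than needed: once $H^\mu=H$, Proposition~\ref{p:multi-pro-mu}(i) gives $(M(H))^\uparrow=M(H)$, and then $(c)$ is literally the description of the $\sigma$-algebra in Theorem~\ref{T:integral representation H}.

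There is, however, a genuine gap in $(d)$. You propose $F_1=[\tilde m>0]$ and $F_2=[\tilde m=0]$ where $\tilde m\in M(H)$ satisfies $\tilde m\ge 0$ on $X$ and $\tilde m|_{\ext X}=m^+|_{\ext X}$. The set $F_2=[\tilde m=0]$ is indeed a face, but $F_1=[\tilde m>0]$ is \emph{not}: if $\tilde m(y)=0$ and $\tilde m(z)=2$ then $\tilde m\bigl(\tfrac12(y+z)\bigr)=1>0$ while $y\notin F_1$. The complement of a face is not a face in general. The paper fixes this by using part $(c)$ instead of the raw definition: take $m\in (M(H))^\uparrow$ with $0\le m\le 1$, $m|_E=1$, $m|_{\ext X\setminus E}=0$, and set $F_1=[m=1]$, $F_2=[m=0]$. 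Both are level sets of an affine function at the endpoints of its range, hence genuine faces, and they clearly trace $E$ and $\ext X\setminus E$.
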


\begin{proof}
 Let $R$ be the restriction mapping from Proposition~\ref{P:algebra ZH}. Then $R(M(H))$ is (by the quoted proposition) a closed subalgebra and sublattice of $\ell^\infty(\ext X)$. It easily follows that 
$$\A_H= \{[m>0]\cap \ext X\setsep m\in M(H), m\ge0\}.$$
 
 Asertions $(a)$ and $(b)$ thus follow from Lemma~\ref{L:jen algebra}. 

$(c)$: Assume $E\in A_H$. Using the definitions and the lattice property of $R(M(H))$ we deduce that there is
  $f\in M(H)$ such that $0\le f\le 1$ and $E=[f<1]\cap \ext X$. 
 Given $n\in\en$, by Proposition~\ref{P:algebra ZH}$(c)$ there is $f_n\in M(H)$ be such that $f_n=1-f^n$ on $\ext X$. The sequence $(f_n)$ is bounded and non-decreasing, let $m$ denote its limit. Then $m\in(M(H))^\uparrow$, $0\le m\le1$, $m|_E=1$ and $m|_{\ext X\setminus E}=0$.
 This completes the proof of inclusion `$\subset$'.

 To prove the converse, assume that $E\subset \ext X$, $m\in M(H)^\uparrow$, $m|_E=1$ and $m|_{\ext X\setminus E}=0$. Let $(f_n)$
 in $M(H)$ with $f_n\nearrow m$. We observe that
 $$E=\bigcup_n [f_n>0]\cap\ext X,$$
 so $E\in\A_H$ by $(a)$.

 $(d)$: Let $E\in \A_H$. Let $m\in (M(H))^\uparrow$ be provided by $(c)$. Then clearly $[m=0]$ and $[m=1]$ are faces.

  $(e)$: If additionally $H^\mu=H$, then $(M(H))^\uparrow=M(H)$ (by Proposition~\ref{p:multi-pro-mu}$(i)$), so the assertion easily follows.

It is simple to check that the proof works also in the case when $M(H)$ is replaced by $M^s(H)$ and $\A_H$ is replaced by $\A^s_H$. 
\end{proof}

In case an intermediate function space $H$ satisfies that $H^\mu$ is determined  by $\ext X$,
by Theorem~\ref{T:integral representation H} we know that for each $x\in X$ there is a unique measure $\mu_{H^\mu,x}$ defined on $(\ext X,\A_{H^\mu})$ such that $m(x)=\int m\di\mu_{H^\mu,x}$ for each $m\in M(H^\mu)$. Since $M(H)\subset (M(H))^\mu\subset M(H^\mu)$ by Proposition~\ref{p:multi-pro-mu}$(i)$, we can use this integral representation for elements from $M(H)$. The next result shows that the measure $\mu_{H^\mu,x}$ is ``regular'' on the $\sigma$-algebra generated by $\A_H$.

\begin{thm}
    \label{t:aha-regularita}
 Let $H$ be an intermediate function space on a compact convex set $X$ such that $H^\mu$ is determined by extreme points. For $x\in X$, let $\mu=\mu_{H^\mu,x}$ be the measure on $(\ext X,\A_{H^\mu})$ given by Theorem~\ref{T:integral representation H}$(iii)$.
 Then the following assertions hold.
 \begin{enumerate}[$(i)$]
\item  The $\sigma$-algebra $\sigma(\A_H)$ in $\ext X$ generated by $\A_H$ is contained in $\A_{H^\mu}$.
\item  For any $A\in \sigma(\A_H)$ and $\ep>0$ there exist sets $F\in (\A_H)^c$ and $G\in\A_H$ such that $F\subset A\subset G$ and $\mu(G\setminus F)<\ep$.
\end{enumerate}
\end{thm}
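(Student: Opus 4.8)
The plan is to derive assertion $(i)$ directly from the descriptions of $\A_H$ and $\A_{H^\mu}$ obtained in the previous results, and then to prove assertion $(ii)$ by a routine monotone-class argument on $\sigma(\A_H)$.

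For $(i)$ I would argue as follows. Since $(H^\mu)^\mu=H^\mu$ and $H^\mu$ is determined by extreme points, Theorem~\ref{T:integral representation H} applies to $H^\mu$, so $\A_{H^\mu}$ is a $\sigma$-algebra, and by Theorem~\ref{T:integral representation H}$(i)$ applied to $H^\mu$ we have $\A_{H^\mu}=\{E\subset\ext X\setsep\exists m\in M(H^\mu)\colon m|_E=1,\ m|_{\ext X\setminus E}=0\}$. Now fix $E\in\A_H$. By Proposition~\ref{p:system-aha}$(c)$ there is $m\in(M(H))^\uparrow$ with $0\le m\le1$, $m|_E=1$ and $m|_{\ext X\setminus E}=0$; since $(M(H))^\uparrow\subset(M(H))^\mu\subset M(H^\mu)$ by Proposition~\ref{p:multi-pro-mu}$(i)$, this exhibits $E$ as a member of $\A_{H^\mu}$. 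Hence $\A_H\subset\A_{H^\mu}$, and, $\A_{H^\mu}$ being a $\sigma$-algebra, $\sigma(\A_H)\subset\A_{H^\mu}$; in particular every member of $\sigma(\A_H)$ is $\mu$-measurable, which is what $(ii)$ will need.

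For $(ii)$ I would let $\D$ be the collection of all $A\in\sigma(\A_H)$ such that for every $\ep>0$ there exist $F\in(\A_H)^c$ and $G\in\A_H$ with $F\subset A\subset G$ and $\mu(G\setminus F)<\ep$. Since $\D\subset\sigma(\A_H)$ by construction, it suffices to show that $\D$ is a $\sigma$-algebra containing $\A_H$. To see $\A_H\subset\D$, given $E=[m>0]\cap\ext X$ with $m\in M(H)$ I would take $G=E$ and, for $n\in\en$, $F_n=[m\ge\tfrac1n]\cap\ext X$; as $M(H)$ is a linear subspace containing the constants, $\tfrac1n-m\in M(H)$, so $\ext X\setminus F_n=[\tfrac1n-m>0]\cap\ext X\in\A_H$, i.e.\ $F_n\in(\A_H)^c$, and $F_n\nearrow E$, whence $\mu(E\setminus F_n)\to0$ (all sets being $\mu$-measurable by $(i)$) and a suitable $F=F_n$ works. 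Closure of $\D$ under complements is immediate: passing to complements in $\ext X$ interchanges $\A_H$ with $(\A_H)^c$ and leaves $\mu(G\setminus F)$ unchanged.

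The step that requires genuine care — and the main (though modest) obstacle — is closure of $\D$ under countable unions, because $(\A_H)^c$ is closed only under \emph{finite} unions (as $\A_H$ is closed under finite intersections), so the inner approximations must be truncated. Given $A_n\in\D$ and $\ep>0$ I would choose $F_n\in(\A_H)^c$, $G_n\in\A_H$ with $F_n\subset A_n\subset G_n$ and $\mu(G_n\setminus F_n)<\ep2^{-n-1}$, set $G=\bigcup_nG_n\in\A_H$, so that $\mu\bigl(G\setminus\bigcup_nF_n\bigr)\le\sum_n\mu(G_n\setminus F_n)<\tfrac\ep2$, then pick $N$ with $\mu\bigl(\bigcup_nF_n\setminus\bigcup_{n\le N}F_n\bigr)<\tfrac\ep2$ (using $\bigcup_{n\le N}F_n\nearrow\bigcup_nF_n$ and $\mu$-measurability from $(i)$) and put $F=\bigcup_{n\le N}F_n\in(\A_H)^c$. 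Then $F\subset\bigcup_nA_n\subset G$, and from $G\setminus F\subset\bigl(G\setminus\bigcup_nF_n\bigr)\cup\bigl(\bigcup_nF_n\setminus F\bigr)$ one gets $\mu(G\setminus F)<\ep$. Since also $\ext X\in\A_H\subset\D$, this shows $\D$ is a $\sigma$-algebra, forcing $\D=\sigma(\A_H)$ and completing the proof; everything apart from the truncation bookkeeping just described is formal.
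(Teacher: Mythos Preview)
Your proof is correct and follows essentially the same approach as the paper. The only cosmetic differences are that the paper proves the family $\D$ is a Dynkin class (closed under \emph{disjoint} countable unions) and then invokes the $\pi$--$\lambda$ theorem, whereas you show directly that $\D$ is closed under arbitrary countable unions; and the paper verifies $(\A_H)^c\subset\D$ by approximating from outside (using the decreasing sequence $[m_j>\tfrac12]$ for $m_j\searrow m\in(M(H))^\downarrow$), whereas you verify $\A_H\subset\D$ by approximating from inside with $[m\ge\tfrac1n]$---these are dual formulations, and the key truncation step for unions is identical.
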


\begin{proof}
$(i)$: Since $M(H)\subset M(H^\mu)$ by Proposition~\ref{p:multi-pro-mu}$(i)$, we obtain inclusion $\A_H\subset \A_{H^\mu}$. This proves  inclusion $\sigma(\A_H)\subset \A_{H^\mu}$, because $\A_{H^\mu}$ is a $\sigma$-algebra by Theorem~\ref{T:integral representation H}$(i)$.

$(ii)$: We set
\[
\E=\{A\in \sigma(\A_H)\setsep \forall \ep>0\ \exists F\in (\A_{H})^c, G\in \A_H\colon F\subset A\subset G \And \mu(G\setminus F)<\ep\}.
\]
We claim that $\E$ is a Dynkin class of sets (see \cite[136A Lemma]{fremlin1}) containing $(\A_H)^c$. 

Indeed, $\E$ contains $\emptyset$, it is obviously closed with respect to complements (in $\ext X$). Finally, $\E$ is closed with respect to taking countable unions of disjoint families of sets. To see this, take a disjoint sequence $(A_n)$ in $\E$ and let $\ep>0$. We denote $A=\bigcup_{n} A_n$.  There exists $k\in\en$ such that $\mu(A)-\ep\le \mu(\bigcup_{n=1}^k A_n)$ We select $F_i\in (\A_H)^c, G_i\in \A_H$ such that $F_i\subset A_i\subset G_i$ and $\mu(G_i\setminus F_i)<2^{-i}\ep$, $i\in\en$. Then $F=F_1\cup \cdots \cup F_k\in (\A_H)^c$, $G=\bigcup_{i} G_i\in \A_H$, $F\subset A\subset G$ and 
\[
\begin{aligned}
\mu(G\setminus F)&\le \mu(G\setminus A)+\mu(A\setminus F)\le \sum_{i\in\en} \mu(G_i\setminus F_i)+\mu(A\setminus \bigcup_{i=1}^k A_i)+\sum_{i=1}^k\mu(A_i\setminus F_i)\\
&\le \ep+\ep+\ep=3\ep.
\end{aligned}
\]

To verify that $(\A_H)^c\subset \E$, let $E\in(\A_H)^c$ and $\ep>0$ be given. By Proposition~\ref{p:system-aha}$(c)$ there exists $m\in (M(H))^\downarrow$ such that $m|_E=1$ and $m|_{\ext X\setminus E}=0$. Let $(m_j)$ be a sequence in $M(H)$ such that $m_j\searrow m$. Then 
the sets $E_j=\ext X\cap [m_j> \frac12]$, $j\in\en$, belong to $\A_H$, they form a nonincreasing sequence and
$E=\bigcap_{j\in\en} E_j$.
Hence $\mu(E)=\lim_{j\to\infty} \mu(E_j)$. Thus we can select a set $E_j$ such that $\mu(E_j\setminus E)<\ep$. Hence $\E\supset (\A_H)^c$.

By \cite[136B Theorem]{fremlin1}, $\E$ contains the $\sigma$-algebra $\sigma(\A_H)$ generated by $\A_H$. Hence $\E=\sigma(\A_H)$, which concludes the proof.
\end{proof}

Next we provide a sufficient condition ensuring that measurability with respect to systems $\A_H$ and $\A^s_H$ characterizes multipliers on $H$.


\begin{thm}
 \label{T: meritelnost H=H^uparrow cap H^downarrow}   
 Let $H$ be an intermediate function space satisfying that $H=H^{\uparrow}\cap H^{\downarrow}$ and such that $H^{\uparrow}$ is determined by extreme points. Let $\A_H$, $\A^s_H$ be the families of sets from Proposition \ref{p:system-aha}. Then:
 \begin{enumerate}[$(i)$]
     \item $M(H)=M(H)^{\uparrow} \cap M(H)^{\downarrow}$ and $M^s(H)=M^s(H)^{\uparrow} \cap M^s(H)^{\downarrow}$.
     \item A bounded function on $\ext X$ can be extended to an element of $M(H)$ if and only if it is $\A_H$-measurable.
     \item A bounded function on $\ext X$ can be extended to an element of $M^s(H)$ if and only if it is $\A^s_H$-measurable.
 \end{enumerate}
\end{thm}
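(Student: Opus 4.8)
The plan is to establish $(i)$ first and then derive $(ii)$ and $(iii)$ by applying Lemma~\ref{L:jen algebra} to the subalgebras $R(M(H))$ and $R(M^s(H))$ of $\ell^\infty(\ext X)$, where $R\colon H\to\ell^\infty(\ext X)$ is the restriction operator of Proposition~\ref{P:algebra ZH}. Throughout I use that $H\subset H^\uparrow$, so the hypothesis that $H^\uparrow$ is determined by extreme points forces $H$ (and likewise $H^\downarrow$) to be determined by extreme points.

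For $(i)$, the inclusions $M(H)\subset M(H)^\uparrow\cap M(H)^\downarrow$ and $M^s(H)\subset M^s(H)^\uparrow\cap M^s(H)^\downarrow$ are trivial. For the reverse inclusion in the first equality, let $g\in M(H)^\uparrow\cap M(H)^\downarrow$; then $g\in H^\uparrow\cap H^\downarrow=H$, so it remains to verify that $g$ is a multiplier. Write $g=\lim g_n=\lim h_n$ with $(g_n)$ non-decreasing, $(h_n)$ non-increasing, both bounded sequences in $M(H)$. Fix $a\in H$; since constant functions are multipliers we may assume $a\ge 0$. Choose $b_n,c_n\in H$ with $b_n=g_na$ and $c_n=h_na$ on $\ext X$. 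Because $a\ge 0$, the sequences $(g_na)$ and $(h_na)$ are monotone and uniformly bounded on $\ext X$, so, $H$ being determined by extreme points, $(b_n)$ is non-decreasing and bounded on $X$ and $(c_n)$ is non-increasing and bounded on $X$; put $b=\lim b_n\in H^\uparrow$ and $c=\lim c_n\in H^\downarrow$. Then $b=c=ga$ on $\ext X$, and $b-c=\lim(b_n-c_n)$ is the limit of a bounded non-decreasing sequence, hence $b-c\in H^\uparrow$; since $b-c=0$ on $\ext X$ and $H^\uparrow$ is determined by extreme points, $b=c$ on $X$. Thus $b=c\in H^\uparrow\cap H^\downarrow=H$ and $b=ga$ on $\ext X$, so $g\in M(H)$. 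The case of $M^s(H)$ is completely analogous: one replaces ``on $\ext X$'' by ``$\mu$-almost everywhere for each maximal $\mu\in M_1(X)$'', notes that Dirac measures at extreme points are maximal (so that $(b_n)$, $(c_n)$ are still monotone and bounded on $X$ and $b=c$ on $X$), and for a fixed maximal measure $\mu$ collects the countably many exceptional null sets into one, exactly as in the proof of Lemma~\ref{L:uzavrenost}$(ii)$.

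For $(ii)$ and $(iii)$, fix $A:=R(M(H))$ (respectively $A:=R(M^s(H))$). By Proposition~\ref{P:algebra ZH} the map $R$ is an order-isomorphic isometric inclusion and $A$ is a norm-closed subalgebra of $\ell^\infty(\ext X)$ containing the constants; moreover $\A:=\{[f>0]\setsep f\in A\}$ equals $\A_H$ (respectively $\A_H^s$). By Lemma~\ref{L:jen algebra}$(c)$ it is enough to verify condition $(ii)$ of that lemma: if $f,g\in A$, $0\le f\le g$ and $g$ does not attain $0$ on $\ext X$, then $\tfrac fg\in A$. Since $0\le f\le g$ on $\ext X$ we have $g+\tfrac1n\ge\tfrac1n>0$, so Lemma~\ref{L:lattice}$(a)$ gives $\tfrac1{g+1/n}\in A$, hence $\tfrac f{g+1/n}\in A$ for every $n$. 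These functions are bounded by $1$ and increase pointwise on $\ext X$ to $\tfrac fg$, so $\tfrac fg\in A^\uparrow$; applying the same to $\tfrac{g-f}{g}$ yields $1-\tfrac fg\in A^\uparrow$. A routine verification — using the injectivity of $R$, that $H$ is determined by extreme points, and the definitions of $M(H)^\uparrow$, $M(H)^\downarrow$ — shows $A^\uparrow=R(M(H)^\uparrow)$ and $1-A^\uparrow=R(M(H)^\downarrow)$ (and similarly with $M^s$). Consequently $\tfrac fg\in R(M(H)^\uparrow)\cap R(M(H)^\downarrow)=R\bigl(M(H)^\uparrow\cap M(H)^\downarrow\bigr)=R(M(H))=A$ by $(i)$. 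Hence Lemma~\ref{L:jen algebra}$(c)$ applies and $A$ coincides with the space of bounded $\A_H$-measurable (respectively $\A_H^s$-measurable) functions on $\ext X$; since a bounded function on $\ext X$ extends to an element of $M(H)$ (respectively $M^s(H)$) precisely when it lies in $A$, this is exactly $(ii)$ (respectively $(iii)$).

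The main obstacle is the interplay in the last paragraph: one has to pass a monotone limit of functions known only on $\ext X$ to all of $X$ while staying inside $M(H)$, and this is where the full strength of the hypothesis — that $H^\uparrow$, not merely $H$, is determined by extreme points — is needed, both for the conclusion $b=c$ on $X$ in $(i)$ and for the identification $A^\uparrow=R(M(H)^\uparrow)$. The remaining points, namely the null-set bookkeeping in the $M^s(H)$ part of $(i)$ and checking that the set system $\A$ produced by Lemma~\ref{L:jen algebra} is indeed the $\A_H$ of Proposition~\ref{p:system-aha}, are routine.
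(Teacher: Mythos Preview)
Your proof is correct and follows essentially the same route as the paper: part $(i)$ is argued identically, and for $(ii)$--$(iii)$ both you and the paper verify the quotient condition of Proposition~\ref{p:system-aha}$(b)$/Lemma~\ref{L:jen algebra}$(c)$ by exhibiting $\tfrac{f}{g}$ as lying in $A^\uparrow\cap A^\downarrow$ via the sequences $\tfrac{f}{g+1/n}$, then invoking $(i)$. The only cosmetic difference is that the paper uses $\tfrac{f+1/n}{g+1/n}\searrow\tfrac{f}{g}$ for the decreasing half, whereas you obtain it from $\tfrac{g-f}{g+1/n}\nearrow 1-\tfrac{f}{g}$; both work equally well.
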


\begin{proof}
$(i)$: Clearly, $M(H) \subset M(H)^{\uparrow} \cap M(H)^{\downarrow}$. To prove the reverse inclusion, we assume that we are given a function $m$ such that there are sequences $(a_n), (b_n)$ from $M(H)$ with $a_n \nearrow m$ and $b_n \searrow m$. Then $m \in H^\uparrow\cap H^\downarrow=H$. To show that $m \in M(H)$, let $h \in H$ be given. We may assume that $h \geq 0$, otherwise we would add a suitable constant to it. Then there exist sequences of functions $(f_n), (g_n)$ from $H$ such that $f_n=a_n \cdot h$ and $g_n=b_n \cdot h$ on $\ext X$ (for $n\in\en$). Since $H$ is determined by extreme points, the sequence $(f_n)$ is non-decreasing and the sequence $(g_n)$ is non-increasing on $X$. Let $f$ and $g$ denote the pointwise limits of sequences $(f_n)$ and $(g_n)$, respectively. Then $f\in H^\uparrow$, $g\in H^\downarrow$  and $f=g=m \cdot h$ on $\ext X$. Since $f-g\in H^\uparrow$ and $f-g=0$ on $\ext X$, we deduce that $f=g$ on $X$. Thus $f=g \in H^{\uparrow}\cap H^{\downarrow}=H$, which proves that $m \in M(H)$. The proof that $M^s(H)=M^s(H)^{\uparrow} \cap M^s(H)^{\downarrow}$ is basically the same, except that we use the monotone convergence theorem.

For the proof of $(ii)$ we will verify condition $(ii)$ from Proposition \ref{p:system-aha}$(b)$. Assume $u,v\in M(H)$ such that $0\le u\le v$ and $v$ does not attain $0$ on $\ext X$. Let $R$ be the restriction map from Proposition~\ref{P:algebra ZH}. It follows from Proposition~\ref{P:algebra ZH}$(c)$ and Lemma~\ref{L:lattice}$(a)$ we deduce that $\frac{R(u)}{R(v)+\frac1n},\frac{R(u)+\frac1n}{R(v)+\frac1n}\in R(M(H))$. We observe that
$$\frac{R(u)}{R(v)+\frac1n}\nearrow \frac{R(u)}{R(v)}\quad\mbox{and}\quad\frac{R(u)+\frac1n}{R(v)+\frac1n}\searrow\frac{R(u)}{R(v)}.$$
Using $(i)$ we now see that $\frac{R(u)}{R(v)}\in R(M(H))$ which completes the argument.

Assertion $(iii)$ can be proven similarly as assertion $(ii)$. The proof is finished.
\end{proof}

We continue by describing a rather general situation and several concrete cases where the previous theorem may be applied.

\begin{lemma}\label{L:YupcapYdown=Y}
    Let $\Gamma$ be a set and let $\B$ be a family of subsets of $\Gamma$ containing the empty set and the whole set $\Gamma$ and closed with respect to taking countable unions and finite intersections. Let $Y$ denote the space of all bounded $\B$-measurable functions on $\Gamma$. Then
    $$Y=Y^\uparrow\cap Y^\downarrow=\overline{Y^\uparrow}\cap \overline{Y^\downarrow}.$$
\end{lemma}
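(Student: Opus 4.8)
The plan is to reduce everything to the concrete description of $Y$ already used in Lemma~\ref{L:jen algebra}$(b)$: since $\B$ contains $\emptyset$ and $\Gamma$ and is closed under countable unions, a bounded function $f$ on $\Gamma$ is $\B$-measurable if and only if $[f>c]\in\B$ and $[f<c]\in\B$ for every $c\in\er$. With this reformulation in hand, I would prove the statement by establishing the circular chain $Y\subseteq Y^\uparrow\cap Y^\downarrow\subseteq\overline{Y^\uparrow}\cap\overline{Y^\downarrow}\subseteq Y$, which forces all four sets to coincide (using also $Y^\uparrow\subseteq\overline{Y^\uparrow}$, $Y^\downarrow\subseteq\overline{Y^\downarrow}$).

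The first inclusion is immediate: the constant sequence $f,f,f,\dots$ is bounded, non-decreasing and non-increasing, so $f\in Y^\uparrow\cap Y^\downarrow$. The middle inclusion is trivial. For the last inclusion I would first record the auxiliary observation that every $g\in Y^\uparrow$ satisfies $[g>c]\in\B$ for all $c\in\er$: if $g=\lim_k a_k$ with $(a_k)$ bounded non-decreasing in $Y$, then $g=\sup_k a_k$ pointwise, hence $[g>c]=\bigcup_k[a_k>c]$, a countable union of sets of $\B$; symmetrically, every $h\in Y^\downarrow$ satisfies $[h<c]\in\B$ for all $c$, via $h=\inf_k b_k$. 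Now take $f\in\overline{Y^\uparrow}\cap\overline{Y^\downarrow}$; it is bounded, being a uniform limit of bounded functions. Passing to subsequences I may choose $g_n\in Y^\uparrow$ with $\|g_n-f\|_\infty<\tfrac1n$ and $h_n\in Y^\downarrow$ with $\|h_n-f\|_\infty<\tfrac1n$. A routine estimate then gives
\[
[f>c]=\bigcup_{n}\bigl[g_n>c+\tfrac1n\bigr]\quad\text{and}\quad [f<c]=\bigcup_{n}\bigl[h_n<c-\tfrac1n\bigr],
\]
and since each set on the right lies in $\B$ and $\B$ is closed under countable unions, we get $[f>c]\in\B$ and $[f<c]\in\B$ for every $c$, i.e.\ $f\in Y$.

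I do not expect a genuine obstacle here; the only point that needs care is that $\B$ is merely closed under countable unions and not a $\sigma$-algebra, so the level sets of the limit function must be written as countable \emph{unions} (never intersections or complements) of level sets of the approximants. This is exactly why one works with the sets $[g>c]$ for $g\in Y^\uparrow$ and with the sets $[h<c]$ for $h\in Y^\downarrow$, and not the other way around; matching the direction of monotonicity to the correct type of level set is the whole content of the argument.
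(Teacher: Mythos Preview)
Your proof is correct and follows essentially the same approach as the paper: both establish the chain $Y\subset Y^\uparrow\cap Y^\downarrow\subset\overline{Y^\uparrow}\cap\overline{Y^\downarrow}$ trivially and close it by showing $[g>c]\in\B$ for $g\in Y^\uparrow$, $[h<c]\in\B$ for $h\in Y^\downarrow$, and that these properties pass to uniform limits via $[f>c]=\bigcup_n[g_n>c+\tfrac1n]$. The paper simply cites the proof of Lemma~\ref{L:jen algebra}$(b)$ for the uniform-limit step, whereas you spell it out explicitly.
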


\begin{proof}
    It is clear that $Y\subset Y^\uparrow\cap Y^\downarrow\subset\overline{Y^\uparrow}\cap \overline{Y^\downarrow}$.
        To prove the converse observe that 
    $[f>c]\in \B$ whenever $f\in Y^\uparrow$ and $c\in\er$. Moreover, this property is preserved by uniform limits (see the proof of Lemma~\ref{L:jen algebra}$(b)$). Hence 
     $[f>c]\in \B$ whenever $f\in \overline{Y^\uparrow}$ and $c\in\er$.
     Similarly we get that $[f<c]\in \B$ whenever $f\in \overline{Y^\downarrow}$ and $c\in\er$.

     Therefore, if $f\in \overline{Y^\uparrow}\cap \overline{Y^\downarrow}$ and $c\in\er$, we deduce $[f>c]\in\B$ and $[f<c]\in\B$. Using the properties of $\B$ we conclude that $f$ is $\B$-measurable, hence $f\in Y$.
\end{proof}

\begin{cor}\label{cor:hup+hdown pro A1aj}
    Let $X$ be a compact convex set and $H$ be one of the spaces
    $$A_1(X), A_b(X)\cap\Bo_1(X), A_f(X).$$
    Then $H=H^\uparrow\cap H^\downarrow=\overline{H^\uparrow}\cap\overline{H^\downarrow}$. 

    In particular, Theorem~\ref{T: meritelnost H=H^uparrow cap H^downarrow} applies to these spaces.
\end{cor}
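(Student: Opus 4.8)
The plan is to deduce the corollary directly from Lemma~\ref{L:YupcapYdown=Y} together with the representation of these function spaces provided by Lemma~\ref{L:bidual}. The key point is that each of the three spaces $A_1(X)$, $A_b(X)\cap\Bo_1(X)$ and $A_f(X)$ is, via the isometry $\Psi$ of Lemma~\ref{L:bidual}, identified with the space of functions of a fixed descriptive class on the compact set $Y=(B_{(A_c(X))^*},w^*)$, and each such descriptive class is exactly the class of bounded functions measurable with respect to a suitable family $\B$ of subsets of $Y$ of the type required in Lemma~\ref{L:YupcapYdown=Y} (closed under countable unions and finite intersections, containing $\emptyset$ and $Y$).

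First I would fix which family $\B$ corresponds to each space. For $H=A_b(X)\cap\Bo_1(X)$ we take $\B=(\FpG)_\sigma$, the family of countable unions of sets that are intersections of a closed and an open set; a bounded function on a topological space is of the first Borel class precisely when it is $\B$-measurable, and $\B$ is closed under countable unions and finite intersections (since $\FpG$ is). For $H=A_1(X)$ we take $\B=\Zer_\sigma$, the family of countable unions of zero sets; by the characterization recalled in Section~\ref{ssc:csp} a bounded function is Baire-one iff it is $\Zer_\sigma$-measurable, and $\Zer_\sigma$ has the required closure properties because $\Zer$ is closed under finite unions and finite intersections. For $H=A_f(X)$ we take $\B=\H_\sigma$, the family of countable unions of resolvable sets; by Theorem~\ref{T:a}(b) on the compact space $Y$ a bounded function is fragmented iff it is $\H_\sigma$-measurable, and $\H_\sigma$ is closed under countable unions and finite intersections since $\H$ is an algebra (by \cite[Proposition 2.1(i)]{koumou}). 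In each case Lemma~\ref{L:YupcapYdown=Y}, applied with $\Gamma=Y$ and this $\B$, gives $Y_H=Y_H^\uparrow\cap Y_H^\downarrow=\overline{Y_H^\uparrow}\cap\overline{Y_H^\downarrow}$, where $Y_H$ denotes the corresponding space of restrictions to $Y$.

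Next I would transport this back to $X$. By Lemma~\ref{L:bidual}(a), $\Psi$ is a linear order-preserving isometry from $A_b(X)$ onto $(A_c(X))^{**}$ which is also a homeomorphism from the topology of pointwise convergence to the weak$^*$ topology; in particular $\Psi$ maps bounded monotone pointwise-convergent sequences to bounded monotone pointwise-convergent sequences, and vice versa, so it intertwines the operations $(\cdot)^\uparrow$, $(\cdot)^\downarrow$ and uniform closure. By parts (d), (f), (g) of Lemma~\ref{L:bidual}, $\Psi$ carries $A_1(X)$, $A_b(X)\cap\Bo_1(X)$, $A_f(X)$ onto the spaces $Y_H$ of Baire-one, first-Borel-class, fragmented functions on $Y$, respectively (these are exactly $\Psi(f)|_Y$ for $f$ in the respective space, and one checks the three target spaces are the $\B$-measurable ones above). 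Since $\Psi$ is an order-isomorphism, $\Psi(H^\uparrow)=(\Psi(H))^\uparrow=Y_H^\uparrow$ and similarly for $\downarrow$ and for closures; applying $\Psi^{-1}$ to the equality $Y_H=Y_H^\uparrow\cap Y_H^\downarrow=\overline{Y_H^\uparrow}\cap\overline{Y_H^\downarrow}$ yields $H=H^\uparrow\cap H^\downarrow=\overline{H^\uparrow}\cap\overline{H^\downarrow}$.

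Finally, the ``in particular'' clause is immediate: Corollary~\ref{cor:hup+hdown pro A1aj} has just verified $H=H^\uparrow\cap H^\downarrow$, and each of these $H$ is determined by extreme points (by the corollary following Theorem~\ref{extdeter}, or directly by \cite{dostal-spurny} for $A_f(X)$ and $A_b(X)\cap\Bo_1(X)$, and by the Mokobodzki theorem plus \cite[Lemma 3.3]{smith-london} for $A_1(X)$). Moreover $H^\uparrow$ is determined by extreme points: one has $H^\uparrow\subset H^\mu$, and $H^\mu$ is determined by extreme points in all three cases by Theorem~\ref{extdeter} and its corollary. Hence the hypotheses of Theorem~\ref{T: meritelnost H=H^uparrow cap H^downarrow} are met. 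The main obstacle I anticipate is bookkeeping: making sure that the descriptive class on $Y$ really coincides with $\B$-measurability for the chosen $\B$ (especially the identity ``fragmented $=\H_\sigma$-measurable'' on the compact, generally non-metrizable space $Y$, which relies on Theorem~\ref{T:a}(b)), and checking that $\Psi$ genuinely commutes with $\uparrow$, $\downarrow$ and uniform closure — all routine given the stated lemmas, but needing care about boundedness of the sequences involved.
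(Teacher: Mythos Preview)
Your detour through $Y=(B_{(A_c(X))^*},w^*)$ is unnecessary and, as written, contains a genuine gap. Lemma~\ref{L:bidual}(d),(f),(g) say that $\Psi(f)|_Y$ is Baire-one (resp.\ of the first Borel class, fragmented) \emph{if and only if} $f$ lies in the corresponding space $H$; they do \emph{not} say that the map $f\mapsto\Psi(f)|_Y$ carries $H$ \emph{onto} all Baire-one (resp.\ first-Borel-class, fragmented) functions on $Y$. The image of $A_b(X)$ under $f\mapsto\Psi(f)|_Y$ consists only of restrictions of elements of $(A_c(X))^{**}$, a proper (affine-like) subspace of $\ell^\infty(Y)$. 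So the equality $Y_H=Y_H^\uparrow\cap Y_H^\downarrow$ from Lemma~\ref{L:YupcapYdown=Y} refers to a space strictly larger than $\Psi(H)|_Y$, and you cannot simply ``apply $\Psi^{-1}$'' to it. What survives is only the inclusion argument: if $f\in\overline{H^\uparrow}\cap\overline{H^\downarrow}$ then $\Psi(f)|_Y\in\overline{Y_H^\uparrow}\cap\overline{Y_H^\downarrow}=Y_H$, whence $f\in H$ by Lemma~\ref{L:bidual}; together with the trivial inclusion this does give the result.

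The paper bypasses the whole apparatus of $\Psi$ and $Y$: the three spaces $A_1(X)$, $A_b(X)\cap\Bo_1(X)$, $A_f(X)$ are already, \emph{on $X$ itself}, exactly the bounded affine functions that are $\Zer_\sigma$-, $\FpG_\sigma$-, or $\H_\sigma$-measurable. So one takes $\Gamma=X$ in Lemma~\ref{L:YupcapYdown=Y}, lets $Y_\B$ be all bounded $\B$-measurable functions on $X$, and observes $H=A_b(X)\cap Y_\B$. For $f\in\overline{H^\uparrow}\cap\overline{H^\downarrow}$ one has $f\in A_b(X)$ (affinity and boundedness pass to monotone and uniform limits) and $f\in\overline{Y_\B^\uparrow}\cap\overline{Y_\B^\downarrow}=Y_\B$, hence $f\in H$. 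This is a one-line application of the lemma with no transfer needed.
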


\begin{proof}
 The named spaces are formed by affine functions which are $\Zer_\sigma$-measurable, $\FpG_\sigma$-measurable or $\H_\sigma$-measurable, respectively (see Section~\ref{ssc:csp}). Thus the statement follows immediately from Lemma~\ref{L:YupcapYdown=Y}.
\end{proof}

\begin{remarks}\label{rem:o meritelnosti}
(1) The key results of Theorem~\ref{T:integral representation H} and Theorem~\ref{T: meritelnost H=H^uparrow cap H^downarrow} include a characterization of multipliers (and strong multipliers) by a measurability condition on $\ext X$. If $H^\mu=H$, this is a measurability condtition with respect to a $\sigma$-algebra. This $\sigma$-algebra is canonical and unique (because $1_E$ is measurable if and only if $E$ belongs to the $\sigma$-algebra). However, the canonical description from Theorem~\ref{T:integral representation H}$(i)$ is not very descriptive. In the sequel we try to give a better description for some spaces $H$. A characterization for strongly affine Baire functions in given in Theorem~\ref{T:baire-multipliers}$(b)$ below, in some concrete examples in Section~\ref{sec:stacey}.

(2) For spaces not closed to monotone limits the situation is more complicated. Firstly, there are some cases when Theorem~\ref{T: meritelnost H=H^uparrow cap H^downarrow} may be applied (see Corollary~\ref{cor:hup+hdown pro A1aj})
and some cases when the characterization fails (see Proposition~\ref{P:dikous-lsc--new}$(f)$). Further, if multipliers are characterized by a measurability condition, the respective family of sets need not be uniquely determined. In such a case family $\A_H$ is the smallest one, but perhaps some bigger one works as well (cf.\ Proposition~\ref{P:dikous-spoj-new}, assertions $(b)$ and $(c)$). Therefore, in some cases we provide a more descriptive measurability condition characterizing multipliers but without claiming it is a description of family $\A_H$ (see Theorem~\ref{t:a1-lindelof-h-hranice}, Theorem~\ref{t:Bo1-fsigma-hranice}  and Theorem~\ref{t:af-fsigma-hranice} below).
\end{remarks}

\section{Measurability of strong multipliers in terms of split faces}\label{s:meas sm}

From the previous section we already know that, given an intermediate function space $H$ with certain properties, multipliers and strong multipliers may be characterized by measurability with respect to the system $\A_H$ or $\A_H^s$, respectively, on $\ext X$ (see Theorem~\ref{T:integral representation H} and Theorem~\ref{T: meritelnost H=H^uparrow cap H^downarrow}). However, the systems $\A_H$ and $\A_H^s$ are described using $M(H)$ and $M^s(H)$, not just using $H$. This implies some limitations of possible applications of these characterizations.
In this section we try to partially overcome this issue for strong multipliers. We do not know how to do a similar thing for multipliers themselves. This was, in fact, one of our main motivations to introduce the concept of a strong multiplier.  The first step is the following improvement of Proposition~\ref{p:system-aha}$(c)$ for strong multipliers.

\begin{thm}\label{T:meritelnost-strongmulti}
Let $H \subset A_{sa}(X)$ be an intermediate function space such that $H^{\uparrow}$ is determined by extreme points. Then
\[\A^s_H=\{F \cap \ext X: F \text{ is a split face such that } \lambda_F \in M^s(H)^{\uparrow}\}.\]
\end{thm}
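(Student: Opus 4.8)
The plan is to prove the two inclusions separately, in both cases exploiting the characterization of $\A^s_H$ from Proposition~\ref{p:system-aha}$(c)$ (applied to $M^s(H)$ in place of $M(H)$), namely that $E\in\A^s_H$ if and only if there is $m\in(M^s(H))^\uparrow$ with $0\le m\le 1$, $m|_E=1$ and $m|_{\ext X\setminus E}=0$, together with Proposition~\ref{p:system-aha}$(e)$ (note $H^\mu$ is determined by extreme points since $H^\uparrow$ is, hence $\A^s_H$ is a $\sigma$-algebra). Throughout I will use that, since $H\subset A_{sa}(X)$, every element of $H$ (and of $H^\uparrow$, $M^s(H)^\uparrow$) is strongly affine.

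\textbf{The easy inclusion ``$\supseteq$''.} Suppose $F$ is a split face with $\lambda_F\in M^s(H)^\uparrow$. Set $m=\lambda_F$; then $0\le m\le 1$, $m=1$ on $F$ and $m=0$ on $F'$, and $\ext X\subset F\cup F'$ because $F$ is split (every extreme point lies in $F$ or in $F'$). Hence $m|_{\ext X}$ attains only the values $0,1$, with $[m=1]\cap\ext X=F\cap\ext X$. Since $m\in M^s(H)^\uparrow$, Proposition~\ref{p:system-aha}$(c)$ (in the $M^s$-version) shows $F\cap\ext X\in\A^s_H$.

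\textbf{The hard inclusion ``$\subseteq$''.} Let $E\in\A^s_H$. By Proposition~\ref{p:system-aha}$(c)$ choose $m\in(M^s(H))^\uparrow$ with $0\le m\le 1$, $m|_E=1$, $m|_{\ext X\setminus E}=0$; write $m=\lim_n m_n$ with $m_n\nearrow m$, $m_n\in M^s(H)$. The goal is to produce a split face $F$ with $F\cap\ext X=E$ and $\lambda_F\in M^s(H)^\uparrow$; the natural candidate is $F=[m=1]$, with complementary candidate $[m=0]$. First, using that $m$ is strongly affine and $m_n\in M^s(H)$, I would upgrade the relation $m^2=m$ on $\ext X$ to $m^2=m$ on all of $X$: there is $g_n\in H$ with $g_n=m_n^2$ on $\ext X$ (here I need that $M^s(H)$ is an algebra on $\ext X$, i.e.\ Proposition~\ref{P:algebra ZH}$(d)$, replacing $m_n$ by a lattice-bounded version if necessary to keep $(g_n)$ monotone as in the proof of the excised Lemma~\ref{L:neseni01}); then for any maximal $\mu$, $g_n=m_n^2$ $\mu$-a.e., so passing to the limit $g:=\lim g_n$ (which lies in $H^\uparrow$) satisfies $g=m^2$ $\mu$-a.e.\ for all maximal $\mu$, in particular $g=m^2=m$ on $\ext X$; since $H^\uparrow$ is determined by extreme points, $g=m$ on $X$, so $m^2=m$ $\mu$-a.e.\ for every maximal $\mu$, i.e.\ $m$ takes values in $\{0,1\}$ off a universally null set. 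Being strongly affine with values (essentially) in $\{0,1\}$, standard Choquet-theoretic arguments then show $F=[m=1]$ and $[m=0]$ are complementary split faces: $F$ and $[m=0]$ are convex and measure extremal because $m$ is affine with values in $[0,1]$, their union carries every maximal measure by the previous sentence, and for $x\notin F\cup[m=0]$ one writes $x=m(x)\,r(\mu|_{[m=1]}/\dots)+(1-m(x))\,r(\mu|_{[m=0]}/\dots)$ using a maximal $\mu\in M_x(X)$ concentrated on $[m=1]\cup[m=0]$, with uniqueness of the decomposition following from $\lambda_F=m$ being affine; thus $\lambda_F=m\in M^s(H)^\uparrow$ and $F\cap\ext X=[m=1]\cap\ext X=E$.

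\textbf{Main obstacle.} The delicate point is the passage ``$m^2=m$ on $\ext X$ $\Rightarrow$ $m^2=m$ $\mu$-a.e.\ for all maximal $\mu$'', which requires that the approximating functions genuinely be \emph{strong} multipliers (this is exactly why the theorem is false, or at least unproven, for ordinary multipliers) and needs the algebra structure of $R(M^s(H))$ from Proposition~\ref{P:algebra ZH}$(d)$ together with a monotonicity bookkeeping on the sequences $(m_n)$, $(g_n)$ — essentially reconstructing the content of the (here suppressed) Lemma~\ref{L:neseni01}. A secondary but routine obstacle is checking that $F=[m=1]$ really is a \emph{split} face (uniqueness of the convex decomposition), for which the identity $\lambda_F=m$ and affinity of $m$ do the work via \cite[Proposition II.6.5]{alfsen}-type reasoning; measure-convexity and measure-extremality of $F$ and $F'$ follow from $m$ being affine universally measurable with range in $[0,1]$.
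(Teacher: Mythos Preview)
Your overall strategy matches the paper's: use Proposition~\ref{p:system-aha}$(c)$ (in its $M^s$-version) for both inclusions, and for ``$\subseteq$'' show that $F=[m=1]$ is a split face with $\lambda_F=m$. The ``$\supseteq$'' direction is fine, and your argument that $m^2=m$ $\mu$-a.e.\ for all maximal $\mu$ (via the algebra structure of $R(M^s(H))$, monotone bookkeeping, and determinacy of $H^\uparrow$ by extreme points) is essentially the paper's Lemma~\ref{l:multi-face}$(a)$.

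There is, however, a genuine gap in the final step. You write that ``uniqueness of the decomposition follows from $\lambda_F=m$ being affine'', and later call the split property a ``routine obstacle'' handled by the identity $\lambda_F=m$ and affinity of $m$. But affinity of $m$ only fixes the \emph{coefficient} $\lambda$ in $x=\lambda x_1+(1-\lambda)y_1$; that is, it shows $F$ is a \emph{parallel} face. It does not force $x_1\in F$ to be unique --- think of the centre of a square with $F$ and $F'$ opposite edges. No simpliciality is assumed here, so Corollary~\ref{c:simplex-facejesplit} is unavailable. The paper (Lemma~\ref{l:multi-face}$(b)$) closes this gap by using the strong multiplier property a \emph{second} time: for a positive $h\in A_c(X)\subset H$, Lemma~\ref{L:mult-uparrow} gives $a\in H^\uparrow$ with $a=mh$ $\mu$-a.e.\ for every maximal $\mu$, and Lemma~\ref{l:complementarni-facy}$(iii)$ upgrades this to $a=mh$ on all of $F\cup F'$. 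Applying the affine function $a$ to the two candidate decompositions yields $\lambda h(x_1)=a(x)=\lambda h(x_2)$, hence $h(x_1)=h(x_2)$ for every $h\in A_c(X)$, so $x_1=x_2$. This extra use of the multiplier structure is essential and is precisely what distinguishes strong multipliers from ordinary ones at this point of the theory.

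A minor aside: your parenthetical appeal to Proposition~\ref{p:system-aha}$(e)$ to conclude that $\A^s_H$ is a $\sigma$-algebra requires $H^\mu=H$, which is not assumed; fortunately you never use this claim.
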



Before we prove the result we need some definitions and a couple of lemmas. We start by the following stronger notions of convexity and extremality.

\begin{definition}
A universally measurable set $F\subset X$ is called \emph{measure convex}\index{set!measure convex} provided $r(\mu)\in F$ whenever $\mu\in M_1(X)$ is carried by $F$ (i.e., $\mu(F)=1$). The set $F$ is called \emph{measure extremal}\index{set!measure extremal} provided $\mu(F)=1$ whenever $\mu\in M_1(X)$ and $r(\mu)\in F$.
\end{definition}

\begin{remarks}
(1) It is clear that any measure convex set is convex and any measure extremal set is extremal.

(2) Any closed, open or resolvable convex set is also measure convex, see \cite[Proposition~2.80]{lmns}. Similarly, a
closed, open or resolvable extremal set is also measure extremal, see \cite[Proposition~2.92]{lmns}.

(3) There exist an $F_\sigma$ face $F$ and $G_\delta$ face $G$ in $X=M_1([0,1])$, which are not measure convex (see \cite[Propositions~2.95 and 2.96]{lmns}).
\end{remarks}

We continue by a lemma on a complementary pair of faces.

\begin{lemma}
\label{l:complementarni-facy}
Let $A, B$ be two disjoint faces of a compact convex set $X$ which are both measure convex and measure extremal, and such that $A \cup B$ carries each maximal measure on $X$. Then the following assertions hold.
\begin{enumerate}[$(i)$]
    \item $X=\co(A\cup B)$.
    \item $A^{\prime}=B$.
    \item If $f, h$ are strongly affine functions such that $f=h \cdot 1_A$ $\mu$-almost everywhere for each maximal measure $\mu \in M_1(X)$, then $f=h \cdot 1_A$ on $A \cup B$. 
\end{enumerate}
\end{lemma}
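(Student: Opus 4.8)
The plan is to prove the three assertions in order, using the measure convexity and measure extremality of $A$ and $B$ together with the hypothesis that $A\cup B$ carries every maximal measure. I would first fix an arbitrary $x\in X$ and take a maximal measure $\mu\in M_x(X)$; by hypothesis $\mu(A\cup B)=1$, so writing $\lambda=\mu(A)$ we have $\mu(B)=1-\lambda$ (the sets are disjoint). If $\lambda\in(0,1)$, then $\mu_A=\tfrac1\lambda\mu|_A$ and $\mu_B=\tfrac1{1-\lambda}\mu|_B$ are probability measures carried by $A$ and $B$ respectively, so by measure convexity $a:=r(\mu_A)\in A$ and $b:=r(\mu_B)\in B$; since the barycenter map is affine, $x=r(\mu)=\lambda a+(1-\lambda)b\in\co(A\cup B)$. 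The degenerate cases $\lambda\in\{0,1\}$ give $x\in B$ or $x\in A$ directly. This proves $(i)$.

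For $(ii)$, recall $A'=\bigcup\{\text{faces disjoint from }A\}$. The inclusion $B\subset A'$ is immediate since $B$ is a face disjoint from $A$. For the reverse inclusion, take any face $C$ disjoint from $A$ and any $x\in C$. Applying the representation from $(i)$, $x=\lambda a+(1-\lambda)b$ with $a\in A$, $b\in B$, $\lambda\in[0,1]$; since $C$ is a face and $x\in C$, if $\lambda\in(0,1)$ then $a\in C$, contradicting $C\cap A=\emptyset$, so $\lambda=0$ and $x=b\in B$. Hence $A'\subset B$, giving $A'=B$. (The decomposition $x=\lambda a+(1-\lambda)b$ need not a priori be unique, but for the face argument we only need the existence of \emph{one} such decomposition, which $(i)$ supplies.)

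For $(iii)$, let $f,h$ be strongly affine with $f=h\cdot 1_A$ holding $\mu$-a.e.\ for every maximal $\mu\in M_1(X)$. Fix $x\in A\cup B$ and a maximal measure $\mu\in M_x(X)$. By measure extremality: if $x\in A$ then $\mu(A)=1$, so $1_A=1$ $\mu$-a.e.\ and $f(x)=\mu(f)=\mu(h\cdot 1_A)=\mu(h)=h(x)=h(x)\cdot 1_A(x)$, using strong affinity of $f$ and $h$; if $x\in B$ then $\mu(B)=1$, so (as $A,B$ disjoint) $1_A=0$ $\mu$-a.e.\ and $f(x)=\mu(f)=\mu(h\cdot 1_A)=0=h(x)\cdot 1_A(x)$. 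In both cases $f(x)=h(x)\cdot 1_A(x)$, which is the claim.

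I do not expect a genuine obstacle here; the statement is a clean consequence of the definitions, and the only mild subtlety is the potential non-uniqueness of the convex decomposition in $(i)$, which is harmless for $(ii)$ since there we argue with an \emph{existing} decomposition rather than a canonical one. The one point to be careful about is ensuring, when normalizing $\mu|_A$ and $\mu|_B$, that these restrictions are genuine Radon (hence well-behaved) measures and that $A$, $B$ being universally measurable makes $\mu(A)$, $\mu(B)$ meaningful — both are guaranteed by the standing hypotheses on measure convex/extremal sets.
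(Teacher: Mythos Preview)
Your proof is correct and follows essentially the same route as the paper: the same barycentric splitting of a maximal representing measure for $(i)$, the same face argument for $(ii)$, and the same use of measure extremality plus strong affinity for $(iii)$. The only small slip is in $(ii)$, where you rule out $\lambda\in(0,1)$ and conclude $\lambda=0$ without explicitly excluding $\lambda=1$; but $\lambda=1$ gives $x=a\in A\cap C=\emptyset$, so this is immediate.
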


\begin{proof}
$(i)$: Let $x\in X$ be given. We pick a maximal measure $\mu\in M_x(X)$. Then $\mu(A\cup B)=1$ by the assumption.
 If $\mu(A)=1$, then $x \in A$ since $A$ is measure convex. Similarly $x\in B$ provided $\mu(B)=1$. Thus we may assume that $\lambda=\mu(A)\in (0,1)$. Then $\mu_A=\lambda^{-1}\mu|_A$ and $\mu_B=(1-\lambda)^{-1}\mu|_B$ are probability measures on $X$. By the measure convexity we see that $x_A=r(\mu_A)\in A$ and $x_B=r(\mu_B)\in B$. Since the barycentric mapping is affine, we have
\[
x=r(\mu)=r(\lambda\mu_A+(1-\lambda)\mu_B)=\lambda x_A+(1-\lambda)x_B\in\co(A\cup B).
\]

$(ii)$: Let $C\subset X$ be a face disjoint from $A$ and $x\in C$ be given. By $(i)$ we can write $x=\lambda x_A+(1-\lambda) x_B$ for some $\lambda\in[0,1]$ and $x_A\in A$, $x_B\in B$. If $\lambda=1$, then $x=x_A \in A$, which is impossible as $C\cap A=\emptyset$. If $\lambda \in (0,1)$, then $x_A, x_B\in C$ because $C$ is a face. Again we have a contradiction as $x_A\in C\cap A$.
Hence the only possibility is $\lambda=0$, which means $x=x_B\in B$. Thus $A'\subset B$.
Since obviously $B\subset A'$, the proof of $(ii)$ is finished.

$(iii)$: Let $f,h$ be as in the assumptions. Take an arbitrary $x \in A$. Fix a maximal measure $\mu \in M_1(X)$ with $x=r(\mu)$. Since $A$ is measure extremal, we get $\mu(A)=1$, and it follows that 
$$h(x)1_A(x)=h(x)=\int_{A} h(y) \di\mu(y)=\int_A f(y)\di\mu(y)=f(x),$$ hence $f(x)=h(x)1_A(x)$. The case $x \in B$ is treated similarly. The proof is finished.
\end{proof}

The next lemma shows that monotone limits of multipliers share some properties of multipliers.

\begin{lemma}\label{L:mult-uparrow}
    Let $H$ be an intermediate function space such that $H^\uparrow$ is determined by extreme points. Let $m\in (M(H))^\uparrow$ and $a\in H$ be a non-negative function. Then the following assertions hold.
    \begin{enumerate}[$(a)$]
        \item There is a unique $b\in H^\uparrow$ such that $b=ma$ on $\ext X$.
        \item If $m\in (M^s(H))^\uparrow$, then  $b=ma$ $\mu$-almost everywhere for every maximal $\mu\in M_1(X)$.
    \end{enumerate}
\end{lemma}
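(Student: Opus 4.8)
\textbf{Proof plan for Lemma~\ref{L:mult-uparrow}.}

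The plan is to write $m$ as the pointwise limit of a non-decreasing sequence $(m_n)$ in $M(H)$ (resp. $M^s(H)$), use the fact that each $m_n$ is a multiplier to find $b_n\in H$ with $b_n=m_na$ on $\ext X$, and then show that the $b_n$ increase to a bounded limit $b\in H^\uparrow$ which is the desired function. For uniqueness in $(a)$ we invoke the hypothesis that $H^\uparrow$ is determined by extreme points: two elements of $H^\uparrow$ agreeing on $\ext X$ must coincide on $X$.

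In more detail: since $a\ge 0$ and $1\in H$, we may shift $m$ by a constant and assume $m_1\ge 0$ (adding a constant to $m$ only adds a multiple of $a\in H$ to $b$, so this is harmless; alternatively, since all functions are bounded we may assume $0\le m_n$). For each $n$ choose $b_n\in H$ with $b_n=m_n a$ on $\ext X$; this is possible because $m_n\in M(H)\subset M(H)$. On $\ext X$ we have $b_{n+1}-b_n=(m_{n+1}-m_n)a\ge 0$, and since $H$ is determined by extreme points (a fortiori, as $H\subset H^\uparrow$ and $H^\uparrow$ is determined by extreme points, or directly because $M(H)$ being nonempty forces $H$ determined by extreme points in the relevant lemmas — but in fact we only need the inequality to propagate, which follows from determinacy of $H^\uparrow$ applied to $b_{n+1}-b_n\in H\subset H^\uparrow$), the sequence $(b_n)$ is non-decreasing on all of $X$. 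Moreover on $\ext X$ we have $0\le b_n\le \|m\|\cdot\|a\|$, so by the same determinacy argument $b_n\le \|m\|\cdot\|a\|$ on $X$; hence $(b_n)$ is bounded and non-decreasing, so it converges pointwise to some $b\in H^\uparrow$. Since $b_n\nearrow b$ pointwise and $b_n=m_na\nearrow ma$ on $\ext X$, we get $b=ma$ on $\ext X$, proving existence in $(a)$. Uniqueness: if $b'\in H^\uparrow$ also satisfies $b'=ma$ on $\ext X$, then $b-b'$ or $b'-b$ — more precisely, since $H^\uparrow$ need not be a subspace, we argue via determinacy directly: $b\le b'$ on $\ext X$ and $b'\le b$ on $\ext X$, and as $b,b'\in H^\uparrow$ which is determined by extreme points, $b\le b'$ and $b'\le b$ on $X$, whence $b=b'$.

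For $(b)$, assume $m\in (M^s(H))^\uparrow$ and pick $m_n\in M^s(H)$ with $m_n\nearrow m$. Choose $b_n\in H$ with $b_n=m_n a$ holding $\mu$-almost everywhere for each maximal $\mu\in M_1(X)$; on $\ext X$ this in particular gives $b_n=m_na$ (Dirac measures at extreme points are maximal), so the $b_n$ are exactly as in part $(a)$ and $b_n\nearrow b$. Fix a maximal measure $\mu\in M_1(X)$. By $\sigma$-additivity of $\mu$ there is a $\mu$-null set $N$ with $b_n(x)=m_n(x)a(x)$ for all $x\in X\setminus N$ and all $n\in\en$; letting $n\to\infty$ gives $b(x)=m(x)a(x)$ for $x\in X\setminus N$, i.e. $b=ma$ $\mu$-almost everywhere.

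The only genuinely delicate point is making sure the monotonicity and boundedness of $(b_n)$ transfer from $\ext X$ to $X$; this is exactly where determinacy of $H^\uparrow$ by extreme points is used, applied to the differences $b_{n+1}-b_n$ and to $\|m\|\|a\|-b_n$, both of which lie in $H$ (hence in $H^\uparrow$) — note these are honest elements of the linear space $H$, so no issue with $H^\uparrow$ failing to be linear arises at that step. Everything else is routine: pointwise monotone convergence to produce $b\in H^\uparrow$, and for $(b)$ the standard argument that a countable family of a.e.\ equalities holds simultaneously off a single null set.
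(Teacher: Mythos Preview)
Your proposal is correct and follows essentially the same route as the paper: pick $m_n\nearrow m$ in $M(H)$ (resp.\ $M^s(H)$), set $b_n\in H$ with $b_n=m_na$ on $\ext X$, use determinacy by extreme points to transfer monotonicity and boundedness of $(b_n)$ from $\ext X$ to $X$, and pass to the limit $b\in H^\uparrow$; part $(b)$ follows by intersecting countably many $\mu$-full sets. Two minor remarks: the shift to arrange $m_1\ge 0$ is unnecessary, since $a\ge 0$ alone already gives $(m_{n+1}-m_n)a\ge 0$ on $\ext X$; and your uniqueness step (``$b\le b'$ on $\ext X$ plus determinacy of $H^\uparrow$ gives $b\le b'$ on $X$'') is not literally what determinacy says for a single function --- to make it airtight note that $b-b'_n\in H^\uparrow$ (as a monotone limit of $b_k-b'_n\in H$) is $\ge 0$ on $\ext X$, hence $\ge 0$ on $X$, and let $n\to\infty$ --- but the paper's own proof is equally terse on this point.
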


\begin{proof} Fix a sequence $(m_n)$ in $M(H)$ with $m_n\nearrow m$. 
Let $b_n\in H$ be such that $b_n=m_n a$ on $\ext X$. Then $b_n\nearrow ma$ on $\ext X$. Since $H$ is determined by extreme points, the sequence $(b_n)$ is non-decreasing on $X$, hence $b_n\nearrow b$ for some $b\in H^\uparrow$. Clearly $b=ma$ on $\ext X$. Since $H^\uparrow$ is determined by extreme points, such $b$ is unique. This completes the proof of $(a)$.

If $m\in (M^s(H))^\uparrow$, we may assume that $m_n\in M^s(H)$. Let $\mu$ be any maximal measure on $X$. Then $b_n=m_n a$ $\mu$-almost everywhere for each $n\in\en$, hence $b=ma$ $\mu$-almost everywhere.
This completes the proof of $(b)$.
\end{proof}

\begin{lemma}
\label{l:multi-face}
Let $H$ be an intermediate function space such that $H^\uparrow$ is determined by extreme points. Let $m \in (M^s(H))^\uparrow$ be such that $m(\ext X) \subset \{0, 1\}$. Then the following assertions hold:
\begin{enumerate}[$(a)$]
    \item The set $[m=0]\cup[m=1]$ carries all maximal measures.
    \item If $H\subset A_{sa}(X)$, then  $F=[m=1]$ is a split face with $\lambda_F=m$. Moreover,   $F^{\prime}=[m=0]$ and both $F, F^{\prime}$ are measure convex and measure extremal.
\end{enumerate}
\end{lemma}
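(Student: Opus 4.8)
The plan is to prove $(a)$ first and then obtain $(b)$ from $(a)$ together with Lemma~\ref{l:complementarni-facy}. Throughout I will use that, since $H\subset H^\uparrow$ and $H^\uparrow$ is determined by extreme points, each of the families $H$, $H^\uparrow$, $H^\downarrow$, $M^s(H)$, $(M^s(H))^\uparrow$ and $(M^s(H))^\downarrow$ satisfies $\inf u(\ext X)\le u\le\sup u(\ext X)$; in particular $0\le m\le 1$ on $X$ (the case where $m$ is constant on $\ext X$ being trivial). The first step for $(a)$ is a reduction: starting from any non-decreasing sequence $(m_n)$ in $M^s(H)$ with $m_n\nearrow m$ pointwise, I would use that $R(M^s(H))$ is a sublattice (Proposition~\ref{P:algebra ZH}$(d)$) together with the above determinacy to replace $(m_n)$ by a non-decreasing sequence in $M^s(H)$ with $0\le m_n\le 1$ and $m_n\nearrow m$ \emph{everywhere on $X$}; the point is that $\max\{0,m_n\}$ extends to a strong multiplier $f_n$ with $m_n\le f_n\le m$ on $\ext X$, hence on $X$, so $f_n\nearrow m$ on $X$.

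For $(a)$ itself the basic tool is that a product of two strong multipliers can be realized $\mu$-a.e.: if $u,v\in M^s(H)$ then, applying the definition of $M^s(H)$ to $v+\norm v\ge 0$, one gets $w\in M^s(H)$ with $w=uv$ both on $\ext X$ and $\mu$-almost everywhere for every maximal $\mu$ (determinacy of $H$ pins $w$ down on $X$ from its values on $\ext X$). Iterating this with powers $2^k$ produces, for each $n$, elements of $M^s(H)$ equal to $m_n^{2^k}$ on $\ext X$ and $\mu$-a.e.; these are non-increasing in $k$ and, for fixed $k$, non-decreasing in $n$, and passing to the limits (and treating $1-m\in(M^s(H))^\downarrow$ symmetrically) one is led to two functions $g\le m\le m^*$ on $X$ with $g=m=m^*$ on $\ext X$ such that, for every maximal $\mu$, $g=1_{[m=1]}$ and $m^*=1_{[m>0]}$ $\mu$-almost everywhere; in particular $g$ and $m^*$ take only the values $0$ and $1$ $\mu$-a.e. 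It then remains to show $g=m^*$ on $X$, which forces $1_{[0<m<1]}=m^*-g=0$ $\mu$-a.e., i.e.\ $\mu([m=0]\cup[m=1])=1$ for every maximal $\mu$. The equality $g=m^*$ is obtained from the fact that $m^*-g$ is a non-negative function vanishing on $\ext X$ and built from monotone limits of strong multipliers, so that the determinacy of the appropriate limit class forces it to vanish on all of $X$.

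To deduce $(b)$, assume $H\subset A_{sa}(X)$. Then $m\in H^\uparrow\subset A_{sa}(X)$ is strongly affine, hence affine and universally measurable, and since $0\le m\le 1$ the sets $F=[m=1]$ and $F'=[m=0]$ are disjoint convex faces of $X$. Strong affinity gives that $F$ and $F'$ are measure convex and measure extremal: if $\mu$ is carried by $F$ then $m(r(\mu))=\mu(m)=1$, so $r(\mu)\in F$; and if $r(\mu)\in F$ then $\mu(m)=m(r(\mu))=1$, which, since $m\le 1$, forces $\mu(F)=1$; likewise for $F'$. By $(a)$, $F\cup F'$ carries every maximal measure, so Lemma~\ref{l:complementarni-facy} applies and yields $X=\co(F\cup F')$ and that $F'$ is the complementary face of $F$. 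Finally, for $x\in X\setminus(F\cup F')$ any representation $x=\lambda a+(1-\lambda)a'$ with $a\in F$, $a'\in F'$, $\lambda\in[0,1]$ satisfies $\lambda=m(x)$ by affinity of $m$; together with the facial decomposition from Lemma~\ref{l:complementarni-facy} this shows that $F$ is a split face with $\lambda_F=m$ and $F'$ its complementary face.

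I expect the genuinely delicate point to be the closing step of $(a)$: arranging the square-iteration so that the difference $m^*-g$ lands in a family for which determinacy by extreme points is available, since neither $(H^\uparrow)^\downarrow$ nor $H^\uparrow+H^\downarrow$ is automatically determined by extreme points, so the bookkeeping about which iterated monotone-limit class each auxiliary function belongs to must be done with care. By comparison, the reduction to a pointwise-convergent approximating sequence, the $\mu$-a.e.\ realization of products of strong multipliers, and all of $(b)$ (modulo the standard characterization of split faces) are routine.
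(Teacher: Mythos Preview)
Your proposal has two issues, one structural and one a genuine gap.

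\textbf{Part $(a)$: the double iteration is unnecessary and creates the very obstacle you flag.} You iterate squaring to form $m_n^{2^k}$ and then pass to limits in both $k$ and $n$, so that the resulting functions $g$ and $m^*$ live in iterated monotone classes such as $(H^\uparrow)^\downarrow$ for which determinacy by extreme points is \emph{not} assumed --- exactly the difficulty you identify at the end. The paper avoids this entirely with a single squaring: after arranging $f_n\in M^s(H)$ with $0\le f_n\le 1$ and $f_n\nearrow m$ on $X$ (as you also do), one simply takes $g_n\in M^s(H)$ with $g_n=f_n^2$ on $\ext X$ and $\mu$-a.e.\ for every maximal $\mu$. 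Since $0\le f_n\le 1$ is non-decreasing on $\ext X$, so is $f_n^2$, hence $g_n\nearrow g\in H^\uparrow$. The key observation you are missing is that on $\ext X$ one already has $m^2=m$ (because $m(\ext X)\subset\{0,1\}$), so $g=m$ on $\ext X$, and determinacy of $H^\uparrow$ gives $g=m$ on all of $X$ directly. Passing to the limit in $g_n=f_n^2$ $\mu$-a.e.\ yields $m=g=m^2$ $\mu$-a.e., i.e.\ $\mu([m=0]\cup[m=1])=1$. No iterated powers, no symmetric construction with $1-m$, and no need to compare two limit functions.

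\textbf{Part $(b)$: you only show $F$ is a parallel face, not a split face.} Your argument establishes that $\lambda=m(x)$ is uniquely determined in any representation $x=\lambda a+(1-\lambda)a'$, but a split face requires uniqueness of the points $a\in F$ and $a'\in F'$ as well, and nothing in Lemma~\ref{l:complementarni-facy} provides this. The paper closes the gap as follows: given two decompositions $x=\lambda x_1+(1-\lambda)y_1=\lambda x_2+(1-\lambda)y_2$ with $x_1\ne x_2$, pick a positive $h\in A_c(X)$ with $h(x_1)<h(x_2)$; by Lemma~\ref{L:mult-uparrow} there is $a\in H^\uparrow$ with $a=hm$ $\mu$-a.e.\ for every maximal $\mu$, hence (by Lemma~\ref{l:complementarni-facy}$(iii)$) $a=hm$ on $F\cup F'$. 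Affinity of $a$ then forces $\lambda h(x_1)=a(x)=\lambda h(x_2)$, a contradiction. This use of the strong-multiplier structure to separate points of $F$ is essential and absent from your outline.
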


\begin{proof}
$(a)$:   Let $(m_n)$ be a sequence in $M^s(H)$ such that $m_n\nearrow m$. By Proposition~\ref{P:algebra ZH}$(d)$ there is a sequence $(f_n)$ in $M^s(H)$ such that
    \begin{itemize}
        \item $f_1=\max\{0,m_1\}$ on $\ext X$;
         \item $f_{n+1}=\max\{m_{n+1},f_n\}$ on $\ext X$ for $n\in\en$.
    \end{itemize}
   Then $0\le f_n\le 1$, the sequence $(f_n)$ is non-decreasing and $f_n\nearrow m$ on $\ext X$. Since $H^\uparrow$ is determined by extreme points, we deduce that $f_n\nearrow m$ on $X$.

   Let $g_n\in H$ be such that $g_n=f_n^2$ on $\ext X$. Then $(g_n)$ is a non-increasing sequence in $H$, let $g$ denote its limit.
     
   Let $\mu\in M_1(X)$ be a maximal measure. Since $f_n\in M^s(H)$, we get
   $$\forall n\in\en\colon g_n=f_n^2\quad\mu\mbox{-almost everywhere}.$$
   Hence there is a set $N\subset X$ with $\mu(N)=0$ such that
   $$\forall x\in X\setminus N\,\forall n\in\en\colon g_n(x)=(f_n(x))^2.$$
   Passing to the limit we get
   $$\forall x\in X\setminus N\colon g(x)=(m(x))^2,$$
   hence $g=m^2$ $\mu$-almost everywhere. 

   Applying to $\mu=\ep_x$ for $x\in\ext X$, we deduce $g=m^2$ on $\ext X$. Since $m^2=m$ on $\ext X$ and $H^\uparrow$ is determined by extreme points, we deduce that $g=m$. 
   
   Hence, for any maximal measure $\mu$ we have $m=m^2$ $\mu$-almost everywhere, i.e., $m(x)\in\{0,1\}$ for $\mu$-almost all $x\in X$ which completes the proof.

$(b)$:  By $(a)$ we know that $[m=0]\cup[m=1]$ carries all maximal measures. Since $m$ is a strongly affine function and $m(X) \subset [0, 1]$, it follows using elementary methods that both sets $F=[m=1]$ and $[m=0]$ are measure convex and measure extremal faces. Therefore, by Lemma~\ref{l:complementarni-facy} we get $F'=[m=0]$ and  $X=\co(F\cup F')$. 

Further, let $x \in X\setminus (F\cup F')$ be such that 
\[
x=\lambda_1x_1+(1-\lambda_1)y_1=\lambda_2 x_2+(1-\lambda_2)y_2
\]
for some $x_1,x_2\in F$, $y_1,y_2\in F'$ and $\lambda_1,\lambda_2\in [0,1]$. An application of $m$ yields $\lambda_1=\lambda_2=m(x)\in(0,1)$.  
This already shows that $F$ is a parallel face and $\lambda_F=m$.
Let $\lambda=m(x)$ stand for the common value.

If $x_1\neq x_2$, let $h\in A_c(X)$ be a positive function satisfying $h(x_1)<h(x_2)$. Since $h \in H$ and $m \in (M^s(H))^\uparrow$, by Lemma~\ref{L:mult-uparrow} there exists a function $a \in H^\uparrow$ such that for each maximal measure $\mu \in M_1(X)$, $a=h \cdot m$ $\mu$-almost everywhere. Thus $a=h \cdot m$ on $F \cup F^{\prime}$ by Lemma~\ref{l:complementarni-facy}$(iii)$.
Consequently,
\[
\lambda h(x_1)=\lambda a(x_1)=\lambda a(x_1)+(1-\lambda) a(y_1)=a(x)=\lambda a(x_2)+(1-\lambda) a(y_2)=\lambda h(x_2).
\]
Hence $h(x_1)=h(x_2)$, a contradiction completing the proof.
\end{proof}

\begin{proof}[Proof of Theorem~\ref{T:meritelnost-strongmulti}]
Let $F$ be a split face with $\lambda_F\in (M^s(H))^\uparrow$. Then $\lambda_F|_F=1$ and $\lambda_F|_{F'}=0$. Since $F\cup F'\supset\ext X$, Proposition~\ref{p:system-aha}$(c)$ (applied to strong multipliers) shows that $F\cap\ext X\in\A_H^S$.

Conversely, assume $E\in\A_H^s$. By Proposition~\ref{p:system-aha}$(c)$ (applied to strong multipliers) we find $m\in (M^s(H))^\uparrow$ such that $m|_E=1$ and $m|_{\ext X\setminus E}=0$.
By Lemma~\ref{l:multi-face} we deduce that $F=[m=1]$ is a split face and $m=\lambda_F$. This completes the proof.
\end{proof}

\begin{remark} The proof of Theorem~\ref{T:meritelnost-strongmulti} illustrates
     the crucial difference between strong multipliers and ordinary multipliers. To be more precise, an~analogue of Lemma~\ref{l:multi-face} for multipliers does not hold. It may happen that $H$ is determined by extreme points, $H\subset A_{sa}(H)$ and $H=H^\mu$, but there is $m\in M(H)$ such that $m(\ext X)\subset\{0,1\}$, but $[m=1]$ is not a split face. An example illustrating it is described in Example~\ref{ex:dikous-mezi-new}.
\end{remark}

We further note that an important role is played by the condition that $F\cup F'$ carries all maximal measures. We do not know whether this condition is automatically satisfied for nice split faces.

\begin{ques}
    Let $X$ be a compact convex set and let $F\subset X$ be a split face such that both $F$ and $F'$ are measure convex and measure extremal. Does the set $F\cup F'$ carry all maximal measures?
\end{ques}

The answer is positive if $X$ is a simplex:

\begin{obs}
    Let $X$ be a simplex and let $A,B\subset X$ be two convex measure extremal sets such that $X=\co(A\cup B)$. Then $A\cup B$ carries all maximal measures.
\end{obs}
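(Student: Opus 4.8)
The statement to prove is: if $X$ is a simplex and $A,B\subset X$ are two convex measure extremal sets with $X=\co(A\cup B)$, then $A\cup B$ carries all maximal measures. The natural approach is to pass to a maximal measure $\mu\in M_1(X)$ and show $\mu(A\cup B)=1$, using the characterization of barycenters together with measure extremality. First I would fix an arbitrary maximal measure $\mu$ with barycenter $x=r(\mu)$. Since $X=\co(A\cup B)$, write $x=\lambda a+(1-\lambda)b$ for some $a\in A$, $b\in B$, $\lambda\in[0,1]$. Pick maximal measures $\mu_a\in M_a(X)$ and $\mu_b\in M_b(X)$ (these exist by the Choquet representation theorem). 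Then $\nu=\lambda\mu_a+(1-\lambda)\mu_b$ is a probability measure on $X$ with barycenter $x$, and it is again maximal (a convex combination of maximal measures is maximal, since maximality is equivalent to being carried by the right kind of boundary set, or more directly: $\nu(f)=\lambda\mu_a(f)+(1-\lambda)\mu_b(f)\ge\lambda\mu_a'(f)+(1-\lambda)\mu_b'(f)$ would contradict maximality of $\mu_a,\mu_b$ for convex $f$ — I would cite the standard fact). Now since $X$ is a simplex, the maximal measure representing $x$ is \emph{unique}, so $\mu=\nu=\lambda\mu_a+(1-\lambda)\mu_b$.

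Next I would use measure extremality of $A$ and $B$. Since $\mu_a$ is a probability measure with barycenter $a\in A$ and $A$ is measure extremal, $\mu_a(A)=1$; similarly $\mu_b(B)=1$. Hence
\[
\mu(A\cup B)=\lambda\mu_a(A\cup B)+(1-\lambda)\mu_b(A\cup B)\ge \lambda\mu_a(A)+(1-\lambda)\mu_b(B)=\lambda+(1-\lambda)=1,
\]
so $\mu(A\cup B)=1$. (Here I am implicitly using that $A$ and $B$ are universally measurable, which is part of the definition of measure extremal, so $A\cup B$ is $\mu$-measurable.) Since $\mu$ was an arbitrary maximal measure, this proves the claim.

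\textbf{Main obstacle.} The only genuine subtlety is justifying that $\nu=\lambda\mu_a+(1-\lambda)\mu_b$ is a maximal measure, so that the uniqueness coming from the simplex property forces $\mu=\nu$. This is where simplicity is essential: in a general compact convex set the decomposition $x=\lambda a+(1-\lambda)b$ gives a maximal measure $\nu$ with $\nu(A\cup B)=1$, but there is no reason the \emph{given} $\mu$ should equal it. The fact that a convex combination of maximal measures is maximal is standard (it follows, e.g., from the characterization of maximal measures via $\mu(f^*)=\mu(f)$ for all $f\in C(X)$, which is an affine condition in $\mu$ on a weak$^*$-compact convex set; alternatively from \cite[Proposition I.4.6 or I.4.5]{alfsen}), so I would simply invoke it. Everything else — the existence of $\mu_a,\mu_b$, the measure extremality conclusions, and the final arithmetic — is routine. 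I expect the whole proof to fit in a short paragraph once the maximality-of-convex-combinations fact is cited.
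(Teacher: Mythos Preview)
Your proof is correct and follows essentially the same route as the paper's: decompose the barycenter $x=\lambda a+(1-\lambda)b$, take maximal representatives $\mu_a,\mu_b$, use measure extremality to see they are carried by $A,B$, observe that $\lambda\mu_a+(1-\lambda)\mu_b$ is a maximal measure with barycenter $x$, and invoke simpliciality to conclude $\mu=\lambda\mu_a+(1-\lambda)\mu_b$ is carried by $A\cup B$. The paper's version differs only cosmetically (it applies measure extremality before the uniqueness step rather than after), and it treats the maximality of the convex combination as self-evident rather than commenting on it.
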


\begin{proof}
    Let $\mu\in M_1(X)$ be maximal. Let $x=r(\mu)$ be its barycenter. 
    Then $x=\lambda a+(1-\lambda)b$ for some $\lambda\in [0,1]$, $a\in A$ and $b\in B$. Let $\mu_a$ and $\mu_b$ be maximal measures representing $a$ and $b$, respectively. By measure extremality we deduce that $\mu_a$ is carried by $A$ and $\mu_b$ is carried by $B$. Thus $\lambda\mu_a+(1-\lambda)\mu_b$ is a maximal measure carried by $A\cup B$ with barycenter $x$. By simpliciality this measure coincide with $\mu$, thus $\mu$ is carried by $A\cup B$. 
\end{proof}

Theorem~\ref{T:meritelnost-strongmulti} provides a better characterization of the system $\A_H^s$ than Proposition~\ref{p:system-aha}. However, it still uses $M^s(H)$ and not just $H$. But it can be used, at least for some spaces $H$, to provide a characterization of strong multipliers just in terms of $H$. We recall that we know from Theorem \ref{T: meritelnost H=H^uparrow cap H^downarrow} that, under some assumptions on $H$, a bounded function on $\ext X$ can be extended to an element of $M^s(H)$ if and only if it is $\A^s_H$-measurable. It is easy to see that $\A^s_H$ is the smallest system with such property. If moreover $H=H^{\mu}$, then $\A^s_H$ is a $\sigma$-algebra and hence, it is the  unique $\sigma$-algebra with this property. However, when $H$ is not equal to $H^{\mu}$, there might be some systems larger than $\A^s_H$ which characterize strong multipliers. This motivates the following notion.

\begin{definition}
For a system $\A$ of subsets of $\ext X$ and an intermediate function space $H$, we say that $M^s(H)$ \emph{is determined by} $\A$\index{system of sets!determining strong multipliers} if a bounded function on $\ext X$ can be extended to an element of $M^s(H)$ if and only if it is $\A$-measurable. 
\end{definition}

Apart from the above-defined system $\A^s_H$, there are  another canonical systems of subsets of extreme points that we may consider. 

\begin{definition}
\label{d:es-ha}
For an intermediate function space $H$, let
\[\begin{aligned}  
\gls{SH}&=\{F\cap\ext X\setsep F\mbox{ is split face with }\lambda_F\in H^\uparrow\},
\\ \gls{ZH}&=\{[f=1]\cap\ext X\setsep f\in H^\uparrow, f(\ext X)\subset\{0,1\}\}.\end{aligned}
\]\index{system of sets!SH@$\ms_H$}\index{system of sets!ZH@$\Z_H$}
\end{definition}

If  $H \subset A_{sa}(X)$ is an intermediate function space such that $H^{\uparrow}$ is determined by extreme points,  by Theorem~\ref{T:meritelnost-strongmulti} we know that $\A^s_H \subset \ms_H$.
It appears that in some cases the equality holds (see Theorem~\ref{T:baire-multipliers}, Proposition~\ref{P:As pro simplex}, Theorem~\ref{t:metriz-sa-splitfacy} and Proposition~\ref{P:shrnutidikousu} below) or at least
$M^s(H)$ is determined by $\ms_H$ (see Theorem~\ref{t:a1-lindelof-h-hranice}, Theorem~\ref{t:Bo1-fsigma-hranice} and Theorem~\ref{t:af-fsigma-hranice} below). We note that unlike  the systems $\A^s_H$, the systems $\ms_H$ clearly reflect the inclusions between intermediate functions spaces. This is applied in the following proposition.

\begin{prop}
\label{P: silnemulti-inkluze}
Let $H_1, H_2$ be intermediate function spaces on a compact convex set $X$ that $H_1\subset H_2\subset A_{sa}(X)$. Assume that $H_2^\uparrow$ is determined by extreme points and $M^s(H_2)$ is determined by $\ms_{H_2}$. Then $M^s(H_1) \subset M^s(H_2)$.
\end{prop}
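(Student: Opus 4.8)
The statement to prove is that $M^s(H_1) \subset M^s(H_2)$ under the hypotheses that $H_1 \subset H_2 \subset A_{sa}(X)$, that $H_2^\uparrow$ is determined by extreme points, and that $M^s(H_2)$ is determined by $\ms_{H_2}$. The key observation is that both $\A^s_{H_1}$ and $\ms_{H_1}$ make sense and that $\ms_{H_1}$ is visibly monotone in $H$: if $F$ is a split face with $\lambda_F \in H_1^\uparrow$, then a fortiori $\lambda_F \in H_2^\uparrow$ (since $H_1 \subset H_2$ implies $H_1^\uparrow \subset H_2^\uparrow$), so $\ms_{H_1} \subset \ms_{H_2}$. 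This is precisely the feature that the systems $\ms_H$ enjoy but the systems $\A^s_H$ do not, which is why we route the argument through $\ms$.

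First I would take an arbitrary $m \in M^s(H_1)$ and show it extends to an element of $M^s(H_2)$. Since $H_1 \subset H_2$, the function $m$ (as an element of $A_{sa}(X)$) lies in $H_2$, so the only thing to check is the multiplier condition with respect to $H_2$, equivalently, by the hypothesis that $M^s(H_2)$ is determined by $\ms_{H_2}$, that $m|_{\ext X}$ is $\ms_{H_2}$-measurable. Now I would like to invoke Theorem~\ref{T:meritelnost-strongmulti}: provided $H_1^\uparrow$ is determined by extreme points, that theorem gives $\A^s_{H_1} = \ms_{H_1}$, and by Theorem~\ref{T: meritelnost H=H^uparrow cap H^downarrow} (or directly by the definition of $\A^s_{H_1}$ and Proposition~\ref{p:system-aha}) the function $m|_{\ext X}$ is $\A^s_{H_1}$-measurable. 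Combining these, $m|_{\ext X}$ is $\ms_{H_1}$-measurable, hence $\ms_{H_2}$-measurable by the monotonicity $\ms_{H_1} \subset \ms_{H_2}$ noted above. Then the assumption that $M^s(H_2)$ is determined by $\ms_{H_2}$ yields a function in $M^s(H_2)$ extending $m|_{\ext X}$; since $H_2 \subset A_{sa}(X)$ consists of strongly affine functions and strongly affine functions are determined by their values on $\ext X$ when the relevant determinacy holds, this extension coincides with $m$ on all of $X$, so $m \in M^s(H_2)$.

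The main obstacle I anticipate is bookkeeping about which determinacy hypotheses are actually available: Theorem~\ref{T:meritelnost-strongmulti} is stated for $H$ with $H^\uparrow$ determined by extreme points, and the statement of the proposition only assumes this for $H_2$, not for $H_1$. So one has to check whether $H_1^\uparrow$ being determined by extreme points is automatic here or whether the argument can be arranged to bypass it. In fact it can: the membership of $m|_{\ext X}$ in the $\ms_{H_1}$-measurable functions does not really need the full strength of Theorem~\ref{T:meritelnost-strongmulti} for $H_1$ — it suffices to show directly that for each $m \in M^s(H_1)$ the restriction $m|_{\ext X}$ is $\ms_{H_2}$-measurable, and for this one can use Lemma~\ref{l:multi-face}(b) applied inside $H_2$ (whose $H_2^\uparrow$ \emph{is} determined by extreme points) to the functions arising from $m$, after first noting via Proposition~\ref{P:algebra ZH} that $R(M^s(H_1))$ is a closed subalgebra, so its characteristic functions generate the relevant split faces with $\lambda_F \in (M^s(H_1))^\uparrow \subset (M^s(H_2))^\uparrow \subset H_2^\uparrow$, placing $F \cap \ext X \in \ms_{H_2}$. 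I would write the argument in this second form to keep the hypotheses exactly as given, spelling out that $(M^s(H_1))^\uparrow \subset H_1^\uparrow \subset H_2^\uparrow$ and invoking Theorem~\ref{T:meritelnost-strongmulti} only for $H_2$.
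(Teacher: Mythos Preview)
Your first approach is exactly the paper's proof: take $m\in M^s(H_1)$, note $m|_{\ext X}$ is $\A^s_{H_1}$-measurable (Proposition~\ref{p:system-aha}), use Theorem~\ref{T:meritelnost-strongmulti} to pass to $\ms_{H_1}$, then use $\ms_{H_1}\subset\ms_{H_2}$ and the hypothesis on $H_2$ to extend, and finally use determinacy of $H_2$ by extreme points to conclude $m=f\in M^s(H_2)$.

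The obstacle you anticipated dissolves immediately: since $H_1\subset H_2$ we have $H_1^\uparrow\subset H_2^\uparrow$, and determinacy by extreme points is inherited by subspaces, so the hypothesis that $H_2^\uparrow$ is determined by extreme points automatically gives the same for $H_1^\uparrow$. Hence Theorem~\ref{T:meritelnost-strongmulti} applies directly to $H_1$ and your workaround, while correct, is unnecessary. The paper's proof simply invokes Theorem~\ref{T:meritelnost-strongmulti} for $H_1$ without comment, implicitly relying on this trivial inheritance.
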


\begin{proof}
Let $m \in M^s(H_1)$. Then $m|_{\ext X}$ is $\A^s_{H_1}$-measurable by Proposition \ref{p:system-aha}. By Theorem~\ref{T:meritelnost-strongmulti} we get $\A^s_{H_1} \subset \ms_{H_1}$ and thus $m|_{\ext X}$ is $\ms_{H_1}$-measurable. Since clearly $\ms_{H_1} \subset \ms_{H_2}$, $m|_{\ext X}$ is $\ms_{H_2}$-measurable. By the assumption on $H_2$ there exists $f \in M^s(H_2)$ such that $f|_{\ext X}=m|_{\ext X}$. Since  $H_2$ is determined by extreme points, $m=f \in M^s(H_2)$.
\end{proof}

Next observe that $\ms_H \subset \Z_{H}$ for any intermediate function space $H$ on $X$. The inclusion may be proper in case $X$ is not a simplex (for example if $X$ is a square in the plane). However,
there are important cases when the equality holds:

\begin{lemma}\label{L:SH=ZH}
    Let $X$ be a simplex and let $H\subset A_{sa}(X)$ be an intermediate function space. Assume moreover that at least one of the following conditions is satisfied: 
    \begin{itemize}
        \item $H\subset \Ba(X)$;
        \item $X$ is a standard compact convex set.
    \end{itemize}
    Then $\ms_H=\Z_H$.
\end{lemma}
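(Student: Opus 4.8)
\textbf{Proof proposal for Lemma~\ref{L:SH=ZH}.}

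The plan is to prove the nontrivial inclusion $\Z_H\subset\ms_H$; the reverse inclusion $\ms_H\subset\Z_H$ holds for every intermediate function space, as remarked before the statement (if $F$ is a split face with $\lambda_F\in H^\uparrow$, then $F\cap\ext X=[\lambda_F=1]\cap\ext X$ with $\lambda_F\in H^\uparrow$ attaining only the values $0,1$ on $\ext X$). So fix $f\in H^\uparrow$ with $f(\ext X)\subset\{0,1\}$ and put $E=[f=1]\cap\ext X$; I must produce a split face $F$ with $\lambda_F\in H^\uparrow$ and $F\cap\ext X=E$. Since $f\in H^\uparrow\subset A_{sa}(X)^\uparrow$, the function $f$ is strongly affine (limits of bounded nondecreasing sequences of strongly affine functions are strongly affine, by the monotone convergence theorem) and affine with values in $[0,1]$. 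The natural candidate is $F=[f=1]$, with complementary face $[f=0]$.

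First I would show that $[f=0]\cup[f=1]$ carries every maximal measure on $X$. Here the two alternative hypotheses enter. If $H\subset\Ba(X)$, then $f\in(\Ba^b(X))^\uparrow$, and it is routine that a monotone limit of bounded Baire functions is Baire; hence $[f=0]\cup[f=1]=[f^2=f]$ is a Baire set containing $\ext X$, so it carries every maximal measure by \cite[Corollary I.4.12]{alfsen} (cf.\ \cite[Theorem 3.79(a)]{lmns}). If instead $X$ is a standard compact convex set, then $f$, being strongly affine, is universally measurable, so $[f^2=f]$ is a universally measurable set containing $\ext X$ and therefore carries every maximal measure by the very definition of a standard compact convex set. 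In either case, the next step is the elementary observation that since $f$ is strongly affine with $f(X)\subset[0,1]$, the sets $F=[f=1]$ and $[f=0]$ are measure convex and measure extremal faces (if $r(\mu)\in[f=1]$ then $1=f(r(\mu))=\mu(f)$ forces $f=1$ $\mu$-a.e., and conversely; similarly for $[f=0]$). Lemma~\ref{l:complementarni-facy} then applies to $A=F$, $B=[f=0]$ and yields $F'=[f=0]$ and $X=\co(F\cup F')$.

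It remains to upgrade "$F$ is a face with $X=\co(F\cup F')$" to "$F$ is a \emph{split} face with $\lambda_F=f$". This is where I expect the only real subtlety, and it is precisely the point at which simpliciality is used. Take $x\in X\setminus(F\cup F')$. Applying $f$ to any representation $x=\lambda a+(1-\lambda)a'$ with $a\in F$, $a'\in F'$, $\lambda\in(0,1)$ gives $\lambda=f(x)$, so the coefficient is unique and $F$ is at least a parallel face with $\lambda_F=f$ (in particular $\lambda_F=f\in H^\uparrow$, which is exactly what we need once splitness is established). To get uniqueness of the pair $(a,a')$: pick a maximal measure $\mu\in M_x(X)$; by the previous paragraph $\mu(F\cup F')=1$, and writing $\lambda=\mu(F)=f(x)$, the normalized restrictions $\mu_F=\lambda^{-1}\mu|_F$ and $\mu_{F'}=(1-\lambda)^{-1}\mu|_{F'}$ have barycenters $a_0\in F$, $a_0'\in F'$ with $x=\lambda a_0+(1-\lambda)a_0'$. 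If $x=\lambda b+(1-\lambda)b'$ is another such decomposition, then since $X$ is a simplex one compares maximal measures: any maximal measure representing $b$ is carried by $F$ (by measure extremality of $F$) and similarly for $b'$, so $\lambda(\text{max.\ meas.\ at }b)+(1-\lambda)(\text{max.\ meas.\ at }b')$ is a maximal measure representing $x$; by uniqueness of the maximal representing measure in a simplex it equals $\mu$, whence $b=r(\mu_F)=a_0=a$ and $b'=a_0'=a'$. Thus $F$ is a split face with $\lambda_F=f\in H^\uparrow$ and $F\cap\ext X=[f=1]\cap\ext X=E$, which finishes the proof. I expect this simplex-comparison argument to be the main obstacle, though it is standard; everything else is bookkeeping with the measure-convexity/extremality machinery already in the excerpt, and may even be quotable more or less directly from Lemma~\ref{l:multi-face}$(b)$ applied with the roles of the monotone-limit multiplier replaced by $f$ itself.
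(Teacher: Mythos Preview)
Your proposal is correct and follows essentially the same approach as the paper: both arguments show that $[f=0]\cup[f=1]$ carries all maximal measures via the two hypotheses, invoke Lemma~\ref{l:complementarni-facy} to identify $F'=[f=0]$, and then exploit simpliciality by lifting any two decompositions of $x$ to maximal measures (carried by $F$ and $F'$ thanks to measure extremality) and comparing them. The only cosmetic difference is that the paper compares two arbitrary decompositions directly, whereas you fix one canonical decomposition from the maximal measure at $x$ and compare any other to it.
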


\begin{proof}
    Let $E\in\Z_H$. Fix $f\in H^\uparrow$ such that $f(\ext X)\subset \{0,1\}$ and $E=[f=1]\cap \ext X$. Assuming one of the conditions, we deduce that the set $[f=1]\cup[f=0]$ carries all maximal measures. Set $F=[f=1]$.
    By Lemma~\ref{l:complementarni-facy} we deduce that  $F'=[f=0]$ and $X=\co(F\cup F')$.

    Next we proceed similarly as in the proof of Lemma~\ref{l:multi-face}. Let $x \in X\setminus (F\cup F')$ be such that 
\[
x=\lambda_1x_1+(1-\lambda_1)y_1=\lambda_2 x_2+(1-\lambda_2)y_2
\]
for some $x_1,x_2\in F$, $y_1,y_2\in F'$ and $\lambda_1,\lambda_2\in [0,1]$. Let $\mu_1,\mu_2,\nu_1,\nu_2\in M_1(X)$ be maximal measures representing $x_1,x_2,y_1,y_2$, respectively. Since $F$ and $F'$ are clearly measure extremal, $\mu_1,\mu_2$ are carried by $F$ and $\nu_1,\nu_2$ are carried by $F'$. Then 
$$\lambda_1\mu_1+(1-\lambda_1)\nu_1,\lambda_2\mu_2+(1-\lambda_2)\nu_2$$ are two maximal measures representing $x$, therefore they are equal. Since $F$ and $F'$ are two disjoint universally measurable sets, we deduce that
$$\lambda_1\mu_1=\lambda_2\mu_2 \quad\mbox{and}\quad (1-\lambda_1)\nu_1=(1-\lambda_2)\nu_2.$$
Thus $\lambda_1=\lambda_2\in (0,1)$ and $\mu_1=\mu_2$ and $\nu_1=\nu_2$. Thus $x_1=x_2$. It follows that $F$ is a split face and $\lambda_F=f\in H^\uparrow$. Thus $E\in\ms_H$.
\end{proof}

We observe that the assumption that $X$ is a standard compact convex space is important as Proposition~\ref{P:shrnutidikousu}$(c)$ below shows.

We proceed to the final result of this section which provides a characterization of (strong) multipliers for a special type of intermediate function spaces defined via a topological property on $\ext X$. This will be applied later in Theorem~\ref{t:a1-lindelof-h-hranice} and in Section~\ref{ssce:fsigma-hranice}.

To formulate it we need to introduce some natural notation concerning split faces. We start by an easy lemma.

\begin{lemma}
\label{l:extense-splitface}
Let $X$ be a compact convex set and let $F\subset X$ be a split face with the complementary face $F'$. Then for each affine function $a$ on $F$ and $a'$ on $F'$ there exists a unique affine function $b:X\to\er$ satisfying $b=a$ on $F$ and $b=a'$ on $F'$.

Moreover, $b$ is bounded whenever $a$ and $a'$ are bounded.
\end{lemma}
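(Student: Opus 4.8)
The statement is about extending a pair of affine functions across a split face, so the natural object to build is the affine combination dictated by the split-face decomposition. Recall that since $F$ is a split face with complementary face $F'$, every $x\in X$ can be written uniquely as $x=\lambda_F(x)\,x_1+(1-\lambda_F(x))\,x_2$ with $x_1\in F$, $x_2\in F'$ (interpreting the decomposition in the obvious degenerate way when $\lambda_F(x)\in\{0,1\}$), and $\lambda_F$ is an affine function on $X$. The plan is to \emph{define} $b$ by
\[
b(x)=\lambda_F(x)\,a(x_1)+(1-\lambda_F(x))\,a'(x_2),
\]
where $x=\lambda_F(x)x_1+(1-\lambda_F(x))x_2$ is the unique decomposition; when $\lambda_F(x)=1$ we read this as $b(x)=a(x)$ (here $x\in F$) and when $\lambda_F(x)=0$ as $b(x)=a'(x)$ (here $x\in F'$). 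This is well-defined precisely because of the uniqueness in the definition of a split face.

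\textbf{Key steps.}
First I would record that $b$ is well-defined and agrees with $a$ on $F$ and with $a'$ on $F'$; this is immediate from the definition and the uniqueness of the decomposition. The substantive step is affinity of $b$. Take $x,y\in X$ and $t\in[0,1]$, and let $z=tx+(1-t)y$. Writing $x=\lambda_F(x)x_1+(1-\lambda_F(x))x_2$, $y=\lambda_F(y)y_1+(1-\lambda_F(y))y_2$ with $x_1,y_1\in F$ and $x_2,y_2\in F'$, one computes
\[
z=\bigl(t\lambda_F(x)+(1-t)\lambda_F(y)\bigr)z_1+\bigl(t(1-\lambda_F(x))+(1-t)(1-\lambda_F(y))\bigr)z_2,
\]
where $z_1$ is the appropriate convex combination of $x_1,y_1$ (which lies in $F$ because $F$ is convex) and $z_2$ the analogous combination of $x_2,y_2$ (in $F'$), assuming the denominators are nonzero; the degenerate cases are handled separately and are easier. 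Since $\lambda_F$ is affine, $\lambda_F(z)=t\lambda_F(x)+(1-t)\lambda_F(y)$, so this is exactly the canonical decomposition of $z$. Now using affinity of $a$ on $F$ and of $a'$ on $F'$, expanding $b(z)$ via this decomposition gives $tb(x)+(1-t)b(y)$ after a short rearrangement. Uniqueness of $b$ is clear: any affine function equal to $a$ on $F$ and $a'$ on $F'$ must, by affinity, take the prescribed value on each $x=\lambda_F(x)x_1+(1-\lambda_F(x))x_2$. Finally, boundedness: if $\abs{a}\le M$ on $F$ and $\abs{a'}\le M$ on $F'$, then $\abs{b(x)}\le \lambda_F(x)M+(1-\lambda_F(x))M=M$ since $\lambda_F(x)\in[0,1]$, so $b$ is bounded by $\max$ of the two bounds.

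\textbf{Main obstacle.}
The only delicate point is bookkeeping in the affinity computation when some of the coefficients $\lambda_F(x)$, $\lambda_F(y)$, or the combined coefficient $\lambda_F(z)$ equal $0$ or $1$, since then the points $x_1,x_2$ (etc.) are not uniquely determined and the formula degenerates. I would dispose of these by a short case analysis: if $\lambda_F(z)=1$ then both $x,y$ (that actually contribute, i.e.\ those with positive weight) lie in $F$, so $b$ restricted there is just $a$, which is affine; symmetrically for $\lambda_F(z)=0$; and if $\lambda_F(z)\in(0,1)$ the decomposition of $z$ is genuinely unique and the computation above goes through verbatim. This is routine but needs to be stated to make the argument airtight. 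No earlier result beyond the definition of split face and affinity of $\lambda_F$ (recalled in Section~\ref{ssc:ccs}) is needed.
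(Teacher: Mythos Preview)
Your proposal is correct and follows exactly the same approach as the paper: define $b$ via the split-face decomposition $b(x)=\lambda_F(x)a(y)+(1-\lambda_F(x))a'(y')$ and verify affinity directly, with uniqueness coming from $X=\co(F\cup F')$. In fact you have written out more carefully the ``routine verification'' of affinity (including the degenerate cases) that the paper simply asserts.
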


\begin{proof}
The uniqueness is obvious as $X=\co(F\cup F')$. To prove the existence, given $x\in X$ we set
\[
b(x)=\lambda_F(x) a(y)+(1-\lambda_F(X)) a'(y'),
\]
where $y\in F$ and $y'\in F'$ satisfy $x=\lambda_F(x)y+(1-\lambda_F(x))y'$.
By a routine verification, the function $b$ is affine.
Further, if $a$ and $a'$ are bounded, $b$ is obviously bounded as well.
\end{proof}

The case $a'=0$ in the previous lemma is especially important. It inspires the following definition.

\begin{definition}\label{d:teckoacko}
If $F$ is a split face in $X$ and $a$ is an affine function defined at least on $F$, we denote by $\gls{Upsilon} a$\index{operator Upsilon@operator $\Upsilon$} the unique affine function $b$ satisfying $b=a$ on $F$ and $b=0$ on $F'$.
We write $\Upsilon_F$ instead of $\Upsilon$ in case we need to stress it is related to $F$.
\end{definition}

Next we collect basic properties of the extension operator. Their proofs are completely straightforward.

\begin{obs}\label{obs:operator rozsireni}
    Let $X$ be a compact convex set and let $F\subset X$ be a split face.
     \begin{enumerate}[$(i)$]
        \item $\Upsilon$ is a positive linear operator from the linear space of affine functions on $F$ to the linear space of affine functions on $X$.
        \item If $(a_n)$ is a sequence of affine functions on $F$ pointwise converging to an affine function $a$, then $\Upsilon a_n\to\Upsilon a$ pointwise on $X$.
        \item $\Upsilon$ maps $A_b(F)$, the space of bounded affine functions on $F$, isometrically into $A_b(X)$.
        \item $\Upsilon 1_F=\lambda_F$.
    \end{enumerate}
\end{obs}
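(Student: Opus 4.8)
The final statement to prove is Observation~\ref{obs:operator rozsireni}, which collects four elementary properties of the extension operator $\Upsilon$ associated to a split face $F$. The proof will be short and computational, relying on the explicit formula for $\Upsilon a$ given in the proof of Lemma~\ref{l:extense-splitface}, namely
\[
\Upsilon a(x)=\lambda_F(x)\,a(y),\quad\text{where }x=\lambda_F(x)y+(1-\lambda_F(x))y',\ y\in F,\ y'\in F',
\]
with the convention that when $\lambda_F(x)=0$ the value is $0$ (and the choice of $y$ is irrelevant), and when $\lambda_F(x)=1$ we have $x=y\in F$.

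For $(i)$: linearity and positivity are immediate from the formula once one notes that, for fixed $x$, the decomposition $x=\lambda_F(x)y+(1-\lambda_F(x))y'$ has $\lambda_F(x)$ determined by $x$ alone (and, for a split face, the points $y,y'$ are uniquely determined when $\lambda_F(x)\in(0,1)$). Thus $\Upsilon(a_1+a_2)(x)=\lambda_F(x)(a_1(y)+a_2(y))=\Upsilon a_1(x)+\Upsilon a_2(x)$ and similarly for scalar multiples; positivity of $a$ on $F$ gives $\Upsilon a(x)=\lambda_F(x)a(y)\ge 0$ since $\lambda_F(x)\ge 0$. One should also observe that $\Upsilon a$ is indeed affine — this is already part of Lemma~\ref{l:extense-splitface}, so it may simply be quoted. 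For $(ii)$: if $a_n\to a$ pointwise on $F$ and $x\in X$, write $x=\lambda_F(x)y+(1-\lambda_F(x))y'$; then $\Upsilon a_n(x)=\lambda_F(x)a_n(y)\to\lambda_F(x)a(y)=\Upsilon a(x)$, using continuity of multiplication by the fixed scalar $\lambda_F(x)$. For $(iii)$: $\Upsilon$ clearly maps bounded affine functions to bounded affine functions with $\norm{\Upsilon a}\le\norm{a}$ (as $|\Upsilon a(x)|=\lambda_F(x)|a(y)|\le\norm{a}$), and conversely $\norm{\Upsilon a}\ge\sup_{y\in F}|\Upsilon a(y)|=\sup_{y\in F}|a(y)|=\norm{a}$ since $F\neq\emptyset$; linearity was established in $(i)$, so $\Upsilon|_{A_b(F)}$ is an isometric linear embedding into $A_b(X)$. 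For $(iv)$: applying the definition with $a=1_F$ (the constant function $1$ on $F$), the unique affine $b$ with $b=1$ on $F$ and $b=0$ on $F'$ is by definition $\lambda_F$, so $\Upsilon 1_F=\lambda_F$.

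There is essentially no obstacle here; the only mild subtlety is to make sure the ``fixed scalar $\lambda_F(x)$'' argument in $(i)$ and $(ii)$ is phrased so that it is clear that $\lambda_F(x)$ depends only on $x$ (not on the representation), which is exactly the content of $\lambda_F$ being a well-defined affine function on $X$, as recalled in Section~\ref{ssc:ccs}. Since the statement of the observation already asserts ``Their proofs are completely straightforward,'' the written proof can be kept to a few lines. Here is the proof:

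\begin{proof}
We use the explicit formula for $\Upsilon a$ from the proof of Lemma~\ref{l:extense-splitface}: for $x\in X$ write $x=\lambda_F(x)y+(1-\lambda_F(x))y'$ with $y\in F$, $y'\in F'$ (when $\lambda_F(x)=0$ the point $y$ is irrelevant, when $\lambda_F(x)=1$ we have $x=y\in F$); then $\Upsilon a(x)=\lambda_F(x)\,a(y)$. Note that $\lambda_F(x)$ depends only on $x$, being the value at $x$ of the affine function $\lambda_F$.

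$(i)$: By Lemma~\ref{l:extense-splitface} each $\Upsilon a$ is affine. If $a_1,a_2$ are affine on $F$ and $\alpha\in\er$, then for $x$ as above
\[
\Upsilon(a_1+\alpha a_2)(x)=\lambda_F(x)(a_1(y)+\alpha a_2(y))=\Upsilon a_1(x)+\alpha\,\Upsilon a_2(x),
\]
so $\Upsilon$ is linear. If $a\ge0$ on $F$, then $\Upsilon a(x)=\lambda_F(x)a(y)\ge0$ since $\lambda_F(x)\ge0$; hence $\Upsilon$ is positive.

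$(ii)$: If $a_n\to a$ pointwise on $F$ and $x\in X$, then with $x=\lambda_F(x)y+(1-\lambda_F(x))y'$ we get
\[
\Upsilon a_n(x)=\lambda_F(x)\,a_n(y)\longrightarrow\lambda_F(x)\,a(y)=\Upsilon a(x).
\]

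$(iii)$: For $a\in A_b(F)$ and $x\in X$ we have $\abs{\Upsilon a(x)}=\lambda_F(x)\abs{a(y)}\le\norm{a}$, so $\Upsilon a\in A_b(X)$ and $\norm{\Upsilon a}\le\norm{a}$. Conversely, since $F\neq\emptyset$ and $\Upsilon a=a$ on $F$, we have $\norm{\Upsilon a}\ge\sup_{y\in F}\abs{a(y)}=\norm{a}$. Together with the linearity from $(i)$, this shows that $\Upsilon$ maps $A_b(F)$ isometrically into $A_b(X)$.

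$(iv)$: By Definition~\ref{d:teckoacko}, $\Upsilon 1_F$ is the unique affine function on $X$ equal to $1$ on $F$ and to $0$ on $F'$. By the definition of $\lambda_F$ (see Section~\ref{ssc:ccs}), this function is exactly $\lambda_F$.
\end{proof}
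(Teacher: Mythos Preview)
Your proof is correct and matches the paper's approach: the paper simply states ``Their proofs are completely straightforward'' and gives no further argument, and your verification via the explicit formula $\Upsilon a(x)=\lambda_F(x)\,a(y)$ from the proof of Lemma~\ref{l:extense-splitface} is exactly the intended routine check.
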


The operator $\Upsilon$ is closely related to multipliers. More precisely, if $f\in A_b(X)$ is arbitrary, then $\Upsilon f\in A_b(X)$ and $\Upsilon (f|_F)=\lambda_F\cdot f$ on $\ext X$. This will be used in the following result. In the next section we will investigate in more detail when this feature produces a real multiplier in our sense.

\begin{prop}
\label{P:meritelnost multiplikatoru pomoci topologickych split facu}
Let $X$ be a standard compact convex set and let $T\subset\ell^\infty(\ext X)$ be a linear subspace with the following  properties:
\begin{enumerate}[$(i)$]
    \item $af \in T^{\uparrow}$ for each $a \in T$ positive and $f \in T^{\uparrow}$;
    \item $\overline{T^{\downarrow}} \cap \overline{T^{\uparrow}}=T$;
    \item $f|_{\ext X}\in T$ for each $f\in A_c(X)$.
\end{enumerate}
Set
$$H=\{f\in A_{sa}(X)\setsep f|_{\ext X}\in T\}.$$
Then $H$ is an intermediate function space and $M^s(H)=M(H)$ is determined by the system
$$\begin{aligned}
\B^s_{H}= \{ F\cap\ext X\setsep& F \text{ is a  split face such that } 1_{F \cap \ext X} \in T^{\uparrow} \\& \text{and }
\Upsilon_F(a|_F) \in A_{sa}(X)\text{ for every }a\in H \}.
\end{aligned}$$
\end{prop}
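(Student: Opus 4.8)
The strategy is to first verify that $H$ is an intermediate function space, then establish the analogue of Theorem~\ref{T: meritelnost H=H^uparrow cap H^downarrow} (namely that $M^s(H)$ is characterised by $\A_H^s$-measurability and that $M(H)=M^s(H)$), and finally to identify $\A_H^s$ with $\B_H^s$ using Theorem~\ref{T:meritelnost-strongmulti}. First I would observe that since $X$ is standard, every universally measurable set containing $\ext X$ carries all maximal measures; in particular all functions in $A_{sa}(X)$ are $\mu$-measurable for each maximal $\mu$, so $M(H)=M^s(H)$ will follow once we know $H$ consists of universally measurable functions (which it does, being a subspace of $A_{sa}(X)$) — this is precisely Proposition~\ref{P:rovnostmulti}. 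Next, assumption $(iii)$ gives $A_c(X)\subset H$, and the closedness of $H$ follows from assumption $(ii)$: if $f_n\to f$ uniformly with $f_n\in H$, then $f\in A_{sa}(X)$ (as $A_{sa}(X)$ is closed) and $f|_{\ext X}\in \overline{T}=\overline{T^\downarrow}\cap\overline{T^\uparrow}\supset T$; more precisely $f|_{\ext X}\in\overline{T}$ and since $T=\overline{T^\downarrow}\cap\overline{T^\uparrow}$ is itself closed, $f|_{\ext X}\in T$. Also $H$ is determined by extreme points because its elements are strongly affine, hence the restriction map to $\ext X$ is an isometry, and the same reasoning applied to bounded monotone sequences (using Observation analogous to the one for $A_{sa}$) shows $H^\uparrow$ is determined by extreme points as well; indeed if $f_n\nearrow f$ with $f_n\in H$ then $f\in A_{sa}(X)$ by monotone convergence for the barycentric identity, and $f|_{\ext X}\in T^\uparrow\subset\overline{T^\uparrow}$, and one checks $H^\uparrow$ is determined by $\ext X$ since its members are strongly affine.

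Second, I would check that $H=H^\uparrow\cap H^\downarrow$ so that Theorem~\ref{T: meritelnost H=H^uparrow cap H^downarrow} applies. If $f\in H^\uparrow\cap H^\downarrow$, then $f\in A_{sa}(X)$ (monotone limits of strongly affine functions that exist pointwise and are bounded remain strongly affine by dominated/monotone convergence), and $f|_{\ext X}\in T^\uparrow\cap T^\downarrow\subset \overline{T^\uparrow}\cap\overline{T^\downarrow}=T$, so $f\in H$; the reverse inclusion is trivial. Thus by Theorem~\ref{T: meritelnost H=H^uparrow cap H^downarrow}, $M^s(H)=M^s(H)^\uparrow\cap M^s(H)^\downarrow$, a bounded function on $\ext X$ extends to a member of $M^s(H)$ iff it is $\A_H^s$-measurable, and similarly for $M(H)$; combined with $M(H)=M^s(H)$ we get that both are determined by $\A_H^s$.

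Third, the heart of the matter is to show $\A_H^s=\B_H^s$. By Theorem~\ref{T:meritelnost-strongmulti}, $\A_H^s=\{F\cap\ext X\setsep F\text{ split face},\ \lambda_F\in M^s(H)^\uparrow\}$. For the inclusion $\A_H^s\subset\B_H^s$: if $F$ is a split face with $\lambda_F\in M^s(H)^\uparrow\subset H^\uparrow$, then $\lambda_F|_{\ext X}=1_{F\cap\ext X}\in T^\uparrow$, and for $a\in H$ we must show $\Upsilon_F(a|_F)\in A_{sa}(X)$. Here I would use Lemma~\ref{L:mult-uparrow}: since $\lambda_F\in M^s(H)^\uparrow$ and (after adding a constant) $a\ge0$, there is $b\in H^\uparrow$ with $b=\lambda_F\cdot a$ on $\ext X$ and $b=\lambda_F a$ $\mu$-a.e.\ for every maximal $\mu$; by Lemma~\ref{l:complementarni-facy}$(iii)$ applied to the measure-convex, measure-extremal faces $F,F'$ (these properties coming from Lemma~\ref{l:multi-face}$(b)$ since $\lambda_F\in(M^s(H))^\uparrow$ takes values $0,1$ on $\ext X$) we get $b=\lambda_F a=\Upsilon_F(a|_F)$ on $F\cup F'$, hence on all of $X$ by affinity and $X=\co(F\cup F')$, so $\Upsilon_F(a|_F)=b\in H^\uparrow\subset A_{sa}(X)$. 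For the reverse inclusion $\B_H^s\subset\A_H^s$: given a split face $F$ with $1_{F\cap\ext X}\in T^\uparrow$ and $\Upsilon_F(a|_F)\in A_{sa}(X)$ for all $a\in H$, I would show $\lambda_F\in M^s(H)^\uparrow$. The condition $1_{F\cap\ext X}\in T^\uparrow$ should be upgraded, using assumption $(i)$ and the fact that $1_{F\cap\ext X}=1_{F\cap\ext X}\cdot 1_{F\cap\ext X}$, together with a limit argument, to produce a nondecreasing sequence $a_n\in T$ of functions with values in $[0,1]$ converging to $1_{F\cap\ext X}$; lifting these to strongly affine functions $m_n\in H$ (possible since each $a_n\in T$ and $X$ standard lets us define $m_n(x)=\int a_n\,d\delta_x$ type extension, or directly: $a_n\in T$ means there is a strongly affine $m_n$ with $m_n|_{\ext X}=a_n$ — this needs care, see below) one checks each $m_n\in M^s(H)$: for $a\in H$, $\Upsilon_F(a|_F)\in A_{sa}(X)$ and $\Upsilon_F(a|_F)=\lambda_F\cdot a=m\cdot a$ on $\ext X$ where $m=\lim m_n=\lambda_F$; but to see $m_n$ itself is a strong multiplier one uses that the faces are split so $\Upsilon_F((a_n\text{-weighted})\ \cdot)$ works. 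Then $\lambda_F=\lim m_n\in M^s(H)^\uparrow$.

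\textbf{Main obstacle.} The delicate point is the passage from ``$1_{F\cap\ext X}\in T^\uparrow$ and $\Upsilon_F(a|_F)\in A_{sa}(X)$ for all $a\in H$'' to ``$\lambda_F\in M^s(H)^\uparrow$'': one must produce actual strong \emph{multipliers} $m_n\in M^s(H)$ with $m_n\nearrow\lambda_F$, not merely elements of $H$. The natural candidates are the strongly affine extensions of the $a_n\in T$ approximating $1_{F\cap\ext X}$ from below; the subtlety is that $a_n$ need not be a restriction of a strong multiplier a priori — we only know $af\in T^\uparrow$ for $a\in T$, $f\in T^\uparrow$. I expect the resolution is to apply this closure property with $f=1_{F\cap\ext X}$ to get $a\cdot 1_{F\cap\ext X}\in T^\uparrow$ for every positive $a\in T$, which combined with $\Upsilon_F(a|_F)\in A_{sa}(X)$ (whose restriction to $\ext X$ is exactly $a\cdot 1_{F\cap\ext X}$ for $a\in H$) and the fact that $A_c(X)|_{\ext X}\subset T$ should show $\Upsilon_F 1_F=\lambda_F$ is a strong multiplier directly, bypassing the sequence $m_n$; then $\lambda_F\in M^s(H)\subset M^s(H)^\uparrow$ and we are done via Theorem~\ref{T:meritelnost-strongmulti}. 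Verifying this cleanly — in particular that $\Upsilon_F(a|_F)\in H$ (not just $A_{sa}(X)$) for every $a\in H$, which is what membership in $M^s(H)$ requires — will be the technical crux, and it is exactly where assumptions $(i)$ and $(iii)$ are used in tandem: $(iii)$ ensures enough test functions $a$, and $(i)$ ensures the products land back in $T^\uparrow$ so that the restriction of $\Upsilon_F(a|_F)$ to $\ext X$ lies in $T^\uparrow\cap T^\downarrow$-type classes, forcing $\Upsilon_F(a|_F)\in H$ via $H=H^\uparrow\cap H^\downarrow$.
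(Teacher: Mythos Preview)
Your preliminary steps are correct and partly offer a valid alternative route: that $H$ is an intermediate function space, that $M(H)=M^s(H)$ via Proposition~\ref{P:rovnostmulti}, that $H=H^\uparrow\cap H^\downarrow$ (from $(ii)$ and closure of $A_{sa}(X)$ under bounded monotone limits), and hence by Theorem~\ref{T: meritelnost H=H^uparrow cap H^downarrow} that $M^s(H)$ is determined by $\A_H^s$. The inclusion $\A_H^s\subset\B_H^s$ also goes through essentially as in the paper. The problem is the reverse direction, where your argument does not close.

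You attempt $\B_H^s\subset\A_H^s$ by showing, for a split face $F$ in the defining family of $\B_H^s$, that $\lambda_F\in M^s(H)$, i.e., that $\Upsilon_F(a|_F)\in H$ for every $a\in H$. The hypothesis gives $\Upsilon_F(a|_F)\in A_{sa}(X)$; for $a\ge 0$ condition $(i)$ yields $\Upsilon_F(a|_F)|_{\ext X}=a|_{\ext X}\cdot 1_{F\cap\ext X}\in T^\uparrow$. But there is no way to obtain membership in $T^\downarrow$: condition $(i)$ is one-sided, $1_{F\cap\ext X}$ lies only in $T^\uparrow$, and the obvious tricks using $1_{F'\cap\ext X}\in T^\downarrow$ and the symmetric version of $(i)$ simply return you to $T^\uparrow$. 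So your proposed use of $H=H^\uparrow\cap H^\downarrow$ cannot be completed. There is also the secondary lifting issue you flagged: elements of $T$ need not be restrictions of elements of $H$, so even ``$g|_{\ext X}\in T^\uparrow$'' for $g\in A_{sa}(X)$ does not immediately yield $g\in H^\uparrow$.

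The paper does \emph{not} prove $\B_H^s\subset\A_H^s$; it bypasses this by showing directly that every bounded $\B_H^s$-measurable $f$ on $\ext X$ extends to a multiplier, via a dyadic approximation in the style of \cite[Theorem~II.7.2]{alfsen}. For $a\in H$ positive one sets $C_{n,i}=[f>\tfrac{i-1}{2^n}]=F_{n,i}\cap\ext X\in\B_H^s$ and forms $a_n=2^{-n}\sum_i\Upsilon_{F_{n,i}}(a|_{F_{n,i}})$; each summand lies in $A_{sa}(X)$ by the $\B_H^s$ hypothesis, and $a_n|_{\ext X}\in T^\uparrow$ by $(i)$. The $a_n$ converge uniformly to some $h\in A_{sa}(X)$ with $h=fa$ on $\ext X$ and $h|_{\ext X}\in\overline{T^\uparrow}$. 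The decisive point is that the same construction applies to $-f$ (which is also $\B_H^s$-measurable, since the sets $[f<c]$ lie in $\B_H^s$), giving $h|_{\ext X}\in\overline{T^\downarrow}$ as well, whence $h\in H$ by $(ii)$. This $\pm f$ symmetry works at the level of $\B_H^s$-measurable functions but fails face-by-face, precisely because $F'\cap\ext X$ need not belong to $\B_H^s$.
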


\begin{proof} It is clear that $H$ is a linear subspace of $A_{sa}(X)$ containing $A_c(X)$. Moreover, by the assumptions we know that $A_{sa}(X)$ is determined by extreme points, so it follows from condition $(ii)$ that $H$ is closed. It is thus an intermediate function space. By Proposition~\ref{P:rovnostmulti} we get $M(H)=M^s(H)$.

By Proposition~\ref{p:system-aha}$(a)$ any $m\in M^s(H)$ is $\A_H^s$-measurable. To prove it is also $\B_H^s$-measurable it is enough to observe that $\A_H^s\subset\B_H^s$. So, assume $E\in \A_H^s$. By Theorem~\ref{T:meritelnost-strongmulti} we find a split face $F\subset X$ such that $\lambda_F\in (M^s(H))^\uparrow$ and $E=F\cap\ext X$.  Since $\lambda_F|_{\ext x}=1_{F\cap\ext X}$, this function belongs to $T^\uparrow$. Finally, let $a\in H$. There is $c\in\er$ such that $b=a+c\cdot1_X\ge 0$. By Lemma~\ref{L:mult-uparrow} there is $f\in H^\uparrow$ such that $f=\lambda_F\cdot b$ on $\ext X$.  Then $g=f-c\cdot 1_X\in A_{sa}(X)$ and $g=\lambda_F\cdot a$ on $\ext X$. Since $X$ is standard, the equality holds $\mu$-almost everywhere for each maximal $\mu\in M_1(X)$. Further, since $\lambda_F$ is strongly affine,  $F$ and $F'$ are measure convex and measure extremal. By Lemma~\ref{l:multi-face}$(a)$ we deduce that $F\cup F'$ carries all maximal measures.  By Lemma~\ref{l:complementarni-facy}$(iii)$ we conclude that $g=\lambda_F\cdot a$ on $F\cup F'$, i.e., $g=\Upsilon(a|_F)$.

The converse implication will be proved in several steps:

{\tt Step 1:} Let $f\in\ell^\infty(\ext X)$ be  $\B^s_H$-measurable and $a\in H$, $a\ge 0$. Then there is $h\in A_{sa}(X)$ such that
\begin{itemize}
    \item $h=af$ on $\ext X$;
    \item $h|_{\ext X}\in\overline{T^\uparrow}$.
\end{itemize}

Let us first observe that we may assume without loss of generality that $f\ge0$. Indeed, if $f$ is general bounded function, then there is some $\lambda>0$ such that $f_1=f+\lambda \cdot 1_{\ext X}\ge 0$. If $h_1$ with the required properties corresponds to $f_1$, the function $h=h_1-\lambda a$ corresponds to $f$. (Note that $a|_{\ext X}\in T$.) 

So, assume $f\ge0$.
 Without loss of generality we assume that $0 \leq f \leq 1$.
For each $n\in \en$ we set
\[
C_{n,i}= [f> \tfrac{i-1}{2^n}],\quad i\in\{0,\dots, 2^n\}.
\]
By the assumption we find split faces $F_{n,i}$ from the system which defines $\B^s_{\H}$ with $C_{n,i}=F_{n,i} \cap \ext X$. Set
\[
a_n=2^{-n}\sum_{i=1}^{2^n} \Upsilon_{F_{n,i}}(a|_{F_{n,i}}).
\]
Then $a_n \in A_{sa}(X)$. Further, since $a|_{\ext X} \in T$ and $1_{F_{n,i} \cap \ext X} \in T^{\uparrow}$, we get $a_n|_{\ext X} \in T^{\uparrow}$ (by condition $(i)$). Moreover, it is simple to check that
\[
a(x) \cdot f(x)\le a_n(x)\le a(x) \cdot (f(x)+2^{-n})\mbox{ for }x \in \ext X
\]
 (cf.\ the proof of \cite[Theorem II.7.2]{alfsen}). 
Hence $(a_n)$ is a uniformly convergent sequence on $X$ (as $A_{sa}(X)$ is determined by extreme points). Denote the limit by $h$. Then clearly $h \in A_{sa}(X)$, $h=af$ on $\ext X$ and  $h|_{\ext X} \in \overline{T^{\uparrow}}$.

\smallskip

{\tt Step 2:} Let $f\in\ell^\infty(\ext X)$ be  $\B^s_H$-measurable and $a\in H$. Then there is  $h\in A_{sa}(X)$ such that
\begin{itemize}
    \item $h=af$ on $\ext X$;
    \item $h|_{\ext X}\in\overline{T^\uparrow}$.
\end{itemize}

Find $\lambda\ge0$ such that $b=a+\lambda\cdot 1_X\ge 0$. 
Observe that $-f$ is also $\B^s_H$-measurable.
We apply Step 1 to the pairs $f,b$ and $-f,\lambda\cdot 1_X$
and obtain strongly affine functions $h_1,h_2$ such that $h_1=f b$ and $h_2=-\lambda f$ on $\ext X$, and $h_i|_{\ext X}\in \overline{T^{\uparrow}}$ (for $i=1,2$). Then $h=h_1+h_2$ is strongly affine, $h|_{\ext X} \in \overline{T^{\uparrow}}$ and $h=fa$ for each $x \in \ext X$. 

\smallskip

{\tt Step 3:} Let $f\in\ell^\infty(\ext X)$ be  $\B^s_H$-measurable and $a\in H$. Then there is  $h\in H$ such that $h=af$ on $\ext X$.

Recall that  $-f$ is also $\B^s_H$-measurable. We apply Step 2 to the pairs $f,a$ and $-f,a$. We get strongly affine functions $h_1,h_2$ such that $h_1=fa$ and $h_2=-fa$ on $\ext X$ and $h_i|_{\ext X}\in \overline{T^{\uparrow}}$ (for $i=1,2$). Then $h_1=-h_2$ on $\ext X$, so $h_1=-h_2$ (as $A_{sa}(X)$ is determined by extreme points). Then  $h_1|_{\ext X}\in \overline{T^{\downarrow}} \cap \overline{T^{\uparrow}}=T$ (by condition $(ii)$). Thus $h \in H$.

\smallskip

{\tt Step 4:} Let  $f\in\ell^\infty(\ext X)$ be  $\B^s_H$-measurable. Apply Step 3 to the pair $f,1_X$. The resulting function $h$ is a multiplier and $h|_{\ext X}=f$. Since $M(H)=M^s(H)$, this finishes the proof.
\end{proof}


\section{Extending affine functions from split faces}
\label{sec:splifaces}

In this section we collect results on split faces needed in the sequel
with focus on extending affine mappings. More precisely, we investigate in more detail properties of the extension operator $\Upsilon$ from Definition~\ref{d:teckoacko}.
 Let us briefly recall some notation from Section~\ref{ssc:ccs}. Let $X$ be a compact convex set, $F\subset X$ a split face and $F'$ the complementary face. By $\lambda_F$ we denote the unique function $\lambda_F\colon X\to [0,1]$ such that for each $x\in X$ there are $y\in F$ and $y'\in F'$ such that
\begin{equation}\label{eq:split}
    x=\lambda_F(x)y+(1-\lambda_F(x))y'.\end{equation}
We recall that $\lambda_F$ is affine, $F=[\lambda_F=1]$, $F'=[\lambda_F=0]$ and, moreover, if $x\in X\setminus(F\cup F')$, the points $y$ and $y'$ are uniquely determined.

\subsection{More on measure convex split faces}

In this section we study in more detail measure convex split faces with measure convex complementary face.
We start by a lemma providing an automatic partial strong affinity.

\begin{lemma}\label{L:mc-partial-sa}
    Let $X$ be a compact convex set and let $A\subset X$ be a measure convex split face such that $A'$ is also measure convex.
    Let $a$ be a strongly affine function on $A$. Then $\Upsilon a$ is a bounded affine function on $X$ which satisfies the barycentric formula for any probability measure on $X$ carried by $A\cup A'$.
\end{lemma}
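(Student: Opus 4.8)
The statement asserts two things about $\Upsilon a$ where $a$ is strongly affine on the measure convex split face $A$ (with $A'$ also measure convex): that $\Upsilon a$ is a bounded affine function on $X$, and that it satisfies the barycentric formula $\mu(\Upsilon a) = \Upsilon a(r(\mu))$ for every probability $\mu$ on $X$ carried by $A\cup A'$. The first part is essentially immediate from the structure theory already set up: boundedness and affinity of $\Upsilon a$ follow from Observation~\ref{obs:operator rozsireni}$(iii)$ together with the fact that a strongly affine function on $A$ is bounded (quoted from \cite{krause} in Section~\ref{ssc:meziprostory}, applied to the compact convex set $A$ — or, more safely, one notes that $a$ extends to a strongly affine, hence bounded, function on $\overline{A}$, or simply assumes boundedness as part of strong affinity). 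So the real content is the barycentric formula on measures carried by $A\cup A'$.

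The approach to the barycentric formula is a decomposition argument. Fix $\mu\in M_1(X)$ with $\mu(A\cup A')=1$, and set $x=r(\mu)$. If $\mu(A)\in\{0,1\}$ the claim is easy: if $\mu(A)=1$ then $x\in A$ by measure convexity of $A$, and $\mu$ is a probability on $A$ with barycenter $x$ (barycenters computed against $A_c(X)$ restricted to $A$), so $\mu(\Upsilon a)=\mu(a)=a(x)=\Upsilon a(x)$ using the strong affinity of $a$ on $A$; symmetrically if $\mu(A')=1$ then $x\in A'$, $\Upsilon a$ vanishes on $A'$, and $\mu(\Upsilon a)=0=\Upsilon a(x)$. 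Otherwise write $\lambda=\mu(A)\in(0,1)$, $\mu_A=\lambda^{-1}\mu|_A$, $\mu_{A'}=(1-\lambda)^{-1}\mu|_{A'}$, so $\mu=\lambda\mu_A+(1-\lambda)\mu_{A'}$. By measure convexity, $x_A:=r(\mu_A)\in A$ and $x_{A'}:=r(\mu_{A'})\in A'$, and since the barycentric map is affine, $x=\lambda x_A+(1-\lambda)x_{A'}$. Now $\mu(\Upsilon a)=\lambda\mu_A(\Upsilon a)+(1-\lambda)\mu_{A'}(\Upsilon a)=\lambda\mu_A(a)+0=\lambda a(x_A)$, where the middle step uses that $\Upsilon a$ agrees with $a$ on $A$ (and vanishes on $A'$) together with the strong affinity of $a$ on $A$ to evaluate $\mu_A(a)=a(r(\mu_A))=a(x_A)$. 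On the other hand, since $A$ is a split face and $x=\lambda x_A+(1-\lambda)x_{A'}$ with $\lambda_A(x)=\lambda$, the defining property of $\Upsilon$ gives $\Upsilon a(x)=\lambda_A(x)\,a(x_A)=\lambda a(x_A)$ (here one uses that $x_A,x_{A'}$ are, if $\lambda\in(0,1)$ and $x\notin A\cup A'$, the unique decomposition points, or directly that $\Upsilon a$ is affine and equals $a$ on $A$, $0$ on $A'$). Hence $\mu(\Upsilon a)=\Upsilon a(x)$.

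The one point requiring care — the "main obstacle," though it is minor — is the evaluation $\mu_A(a)=a(r(\mu_A))$. A priori $\mu_A$ is a Radon probability on $X$ carried by $A$, and $a$ is only defined (and strongly affine) on $A$; one must know that $r(\mu_A)$, the barycenter taken in $X$, lands in $A$ (that is measure convexity of $A$, given), and that "$a$ strongly affine on $A$" is exactly the statement that $a(r(\nu))=\nu(a)$ for every probability $\nu$ on $A$, where barycenters of measures carried by $A$ may be computed inside $A$ or inside $X$ interchangeably. Since $A$ is measure convex, a probability carried by $A$ has the same barycenter whether viewed in $A$ or in $X$, so this is consistent, and $\mu_A$ (pushed to $A$) is a legitimate test measure for the strong affinity of $a$. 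I would state this compatibility in one sentence and proceed. The rest is the routine affine bookkeeping sketched above; I would present it compactly, handling the degenerate cases $\lambda\in\{0,1\}$ first and then the generic case, and invoking Observation~\ref{obs:operator rozsireni} and the characterization $A=[\lambda_A=1]$, $A'=[\lambda_A=0]$ recalled at the start of the section.
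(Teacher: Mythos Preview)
Your proof is correct and follows essentially the same approach as the paper: both arguments obtain boundedness of $\Upsilon a$ from the Krause result that strongly affine functions on a measure convex set are bounded (together with Lemma~\ref{l:extense-splitface}/Observation~\ref{obs:operator rozsireni}), and then prove the barycentric formula by decomposing $\mu$ into its pieces on $A$ and $A'$, using measure convexity to locate the partial barycenters, the strong affinity of $a$ on $A$ to integrate the $A$-piece, and the split-face uniqueness to identify $\lambda_A(r(\mu))$ and the $A$-component of $r(\mu)$. The paper's write-up is slightly terser (it absorbs the degenerate cases $\lambda\in\{0,1\}$ into the general one), but the content is the same.
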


\begin{proof}
The strong affinity of $a$ on $A$ is defined in the natural sense (it is reasonable as $A$ is measure convex).  Further, a strongly affine function on a measure convex set is bounded as the proof of \cite[Satz 2.1.(c)]{krause} shows. 
Therefore $\Upsilon a$ is a bounded affine function on $X$ by Lemma~\ref{l:extense-splitface}.

Let $\mu\in M_1(X)$ be carried by $A\cup A'$. Then $\mu=t\mu_1+(1-t)\mu_2$ for some $t\in [0,1]$ and probability measures $\mu_1,\mu_2$ carried by $A,A'$, respectively. By measure convexity we get $r(\mu_1)\in A$ and $r(\mu_2)\in A'$. Thus
$$  \int \Upsilon(a)\di\mu=\int a\di (t\mu_1)=ta(r(\mu_1))$$
as $a$ is strongly affine on $A$. Further,
$$r(\mu)=tr(\mu_1)+(1-t)r(\mu_2).$$
As $A$ is a split face, $t=\lambda_A(r(\mu))$ and $r(\mu_1)=y$ (in the notation from \eqref{eq:split}). Hence,
$$\Upsilon(a)(r(\mu))=ta(r(\mu_1)).$$
We conclude by comparing the formulas.
\end{proof}

We continue by a result characterizes split faces among measure convex faces with measure convex complementary set. It is a generalization of \cite[Theorem II.6.12]{alfsen} where the case of closed faces is addressed. It has also similar flavour as condition $(ii')$ from Remark~\ref{rem:nutne}$(2)$.

We recall that a signed measure $\mu\in M(X)$ is called \emph{boundary}\index{measure!boundary} provided its variation $\abs{\mu}$ is maximal on $X$.

\begin{thm}
    \label{t:split-mc-me-char}
 Let $X$ be a compact convex set and let $A\subset X$ be a measure convex  face  such that the complementary set $A'$ is also measure convex. Assume that $A\cup A'$ carries every maximal measure. Then $A$ is a split face if and only if $\mu|_A\in (A_c(X))^\perp$   for each boundary measure $\mu\in (A_c(X))^\perp$.
\end{thm}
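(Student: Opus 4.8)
The plan is to prove both implications separately, relying on the characterization of maximal (boundary) measures and the already-established link between split faces and the affine function $\lambda_A$.

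\emph{The ``only if'' direction.} Suppose $A$ is a split face. Then $\lambda_A$ is a bounded affine function on $X$ with $A=[\lambda_A=1]$ and $A'=[\lambda_A=0]$; moreover, since $A$ and $A'$ are measure convex and $A\cup A'$ carries every maximal measure, Lemma~\ref{L:mc-partial-sa} (applied to $a=1_A$ on $A$, so that $\Upsilon a=\lambda_A$) shows that $\lambda_A$ satisfies the barycentric formula for every probability measure carried by $A\cup A'$, hence for every maximal probability measure. Now let $\mu\in(A_c(X))^\perp$ be a boundary measure and write $\mu=\mu^+-\mu^-$ for its Jordan decomposition; both $\mu^+$ and $\mu^-$ are (up to a common normalizing constant) maximal probability measures with the same barycenter, say $x_0$. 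Then $\mu|_A=\mu^+|_A-\mu^-|_A$, and for $a\in A_c(X)$ we compute $\int a\di(\mu^\pm|_A)=\int a\cdot\lambda_A\di\mu^\pm$, because $\lambda_A$ is the indicator of $A$ off $A'$ and $\mu^\pm$ are carried by $A\cup A'$. Hence
\[
\int a\di(\mu|_A)=\int a\lambda_A\di\mu^+-\int a\lambda_A\di\mu^-=\int a\lambda_A\di\mu=0,
\]
where the last equality uses that $a\lambda_A$ is a bounded affine function satisfying the barycentric formula on boundary measures, combined with $\mu\in(A_c(X))^\perp$ having barycenter pairing zero; more carefully, one applies $\int (a\lambda_A)\di\mu^+=(a\lambda_A)(x_0)=\int(a\lambda_A)\di\mu^-$, which holds since $a\lambda_A$ is a product of $a\in A_c(X)$ with the strongly-affine-on-$A\cup A'$ function $\lambda_A$ and hence satisfies the barycentric identity against the maximal measures $\mu^\pm$. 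Thus $\mu|_A\in(A_c(X))^\perp$.

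\emph{The ``if'' direction.} Conversely, assume $\mu|_A\in(A_c(X))^\perp$ for every boundary measure $\mu\in(A_c(X))^\perp$. The goal is to produce the affine function $\lambda_A$ and verify the unique-decomposition property. Fix $x\in X$ and a maximal probability measure $\nu\in M_x(X)$; by hypothesis $\nu(A\cup A')=1$, so setting $t(\nu)=\nu(A)$ we get, when $t(\nu)\in(0,1)$, probability measures $\nu_1=t(\nu)^{-1}\nu|_A$ and $\nu_2=(1-t(\nu))^{-1}\nu|_{A'}$ carried by $A$ and $A'$ respectively; by measure convexity $y=r(\nu_1)\in A$, $y'=r(\nu_2)\in A'$, and $x=t(\nu)y+(1-t(\nu))y'$. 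This yields \emph{a} decomposition of $x$ through $A$ and $A'$; I must show $t(\nu)$ (and then $y,y'$) does not depend on the choice of $\nu$, which is exactly where the hypothesis enters. If $\nu,\tilde\nu$ are two maximal measures representing $x$, then $\mu=\nu-\tilde\nu\in(A_c(X))^\perp$ is a boundary measure, so $\mu|_A=\nu|_A-\tilde\nu|_A\in(A_c(X))^\perp$; evaluating against the constant function $1$ gives $\nu(A)=\tilde\nu(A)$, so $t:=t(\nu)$ is well defined, and evaluating against arbitrary $a\in A_c(X)$ gives $\int a\di(\nu|_A)=\int a\di(\tilde\nu|_A)$, i.e.\ the barycenter $r(\nu|_A)$ is independent of $\nu$ — so $y$ is well defined, and likewise $y'$. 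Defining $\lambda_A(x)=t$ (with values $1$ on $A$, $0$ on $A'$) one checks affinity of $\lambda_A$ by the usual argument: given $x=\alpha x_1+(1-\alpha)x_2$, the measure $\alpha\nu_{x_1}+(1-\alpha)\nu_{x_2}$ is maximal representing $x$, and its $A$-mass computes $\lambda_A(x)$. Finally, uniqueness of the decomposition $x=\lambda y+(1-\lambda)y'$ with $y\in A$, $y'\in A'$, $\lambda\in(0,1)$: any such decomposition forces $\lambda=\lambda_A(x)$ by applying $\lambda_A$ (affine, equal to $1$ on $A$ and $0$ on $A'$), and then forces $y=r(\nu|_A)/\lambda$-type identity, i.e.\ $y,y'$ are determined. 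Hence $A$ is a split face.

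\emph{Main obstacle.} The delicate point is the ``if'' direction: turning the hypothesis on boundary measures into a genuine \emph{affine} function $\lambda_A$ and uniqueness of decomposition. The subtlety is that a point $x\notin A\cup A'$ might a priori be represented by maximal measures with different $A$-masses, and one must also handle the boundary cases $t(\nu)\in\{0,1\}$ (showing then $x\in A$ or $x\in A'$, using measure convexity). I expect the cleanest route is to phrase everything through the canonical decomposition of the (unique up to the boundary relation) maximal measures, invoking the Jordan decomposition of a boundary measure in $(A_c(X))^\perp$ to pass between representing measures; the measure-convexity and measure-extremality-flavored hypotheses (that $A\cup A'$ carries all maximal measures) are what make the restrictions $\nu|_A$ into legitimate carried-by-$A$ measures. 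I will model the affinity and uniqueness verifications on the closed-face case \cite[Theorem II.6.12]{alfsen}, substituting ``boundary measure'' arguments for the compactness arguments available there.
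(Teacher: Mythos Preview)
Your plan follows the same overall strategy as the paper, but there is a real step missing in the ``if'' direction. You assert that $\lambda_A$, defined via maximal measures, takes the value $1$ on $A$ and $0$ on $A'$. This is not automatic: it says precisely that any maximal measure representing a point of $A$ is carried by $A$ (and dually for $A'$), i.e., that $A$ and $A'$ are measure extremal \emph{with respect to maximal measures}. You have this backwards in your ``main obstacle'' paragraph, where you only note that $t(\nu)=1$ forces $x\in A$ by measure convexity; what is needed is the converse. The paper proves it explicitly: if $x\in A$ and a maximal $\mu\in M_x(X)$ had $\mu(A')>0$, write $\mu=a_1\mu_1+a_2\mu_2$ with $\mu_1\in M_1(A)$, $\mu_2\in M_1(A')$, $a_2>0$; measure convexity of $A'$ gives $r(\mu_2)\in A'$, while the face property of $A$ forces $r(\mu_2)\in A$, a contradiction. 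Without this step your uniqueness argument for $y$ does not close: to connect an \emph{arbitrary} decomposition $x=\lambda y+(1-\lambda)y'$ back to the maximal-measure picture you must know that a maximal measure for $y\in A$ stays in $A$, and your ``$r(\nu|_A)/\lambda$-type identity'' gesture does not supply this.

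Once that step is in place, the paper's organization is somewhat more direct than yours: rather than first building $\lambda_A$, it takes two arbitrary decompositions, chooses maximal measures $\mu_i$ for all four points (carried by $A$ or $A'$ by the step above), forms the boundary measure $(\lambda_1\mu_1+(1-\lambda_1)\mu_2)-(\lambda_2\mu_3+(1-\lambda_2)\mu_4)\in(A_c(X))^\perp$, and reads off $\lambda_1=\lambda_2$ and $r(\mu_1)=r(\mu_3)$ directly from the hypothesis applied to its restriction to $A$. A minor point on your ``only if'' direction: $a\lambda_A$ is not affine on $X$, so writing $(a\lambda_A)(x_0)$ is misleading; the function you actually want is $\Upsilon_A(a|_A)$, which is affine, agrees with $a\lambda_A$ on $A\cup A'$, and to which Lemma~\ref{L:mc-partial-sa} applies. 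The paper sidesteps this by decomposing $\mu^\pm$ over $A,A'$ and invoking the split property on the resulting barycenters, without going through $\Upsilon$.
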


\begin{proof}
$\implies$: Let $A$ be a split face and  $\mu\in (A_c(X))^\perp$ be a boundary measure. Then $\mu(X)=0$, hence $\mu^+(X)=\mu^-(X)$. Hence, we may assume without loss of generality that $\mu^+$ and $\mu^-$ are maximal probability measures. The assumption $\mu\in (A_c(X))^\perp$ then means that $\mu^+$ and $\mu^-$ have the same barycenter, denote it by $x$. Further, $\mu^+$ and $\mu^-$ are carried by $A\cup A'$.
It follows that
$$\mu^+=a_1\nu_1+a_2\nu_2\quad\mbox{and}\quad\mu^-=a_3\nu_3+a_4\nu_4,$$ where $a_1,a_2,a_3,a_4\ge 0$, $a_1+a_2=a_3+a_4=1$, and $\nu_1,\nu_3\in M_1(A)$, $\nu_2,\nu_4\in M_1(A')$. Then 
$$a_1r(\nu_1)+a_2r(\nu_2)=r(\mu^+)=r(\mu^-)=a_3 r(\nu_3)+a_4r(\nu_4).$$
By measure convexity we deduce that $r(\nu_1),r(\nu_3)\in A$ and $r(\nu_2),r(\nu_4)\in A'$. Since $A$ is a split face, we deduce $a_1=a_3$ and, provided $a_1>0$, $r(\nu_1)=r(\nu_3)$. Thus $a_1\nu_1-a_3\nu_3=\mu|_A\in(A_c(X))^\perp$.


$\Longleftarrow$: Since $A\cup A'$ carries every maximal measure, $X=\co (A\cup A')$. Indeed, let  $x\in X$ be given. We find a maximal measure $\mu\in M_1(X)$ with $r(\mu)=x$ and using the assumption we decompose it as $\mu=a_1\mu_1+a_2\mu_2$, where $a_1+a_2=1$, $a_1,a_2\ge 0$ and $\mu_1\in M_1(A), \mu_2\in M_1(A')$. Then their barycenters satisfy $x_1=r(\mu_1)\in A$, $x_2=r(\mu_2)\in A'$ and
$x=a_1x_1+a_2x_2$. Hence $x\in \co (A\cup A')$ and $X=\co (A\cup A')$.

To observe that $A$ is split, we first verify that both $A$ and $A'$ are measure extremal with respect to maximal measures. More precisely, we claim that given $x\in A$ and $\mu\in M_x(X)$ maximal, then $\mu(A)=1$. To see this, assume that $\mu=a_1\mu_1+a_2\mu_2$, where $a_1,a_2\ge 0$, $a_1+a_2=1$, and $\mu_1\in M_1(A)$, $\mu_2\in M_1(A')$. If $a_2>0$, we write 
\[
x=r(\mu)=a_1r(\mu_1)+a_2r(\mu_2)
\]
and observe, that since $A$ is a face, $r(\mu_2)\in A$, which contradicts the measure convexity of $A'$. Hence $a_2=0$ and $\mu\in M_1(A)$. Similarly we check that $A'$ is measure extremal with respect to maximal measures.

To check the uniqueness of convex combinations, take $x\in X\setminus (A\cup A')$ and assume that
\[
x=\lambda_1x_1+(1-\lambda_1)x_2=\lambda_2x_3+(1-\lambda_2)x_4
\]
are convex combinations with $x_1,x_3\in A$ and $x_2,x_4\in A'$. We find maximal measures $\mu_i\in M_{x_i}(X)$ for $i=1,\dots, 4$. Then $\mu_1,\mu_3\in M_1(A)$ and $\mu_2,\mu_4\in M_1(A')$. Since 
\[
\nu=(\lambda_1\mu_1+(1-\lambda_1)\mu_2)-(\lambda_2\mu_3+(1-\lambda_2)\mu_4)
\]
is boundary and $\nu\in (A_c(X))^\perp$, by the assumption we have 
\[
\nu|_{A}=\lambda_1\mu_1-\lambda_2\mu_3\in (A_c(X))^\perp.
\]
Hence $\lambda_1=\lambda_2$ and $x_1=r(\mu_1)=r(\mu_3)=x_3$. 
This proves that $A$ is a split face.
\end{proof}

If $X$ is a simplex, then $(A_c(X))^\perp$ contains no nonzero boundary measure, hence we get the following corollary.

\begin{cor}
\label{c:simplex-facejesplit}
Let $X$ be a simplex and let $A\subset X$ be a measure convex face such that the complementary set $A'$ is also measure convex and $A\cup A'$ carries every maximal measure. Then $A$ is a split face. 
\end{cor}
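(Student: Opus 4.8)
The plan is to deduce Corollary~\ref{c:simplex-facejesplit} directly from Theorem~\ref{t:split-mc-me-char}. Since $X$ is a simplex, the space $(A_c(X))^\perp$ of measures annihilating the affine continuous functions contains no nonzero boundary measure: if $\mu\in(A_c(X))^\perp$ were boundary, then $\mu^+$ and $\mu^-$ would be (after normalization, using $\mu^+(X)=\mu^-(X)$) maximal probability measures with the same barycenter, hence equal by the defining property of a simplex, so $\mu=0$. Therefore the condition ``$\mu|_A\in(A_c(X))^\perp$ for each boundary measure $\mu\in(A_c(X))^\perp$'' from Theorem~\ref{t:split-mc-me-char} is satisfied vacuously.

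Thus the only thing I need to verify is that the hypotheses of Corollary~\ref{c:simplex-facejesplit} are exactly the standing hypotheses of Theorem~\ref{t:split-mc-me-char}, namely that $A$ is a measure convex face, that the complementary set $A'$ is measure convex, and that $A\cup A'$ carries every maximal measure. These are assumed verbatim. Hence Theorem~\ref{t:split-mc-me-char} applies and gives that $A$ is a split face, which is precisely the conclusion.

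Concretely, the write-up will be just a couple of lines: first recall that in a simplex there are no nonzero boundary measures in $(A_c(X))^\perp$ (citing the uniqueness of maximal representing measures), then invoke Theorem~\ref{t:split-mc-me-char}. There is essentially no obstacle here; the only minor point to phrase carefully is the reduction from a general boundary measure $\mu\in(A_c(X))^\perp$ to normalized maximal probability measures $\mu^+,\mu^-$ — one uses $\mu(X)=0$ (since $1_X\in A_c(X)$) to get $\mu^+(X)=\mu^-(X)$, and if this common value is $0$ then $\mu=0$ already, while if it is positive one rescales. This is the same normalization step already used in the forward direction of the proof of Theorem~\ref{t:split-mc-me-char}, so I can keep it brief.

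\begin{proof}
Since $X$ is a simplex, $(A_c(X))^\perp$ contains no nonzero boundary measure. Indeed, let $\mu\in (A_c(X))^\perp$ be a boundary measure. As $1_X\in A_c(X)$, we have $\mu(X)=0$, so $\mu^+(X)=\mu^-(X)$. If this common value is $0$, then $\mu=0$. Otherwise, normalizing, $\mu^+$ and $\mu^-$ are maximal probability measures on $X$, and $\mu\in (A_c(X))^\perp$ means they have the same barycenter; since $X$ is a simplex, the maximal representing measure is unique, hence $\mu^+=\mu^-$ and $\mu=0$.

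Therefore the condition ``$\mu|_A\in (A_c(X))^\perp$ for each boundary measure $\mu\in (A_c(X))^\perp$'' holds trivially. Since $A$ is a measure convex face, $A'$ is measure convex, and $A\cup A'$ carries every maximal measure, Theorem~\ref{t:split-mc-me-char} yields that $A$ is a split face.
\end{proof}
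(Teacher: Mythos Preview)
Your proof is correct and follows exactly the paper's approach: the paper deduces the corollary in one line by observing that in a simplex $(A_c(X))^\perp$ contains no nonzero boundary measure, so Theorem~\ref{t:split-mc-me-char} applies trivially. Your write-up simply spells out the standard reason behind that observation, which is fine.
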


\subsection{Strongly affine functions on compact convex sets with $K$-analytic boundary}
\label{ss:measure-splitfaces}

This section is devoted to split faces $F$ with strongly affine $\lambda_F$ and to extending strongly affine functions to strongly affine ones. To this end we restrict ourselves to compact convex sets with $K$-analytic boundary.
We start by recalling one of the equivalent definitions of $K$-analytic spaces.

\begin{definition}
    A Tychonoff space $Y$ is called \emph{$K$-analytic} if it is a continuous image of an $F_{\sigma\delta}$ subset of a compact space.\index{K-analytic space@$K$-analytic space}
\end{definition}

This is the original definition due to Choquet \cite{choquet59}. Note that any $K$-analytic space is Lindel\"of and that a Polish space (i.e., a separable completely metrizable space) is $K$-analytic. 

The main result of this section is the following theorem.

\begin{thm}
  \label{t:srhnuti-splitfaceu-metriz} Let $X$ be a compact convex set  with $\ext X$ being $K$-analytic.
  (This is satisfied, in particular, if $X$ is metrizable or if $\ext X$ is $F_\sigma$.)
  Let $A\subset X$ be a split face. 
  Then the following assertions are equivalent.
  \begin{enumerate}[$(i)$]
      \item Both $A$ and $A'$ are measure convex.
      \item $A$ is measure convex and $\Upsilon a\in A_{sa}(X)$ whenever $a$ is a strongly affine function on $A$.
      \item $\Upsilon(a|_A)\in A_{sa}(X)$ whenever $a\in A_{sa}(X)$.
      \item  The function $\lambda_A$ is strongly affine.
      \item  Both $A$ and $A'$ are measure convex and measure extremal.
  \end{enumerate}
\end{thm}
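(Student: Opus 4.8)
The equivalences split naturally into several blocks, and the plan is to prove them in a cyclic fashion, exploiting what we already have. First I would dispose of the easy implications $(v)\Rightarrow(i)$ (measure extremal sets are in particular measure convex — trivial) and $(iii)\Rightarrow(ii)$ (restricting a strongly affine function on $X$ to $A$ gives a strongly affine function on $A$, and measure convexity of $A$ in $(ii)$ will follow from the harder blocks, so actually I would route $(iii)$ into the cycle through $(iv)$ rather than directly). The genuinely substantive arrows are $(i)\Rightarrow(ii)$, $(ii)\Rightarrow(iv)$, $(iv)\Rightarrow(v)$, and $(iv)\Rightarrow(iii)$; the $K$-analyticity of $\ext X$ is used precisely where we must upgrade ``carried by $A\cup A'$'' to ``$\mu(A\cup A')=1$ for \emph{all} maximal $\mu$'' and where we need a measurable selection of the decomposition $x=\lambda_A(x)y+(1-\lambda_A(x))y'$.

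\textbf{Block $(i)\Rightarrow(ii)$ and $(ii)\Rightarrow(iv)$.} Assume $(i)$. By Lemma~\ref{L:mc-partial-sa}, for any strongly affine $a$ on $A$ the function $\Upsilon a$ is a bounded affine function on $X$ satisfying the barycentric formula for every probability measure carried by $A\cup A'$. To get genuine strong affinity of $\Upsilon a$ on all of $X$, I need that every maximal measure on $X$ is carried by $A\cup A'$, and that an arbitrary $\mu\in M_1(X)$ can be ``pushed'' to a maximal one without destroying the formula — this is where one invokes that $\ext X$ is $K$-analytic: by \cite[Theorem 3.79]{lmns} (or the standard fact that for $\ext X$ Lindelöf the set $\ext X$, hence $\overline{\ext X}\setminus\ext X$ being of type the relevant $K$-analytic machinery controls) maximal measures are carried by $\ext X$ up to a universally null set, and since $A$ and $A'$ are measure convex with $A\cup A'\supset\ext X$ (as $A$ is a split face, $\ext X\subset A\cup A'$), one concludes $\mu(A\cup A')=1$ for maximal $\mu$. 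Then for general $\mu\in M_1(X)$ with barycenter $x$, pass to a maximal $\nu\succ\mu$ representing $x$, write $\nu=\lambda_A(x)\nu_1+(1-\lambda_A(x))\nu_2$ with $\nu_1\in M_1(A)$, $\nu_2\in M_1(A')$, and compare $\int\Upsilon a\,\dd\mu$ with $\int\Upsilon a\,\dd\nu=\Upsilon a(x)$ using that $\Upsilon a$ is affine and bounded and the dilation order preserves integrals of affine functions of the first class — actually the cleanest route is: $\Upsilon a$ restricted to $A$ equals $a$, which is strongly affine on $A$, and $\Upsilon a$ vanishes on $A'$; barycentric formula on $X$ then reduces to the two measure-convex pieces. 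This gives $(ii)$ (with measure convexity of $A$ inherited from $(i)$). For $(ii)\Rightarrow(iv)$: apply $(ii)$ to the constant function $a=1_A$, which is trivially strongly affine on $A$; then $\Upsilon 1_A=\lambda_A$ by Observation~\ref{obs:operator rozsireni}$(iv)$, and it is strongly affine by hypothesis.

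\textbf{Block $(iv)\Rightarrow(v)$, $(iv)\Rightarrow(iii)$, closing the cycle.} Suppose $\lambda_A$ is strongly affine. Then $A=[\lambda_A=1]$ and $A'=[\lambda_A=0]$ are preimages of points under a strongly affine function, hence both are measure convex \emph{and} measure extremal by an elementary argument (if $\mu$ represents $x\in A$ then $1=\lambda_A(x)=\int\lambda_A\,\dd\mu$ with $0\le\lambda_A\le1$ forces $\lambda_A=1$ $\mu$-a.e., i.e.\ $\mu(A)=1$; conversely if $\mu(A)=1$ then $\lambda_A(r(\mu))=\int\lambda_A\,\dd\mu=1$, so $r(\mu)\in A$). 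That is $(v)$, and it also re-derives $(i)$, closing any loop. For $(iv)\Rightarrow(iii)$: given $a\in A_{sa}(X)$, its restriction $a|_A$ is strongly affine on $A$; I claim $\Upsilon(a|_A)=\lambda_A\cdot a$ pointwise on $X$ when $a\in A_{sa}(X)$ and $\lambda_A\in A_{sa}(X)$ — one checks this on $A$ (both sides equal $a$), on $A'$ (both sides equal $0$), and on a general $x=\lambda_A(x)y+(1-\lambda_A(x))y'$ by affinity of $\Upsilon(a|_A)$ together with the fact that the decomposition is unique off $A\cup A'$. The product $\lambda_A\cdot a$ of two strongly affine functions need not be strongly affine in general, so instead I would argue directly: $\Upsilon(a|_A)$ is bounded affine (Lemma~\ref{l:extense-splitface}), it vanishes on $A'$ and equals $a$ on $A$, and using that $A$, $A'$ are measure convex and measure extremal (from $(v)$, which we now have) together with the $K$-analyticity to ensure maximal measures live on $A\cup A'$, the barycentric formula for $\Upsilon(a|_A)$ follows exactly as in Block 1. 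This yields $(iii)$. Finally $(iii)\Rightarrow(ii)$ is immediate (restrict, as noted), and we already have $(ii)\Rightarrow(iv)\Rightarrow(v)\Rightarrow(i)\Rightarrow(ii)$, so all five are equivalent; the parenthetical remark that metrizable $X$ or $F_\sigma$ $\ext X$ gives $K$-analytic $\ext X$ is standard (a Polish space is $K$-analytic; an $F_\sigma$ subset of a compact space is $K$-analytic).

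\textbf{Main obstacle.} The delicate point throughout is the interplay between ``$\mu$ carried by $A\cup A'$'' and ``$\mu(A\cup A')=1$'' for \emph{maximal} $\mu$, and the manipulation of the barycentric identity under the dilation order. For a general compact convex set a maximal measure need only be pseudo-supported by $\overline{\ext X}$, and $A\cup A'$ could in principle miss a universally non-measurable chunk; the $K$-analyticity (hence Lindelöfness, hence that $X$ is a standard compact convex set, by the remark after Definition~\ref{d:standard}) is exactly what guarantees every maximal measure satisfies $\mu(B)=1$ for every universally measurable $B\supset\ext X$, in particular $\mu(A\cup A')=1$. Keeping careful track of where this is invoked — and making sure the ``push to a maximal measure'' step in verifying strong affinity of $\Upsilon a$ genuinely preserves the integral (it does, because $\Upsilon a$ agrees on $A\cup A'$ with a function that is strongly affine on each measure-convex piece, and maximal measures sit inside $A\cup A'$) — is the part that requires the most care.
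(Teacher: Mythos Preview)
Your routing of the easy implications $(ii)\Rightarrow(iii)\Rightarrow(iv)\Rightarrow(v)\Rightarrow(i)$ matches the paper's and is fine (with $(ii)\Rightarrow(iv)$ via $\Upsilon 1_A=\lambda_A$ being essentially the same as the paper's $(iii)\Rightarrow(iv)$ via $\Upsilon(1_X|_A)=\lambda_A$). You also correctly identify that under $K$-analyticity of $\ext X$ every maximal measure is carried by $\ext X$ and hence by $A\cup A'$, so that Lemma~\ref{L:mc-partial-sa} gives the barycentric formula for $\Upsilon a$ with respect to all \emph{maximal} measures.

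The genuine gap is the passage from ``barycentric formula for all maximal $\mu$'' to ``barycentric formula for all $\mu$''. Your first suggestion --- that the dilation order preserves integrals of bounded affine functions --- is false in general: $\mu\prec\nu$ only forces $\mu(f)=\nu(f)$ for $f\in A_c(X)$ (or for suitable semicontinuous envelopes), not for an arbitrary bounded affine $f$ such as $\Upsilon a$. Your fallback (``barycentric formula on $X$ reduces to the two measure-convex pieces'') is just a restatement of Lemma~\ref{L:mc-partial-sa} and again only covers measures carried by $A\cup A'$, which a generic $\mu\in M_1(X)$ need not be. Likewise, the standardness you extract from Lindel\"ofness concerns maximal measures only and does not close the gap.

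The paper handles $(i)\Rightarrow(ii)$ via an independent result, Proposition~\ref{p:sa-k-analytic}: if $\ext X$ is $K$-analytic and a bounded $f$ satisfies the barycentric formula for every \emph{maximal} measure, then $f$ is strongly affine. Its proof is where $K$-analyticity is really used --- one works on the $K$-analytic measure convex set $Y=\{\mu\in M_1(X):\mu(\ext X)=1\}$, observes that $\tilde g(\mu)=\mu(f\cdot 1_{\ext X})$ is strongly affine on $M_1(X)$ and equals $f\circ r$ on $Y$, and then lifts an arbitrary probability on $X$ to one on $Y$ through the barycenter map by \cite[Corollary~432G]{fremlin4} (Lemma~\ref{l:perfect-k-analytic}). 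This lifting step is the missing idea in your argument; once you have it, both $(i)\Rightarrow(ii)$ and your $(iv)\Rightarrow(iii)$ go through exactly as you outlined.
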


\begin{proof}[Easy part of the proof.] We start by observing that some implications are easy and hold without any assumptions on $X$. More precisely:

$(ii)\implies(iii)$: Note that $a|_A$
 is strongly affine on $A$ whenever $A$ is measure convex and $a\in A_{sa}(X)$.

 $(iii)\implies(iv)$: Recall that $\lambda_A=\Upsilon(1_X|_A)$.

 $(iv)\implies(v)$: Observe that $A=[\lambda_A=1]$ and $A'=[\lambda_A=0]$.
 
 $(v)\implies(i)$: This is trivial.
\end{proof}

It remains to prove implication $(i)\implies(ii)$. In its proof we will use the assumption on $\ext X$. It will be done using some auxilliary results. 

The first one can be viewed as a variant of Lemma~\ref{L:kvocient} for continuous affine surjections and strongly affine functions.

\begin{lemma}
    \label{l:perfect-k-analytic}
    Let $X, Z$ be compact convex sets and let $Y\subset Z$ be a measure convex $K$-analytic subset. Let $\rho\colon Z\to X$ be a continuous affine mapping such that $\rho(Y)=X$. Let $f\colon X\to \er$ be a bounded function such that $f\circ\rho$ is strongly affine on $Y$. Then $f$ is strongly affine on $X$. 
 \end{lemma}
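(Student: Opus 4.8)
The plan is to transfer the defining properties of a strongly affine function from $Y$ to $X$ by lifting measures along $\rho$, the lift being supplied by a measurable selection. First I would observe that the set $\{(\rho(y),y)\setsep y\in Y\}$ is a $K$-analytic subset of the compact space $X\times Z$ (it is a continuous image of $Y$) whose projection to $X$ is $\rho(Y)=X$; by the uniformization theorem for $K$-analytic sets it admits a uniformizing map $\sigma\colon X\to Z$ which is measurable with respect to the $\sigma$-algebra generated by $K$-analytic sets, hence universally measurable, and which satisfies $\sigma(X)\subset Y$ and $\rho\circ\sigma=\mathrm{id}_X$. I would also record that $K$-analytic subsets of compact spaces are universally measurable, so $Y$ is universally measurable in $Z$ (consistent with the hypothesis that it is measure convex) and any $\nu\in M_1(Z)$ with $\nu(Y)=1$ restricts to a Radon probability on $Y$; in particular strong affinity of $f\circ\rho$ on $Y$ is a genuine statement about such $\nu$'s.

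Next I would fix $\mu\in M_1(X)$ and put $\nu=\sigma_{\#}\mu$, the image measure on $Z$. By Lusin's theorem (applied to the $\mu$-measurable $\sigma$) $\nu$ is a Radon probability on $Z$, it satisfies $\rho_{\#}\nu=(\rho\circ\sigma)_{\#}\mu=\mu$, and since $\sigma^{-1}(Y)=X$ we get $\nu(Y)=1$. Because $Y$ is measure convex and $\nu$ is carried by $Y$, the barycenter $z:=r(\nu)$ lies in $Y$, and affinity and continuity of $\rho$ give $\rho(z)=r(\rho_{\#}\nu)=r(\mu)$.

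For universal measurability of $f$ I would use the pointwise identity $f=(f\circ\rho)\circ\sigma$ on $X$, valid since $\rho(\sigma(x))=x$. Let $B\subset\er$ be Borel and $P=(f\circ\rho)^{-1}(B)\subset Z$. As $f\circ\rho$ is strongly affine, hence universally measurable, on $Y$ and $\nu$ is carried by the universally measurable set $Y$, the set $P$ is $\nu$-measurable, so $\nu(P\triangle C)=0$ for some Borel $C\subset Z$. Then $f^{-1}(B)=\sigma^{-1}(P)$, the set $\sigma^{-1}(C)$ is $\mu$-measurable, and $\mu\bigl(\sigma^{-1}(P)\triangle\sigma^{-1}(C)\bigr)=\mu\bigl(\sigma^{-1}(P\triangle C)\bigr)\le \nu(P\triangle C)=0$; hence $f^{-1}(B)$ is $\mu$-measurable. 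As $\mu$ was arbitrary, $f$ is universally measurable. The barycentric formula now follows: applying strong affinity of $f\circ\rho$ on $Y$ to $\nu$ and changing variables by $\rho_{\#}\nu=\mu$ (legitimate since $f$ is bounded and $\mu$-measurable),
$$\int_X f\,\di\mu=\int_Z (f\circ\rho)\,\di\nu=(f\circ\rho)(r(\nu))=f(\rho(z))=f(r(\mu)).$$
Since $f$ is bounded, universally measurable, and satisfies this formula for every $\mu\in M_1(X)$, it is strongly affine on $X$.

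The main obstacle is the first step, namely producing the universally measurable section $\sigma$ of $\rho$ with values in $Y$; once this is available the rest is a routine transfer. If a uniformization statement for $K$-analytic sets in exactly the needed form is not convenient to quote, I would instead argue measure by measure: for each fixed $\mu\in M_1(X)$ it suffices to produce a single $\nu\in M_1(Z)$ with $\nu(Y)=1$ and $\rho_{\#}\nu=\mu$ (this is where $K$-analyticity of $Y$ together with $\rho(Y)=X$ is really used), which already yields the barycentric identity, and to treat universal measurability of $f$ separately, using that $(f\circ\rho)|_Y$ is universally measurable on $Y$ and that $\rho|_Y\colon Y\to X$ is a continuous surjection.
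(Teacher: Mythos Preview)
Your proposal is correct, and the alternative you sketch at the end is exactly the route the paper takes. Rather than producing a global universally measurable section $\sigma\colon X\to Y$ via a uniformization theorem for $K$-analytic sets, the paper fixes $\mu\in M_1(X)$ and directly invokes \cite[Corollary~432G]{fremlin4} to lift it to some Radon $\nu\in M_1(Z)$ with $\nu(Y)=1$ and $(\rho|_Y)_\#\nu=\mu$; this is precisely the measure-by-measure lift you describe. The barycentric computation is then identical to yours. For $\mu$-measurability of $f$, the paper cannot use your identity $f=(f\circ\rho)\circ\sigma$ (there is no $\sigma$), so instead it argues via inner regularity: given open $U\subset\er$, it finds a $\sigma$-compact $K\subset (f\circ\rho)^{-1}(U)$ of full $\nu$-measure, observes that $\rho(K)\subset f^{-1}(U)$ is $\sigma$-compact, and checks that $f^{-1}(U)\setminus\rho(K)$ is $\mu$-null.

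Your primary approach via a section is perfectly valid and yields a cleaner measurability argument, but the uniformization theorem for $K$-analytic subsets of general compact spaces is a less standard citation than the Polish-space Jankov--von Neumann theorem; the paper sidesteps this by citing the measure-lifting result instead, which is well documented in Fremlin.
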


\begin{proof}
 Let $\mu\in M_1(X)$ be given.  We are going to verify that $f$ is $\mu$-measurable and $\mu(f)=f(r(\mu))$. Let us denote $s=\rho|_Y$.
 By \cite[Corollary~432G]{fremlin4} there is a Radon measure $\nu\in M_1(X)$ such that $\nu(Y)=1$ and $s(\nu)=\mu$. 

 We continue by proving $f$ is $\mu$-measurable. So, fix an open set 
$U\subset \er$. The function $\wt{f}=f\circ s=(f\circ\rho)|_Y$ is universally measurable, hence the set $\wt{f}^{-1}(U)$ is $\nu$-measurable. Since $\nu$ is Radon, we find a $\sigma$-compact set $K\subset \wt{f}^{-1}(U)$ with $\nu(K)=\nu(\wt{f}^{-1}(U))$.
Then $s(K)$ is a $\sigma$-compact set and
$$s(K)\subset s(\wt{f}^{-1}(U))=f^{-1}(U).$$
Moreover,
$$s^{-1}(f^{-1}(U)\setminus s(K))\subset s^{-1}(f^{-1}(U))\setminus K= 
 \wt{f}^{-1}(U)\setminus K,$$
 which is a $\nu$-null set. It follows that $f^{-1}(U)\setminus s(K)$ is $\mu$-null and thus $f^{-1}(U)$ is $\mu$-measurable.

Finally, we check the barycentric formula. 
Let  $x=r(\mu)$ be the barycenter of $\mu$ and $z=r(\nu)\in Z$ be the barycenter of $\nu$. Since $Y$ is measure convex, $z\in Y$. Further, $s(z)=x$. Indeed, given $a\in A_c(X)$, we have $a\circ \rho\in A_c(Z)$, and thus
 \[
 \begin{aligned}
 a(x)&=a(r(\mu))=\mu(a)=s(\nu)(a)=\nu(a\circ s)=\int_Y (a\circ s )\di\nu=\int_Z (a\circ\rho)\di\nu\\
 &=(a\circ\rho)(z)=a(s(z)).
 \end{aligned}
 \]
 Since continuous affine functions on $X$ separate points of $X$, we obtain $x=s(z)$. 
 Thus
 \[
 \begin{aligned}
\mu(f)&=s(\nu)(f)=\nu(f\circ s)=\int_Y (f\circ s)\di\nu=\int_Y (f\circ \rho)\di\nu=(f\circ \rho)(z)=f(s(z))\\
&=f(x).
 \end{aligned}
 \]
 
 Since $\mu$ is arbitrary, we deduce that $f$ is strongly affine.
\end{proof}

We continue by a sufficient condition for strong affinity on compact convex sets with $K$-analytic boundary.

 \begin{prop}
     \label{p:sa-k-analytic}
   Let $X$ be a compact convex set with $\ext X$ being $K$-analytic and let $f\colon X\to\er$ be a bounded function such that  for each $\mu\in M_1(X)$ maximal, $f$ is $\mu$-measurable and $\mu(f)=f(r(\mu))$. Then $f$ is strongly affine.
 \end{prop}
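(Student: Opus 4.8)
The plan is to derive the conclusion from Lemma~\ref{l:perfect-k-analytic}, applied with $Z=M_1(X)$, with $\rho\colon M_1(X)\to X$ the barycentric map $\mu\mapsto r(\mu)$ (an affine continuous surjection), and with $Y\subset Z$ the set of all maximal probability measures on $X$. The first task is to describe $Y$ conveniently. Since $\ext X$ is $K$-analytic it is \lin, hence $X$ is a standard compact convex set by \cite[Theorem 3.79(b)]{lmns}, and moreover $\ext X$ is universally measurable (every $K$-analytic subset of a compact space is). It follows that
$$Y=\{\mu\in M_1(X)\setsep \mu(\ext X)=1\};$$
indeed, `$\subset$' is the definition of standardness, while for `$\supset$' one uses that every representing measure of an extreme point is a Dirac measure, so a probability carried by $\ext X$ admits no nontrivial dilation and is therefore maximal. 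Finally $\rho(Y)=X$ by the Choquet representation theorem.

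Next I would verify that $Y$ is $K$-analytic and measure convex in $Z$. For the former, note that, with the topology inherited from $M_1(X)$, the set $Y$ is the image under a continuous one-to-one map of the space of Radon probability measures on the $K$-analytic space $\ext X$ (taken with its own weak topology), namely the map sending a Radon probability on $\ext X$ to its extension by zero to an element of $M_1(X)$; since the space of Radon probabilities on a $K$-analytic space is again $K$-analytic and a continuous image of a $K$-analytic space is $K$-analytic, so is $Y$. For measure convexity, let $\nu\in M_1(Z)$ satisfy $\nu(Y)=1$ and let $z=r(\nu)\in Z$ be its resultant. From $z(g)=\int_Z\mu(g)\di\nu(\mu)$ for $g\in C(X)$ one obtains, by a monotone class argument, $z(B)=\int_Z\mu(B)\di\nu(\mu)$ for every Borel set $B\subset X$. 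Since $z$ is Radon and $X\setminus\ext X$ is $z$-measurable, choose a Borel set $B_1\subset X\setminus\ext X$ with $z(B_1)=z(X\setminus\ext X)$; as $\mu(B_1)\le\mu(X\setminus\ext X)=0$ for each $\mu\in Y$, we get $z(X\setminus\ext X)=\int_Y\mu(B_1)\di\nu(\mu)=0$, i.e.\ $z\in Y$.

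It remains to check that $f\circ\rho$ is strongly affine on $Y$. For $\mu\in Y$ the point $\mu$ is a maximal measure, so by hypothesis $(f\circ\rho)(\mu)=f(r(\mu))=\mu(f)$; thus one must show that $\mu\mapsto\mu(f)$ is strongly affine on the measure convex set $Y$. Fix $\nu\in M_1(Y)$ with resultant $z=r(\nu)\in Y$; then $z$ is maximal, so $f$ is $z$-measurable, and one may pick a bounded Borel function $f_0$ on $X$ with $f_0=f$ $z$-a.e.\ and, by Radonness of $z$, a Borel set $N\supset\{f\ne f_0\}$ with $z(N)=0$. As above $\int_Y\mu(N)\di\nu(\mu)=z(N)=0$, hence $\mu(N)=0$, and therefore $\mu(f)=\mu(f_0)$, for $\nu$-almost every $\mu$. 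Consequently $\mu\mapsto\mu(f)$ is $\nu$-measurable (it agrees $\nu$-a.e.\ with the Borel measurable map $\mu\mapsto\mu(f_0)$) and, extending the resultant identity to bounded Borel functions by the monotone class theorem,
$$\int_Y\mu(f)\di\nu(\mu)=\int_Y\mu(f_0)\di\nu(\mu)=z(f_0)=z(f)=f(r(z))=(f\circ\rho)(z),$$
where $z(f)=f(r(z))$ again uses the maximality of $z$. Lemma~\ref{l:perfect-k-analytic} then gives that $f$ is strongly affine on $X$.

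The step I expect to require the most care is the $K$-analyticity of $Y$: this is where the hypothesis on $\ext X$ is genuinely used, and it rests on the stability of $K$-analyticity under the Radon-measure functor together with the behaviour of the canonical injection of that measure space into $M_1(X)$. The measure-theoretic bookkeeping in the other two steps — the passage from the continuous resultant identity to Borel sets and bounded Borel functions, and the approximation of the merely $z$-measurable function $f$ by a Borel one — is routine but must be done carefully.
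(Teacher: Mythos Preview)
Your proof is correct and follows the paper's route: apply Lemma~\ref{l:perfect-k-analytic} with $Z=M_1(X)$, $\rho=r$, and $Y$ the set of maximal measures, identified with $\{\mu\in M_1(X):\mu(\ext X)=1\}$. The paper streamlines the verification that $f\circ\rho$ is strongly affine on $Y$ by introducing the single universally measurable function $g$ equal to $f$ on $\ext X$ and $0$ elsewhere, so that $\mu\mapsto\mu(g)$ is strongly affine on all of $M_1(X)$ by \cite[Proposition 5.30]{lmns}, which avoids your per-$\nu$ Borel approximation of $f$; for the $K$-analyticity of $Y$ it simply cites \cite[Theorem 3(a)]{Hol-Kal}, and for measure convexity it uses the same \cite[Proposition 5.30]{lmns} applied to $h=1_{\ext X}$.
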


\begin{proof}
Since $\ext X$ is a $K$-analytic set, it is universally measurable by \cite[Corollary 2.9.3]{ROJA} and a measure $\mu\in M_1(X)$ is carried by $\ext X$ if and only if $\mu$ is maximal (see \cite[Theorem 3.79]{lmns}). Hence the function 
$$g=\begin{cases}
    f&\text{on }\ext X,\\
    0&\text{elsewhere},
\end{cases}$$
is universally measurable. Indeed, given a measure $\mu\in M_1(X)$, we can decompose $\mu=\mu|_{\ext X}+\mu|_{X\setminus \ext X}$ and $g$ is measurable with respect to both parts.

Let $Z=M_1(X)$ and $Y=\{\mu\in Z\setsep \mu(\ext X)=1\}$. Then $Y$ is a $K$-analytic subset of $Z$ by \cite[Theorem 3(a)]{Hol-Kal}, which is moreover measure convex. Indeed, the function $h=1_{\ext X}$ is universally measurable, and thus $\wt{h}(\mu)=\mu(h)$ is a strongly affine function on $Z$ by \cite[Proposition 5.30]{lmns}. Hence $Y=[\wt{h}=1]$ is a measure convex face in $Z$.

By \cite[Proposition 5.30]{lmns}, the function $\wt{g}\colon Z\to \er$ defined by $\wt{g}(\mu)=\mu(g)$, $\mu\in Z$, is strongly affine on $Z$. Thus its restriction $\wt{f}=\wt{g}|_Y$ to $Y$ is strongly affine on $Y$. To finish the proof it is enough to observe that, denoting $r\colon Z\to X$ the barycentric mapping, we have $\wt{f}=f\circ r$ on $Y$ by the assumption. (We observe that $r\colon Y\to X$ is surjective by \cite[Theorem 3.79 and Theorem 3.65]{lmns}.) Hence Lemma~\ref{l:perfect-k-analytic} finishes the reasoning.
 \end{proof}

Now we are ready to prove the remaining implication.

\begin{proof}[Proof of implication $(i)\implies(ii)$ from Theorem~\ref{t:srhnuti-splitfaceu-metriz}.]
Assume $A$ and $A'$ are measure convex and fix a strongly affine function $a$ on $A$.
By Lemma~\ref{l:extense-splitface}, $b=\Upsilon a$ is a bounded affine function on $X$. To prove it is strongly affine we will use Proposition~\ref{p:sa-k-analytic}. 
So, let $\mu\in M_1(X)$ be maximal and $x=r(\mu)$. Then $\mu$ is carried by $\ext X$ (by \cite[Theorem 3.79]{lmns}) and hence by $A\cup A'$. Therefore we conclude by Lemma~\ref{L:mc-partial-sa}.
\end{proof}

The following example witnesses that in Theorem~\ref{t:srhnuti-splitfaceu-metriz}$(i)$ the assumption that both $A$ and $A'$ are measure convex is essential.

\begin{example}\label{ex:d+s}
    There exists a split face $A\subset X=M_1([0,1])$ with the following properties:
    \begin{enumerate}[$(i)$]
        \item Both $A$ and $A'$ are Baire sets;
        \item  $A$ is measure convex and $A'$ is measure extremal;
        \item $A\cup A'$  carries every maximal measure;
        \item $\lambda_A$ is an affine Baire function which is  not strongly affine.
    \end{enumerate}
\end{example}

\begin{proof}
Let 
\[
A=\{\mu\in X\setsep \mu(\{x\})=0, x\in [0,1]\}.
\] 
Then $A$ is a $G_\delta$ face of $X$ (see \cite[Proposition~2.58]{lmns}).
Let 
\[
B=\{\mu\in X\setsep \mu\text{ is discrete}\}.
\]
Then $B=\{\mu\in X\setsep \mu_d([0,1])=1\}$ (here $\mu_d$ denotes the discrete part of $\mu$) is a Borel face of $X$ by \cite[Proposition~2.63]{lmns}. We claim that $A$ is a split face and $B=A'$.

To this end we notice that each $\mu\in X$ can be uniquely decomposed as $\mu=\mu_c+\mu_d$, where $\mu_c$ is the continuous part of $\mu$ and $\mu_d$ is the discrete part of $\mu$.
Hence $X$ is the direct convex sum of $A$ and $B$ (see \cite[Chapter II, \S 6]{alfsen}).

To finish the proof that $A$ is a split face it is enough to show that $B=A'$. Obviously, $B\subset A'$. For the converse inclusion, let $\mu\in A'$ be given. Then $\mu=\alpha \mu_1+(1-\alpha)\mu_2$, where $\alpha\in [0,1]$, $\mu_1\in A$ and $\mu_2\in B$. Let $C\subset X$ be a face containing $\mu$ and disjoint from $A$. If $\alpha\in (0,1]$, then $\alpha\mu_1+(1-\alpha)\mu_2=\mu\in C$ implies $\mu_1\in C$, which is impossible as $C\cap A=\emptyset$. Hence $\alpha=0$, and consequently $\mu=\mu_2\in B$. Hence $A'\subset B$.

So far we have verified that $A$ is a Borel split face such that $A'=B$ is Borel as well and $A\cup A'\supset \ext X$ carries any maximal measure. Since $X$ is metrizable, Baire and Borel sets coincide. We have thus verified conditions $(i)$ and $(iii)$.

To check that $A$ is measure convex, we observe that
\[
A=\bigcap_{x\in [0,1]}\{\mu\in X\setsep \mu(\{x\})=0\}.
\]
Since each function $\mu\mapsto \mu(\{x\})$ is upper semicontinuous and affine on $X$ and has values in $[0,1]$, it is strongly affine and thus each set $\{\mu\in X\setsep \mu(\{x\})=0\}$ is measure convex. Since $A$ is Borel, it follows that $A$ is measure convex.

To check that $B=A'$ is measure extremal, fix $\mu\in B$ and $\Lambda\in M_1(X)$ with $r(\Lambda)=\mu$. Then $\mu=\sum_{n=1}^\infty t_n\ep_{x_n}$ for a sequence $(x_n)$ in $[0,1]$ and a sequence $(\lambda_n)$ of non-negative numbers with $\sum_{n=1}^\infty t_n=1$. The function $H(\nu)=\sum_{n=1}^\infty \nu(\{x_n\})$ is easily seen to be strongly affine and $H(\mu)=1$, Thus $H=1$ $\Lambda$-almost everywhere, hence $\Lambda$ is supported by $B$. 
Hence, we have verified $(ii)$.

Finally, the function $\lambda_A$ is an affine function of the second Baire class which
is not strongly affine, since $\lambda_A(\mu)=1-\mu_d([0,1])$ for $\mu\in X$, and \cite[Proposition~2.63]{lmns} applies. So, $(iv)$ is verified and the proof is complete.
\end{proof}

We note that Theorem~\ref{t:srhnuti-splitfaceu-metriz} implies, in particular, that $\lambda_A$ is a (strong) multiplier for $A_{sa}(X)$ provided $X$ is a compact convex set with $K$-analytic boundary and $A$ is a split face with $\lambda_A$ strongly affine. Later we will discuss a converse and similar results. We note that Example~\ref{ex:dikous-divnyspitface} below shows that the assumption on $\ext X$ in Theorem~\ref{t:srhnuti-splitfaceu-metriz} cannot be dropped.
However, the following problems seem to be open.

\begin{ques}
    Let $X$ be a compact convex set and let $A\subset X$ be split face.
    \begin{enumerate}[$(1)$]
        \item Assume both $A$ and $A'$ are measure convex and measure extremal. Is $\lambda_A$ strongly affine?
        \item Assume $\lambda_A$ is strongly affine. Does $\Upsilon$ maps strongly affine functions to strongly affine functions?
    \end{enumerate}
\end{ques}

\subsection{Baire split faces}
\label{ss:extension-splitfaces}

The aim of this section is to characterize split faces $A\subset X$ with $\lambda_A$ Baire and strongly affine. This is the content of the following proposition, which is an analogue of Theorem~\ref{t:srhnuti-splitfaceu-metriz}. We point out that in this case $X$ is a general compact convex set (with no additional assumptions on $\ext X$), stronger assumptions are imposed on the face.

\begin{prop}
  \label{p:shrnuti-splitfaceu-baire}
  Let $X$ be a compact convex set and let $A\subset X$ be a split face. The the following assertions are equivalent.
  \begin{enumerate}[$(i)$]
      \item Both $A$ and $A'$ are Baire and measure convex.
      \item $A$ is measure convex and $\Upsilon a\in A_{sa}(X)\cap\Ba(X)$ whenever $a$ is a strongly affine Baire function on $A$.
      \item $\Upsilon(a|_A)\in A_{sa}(X)\cap\Ba(X)$ whenever $a\in A_{sa}(X)\cap\Ba(X)$.
      \item  The function $\lambda_A$ is strongly affine and Baire.
      \item  Both $A$ and $A'$ are Baire, measure convex and measure extremal.
  \end{enumerate}
\end{prop}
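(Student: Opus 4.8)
The plan is to prove Proposition~\ref{p:shrnuti-splitfaceu-baire} following exactly the pattern of the proof of Theorem~\ref{t:srhnuti-splitfaceu-metriz}, adding the Baire-measurability bookkeeping at each step. First I would observe that the implications $(ii)\implies(iii)\implies(iv)\implies(v)\implies(i)$ are formal and require no hypothesis on $\ext X$: for $(ii)\implies(iii)$ note that if $A$ is measure convex and $a\in A_{sa}(X)\cap\Ba(X)$, then $a|_A$ is a strongly affine Baire function on $A$; for $(iii)\implies(iv)$ recall $\lambda_A=\Upsilon(1_X|_A)$ and that $1_X\in A_{sa}(X)\cap\Ba(X)$; for $(iv)\implies(v)$ use $A=[\lambda_A=1]$ and $A'=[\lambda_A=0]$, so both are Baire (as preimages of points under a Baire function), measure convex and measure extremal because $\lambda_A$ is strongly affine with values in $[0,1]$ (a standard argument: if $\mu\in M_1(X)$ is carried by $A$ then $\lambda_A(r(\mu))=\mu(\lambda_A)=1$ so $r(\mu)\in A$, and if $r(\mu)\in A$ then $\mu(\lambda_A)=1$ forces $\lambda_A=1$ $\mu$-a.e.); finally $(v)\implies(i)$ is trivial.

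The only substantive implication is $(i)\implies(ii)$. Here I would argue as follows. Assume $A$ and $A'$ are Baire and measure convex, and fix a strongly affine Baire function $a$ on $A$; such $a$ is automatically bounded by the proof of \cite[Satz 2.1.(c)]{krause}. Set $b=\Upsilon a$, which is a bounded affine function on $X$ by Lemma~\ref{l:extense-splitface}. I would first check that $b$ is a Baire function on $X$: since $A$ and $A'$ are Baire sets and $b=a\cdot 1_A+0\cdot 1_{A'}$ on $A\cup A'$, while on $X\setminus(A\cup A')$ we have $b=\lambda_A\cdot(a\circ\pi_A)$ where $\pi_A$ is the affine retraction onto $A$ given by the split decomposition — here I must be a little careful, so the cleaner route is to use measure-theoretic decomposition only where needed and instead argue directly. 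Actually the cleanest path: by $(i)\implies(iv)$ applied in the Theorem~\ref{t:srhnuti-splitfaceu-metriz}-style reasoning is not available since we do not assume $\ext X$ $K$-analytic. Instead I would use Lemma~\ref{L:mc-partial-sa}: since $A$ and $A'$ are measure convex, $b=\Upsilon a$ satisfies the barycentric formula for every probability measure carried by $A\cup A'$. Because $A$ and $A'$ are Baire sets whose union contains $\ext X$, the set $A\cup A'$ is a Baire set containing $\ext X$, hence carries every maximal measure. Thus for every maximal $\mu\in M_1(X)$, $\mu$ is carried by $A\cup A'$, so $b(r(\mu))=\mu(b)$.

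It remains to upgrade ``barycentric formula on maximal measures'' to ``strongly affine and Baire.'' For the Baire property, I would note that $A$, $A'$ are Baire sets, hence the function $g$ equal to $a$ on $A$, to $0$ on $A'$, and defined on $X\setminus(A\cup A')$ by the formula $g(x)=\lambda_A(x)a(y)$ with $x=\lambda_A(x)y+(1-\lambda_A(x))y'$ — but to avoid explicit handling of the splitting map I would instead invoke the following: on $A\cup A'$, $b$ agrees with $a1_A$ which is Baire (as $a$ is Baire on the Baire set $A$ and $A$ is Baire in $X$, so $a1_A$ extends to a Baire function on $X$ by a standard extension argument for Baire functions on Baire sets), and an affine function on $X$ that is Baire on the Baire set $A\cup A'$ and agrees there with a global Baire function, together with the convex combination structure, is Baire; more efficiently, I will cite that $\lambda_A$ is Baire (this is the content of $(i)\Rightarrow$ ``$\lambda_A$ Baire'', which follows because $A=[\lambda_A=1]$ and $A'=[\lambda_A=0]$ are Baire and $\lambda_A$ is affine — actually establishing $\lambda_A\in\Ba(X)$ from $A,A'$ Baire is itself a small lemma I should include) and then $b=\lambda_A\cdot\widetilde{a}$ where $\widetilde a$ is any Baire extension of $a$ from $A$, since the two sides agree on $A\cup A'\supset\ext X$ and both are... no — they need not agree off $A\cup A'$. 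The honest fix is to decompose each $x$ via the affine retraction $r_A\colon X\to A$, $r_A(x)=y$, which is affine; $r_A$ need not be Baire in general, so I expect \textbf{this measurability step to be the main obstacle}. I would resolve it by choosing, for the Baire claim, the representation $b=$ the affine function determined by $b|_A=a$, $b|_{A'}=0$, and observing that since $A$, $A'$ are both measure convex Baire faces, one can run the argument of Lemma~\ref{l:perfect-k-analytic}/Proposition~\ref{p:sa-k-analytic} with $Z=M_1(X)$, $Y=M_1(A)\cup M_1(A')$ (a Baire, measure convex subset of $Z$ because $M_1(A)=[\widetilde{1_A}=1]$ and $M_1(A')=[\widetilde{1_{A'}}=1]$ for the strongly affine barycentric extensions of $1_A,1_{A'}$, these being Baire on $M_1(X)$), together with the strongly affine Baire function $\widetilde g$ on $Z$ given by $\widetilde g(\nu)=\int g\,d\nu$ where $g=a1_A$ is a Baire function on $X$; restricting to $Y$ and pushing forward by the barycentric map $r\colon Y\to X$ (surjective, continuous, affine) gives that $b=\Upsilon a$ is strongly affine and Baire on $X$ by the same computation as in the $K$-analytic case, now using that Baire measurability is preserved and that $A\cup A'$ carries all maximal measures. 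This yields $b\in A_{sa}(X)\cap\Ba(X)$ and completes $(i)\implies(ii)$, and with it the proof.
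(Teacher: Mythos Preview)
Your handling of the easy implications $(ii)\Rightarrow(iii)\Rightarrow(iv)\Rightarrow(v)\Rightarrow(i)$ is fine and matches the paper. The gap is entirely in $(i)\Rightarrow(ii)$, and you have correctly located the difficulty: establishing that $\Upsilon a$ is a Baire function on $X$. Your final proposed fix via $Y=M_1(A)\cup M_1(A')$ does not work. That set is not measure convex (a nontrivial convex combination of a measure carried by $A$ and one carried by $A'$ lies in neither piece), and the barycentric map restricted to $Y$ has range only $A\cup A'$, not $X$, so Lemma~\ref{l:perfect-k-analytic} cannot be invoked. Moreover, even granting the barycentric formula on maximal measures, you still need a device to upgrade this to full strong affinity; Proposition~\ref{p:sa-k-analytic} is unavailable because there is no hypothesis on $\ext X$.

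The paper resolves both issues with two dedicated lemmas. For Baire measurability (Lemma~\ref{l:extense-K-anal}) the key observation is that Baire subsets of a compact space are $K$-analytic, so $H=A\times A'\times[0,1]$ is $K$-analytic, the map $\varphi(y,y',\lambda)=\lambda y+(1-\lambda)y'$ is a continuous surjection of $H$ onto $X$, and $h(y,y',\lambda)=\lambda a(y)$ is Baire on $H$ with $b\circ\varphi=h$; hence for each open $U\subset\er$ both $b^{-1}(U)=\varphi(h^{-1}(U))$ and its complement are $K$-analytic, so both are Baire by the first separation theorem. For strong affinity (Lemma~\ref{l:sa-baireovske}) the paper uses the already-established Baire property of $b$ to perform a separable reduction: factor $b$ through a metrizable simplex $Y$ via an affine continuous surjection, transfer the maximal-measure barycentric formula downstairs, and there apply Proposition~\ref{p:sa-k-analytic} (legitimate since $\ext Y$ is Polish). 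This is precisely the step that replaces the missing $K$-analyticity assumption on $\ext X$, and it is the idea your proposal lacks.
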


The proof follow the same scheme as that of Theorem~\ref{t:srhnuti-splitfaceu-metriz}. Again, implications $(ii)\implies(iii)\implies(iv)\implies(v)\implies(i)$ are easy and may be proved exactly in the same way. To prove the remaining implication $(i)\implies(ii)$ we will use two auxilliary results. The first one is a purely topological extension result.

\begin{lemma}
\label{l:extense-K-anal}
Let $X$ be a compact convex set and $A\subset X$ a $K$-analytic split face such that $A'$ is also $K$-analytic. Then
     $\Upsilon a$ is Baire for each Baire affine function on $A$.

In particular, both $A$ and $A'$ are Baire.
  \end{lemma}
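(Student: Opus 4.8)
The plan is to reduce the statement to an application of Lemma~\ref{l:perfect-k-analytic} via the barycentric mapping. First I would record that $b=\Upsilon a$ is a well-defined bounded affine function on $X$ by Lemma~\ref{l:extense-splitface}; the task is to promote "affine" to "Baire". The natural device is to work on $Z=M_1(X)$ and the set $Y=\{\mu\in Z\setsep \mu(A\cup A')=1\}$. The key point is that $A\cup A'$ carries \emph{all} of $X$ in the sense that every $x\in X$ admits a representing measure in $M_1(A\cup A')$: indeed, since $A$ is a split face, $X=\co(A\cup A')$, and for $x=\lambda y+(1-\lambda)y'$ with $y\in A$, $y'\in A'$ the measure $\lambda\varepsilon_y+(1-\lambda)\varepsilon_{y'}$ is carried by $A\cup A'$ and represents $x$. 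Hence the barycentric map $r\colon Y\to X$ is a continuous affine surjection.

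Next I would identify $Y$ as a $K$-analytic subset of $Z$. Since $A$ and $A'$ are $K$-analytic, they are universally measurable, so the characteristic functions $1_A,1_{A'}$ are universally measurable, and by \cite[Proposition 5.30]{lmns} the maps $\mu\mapsto\mu(A)$ and $\mu\mapsto\mu(A')$ are strongly affine on $Z$; consequently $Y=[\mu\mapsto\mu(A)+\mu(A')=1]$ is a measure convex face of $Z$. For $K$-analyticity I would invoke \cite[Theorem 3(a)]{Hol-Kal} (as in the proof of Proposition~\ref{p:sa-k-analytic}), applied to the $K$-analytic set $A\cup A'$: the set of probabilities carried by a $K$-analytic set is again $K$-analytic. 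Now define $\widetilde b\colon Y\to\er$ by $\widetilde b(\mu)=\mu(b)$ when $\mu$ is carried by $A$, extended appropriately; more cleanly, set $g=a$ on $A$, $g=0$ on $A'$, $g=0$ elsewhere — this $g$ is a Baire (indeed universally measurable, and Baire once we know $A,A'$ are Baire, but to \emph{prove} they are Baire I should instead argue directly) function. To avoid circularity on the "Baire" part I would first prove $A$ and $A'$ are Baire: since $A$ is $K$-analytic and $A'$ is $K$-analytic and $A\cup A'\supset\ext X$, and $A=[\lambda_A=1]$ — but $\lambda_A$ is only known affine a priori. The correct route: a $K$-analytic set and its $K$-analytic complement-within-a-larger-set need not be Baire in general, so the "in particular" clause must be extracted from the conclusion that $\Upsilon 1_A=\lambda_A$ is Baire; apply the main assertion with $a\equiv 1$ to get $\lambda_A$ Baire, whence $A=[\lambda_A=1]$ and $A'=[\lambda_A=0]$ are Baire.

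So the core is: show $\Upsilon a$ is Baire for $a$ affine Baire on $A$. Here I would use that $a$ Baire on the $K$-analytic set $A$ means $a$ is $\Ba$-measurable with respect to the Baire $\sigma$-algebra of $A$; pulling back through the continuous maps $\mu\mapsto r(\tfrac{\mu|_A}{\mu(A)})$ (defined on $\{\mu\in Y\setsep\mu(A)>0\}$, which is relatively open) and using that $\mu\mapsto\mu(A)$, $\mu\mapsto\int\varphi\,1_A\di\mu$ for $\varphi\in A_c(X)$ are Baire on $Z$ (continuous even, for $\varphi$ continuous, up to the universally measurable factor $1_A$ — here one needs that $1_A$-integration of continuous functions is Baire on $M_1(X)$, which follows from \cite[Proposition 5.30]{lmns} since $A$ is $K$-analytic hence universally measurable, and the Baire class is preserved because $K$-analytic sets sit inside $F_{\sigma\delta}$'s of compacta). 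Concretely I would show the function $\mu\mapsto(\Upsilon a)(r(\mu))$ equals $\mu\mapsto\int_A a\di\mu$ on $Y$ (this is the content of Lemma~\ref{L:mc-partial-sa}'s computation, valid here since $A,A'$ measure convex), that the latter is Baire on $Y$ because $a\,1_A$ is a Baire function on $X$ and measures in $Y$ give it a Baire-measurable integral by a monotone-class argument starting from continuous $\varphi$ and the universally measurable weight $1_A$, and then transfer Baireness down to $X$ through $r$. The transfer $f$ Baire on $Y$, $f=\widetilde f\circ r$ with $r$ a continuous surjection of $K$-analytic onto compact $\Longrightarrow$ $\widetilde f$ Baire on $X$ is exactly the type of statement furnished by a $K$-analytic version of Lemma~\ref{L:kvocient}(d); I would cite \cite[Theorem A]{affperf} together with the universal measurability of $K$-analytic sets to justify it, or phrase it as: the Baire $\sigma$-algebra is the one generated by $A_c(X)$, $\widetilde b$ is affine and its composition with the quotient-like map $r$ is Baire, so $\widetilde b$ itself is Baire.

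\textbf{Main obstacle.} The delicate point is the descriptive-set-theoretic bookkeeping in the downward transfer: $r\colon Y\to X$ is not a quotient map between compacta (the domain $Y$ is merely $K$-analytic), so I cannot quote Lemma~\ref{L:kvocient} verbatim and must instead either (a) embed $Y$ into $Z=M_1(X)$, extend $\widetilde b$ to a Baire function $\widehat b$ on all of $Z$ using $\widehat b(\mu)=\int_A a\di\mu$ (which is genuinely Baire on $Z$ by \cite[Proposition 5.30]{lmns} since $a\,1_A$ is universally measurable with a controlled descriptive class), and then observe $\widehat b=b\circ r$ on $Z$ where $r\colon Z\to X$ \emph{is} a continuous surjection of compacta, so Lemma~\ref{L:kvocient}(d) applies directly to conclude $b$ is Baire on $X$ — provided $a\,1_A$ is actually Baire on $X$, not merely universally measurable, which is where the $K$-analyticity of $A$ (hence $1_A$ being, say, analytic-measurable, and $a$ Baire on it) must be leveraged carefully. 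Route (a) is the one I would pursue, since it trades the awkward non-compact domain for the clean surjection $M_1(X)\twoheadrightarrow X$ and isolates the only real content into a single lemma: \emph{if $S\subset X$ is $K$-analytic and $h\colon S\to\er$ is bounded Baire, then $\mu\mapsto\int_S h\di\mu$ is Baire on $M_1(X)$}, which should follow from the monotone-class theorem bootstrapped from continuous $h$ and the fact, proved en route, that $1_S$-integration of continuous functions is of bounded Baire class on $M_1(X)$ for $K$-analytic $S$.
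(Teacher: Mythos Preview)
Your approach has a genuine gap at the point you yourself flag as the ``only real content'': the proposed lemma that \emph{for $K$-analytic $S\subset X$ and bounded Baire $h$ on $S$, the map $\mu\mapsto\int_S h\di\mu$ is Baire on $M_1(X)$} is false in general. Take $h\equiv 1$: then the map is $\mu\mapsto\mu(S)$, and restricting to Dirac measures gives $x\mapsto 1_S(x)$; if this were Baire on $M_1(X)$ it would be Baire on the closed set $\{\varepsilon_x\setsep x\in X\}$, forcing $S$ to be a Baire set --- but $K$-analytic subsets of non-metrizable compacta need not be Baire. So the monotone-class bootstrap cannot start. Your route~(a) has a second, independent problem: the equality $\widehat b=b\circ r$ you assert on all of $Z=M_1(X)$ holds only on $Y$, since $\widehat b(\mu)=\int a\cdot 1_A\di\mu$ agrees with $(\Upsilon a)(r(\mu))$ only for measures carried by $A\cup A'$ (and $\Upsilon a$ is not known to be strongly affine here in any case). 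This forces you back to transferring Baireness through the non-compact $K$-analytic domain $Y$, which you correctly note is not covered by Lemma~\ref{L:kvocient}.

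The paper's proof avoids $M_1(X)$ entirely and instead works with the $K$-analytic product $H=A\times A'\times[0,1]$ and the continuous surjection $\varphi\colon H\to X$, $\varphi(y,y',\lambda)=\lambda y+(1-\lambda)y'$. The function $h(y,y',\lambda)=\lambda\,a(y)$ is Baire on $H$ and satisfies $h=(\Upsilon a)\circ\varphi$. The decisive ingredient you are missing is the \emph{first separation theorem for $K$-analytic sets}: for any open $U\subset\er$, the sets $(\Upsilon a)^{-1}(U)=\varphi(h^{-1}(U))$ and $(\Upsilon a)^{-1}(\er\setminus U)=\varphi(h^{-1}(\er\setminus U))$ are disjoint $K$-analytic subsets of $X$ covering $X$, hence each is Baire. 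This uses precisely the hypothesis that \emph{both} $A$ and $A'$ are $K$-analytic --- a single $K$-analytic set would not suffice, which is exactly why your isolated lemma fails. The ``in particular'' clause then drops out by taking $a\equiv 1$.
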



\begin{proof}
 Let $a$ be a Baire affine function on $A$.
To verify that $\Upsilon a$ is Baire, we proceed as in \cite{stacey-maan}. Since $A$ and $A'$ are $K$-analytic,  the space $H=A\times A'\times [0,1]$ is $K$-analytic as well (see \cite[Theorem~2.5.5]{ROJA}). The mapping $\varphi\colon H\to X$ defined by $\varphi([y,y',\lambda])=\lambda y +(1-\lambda) y'$ is a continuous affine surjection. Further, the function $h\colon H\to \er$ defined as $h([y,y',\lambda])=\lambda a(y)$ is a Baire function on $H$ satisfying $b\circ \varphi=h$. Hence, for each $U\subset \er$, open the sets
\[
b^{-1}(U)=\varphi(h^{-1}(U))\quad\text{and}\quad b^{-1}(\er\setminus U)=\varphi(h^{-1}(\er\setminus U))
\]
are disjoint, $K$-analytic (as continuous images of $K$-analytic sets) and cover $X$. By \cite[Section~3.3]{ROJA}, they are Baire sets. Hence $b$ is Baire measurable and thus it is a Baire function.

In particular, if we apply it to $a=1$, we deduce that $A$ and $A'$ are Baire sets.
\end{proof}

We start by a sufficient condition for strong affinity of Baire functions which is a counterpart of Proposition~\ref{p:sa-k-analytic}.

\begin{lemma}
    \label{l:sa-baireovske}
 Let $a$ be a bounded Baire function on a compact convex set $X$ satisfying $a(r(\mu))=\mu(a)$ for each maximal $\mu\in M_1(X)$. Then $a$ is  strongly affine.
\end{lemma}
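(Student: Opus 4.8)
The plan is to reduce the problem to the already-proved Proposition~\ref{p:sa-k-analytic}, whose hypothesis only asks that $\ext X$ be $K$-analytic. Since here we make no assumption on $\ext X$ but instead assume that $a$ is Baire, the idea is to pass to a separable reduction: a bounded Baire function depends only on countably many ``coordinates'' and hence factors through a metrizable quotient, whose extreme boundary is automatically $K$-analytic (indeed Polish as a $G_\delta$ in a compact metric space).

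More precisely, first I would invoke a standard factorization lemma for affine Baire functions on compact convex sets: there is a metrizable compact convex set $Y$, an affine continuous surjection $\varphi\colon X\to Y$, and a bounded Baire function $\tilde a$ on $Y$ with $a=\tilde a\circ\varphi$. (This is the same device used in the proof of Proposition~\ref{p:postacproa1}$(a)$ via \cite[Theorem 9.12]{lmns}; an analogous statement for Baire functions on general compact convex sets is available from the theory of Baire functions and the separable reduction of the relevant $\sigma$-algebra. Since $\tilde a$ is Baire on the metrizable $Y$, it is automatically Borel and its construction uses only countably many elements of $A_c(X)$.) Then I would check that $\tilde a$ satisfies the barycentric formula for every maximal measure on $Y$: given a maximal $\nu\in M_1(Y)$, lift it to a maximal $\mu\in M_1(X)$ with $\varphi(\mu)=\nu$ (maximal measures map onto maximal measures under affine continuous surjections, cf.\ the dilation order and \cite[Theorem 3.79]{lmns} together with Lemma~\ref{L:kvocient}-type arguments), and compute
\[
\tilde a(r(\nu))=\tilde a(\varphi(r(\mu)))=a(r(\mu))=\mu(a)=\mu(\tilde a\circ\varphi)=\nu(\tilde a),
\]
using the hypothesis on $a$ in the middle equality.

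Next, since $Y$ is metrizable, $\ext Y$ is a $G_\delta$, hence Polish, hence $K$-analytic, so Proposition~\ref{p:sa-k-analytic} applies to $\tilde a$ on $Y$ and yields that $\tilde a$ is strongly affine on $Y$. Finally I would transfer this back: for an arbitrary $\mu\in M_1(X)$, the function $a=\tilde a\circ\varphi$ is $\mu$-measurable because $\tilde a$ is universally measurable (being strongly affine) and $\varphi$ is continuous, and
\[
\mu(a)=\mu(\tilde a\circ\varphi)=(\varphi(\mu))(\tilde a)=\tilde a(r(\varphi(\mu)))=\tilde a(\varphi(r(\mu)))=a(r(\mu)),
\]
where we use that $\varphi(\mu)$ has barycenter $\varphi(r(\mu))$ (as $\varphi$ is affine and continuous). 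Hence $a$ is strongly affine on $X$.

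The main obstacle I anticipate is the separable reduction step, i.e.\ producing the metrizable quotient $Y$ through which a given bounded affine Baire function factors, together with the accompanying fact that maximal measures on $X$ push forward to maximal measures on $Y$ (equivalently, that $\varphi$ respects the dilation order and carries boundary measures to boundary measures). If a ready-made reference for ``every affine Baire function factors through a metrizable quotient'' is not at hand in the form needed, one builds $Y$ by hand: take a countable family $\mathcal F\subset A_c(X)$ witnessing, through countably many sequential-limit operations, that $a$ lies in the Baire class generated by $A_c(X)$, let $Y=\overline{e_{\mathcal F}(X)}$ where $e_{\mathcal F}\colon X\to\mathbb R^{\mathcal F}$ is the evaluation map, and verify that $a$ is constant on the fibers of $\varphi=e_{\mathcal F}$ by a routine transfinite induction on the Baire class. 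Everything else is bookkeeping with barycenters and universal measurability, which is routine given the results already established in the paper.
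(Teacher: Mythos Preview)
Your proposal is correct and is essentially the same proof as in the paper: separable reduction via \cite[Theorem 9.12]{lmns} to a metrizable quotient $Y$, lifting of maximal measures on $Y$ to maximal measures on $X$ (the paper cites \cite[Proposition 7.49]{lmns} for this), application of Proposition~\ref{p:sa-k-analytic} on $Y$, and transfer back along the affine continuous surjection (the paper cites \cite[Proposition 5.29]{lmns} for this last step, which is exactly your final computation).
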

\begin{proof}
  By a separable reduction result (see \cite[Theorem 9.12]{lmns}), there exist a metrizable compact convex set $Y$, a continuous affine surjection $\varphi\colon X\to Y$  and a Baire function $b\colon Y\to \er$ such that $a=b\circ\varphi$. 
  Then $b$ is bounded and $b(r(\nu))=\nu(b)$ for each maximal $\nu\in M_1(Y)$. 

  Indeed, let $\nu\in M_1(Y)$ maximal be given. Using \cite[Proposition 7.49]{lmns} we find a maximal measure $\mu\in M_1(X)$ with $\varphi(\mu)=\nu$.
 Given $h\in A_c(Y)$, we have
   \[
  h(\varphi(r(\mu)))=(h\circ \varphi)(r(\mu))=\mu(h\circ\varphi)=(\varphi(\mu))(h)=\nu(h)=h(r(\nu)).
  \]
 Thus $\varphi(r(\mu))=r(\nu)$. Now we have
\[
\nu(b)=(\varphi(\mu))(b)=\mu(b\circ\varphi)=(b\circ\varphi)(r(\mu))=b(\varphi(r(\mu)))=b(r(\nu)).
\]

Thus we have proved that $b$ satisfies the barycentric formula with respect to each maximal measure on $Y$.  Since $Y$ is metrizable, $\ext Y$ is $K$-analytic, hence Proposition~\ref{p:sa-k-analytic} yields that $b$ is strongly affine on $Y$.
%
By \cite[Proposition~5.29]{lmns}, the function $a=b\circ \varphi$ is strongly affine as well.
\end{proof}

\begin{proof}[Proof of implication $(i)\implies(ii)$ from Proposition~\ref{p:shrnuti-splitfaceu-baire}.]
 Assume that both $A$ and $A'$ are Baire and measure convex and let $a$ be a strongly affine Baire function on $A$.
By Lemma~\ref{l:extense-K-anal} we know that $\Upsilon a\in\Ba(X)$. 
It remains to show that $b=\Upsilon a$ is strongly affine.
By Lemma~\ref{l:sa-baireovske} it is enough to prove that $b$  satisfies the barycentric formula with respect to every maximal measure. Since $A\cup A'$ is a Baire set containing $\ext X$, it carries all maximal measures. Hence, we conclude by Lemma~\ref{L:mc-partial-sa}.
\end{proof}

\subsection{Extending strongly affine functions from closed split faces}

In this section we focus on extending affine functions of various descriptive classes from closed split faces. To this end, we need the following lemma.

\begin{lemma}
\label{l:frag-selekce} Let $r\colon M\to N$ be a continuous surjection of a compact space $M$ onto a compact space $N$. Let $g\colon M\to \er$ be in $\Bo_1(M)$ or a fragmented function. Then there exists a function $\phi\colon N\to M$ such that $r(\phi(y))=y$, $y\in N$, and $g\circ \phi$ is in $\Bo_1(N)$ or fragmented, respectively.
\end{lemma}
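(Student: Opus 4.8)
The plan is to construct the selection $\phi$ by a standard reduction to a continuous selection on a suitable closed subset, followed by a transfer of the descriptive class of $g$ through the selection. First I would recall that since $r\colon M\to N$ is a continuous surjection of compact spaces, by the Kuratowski--Ryll-Nardzewski / Michael-type selection results (or more elementarily, using a maximal compact subset of $M$ on which $r$ is irreducible) one can reduce to the case where $r$ is irreducible: there is a closed set $M_0\subset M$ with $r(M_0)=N$ such that $r|_{M_0}$ maps no proper closed subset of $M_0$ onto $N$. Restricting $g$ to $M_0$ preserves membership in $\Bo_1$ or fragmentedness (both classes are stable under restriction to closed subsets, as is clear from the definitions and Theorem~\ref{T:a}). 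So without loss of generality we may assume $r$ is irreducible.

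The key point is then that an irreducible continuous surjection between compact spaces is ``almost one-to-one'': the set of points $y\in N$ such that $r^{-1}(y)$ is a singleton is a dense $G_\delta$ (this is classical). However, density of the one-to-one points is not quite enough by itself to get the transfer of the Baire/fragmented class; what I would really use is a more refined construction. The plan is to build $\phi$ recursively, at each stage refining on a relatively open piece of a closed set so as to control the oscillation of $g\circ\phi$. Concretely, for the fragmented case: given $\varepsilon>0$ and a nonempty closed $F\subset N$, I want to find a nonempty relatively open $U\subset F$ and a continuous (or at least well-behaved) partial selection on $\overline{U}$ along which $g$ has oscillation $<\varepsilon$. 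Since $g$ is fragmented on $M$, for the closed set $r^{-1}(F)\subset M$ there is a nonempty relatively open $W\subset r^{-1}(F)$ with $\operatorname{diam} g(W)<\varepsilon$; using that $r$ is a closed map, $r(r^{-1}(F)\setminus W)$ is closed and, by irreducibility, proper in $F$, so $U=F\setminus r(r^{-1}(F)\setminus W)$ is nonempty, relatively open in $F$, and $r^{-1}(y)\subset W$ for every $y\in U$. Hence any selection over $U$ automatically has $g\circ\phi$ of oscillation $<\varepsilon$ there. Iterating this along a decreasing transfinite scheme (exactly as in the proof that fragmented = $\mathcal H_\sigma$-measurable, cf.\ the arguments behind Theorem~\ref{T:a} and \cite{koumou}) partitions $N$ into an $\mathcal H_\sigma$ family of pieces on each of which $g\circ\phi$ can be made to have small oscillation, which yields that $g\circ\phi$ is fragmented once $\phi$ is chosen compatibly on the pieces. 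For the $\Bo_1(N)$ case one runs the same scheme but using that $g\in\Bo_1(M)$ is $(F\wedge G)_\sigma$-measurable: each level set $g^{-1}(V)$ of an open $V$ is a countable union of sets of the form (closed)$\cap$(open); pulling back and pushing forward through the closed irreducible map $r$ turns these into $(F\wedge G)_\sigma$ sets in $N$ (using that $r$ is closed, so $r$ of a closed set is closed, and irreducibility to handle the ``open'' direction), giving $g\circ\phi\in\Bo_1(N)$.

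The remaining care is the bookkeeping needed to make a single global selection $\phi$ from the pieces produced in the recursion. I would organize the recursion as a tree of nonempty relatively clopen (in the relative topology of the stage) pieces whose union at each level is all of $N$, choosing $\phi$ to be locally constant-in-fiber at the deepest relevant level and then passing to a limit; the compactness of $M$ guarantees the pointwise limit lands in $M$, and the fiber condition $r(\phi(y))=y$ is preserved because it is a closed condition. An alternative, cleaner route that avoids transfinite induction: invoke directly the known theorem on liftings of functions of the first Borel class / fragmented functions through continuous surjections (this is essentially the content of \cite{affperf}, already quoted for Lemma~\ref{L:kvocient}) applied ``in reverse'' together with a Michael-type selection $\phi_0\colon N\to M$ that is itself of the first Borel class — such selections exist for continuous surjections of compact spaces by Srivastava/Michael selection theorems — and then compose: $g\circ\phi_0$ is then Borel of an appropriate class, and a further refinement using the Baire-space / irreducibility argument above upgrades it to $\Bo_1(N)$ or fragmented.

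The main obstacle I anticipate is precisely this last upgrading step: a generic first-Borel-class selection $\phi_0$ will only give that $g\circ\phi_0$ is Borel of some bounded class, not necessarily of the first Borel class or fragmented, so the heart of the proof is the oscillation-control argument via irreducibility (the step ``$U=F\setminus r(r^{-1}(F)\setminus W)$ is nonempty'') and the verification that a selection can be chosen globally compatibly with all the small-oscillation pieces simultaneously. That is where I would spend the real effort; everything else (stability of $\Bo_1$ and fragmentedness under restriction, closedness of continuous maps of compacta, existence of irreducible restrictions) is routine and already available in the references cited in the excerpt.
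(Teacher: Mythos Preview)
Your irreducibility argument has a concrete gap: irreducibility of $r\colon M\to N$ does \emph{not} pass to the restriction $r|_{r^{-1}(F)}\colon r^{-1}(F)\to F$ for a closed $F\subset N$, and your key step ``$r(r^{-1}(F)\setminus W)$ is proper in $F$ by irreducibility'' fails without it. For a counterexample, let $M=[0,\tfrac12]\sqcup[\tfrac12,1]$ (disjoint union of two intervals), $N=[0,1]$, and $r$ the obvious gluing of the two copies of $\tfrac12$. Then $r$ is irreducible (any closed $C\subset M$ with $r(C)=N$ must contain $[0,\tfrac12)\times\{0\}$ and $(\tfrac12,1]\times\{1\}$, hence all of $M$ by closedness), but for $F=\{\tfrac12\}$ the fibre $r^{-1}(F)$ has two points and taking $W$ to be one of them gives $r(r^{-1}(F)\setminus W)=F$, so your $U$ is empty. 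You could re-irreducibilize over each $F$ separately, but then the selections built at different stages of your transfinite scheme need not be compatible, and your ``bookkeeping'' paragraph does not address how to glue them into one global $\phi$ while simultaneously controlling oscillation at \emph{every} scale $\varepsilon>0$. The alternative route via a generic first-Borel-class selection you yourself correctly identify as insufficient.

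The paper's proof avoids all of this by working with the measurability characterizations directly. Since $g$ is fragmented (resp.\ in $\Bo_1(M)$), by Theorem~\ref{T:a} each $g^{-1}(U_n)$ for a countable open base $(U_n)$ of $\er$ is a countable union of resolvable (resp.\ $F\wedge G$) sets $F_{n,k}$; one then invokes an existing selection lemma of Holick\'y--Spurn\'y \cite[Lemma~1, resp.\ Lemma~8]{HoSp} which, given any countable family $\A$ of resolvable sets in $M$, produces a selection $\phi$ from $y\mapsto r^{-1}(y)$ with $\phi^{-1}(A)$ resolvable in $N$ for each $A\in\A$. This immediately gives that $(g\circ\phi)^{-1}(U_n)=\bigcup_k\phi^{-1}(F_{n,k})$ is $\H_\sigma$ (resp.\ $(F\wedge G)_\sigma$), hence $g\circ\phi$ is fragmented (resp.\ in $\Bo_1(N)$). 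The entire difficulty you are wrestling with is already packaged inside those cited selection lemmas.
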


\begin{proof}
We first handle the case of fragmented functions.
Let $(U_n)$ be a countable base of open sets in $\er$. By Theorem~\ref{T:a}, each set $g^{-1}(U_n)$ can be written as a countable union of resolvable sets in $M$ called, say, $F_{n,k}$. Let $\A=\{F_{n,k}\setsep n,k\in\en\}$. By \cite[Lemma 1]{HoSp}, there exists a selection $\phi\colon N\to M$ from the multi-valued mapping $y\mapsto r^{-1}(y)$ such that $\phi^{-1}(A)$ is resolvable in $N$  for each $A\in \A$. Then for each $U_n$  the set
\[
(g\circ \phi)^{-1}(U_n)=\phi^{-1}(g^{-1}(U_n))=\phi^{-1}\left(\bigcup_{k=1}^\infty F_{n,k}\right)=\bigcup_{k=1}^\infty \phi^{-1}(F_{n,k})
\]
is a countable union of resolvable sets in $N$. By Theorem~\ref{T:a}, $g\circ\phi$ is fragmented.

If $g\in\Bo_1(M)$, we proceed as above  but we use the selection lemma \cite[Lemma 8]{HoSp} instead.
\end{proof}

\begin{lemma}
\label{l:rozsir-splitface}
Let $X$ be a compact convex set and let $F\subset X$ be a closed split face. Then the following assertions are valid.
\begin{enumerate}[$(a)$]
    \item Both $F$ and $F'$ are measure convex, measure extremal and $F\cup F'$ carries all maximal measures.
    \item $\Upsilon a\in A_s(X)$ whenever $a\in A_s(F)$.
    \item $\Upsilon a\in A_b(X)\cap \Bo_1(X)$ whenever $a\in A_b(A)\cap \Bo_1(A)$.
    \item $\Upsilon a\in A_f(X)$ whenever $a\in A_f(F)$.
\end{enumerate}
\end{lemma}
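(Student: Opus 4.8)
The plan is to prove the four assertions in order, exploiting the fact that a closed split face $F$ is particularly well-behaved. For $(a)$, recall that a closed convex set and a closed extremal set are automatically measure convex and measure extremal by the remarks preceding Lemma~\ref{l:complementarni-facy} (citing \cite[Propositions 2.80 and 2.92]{lmns}); since $F$ is a closed split face, $F'$ is closed as well (it equals $[\lambda_F=0]$ with $\lambda_F$ upper semicontinuous by \eqref{eq:lambda=1*}, hence $F'$ is closed), so both $F$ and $F'$ are measure convex and measure extremal. That $F\cup F'$ carries all maximal measures follows because $F\cup F'\supset\ext X$ and $F\cup F'$ is closed, hence a Baire superset of $\ext X$ (being closed it is a zero set only after passing to $G_\delta$, but in any case a closed superset of $\overline{\ext X}$ carries every maximal measure since maximal measures are supported by $\overline{\ext X}$, see the remarks in Section~\ref{ssc:ccs}). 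Actually the cleanest route: maximal measures are supported by $\overline{\ext X}\subset F\cup F'$ because $F\cup F'$ is closed and contains $\ext X$.

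For $(b)$, $(c)$, $(d)$ I would use the same scheme, based on Lemma~\ref{l:frag-selekce}. Consider the canonical continuous affine surjection $\varphi\colon F\times F'\times[0,1]\to X$ given by $\varphi(y,y',\lambda)=\lambda y+(1-\lambda)y'$ (its surjectivity is exactly $X=\co(F\cup F')$ from $(a)$ via Lemma~\ref{l:complementarni-facy}$(i)$), and the projection $r=\varphi$ viewed as a continuous surjection of the compact space $M=F\times F'\times[0,1]$ onto $N=X$. Given $a$ in the relevant descriptive affine class on $F$, the function $g\colon M\to\er$, $g(y,y',\lambda)=\lambda\,a(y)$, lies in the corresponding class on $M$ ($A_s(M)$, $\Bo_1(M)$, or $\Fr^b(M)$ respectively), using that products and multiplication by the continuous coordinate $\lambda$ preserve these classes — for $A_s$ one writes $a=a_1-a_2$ with $a_i$ lower semicontinuous and notes $\lambda a_1$ is a difference of bounded semicontinuous functions on $M$; for $\Bo_1$ and $\Fr$ one uses that these are algebras closed under the relevant operations (Theorem~\ref{T:b}, Theorem~\ref{T:a}). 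Then apply Lemma~\ref{l:frag-selekce} in the $\Bo_1$ case and (for fragmented functions) to produce a selection $\phi\colon X\to M$ with $\varphi\circ\phi=\mathrm{id}_X$ and $g\circ\phi$ in the same class on $X$. By construction of $\varphi$ and the splitting property, $g\circ\phi=\Upsilon a$ (on $F$ the only preimage has $\lambda=1$, giving value $a(y)$; on $F'$, $\lambda=0$ giving $0$; on $X\setminus(F\cup F')$ the decomposition \eqref{eq:split} is unique so $g\circ\phi(x)=\lambda_F(x)a(y)=\Upsilon a(x)$). Hence $\Upsilon a$ belongs to the desired class. For the $A_s$ case one can either run a direct argument since semicontinuity behaves well under $\varphi$ (using that $\varphi$ is a closed map, as in Lemma~\ref{L:kvocient}$(b)$), or deduce it from the $\Bo_1$ selection combined with the fact that $\Upsilon$ of a bounded semicontinuous function is semicontinuous by a direct limsup/liminf computation.

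There is one subtlety: Lemma~\ref{l:frag-selekce} as stated handles $\Bo_1$ and fragmented functions but not $A_s(X)$ directly, so for $(b)$ I would argue $A_s(F)\subset A_b(F)\cap\Bo_1(F)$ and first get $\Upsilon a\in A_b(X)\cap\Bo_1(X)$, then separately verify it is in $A_s(X)=A_l(X)-A_l(X)$ by showing $\Upsilon$ maps $A_l(F)$ into $A_l(X)$: if $a$ is affine and lower semicontinuous on the closed face $F$, extend it (Tietze) and use that $\Upsilon(a\vee c)$ type approximations, or more directly note $\Upsilon a(x)=\inf\{\,b(x)\setsep b\in A_s(X),\ b\ge \Upsilon a\ \text{on}\ F\cup F'\,\}$ — I will need to be a little careful here and may instead invoke the known fact (e.g.\ from \cite[Chapter II]{alfsen}) that restriction $A_s(X)\to A_s(F)$ is surjective for closed split faces. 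The strong affinity part of $(c)$ and $(d)$ is free: any function in $A_b(X)\cap\Bo_1(X)$ or $A_f(X)$ is automatically strongly affine by \eqref{eq:prvniinkluze}, so once we know $\Upsilon a$ lies in the right descriptive affine class the statement follows; affinity of $\Upsilon a$ itself is Lemma~\ref{l:extense-splitface}.

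The main obstacle I anticipate is precisely the $A_s(X)$ assertion $(b)$: unlike $\Bo_1$ and $\Fr$, the space $A_s(X)$ is not an algebra and the selection lemma does not apply verbatim, so the cleanest approach is to reduce to semicontinuous generators and check directly that $\Upsilon$ preserves affine lower (resp.\ upper) semicontinuity on a closed split face — this uses closedness of $F$ and $F'$ essentially, via the fact that $\lambda_F$ is upper semicontinuous and $x\mapsto\lambda_F(x)y(x)$ behaves well near the boundary faces. Everything else is a routine application of Lemma~\ref{l:frag-selekce} together with the product/algebra stability of $\Bo_1$ and $\Fr$ and the barycentric bookkeeping already used in the proof of Theorem~\ref{t:srhnuti-splitfaceu-metriz}.
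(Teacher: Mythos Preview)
Your proposal has a genuine gap: you repeatedly assume that $F'$ is closed, but this is false in general. Upper semicontinuity of $\lambda_F$ gives that $[\lambda_F\ge c]$ is closed, not $[\lambda_F\le c]$; hence $F'=[\lambda_F=0]$ need not be closed. A concrete counterexample is $X=M_1([0,1])$ with $F=\{\varepsilon_0\}$: then $F'=\{\mu:\mu(\{0\})=0\}$, and $\varepsilon_{1/n}\in F'$ while $\varepsilon_{1/n}\to\varepsilon_0\notin F'$. This error breaks your argument for $(a)$ (you deduce measure convexity and extremality of $F'$ from closedness, and conclude $F\cup F'$ is closed to handle maximal measures), and more seriously it breaks your scheme for $(c)$ and $(d)$: the space $M=F\times F'\times[0,1]$ is not compact, so Lemma~\ref{l:frag-selekce} does not apply.

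The paper circumvents the non-closedness of $F'$ as follows. For $(a)$, it uses that $\lambda_F=(1_F)^*$ is upper semicontinuous, hence strongly affine, so $F=[\lambda_F=1]$ and $F'=[\lambda_F=0]$ are measure convex and measure extremal; that $F\cup F'=[(1_F)^*=1_F]$ carries maximal measures is the Mokobodzki criterion. For $(c)$ and $(d)$, instead of your map from $F\times F'\times[0,1]$, the paper works inside the genuinely compact space $M_1(X)$: it considers closed sets $M_i=\{\omega\in M_1(X):\omega(F)\ge\alpha_i\}$ for a fine partition $0=\alpha_0<\dots<\alpha_n=1$, applies Lemma~\ref{l:frag-selekce} to the function $\tilde a(\omega)=\omega(a\cdot 1_F)$ on each $M_i$ to get selections $\phi_i:r(M_i)\to M_i$, and then patches together a function $b$ defined piecewise on the strata $[\alpha_i\le\lambda_F<\alpha_{i+1}]$. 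This $b$ does not equal $\Upsilon a$ exactly (as your approach would have given had $F'$ been compact), but a barycentric estimate shows $\|b-\Upsilon a\|<\varepsilon$; since $\Bo_1$ and $\Fr$ are uniformly closed, this suffices. For $(b)$, the paper shows directly that for $a\ge0$ upper semicontinuous one has $\Upsilon a=d^*$ where $d=a\cdot 1_F$, hence $\Upsilon a$ is upper semicontinuous; note that $\Upsilon$ does \emph{not} preserve lower semicontinuity (e.g.\ $\Upsilon 1_F=\lambda_F$ is only upper semicontinuous), so your suggestion to check $\Upsilon(A_l(F))\subset A_l(X)$ would also fail.
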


\begin{proof}
$(a)$: The face $F$, being closed, is measure convex. Further, by \cite[Proposition II.6.5 and Proposition II.6.9]{alfsen} we deduce that $\lambda_F=(1_F)^*$, so it is upper semicontinuous. It follows that $\lambda_F$ is strongly affine by \eqref{eq:prvniinkluze}, hence we easily get that both $F$ and $F'$ are measure convex and measure extremal. Further,
$$F\cup F'=[\lambda_F=1]\cup[\lambda_F=0]=[(1_F)^*=1_F],$$
so this set carries each maximal measure by \cite[Theorem 3.58]{lmns} (as $1_F$ is upper semicontinuous).


\smallskip

Let us proceed to $(b)-(d)$: Since $\Upsilon 1_F=1_F^*\in A_s(X)\subset A_b(X)\cap \Bo_1(X)\subset A_f(X)$ and $\Upsilon$ is linear, it is enough to prove the statements assuming $0\le a\le1$.

Given $x\in X$, let $y(x)\in F$ and $y'(x)\in F'$ be such that \eqref{eq:split} is fulfilled. We recall that in this case $\Upsilon a(x)=\lambda_F(x) a(y(x))$ for any $a\in A_b(X)$. Further, it follows from Lemma~\ref{L:mc-partial-sa} that
$$\Upsilon a(x)=\int_F a\di\omega\mbox{\quad for any maximal }\omega\in M_1(X).$$
In other words, if we set
\[
d=\begin{cases} a,&\text{on } F,\\
0,&\text{on }X\setminus F,
\end{cases}
\]
then for any maximal $\omega\in M_1(X)$ we have
$$\Upsilon a(x)=\int_X d\di\omega.$$

Now we verify that $\Upsilon a$ is in the relevant function system.

$(b)$: It is enough to show that $\Upsilon a\in A_s(X)$ for any positive upper semicontinuous affine function $a$ on $F$.
In this case 
$d$ is an upper semicontinuous convex function and $\Upsilon a=d^*$. Indeed, let $x\in X$ be given. Let $\nu$ be a maximal measure representing $x$. Then
$$\Upsilon a(x)=\int_F a\di \nu=\nu(d)=\nu(d^*)\ge d^*(x),$$
where we used \cite[Theorem 3.58]{lmns} and the fact that $d^*$ is an upper semicontinuous concave function. On the other hand, $\nu(d)\le d^*(x)$ by \cite[Corollary I.3.6 and the following remark]{alfsen}.
Hence, we conclude that $\Upsilon a$ is upper semicontinuous.

$(c)$: Assume $a\in A_b(F)\cap\Bo_1(F)$. Since the space $\Bo_1(X)$ is uniformly closed, it is enough to show that $\Upsilon a$ may be uniformly approximated by functions from $\Bo_1(X)$.  To this end, fix $\ep>0$. Let  $0=\alpha_0<\alpha_1<\cdots<\alpha_n=1$ be a division of $[0,1]$ with $\alpha_{i}-\alpha_{i-1}<\ep$, $i\in \{1,\dots, n\}$.

For each $i\in\{0,\dots, n-1\}$ we consider the set
\[
M_i=\{\omega\in M_1(X)\setsep \omega(F)\ge \alpha_i\}.
\]
Then each $M_i$ is a closed set in $M_1(X)$ and $N_i=r(M_i)$ is a closed set containing $[\alpha_i\le \lambda_F<\alpha_{i+1}]$.
Let $\tilde{a}\colon M_1(X)\to\er$ be defined as 
$$\tilde{a}(\omega)=\omega|_F(a)=\omega(d),\quad\omega\in M_1(X).$$
Since $d\in\Bo_1(X)$, \cite[Proposition 5.30]{lmns} implies that $\tilde{a}\in\Bo_1(M_1(X))$. In particular, $\tilde{a}|_{M_i}\in \Bo_1(M_i)$ for $i\in\{0,\dots, n-1\}$.

We use Lemma~\ref{l:frag-selekce}  to find a selection function $\phi_i\colon N_i\to M_i$ such that $\tilde{a}\circ \phi_i\in \Bo_1(N_i)$. 
We define the sought function $b$ as 
\[
b(x)=
\begin{cases}
    a(x) & x\in F,\\
(\tilde{a}\circ \phi_i)(x)& x\in [\alpha_i\le \lambda_F<\alpha_{i+1}], i\in \{0,\dots,n-1\}.\end{cases}
\]
Clearly, the restriction of $b$ to each of the sets $[\alpha_i\le \lambda_F<\alpha_{i+1}]$ is of the first Borel class. By the very assumption $b|_F=a$ is also of the first Borel class. 
Since $\lambda_F$ is upper semicontinuous, the sets $[\alpha_i\le \lambda_F<\alpha_{i+1}]$ are of the form $U\cap H$, where $U$ open and $H$ closed. Thus $b\in \Bo_1(X)$. 

We are going to show that $\norm{b-\Upsilon a}_\infty<\ep$. To this end, fix $x\in X$. If $x\in F$, then $b(x)-\Upsilon a(x)=0$.


If $x\in X\setminus F$, we fix $i\in\{0,\dots,n-1\}$ with $x\in  [\alpha_i\le \lambda_F<\alpha_{i+1}]$.
We set $\mu=\phi_i(x)$ and decompose it as $\mu=\mu|_F+\mu|_{X\setminus F}$. Let $\nu_1$ and $\nu_2$ be maximal measures with $\mu|_F\prec \nu_1$ and $\mu|_{X\setminus F}\prec \nu_2$. Then $\nu=\nu_1+\nu_2$ is a maximal measure with $\mu\prec \nu$. Hence $r(\nu)=r(\mu)=x$.
If $\mu(F)=0$, then $\nu_1=0$. If $\mu(F)>0$, then -- using the fact that $F$ is measure convex and measure extremal -- we deduce that $\nu_1$ is carried by $F$. Thus we have
\[\begin{gathered}
    \alpha_i\le \mu(F)=\nu_1(F)\quad \text{and}\quad\\ \nu_1(F)+\nu_2(F)=\nu(F)=\int 1_F\di\nu\le\int 1_F\di\mu=\mu(F)<\alpha_{i+1},
\end{gathered}\]
where we used that $\mu\prec\nu$ and $1_F$ is an upper semicontinuous concave function.
We deduce that $\nu_2(F)<\alpha_{i+1}-\alpha_i$.
Since $a$ is strongly affine, we have $\mu|_F(a)=\nu_1(a)$.
Hence
\[
\begin{aligned}
\abs{b(x)-\Upsilon a(x)}&=\abs{(\tilde{a}\circ \phi_i)(x)-\Upsilon a(x)}=\abs{\mu|_F(a)-\nu|_F(a)}\\
&=\abs{\nu_1(a)-\nu_1(a)-\nu_2|_F(a)}\le \nu_2(F)<\alpha_{i+1}-\alpha_i.
\end{aligned}
\]
This completes the proof that $\norm{b-\Upsilon a}<\ep$.
Thus $\Upsilon a$ is the uniform limit of a sequence of functions in $\Bo_1(X)$, which implies that $\Upsilon a\in \Bo_1(X)$ as well.

$(d)$: Assume $a\in A_f(F)$. We proceed as in the proof of $(c),$ we only use \cite[Lemma 3.2]{lusp} to show that $\tilde{a}$ is fragmented and subsequently Lemma~\ref{l:frag-selekce} to find selections $\phi_i$ such that the functions $\tilde{a}\circ\phi_i$ are fragmented. The resulting function $b$ is then fragmented as well. So, $\Upsilon a$ can be uniformly approximated by fragmented functions, so $\Upsilon a$ is fragmented.
\end{proof}

The following statement follows from the previous lemma together with Observation~\ref{obs:operator rozsireni}.

\begin{cor}
   Let $X$ be a compact convex set and let $F\subset X$ be a closed split face. Then the following assertions are valid.
\begin{enumerate}[$(a)$]
     \item   $\Upsilon a\in \overline{(A_s(X)}$ whenever $a\in \overline{A_s(F)}$.
     \item $\Upsilon a\in (A_s(X))^\mu$ whenever $a\in (A_s(F))^\mu$.
     \item $\Upsilon a\in (A_s(X))^\sigma$ whenever $a\in (A_s(F))^\sigma$.
    \item $\Upsilon a\in (A_b(X)\cap \Bo_1(X))^\mu$ whenever $a\in (A_b(F)\cap \Bo_1(F))^\mu$.
    \item  $\Upsilon a\in (A_f(X))^\mu$ whenever $a\in (A_f(F))^\mu$.
\end{enumerate} 
\end{cor}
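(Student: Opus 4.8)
The corollary is a direct consequence of Lemma~\ref{l:rozsir-splitface} combined with Observation~\ref{obs:operator rozsireni}, specifically parts $(i)$ and $(ii)$ of the latter. The key point is that for a closed split face $F\subset X$, the extension operator $\Upsilon=\Upsilon_F$ is a positive linear operator commuting with pointwise limits of bounded sequences, and Lemma~\ref{l:rozsir-splitface} tells us that $\Upsilon$ sends $A_s(F)$ into $A_s(X)$, $A_b(F)\cap\Bo_1(F)$ into $A_b(X)\cap\Bo_1(X)$, and $A_f(F)$ into $A_f(X)$. Each of the five assertions is then obtained by transferring one of these three base cases along the appropriate closure operation.

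First I would fix a closed split face $F\subset X$ and recall from Observation~\ref{obs:operator rozsireni}$(iii)$ that $\Upsilon$ is an isometry of $A_b(F)$ into $A_b(X)$, so it preserves boundedness and uniform Cauchy sequences; from Observation~\ref{obs:operator rozsireni}$(i)$ that $\Upsilon$ is linear and positive, hence monotone; and from Observation~\ref{obs:operator rozsireni}$(ii)$ that $\Upsilon a_n\to\Upsilon a$ pointwise on $X$ whenever $a_n\to a$ pointwise on $F$ with $(a_n)$ bounded. Then I would argue as follows. For $(a)$: if $a\in\overline{A_s(F)}$, pick $a_n\in A_s(F)$ with $a_n\to a$ uniformly on $F$; then $\Upsilon a_n\in A_s(X)$ by Lemma~\ref{l:rozsir-splitface}$(b)$, and since $\Upsilon$ is an isometry, $\Upsilon a_n\to\Upsilon a$ uniformly on $X$, so $\Upsilon a\in\overline{A_s(X)}$. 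For $(b)$: if $a\in(A_s(F))^\mu$, then $\Upsilon a$ lies in the smallest family containing $\Upsilon(A_s(F))\subset A_s(X)$ and closed under pointwise limits of bounded monotone sequences; using linearity, positivity and the sequential continuity of $\Upsilon$ one checks that the preimage under $\Upsilon$ of $(A_s(X))^\mu$ is a family containing $A_s(F)$ and closed under bounded monotone pointwise limits, hence it contains $(A_s(F))^\mu$. (Concretely: the set $\mathcal G=\{g\in A_b(F)\setsep \Upsilon g\in (A_s(X))^\mu\}$ contains $A_s(F)$ by Lemma~\ref{l:rozsir-splitface}$(b)$, and if $g_n\nearrow g$ or $g_n\searrow g$ boundedly with $g_n\in\mathcal G$, then $\Upsilon g_n$ is bounded monotone with limit $\Upsilon g$, so $\Upsilon g\in(A_s(X))^\mu$; hence $\mathcal G\supset(A_s(F))^\mu$.) Assertions $(c)$, $(d)$, $(e)$ are proved exactly the same way, replacing the monotone closure by the sequential closure $(\cdot)^\sigma$ for $(c)$, starting from $A_b(F)\cap\Bo_1(F)\subset A_b(X)\cap\Bo_1(X)$ (Lemma~\ref{l:rozsir-splitface}$(c)$) and taking $(\cdot)^\mu$ for $(d)$, and starting from $A_f(F)\subset A_f(X)$ (Lemma~\ref{l:rozsir-splitface}$(d)$) and taking $(\cdot)^\mu$ for $(e)$. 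In the $(\cdot)^\sigma$ case one uses that $\Upsilon$ carries bounded pointwise converging sequences to bounded pointwise converging sequences, which follows from Observation~\ref{obs:operator rozsireni}$(ii)$ together with linearity (writing an arbitrary bounded convergent sequence in terms of its $\liminf$ and $\limsup$, or more simply noting that boundedness of $(a_n)$ forces $(\Upsilon a_n)$ bounded and pointwise convergence of $(a_n)$ on $F$ forces pointwise convergence of $(\Upsilon a_n)$ on $X$ by applying $(ii)$ to the monotone auxiliary sequences $\sup_{k\ge n}a_k$ and $\inf_{k\ge n}a_k$, which lie in the relevant spaces since these are lattices).

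I do not anticipate any real obstacle here; the whole proof is a routine ``transfer along a closure operator'' argument. The only point requiring a little care is the $(\cdot)^\sigma$ case in $(c)$: one must know that $A_b(F)\cap\Bo_1(F)$ (equivalently, the bounded $\FpG_\sigma$-measurable functions on $F$) is stable under the operations $\sup_{k\ge n}$ and $\inf_{k\ge n}$, so that the auxiliary monotone sequences needed to invoke Observation~\ref{obs:operator rozsireni}$(ii)$ stay inside the space — but this is immediate since that space is a lattice. Everything else is a direct induction on the respective closure operation, and the corollary follows.
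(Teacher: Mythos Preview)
Your approach is correct and matches the paper's: both derive the corollary directly from Lemma~\ref{l:rozsir-splitface} together with Observation~\ref{obs:operator rozsireni}, via the standard ``preimage of a closure-stable family is closure-stable'' induction.

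One small correction to your final paragraph: Observation~\ref{obs:operator rozsireni}$(ii)$ already asserts that $\Upsilon$ preserves pointwise convergence of \emph{arbitrary} sequences of affine functions, not just monotone ones. Combined with the isometry property $(iii)$, this immediately gives that $\Upsilon$ carries bounded pointwise convergent sequences to bounded pointwise convergent sequences, so the $(\cdot)^\sigma$ case in $(c)$ needs no extra care. Your detour through the auxiliary sequences $\sup_{k\ge n}a_k$ and $\inf_{k\ge n}a_k$ is both unnecessary and would not work as stated: the spaces $A_s(F)$ and $A_b(F)\cap\Bo_1(F)$ consist of \emph{affine} functions and are not lattices under pointwise operations.
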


\section{Strongly affine Baire functions}\label{sec:baire}

In this section we focus on  canonical intermediate function spaces of Baire functions and mulitpliers on them. We start by comparing four of such spaces which are closed with respect to pointwise limits of monotone sequences.

\begin{prop}
\label{P:Baire-srovnani}
Let $X$ be a compact convex set. Then we have the following:
\begin{enumerate}[$(i)$]
    \item   $(A_c(X))^\mu\subset(A_1(X))^\mu\subset(A_c(X))^\sigma\subset A_{sa}(X)\cap \Ba^b(X)$.
    \item  Any of the inclusion from assertion (i) may be strict.
    \item If $X$ is a simplex, the four subspaces from assertion $(i)$ coincide.
\end{enumerate}
\end{prop}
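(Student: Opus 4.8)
\smallskip

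For assertion $(i)$, the plan is to verify the four inclusions in turn. The first inclusion $(A_c(X))^\mu\subset (A_1(X))^\mu$ is immediate, since $A_c(X)\subset A_1(X)$ and the operation $F\mapsto F^\mu$ is monotone. For the middle inclusion $(A_1(X))^\mu\subset (A_c(X))^\sigma$, I would first note that by the Mokobodzki theorem (quoted in Section~\ref{ssc:meziprostory}) every affine Baire-one function is a pointwise limit of a bounded sequence from $A_c(X)$, so $A_1(X)\subset (A_c(X))^\sigma$; then, since $(A_c(X))^\sigma$ is closed with respect to pointwise limits of monotone bounded sequences, applying the operation $F\mapsto F^\mu$ to both sides of $A_1(X)\subset (A_c(X))^\sigma$ gives $(A_1(X))^\mu\subset ((A_c(X))^\sigma)^\mu=(A_c(X))^\sigma$. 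For the last inclusion, I would recall that $A_{sa}(X)\cap\Ba^b(X)$ is a linear subspace of $A_b(X)$ (bounded strongly affine Baire functions) which contains $A_c(X)$ and is closed under pointwise limits of bounded sequences (by the Lebesgue dominated convergence theorem for the strong-affinity part and by stability of the Baire class under sequential limits); hence it contains the sequential closure $(A_c(X))^\sigma$.

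\smallskip

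For assertion $(iii)$, suppose $X$ is a simplex. It suffices to prove $A_{sa}(X)\cap\Ba^b(X)\subset (A_c(X))^\mu$, since the reverse chain of inclusions is already in $(i)$. The plan is to invoke a known representation/approximation result for strongly affine Baire functions on a simplex: on a simplex, the spaces of strongly affine Baire functions and of functions obtained from $A_c(X)$ by iterated bounded monotone sequential limits coincide. Concretely, I would proceed by transfinite induction on the Baire class of $f\in A_{sa}(X)\cap\Ba^b(X)$; using that $X$ is a simplex one has good control of upper and lower envelopes (the envelope of an affine bounded function is affine on a simplex, and the relevant Mokobodzki-type machinery, cf.\ \cite[Theorem 4.24]{lmns} and the results around \cite[Theorem 6.8]{lmns} and \cite[Theorem 9.12]{lmns}, applies) so that each strongly affine semicontinuous function lies in $A_1(X)^\mu=A_c(X)^\mu$, and then one climbs the Baire hierarchy staying inside $(A_c(X))^\mu$ because the latter is $\mu$-closed and, being a lattice on a simplex, $\mu$-closed equals $\sigma$-closed (Lemma~\ref{L:lattice}(b) together with the lattice property). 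The key point is that on a simplex $A_{sa}(X)$ behaves like a lattice with respect to the natural operations, which collapses the distinction between $F^\mu$ and $F^\sigma$.

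\smallskip

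For assertion $(ii)$, the plan is to exhibit concrete compact convex sets witnessing strictness of each inclusion; the natural source is the Bauer-simplex constructions and the ``porcupine''/Stacey-type examples referenced later in the paper (Section~\ref{sec:strange} and Section~\ref{sec:stacey}), or classical examples. For $(A_c(X))^\mu\subsetneqq (A_1(X))^\mu$ one needs an affine Baire-one function which is not a monotone sequential limit of continuous affine functions (so $X$ should not be a simplex, by $(iii)$); a square in the plane or a suitable finite-dimensional polytope already does this. For $(A_1(X))^\mu\subsetneqq (A_c(X))^\sigma$ one needs a strongly affine function realised as an $F_{\sigma\delta}$-type limit but not as a monotone limit of Baire-one affine functions; and for $(A_c(X))^\sigma\subsetneqq A_{sa}(X)\cap\Ba^b(X)$ one needs a strongly affine Baire function of high Baire class that is not in the sequential closure of $A_c(X)$, for which one again exploits non-simpliciality. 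I would present the simplest available witnesses and refer forward to the detailed examples where the verification is carried out.

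\smallskip

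The main obstacle I anticipate is assertion $(iii)$: making the transfinite induction on Baire classes rigorous requires carefully quoting the right simplex-specific results (existence of strongly affine Baire solutions to the Dirichlet-type problem on a simplex, behaviour of envelopes, and the Mokobodzki-type passage from semicontinuous to Baire-one on a simplex) and checking that $(A_c(X))^\mu$ is closed under all the limit operations used. Everything in $(i)$ is routine once the Mokobodzki theorem and the $\mu$-/$\sigma$-closedness of the ambient spaces are in hand; $(ii)$ is a matter of pointing to the right examples rather than proving anything new here.
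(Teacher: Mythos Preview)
Your treatment of $(i)$ is fine; the paper just declares it obvious, and your expansion via Mokobodzki's theorem is exactly the intended reason for the middle inclusion.

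For $(iii)$, your transfinite induction on Baire class has a real gap: a strongly affine function of Baire class $\alpha$ is a pointwise limit of Baire functions of lower class, but those approximants are \emph{not} affine in general, so you cannot stay inside the affine hierarchy as you climb. The lattice argument you sketch (invoking Lemma~\ref{L:lattice}(b)) does not apply either, since $(A_c(X))^\mu$ is a subspace of affine functions, not a subalgebra of $\ell^\infty$, and you do not justify why it should be a lattice on a general simplex. The paper proceeds differently and far more directly: given $a\in A_{sa}(X)\cap\Ba^b(X)$, it uses the separable reduction \cite[Theorem 9.12]{lmns} (which you mention but only as a side reference) to factor $a=\tilde a\circ\varphi$ through a \emph{metrizable} simplex $Y$, observes that $\tilde a$ is strongly affine Baire on $Y$, and then invokes \cite[Corollary]{alfsen-note-scand}, which gives $\tilde a\in (A_c(Y))^\mu$ on a metrizable simplex; composing back with $\varphi$ yields $a\in (A_c(X))^\mu$.

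For $(ii)$, your concrete suggestion is wrong: on a square in the plane (or any finite-dimensional polytope) every bounded affine function is continuous, so $A_c(X)=A_1(X)=A_b(X)$ and all four spaces coincide --- no strict inclusion is witnessed. The paper's examples are of a quite different nature: it takes $X_1=(B_{C([0,1])^*},w^*)$, which is centrally symmetric, and uses Lemma~\ref{L:symetricke X} to show that on such $X_1$ one has $(A_c(X_1))^\mu=A_c(X_1)$ and that every element of $(A_1(X_1))^\mu$ has a dense $G_\delta$ set of continuity points; this separates the first three spaces via the functions $\mu\mapsto\mu(\{0\})$ and $\mu\mapsto\mu([0,1]\cap\qe)$. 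For the last strict inclusion the paper takes $X_2=(B_{E^*},w^*)$ with $E$ the Schur space from \cite{talagrand}, so that $(A_c(X_2))^\sigma=A_c(X_2)$ while Talagrand's functional gives a strongly affine second-Baire-class function outside $A_c(X_2)$. Finally $X=X_1\times X_2$ realises all three strict inclusions simultaneously.
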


To provide examples proving assertion $(ii)$ of the previous proposition we will use the following lemmma.

\begin{lemma}\label{L:symetricke X}
Let $X$ be a compact convex subset of a locally convex space which is symmetric (i.e., $X=-X$). Let $f\in A_b(X)$ and $x\in X$ be given. Then we have the following:
\begin{enumerate}[$(i)$]
    \item  If $f$ is lower semicontinuous at $x$, then $f$ is upper semicontinuous at $-x$.
    \item If $f$ is lower semicontinuous both at $x$ and at $-x$, then $f$ is continuous at $x$.
    \item Let $(f_\alpha)$ be a non-decreasing net in $A_b(X)$ with $f_\alpha\nearrow f$. If each $f_\alpha$ is continuous at $x$, then $f$ is continuous at $x$ as well.
\end{enumerate}
\end{lemma}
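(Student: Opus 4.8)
The three assertions are local statements about a function $f$ at a point $x$ in a symmetric compact convex set $X$, so the key structural fact to exploit is the affine involution $\sigma\colon z\mapsto -z$, which is a homeomorphism of $X$ onto itself. I would first record the trivial but crucial observation that $f\circ\sigma$ is again a bounded affine function on $X$: indeed, for $z=\frac12(z_1+z_2)$ we have $-z=\frac12(-z_1-z_2)$, so affinity is preserved, and $\sigma$ being a homeomorphism preserves boundedness. Moreover, since $f$ is affine and $X=-X$, we have $f(z)+f(-z)=2f(0)$ for all $z\in X$ (apply affinity to $0=\frac12(z+(-z))$), hence $f\circ\sigma = 2f(0)-f$. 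This single identity is the engine for all three parts.

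For part $(i)$: if $f$ is lower semicontinuous at $x$, then for every $c<f(x)$ there is a neighbourhood $U$ of $x$ with $f>c$ on $U$. Applying the identity $f = 2f(0)-(f\circ\sigma)$, lower semicontinuity of $f$ at $x$ is equivalent to upper semicontinuity of $f\circ\sigma$ at $x$, which (since $\sigma$ is a homeomorphism sending $x$ to $-x$) is equivalent to upper semicontinuity of $f$ at $-x$. I would write this out cleanly with $\ep$-neighbourhoods: given $c>f(-x)=2f(0)-f(x)$, set $c'=2f(0)-c<f(x)$; lower semicontinuity at $x$ gives a neighbourhood $U\ni x$ with $f>c'$ on $U$; then $\sigma(U)$ is a neighbourhood of $-x$ on which $f=2f(0)-(f\circ\sigma)<2f(0)-c'=c$, proving upper semicontinuity of $f$ at $-x$.

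Part $(ii)$ is then immediate by combining $(i)$ with its mirror image: lower semicontinuity of $f$ at $-x$ gives, via $(i)$ applied with $x$ replaced by $-x$, upper semicontinuity of $f$ at $-(-x)=x$; together with the assumed lower semicontinuity of $f$ at $x$ this yields continuity at $x$. For part $(iii)$: each $f_\alpha$ is continuous at $x$, in particular lower semicontinuous at $x$; since $f=\sup_\alpha f_\alpha$ is a pointwise supremum of functions lower semicontinuous at $x$, and suprema of lower semicontinuous functions are lower semicontinuous, $f$ is lower semicontinuous at $x$. On the other hand each $f_\alpha$ is also continuous at $-x$, hence lower semicontinuous at $-x$, so by the same argument $f$ is lower semicontinuous at $-x$. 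Now apply part $(ii)$ to conclude $f$ is continuous at $x$. (I should double-check that $f=\sup_\alpha f_\alpha$ indeed holds pointwise — this is exactly what $f_\alpha\nearrow f$ means — and that $f\in A_b(X)$ so that $(ii)$ applies, which is given in the hypothesis.)

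\textbf{Main obstacle.} There is no real obstacle here; the only thing requiring a little care is making the semicontinuity bookkeeping precise, since "lower semicontinuous at a point" must be handled with the correct direction of inequalities, and one must be careful that the supremum in $(iii)$ is genuinely pointwise (true by definition of $\nearrow$) rather than merely an order-theoretic supremum in $A_b(X)$. The identity $f\circ\sigma=2f(0)-f$ does all the work, so the proof is short; the risk is only in sloppy quantifier handling, which I would avoid by phrasing everything in terms of explicit neighbourhoods and the homeomorphism $\sigma$.
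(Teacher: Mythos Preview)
Your proof is correct and follows essentially the same route as the paper: both arguments hinge on the identity $f(z)+f(-z)=2f(0)$ (equivalently $f\circ\sigma=2f(0)-f$) and use it to translate lower semicontinuity at $x$ into upper semicontinuity at $-x$, deducing $(ii)$ from $(i)$ and $(iii)$ from $(ii)$ via the preservation of lower semicontinuity under pointwise suprema. The only small point to tighten is your assertion in $(iii)$ that ``each $f_\alpha$ is also continuous at $-x$'': this is not part of the hypothesis but follows by applying $(i)$ to both $f_\alpha$ and $-f_\alpha$ (exactly as the paper does), so you should spell that out.
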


\begin{proof}
    $(i)$: Fix $\ep>0$. Then there is $U$, a neighborhood of $x$, such that $f(y)>f(x)-\ep$ for each $y\in U$. Then $-U$ is a neighborhood of $-x$ and for any $y\in -U$ we have
    $$f(y)=2f(0)-f(-y)<2f(0)-(f(x)-\ep)=f(-x)+\ep,$$
    where we used that $-y\in U$ and the assumption that $f$ is affine. This completes the proof.

    Assertion $(ii)$ follows immediately from $(i)$.

    $(iii)$ By applying $(i)$ both to $f_\alpha$ and $-f_\alpha$ we deduce that $f_\alpha$ is continuous also at $-x$. Now it follows that $f$ is lower semicontinuous both at $x$ and at $-x$, so it is continuous at $x$ by $(ii)$.
\end{proof}

Now we are ready to prove the above proposition:

\begin{proof}[Proof of Proposition~\ref{P:Baire-srovnani}]
Assertion $(i)$ is obvious. Let us prove assertion $(ii)$. 
Let $X_1=(B_{C([0,1])^*},w^*)$. Then $X_1$ is symmetric, so Lemma~\ref{L:symetricke X}$(iii)$ yields $(A_c(X_1))^\mu=A_c(X_1)$. Note that $X_1$ may be represented as the space of signed Radon measures
on $[0,1]$ with total variation at most $1$. The function $f_1(\mu)=\mu(\{0\})$ clearly belongs to $A_1(X_1)$ and is not continuous. Hence $(A_c(X_1))^\mu\subsetneqq (A_1(X_1))^\mu$.

Further, any $f\in A_1(X_1)$ is continuous at all points of a dense $G_\delta$-subset of $X_1$. It easily follows from Lemma~\ref{L:symetricke X}$(iii)$ (using the Baire category theorem) that the same is true for any $f\in (A_1(X_1))^\mu$. Define the function $f_2$ by $f_2(\mu)=\mu([0,1]\cap \qe)$. Then $f_2\in A_2(X_1)\subset (A_c(X))^\sigma$, but it has no point of continuity. Indeed, both sets
$$\begin{aligned} 
E_0&=\{\mu\in X_1\setsep \mu\mbox{ is discrete and }\abs{\mu}([0,1]\setminus \qe)=1\},\\
E_1&=\{\mu\in X_1\setsep\abs{\mu}([0,1]\cap\qe)=1\}\end{aligned}$$
are dense in $X_1$, $f_2|_{E_0}=0$ and $f_2|_{E_1}=1$. We deduce that $(A_1(X_1))^\mu\subsetneqq (A_c(X_1))^\sigma$.

Further, let $E$ be the Banach space provided by \cite{talagrand} and $X_2=(B_{E^*},w^*)$.
Then $(A_c(X_2))^\sigma\subsetneqq A_{sa}(X_2)\cap \Ba^b(X_2)$. Indeed, $E$ has the Schur property and hence $(A_c(X_2))^\sigma=A_c(X_2)$ and in \cite{talagrand} an element $\phi\in E^{**}\setminus E$ is constructed such that $\phi|_{X_2}$ is strongly affine and of the second Baire class.

Finally, if we set $X=X_1\times X_2$, it is easy to check that
 $$(A_c(X))^\mu\subsetneqq(A_1(X))^\mu\subsetneqq(A_c(X))^\sigma\subsetneqq A_{sa}(X)\cap \Ba^b(X).$$

$(iii)$: Let $a\in A_{sa}(X)\cap \Ba^b(X)$ be given.  Using \cite[Theorem 9.12]{lmns} we find a metrizable simplex $Y$ along with a continuous affine surjection $\varphi\colon X\to Y$ such that there exists a Baire function $\tilde{a}\colon Y\to \er$ with $a=\tilde{a}\circ \varphi$.
Then $\tilde{a}$ is a strongly affine Baire function on $Y$ by \cite[Proposition 5.29]{lmns}. By \cite[Corollary]{alfsen-note-scand}, $\tilde{a}\in (A_c(Y))^\mu$.
Hence $a=\tilde{a}\circ \varphi\in (A_c(X))^\mu$.
\end{proof}

\begin{remark}
 It is possible to consider a more detailed transfinite hierarchy of spaces of strongly affine Baire functions. In particular, we may consider spaces $(A_\alpha(X))^\mu$, $(A_{sa}(X)\cap \Ba_\alpha(X))^\mu$ or  $(A_{sa}(X)\cap \Ba_\alpha(X))^\sigma$, where $\alpha\in[2,\omega_1)$ is an ordinal. It is probably not known whether all these spaces may be different.
\end{remark}

Next we focus on describing the spaces of multipliers. Recall that 
 by Proposition~\ref{P:rovnostmulti} $M^s(H)=M(H)$ holds for any $H\subset \Ba(X)\cap A_{sa}(X)$, in particular for the four spaces addressed above. Further, by \cite[Proposition 4.4]{smith-london} we know that $M((A_c(X))^\sigma)=Z((A_c(X))^\sigma)$. We obtain from the proof of \cite[Proposition 4.9]{smith-london} that $M((A_c(X))^\mu)=Z((A_c(X))^\mu)$ as well. It seems not to be clear whether an analogous equality holds for the remaining two spaces:

\begin{ques} Let $X$ be a compact convex set
    Let $H=(A_1(X))^\mu$ or $H=\Ba(X)\cap A_{sa}(X)$. Is $M(H)=Z(H)$?
\end{ques}

 The four above mentioned spaces are intermediate function spaces closed under monotone limits, hence Theorem~\ref{T:integral representation H} applies to each of them. We shall look in 
 more detail at the respective $\sigma$-algebras.

\begin{thm}\label{T:baire-multipliers} 
Let $X$ be a compact convex set and let $H=A_{sa}(X)\cap\Ba(X)$. Then we have the following.
\begin{enumerate}[$(a)$]
     \item 
    $\begin{aligned}[t]
           \A_H&=\A^s_H=\ms_H\\&=\{ F\cap \ext X \setsep F\mbox{ is a split face, $F$ and $F'$ are Baire and measure convex}\}. \end{aligned}$
           
    \noindent{}If $X$ is a simplex, then $$\A_H=\Z_H=\{[f=1]\cap\ext X\setsep f\in A_{sa}(X)\cap\Ba(X), f(\ext X)\subset\{0,1\}\}.$$
    \item $M(H')\subset M(H)$ for each intermediate function space $H' \subset H $.
\end{enumerate}
    \end{thm}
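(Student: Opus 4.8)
\textbf{Proof proposal for Theorem~\ref{T:baire-multipliers}.}

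The plan is to prove the two assertions in the order stated, using the abstract machinery from Sections~\ref{s:reprez-abstraktni}, \ref{s:meas sm}, and the split-face results from Section~\ref{ss:extension-splitfaces}. First I would set up the framework: the space $H=A_{sa}(X)\cap\Ba(X)$ is an intermediate function space which satisfies $H^\mu=H$ (it is closed under monotone limits of bounded sequences since both strong affinity and being a Baire function are preserved under such limits, the former by the Lebesgue monotone convergence theorem), and it is determined by extreme points (this is the classical fact, recalled at the start of Section~\ref{s:determined}, that any Baire set containing $\ext X$ carries all maximal measures). Hence $H$ falls under the hypotheses of Theorem~\ref{T:integral representation H}, so $\A_H$ is a $\sigma$-algebra and $\A_H=\A_H^s$ by Proposition~\ref{P:rovnostmulti} (since $H\subset\Ba^b(X)$, whence $M(H)=M^s(H)$ and the two families $\A_H$, $\A_H^s$ coincide). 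Also $H\subset A_{sa}(X)$ and $H^\uparrow$ is determined by extreme points (again because $H^\uparrow$ consists of Baire functions containing $\ext X$ in their relevant measurable sets; more directly $H^\uparrow\subset H=H^\mu$). Therefore Theorem~\ref{T:meritelnost-strongmulti} applies and gives $\A_H^s=\ms_H$, using moreover that $\ms_H\subset\A_H^s$ always and the reverse inclusion from that theorem. This already yields the chain $\A_H=\A_H^s=\ms_H$.

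Next I would identify $\ms_H$ explicitly. By definition $\ms_H=\{F\cap\ext X\setsep F\text{ split face},\ \lambda_F\in H^\uparrow\}$. The claim is that this equals the family of $F\cap\ext X$ where $F$ is a split face with both $F$ and $F'$ Baire and measure convex. For the inclusion ``$\subset$'': if $\lambda_F\in H^\uparrow$, then $\lambda_F$ is a Baire function (pointwise limit of a monotone bounded sequence of Baire functions is Baire) and strongly affine (same limit argument, or directly $H^\uparrow\subset H$), so $F=[\lambda_F=1]$ and $F'=[\lambda_F=0]$ are Baire and measure convex — this is exactly the content of Proposition~\ref{p:shrnuti-splitfaceu-baire}, implication $(iv)\implies(i)$. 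For ``$\supset$'': if $F$ is a split face with $F,F'$ Baire and measure convex, then by Proposition~\ref{p:shrnuti-splitfaceu-baire}, implication $(i)\implies(iv)$, the function $\lambda_F$ is strongly affine and Baire, i.e.\ $\lambda_F\in H=H^\mu$. But I need $\lambda_F\in H^\uparrow$, not merely $\lambda_F\in H$; here I would note that $\lambda_F=\Upsilon_F 1_F$ and apply Proposition~\ref{p:shrnuti-splitfaceu-baire}$(i)\Rightarrow$ the conclusion that $\lambda_F$ is an \emph{upper}-semicontinuous-type limit — more carefully, since $\lambda_F\in H$ and $H\subset A_{sa}(X)\cap\Ba(X)$, and since $\A_H^s=\ms_H$ is forced to be a $\sigma$-algebra, the membership $F\cap\ext X\in\A_H^s$ (obtained because $\lambda_F\in H=M^s(H)$ — indeed $\lambda_F$ is a strong multiplier by Theorem~\ref{t:srhnuti-splitfaceu-metriz}-type reasoning applied via Theorem~\ref{T:meritelnost-strongmulti}) gives back $F\cap\ext X\in\ms_H$ via Theorem~\ref{T:meritelnost-strongmulti}. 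I expect the cleanest route is: $\lambda_F$ strongly affine Baire $\Rightarrow$ $\lambda_F\in M^s(H)$ (using that $\Upsilon_F(a|_F)\in A_{sa}(X)\cap\Ba(X)=H$ for each $a\in H$ by Proposition~\ref{p:shrnuti-splitfaceu-baire}$(iii)$, together with the fact that $\Upsilon_F(a|_F)=\lambda_F\cdot a$ on $\ext X$ and $F\cup F'$ carries all maximal measures) $\Rightarrow$ $F\cap\ext X\in\A_H^s=\ms_H$. For the simplex case, I would invoke Lemma~\ref{L:SH=ZH} (with the hypothesis $H\subset\Ba(X)$), giving $\ms_H=\Z_H$, which is the displayed description of $\A_H$.

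For assertion $(b)$, let $H'\subset H$ be any intermediate function space. I would use Proposition~\ref{P: silnemulti-inkluze}: its hypotheses are that $H'\subset H\subset A_{sa}(X)$, that $H^\uparrow$ is determined by extreme points (verified above), and that $M^s(H)$ is determined by $\ms_H$. This last point follows from part $(a)$ together with Theorem~\ref{T:integral representation H}$(ii)$: $M^s(H)=M(H)$ is exactly the set of bounded functions on $\ext X$ measurable with respect to $\A_H^s=\ms_H$, extended to $X$; so $M^s(H)$ is determined by $\ms_H$ in the sense of the definition preceding Proposition~\ref{P: silnemulti-inkluze}. Thus Proposition~\ref{P: silnemulti-inkluze} yields $M^s(H')\subset M^s(H)$. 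Finally, since $H'\subset H\subset\Ba^b(X)$, Proposition~\ref{P:rovnostmulti} gives $M(H')=M^s(H')$ and $M(H)=M^s(H)$, so $M(H')\subset M(H)$ as required.

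The main obstacle I anticipate is the ``$\supset$'' direction in the description of $\ms_H$ in part $(a)$ — specifically upgrading $\lambda_F\in H$ to the assertion that $F\cap\ext X$ genuinely lies in $\ms_H$, i.e.\ that $\lambda_F$ can be realized as a strong multiplier (equivalently as a member of $M^s(H)^\uparrow$). This requires carefully combining the extension statement Proposition~\ref{p:shrnuti-splitfaceu-baire}$(iii)$ (that $\Upsilon_F$ preserves $A_{sa}(X)\cap\Ba(X)$) with the observation $\Upsilon_F(a|_F)=\lambda_F a$ on $\ext X$ and with Lemma~\ref{l:complementarni-facy}$(iii)$ / the fact that $F\cup F'$ carries maximal measures, to conclude $\lambda_F\in M^s(H)$, hence $F\cap\ext X\in\A_H^s=\ms_H$; the circularity between these identifications must be broken by starting from the concrete split-face data rather than from $\ms_H$.
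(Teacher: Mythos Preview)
Your proposal is correct and follows essentially the same approach as the paper: close the chain $\A_H=\A_H^s\subset\ms_H\subset\{F\cap\ext X: F,F'\text{ Baire, measure convex}\}\subset\A_H$ via Proposition~\ref{P:rovnostmulti}, Theorem~\ref{T:meritelnost-strongmulti}, and Proposition~\ref{p:shrnuti-splitfaceu-baire}, then invoke Proposition~\ref{P: silnemulti-inkluze} for part~$(b)$.

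Two points you can streamline. First, Theorem~\ref{T:meritelnost-strongmulti} only gives the inclusion $\A_H^s\subset\ms_H$; your claim that ``$\ms_H\subset\A_H^s$ always'' is false in general (cf.\ Proposition~\ref{P:dikous-Bo1-new}$(d)$), so equality must come from closing the full chain rather than from that theorem alone. Second, your worry about needing $\lambda_F\in H^\uparrow$ rather than $\lambda_F\in H$ is unnecessary since $H^\mu=H$ forces $H^\uparrow=H$; but the paper avoids $\ms_H$ entirely at this step and simply notes that Proposition~\ref{p:shrnuti-splitfaceu-baire}$(iii)$ gives $\Upsilon_F(a|_F)\in H$ with $\Upsilon_F(a|_F)=\lambda_F\cdot a$ on $\ext X\subset F\cup F'$, so $\lambda_F\in M(H)$ and hence $F\cap\ext X\in\A_H$ directly by the definition in Theorem~\ref{T:integral representation H}$(i)$, closing the chain without the detour you flagged as an obstacle.
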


\begin{proof}
 $(a)$: Equality $\A_H=\A_H^s$ follows from Theorem~\ref{T:integral representation H}$(ii)$ using the fact that $M(H)=M^s(H)$. Inclusion $\A_H^s\subset\ms_H$ follows from  Theorem~\ref{T:meritelnost-strongmulti}. Inclusion `$\subset$' from the last equality is obvious.

 Finally, assume that $F\subset X$ is a split face such that both $F$ and $F'$ are Baire and measure convex. It follows from Proposition~\ref{p:shrnuti-splitfaceu-baire} that $\lambda_F\in M(H)$. Thus $F\cap\ext X\in \A_H$ by the very definition of $\A_H$.

 If $X$ is a simplex, Lemma~\ref{L:SH=ZH} yields $\ms_H=\Z_H$ and so the proof is complete.

 $(b)$: This follows from $(a)$ and Proposition~\ref{P: silnemulti-inkluze}.
\end{proof}    

We continue by a comparison of several spaces of multipliers.
    
\begin{prop}\label{P:baire-multipliers-inclusions} 
Let $X$ be a compact convex set. Then we have the following inclusions:
 $$\begin{array}{ccccc}
             M(A_c(X))&\subset& M((A_c(X))^\mu)&\subset &M((A_c(X))^\sigma)\subset M(A_{sa}(X)\cap \Ba(X))\\
             \cap&&&\nesubset&\\
             M(A_1(X))&\subset&M((A_1(X))^\mu))&&\end{array}$$
    \end{prop}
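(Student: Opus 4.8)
The plan is to establish the diagram in Proposition~\ref{P:baire-multipliers-inclusions} one arrow at a time, drawing on the abstract machinery already built up, mainly Proposition~\ref{p:multi-pro-mu}, Theorem~\ref{T:baire-multipliers}, and the basic facts from Sections~\ref{s:multi-atp} and~\ref{s:determined}. First I would record the necessary background: each of the six spaces appearing in the diagram is an intermediate function space determined by extreme points. For $A_c(X)$ this is Krein--Milman; for $A_1(X)$ it is the inclusion $A_1(X)\subset A_{sa}(X)\cap\Ba(X)$ together with the fact that Baire sets containing $\ext X$ carry all maximal measures (Section~\ref{s:determined}); and for the monotone/sequential closures it follows since $(A_c(X))^\mu,(A_c(X))^\sigma,(A_1(X))^\mu$ are all contained in $A_{sa}(X)\cap\Ba^b(X)$ by Proposition~\ref{P:Baire-srovnani}(i), which is determined by extreme points. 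Moreover all six are contained in $A_{sa}(X)$, so by Proposition~\ref{P:rovnostmulti} we have $M^s(H)=M(H)$ throughout, which will let me freely use the results on strong multipliers when convenient.

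Next I would treat the horizontal inclusions in the top row. For $M((A_c(X))^\mu)\subset M((A_c(X))^\sigma)$: since $(A_c(X))^\sigma = ((A_c(X))^\mu)^\sigma$ and $(A_c(X))^\sigma$ is determined by extreme points, Proposition~\ref{p:multi-pro-mu}(ii) (applied with $H=(A_c(X))^\mu\subset A_{sa}(X)$) gives $M^s((A_c(X))^\mu)\subset M^s((A_c(X))^\sigma)$, hence the claim after using $M=M^s$. For $M((A_c(X))^\sigma)\subset M(A_{sa}(X)\cap\Ba(X))$: this is exactly Theorem~\ref{T:baire-multipliers}(b), since $(A_c(X))^\sigma$ is an intermediate function space contained in $H=A_{sa}(X)\cap\Ba(X)$. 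The inclusion $M(A_c(X))\subset M((A_c(X))^\mu)$ is Proposition~\ref{p:multi-pro-mu}(i): since $(A_c(X))^\mu$ is determined by extreme points, $(M(A_c(X)))^\mu\subset M((A_c(X))^\mu)$, and $M(A_c(X))\subset (M(A_c(X)))^\mu$ trivially.

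For the bottom row, $M(A_1(X))\subset M((A_1(X))^\mu)$ again follows from Proposition~\ref{p:multi-pro-mu}(i), once we know $(A_1(X))^\mu$ is determined by extreme points, which was noted above. The vertical arrows: $M(A_c(X))\subset M(A_1(X))$ follows from Theorem~\ref{T:baire-multipliers}(b) since $A_c(X)\subset A_{sa}(X)\cap\Ba(X)$ — wait, more directly, since $A_c(X)\subset A_1(X)\subset A_{sa}(X)\cap\Ba(X)$ and $M$ of the big space contains $M$ of any subspace by Theorem~\ref{T:baire-multipliers}(b); but to land inside $M(A_1(X))$ rather than $M(A_{sa}(X)\cap\Ba(X))$ one needs the extension produced to lie in $A_1(X)$, which it does because $A_1(X)$ is determined by extreme points and the multiplier of $A_c(X)$ multiplied into $a\in A_1(X)$ gives a bounded pointwise limit of continuous functions. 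Concretely: given $m\in M(A_c(X))$ and $a\in A_1(X)$ with $a=\lim a_n$, $a_n\in A_c(X)$, the functions $b_n\in A_c(X)$ with $b_n=ma_n$ on $\ext X$ converge pointwise on $\ext X$, hence on $X$ by determinacy (they are bounded), to some $b\in A_1(X)$ with $b=ma$ on $\ext X$; and $m\in M(A_1(X))$ follows, cf.\ the proof of Proposition~\ref{p:postacproa1}(b). Finally the arrow $M((A_c(X))^\mu)\nesubset M((A_1(X))^\mu)$: this is the genuinely non-formal assertion, that there is \emph{no} general inclusion either way — to justify the symbol $\nesubset$ I would point to the counterexamples in Example~\ref{ex:inkluzeZ} and Example~\ref{ex:4prostory}, where $M(H)$ behaves non-monotonically in $H$, so that enlarging $(A_c(X))^\mu$ to $(A_1(X))^\mu$ can both lose and not gain multipliers. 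The main obstacle is precisely this last point: one must be careful that the diagram asserts only the inclusions drawn (solid arrows) and that the slanted non-inclusion arrow is backed by an explicit example rather than a general theorem; all the solid arrows are routine consequences of Proposition~\ref{p:multi-pro-mu} and Theorem~\ref{T:baire-multipliers}(b).
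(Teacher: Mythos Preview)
Your proposal is largely on track, but it contains one genuine error: you have misread the symbol $\nesubset$. In the paper this is defined as \verb|\rotatebox{45}{$\,\subset\ $}|, i.e., a $\subset$ sign rotated to point northeast. So the diagram is asserting the \emph{inclusion} $M((A_1(X))^\mu)\subset M((A_c(X))^\sigma)$, not a non-inclusion between $M((A_c(X))^\mu)$ and $M((A_1(X))^\mu)$. (Indeed, the latter relation is left open in the paper; see the question immediately following the proposition.) Your appeal to Examples~\ref{ex:inkluzeZ} and~\ref{ex:4prostory} is therefore misplaced: those examples concern non-monotonicity of $H\mapsto M(H)$ in general, not this specific pair of spaces.

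The missing inclusion $M((A_1(X))^\mu)\subset M((A_c(X))^\sigma)$ is proved as follows. By the Mokobodzki theorem (recalled in Section~\ref{ssc:meziprostory}), $A_1(X)$ consists exactly of pointwise limits of bounded sequences from $A_c(X)$, so $A_c(X)\subset A_1(X)\subset (A_c(X))^\sigma$ and hence $(A_1(X))^\sigma=(A_c(X))^\sigma$. Now apply Proposition~\ref{p:multi-pro-mu}(ii) with $H=A_1(X)$: since $A_1(X)\subset A_{sa}(X)$ and $(A_1(X))^\sigma=(A_c(X))^\sigma$ is determined by extreme points, we get $M^s(A_1(X))\subset M^s((A_1(X))^\mu)\subset M^s((A_1(X))^\sigma)=M^s((A_c(X))^\sigma)$, and $M=M^s$ throughout by Proposition~\ref{P:rovnostmulti}. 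This is exactly how the paper handles it.

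Aside from this misreading, your arguments for the remaining inclusions are correct and essentially match the paper's proof. Your direct limit argument for $M(A_c(X))\subset M(A_1(X))$ is a spelled-out version of what the paper cites as Remark~\ref{rem:Ms(H1)}(2); either works.
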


\begin{proof}
   First recall that by Proposition~\ref{P:rovnostmulti} for all spaces in question multipliers and strong multipliers coincide. Thus inclusions  $M(A_c(X))\subset M((A_c(X))^\mu)\subset M((A_c(X))^\sigma)$ follow from Proposition~\ref{p:multi-pro-mu} (one can use also the old results of \cite{edwards,smith-london}).
  
  Further, inclusion $M(A_c(X))\subset M(A_1(X))$ follows from Remark~\ref{rem:Ms(H1)}. 
  Inclusions $M(A_1(X))\subset M((A_1(X))^\mu)\subset M((A_c(X))^\sigma)$ follow from Proposition~\ref{p:multi-pro-mu} applied to $H=A_1(X)$.

  Finally, inclusion $M((A_c(X))^\sigma)\subset M(A_{sa}(X)\cap\Ba(X))$ follows from Theorem~\ref{T:baire-multipliers}$(b)$.
\end{proof}

We continue by two natural open problems.

\begin{ques}
    Let $X$ be a compact convex set. Is $M((A_c(X))^\mu)\subset M((A_1(X))^\mu)$?
\end{ques}

Note that this inclusion is missing in Proposition~\ref{P:baire-multipliers-inclusions}. We have no idea how to attack it.

\begin{ques}
    Let $X$ be a compact convex set. Is $M((A_c(X))^\mu)= M(A_{sa}(X)\cap\Ba(X))$? 
\end{ques}

By Proposition~\ref{P:baire-multipliers-inclusions} we know that inclusion `$\subset$' holds, but we do not know any counterexample to the converse inclusion. Note that in two extreme cases the equality holds: If $X$ is a simplex, the equality follows from Proposition~\ref{P:Baire-srovnani}$(iii)$.
On the other hand, if $X$ is symmetric, inclusions from Proposition~\ref{P:Baire-srovnani}$(i)$ may be strict, but in this case the spaces of multipliers are the same and trivial by Example~\ref{ex:symetricka}.

Let us now pass to the space $A_1(X)$. We start by the following easy consequence of Corollary~\ref{cor:hup+hdown pro A1aj}.

\begin{prop}\label{P:A1 determined}
    Let $X$ be a compact convex set. Then a bounded function on $\ext X$ may be extended to an element of $M(A_1(X))=M^s(A_1(X))$ if and only if it is $\A_{A_1(X)}$-measurable. I.e.,  the space $M(A_1(X))=M^s(A_1(X))$ is determined by $\A_{A_1(X)}=\A^s_{A_1(X)}$.
\end{prop}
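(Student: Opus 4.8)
The statement claims that $M(A_1(X))=M^s(A_1(X))$ and that this common space is characterized by $\A_{A_1(X)}$-measurability (equivalently $\A^s_{A_1(X)}$-measurability). The strategy is to verify that $H=A_1(X)$ meets the hypotheses of the already-established Theorem~\ref{T: meritelnost H=H^uparrow cap H^downarrow} and then invoke that theorem directly. Thus the plan has exactly three checkpoints: (1) $A_1(X)$ is determined by extreme points; (2) $A_1(X)^\uparrow$ is determined by extreme points; (3) $A_1(X)=A_1(X)^\uparrow\cap A_1(X)^\downarrow$.

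First I would dispatch the measurability equality $\A_{A_1(X)}=\A^s_{A_1(X)}$ and the coincidence $M(A_1(X))=M^s(A_1(X))$ by appealing to Proposition~\ref{P:rovnostmulti}: since $X$ is an arbitrary compact convex set but $A_1(X)\subset A_{sa}(X)\subset\Ba^b(X)$ is false in general — wait, actually $A_1(X)$ consists of Baire-one functions, hence Baire functions, so $A_1(X)\subset\Ba^b(X)$ and Proposition~\ref{P:rovnostmulti} applies with the first alternative. This gives $M^s(A_1(X))=M(A_1(X))$ at once, and then Proposition~\ref{p:system-aha} (the ``same statements'' clause for $\A^s_H$) combined with this equality yields $\A_{A_1(X)}=\A^s_{A_1(X)}$. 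Next, the three checkpoints: checkpoint~(1) is exactly the classical fact that $A_1(X)$ is determined by extreme points, which follows from the chain of inclusions and the remarks in Section~\ref{s:determined} (it is a subspace of $A_{sa}(X)\cap\Ba(X)$, which is determined by extreme points because any Baire set containing $\ext X$ carries all maximal measures). Checkpoint~(3) — and with it checkpoint~(2), since $A_1(X)^\uparrow\cap A_1(X)^\downarrow=A_1(X)$ forces $A_1(X)^\uparrow$ to be determined by extreme points whenever $A_1(X)$ is and one argues through $\overline{A_1(X)^\uparrow}$ as in Lemma~\ref{L:YupcapYdown=Y} — is precisely the content of Corollary~\ref{cor:hup+hdown pro A1aj}, which states $H=H^\uparrow\cap H^\downarrow=\overline{H^\uparrow}\cap\overline{H^\downarrow}$ for $H=A_1(X)$. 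Actually the cleanest route is: Corollary~\ref{cor:hup+hdown pro A1aj} gives (3) directly, and it also records ``In particular, Theorem~\ref{T: meritelnost H=H^uparrow cap H^downarrow} applies to these spaces,'' so the whole of checkpoint (2)--(3) plus the applicability of the measurability theorem is already packaged.

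Therefore the proof reduces to citing Corollary~\ref{cor:hup+hdown pro A1aj} for the structural hypotheses, noting that $A_1(X)$ is determined by extreme points (so that Proposition~\ref{p:system-aha} produces the systems $\A_{A_1(X)}$, $\A^s_{A_1(X)}$ in the first place), applying Theorem~\ref{T: meritelnost H=H^uparrow cap H^downarrow}(ii) and (iii) to get the ``$M(A_1(X))$ is determined by $\A_{A_1(X)}$'' and ``$M^s(A_1(X))$ is determined by $\A^s_{A_1(X)}$'' assertions, and finally using Proposition~\ref{P:rovnostmulti} to collapse the strong and ordinary notions. I do not anticipate a real obstacle here: the genuine work lies upstream (in Corollary~\ref{cor:hup+hdown pro A1aj}, which rests on Lemma~\ref{L:YupcapYdown=Y} and the fact that Baire-one functions are exactly the $\Zer_\sigma$-measurable ones, and in the determinacy-by-extreme-points results of Section~\ref{s:determined}). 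The only point requiring a sentence of care is making sure that the system $\A_{A_1(X)}$ appearing in the statement is literally the one produced by Proposition~\ref{p:system-aha}$(a)$, i.e. $\{[m>0]\cap\ext X\setsep m\in M(A_1(X))\}$, and that under the hypothesis $H^\mu=H$ it would coincide with the $\sigma$-algebra of Theorem~\ref{T:integral representation H} — but since $A_1(X)^\mu\neq A_1(X)$ in general, one should not claim $\A_{A_1(X)}$ is a $\sigma$-algebra; it is merely the smallest family determining the multipliers, exactly as Remark~\ref{rem:o meritelnosti} warns. So the final write-up will be short: assemble the cited ingredients in the order above.
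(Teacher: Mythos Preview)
Your proposal is correct and follows essentially the same approach as the paper, which states the result as an ``easy consequence of Corollary~\ref{cor:hup+hdown pro A1aj}'' without further proof. You have correctly unpacked what that entails: invoke Corollary~\ref{cor:hup+hdown pro A1aj} (giving $H=H^\uparrow\cap H^\downarrow$ and the applicability of Theorem~\ref{T: meritelnost H=H^uparrow cap H^downarrow}), note that $A_1(X)^\uparrow\subset A_{sa}(X)\cap\Ba(X)$ is determined by extreme points, and use Proposition~\ref{P:rovnostmulti} (first alternative, since $A_1(X)\subset\Ba^b(X)$) to identify $M$ with $M^s$ and hence $\A_{A_1(X)}$ with $\A^s_{A_1(X)}$.
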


It is not clear whether this characterization can be improved:

\begin{ques}\label{q:A1-split}
    Let $X$ be a compact convex set and let $H=A_1(X)$. Is $\A_H=\ms_H$? Or, at least, is $A_1(X)$ determined by $\ms_H$?
\end{ques}

The answer is positive for the class of simplices addressed in Section~\ref{sec:stacey} (see Proposition~\ref{P:dikous-A1-new}).
We further get a better characterization for compact convex sets $X$  with $\ext X$ being \lin\ resolvable: 

\begin{thm}
    \label{t:a1-lindelof-h-hranice}
  Let $X$ be a compact convex such that $\ext X$ is a \lin\   resolvable set and let $H=A_1(X)$. Then $M(H)=M^s(H)$ is determined by
  \[
  \begin{aligned}
  \B=\{\ext X \setminus F\setsep &F\text{ is a }\Coz_\delta\text{ measure convex split face}\\
  &\text{such that }F'\text{ is Baire and measure convex}\}.
  \end{aligned}
  \]
  
 Consequently, $M(H)=M^s(H)$ is determined by $\ms_H$, and hence for each intermediate function space $H' \subset H$, $M(H') \subset M(H)$. 
\end{thm}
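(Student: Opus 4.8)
\textbf{Proof plan for Theorem~\ref{t:a1-lindelof-h-hranice}.}
The strategy is to apply Proposition~\ref{P:meritelnost multiplikatoru pomoci topologickych split facu} with a suitable subspace $T\subset\ell^\infty(\ext X)$. First I would note that since $\ext X$ is \lin, the compact convex set $X$ is standard by the remark following Definition~\ref{d:standard}, and hence by Proposition~\ref{P:rovnostmulti} we already have $M(H)=M^s(H)$ for $H=A_1(X)$. The natural candidate is $T=\{f|_{\ext X}\setsep f\in A_1(X)\}$, which equals the space of $\Zer_\sigma$-measurable (equivalently, using that $\ext X$ is resolvable, $\H_\sigma$-measurable) bounded affine-on-$\ext X$ functions restricted from $A_1(X)$; here one must invoke that any affine Baire-one function on $\ext X$ coming from the trace of an element of $A_1(X)$ is exactly the relevant class, and that $A_1(X)=\{f\in A_{sa}(X)\setsep f|_{\ext X}\in T\}$ — this uses \cite[Theorem 6.4]{lusp} (as in the proof of Proposition~\ref{p:postacproa1}$(b)$) together with the fact that strongly affine functions are determined by $\ext X$ when $X$ is standard. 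The verification of hypotheses $(i)$–$(iii)$ of Proposition~\ref{P:meritelnost multiplikatoru pomoci topologickych split facu}: $(iii)$ is immediate; $(ii)$, namely $\overline{T^\downarrow}\cap\overline{T^\uparrow}=T$, follows from Corollary~\ref{cor:hup+hdown pro A1aj} (which gives $A_1(X)=\overline{A_1(X)^\uparrow}\cap\overline{A_1(X)^\downarrow}$) after transporting through the isometry $f\mapsto f|_{\ext X}$ using that $H$ is determined by extreme points; $(i)$, that $af\in T^\uparrow$ whenever $a\in T$ is positive and $f\in T^\uparrow$, is the algebraic fact that the product of a Baire-one function and a nondecreasing limit of Baire-one functions is again such a limit, again transported to $\ext X$.

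Having applied the proposition, $M^s(H)=M(H)$ is determined by the system $\B^s_H$ consisting of traces $F\cap\ext X$ of split faces $F$ with $1_{F\cap\ext X}\in T^\uparrow$ and $\Upsilon_F(a|_F)\in A_{sa}(X)$ for all $a\in H$. The next step is to identify $\B^s_H$ with (or show it determines the same measurable functions as) the system $\B$ in the statement. The condition $1_{F\cap\ext X}\in T^\uparrow$ means $\ext X\setminus F$ is a countable union of traces of cozero sets that are simultaneously traces of elements of $T$; I would translate this, using that $\ext X$ is \lin\ and resolvable together with the Baire split face theory, into: $F$ is a $\Coz_\delta$ set relative to $X$ in a way compatible with the complementary decomposition, i.e. $F$ is a $\Coz_\delta$ measure convex split face with $F'$ Baire and measure convex. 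Here Proposition~\ref{p:shrnuti-splitfaceu-baire} is the key bridge — it tells us precisely that for a split face $F$, having both $F,F'$ Baire and measure convex is equivalent to $\lambda_F$ being strongly affine Baire, which then guarantees $\Upsilon_F(a|_F)\in A_{sa}(X)$ for $a\in H$ via the extension results of Section~\ref{sec:splifaces} (Lemma~\ref{L:mc-partial-sa} and the $K$-analytic machinery, using that \lin\ resolvable sets are $K$-analytic). The refinement from "Baire" to "$\Coz_\delta$" on $F$ itself uses that, when $\ext X$ is \lin, the trace $\ext X\setminus F$ being in $T^\uparrow$ forces a one-sided cozero-type structure; the Lindelöf property is what upgrades a countable-union description to a genuine $\Coz_\delta$ face.

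Finally, the "consequently" clause: since every $F$ appearing in $\B$ has $\lambda_F$ strongly affine and Baire, in particular $\lambda_F\in H^\uparrow$ (because a $\Coz_\delta$ split face is an increasing limit of cozero-controlled Baire-one functions), we get $\B\subset\ms_H$; and by Theorem~\ref{T:meritelnost-strongmulti} together with Proposition~\ref{p:system-aha} we have $\A^s_H\subset\ms_H$, while $\A^s_H$ is the smallest system determining $M^s(H)$. Since $M^s(H)$ is determined by $\B\subset\ms_H$, it is a fortiori determined by $\ms_H$. The last assertion, $M(H')\subset M(H)$ for $H'\subset H$, is then exactly Proposition~\ref{P: silnemulti-inkluze} applied with $H_2=H$ (whose relevant hypotheses — $H^\uparrow$ determined by extreme points, which holds since $A_1(X)^\uparrow\subset A_{sa}(X)$ and $X$ is standard, and $M^s(H)$ determined by $\ms_H$ — we have just verified).

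\textbf{Main obstacle.} The hard part will be the precise identification of $\B^s_H$ with $\B$, specifically showing that the abstract condition $1_{F\cap\ext X}\in T^\uparrow$ plus the strong-affinity-of-extension condition are together equivalent to "$F$ is a $\Coz_\delta$ measure convex split face with $F'$ Baire and measure convex." This requires carefully combining the Lindelöf property of $\ext X$, the resolvability (to pass between $\H_\sigma$ and $\Zer_\sigma$ descriptions), and the split-face extension results of Section~\ref{sec:splifaces}; getting the cozero (rather than merely Baire) structure on $F$ out of the increasing-limit condition, and conversely checking that a $\Coz_\delta$ split face indeed has $1_{F\cap\ext X}\in T^\uparrow$, is where the real work lies. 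The rest is assembling already-proved machinery.
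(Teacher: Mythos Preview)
Your overall strategy---invoking Proposition~\ref{P:meritelnost multiplikatoru pomoci topologickych split facu}---matches the paper's, but the execution has a real gap in the choice of $T$.

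You take $T=\{f|_{\ext X}\setsep f\in A_1(X)\}$, and then justify condition~$(i)$ by ``the algebraic fact that the product of a Baire-one function and a nondecreasing limit of Baire-one functions is again such a limit.'' But that fact is about the algebra $\Ba_1^b(\ext X)$, not about your $T$: the product of two restrictions of \emph{affine} Baire-one functions is not affine, so there is no reason it lies in $T$ (or in $T^\uparrow$). Your verification of $(i)$ thus fails for the $T$ you chose. The paper avoids this by taking $T=\Ba_1^b(\ext X)$, the full space of bounded Baire-one functions on $\ext X$; then $(i)$ is immediate, $(ii)$ is Lemma~\ref{L:YupcapYdown=Y} applied to $\Zer_\sigma$-measurable functions, and the required identity $H=\{f\in A_{sa}(X)\setsep f|_{\ext X}\in T\}$ is exactly \cite[Theorem~6.4]{lusp}. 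Note that nothing in Proposition~\ref{P:meritelnost multiplikatoru pomoci topologickych split facu} asks $T$ to consist of restrictions of elements of $H$.

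A second, less serious point: you set yourself the task of proving $\B^s_H=\B$, and correctly flag it as the hard step. The paper does not do this. It only shows $\B\subset\B^s_H$, which is short: if $F=\bigcap_n U_n$ with $U_n$ cozero, then $1_{(\ext X\setminus U_n)}\in T$ and these increase to $1_{F'\cap\ext X}$, while Proposition~\ref{p:shrnuti-splitfaceu-baire} supplies the strong-affinity of $\Upsilon_{F'}(a|_{F'})$. This gives ``$\B$-measurable $\Rightarrow$ multiplier.'' The reverse implication does not come from identifying $\B^s_H$ with $\B$, but from the chain $\A^s_H\subset\ms_H\subset\B$ (Theorem~\ref{T:meritelnost-strongmulti} plus the easy observation that if $\lambda_G\in A_1(X)^\uparrow$ then $G'=[\lambda_G=0]$ is $\Coz_\delta$), combined with Proposition~\ref{P:A1 determined}. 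Your ``consequently'' paragraph already contains the pieces of this chain; you just need to recognize that together with $\B\subset\B^s_H$ it closes the argument without ever establishing $\B=\B^s_H$.
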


\begin{proof}
We first prove the `consequently' part. It is easy to see that $\ms_H \subset \B$. By Theorem~\ref{T:meritelnost-strongmulti} we know that $\A^s_H\subset\ms_H$. By the first part of the theorem, $M^s(H)$ is determined by $\A$. By Proposition~\ref{P:A1 determined} we deduce that $M^s(H)$ is determined by $\A^s_H$. Thus $M^s(H)$ is determined by $\ms_H$. The rest follows from Proposition \ref{P: silnemulti-inkluze}.

To prove the first part of the statement, we use Proposition~\ref{P:meritelnost multiplikatoru pomoci topologickych split facu} for $H=A_1(X)$ and $T=\Ba_1^b(\ext X)$.  First we need to verify that $T$ satisfies conditions $(i)-(iii)$ of the quoted proposition. Condition $(i)$  follows easily from the fact that Baire-one functions are stable with respect to the pointwise multiplication.
Condition $(ii)$ follows from Lemma~\ref{L:YupcapYdown=Y} and condition $(iii)$ is obvious. Further, the formula for $H$
follows from \cite[Theorem 6.4]{lusp} due to the assumption on $\ext X$. 

Therefore, the assumptions of Proposition~\ref{P:meritelnost multiplikatoru pomoci topologickych split facu} are satisfied, hence $H$ is determined by $\B^s_H$. To complete the proof it is enough to show that $\B\subset\B^s_H$. So, fix $E\in \B$. Let $F$ be a $\Coz_\delta$ measure convex split face with $F'$ Baire and measure convex such that $E=\ext X\setminus F$. Let $(U_n)$ be a decreasing sequence of cozero subsets of $X$ with $\bigcap_n U_n=F$. Then $1_{U_n\cap\ext X}\in T$ for each $n\in\en$ and hence $1_{F'\cap\ext X}\in T^\uparrow$. 
Further, by Proposition~\ref{p:shrnuti-splitfaceu-baire}, $\Upsilon_{F'}(a|_{F'})\in A_{sa}(X)\cap\Ba(X)$ for each $a\in H$. Thus $E=F'\cap\ext X\in\B^s_H$.
This concludes the proof.
\end{proof}

\begin{remarks}\label{r:fsigma-resolvable}
(1) By Proposition~\ref{p:postacproa1}, $Z(A_1(X))=M(A_1(X))$ in case $\ext X$ is \lin\ resolvable set. Hence we have obtained a measurable characterization of $Z(A_1(X))$ in Theorem~\ref{t:a1-lindelof-h-hranice} for this class of compact convex sets.

(2) In case $X$ is metrizable the assumptions on $X$ in Theorem~\ref{t:a1-lindelof-h-hranice} are equivalent with the fact that $\ext X$ is an $F_\sigma$ set. Indeed, if $\ext X$ is an $F_\sigma$ set in $X$, its characteristic function is Baire-one (as $\ext X$ is a $G_\delta$ set) and thus $1_{\ext X}$ is a fragmented function. Conversely, if $1_{\ext X}$ is a fragmented function, by Theorem B the see that $\ext X$ is both $F_\sigma$ and $G_\delta$.

(3) We also point out that the assumption in Theorem~\ref{t:a1-lindelof-h-hranice} is
satisfied provided $\ext X$ is an $F_\sigma$ set. To see this it is enough to show that $\ext X$ is
a resolvable set in $X$. To this end, assume that $F\subset  X$ is a nonempty
closed set such that both $F \cap \ext X$ and $F\setminus \ext X$ are dense in $F$. By \cite[Th\'eor\`eme 2]{tal-kanal}, we can write
$\ext X=\bigcap_{n=1}^\infty (H_n \cup V_n)$,
where $H_n\subset X$ is closed and $V_n\subset X$ is open, $n \in\en $. Thus both $F\setminus \ext X$ and $F\cap \ext X$ are comeager disjoint sets in $F$, in contradiction with the Baire category theorem. Hence $\ext X$ is a resolvable set.
\end{remarks}


\section{Beyond Baire functions}\label{sec:beyond}

In this section we investigate intermediate function spaces which are not necessarily contained in the space of Baire functions. More precisely, we consider functions derived from semicontinuous affine functions, from affine functions of the first Borel class or from fragmented affine functions.

\subsection{Comparison of the spaces}
We start by collecting basic properties and mutual relationship of these spaces. Before formulating the first proposition, we introduce another piece of notation:

If $K$ is a compact space, we denote by \gls{Lb(K)} the space of differences of bounded lower semicontinuous functions on $K$.

\begin{prop}\label{P:vetsi prostory srov}
Let $X$ be a compact convex set. Then the following assertions are valid.
    \begin{enumerate}[$(a)$]
        \item $\begin{array}[t]{ccccccc}
            A_s(X) &\subset &\ov{A_s(X)}&\subset& A_b(X)\cap\Bo_1(X) & \subset & A_f(X)  \\
             & & \cap & & \cap & & \cap  \\
             && (A_s(X))^\mu & \subset &(A_b(X)\cap \Bo_1(X))^\mu & \subset & (A_f(X))^\mu \\
             && \cap &&&& \cap       \\
             && (A_s(X))^\sigma&&\subset && A_{sa}(X).
        \end{array}$
       \item All the inclusions in assertion $(a)$ except for $(A_s(X))^\mu\subset (A_b(X)\cap \Bo_1(X))^\mu$ and $(A_s(X))^\mu\subset (A_s(X))^\sigma$ may be strict even if $X$ is a Bauer simplex.
       \item Inclusion  $(A_s(X))^\mu\subset (A_b(X)\cap \Bo_1(X))^\mu$ may be strict even if $X$ is a simplex, but in case $X$ is a Bauer simplex, the equality holds.
       \item Inclusion $(A_s(X))^\mu\subset (A_s(X))^\sigma$ may be strict, but if $X$ is a simplex, the equality holds.
      
    \end{enumerate}
\end{prop}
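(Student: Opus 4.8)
\textbf{Plan for the proof of Proposition~\ref{P:vetsi prostory srov}.}

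The plan is to verify the inclusions in (a) one by one and then to build counterexamples for (b), (c), (d). The inclusions $A_s(X)\subset\overline{A_s(X)}$, the vertical inclusions into $H^\mu$, and $(A_s(X))^\mu\subset(A_s(X))^\sigma$ are trivial from the definitions (any space sits in its closure and in its monotone/sequential closures), and the chain $\overline{A_s(X)}\subset A_b(X)\cap\Bo_1(X)\subset A_f(X)\subset A_{sa}(X)$ is exactly \eqref{eq:prvniinkluze}. For the monotone-closure versions, I would first observe that the operations $H\mapsto H^\mu$ are monotone in $H$, so $(A_s(X))^\mu\subset(A_b(X)\cap\Bo_1(X))^\mu\subset(A_f(X))^\mu$ follows, and $(A_f(X))^\mu\subset A_{sa}(X)$ follows since $A_{sa}(X)$ is closed under pointwise limits of bounded monotone sequences (Lebesgue dominated convergence, as in \cite[Theorem 4.21]{lmns} and the fact that strongly affine functions are closed under such limits). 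The only inclusion requiring a small argument is $\overline{A_s(X)}\subset(A_s(X))^\mu$: this is noted already after \eqref{eq:prvniinkluze-metriz} (using Lemma~\ref{L:muclosed is closed}, $(A_s(X))^\mu$ is uniformly closed and contains $A_s(X)$).

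For (b), I would use the space $X_1=(B_{C([0,1])^*},w^*)$ already exploited in the proof of Proposition~\ref{P:Baire-srovnani}: it is symmetric, so by Lemma~\ref{L:symetricke X}(iii) $(A_c(X_1))^\mu=A_c(X_1)$, yet $A_1(X_1)$ and $A_2(X_1)$ are strictly larger. To make the concrete spaces here strictly increase on a \emph{Bauer simplex}, I would instead take $X=M_1(K)$ for a suitable compact $K$ and use the identification of Lemma~\ref{L:function space}: strictness of $A_s(X)\subsetneqq\overline{A_s(X)}$ and of $\overline{A_s(X)}\subsetneqq A_b(X)\cap\Bo_1(X)$ can be witnessed by the non-closedness of $A_s$ (Proposition~\ref{P:Baire-srovnani}(b), i.e.\ the reference to Proposition~\ref{P:Baire-srovnani}(ii)) together with explicit semicontinuous-versus-first-Borel-class functions on $K$; strictness of $A_b(X)\cap\Bo_1(X)\subsetneqq A_f(X)$ needs $K$ non-metrizable with a first-Borel-class-but-not-fragmented... no, the other way: a fragmented non-Borel-class-one function on $K$, e.g.\ on $K=[0,1]^\Gamma$ or a scattered compact of large height, transported via $V$. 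The remaining horizontal strictness in the middle and bottom rows, and the vertical ones $A_b(X)\cap\Bo_1(X)\subsetneqq(A_b(X)\cap\Bo_1(X))^\mu$ etc., follow from the same menagerie of examples; I would assemble them by taking a product of several such Bauer simplices, as in the proof of Proposition~\ref{P:Baire-srovnani}(ii). I expect this bookkeeping — choosing one $K$ (or product) realizing \emph{all} the listed strict inclusions simultaneously — to be the main nuisance.

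For (c), the equality $(A_s(X))^\mu=(A_b(X)\cap\Bo_1(X))^\mu$ when $X$ is a Bauer simplex should follow because on a Bauer simplex $X=M_1(\ext X)$ with $\ext X$ compact, every bounded affine function of the first Borel class is already a monotone limit of affine semicontinuous functions — concretely, $A_b(X)\cap\Bo_1(X)$ restricted to the compact set $\ext X$ gives Borel-class-one functions on $\ext X$, which lie in $(C(\ext X))^\mu=(A_c(X))^\mu\subset(A_s(X))^\mu$ after re-affinizing; I would make this precise via Lemma~\ref{L:function space} and the fact that on a compact space $\Bo_1=(\text{lsc}-\text{lsc})^\mu$ in the uniform-then-monotone sense is not quite true, so more care is needed — likely one uses that $A_b(X)\cap\Bo_1(X)\subset(\overline{A_s(X)})^\mu$ on Bauer simplices via the closed-split-face extension results of Section~\ref{sec:splifaces}. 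For the strict inclusion on a general simplex, I would invoke a non-Bauer simplex whose boundary forces a genuinely Borel-class-one but not monotone-semicontinuous affine function; Example~\ref{ex:d+s} or the Stacey simplices of Section~\ref{sec:stacey} are the natural source. For (d), $(A_s(X))^\mu=(A_s(X))^\sigma$ on a simplex follows from Proposition~\ref{P:Baire-srovnani}(iii): on a simplex all four spaces $(A_c(X))^\mu,(A_1(X))^\mu,(A_c(X))^\sigma,A_{sa}(X)\cap\Ba^b(X)$ coincide, and $(A_s(X))^\mu$ and $(A_s(X))^\sigma$ are squeezed between $(A_c(X))^\mu$ and $A_{sa}(X)\cap\Ba^b(X)$ (using $A_s(X)\subset A_1(X)$... which fails in general, so instead one uses $A_s(X)\subset A_{sa}(X)\cap\Ba(X)$ on a simplex via $A_l(X)\subset(A_c(X))^\mu$, true because on a simplex every bounded lsc affine function is an increasing limit of continuous affine ones by \cite[Theorem I.1.2]{alfsen}-type results); then $(A_s(X))^\sigma\subset(A_{sa}(X)\cap\Ba(X))$ collapses. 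The strict inclusion in general is witnessed again by a symmetric $X$ where $A_s(X)$ is far from closed. The hard part overall will be pinning down which standard simplex/Bauer simplex simultaneously separates each pair, and citing the split-face extension lemmas of Section~\ref{sec:splifaces} correctly for the Bauer-simplex equalities.
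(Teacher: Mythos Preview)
Your outline for (a) is fine and matches the paper. For (d), your squeeze argument via $A_l(X)\subset(A_c(X))^\mu$ on a simplex (Edwards' theorem) together with Proposition~\ref{P:Baire-srovnani}(iii) does work; the paper simply cites \cite[p.~104]{smith-london} and \cite{krause} for the same fact, and uses Lemma~\ref{L:symetricke X} for the strict example, exactly as you suggest.

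The real gap is in (c). Your route to the Bauer-simplex equality $(A_s(X))^\mu=(A_b(X)\cap\Bo_1(X))^\mu$ is confused: you correctly notice that ``$\Bo_1=(\text{lsc}-\text{lsc})^\mu$'' is not literally true, but then reach for the split-face extension machinery of Section~\ref{sec:splifaces}, which is not what is needed. The clean argument is purely a statement about function spaces on a compact $L$: one shows directly that $(\Lb(L))^\mu=(\Bo_1^b(L))^\mu=\Bo^b(L)$ (Lemma~\ref{L:borel=Lbmu}). The point is that $1_G\in\Lb(L)$ for every open $G$, so $(\Lb(L))^\mu$ contains all bounded Borel functions; since both $(\Lb(L))^\mu$ and $(\Bo_1^b(L))^\mu$ sit inside $\Bo^b(L)$, equality follows. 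Then transport this via Lemma~\ref{L:function space} to the Bauer simplex $X=M_1(L)$. For the strict inclusion on a non-Bauer simplex, Example~\ref{ex:d+s} is not the right source (it concerns split faces with non-strongly-affine $\lambda_A$); the paper uses the Stacey simplices (Proposition~\ref{p:vztahy-multi-ifs}(a)).

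For (b) your plan is too vague to count as a proof. The paper does \emph{not} take a product: it uses two separate Bauer simplices $X_1=M_1([0,1])$ and $X_2=M_1(K_2)$ with $K_2$ the long line, and exhibits explicit functions. On $K_1$, \cite[Proposition~5.1]{odell-rosen} supplies $f_1\in\overline{\Lb(K_1)}\setminus\Lb(K_1)$ and $f_2\in\Ba_1^b(K_1)\setminus\overline{\Lb(K_1)}$, while $f_4=1_{\qe}$ and $f_3=1_A$ for an analytic non-Borel $A$ handle the remaining metrizable distinctions. On $K_2$, a stationary/co-stationary set $S$ gives a fragmented non-Borel $g_1$ and a function $g_2\in(\Fr^b(K_2))^\mu$ that is neither fragmented nor Borel. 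Your suggestion of ``a scattered compact of large height'' or $[0,1]^\Gamma$ does not obviously produce a fragmented non-Borel function; the long line with a stationary set is the standard trick here, and you should name it.
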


Before passing to the proof itself, we give the following easy lemma.

\begin{lemma}\label{L:borel=Lbmu}
    Let $L$ be a compact space.
    \begin{enumerate}[$(a)$]
        \item $\Lb(L)\subset\overline{\Lb(L)}\subset\Bo^b_1(L)$;
        \item $(\Lb(L))^\mu=(\Lb(L))^\sigma=(\Bo_1^b(L))^\mu=(\Bo_1^b(L))^\sigma=\Bo^b(L)$.
    \end{enumerate}
\end{lemma}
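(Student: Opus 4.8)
\textbf{Proof plan for Lemma~\ref{L:borel=Lbmu}.}

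The statement is a standard fact about Borel classes on a compact (indeed arbitrary topological) space $L$, and the plan is to prove the two parts essentially independently, with part $(b)$ relying on part $(a)$ only for the inclusion $\Lb(L)\subset\Bo_1^b(L)$.

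For part $(a)$: a bounded lower semicontinuous function is the pointwise limit of an increasing sequence of continuous functions only when $L$ is, say, perfectly normal, so instead I would argue directly that every bounded lower semicontinuous function $f$ is $F_\sigma$-measurable, hence in $\Bo_1^b(L)$; indeed for $c\in\er$ the set $[f>c]$ is open, and by writing $[f\le c]=\bigcap_n[f<c+\tfrac1n]$ one sees the sublevel sets are $G_\delta$, so $f$ is of the first Borel class in the sense of the $\FpG_\sigma$-measurability used in Section~\ref{ssc:csp} (every open set is in particular an intersection of a closed and an open set, so $\FpG$ contains the open sets and $[f>c]\in(\FpG)_\sigma$ trivially; similarly for complements). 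Since $\Bo_1^b(L)$ is a linear space (it is even closed under uniform limits by Lemma~\ref{L:lattice}-type reasoning, but linearity suffices here), the difference of two bounded lower semicontinuous functions is again in $\Bo_1^b(L)$, giving $\Lb(L)\subset\Bo_1^b(L)$; and since $\Bo_1^b(L)$ is uniformly closed we get $\overline{\Lb(L)}\subset\Bo_1^b(L)$ as well. The inclusion $\Lb(L)\subset\overline{\Lb(L)}$ is trivial.

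For part $(b)$: I would prove the chain of inclusions and reverse inclusions closing up into a cycle. First, $(\Lb(L))^\mu\subset(\Lb(L))^\sigma$ and $(\Bo_1^b(L))^\mu\subset(\Bo_1^b(L))^\sigma$ are immediate from $F^\mu\subset F^\sigma$. Next, $(\Lb(L))^\sigma\subset(\Bo_1^b(L))^\sigma$ and $(\Lb(L))^\mu\subset(\Bo_1^b(L))^\mu$ follow from part $(a)$ by monotonicity of the closure operations. For $(\Bo_1^b(L))^\sigma\subset\Bo^b(L)$: the family $\Bo^b(L)$ is closed under pointwise limits of bounded sequences (bounded Borel functions form such a class), and it contains $\Bo_1^b(L)\subset\Bo_1(L)\subset\Bo(L)$, so it contains the smallest sequentially closed family over $\Bo_1^b(L)$, namely $(\Bo_1^b(L))^\sigma$. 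The crucial reverse inclusion is $\Bo^b(L)\subset(\Lb(L))^\mu$. For this I would use that continuous functions lie in $\Lb(L)$ (as $C(L)\subset A_l$-type functions, i.e., $f=f-0$ with $f$ both u.s.c.\ and l.s.c.), hence $C(L)\subset\Lb(L)\subset(\Lb(L))^\mu$, and that $(\Lb(L))^\mu$ is, by Lemma~\ref{L:muclosed is closed}, a uniformly closed linear space closed under monotone bounded sequential limits. One then shows by transfinite induction on the Borel class that every bounded Borel function lies in $(\Lb(L))^\mu$: for the first Borel class one uses that a bounded $\FpG_\sigma$-measurable function is a uniform limit of finite linear combinations of characteristic functions of $\FpG_\sigma$ sets, each such set is a countable increasing union of sets of the form (closed $\cap$ open), whose characteristic functions are differences of a u.s.c.\ and l.s.c.\ function, hence in $\Lb(L)$; the increasing union gives a monotone limit, so $1_A\in(\Lb(L))^\mu$, and the uniform closure finishes the first class. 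The inductive step uses that a bounded function of Borel class $\alpha$ is a bounded pointwise limit of a sequence of bounded functions of lower classes, and one extracts a \emph{monotone} bounded limit by the usual device $g=\limsup g_n$ with $g=\lim_k\sup_{n\ge k}g_n$ and $\sup_{n\ge k}g_n=\lim_m\max\{g_k,\dots,g_m\}$, keeping everything inside $(\Lb(L))^\mu$ since that space is a lattice (being uniformly closed and containing constants, by Lemma~\ref{L:lattice}(c), it is even a subalgebra). Chaining $\Bo^b(L)\subset(\Lb(L))^\mu\subset(\Lb(L))^\sigma\subset(\Bo_1^b(L))^\sigma\subset\Bo^b(L)$ and $(\Lb(L))^\mu\subset(\Bo_1^b(L))^\mu\subset(\Bo_1^b(L))^\sigma\subset\Bo^b(L)\subset(\Lb(L))^\mu$ forces all five spaces to coincide.

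The main obstacle I expect is the verification that $\Bo^b(L)\subset(\Lb(L))^\mu$, specifically making the transfinite induction clean: one must be careful that at each successor stage the bounded pointwise limit witnessing membership in the next Borel class can be replaced by a monotone one without leaving $(\Lb(L))^\mu$, which is exactly where the lattice property of $(\Lb(L))^\mu$ (via Lemma~\ref{L:lattice}(c) applied to the uniformly closed space containing constants) is indispensable. Everything else is routine bookkeeping with the definitions of $F^\mu$, $F^\sigma$ and the Borel hierarchy.
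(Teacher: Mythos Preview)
Your approach is correct in outline but considerably more laborious than necessary, and it contains one unjustified claim.

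The gap: you assert that $(\Lb(L))^\mu$ is a lattice, which is exactly what your $\limsup$ device requires, but you do not prove it. Lemma~\ref{L:lattice}(c) goes from sublattice to subalgebra, not the other way, so it cannot serve as justification. The claim is true, but the clean route is to first observe that $\Lb(L)$ \emph{itself} is a lattice: if $f=f_1-f_2$ and $g=g_1-g_2$ with $f_i,g_i$ bounded lower semicontinuous, then
\[
\max(f,g)=\max(f_1+g_2,\,g_1+f_2)-(f_2+g_2),
\]
and the first term is lower semicontinuous as a maximum of such. Once $\Lb(L)$ is a lattice, the remark in Section~\ref{ssc:csp} that $F^\mu=F^\sigma$ whenever $F$ is a lattice gives $(\Lb(L))^\mu=(\Lb(L))^\sigma$ in one stroke, and likewise for $\Bo_1^b(L)$.

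The paper's proof then bypasses transfinite induction entirely. With $(\Lb(L))^\mu=(\Lb(L))^\sigma$ in hand, it notes that $1_G\in\Lb(L)$ for every open $G$ (since $1_G$ is lower semicontinuous), so the family $\{B\subset L\setsep 1_B\in(\Lb(L))^\sigma\}$ is a $\sigma$-algebra containing the open sets and hence all Borel sets; uniform density of simple functions in $\Bo^b(L)$ together with uniform closedness of $(\Lb(L))^\sigma$ (Lemma~\ref{L:muclosed is closed}) finishes. This is a few lines, versus a Borel-hierarchy induction. Your argument would work once the lattice property is established, but the paper's route exploits the same underlying fact more efficiently.
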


\begin{proof}
    Assertion $(a)$ is well known. Let us prove $(b)$. Since both $\Lb(L)$ and $\Bo^b_1(L)$ are lattices, we deduce that  $(\Lb(L))^\mu=(\Lb(L))^\sigma$ and $(\Bo_1^b(L))^\mu=(\Bo_1^b(L))^\sigma$. Further, we clearly have 
    $(\Lb(L))^\sigma\subset (\Bo_1^b(L))^\sigma\subset \Bo^b(L)$.
    
Next we observe that if $G\subset L$ is open, then $1_G\in \Lb(L)$, because this function is lower semicontinuous. Thus the set
\[
\{B\subset L\setsep 1_B\in (\Lb(L))^\sigma\}
\]
contains open sets and it is easily seen to be a $\sigma$-algebra, Hence $1_B\in (\Lb(L))^\sigma$ for each Borel set in $L$, which implies $f\in (\Lb(L))^\sigma$ for each bounded Borel function on $L$ (as simple functions are dense in $\Bo^b(L)$).
\end{proof}

Now we proceed to the proof of the above proposition:

\begin{proof}[Proof of Proposition~\ref{P:vetsi prostory srov}.]
 Assertion $(a)$ is clear.

$(c)$: If $X$ is a Bauer simplex, the equality follows from Lemma~\ref{L:borel=Lbmu}$(b)$ using Lemma~\ref{L:function space}.
An example showing that the inclusion may be strict for a simplex $X$ is provided by Proposition~\ref{p:vztahy-multi-ifs}$(a)$ below.

 $(d)$: If $X$ is a simplex, the equality is proved in \cite[p. 104]{smith-london} (using results of \cite{krause}).
  An example when the inclusion is proper is provided by Lemma~\ref{L:symetricke X}.   Indeed, let $X$ be a symmetric compact convex set. By assertions $(ii)$ and $(iii)$ of the quoted lemma we have $(A_s(X))^\mu=(A_c(X))^\mu=A_c(X)$ and $(A_s(X))^\sigma=(A_c(X))^\sigma$.  Thus if we take $X$ to be (for example) the dual unit ball of the Banach space $E=c_0$, then
\[
(A_s(X))^\sigma=(A_c(X))^\sigma\neq A_c(X)=(A_c(X))^\mu=(A_s(X))^\mu.
\]

 $(b)$: We will use Lemma~\ref{L:function space}. Therefore we first look at  inclusions of the respective spaces of bounded functions on a compact space.
 
 Let $K_1=[0,1]$. It follows from \cite[Proposition 5.1]{odell-rosen} that there are two functions $f_1,f_2:K_1\to \er$ such that
 $f_1\in\overline{\Lb(K_1)}\setminus\Lb(K_1)$ and $f_2\in\Ba_1^b(K_1)\setminus\overline{\Lb(K_1)}$.  Further, let $A\subset K_1$ be an analytic set which is not Borel. Then $f_3=1_A$ is  universally measurable but not Borel, hence neither in $(\Fr^b(K_1))^\mu$ nor in $(\Lb(K_1))^\sigma$ (as $K_1$ is metrizable).  The function $f_4=1_{K_1\cap\qe}$ is Borel but not fragmented.

Further, let $K_2$ be the long line, i.e., $K_2=[0,\omega_1)\times[0,1)\cup\{(\omega_1,0)\}$ equipped with the order topology induced by the lexicographic order. Let $S\subset [0,\omega_1)$ be a stationary set whose complement is also stationary (i.e., both $S$ and it complement intersect each closed unbounded subset of $[0,\omega_1)$).
We define the following functions on $K_2$:
\begin{itemize}
   \item Let 
   $$g_1(\alpha,t)=\begin{cases}
       1 & \alpha\in S,\\ 0 & \mbox{ otherwise}.
   \end{cases}$$
   Then $g_1$ is a fragmented non-Borel function on $K_2$.
   \item  Let $$g_2(\alpha,t)=\begin{cases}
       1 & \alpha\in S, t\in\qe,\\ 0 &\mbox{otherwise}.
   \end{cases}$$
    Then $g_2$ belongs to $(\Fr^b(K_2))^\mu$ but it is neither fragmented not Borel.  
\end{itemize}

Now we conclude using Lemma~\ref{L:function space}. Indeed, for $i=1,2$ we set $E_i=C(K_i)$, $X_i=S(E_i)=M_1(K_i)$ and let $V_i$ be the operator from Lemma~\ref{L:function space}. Then we have:
$$\begin{gathered}
    V_1(f_1)\in \overline{A_s(X_1)}\setminus A_s(X_1), V_1(f_2)\in A_b(X_1)\cap\Bo_1(X_1)\setminus \overline{A_s(X_1)},\\
    V_2(g_1)\in A_f(X_2)\setminus (A_b(X_2)\cap\Bo_1(X_2))^\mu,
    V_1(f_4)\in (A_b(X_1)\cap\Bo_1(X_1))^\mu\setminus A_f(X_1),\\
    V_2(g_2)\in (A_f(X_2))^\mu\setminus (A_f(X_2)\cup (A_b(X_2)\cap\Bo_1(X_2))^\mu),\\
    V_1(f_3)\in A_{sa}(X_1)\setminus (A_f(X_1))^\mu\cup(A_s(X_1))^\sigma.
\end{gathered}$$

Since $(A_s(X_i))^\mu=(A_s(X_i))^\sigma=(A_b(X_i)\cap\Bo_1(X_i))^\mu$ (by $(c)$ and $(d)$), we deduce that no more inclusions hold. 
\end{proof}

\begin{remarks}
\label{r:as-sigma-mu}
    (1) Assertion $(b)$ of  Proposition~\ref{P:vetsi prostory srov} in particular shows that $A_s(X)$ need not be a Banach space, even if $X$ is a Bauer simplex.     This answers a question in \cite[p. 100]{smith-london}.

    (2) Assertion $(d)$ of Proposition~\ref{P:vetsi prostory srov} answers in the negative a question asked in \cite[p. 104]{smith-london}. 
\end{remarks}

In view of the validity of $(A_s(X))^\sigma=(A_s(X))^\mu$ for a simplex, the following question is natural:

\begin{ques}
   Assume $X$ is a simplex. Do the equalities  $(A_b(X)\cap\Bo_1(X))^\sigma=(A_b(X)\cap\Bo_1(X))^\mu$ and $(A_f(X))^\sigma=(A_f(X))^\mu$ hold?
\end{ques}

We note that these equalities clearly hold if $X$ is a Bauer simplex and also for simplices addressed in Section~\ref{sec:stacey} below (see Proposition~\ref{P:dikous-Afmu-new}$(a)$ and Proposition~\ref{P:dikous-bo1-mu-new}$(a)$). But the method used for $A_s(X)$ in \cite{krause} fails.

We continue by a comparison of the spaces addressed in this section with spaces of Baire functions from the previous section.

\begin{prop}\label{P:srovnani baire a vetsich}
    Let $X$ be a compact convex set. Then the following assertions are valid.
    \begin{enumerate}[$(a)$]
        \item $\begin{array}[t]{ccccccc}
             A_s(X)&\subset&\overline{A_s(X)}&\subset& (A_s(X))^\mu & \subset & (A_s(X))^\sigma  \\
            \cup &&&&\cup&&\cup \\
            A_c(X)&&\subset&&(A_c(X))^\mu&\subset&(A_c(X))^\sigma \\
            &\sesubset&&&\cap&\nesubset&\\
            && A_1(X)&\subset&(A_1(X))^\mu&& \\
            &&\cap&&\cap&& \\
            &&A_b(X)\cap\Bo_1(X)&\subset&(A_b(X)\cap\Bo_1(X))^\mu.&&
        \end{array}$
        
       No more inclusions between spaces of Baire functions and the other ones are valid in general.
       \item Assume that $X$ is metrizable. Then:
       $$\begin{array}{ccccccc}
            A_s(X) &\subset &\overline{A_s(X)}&\subset&(A_c(X))^\mu & = & (A_s(X))^\mu  \\
            \cup&&\cap&&\cap &&\cap\\
           A_c(X)&&A_1(X)&\subset&(A_1(X))^\mu &\subset& (A_c(X))^\sigma  
            \\
             &&\parallel&&\parallel &&\parallel\\
               &&A_b(X)\cap\Bo_1(X)&\subset&(A_b(x)\cap\Bo_1(X))^\mu &\subset& (A_s(X))^\sigma\\
               &&\parallel&&\parallel &&\parallel\\
                 &&A_f(X)&\subset&(A_f(X))^\mu &\subset& (A_b(x)\cap\Bo_1(X))^\sigma  
                  \\
             &&&&&&\parallel\\
              &&&&&& (A_f(X))^\sigma.
             \end{array}$$
           No more inclusion are valid in general.  
    \end{enumerate}
\end{prop}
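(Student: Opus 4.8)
The proposition is a compendium of inclusions and non-inclusions among two families of intermediate function spaces (the "Baire" ones from Section~\ref{sec:baire} and the "larger" ones from the present section), together with the assertion that the displayed inclusions are the only ones. I would organize the proof in three stages: first, the positive inclusions; second, in the metrizable case, the collapse of several of these spaces to a common value; third, the negative part, exhibiting for each "missing" inclusion a concrete compact convex set and a witnessing function.

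\textbf{Stage 1: the inclusions in (a).} All the inclusions written in the diagram are either immediate from the definitions or already recorded earlier. The vertical and horizontal inclusions among $A_c(X)$, $(A_c(X))^\mu$, $(A_c(X))^\sigma$, $A_1(X)$, $(A_1(X))^\mu$ are contained in Proposition~\ref{P:Baire-srovnani}(i) and the diagram~\eqref{eq:prvniinkluze}. The inclusions $A_c(X)\subset A_s(X)\subset\overline{A_s(X)}\subset (A_s(X))^\mu\subset (A_s(X))^\sigma$ are by definition and Lemma~\ref{L:muclosed is closed}. The inclusions $A_1(X)\subset A_b(X)\cap\Bo_1(X)$ and $(A_1(X))^\mu\subset(A_b(X)\cap\Bo_1(X))^\mu$ come from~\eqref{eq:prvniinkluze} and monotonicity of the operation $H\mapsto H^\mu$; likewise $(A_c(X))^\mu\subset(A_s(X))^\mu$, $(A_c(X))^\sigma\subset(A_s(X))^\sigma$, $(A_1(X))^\mu\subset(A_c(X))^\sigma$ (the last is Proposition~\ref{P:Baire-srovnani}(i)). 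So Stage 1 is just bookkeeping: assemble the diagram from already-proven facts.

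\textbf{Stage 2: the metrizable case (b).} Here the key inputs are: (i) $A_1(X)=A_b(X)\cap\Bo_1(X)=A_f(X)$ for metrizable $X$ (this is~\eqref{eq:prvniinkluze-metriz}); (ii) for a simplex (in particular a metrizable one) $(A_s(X))^\mu=(A_s(X))^\sigma$ (Proposition~\ref{P:vetsi prostory srov}(d)) and $(A_s(X))^\mu=(A_b(X)\cap\Bo_1(X))^\mu$ is proved in that same proposition for Bauer simplices but in the metrizable case one argues directly, and moreover $(A_c(X))^\mu=(A_s(X))^\mu$ holds for metrizable $X$ by Proposition~\ref{P:Baire-srovnani}(iii) combined with the fact that in the metrizable case $A_{sa}(X)\cap\Ba^b(X)$ need not coincide — wait, more carefully: Proposition~\ref{P:Baire-srovnani}(iii) gives coincidence of the four spaces only for simplices, so for a general metrizable $X$ I must instead use that $A_f(X)=A_1(X)$ and that $(A_1(X))^\mu$ and $(A_b(X)\cap\Bo_1(X))^\mu$ are therefore equal, giving the chain $A_f(X)=A_b(X)\cap\Bo_1(X)=A_1(X)$ and $(A_f(X))^\mu=(A_b(X)\cap\Bo_1(X))^\mu=(A_1(X))^\mu$. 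The equalities $(A_1(X))^\mu=(A_c(X))^\sigma$ and $(A_b(X)\cap\Bo_1(X))^\mu=(A_s(X))^\sigma$ and $(A_f(X))^\mu=(A_f(X))^\sigma$ for metrizable $X$ follow from the fact that a metrizable compact convex set need not be a simplex, so these are genuinely nontrivial; I would prove them by the separable-reduction / descriptive argument: every function in these spaces is strongly affine and Baire-measurable of a fixed class, and for a metrizable $X$ the relevant classes collapse because $\ext X$ is $G_\delta$ and the monotone-sequential closures already reach the full Baire class (use Proposition~\ref{P:Baire-srovnani}(iii) applied after the separable reduction of~\cite[Theorem 9.12]{lmns}, noting that in the metrizable case the reduction is to $X$ itself is not available — rather use that $(A_c(X))^\sigma=A_{sa}(X)\cap\Ba^b(X)$ fails in general but the specific equalities hold because $A_f(X)=A_1(X)$ pulls everything down). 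The cleanest route is: for metrizable $X$, $A_f(X)=A_1(X)$, hence $(A_f(X))^\mu=(A_1(X))^\mu$; and $(A_1(X))^\mu=(A_c(X))^\sigma$ by Proposition~\ref{P:Baire-srovnani}(i),(iii)? No—(iii) needs a simplex. I expect this to be the genuine obstacle (see below).

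\textbf{Stage 3: the negative part (the "no more inclusions" claims).} This is where most of the real work lies, and it is handled by transporting counterexamples from the level of bounded functions on a compact space $K$ to the level of affine functions on $X=M_1(K)$ via Lemma~\ref{L:function space} (a Bauer simplex gives $V(\Fr^b(K)\cap E^{\perp\perp})=A_f(X)$ etc., and for $E=C(K)$ the annihilator is trivial so $V$ is onto the full spaces). Concretely: to show $(A_c(X))^\sigma\not\subset (A_b(X)\cap\Bo_1(X))^\mu$ in general (the "nesubset" arrow), and that $(A_1(X))^\mu\not\subset A_f(X)$, $(A_s(X))^\mu\not\subset A_1(X)$, $(A_s(X))^\mu\not\subset\overline{A_s(X)}$, etc., I would reuse exactly the functions $f_1,\dots,f_4$ on $[0,1]$ and $g_1,g_2$ on the long line constructed in the proof of Proposition~\ref{P:vetsi prostory srov}(b), plus the Talagrand function from~\cite{talagrand} to separate $(A_c(X))^\sigma$ (or $A_{sa}(X)$) from the Borel-based spaces, and the fact that a Baire function need not be fragmented ($f_4=1_{\mathbb Q\cap[0,1]}$) and a fragmented function need not be Borel ($g_1$) to separate the two families. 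For the product constructions I would take finite products of these Bauer simplices as in Proposition~\ref{P:vetsi prostory srov}(b) so that several non-inclusions are witnessed simultaneously by one $X$. In the metrizable case (b), the non-inclusions are witnessed on $X=M_1([0,1])$ by $f_1$ (for $A_s\subsetneq\overline{A_s}$), $f_2$ (for $\overline{A_s}\subsetneq (A_c)^\mu$), the Talagrand example for $(A_c)^\sigma\subsetneq A_{sa}$-type separations adapted to stay inside the metrizable world, and an analytic-non-Borel set for $(A_f(X))^\sigma\subsetneq A_{sa}(X)$ if that arrow is present.

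\textbf{Expected main obstacle.} The hard part will not be the individual counterexamples — those are essentially packaged already in Proposition~\ref{P:vetsi prostory srov} and in~\cite{talagrand,odell-rosen} — but rather verifying cleanly that the list of inclusions in each diagram is \emph{exhaustive}, i.e., checking that every pair of spaces not connected by a directed path in the diagram is genuinely incomparable, and in the metrizable case Stage~2, establishing the several equalities $(A_1(X))^\mu=(A_c(X))^\sigma=(A_b(X)\cap\Bo_1(X))^\mu=(A_s(X))^\sigma=(A_f(X))^\mu=(A_f(X))^\sigma$ for a metrizable but possibly non-simplicial $X$. The subtlety there is that Proposition~\ref{P:Baire-srovnani}(iii) — which gives the collapse of the four canonical Baire spaces — is stated only for simplices, so for a general metrizable $X$ one must argue via the separable reduction~\cite[Theorem 9.12]{lmns} that any $f$ in the top space factors through a metrizable \emph{simplex} $Y$ as $f=\tilde f\circ\varphi$ with $\tilde f$ in the corresponding space on $Y$, then apply Proposition~\ref{P:Baire-srovnani}(iii) on $Y$ and pull back along $\varphi$ (using that $(A_c(Y))^\mu\circ\varphi\subset(A_c(X))^\mu$, which follows from Lemma~\ref{L:kvocient}-type preservation and the fact that $a\mapsto a\circ\varphi$ commutes with monotone sequential limits). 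Once that factorization lemma is in place, all the equalities in (b) follow uniformly, and the bookkeeping closes.
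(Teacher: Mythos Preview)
Your Stage 1 and Stage 3 are fine in spirit (though Stage 3 is over-engineered; the paper dispatches (a) with a single function $f=1_{\{(\omega_1,0)\}}$ on the long line, which lies in $A_s(X_2)$ and in $A_b(X_2)\cap\Bo_1(X_2)$ but not in $(A_c(X_2))^\sigma$, killing every missing inclusion from an ``other'' space into a ``Baire'' space at once).

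The real problem is in Stage 2: you have misread the diagram in (b). The right-hand column asserts $(A_c(X))^\sigma=(A_s(X))^\sigma=(A_b(X)\cap\Bo_1(X))^\sigma=(A_f(X))^\sigma$, and the middle column asserts $(A_1(X))^\mu=(A_b(X)\cap\Bo_1(X))^\mu=(A_f(X))^\mu$; but between the two columns the diagram shows only an \emph{inclusion} $(A_1(X))^\mu\subset(A_c(X))^\sigma$, not an equality. The equality you set out to prove, $(A_1(X))^\mu=(A_c(X))^\sigma$ for arbitrary metrizable $X$, is in fact false: Proposition~\ref{P:Baire-srovnani}(ii) gives a metrizable $X_1=(B_{C([0,1])^*},w^*)$ with $(A_1(X_1))^\mu\subsetneqq(A_c(X_1))^\sigma$. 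So your anticipated ``main obstacle'' is an artifact of misreading, and the separable-reduction detour you propose would be chasing a non-theorem.

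The equalities that \emph{are} asserted in (b) are elementary and need neither simpliciality nor separable reduction. The only non-formal one is $(A_c(X))^\mu=(A_s(X))^\mu$: for metrizable $X$ every bounded lower semicontinuous function is the limit of an increasing sequence of continuous functions, hence $A_l(X)\subset(A_c(X))^\mu$, whence $A_s(X)\subset(A_c(X))^\mu$ and $(A_s(X))^\mu=(A_c(X))^\mu$. The remaining equalities $A_1(X)=A_b(X)\cap\Bo_1(X)=A_f(X)$ come straight from Theorem~\ref{T:b}(b), and then the $\mu$- and $\sigma$-closures coincide formally (using $A_s(X)\subset A_1(X)\subset(A_c(X))^\sigma$ to collapse the $\sigma$-column). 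The ``no more inclusions'' in (b) follows by citing the metrizable counterexamples already built in Proposition~\ref{P:Baire-srovnani} and Proposition~\ref{P:vetsi prostory srov}(b).
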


\begin{proof}
    $(a)$: The validity of inclusions is clear. To illustrate that the relevant inclusions may be proper, even for a Bauer simplex, we use the long line, i.e., the compact space $K_2$ from the proof of Proposition~\ref{P:vetsi prostory srov}$(b)$. The function $f=1_{\{(\omega_1,0)\}}$ is upper semicontinuous and hence of the first Borel class, but it is not a Baire function.
    Hence $V_2(f)\in A_s(X_2)\setminus (A_c(X_2))^\sigma$ and also $V_2(f)\in A_b(X_2)\cap\Bo_1(X_2)\setminus (A_c(X_2))^\sigma$.

    $(b)$:  Assume $X$ is metrizable. Then clearly $A_l(X)\subset (A_c(X))^\mu$, so $(A_s(X))^\mu=(A_c(X))^\mu$. The remaining equalities follow from Theorem~\ref{T:b}$(b)$. No more inclusions hold by Proposition~\ref{P:Baire-srovnani} and the proof of Proposition~\ref{P:vetsi prostory srov}$(b)$ (note that the space $X_1$ is metrizable).
\end{proof}

\subsection{On spaces derived from semicontinuous affine functions}
Now we pass to the investigation of multipliers of the spaces of our interest. We start by looking at the spaces derived from semicontinuous functions.

\begin{prop}\label{P:multi strong pro As}
    Let $X$ be a compact convex set and let $H\subset (A_s(X))^\sigma$ be an intermediate function space. Then $M(H)=M^s(H)$.
\end{prop}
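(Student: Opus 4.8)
The plan is to reduce the claim to a statement about maximal measures, using that every function in $H$ is strongly affine (since $H\subset (A_s(X))^\sigma\subset A_{sa}(X)$) together with the integral representation theorem for semicontinuous affine functions, namely Theorem~\ref{T:Assigma-reprez}. Since $M^s(H)\subset M(H)$ always holds, the only thing to prove is $M(H)\subset M^s(H)$. So fix $u\in M(H)$ and $a\in H$; there is $b\in H$ with $b=u\cdot a$ on $\ext X$, and I must show $b=u\cdot a$ $\mu$-almost everywhere for every maximal $\mu\in M_1(X)$.

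The key idea is that although $b$ and $u\cdot a$ are not necessarily Baire functions (so the argument of Proposition~\ref{P:rovnostmulti} does not apply directly), they \emph{are} governed by the canonical correspondence $\mu\mapsto\mu'$ from Lemma~\ref{L:miry na ext}. First I would observe that $u\in M(H)\subset H\subset (A_s(X))^\sigma$ and $a\in H\subset(A_s(X))^\sigma$, and hence $u\cdot a$, being a product of two bounded elements of $(A_s(X))^\sigma$, lies in $(A_s(X))^\sigma$ as well — here I use that $(A_s(X))^\sigma$ is a $\sigma$-algebra-measurable function space: it is $\Ba^b$-closed under products since $(A_s(X))^\sigma = ((A_s(X))^\sigma)^\sigma$ contains $A_s(X)$ which is closed under products of... actually more carefully, $(A_s(X))^\sigma$ is the $\sigma$-closure of the algebra $A_s(X)$, so it is itself an algebra closed under pointwise limits of bounded sequences (one shows the set of $f$ with $f\cdot g\in(A_s(X))^\sigma$ for all $g$ is $\sigma$-closed and contains $A_s(X)$). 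Consequently both $b$ and $u\cdot a$ belong to $(A_s(X))^\sigma$.

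Now the main step: fix a maximal measure $\mu\in M_1(X)$ and let $\mu'$ be the measure on $(\ext X,\Sigma')$ provided by Lemma~\ref{L:miry na ext}. Apply Theorem~\ref{T:Assigma-reprez} to the function $b-u\cdot a\in (A_s(X))^\sigma$: its restriction to $\ext X$ is $\mu'$-measurable and
\[
\int_{\ext X}(b-u\cdot a)|_{\ext X}\di\mu' = (b-u\cdot a)(r(\mu)) .
\]
But $(b-u\cdot a)|_{\ext X}=0$ identically on $\ext X$, so the integral is $0$; this does not immediately give the $\mu$-a.e.\ statement on $X$, so instead I would apply the theorem differently: I want the $\mu$-null set on $X$, not just on $\ext X$. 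The cleaner route is to use that a maximal measure $\mu$ satisfies $\mu(A)=\mu'(A\cap\ext X)$ for $A\in\Sigma$, and that for $f\in(A_s(X))^\sigma$ Theorem~\ref{T:Assigma-reprez} combined with \eqref{eq:mira1} yields $\int_X f\di\mu=\int_{\ext X}f|_{\ext X}\di\mu'$ (the restriction map is measure-preserving between $(\X,\mu)$ restricted to $\Sigma$ and $(\ext X,\mu')$, and $f$ is measurable for the completion). Applying this to the nonnegative function $g=|b-u\cdot a|$, which also lies in $(A_s(X))^\sigma$ (absolute values of differences of bounded lsc functions are handled via the lattice/algebra structure, or: $(A_s(X))^\sigma$ is a $\sigma$-closed algebra hence a lattice by Lemma~\ref{L:lattice}), I get $\int_X g\di\mu = \int_{\ext X} g|_{\ext X}\di\mu' = \int_{\ext X} 0\di\mu' = 0$, since $g=0$ on $\ext X$. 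As $g\ge 0$ and $\mu$-measurable, this forces $g=0$ $\mu$-a.e., i.e.\ $b=u\cdot a$ $\mu$-almost everywhere. Since $\mu$ was an arbitrary maximal measure, $u\in M^s(H)$, completing the proof.

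The main obstacle I anticipate is the bookkeeping around Theorem~\ref{T:Assigma-reprez}: that theorem as stated gives $\int_{\ext X} f\di\mu' = f(r(\mu))$ for $f\in(A_s(X))^\sigma$, and I need to be sure this upgrades (via the relations \eqref{eq:mira1} linking $\mu$ on $\Sigma$ with $\mu'$) to the statement $\int_X |b-u\cdot a|\di\mu=0$, equivalently that $|b-u\cdot a|$ vanishes $\mu$-a.e.\ on $X$ — the subtlety being that $(A_s(X))^\sigma$-functions are measurable with respect to $\mu$ (not merely $\mu'$) and the restriction-to-$\ext X$ map is measure-preserving. One can also argue more directly: apply Theorem~\ref{T:Assigma-reprez} to $|b-u\cdot a|$ at $x=r(\mu)$, getting $\int_{\ext X}|b-u\cdot a|\di\mu'=|b-u\cdot a|(r(\mu))$; since the integrand is $0$, the value $|b-u\cdot a|(r(\mu))$ is $0$ for \emph{every} $r(\mu)$, i.e.\ $b=u\cdot a$ at every barycenter of a maximal measure — but that is a pointwise statement not an a.e.\ one, so I do need the measure-theoretic version. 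I would settle this by invoking the displayed identity $\mu(A)=\mu'(A\cap\ext X)$ for $A\in\Sigma$ from Lemma~\ref{L:miry na ext} to transfer the (completed) integral of the $\Sigma$-measurable-after-completion function $|b-u\cdot a|$ from $(X,\mu)$ to $(\ext X,\mu')$, which is the one genuinely technical point but is routine given the stated lemmas.
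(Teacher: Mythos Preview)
Your approach contains a genuine error: the space $(A_s(X))^\sigma$ is \emph{not} an algebra. Recall that $A_s(X)=A_l(X)-A_l(X)$ consists of \emph{affine} functions (differences of bounded lower semicontinuous affine functions), and $(A_s(X))^\sigma$ is its sequential closure inside $A_b(X)$ --- still a space of affine functions. The product $u\cdot a$ of two nonconstant affine functions is never affine, so $u\cdot a\notin (A_s(X))^\sigma$; for the same reason $|b-u\cdot a|\notin (A_s(X))^\sigma$, and Lemma~\ref{L:lattice} does not apply. Consequently you cannot invoke Theorem~\ref{T:Assigma-reprez} for these functions, and the transfer identity $\int_X |b-u\cdot a|\,\di\mu=\int_{\ext X}|b-u\cdot a|\,\di\mu'$ is unsupported: the relation $\mu(A)=\mu'(A\cap\ext X)$ from Lemma~\ref{L:miry na ext} is stated only for $A$ in the $\sigma$-algebra $\Sigma$ generated by Baire sets and closed \emph{extremal} sets, and level sets of $u\cdot a$ (or of affine lsc functions, for that matter) are generally neither Baire nor extremal. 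Example~\ref{ex:teleman-ne} shows that the Teleman representation does not extend blindly beyond $(A_s(X))^\sigma$, so this is not a mere technicality.

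The paper takes a completely different route that avoids ever forming the product $u\cdot a$ as a function on $X$. It first treats the special case of a multiplier $m$ with $m(\ext X)\subset\{0,1\}$: here $F=[m=1]$ and $E=[m=0]$ are measure-convex, measure-extremal faces, and using the inner regularity in \eqref{eq:mira1} (approximation of $F\cap\ext X$ by traces of closed extremal sets $F_n$, which \emph{are} in $\Sigma$) one shows that every maximal measure is carried by $F\cup E$; the strong-multiplier property then follows by integrating $g$ and $mf$ (both in $(A_s(X))^\sigma$, so Theorem~\ref{T:Assigma-reprez} applies) against $\widehat{\nu}$ for a maximal $\nu$ representing a point of $F$ or $E$. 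This yields $\A_H\subset\A_H^s$ when $H^\mu=H$, and Theorem~\ref{T:integral representation H}$(ii)$ upgrades the equality from characteristic multipliers to all multipliers. A final step passes from $H$ to $H^\mu$ via Proposition~\ref{p:multi-pro-mu}. The key difference from your attempt is that the paper only ever applies the Teleman formula to affine functions in $(A_s(X))^\sigma$, and gets the ``$\mu$-almost everywhere'' conclusion from the facial decomposition $F\cup E$ rather than from an $L^1$-estimate of $b-u\cdot a$.
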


\begin{proof} The proof will be done in three steps:

{\tt Step 1:} Assume that $m\in M(H)$ satisfies $m(\ext X)\subset\{0,1\}$. We will prove that $m\in M^s(H)$.

Set $F=[m=1]$ and $E=[m=0]$. Then both $F$ and $E$ are measure convex and measure extremal disjoint faces. First we shall verify that $F\cup E$ carries every maximal measure.
To this end, let $\mu\in M_1(X)$ maximal be given. Let $(\ext X, \Sigma_\mu, \wh{\mu})$ be the completion of the measure space provided by Lemma~\ref{L:miry na ext}. Denote $c=\wh{\mu}(F\cap \ext X)$ (note that then $1-c=\wh{\mu}(E\cap \ext X)$). By equation~\eqref{eq:mira1}, there exist closed extremal sets $F_n$, $n\in\en$, such that $F_n\cap \ext X\subset F\cap \ext X$ and
$\mu(F_n)=\wh{\mu}(F_n\cap \ext X)\nearrow c$.

We claim that $F_n\subset F$. To see this, consider $x\in F_n$ and its maximal representing measure $\nu$. Let $(\ext X,\Sigma_\nu, \wh{\nu})$ be the completion of the measure space provided by Lemma~\ref{L:miry na ext} for $\nu$. Since $F_n$ is measure extremal, 
\[
1=\nu(F_n)=\wh{\nu}(F_n\cap \ext X)\le \wh{\nu}(F\cap \ext X)\le 1.
\]
Hence $m(x)=\int_{\ext X} m\di\wh{\nu}=1$, i.e., $x\in F$.

From this observation now follows that $c=\lim \mu(F_n)\le \mu(F)$. 
Similarly we obtain $\mu(E)\ge 1-c$. Hence $\mu(F)=c$ and $\mu(E)=1-c$, which gives that $F\cup E$ carries $\mu$. 

We will show that $m$ is a strong multiplier. To this end, let $f\in H$ be given and let $g\in H$ be such that $g=mf$ on $\ext X$. Let $\mu\in M_1(X)$ be any maximal measure. We need to check that $\mu([g=mf])=1$. This will be achieved by showing that $F\cup E\subset [g=mf]$. So let $x\in F$ be given. We select a maximal measure $\nu\in M_x(X)$ and consider the induced measure $\wh{\nu}$. Then
\[
1=m(x)=\int_{\ext X} m\di\wh{\nu}=\wh{\nu}(F\cap \ext X).
\]
Thus
\[
\begin{aligned}
g(x)&=\int_{\ext X}g\di\wh{\nu}=\int_{\ext X} mf\di\wh{\nu}=\int_{F\cap \ext X} f\di\wh{\nu}=\int_{\ext X} f\di\wh{\nu}\\
&=f(x)=m(x)f(x).
\end{aligned}
\]
Similarly we obtain $E\subset [g=mf]$. Hence $m$ is a strong multiplier of $H$.

\smallskip

{\tt Step 2:} Assume $H^\mu=H$. Then $M(H)=M^s(H)$.

By Step 1 we deduce that $\A_H\subset\A^s_H$. Theorem~\ref{T:integral representation H}$(ii)$ then yields that $M(H)=M^s(H)$.

\smallskip

{\tt Step 3:}  $M(H)=M^s(H)$ for general $H$.

Assume $H\subset (A_s(X))^\sigma$ and $m\in M(H)$. Then $H^\mu\subset (A_s(X))^\sigma$. By Proposition~\ref{p:multi-pro-mu}$(i)$ we get $m\in M(H^\mu)$, so by Step 2 we deduce $m\in M^s(H^\mu)$. Let us now check that $m\in M^s(H)$: 

Fix $f\in H$. Then there is $g\in H$ such that $g=mf$ on $\ext X$. Simultaneously, there is $h\in H^\mu$ such that $h=mf$ $\mu$-almost everywhere for each maximal $\mu\in M_1(X)$. Finally, since $H^\mu$ is determined by extreme points, we get $h=g$. This completes the proof.
\end{proof}

We continue by looking at the relationship of the spaces of multipliers and centers.

\begin{prop}\label{P:centrum pro As}
    Let $X$ be a compact space and let $H$ be any of the spaces
    $$\overline{A_s(X)}, (A_s(X))^\mu, (A_s(X))^\sigma.$$
    Then
    $$Z(H)=M(H)=M^s(H).$$
\end{prop}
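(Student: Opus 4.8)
The equality $M(H)=M^s(H)$ is exactly Proposition~\ref{P:multi strong pro As} (applied with $H$ itself, noting $H\subset(A_s(X))^\sigma$ in all three cases — indeed $\overline{A_s(X)}\subset (A_s(X))^\mu\subset (A_s(X))^\sigma$ by Proposition~\ref{P:vetsi prostory srov}(a)), so it remains to prove $Z(H)=M(H)$. Since $M(H)\subset Z(H)$ holds for any intermediate function space determined by extreme points (Proposition~\ref{P:mult}), and each of the three spaces is determined by extreme points (by Theorem~\ref{t:assigma-deter} for $(A_s(X))^\sigma$, hence a fortiori for the smaller spaces $\overline{A_s(X)}$ and $(A_s(X))^\mu$ since determinacy by extreme points passes to closed subspaces), the real content is the reverse inclusion $Z(H)\subset M(H)$. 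The plan is to invoke Corollary~\ref{cor:rovnost na ext}(b): it suffices to show that $\iota(x)\in\ext S(H)$ for every $x\in\ext X$, where $\iota\colon X\to S(H)$ is the canonical map from Lemma~\ref{L:intermediate}.

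To verify the preservation of extreme points, fix $x\in\ext X$. Since $A_s(X)\subset H$, in particular every affine semicontinuous function on $X$ lies in $H$. The key point is that in this setting $\{x\}$ need not be a split face, so Lemma~\ref{l:lemma-split} and Corollary~\ref{cor:iotax} do not apply directly; instead I would work with the representation of states given by Lemma~\ref{L:zachovavani ext}(3), or alternatively use a separable-reduction argument. The separable reduction route seems cleanest: given $x\in\ext X$ and nets $(x_i),(y_i)$ converging to $x$ with $a(\tfrac{x_i+y_i}{2})\to a(x)$ for all $a\in H$, and given $b\in H$, one uses that $b$ can be written (via the structure of $(A_s(X))^\sigma$ and the separable reduction theorem \cite[Theorem 9.12]{lmns}, together with Lemma~\ref{L:kvocient}(b) for the semicontinuous building blocks) in the form $b=\tilde b\circ\varphi$ for a continuous affine surjection $\varphi\colon X\to Y$ onto a metrizable compact convex set with $\varphi(x)\in\ext Y$ and $\tilde b$ of the corresponding class on $Y$. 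One then passes to the auxiliary space $\widetilde H=\{\tilde f\in (A_s(Y))^\sigma\setsep \tilde f\circ\varphi\in H\}$, notes $A_s(Y)\subset\widetilde H$, and on the metrizable space $Y$ applies Corollary~\ref{cor:iotax}(a) (every $\{y\}$, $y\in\ext Y$, is a split face of a metrizable simplex — or more carefully, one uses that closed faces are handled and the metrizable case lets Lemma~\ref{l:lemma-split} bite) to conclude $\tilde f(\varphi(x_i))\to\tilde f(\varphi(x))$, hence $b(x_i)\to b(x)$ and similarly $b(y_i)\to b(x)$. This gives condition (3) of Lemma~\ref{L:zachovavani ext}, so $\iota(x)\in\ext S(H)$.

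The main obstacle is the separable-reduction step: one must ensure that the reduction theorem \cite[Theorem 9.12]{lmns} applies to functions in $\overline{A_s(X)}$, $(A_s(X))^\mu$, and $(A_s(X))^\sigma$ respectively, producing a target function $\tilde b$ in the matching descriptive class on the metrizable $Y$ (difference of semicontinuous, monotone sequential closure thereof, or sequential closure thereof) and with the crucial property $\varphi(x)\in\ext Y$, and that $A_s(Y)\subset\widetilde H$ so that Corollary~\ref{cor:iotax} is genuinely applicable on $Y$. For $\overline{A_s(X)}$ one may alternatively bypass reduction entirely: $\overline{A_s(X)}$ is contained in $A_b(X)\cap\Bo_1(X)$, and for that space Corollary~\ref{cor:iotax}(a) applies directly once one knows $A_s(X)\subset A_b(X)\cap\Bo_1(X)\subset H$ — but here $H=\overline{A_s(X)}$ may be strictly smaller than $A_b(X)\cap\Bo_1(X)$, so one still needs the fact that $1_{\{x\}}^*\in A_s(X)\subset\overline{A_s(X)}=H$ (the upper envelope of the indicator of a split-face point, when $\{x\}$ is a split face, is affine upper semicontinuous, hence in $A_s(X)$), combined with a reduction to handle points $x$ that are extreme but do not form a split face. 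Once the preservation of extreme points is established uniformly for the three spaces, Corollary~\ref{cor:rovnost na ett}(b) closes the argument; I would double-check that the hypothesis ``$H$ determined by extreme points'' in that corollary is met, which it is by the first paragraph.
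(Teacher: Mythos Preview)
Your reductions of $M(H)=M^s(H)$ and $M(H)\subset Z(H)$ are fine. The gap is in $Z(H)\subset M(H)$: your whole argument hinges on showing $\iota(x)\in\ext S(H)$ for every $x\in\ext X$, via Corollary~\ref{cor:iotax} applied on a metrizable reduction $Y$. But Corollary~\ref{cor:iotax} (in all its variants, including cases~(a) and~(b)) carries the standing hypothesis that each extreme point of the ambient set forms a split face, and this is \emph{not} assumed in Proposition~\ref{P:centrum pro As} --- $X$ is a general compact convex set, not a simplex. The separable reduction does not rescue this: the quotient $Y$ need not be a simplex either (take $X$ a square, for instance), so neither Corollary~\ref{cor:iotax} nor Lemma~\ref{l:lemma-split} is available on $Y$. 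Your parenthetical ``every $\{y\}$, $y\in\ext Y$, is a split face of a metrizable simplex'' presupposes exactly what is not given. Note also that Example~\ref{ex:slozity} exhibits $X$ with $H\supset A_s(X)$ for which extreme points are \emph{not} preserved while $Z(H)=M(H)$ nonetheless holds --- so extreme-point preservation is the wrong target here.

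The paper bypasses this entirely. It never proves preservation of extreme points; instead it fixes $T\in\frd(H)$ with $0\le T\le I$, sets $m=T(1)$, and establishes $T(f)=mf$ on $\ext X$ by climbing through the hierarchy: first for $f\in A_c(X)$ via Corollary~\ref{cor:rovnost na ext}(a) (which needs only that $H$ is determined by extreme points, not that $\iota$ preserves them); then for $f\in A_l(X)$ by the order-continuity argument of \cite[Proposition~4.9]{smith-london} (cf.\ Lemma~\ref{L:spojitost monot}); then to $A_s(X)$ and $\overline{A_s(X)}$ by linearity and norm-closure. For $(A_s(X))^\mu$ one passes to monotone limits in the same way, and for $(A_s(X))^\sigma$ one shows that $\{f\in H:\exists g\in H,\ g=mf\text{ on }\ext X\}$ is closed under bounded pointwise sequential limits --- this final step uses the boundary integral representation of Theorem~\ref{T:Assigma-reprez} to prove that the sequence $(g_n)$ actually converges pointwise on~$X$.
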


\begin{proof}
    Equality $M(H)=M^s(H)$ follows from Proposition~\ref{P:multi strong pro As}. Inclusion $M(H)\subset Z(H)$ follows from Proposition~\ref{P:mult} and Theorem~\ref{t:assigma-deter}. It remains to prove $Z(H)\subset M(H)$.

    The case $H=(A_s(X))^\mu$ is proved in \cite[Proposition 4.9]{smith-london}. We will adapt the proof for the other cases:

    Let $T\in\frd(H)$ be arbitrary. Without loss of generality we assume $0\le T\le I$. Set $m=T(1)$. By Corollary~\ref{cor:rovnost na ext} we get
    $$\forall f\in A_c(X)\colon T(f)=mf\mbox{ on }\ext X.$$
    By the argument of \cite[Proposition 4.9]{smith-london} we deduce
    $$\forall f\in A_l(X)\colon T(f)=mf\mbox{ on }\ext X.$$
    Since $T$ is a bounded linear operator, we get
    $$\forall f\in \overline{A_s(X)}\colon T(f)=mf\mbox{ on }\ext X.$$
    This completes the proof if $H=\overline{A_s(X)}$.

    As in  \cite[Proposition 4.9]{smith-london} we infer
     $$\forall f\in (A_s(X))^\mu\colon T(f)=mf\mbox{ on }\ext X,$$
     which proves the case $H=(A_s(X))^\mu$.

    Finally, assume $H=(A_s(X))^\sigma$ and consider
    $$\F=\{ f\in H\setsep \exists g\in H\colon g=mf\mbox{ on }\ext X\}.$$
    It is clearly a linear space. By the above it contains $A_s(X)$. It remains to prove it is closed with respect to pointwise limits of bounded convergent sequences. So, assume $(f_n)$ is a bounded sequence in $\F$ pointwise converging to some $f\in H$. For each $n\in\en$ let $g_n\in H$ be such that $g_n=mf_n$ on $\ext X$
    Let $x\in X$ be arbitrary. Let $\mu$ be a maximal measure representing $x$ and let $\wh{\mu}$ be the respective measure provided by Lemma~\ref{L:miry na ext}. Then
    $$g_n(x)= \int_{\ext X} g_n\di\wh{\mu}= \int_{\ext X} mf_n\di\wh{\mu} \overset{n}{\longrightarrow} \int_{\ext X} mf\di\wh{\mu}.$$
    Thus the sequence $(g_n)$ pointwise converges to some $g\in H$. Applying the above computation to $x\in\ext X$ we deduce $g=mf$ on $\ext X$. Thus $f\in\F$.
    \end{proof}

For a simplex, the situation is easier:

\begin{prop}\label{P:As pro simplex}
    Let $X$ be a simplex and let $H$ be any of the spaces
    $$\overline{A_s(X)}, (A_s(X))^\mu, (A_s(X))^\sigma.$$
    Then $M(H)=H$. In particular, in this case
    $$\A_H=\ms_H=\Z_H.$$
\end{prop}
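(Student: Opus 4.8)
The statement asserts that for a simplex $X$ and $H$ one of the three spaces $\overline{A_s(X)}$, $(A_s(X))^\mu$, $(A_s(X))^\sigma$, one has $M(H)=H$, and consequently $\A_H=\ms_H=\Z_H$. The first and harder part is the equality $M(H)=H$. The plan is to exploit the fact that on a simplex every element of $A_c(X)$ is a multiplier (by Proposition~\ref{P:simplex mult}, or more directly because $Z(A_c(X))=M(A_c(X))=A_c(X)$ for a simplex via \cite[Theorem II.6.22]{alfsen}), and to lift this to the larger spaces by the same kind of ``stability under monotone/pointwise limits'' argument already used throughout Section~\ref{s:multi-atp}. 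Concretely, fix $u\in H$; we want to show $u\in M(H)$, i.e.\ for each $a\in H$ there is $b\in H$ with $b=ua$ on $\ext X$. Since all three spaces are determined by extreme points (Theorem~\ref{t:assigma-deter}, and the easier cases for $\overline{A_s(X)}\subset(A_s(X))^\sigma$), such $b$, if it exists in $H$, is unique, so it suffices to produce it.

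First I would handle $H=\overline{A_s(X)}$. By Proposition~\ref{P:As pro simplex}'s companion statements and the remark after \eqref{eq:lambda=1*}, on a simplex closed faces are split and $1_F^*=\lambda_F$ is upper semicontinuous and affine, hence in $A_l(X)-A_l(X)=A_s(X)$. The key sub-step is: for $u\in A_l(X)$ and $a\in A_l(X)$, the product $ua$ restricted to $\ext X$ extends to an element of $\overline{A_s(X)}$. This is exactly the argument of \cite[Proposition 4.9]{smith-london} (invoked already in the proof of Proposition~\ref{P:centrum pro As}), which shows that for $T\in\frd(H)$ with $m=T(1)$ one has $T(f)=mf$ on $\ext X$ for $f\in A_l(X)$; running it with $m=u$ and using boundedness of the multiplication operator gives the claim for all $f\in\overline{A_s(X)}$ once $u\in A_l(X)$. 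Then bilinearity/continuity of $(u,a)\mapsto (ua)|_{\ext X}$ together with $A_s(X)=A_l(X)-A_l(X)$ being dense in $\overline{A_s(X)}$ upgrades this to arbitrary $u\in\overline{A_s(X)}$: approximate $u$ uniformly by $u_n\in A_s(X)$, get $b_n\in\overline{A_s(X)}$ with $b_n=u_na$ on $\ext X$, note $(b_n)$ is uniformly Cauchy by determinacy by extreme points, and pass to the limit. Hence $M(\overline{A_s(X)})=\overline{A_s(X)}$.

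For $H=(A_s(X))^\mu$ the statement is already contained in \cite[Proposition 4.9]{smith-london} (equivalently: the monotone-sequential-closure argument in the proof of Proposition~\ref{P:centrum pro As}), so I would simply cite it. For $H=(A_s(X))^\sigma$ I would run the $\F$-argument verbatim from the last paragraph of the proof of Proposition~\ref{P:centrum pro As}: set $\F=\{f\in H\setsep \exists g\in H\colon g=uf \text{ on }\ext X\}$, observe $\F$ is a linear subspace containing $A_s(X)$ (by the $\overline{A_s(X)}$ case, or directly), and check $\F$ is closed under bounded pointwise limits of sequences using the integral representation $g_n(x)=\int_{\ext X}g_n\,\dd\wh\mu=\int_{\ext X}uf_n\,\dd\wh\mu\to\int_{\ext X}uf\,\dd\wh\mu$ from Theorem~\ref{T:Assigma-reprez} together with Lebesgue dominated convergence; since $(A_s(X))^\sigma$ is the smallest such family containing $A_s(X)$, $\F=H$, so $u\in M(H)$. (A subtlety I would double-check: the first step needs $u\in A_l(X)$, not merely $u\in(A_s(X))^\sigma$, so the reduction must go through $A_s(X)$ as the base of the induction, exactly as in Proposition~\ref{P:centrum pro As}; this is where the bulk of the care lies.)

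Finally, the ``in particular'' clause. Once $M(H)=H$ is established, $\A_H=\{[m>0]\cap\ext X\setsep m\in M(H)\}=\{[f>0]\cap\ext X\setsep f\in H\}$, and since $H$ consists of strongly affine functions on a simplex, Theorem~\ref{T:baire-multipliers} (for $H=A_{sa}(X)\cap\Ba(X)$ when $H\subset\Ba(X)$) and more to the point Lemma~\ref{L:SH=ZH} give $\ms_H=\Z_H$ for a simplex; it then remains to see $\A_H=\ms_H$. For this I would use Theorem~\ref{T:meritelnost-strongmulti}, which applies since $H\subset A_{sa}(X)$ and $H^\uparrow$ is determined by extreme points (this needs a brief check — for the three spaces in question $H^\uparrow\subset(A_s(X))^\sigma$ which is determined by extreme points by Theorem~\ref{t:assigma-deter}), giving $\A_H^s=\ms_H$; and $\A_H=\A_H^s$ because $M(H)=M^s(H)$ by Proposition~\ref{P:multi strong pro As}. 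The main obstacle is really the bookkeeping in the first paragraph's reduction to $A_l(X)$: verifying that the multiplication identity propagates from $A_c(X)$ to $A_l(X)$ to $\overline{A_s(X)}$ to $(A_s(X))^\sigma$, which I expect to be routine only because \cite[Proposition 4.9]{smith-london} and Proposition~\ref{P:centrum pro As} have already done the delicate semicontinuous step.
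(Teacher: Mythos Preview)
Your proposal has a genuine gap at the very first step. You claim that ``on a simplex every element of $A_c(X)$ is a multiplier''; this is false. Example~\ref{E:more points}(3) in the paper exhibits a simplex for which $M(A_c(X))$ consists of constant functions only. Proposition~\ref{P:simplex mult} does not say $M(A_c(X))=A_c(X)$; it gives a characterization that typically rules out most of $A_c(X)$.

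The deeper problem is your use of \cite[Proposition 4.9]{smith-london}. That argument (reproduced in the proof of Proposition~\ref{P:centrum pro As}) starts from an operator $T\in\frd(H)$, sets $m=T(1)$, and proves $T(a)=ma$ on $\ext X$ for $a\in A_l(X)$. This establishes $Z(H)\subset M(H)$, which holds for \emph{any} compact convex set, not just simplices. It does \emph{not} show that for an arbitrary $u\in H$ and $a\in H$ the product $ua|_{\ext X}$ extends to an element of $H$. When you write ``running it with $m=u$'' you are tacitly assuming $u\in Z(H)$, i.e.\ that there exists $T\in\frd(H)$ with $T(1)=u$; that is precisely what needs to be proved. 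Your $\F$-argument for $(A_s(X))^\sigma$ inherits the same gap: the base case $A_s(X)\subset\F$ is never established, because the cited smith-london step does not produce the required extension for arbitrary $u\in A_l(X)$.

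The paper's proof is short and uses a different idea: it invokes \cite[Lemma~2.3]{krause}, which says that for a simplex the restriction spaces $R(A_s(X))$ and $R((A_s(X))^\mu)$ are sublattices of $\ell^\infty(\ext X)$ (and this passes to $R(\overline{A_s(X)})$). Proposition~\ref{P:algebra ZH}$(b)$ then yields $M(H)=H$ directly from the sublattice property. This is where the simplex hypothesis actually enters, via Krause's lattice result --- a fact your proposal never touches. For the ``in particular'' part, once $M(H)=H$ the description of $\A_H$ in Proposition~\ref{p:system-aha}$(c)$ (with $(M(H))^\uparrow=H^\uparrow$) coincides with $\Z_H$, and since $\A_H^s\subset\ms_H\subset\Z_H$ always and $\A_H=\A_H^s$ by Proposition~\ref{P:multi strong pro As}, all four systems are equal; your route through Theorem~\ref{T:meritelnost-strongmulti} and Lemma~\ref{L:SH=ZH} would also work here but is more circuitous.
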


\begin{proof}
   Assume $X$ is a simplex. By Proposition~\ref{P:vetsi prostory srov}$(d)$ we have $(A_s(X))^\mu=(A_s(X))^\sigma$. Further, by \cite[Lemma 2.3]{krause} the spaces $A_s(X)$ and $(A_s(X))^\mu$ are lattices, if the lattice operations are defined pointwise on $\ext X$ (i.e., the images under the restriction operator $R$ from Proposition~\ref{P:algebra ZH} are sublattices of $\ell^\infty(\ext X)$). This property clearly passes to $\overline{A_s(X)}$.  So, we conclude by Proposition~\ref{P:algebra ZH}$(b)$. 

   The `in particular' part follows from Proposition~\ref{p:system-aha}$(c)$.
\end{proof}

The previous proposition inspires the following question.

\begin{ques}
    Let $X$ be a compact convex space. Is $M((A_s(X))^\mu)=M((A_s(X))^\sigma)$?
\end{ques}

Note that the answer is trivially positive if $X$ is a simplex by Proposition~\ref{P:vetsi prostory srov}$(d)$.

It is further not clear whether some of the properties of the spaces generated by semicontinuous affine functions pass to larger spaces.
Let us formulate the relevant questions:

\begin{ques}\label{q:m=ms}
    Let $X$ be a compact convex set and let $H$ be any of the spaces
 $$A_b(X)\cap \Bo_1(X),A_f(X),(A_b(X)\cap \Bo_1(X))^\mu,(A_f(X))^\mu.$$
 Is it true that $M(H)=M^s(H)$?
\end{ques}

Note that Example~\ref{ex:dikous-mezi-new} below provides an example of an intermediate function space $H$ on a simplex $X$ such that $M^s(H)\subsetneqq M(H)$. Moreover, $H^\mu=H$ and $H\subset (A_b(X)\cap\Bo_1(X))^\mu$. However, we know no counterexample among the above-named natural intermediate function spaces (in particular, 
for the special simplices from Section~\ref{sec:stacey} the answer is positive by Proposition~\ref{P:shrnutidikousu}).

\begin{ques}
    Let $X$ be a compact convex set and let $H$ be any of the spaces
 $$A_b(X)\cap \Bo_1(X),A_f(X),(A_b(X)\cap \Bo_1(X))^\mu,(A_f(X))^\mu.$$
 Is it true that $Z(H)=M(H)$?
\end{ques}

By Corollary~\ref{cor:iotax} the answer is positive if $X$ is a simplex (or, more generally, if any $x\in \ext X$ forms a split face of $X$). But it is not clear, whether this assumption is needed.

\subsection{More on measurability of strong multipliers}
Now we look in more detail at the strong multipliers and the systems $\A^s_H$ for natural choices of $H$. We start by clarifying which of the spaces that are not closed to monotone limits are determined by measurability on $\ext X$.

\begin{prop}\label{P:determ-Afaj}
    Let $X$ be a compact convex set. Then the following assertions hold.
    \begin{enumerate}[$(a)$]
        \item Let $H$ be one of the spaces $A_b(X)\cap\Bo_1(X)$, $A_f(X)$. Then $M(H)$ is determined by $\A_H$ and $M^s(H)$ is determined by $\A^s_H$.
        \item It may happen that $H=\overline{A_s(X)}$ is not determined by $\A_H$ (even if $X$ is a simplex).
    \end{enumerate}
\end{prop}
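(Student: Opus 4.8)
The plan splits along the two assertions. Part $(a)$ is essentially already available: by Corollary~\ref{cor:hup+hdown pro A1aj}, for $H=A_b(X)\cap\Bo_1(X)$ and for $H=A_f(X)$ one has $H=H^\uparrow\cap H^\downarrow$, and moreover Theorem~\ref{T: meritelnost H=H^uparrow cap H^downarrow} applies to $H$ (the remaining hypothesis there, that $H^\uparrow$ be determined by extreme points, holds because $H^\uparrow\subset H^\mu$, the latter being determined by extreme points by Theorem~\ref{extdeter} and its corollary, and determinacy by extreme points clearly passes to linear subspaces). Then assertions $(ii)$ and $(iii)$ of Theorem~\ref{T: meritelnost H=H^uparrow cap H^downarrow} are precisely the statements that $M(H)$ is determined by $\A_H$ and $M^s(H)$ is determined by $\A^s_H$, so there is nothing left to do for $(a)$.

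For part $(b)$ I would exhibit a concrete simplex where the equality fails: take $K=[0,1]$, $E=C(K)$ and $X=S(E)=M_1([0,1])$, a metrizable Bauer simplex, and put $H=\overline{A_s(X)}$. Invoking Lemma~\ref{L:function space} for this $E$ (here $E^\perp=\{0\}$), the isometry $V$ maps $\overline{\Lb([0,1])}$ onto $H$, and identifies $\ext X$ with $[0,1]$ via $\phi$, with the restriction of $V(h)$ to $\ext X$ corresponding to $h$. Since $X$ is a simplex and $H\subset(A_s(X))^\sigma$, Propositions~\ref{P:As pro simplex} and~\ref{P:multi strong pro As} give $M(H)=M^s(H)=H$; moreover $H$ is determined by extreme points (it is contained in $A_b(X)\cap\Bo_1(X)$), so the restriction map $R$ of Proposition~\ref{P:algebra ZH} is an isometric isomorphism of $H$ onto $R(H)$, and ``$H$ is determined by $\A_H$'' just means $R(H)$ equals the space of all bounded $\A_H$-measurable functions on $\ext X$. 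The computational heart of the argument is then the identification of $\A_H$: from Proposition~\ref{p:system-aha}$(a)$ together with $M(H)=H$ we have $\A_H=\{[m>0]\cap\ext X\setsep m\in H,\ m\ge0\}$, which under $V^{-1}$ and the identification $\ext X\cong[0,1]$ becomes $\{[h>0]\setsep h\in\overline{\Lb([0,1])},\ h\ge0\}$. I claim this is exactly the family of $F_\sigma$ subsets of $[0,1]$: every $h\in\overline{\Lb([0,1])}$ is Baire-one, so $[h>0]$ is $F_\sigma$; conversely, for $A=\bigcup_n F_n$ with $F_n$ closed, each $1_{F_n}=1-1_{[0,1]\setminus F_n}$ lies in $\Lb([0,1])$, the partial sums of $\sum_n 2^{-n}1_{F_n}$ converge uniformly, so $h:=\sum_n 2^{-n}1_{F_n}\in\overline{\Lb([0,1])}$ is nonnegative with $[h>0]=A$.

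Consequently the bounded $\A_H$-measurable functions on $\ext X$ are precisely the functions of the first Borel class, i.e.\ $\Ba_1^b([0,1])$ (using Theorem~\ref{T:b}$(b)$). To finish it suffices to recall from \cite{odell-rosen} (already invoked in the proof of Proposition~\ref{P:vetsi prostory srov}$(b)$) that $\overline{\Lb([0,1])}\subsetneq\Ba_1^b([0,1])$; any function realizing this strict inclusion, transported to $\ext X$, is $\A_H$-measurable but is not the restriction of any element of $H$, so $H$ is not determined by $\A_H$. The main obstacle is the bookkeeping in $(b)$ — keeping the identifications through $V$ and $\phi$ consistent and verifying the two inclusions in $\A_H=\{F_\sigma\ \text{sets}\}$; the only genuinely nonelementary input, the strict inclusion $\overline{\Lb([0,1])}\subsetneq\Ba_1^b([0,1])$, is imported as a black box, so no hard estimate is needed.
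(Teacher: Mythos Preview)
Your proof of $(a)$ is identical to the paper's: it simply invokes Corollary~\ref{cor:hup+hdown pro A1aj}, which already records that Theorem~\ref{T: meritelnost H=H^uparrow cap H^downarrow} applies to these two spaces.

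For $(b)$ your argument is correct but packaged differently. The paper forward-references Proposition~\ref{P:dikous-lsc--new}$(f)$, which works inside the Stacey simplex $X_{L,A}$ with $L=[0,1]$ and uses the descriptions of $H$ and $\A_H$ established in parts $(b)$ and $(d)$ of that proposition. You instead take the Bauer simplex $X=M_1([0,1])$ directly and compute $\A_H$ by hand. Your example is in fact the degenerate case $A=\emptyset$ of the Stacey construction, and the decisive input --- the strict inclusion $\overline{\Lb([0,1])}\subsetneq\Ba_1^b([0,1])$ from \cite{odell-rosen} --- is the same in both arguments. Your route is more self-contained and avoids the Stacey machinery entirely; the paper's route reuses machinery developed later but asks the reader to look ahead to Section~\ref{sec:stacey}.
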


\begin{proof}
    Assertion $(a)$ follows from Corollary~\ref{cor:hup+hdown pro A1aj}. Assertion $(b)$ follows from Proposition~\ref{P:dikous-lsc--new}$(f)$ below.
\end{proof}

We continue by two theorems providing a lower bound for systems $\A^s_H$. 

\begin{thm}
\label{t:mh-asfrag}
Let $H$ be one of the spaces
$$\overline{A_s(X)},A_b(X)\cap \Bo_1(X), A_f(X).$$
\begin{enumerate}[$(i)$]
    \item Any bounded  facially upper semicontinuous function on $\ext X$ may be (uniquely) extended to an element of $M^s(H)$.
    \item $\A^s_H$ contains the algebra generated by the facial topology.
    \item $M(A_c(X))\subset M^s(H)$.
\end{enumerate}
\end{thm}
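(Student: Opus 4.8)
The statement concerns three spaces $H$ that all sit between $A_s(X)$ (or $\overline{A_s(X)}$) and larger Borel/fragmented classes, but all share one crucial feature: for a \emph{closed} split face $F\subset X$, the extension operator $\Upsilon_F$ maps the corresponding function class on $F$ back into the same class on $X$ (this is exactly Lemma~\ref{l:rozsir-splitface}, assertions $(b)$, $(c)$, $(d)$, together with assertion $(a)$ which guarantees that $F\cup F'$ carries all maximal measures and that both $F,F'$ are measure convex and measure extremal). So the plan is to run the classical argument from the proof of \cite[Theorem II.7.2 and II.7.10]{alfsen} with $\Upsilon_F$ as the building block, carried out simultaneously for the three spaces.

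\textbf{Step 1: reduce $(ii)$ and $(iii)$ to $(i)$.} Assertion $(iii)$ is immediate once $(i)$ is proved: a function $m\in M(A_c(X))$ is, by Lemma~\ref{L:mult-nutne}$(i)$ and Proposition~\ref{P:m(X)}, constant on the support of every maximal measure with barycenter in $\overline{\ext X}$, and moreover — since multipliers on $A_c(X)$ are exactly the facially continuous functions by \cite[Theorem II.7.10]{alfsen} — the restriction $m|_{\ext X}$ is facially continuous, hence in particular facially upper semicontinuous, so $(i)$ applies. (Here one uses that $H\supset A_c(X)$ and that $H$ is determined by extreme points, which holds for all three spaces by Theorem~\ref{t:assigma-deter}, its metrizable/Borel corollaries, and \cite{dostal-spurny}; also $M(H)=M^s(H)$ for these spaces, by Proposition~\ref{P:multi strong pro As} in the case of $\overline{A_s(X)}$ and we will in fact re-derive strong-multiplier membership directly.) Assertion $(ii)$ follows from $(i)$ by taking $f=1_E$ for $E$ facially clopen: $1_E$ is facially upper semicontinuous whenever $E$ is facially closed, and if $E$ is also facially open then $\ext X\setminus E$ is facially closed, so both $1_E$ and $1_{\ext X\setminus E}=1-1_E$ extend to elements of $M^s(H)$; this says exactly that $E\in\A_H^s$. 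Since $\A_H^s$ is closed under complements whenever it contains $E$ and $\ext X\setminus E$ (both witnessed by multipliers), and it is closed under finite intersections and countable unions by Proposition~\ref{p:system-aha}$(a)$ applied to $M^s(H)$, one checks that the algebra generated by the facial topology is contained in $\A_H^s$.

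\textbf{Step 2: proof of $(i)$.} Let $g\colon\ext X\to\er$ be bounded and facially upper semicontinuous; after an affine normalization assume $0\le g\le 1$. For $n\in\en$ and $i\in\{1,\dots,2^n\}$ set $E_{n,i}=\{x\in\ext X\colon g(x)\ge (i-1)2^{-n}\}$; by facial upper semicontinuity each $E_{n,i}$ is facially closed, so $E_{n,i}=F_{n,i}\cap\ext X$ for a closed extremal subset $F_{n,i}\subset X$. A closed extremal set need not be a split face in general, but in the present setting we replace $F_{n,i}$ by $\overline{\co}\,F_{n,i}$; one shows this is a \emph{closed face} (the closed convex hull of a closed extremal set in a compact convex set is a closed face — this is standard), and then invokes the hypothesis of the statement implicitly through the fact that for $A_s(X)$-type extension we only need closed faces and Lemma~\ref{l:rozsir-splitface} applies to closed \emph{split} faces. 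Here is where care is needed: Lemma~\ref{l:rozsir-splitface} is stated for closed split faces, and a general closed face of a general $X$ need not be split. The correct move is therefore to observe that $\Upsilon_{F}$ and the whole argument only require that $F$ be a closed face for which $1_F^*$ is affine (equivalently $F$ split, by \eqref{eq:lambda=1*} and \cite[Proposition II.6.5]{alfsen}); and one checks that for the relevant $F_{n,i}$ one may instead work with the \emph{smallest closed split face} containing $F_{n,i}$, whose trace on $\ext X$ still equals $E_{n,i}$ because facially closed sets are exactly traces of closed split faces in the cases at hand (this uses that the facial topology on $\ext X$ is generated by complements of traces of closed split faces — see \cite[Section 2]{batty-cambr}). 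Granting this, define
\[
a_n=2^{-n}\sum_{i=1}^{2^n}\Upsilon_{F_{n,i}}\bigl(1_X|_{F_{n,i}}\bigr)=2^{-n}\sum_{i=1}^{2^n}\lambda_{F_{n,i}}.
\]
By Lemma~\ref{l:rozsir-splitface} (for $A_f(X)$, for $A_b(X)\cap\Bo_1(X)$, and for $\overline{A_s(X)}$ using assertion $(a)$ of the corollary following it) each $\lambda_{F_{n,i}}\in H$, hence $a_n\in H$. A direct estimate, exactly as in \cite[Theorem II.7.2]{alfsen}, gives $g(x)\le a_n(x)\le g(x)+2^{-n}$ for $x\in\ext X$; since $H$ is determined by extreme points, $(a_n)$ is uniformly Cauchy on $X$, so it converges uniformly to some $m\in H$ with $m|_{\ext X}=g$.

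\textbf{Step 3: $m$ is a strong multiplier.} Fix $f\in H$. Replacing the scalars $1_X|_{F_{n,i}}$ above by $f|_{F_{n,i}}$ (after an affine shift making $f\ge0$), form $b_n=2^{-n}\sum_i\Upsilon_{F_{n,i}}(f|_{F_{n,i}})\in H$; the same two-sided estimate shows $b_n\to b$ uniformly with $b|_{\ext X}=g\cdot f|_{\ext X}$, so $b\in H$ witnesses that $m$ is a multiplier. For the \emph{strong} version, note that by Lemma~\ref{l:rozsir-splitface}$(a)$ each $F_{n,i}\cup F_{n,i}'$ carries all maximal measures and $\Upsilon_{F_{n,i}}(f|_{F_{n,i}})=\lambda_{F_{n,i}}\cdot f$ on $F_{n,i}\cup F_{n,i}'$ by Lemma~\ref{L:mc-partial-sa}; passing to the uniform limit, $b=m\cdot f$ on a set of full measure for every maximal $\mu$, i.e.\ $m\in M^s(H)$.

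\textbf{Main obstacle.} The delicate point is the passage from "facially closed subset of $\ext X$" to "trace of a closed \emph{split} face" in Step~2: for a general compact convex set $X$, closed extremal sets and closed split faces are genuinely different, and Lemma~\ref{l:rozsir-splitface} only handles the split case. The resolution is to use that the facial topology is generated (as noted in \cite[Section 2]{batty-cambr}, identifying $\tau_{\Ch}$ with $\tau_{\ext}$ of \cite[Chapter 9]{lmns}) by traces of closed split faces, so that any facially closed set is an intersection of such traces; one then handles a facially upper semicontinuous $g$ by approximating the sublevel sets $E_{n,i}$ from outside by finite \emph{unions of traces of closed split faces} — which is permissible because $\A_H^s$ is closed under finite unions and under the operations used — at the cost of replacing the clean sum over $i$ by a slightly more elaborate partition-of-unity construction analogous to Step~$1$ of the proof of Proposition~\ref{P:meritelnost multiplikatoru pomoci topologickych split facu}. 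Everything else is a routine transcription of the Alfsen argument with $\Upsilon_F$ in place of the characteristic-function multiplier of a closed split face.
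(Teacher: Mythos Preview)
Your approach is essentially the paper's: approximate a bounded facially upper semicontinuous $g$ on $\ext X$ by sums $a_n=2^{-n}\sum_i\lambda_{F_{n,i}}$ where the $F_{n,i}$ are closed split faces whose traces on $\ext X$ are the super-level sets of $g$, observe each $\lambda_{F_{n,i}}\in M^s(H)$ via Lemma~\ref{l:rozsir-splitface}, and pass to the uniform limit using that $M^s(H)$ is closed (Lemma~\ref{L:uzavrenost}). Parts $(ii)$ and $(iii)$ then follow exactly as you outline.

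However, your ``main obstacle'' is a phantom created by conflating two different topologies. In Alfsen's usage (and in this paper, which cites \cite[Theorem II.7.10]{alfsen} for the characterization of $M(A_c(X))$), the \emph{facial} topology on $\ext X$ is by definition the topology whose closed sets are precisely the traces $F\cap\ext X$ of closed \emph{split} faces $F\subset X$. What you describe---closed sets as traces of closed extremal sets---is the \emph{Choquet} topology $\tau_{\Ch}$ from Section~\ref{s:determined}, which in general is finer. So when $E_{n,i}$ is facially closed, it is automatically the trace of a closed split face $F_{n,i}$, and Lemma~\ref{l:rozsir-splitface} applies directly. There is no need for your approximation of $E_{n,i}$ from outside by finite unions of split-face traces, and the proposed ``partition-of-unity'' workaround is both unnecessary and (as stated) too vague to be a proof.

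A second, minor point: your Step~3 is more laborious than needed. The paper does not separately verify that the limit $m$ is a strong multiplier by rebuilding $b_n$'s for each $f\in H$. Instead, it observes once that $\lambda_F\in M^s(H)$ for every closed split face $F$ (because $\Upsilon_F(a|_F)=\lambda_F\cdot a$ on $F\cup F'$, which carries every maximal measure by Lemma~\ref{l:rozsir-splitface}$(a)$), so each $a_n\in M^s(H)$, and then $m\in M^s(H)$ since $M^s(H)$ is norm-closed.
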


\begin{proof}
$(i)$: Let $f\colon \ext X\to\er$ be bounded and facially upper semicontinuous. Since $M^s(H)$ is a linear space containing constant functions, we may assume without loss of generality that $0\le f\le 1$.

%
%
For each $n\in \en$ we set
\[
C_{n,i}=\{x\in\ext X\setsep f(x)\ge \tfrac{i}{2^n}\},\quad i\in\{0,\dots, 2^n\}.
\]
Each of these sets is facially closed, so set we find a closed split face $F_{n,i}$ with $C_{n,i}=F_{n,i}\cap \ext X$. It follows from Lemma~\ref{l:rozsir-splitface} that $\lambda_{F_{n,i}}\in M^s(H)$. Set
\[
a_n=2^{-n}\sum_{i=1}^{2^n}\lambda_{F_{n,i}}.
\]
Then $a_n\in M^s(H)$ and
\[
f(x)\le a_n(x)\le f(x)+2^{-n},\quad x\in \ext X
\]
(cf. the proof of \cite[Theorem II.7.2]{alfsen}).
Hence $(a_n)$ is uniformly convergent sequence on $X$ whose limit $a\in M^s(H)$ satisfies 
$f(x)=a(x)$ for each $x\in \ext X$. This proves the existence. Uniqueness follows from the fact that $H$ is determined by extreme points.


$(ii)$: Let $F$ be a closed split face. It follows from Lemma~\ref{l:rozsir-splitface} that $\lambda_F\in M^s(H)$.
%
Then $F=[\lambda_F>0]\cap \ext X$ and $\ext X\setminus F=[\lambda_F<1]\cap\ext X$ belong to $\A^s_H$. So, we conclude by applying properties of $\A^s_H$ from Proposition~\ref{p:system-aha}.

$(iii)$: Let $u\in M(A_c(X))$. By \cite[Theorem II.7.10]{alfsen} the restriction $u|_{\ext X}$ is facially continuous, hence, a fortiori, facially upper semicontinuous. By $(i)$ we deduce that $u\in M^s(H)$.
\end{proof}

\begin{thm}
\label{t:mh-asfragmu}
Let $H$ be one of the spaces
$$(A_s(X))^\mu,(A_s(X))^\sigma,(A_b(X)\cap \Bo_1(X))^\mu, (A_f(X))^\mu.$$
\begin{enumerate}[$(i)$]
    \item Any bounded  facially upper semicontinuous function on $\ext X$ may be (uniquely) extended to an element of $M^s(H)$.
    \item $\A^s_{H}$ contains the $\sigma$-algebra of facially Borel sets.
    \item $M(A_c(X))\subset M^s(H)$.
\end{enumerate}
\end{thm}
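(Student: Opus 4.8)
\textbf{Proof plan for Theorem~\ref{t:mh-asfragmu}.}
The plan is to reduce the statement for the ``monotone-closed'' spaces $(A_s(X))^\mu$, $(A_s(X))^\sigma$, $(A_b(X)\cap\Bo_1(X))^\mu$ and $(A_f(X))^\mu$ to the corresponding statement for the ``smaller'' spaces $\overline{A_s(X)}$, $A_b(X)\cap\Bo_1(X)$ and $A_f(X)$ already established in Theorem~\ref{t:mh-asfragmu}'s predecessor, Theorem~\ref{t:mh-asfrag}. First I would observe that each of the four spaces $H$ in question is of the form $H=H_0^\mu$ (or $H_0^\sigma$) where $H_0$ is one of the spaces covered by Theorem~\ref{t:mh-asfrag}; indeed $(A_s(X))^\sigma=(\overline{A_s(X)})^\sigma$ and $(A_s(X))^\mu=(\overline{A_s(X)})^\mu$ since $\overline{A_s(X)}\subset (A_s(X))^\mu$, and similarly for the Borel-one and fragmented cases. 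Moreover, by Theorem~\ref{t:assigma-deter}, the corollary after Theorem~\ref{extdeter}, and Theorem~\ref{extdeter} itself, all four spaces $H$ are determined by extreme points; this is exactly the hypothesis needed throughout.

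For assertion $(i)$: given a bounded facially upper semicontinuous $f$ on $\ext X$, Theorem~\ref{t:mh-asfrag}$(i)$ already produces a unique $a\in M^s(H_0)$ with $a|_{\ext X}=f$. Since $H_0\subset H$ and both are determined by extreme points, $a\in H$, and by Proposition~\ref{p:multi-pro-mu}$(ii)$ (for the $\sigma$-case, using $H_0\subset A_{sa}(X)$ and $H_0^\sigma$ determined by extreme points) or by the first-step argument inside the proof of Proposition~\ref{p:multi-pro-mu}$(i)$ (for the $\mu$-cases) we get $M^s(H_0)\subset M^s(H_0^\mu)=M^s(H)$ (resp.\ $\subset M^s(H_0^\sigma)$). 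Hence $a\in M^s(H)$, and uniqueness is immediate from determinacy by extreme points. Assertion $(iii)$ then follows exactly as in Theorem~\ref{t:mh-asfrag}$(iii)$: any $u\in M(A_c(X))$ has $u|_{\ext X}$ facially continuous by \cite[Theorem II.7.10]{alfsen}, hence facially upper semicontinuous, so $u\in M^s(H)$ by $(i)$.

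For assertion $(ii)$: by $(i)$, for every closed split face $F$ the affine function $\lambda_F$ lies in $M^s(H)$ (apply $(i)$ to $f=1_{F\cap\ext X}$, which is facially upper semicontinuous). Then $F\cap\ext X=[\lambda_F>0]\cap\ext X\in\A_H^s$ and $\ext X\setminus F=[\lambda_F<1]\cap\ext X\in\A_H^s$. By Proposition~\ref{p:system-aha}$(a)$ (applied to $M^s(H)$ and $\A_H^s$), the system $\A_H^s$ is closed under finite intersections and countable unions and contains $\emptyset$ and $\ext X$; but now, crucially, since $H=H^\mu$ (or $H^\sigma$), Theorem~\ref{T:integral representation H}$(i)$ tells us $\A_H^s$ is actually a \emph{$\sigma$-algebra}. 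A $\sigma$-algebra containing all traces $F\cap\ext X$ of closed extremal sets contains all facially Borel sets, which is precisely the claim. The main obstacle I anticipate is purely bookkeeping: one must check that the hypotheses of Proposition~\ref{p:multi-pro-mu}$(i)$/$(ii)$ and of Theorem~\ref{T:integral representation H} are genuinely met for each of the four spaces simultaneously — in particular that $(A_s(X))^\sigma$, $(A_b(X)\cap\Bo_1(X))^\mu$ and $(A_f(X))^\mu$ all satisfy $H^\mu=H$ (immediate from Lemma~\ref{L:muclosed is closed} and $(H^\mu)^\sigma=H^\sigma$) and are determined by extreme points — so that the cited structural results apply uniformly, rather than any single deep new argument.
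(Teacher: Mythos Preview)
Your proposal is correct and follows essentially the same route as the paper: derive $(i)$ and $(iii)$ from Theorem~\ref{t:mh-asfrag} via the inclusion $M^s(H_0)\subset M^s(H_0^\mu)$ (resp.\ $M^s(H_0^\sigma)$) from Proposition~\ref{p:multi-pro-mu}, and obtain $(ii)$ by noting that, since $H=H^\mu$, the system $\A_H^s$ is a $\sigma$-algebra (Theorem~\ref{T:integral representation H}) containing all traces of closed split faces. One small wording slip: in your argument for $(ii)$ you should say ``closed split faces'' rather than ``closed extremal sets,'' since the facial topology is defined via the former.
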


\begin{proof}
  Assertions $(i)$ and $(iii)$ follow from Theorem~\ref{t:mh-asfrag} together with Proposition~\ref{p:multi-pro-mu}. To prove $(ii)$ we moreover use that $\A^s_{H}$ is a $\sigma$-algebra.  
\end{proof}

We continue by some natural open problems.

\begin{ques}\label{q:fr-split}
    Let $X$ be a compact convex set and let $H$ be one of the spaces
    $$\overline{A_s(X)},(A_s(X))^\mu,(A_s(X))^\sigma,A_f(X),(A_f(X))^\mu.$$
    Is $\A_H=\ms_H$?
\end{ques}

\begin{remarks}
   (1) If $X$ is a simplex, a positive answer for the first three spaces is provided by Proposition~\ref{P:As pro simplex}. But we have no idea how to attack the general case.

   (2) For the last two spaces the answer is positive for a class of simplices addressed in Section~\ref{sec:stacey} (see Proposition~\ref{P:dikous-af-new} and Proposition~\ref{P:dikous-Afmu-new} below).

   (3) The analogous question for spaces $A_b(X)\cap\Bo_1(X)$ and $(A_b(X)\cap\Bo_1(X))^\mu$ has negative answer, even if $X$ is a simplex (see Proposition~\ref{P:dikous-Bo1-new} and Proposition~\ref{P:dikous-bo1-mu-new} below).
\end{remarks}

The next problem is related to inclusion of spaces of multipliers.

\begin{ques}
    Let $X$ be a compact convex set. Is it true that
    $$\begin{gathered}
        M^s(\overline{A_s(X)})\subset M^s(A_b(X)\cap \Bo_1(X))\subset M^s(A_f(X)),\\
        M((A_c(X))^\mu)\subset M^s((A_s(X))^\mu)\subset M^s((A_b(X)\cap \Bo_1(X))^\mu)\subset M^s((A_f(X))^\mu)?
    \end{gathered}$$
    Do the analogous inclusions hold for the spaces of multipliers?
\end{ques}

We note that these inclusions hold within the class of simplices addressed in Section~\ref{sec:stacey} (see Proposition~\ref{p:vztahy-multi-ifs} below).

\subsection{Affine functions on compact convex sets with $F_\sigma$ boundary}
\label{ssce:fsigma-hranice}

In this section we prove a result on transferring some topological properties of strongly affine functions from an $F_\sigma$ set containing $\ext X$ to the whole set $X$.
This result will be applied 
 to get an analogue of  Theorem~\ref{t:a1-lindelof-h-hranice} for affine functions of the first Borel class and for affine fragmented functions on compact convex sets with $F_\sigma$ boundary.

The promised transfer result is the following theorem which may be seen as a generalization of the respective cases of \cite[Theorem 3.5]{lusp} (where a transfer of properties from $\overline{\ext X}$ to $X$ is addressed). We think that a full generalization may be proved, but we restrict ourselves to first class functions which is the case important in the context of the present paper.

\begin{thm}
  \label{t:prenos-fsigma}
 Let $X$ be a compact convex set,  $F\supset \ext X$ be an $F_\sigma$ set and $f\in A_{sa}(X)$. Then the following assertions are valid.
 \begin{enumerate}[$(i)$] 
 \item If $f|_{F}\in \Bo_1(F)$, then $f\in \Bo_1(X)$.
 \item  If $f|_{F}\in \Fr(F)$, then $f\in \Fr(X)$.
 \end{enumerate}
\end{thm}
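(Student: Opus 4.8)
The plan is to reduce Theorem~\ref{t:prenos-fsigma} to the already available transfer machinery for boundaries of the form $\overline{\ext X}$, using a change of the ``boundary'' from $\overline{\ext X}$ to the $F_\sigma$ set $F$, together with a maximal-measure disintegration argument in the spirit of the proof of \cite[Theorem 3.5]{lusp} and of the proofs of Lemmas~\ref{l:perfect-k-analytic} and~\ref{l:rozsir-splitface}. First I would write $F=\bigcup_n F_n$ with $F_n\subset F_{n+1}$ closed. Since $f$ is strongly affine, every maximal measure $\mu\in M_1(X)$ is carried by $\overline{\ext X}\subset F$ (maximal measures are supported by $\overline{\ext X}$ by \cite[Theorem 3.79(c)]{lmns}), hence $\mu(F_n)\to 1$; this gives, for each $x\in X$, a maximal representing measure $\mu_x$ together with closed sets $F_n$ on which $f$ restricted is of the first Borel class (resp.\ fragmented), because $f|_F$ has this property and the classes $\Bo_1$, $\Fr$ are hereditary to closed subspaces.

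The core step is then a barycentric/selection construction: I would look at the compact space $Z=M_1(X)$ and the barycentric map $r\colon Z\to X$, which is a continuous affine surjection (cf.\ \cite[Section 4.3]{lmns}). Using \cite[Proposition 5.30]{lmns}, the function $\widetilde f(\mu)=\mu(f)$ belongs to the relevant descriptive class on $Z$ (first Borel class if $f\in\Bo_1$; fragmented if $f\in\Fr$) — but we do not yet know $f\in\Bo_1(X)$, so instead I apply this to $f|_F$ on the (universally measurable, $F_\sigma$) set $F$: the map $\mu\mapsto\mu(f)$ is in $\Bo_1$ (resp.\ $\Fr$) on the set $\{\mu\in Z\setsep \mu(F)=1\}$, which is itself $K$-analytic and measure convex (as in the proof of Proposition~\ref{p:sa-k-analytic}). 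Then, using the selection lemma \cite[Lemma~3.2]{lusp} for fragmented functions and its Borel-one analogue (\cite[Lemma 8]{HoSp}, used in Lemma~\ref{l:frag-selekce}), I would produce a selection $\phi\colon X\to Z$ from the barycentric map such that $\mu(f)=\widetilde f(\phi(x))$ while $\phi$ carries the descriptive class back down; combined with $f(x)=\mu_x(f)$ (strong affinity), this identifies $f$ on $X$ with the pull-back along a nice selection of a function in the right class, forcing $f\in\Bo_1(X)$, resp.\ $f\in\Fr(X)$. For the fragmented case one may alternatively argue directly via Theorem~\ref{T:a}: it suffices to show that for each nonempty closed $L\subset X$ and $\varepsilon>0$ there is a relatively open nonempty $U\subset L$ with $\diam f(U)<\varepsilon$; picking a point of $L$ with a representing maximal measure concentrated (up to $\varepsilon$) on some $F_n$ where $f|_{F_n}$ is fragmented, and transferring oscillation control through the barycentric formula $f(x)=\int_{F_n} f\,d\mu_x$ plus \cite[Lemma 2.2]{rs-fragment}-type neighbourhood estimates, yields such a $U$.

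The main obstacle I anticipate is the measurable-selection bookkeeping: one must select, simultaneously for all $x$, a maximal representing measure whose restriction to the closed pieces $F_n$ behaves well, and ensure the selection map is of the first Borel class (resp.\ fragmented) so that composition with $\widetilde f$ preserves the class; this is exactly the delicate point handled by \cite[Lemma 1 and Lemma 8]{HoSp} and by \cite[Lemma~3.2]{lusp}, and the bulk of the proof will be in verifying their hypotheses in the present setting (universal measurability of $F$, measure convexity of $\{\mu(F)=1\}$, and the fact that the selection can be pushed through $r$). Once the selection is in place, assertions $(i)$ and $(ii)$ follow by the same scheme, only invoking the Borel-one selection lemma in case $(i)$ and the fragmented selection lemma together with Theorem~\ref{T:a} in case $(ii)$. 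I would also record, as an immediate corollary (useful in the sequel for Theorems~\ref{t:Bo1-fsigma-hranice} and~\ref{t:af-fsigma-hranice}), that if $\ext X$ is itself $F_\sigma$ then $A_{sa}(X)\cap\Bo_1(\ext X)$-functions extend to $A_b(X)\cap\Bo_1(X)$ and $A_{sa}(X)\cap\Fr(\ext X)$-functions to $A_f(X)$.
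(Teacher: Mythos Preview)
Your proposal has the right overall architecture --- use the barycentric map $r\colon M_1(X)\to X$, the transfer $\mu\mapsto\mu(f)$ of descriptive classes, and a selection lemma to pull the class back to $X$ --- and this is exactly the scheme of the paper's proof. But there is a genuine gap at the central technical step.

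You want $\widetilde f(\mu)=\mu(f)$ to lie in $\Bo_1$ (resp.\ $\Fr$) on $Y=\{\mu\in M_1(X)\setsep \mu(F)=1\}$, invoking \cite[Proposition 5.30]{lmns} (resp.\ \cite[Lemma 3.2]{lusp}) ``applied to $f|_F$''. Those results take a function of the relevant class on a \emph{compact} space and lift it to $M_1$ of that space; here $F$ is only $F_\sigma$ and $Y$ is not compact. One cannot extend $f|_F$ to a $\Bo_1$ function on all of $X$ without already knowing the conclusion (the extension by $0$ off $F$ need not be in $\Bo_1(X)$: being $\Bo_1$ on each piece of an $F_\sigma/G_\delta$ partition does not give $\Bo_1$ globally). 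Likewise, Lemma~\ref{l:frag-selekce} requires both spaces compact, so a selection $\phi\colon X\to Y$ with $\widetilde f\circ\phi$ in the right class is not directly available. (Your side claim ``$\overline{\ext X}\subset F$'' is false in general; what is true, and what you actually need, is $\nu(F)=1$ for every maximal $\nu$, because $X\setminus F$ is $G_\delta$ and any compact subset of it sits inside a zero set disjoint from $\ext X$.)

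The paper repairs this by an \emph{approximation} through the compact pieces rather than an exact identification on $F$. For fixed $\eta>0$ it works with the closed sets $M_n=\{\mu\setsep \mu(F_n)\ge 1-\eta\}$ and their images $N_n=r(M_n)$ (which cover $X$), applies \cite[Proposition 5.30]{lmns} or \cite[Lemma 3.2]{lusp} to $f|_{F_n}$ on the compact $F_n$ to get $g_n(\mu)=\mu(f|_{F_n})$ in $\Bo_1(M_n)$ (resp.\ $\Fr(M_n)$), selects $s_n\colon N_n\to M_n$ via Lemma~\ref{l:frag-selekce}, and uses strong affinity to get $\abs{g_n(s_n(x))-f(x)}\le\eta$ on $N_n$. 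Gluing along the stratification $N_n\setminus N_{n-1}$ produces a function of the right class within $\eta$ of $f$; since $\eta>0$ is arbitrary and both classes are uniformly closed, $f$ lies in the class. The missing idea in your plan is precisely this $\eta$-approximation via the compact $F_n$ and $M_n$, in place of an exact selection landing in the non-compact $Y$.
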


\begin{proof}
Without loss of generality we may assume that $\norm{f}\le 1$.

$(i)$: We will show that $f$ can be approximated uniformly by a function in $\Bo_1(X)$. To this end, let $\eta>0$ be given. Let $r\colon  M_1(X)\to X$ denote the barycenter mapping.  We write $F=\bigcup_{n=1}^\infty F_n$, where the sets $F_n$ are closed and satisfy $F_1\subset F_2\subset F_3\subset F_4\subset \cdots$. 

 Set $M_n=\{\omega\in M_1(X)\setsep \omega(F_n)\ge 1-\eta\}$. Then $M_n$ are closed sets in $M_1(X)$. Further, $N_n=r(M_n)$, $n\in\en$, are closed sets in $X$ satisfying $\bigcup_{n=1}^\infty N_n=X$. (Indeed, for $x\in X$ pick a maximal measure $\nu\in M_x(X)$. Then $\nu(F)=1$, and thus there exists  $n\in\en$ satisfying $\nu(F_n)\ge 1-\eta$.)

Fix $n\in\en$. Let $g_n\colon M_n\to \er$ be defined as $g_n(\mu)=\mu(f|_{F_n})$. Then $g_n\in \Bo_1(M_n)$ by Lemma~\ref{L:function space}$(b)$. Using Lemma~\ref{l:frag-selekce} we find a mapping $s_n\colon N_n\to M_n$ such that $g_n\circ s_n\in \Bo_1(N_n)$ and $r(s_n(x))=x$, $x\in N_n$. Then  we have for $x\in N_n$ an estimate
\[
\begin{aligned}
\abs{g_n(s_n(x))-f(x)}&=\abs{(s_n(x))(f|_{F_n})-f(r(s_n(x))}=\abs{(s_n(x))(f|_{F_n})-s_n(x)(f)}\\
&=\abs{\int_{X\setminus F_n} f\di (s_n(x))}\le \eta.
\end{aligned}
\]
Hence $\abs{g_n\circ s_n-f}\le \eta$ on $N_n$.

Now we denote $N_0=\emptyset$ and define a function $g\colon X\to \er$ as
\[
g(x)=g_n(s_n(x)),\quad x\in N_{n}\setminus N_{n-1}, n\in\en.
\]
Then $g$ is $(F\wedge G)_\sigma$-measurable on $X$, because 
\[
g^{-1}(U)=\bigcup_{n=1}^\infty \left((g_n\circ s_n)^{-1}(U)\cap (N_n\setminus N_{n-1})\right)
\]
is of type $(F\wedge G)_\sigma$ in $X$ for every open $U\subset \er$.
Thus $g\in \Bo_1(X)$ and satisfies $\norm{f-g}\le \eta$. Since $\eta>0$ is arbitrary, we deduce $f\in \Bo_1(X)$.

$(ii)$: We proceed as in $(i)$, so let $M_n$ and $N_n$ be as above. Then $g_n$ defined as in $(i)$ is a fragmented function on $M_n$ by Lemma~\ref{L:function space}$(b)$. We use again Lemma~\ref{l:frag-selekce} to obtain mappings $s_n\colon N_n\to M_n$ such that $r(s_n(x))=x$, $x\in N_n$, and $g_n\circ s_n\in \Fr(N_n)$. Then $g_n\circ s_n$ is $\H_\sigma$-measurable on $N_n$ by Theorem~\ref{T:a}$(a)$. As above we see that $\abs{g_n\circ s_n-f}\le \eta$ on $N_n$. The function
\[
g(x)=g_n(s_n(x)),\quad x\in N_{n}\setminus N_{n-1}, n\in\en,
\]
then satisfies $\norm{g-f}\le \eta$, and it is $\H_\sigma$-measurable. Indeed, for any $U\subset \er$ open we have
\[
g^{-1}(U)=\bigcup_{n=1}^\infty \left((g_n\circ s_n)^{-1}(U)\cap (N_n\setminus N_{n-1})\right),
\]
which is a set of type $\H_\sigma$. Hence $g\in \Fr(X)$ by Theorem~\ref{T:a}. Since $\eta>0$ is arbitrary, we deduce $f\in\Fr(X)$.
\end{proof}

Now we can formulate the main results of this section. The first one applies to the functions of the first Borel class.

\begin{thm}
\label{t:Bo1-fsigma-hranice}
 Let $X$ be a compact convex set with $\ext X$ being an $F_\sigma$ set and $H=A_b(X)\cap\Bo_1(X)$. Then $M(H)=M^s(H)$ is determined by
  \[
  \begin{aligned}
  \B=\{\ext X\setminus F\setsep &F\text{ is an }\FsG_\delta\text{ measure convex split face}\\
  &\text{such that }F'\text{ is measure convex}\}.
  \end{aligned}
  \]
 Consequently, $M^s(H)$ is determined by $\ms_H$, and thus for each intermediate function space $H' \subset H$, $M(H')\subset M(H)$.    
\end{thm}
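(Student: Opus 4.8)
The strategy is to verify the hypotheses of Proposition~\ref{P:meritelnost multiplikatoru pomoci topologickych split facu} with the choice $T=\Bo_1^b(\ext X)$ (the space of bounded functions of the first Borel class on $\ext X$, equipped with the subspace topology inherited from $X$), and then to show that the resulting system $\B^s_H$ contains $\B$; combined with the trivial inclusion $\ms_H\subset\B$ this will give that $M^s(H)$ is determined simultaneously by $\B$ and by $\ms_H$. First I would check that $X$ is standard: since $\ext X$ is $F_\sigma$, it is resolvable in $X$ (cf.\ Remarks~\ref{r:fsigma-resolvable}(3), whose argument via \cite{tal-kanal} works verbatim here), and being $F_\sigma$ it is also \lin, so $X$ is standard by the remark following Definition~\ref{d:standard}. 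Next I would verify conditions $(i)$--$(iii)$ for $T=\Bo_1^b(\ext X)$: condition $(iii)$ is obvious since continuous functions restrict to continuous functions on $\ext X$; condition $(i)$ (that $af\in T^\uparrow$ for $a\in T$ positive and $f\in T^\uparrow$) follows from the fact that $\Bo_1$ is a lattice and algebra, closed under products, together with the observation that a product of a non-negative first-Borel-class function with a non-decreasing limit of first-Borel-class functions is again such a limit; and condition $(ii)$, $\overline{T^\downarrow}\cap\overline{T^\uparrow}=T$, follows from Lemma~\ref{L:YupcapYdown=Y} applied to the family $\B_0=\{[f>0]\cap\ext X\setsep f\in\Bo_1^b(\ext X)\}$, which is exactly the family $(F\wedge G)_\sigma$ relativised to $\ext X$ and which is closed under finite intersections and countable unions, and for which the bounded $\B_0$-measurable functions are precisely $\Bo_1^b(\ext X)$ (using the characterisation of the first Borel class by $(F\wedge G)_\sigma$-measurability from Section~\ref{ssc:csp}).

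The key identification is that $H=A_b(X)\cap\Bo_1(X)$ coincides with $\{f\in A_{sa}(X)\setsep f|_{\ext X}\in\Bo_1^b(\ext X)\}$. The inclusion $\subset$ is immediate (a function of the first Borel class restricts to one on $\ext X$, and by \eqref{eq:prvniinkluze} it is strongly affine). The reverse inclusion is exactly Theorem~\ref{t:prenos-fsigma}$(i)$: if $f\in A_{sa}(X)$ and $f|_{\ext X}\in\Bo_1(\ext X)$, then, since $\ext X$ is an $F_\sigma$ set containing $\ext X$, the transfer theorem gives $f\in\Bo_1(X)$, hence $f\in A_b(X)\cap\Bo_1(X)$; note $f$ is automatically bounded since it is strongly affine. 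Given this, Proposition~\ref{P:meritelnost multiplikatoru pomoci topologickych split facu} applies and tells us $M^s(H)=M(H)$ is determined by $\B^s_H$, where a split face $F$ contributes $F\cap\ext X$ to $\B^s_H$ provided $1_{F\cap\ext X}\in T^\uparrow$ and $\Upsilon_F(a|_F)\in A_{sa}(X)$ for every $a\in H$.

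It remains to prove $\B\subset\B^s_H$; this is the step I expect to carry the technical weight. Fix $E\in\B$, say $E=\ext X\setminus F$ where $F$ is an $\FsG_\delta$ measure convex split face with $F'$ measure convex. I would take the complementary split face $F'$ and show $F'\cap\ext X=E$ belongs to $\B^s_H$. First, $1_{F'\cap\ext X}\in T^\uparrow$: writing $F=\bigcap_n C_n$ with each $C_n$ of the form $(\text{closed})\cap(\text{open})$ (an $\FsG$ set), the functions $1_{(\ext X\setminus C_n)}$ lie in $\Bo_1^b(\ext X)$ (being characteristic functions of $(F\vee G)_\sigma$ sets — actually of complements of $F\wedge G$ sets), and $1_{F'\cap\ext X}$ — wait, one must be slightly careful about which of $F,F'$ is the $\FsG_\delta$ one; since $F$ is $\FsG_\delta$, its complement $X\setminus F\supset F'$ is $(F\wedge G)_\sigma$, so $1_{(X\setminus F)\cap\ext X}=1_{F'\cap\ext X}$ is in $\Bo_1^b(\ext X)$, in particular in $T^\uparrow$, and by the same token $1_{F\cap\ext X}\in\Bo_1^b(\ext X)$ too. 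Second, I need $\Upsilon_{F'}(a|_{F'})\in A_{sa}(X)$ for each $a\in H$; here, since $F$ is an $\FsG_\delta$ set it is in particular Baire, and $F'$ is Baire as well (being $(F\wedge G)_\sigma$, hence Baire), and both are measure convex by hypothesis, so Proposition~\ref{p:shrnuti-splitfaceu-baire}, applied with the split face $F'$ (whose complement is $F$), yields $\Upsilon_{F'}(a|_{F'})\in A_{sa}(X)\cap\Ba(X)\subset A_{sa}(X)$ for $a\in A_{sa}(X)\cap\Ba(X)$; since $H=A_b(X)\cap\Bo_1(X)\subset A_{sa}(X)\cap\Ba(X)$ this covers all $a\in H$. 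Thus $E=F'\cap\ext X\in\B^s_H$, proving $\B\subset\B^s_H$ and hence that $M^s(H)$ is determined by $\B$. Finally, since each member of $\B$ is the trace of the complementary split face $F'$ and $1_{F'\cap\ext X}\in T^\uparrow=H^\uparrow|_{\ext X}$ (more precisely, $1_{F'\cap\ext X}$ extends to $\lambda_{F'}\in H^\uparrow$ because $\Upsilon_{F'}1=\lambda_{F'}$ is strongly affine and Baire, hence of the first Borel class by the transfer argument, thus in $H^\uparrow$ — alternatively one notes $\lambda_{F'}$ is upper semicontinuous when $F'$ is closed, but here we only have Baire, so one invokes $\lambda_{F'}\in A_{sa}(X)\cap\Ba(X)$ and the relevant inclusion), we get $\B\subset\ms_H$; combined with $\ms_H\subset\B$ (clear: $\ms_H$ consists of traces of split faces $F$ with $\lambda_F\in H^\uparrow$, and such $F$ is $\FsG_\delta$ and measure convex with $F'$ measure convex by Proposition~\ref{p:shrnuti-splitfaceu-baire} — one takes complements to match the form of $\B$), this gives $\B=\ms_H$ as far as determining $M^s(H)$ is concerned, so $M^s(H)$ is determined by $\ms_H$. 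The last assertion, $M(H')\subset M(H)$ for $H'\subset H$, is then immediate from Proposition~\ref{P: silnemulti-inkluze}, once one notes $M(H')=M^s(H')$ fails in general but $M(H')\subset M^s(H)$ does hold by the measurability argument in the proof of that proposition; more directly, Proposition~\ref{P: silnemulti-inkluze} gives $M^s(H')\subset M^s(H)=M(H)$, and since $H'\subset\Bo_1^b$ implies $H'\subset$ a space of Baire functions, Proposition~\ref{P:rovnostmulti} is not directly available, so I would instead argue via $\A^s_{H'}\subset\ms_{H'}\subset\ms_H$ exactly as in the proof of Proposition~\ref{P: silnemulti-inkluze}, using that $M(H')$-functions are $\A^s_{H'}$-measurable (Proposition~\ref{p:system-aha}) — this yields $M(H')\subset M^s(H)=M(H)$.

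The main obstacle I anticipate is the bookkeeping around which of $F$ and $F'$ carries which descriptive property, and making sure Proposition~\ref{p:shrnuti-splitfaceu-baire} is invoked with the correct face so that both the face and its complement are Baire and measure convex as that proposition requires; the definitional shape of $\B$ (the complement $\ext X\setminus F$ of an $\FsG_\delta$ face) is designed precisely so that the complementary face $F'$ is $(F\wedge G)_\sigma$, hence Baire, which is what Proposition~\ref{p:shrnuti-splitfaceu-baire} needs — but this needs to be checked carefully rather than asserted.
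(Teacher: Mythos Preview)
Your overall strategy is the same as the paper's: apply Proposition~\ref{P:meritelnost multiplikatoru pomoci topologickych split facu} with $T=\Bo_1^b(\ext X)$, use Theorem~\ref{t:prenos-fsigma}$(i)$ for the identification of $H$, and then prove $\B\subset\B^s_H$. The verification of conditions $(i)$--$(iii)$ and the consequently part are essentially right (though your final paragraph on $M(H')\subset M(H)$ is overcomplicated --- since you already checked $X$ is standard, Proposition~\ref{P:rovnostmulti} gives $M(H')=M^s(H')$ for every $H'\subset A_{sa}(X)$, and then Proposition~\ref{P: silnemulti-inkluze} finishes it directly).

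There is, however, a genuine gap in your argument that $\B\subset\B^s_H$. You invoke Proposition~\ref{p:shrnuti-splitfaceu-baire} to conclude $\Upsilon_{F'}(a|_{F'})\in A_{sa}(X)$, but that proposition requires both $F$ and $F'$ to be \emph{Baire} and the function $a$ to be a \emph{Baire} function. Neither is guaranteed here: an $\FsG_\delta$ set need not be Baire (a closed set in a non-metrizable compact space is typically not Baire), and functions in $H=A_b(X)\cap\Bo_1(X)$ need not be Baire either (e.g.\ an upper semicontinuous affine function is of the first Borel class but is Baire only under extra hypotheses). Your assertion ``since $H=A_b(X)\cap\Bo_1(X)\subset A_{sa}(X)\cap\Ba(X)$'' is simply false in general. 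The paper fixes this by using Theorem~\ref{t:srhnuti-splitfaceu-metriz} instead: since $\ext X$ is $F_\sigma$ it is $K$-analytic, so implication $(i)\Rightarrow(iii)$ of that theorem applies to the split face $F'$ (both $F'$ and $(F')'=F$ are measure convex by hypothesis) and yields $\Upsilon_{F'}(a|_{F'})\in A_{sa}(X)$ for every $a\in A_{sa}(X)\supset H$, with no Baire assumption needed.
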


\begin{proof}
   The `consequently' part follows as in Theorem~\ref{t:a1-lindelof-h-hranice}: Indeed, it is easy to check that $\ms_H \subset \B$. By Theorem~\ref{T:meritelnost-strongmulti} we know that $\A^s_H\subset\ms_H$. By the first part of the theorem, $M^s(H)$ is determined by $\B$. Further,  by Proposition~\ref{P:determ-Afaj} $M^s(H)$ is determined also by $\A_H^s$. It follows that $M^s(H)$ is determined by $\ms_H$. Now it is enough to use Proposition \ref{P: silnemulti-inkluze}. 

   For the first part of the theorem, we use  Proposition~\ref{P:meritelnost multiplikatoru pomoci topologickych split facu} for $H=A_b(X)\cap\Bo_1(X)$ and $T=\Bo_1^b(\ext X)$.  First we need to verify that $T$ satisfies conditions $(i)-(iii)$ of the quoted proposition. Condition $(i)$  follows easily from the fact that functions of the first Borel clas are stable with respect to the pointwise multiplication. Further, condition $(ii)$ follows from Lemma~\ref{L:YupcapYdown=Y} and condition $(iii)$ is obvious.
The formula for $H$
follows from Theorem~\ref{t:prenos-fsigma} due to the assumption on $\ext X$. 

We deduce that the assumptions of Proposition~\ref{P:meritelnost multiplikatoru pomoci topologickych split facu} are fulfilled and hence $M^s(H)$ is determined by the system $\B^s_H$. Hence, to complete the proof it is enough to show that $\B\subset\B^s_H$. 
So, let $F$ be an $\FsG_\delta$ face such that both $F$ and $F'$ are measure convex. Let $(R_n)$ be a sequence of $\FsG$ sets in $X$ such that $X\setminus F=\bigcup_n R_n$. Then $1_{(R_1\cup\dots\cup R_n)\cap\ext X}\nearrow 1_{F'\cap\ext X}=\lambda_{F'}|_{\ext X}$, thus $\lambda_{F'}|_{\ext X}\in T^\uparrow$. Finally, using implication $(i)\implies(iii)$ of Theorem~\ref{t:srhnuti-splitfaceu-metriz} we complete the proof that
$\ext X\setminus F=F'\cap \ext X\in\B^s_H$.
\end{proof}

\begin{thm}
\label{t:af-fsigma-hranice}
 Let $X$ be a compact convex set with $\ext X$ being an $F_\sigma$ set and $H=A_f(X)$. Then $M(H)=M^s(H)$ is determined by
  \[
  \begin{aligned}
  \B=\{\ext X\setminus F\setsep &F\text{ is an }\H_\delta\text{ measure convex split face}\\
  &\text{such that }F'\text{ is measure convex}\}.
  \end{aligned}
  \]
 Consequently, $M^s(H)$ is determined by $\ms_H$, and thus for each intermediate function space $H' \subset H$, $M(H')\subset M(H)$.    
\end{thm}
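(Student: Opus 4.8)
The plan is to follow exactly the same scheme as in the proof of Theorem~\ref{t:Bo1-fsigma-hranice}, replacing the class $\Bo_1$ by the class $\Fr$ of fragmented functions throughout, and invoking Proposition~\ref{P:meritelnost multiplikatoru pomoci topologickych split facu} with $T=\Fr^b(\ext X)$. The point is that all the pieces needed for this substitution are already available: Theorem~\ref{t:prenos-fsigma}$(ii)$ is the fragmented analogue of the transfer result for $\Bo_1$, and implication $(i)\implies(iii)$ of Theorem~\ref{t:srhnuti-splitfaceu-metriz} gives the required extension property of $\Upsilon$.

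First I would verify the `consequently' part, which is formal. It is routine to check $\ms_H\subset\B$: if $F$ is a split face with $\lambda_F\in H^\uparrow$, then $\lambda_F=\Upsilon(1_X|_F)$ forces both $F$ and $F'$ to be measure convex, $F'=[\lambda_F=0]$ is of type $\H_\delta$ since $\lambda_F$ is a nondecreasing limit of fragmented functions so $F'\cap\ext X$ is a countable union of $\H$ sets in $\ext X$, hence $\ext X\setminus F=F'\cap\ext X\in\B$. By Theorem~\ref{T:meritelnost-strongmulti}, $\A^s_H\subset\ms_H$. Granting the first assertion of the theorem, $M^s(H)$ is determined by $\B$; by Proposition~\ref{P:determ-Afaj}$(a)$ it is determined by $\A^s_H$; chaining the inclusions $\A^s_H\subset\ms_H\subset\B$ then shows $M^s(H)$ is determined by $\ms_H$. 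Proposition~\ref{P: silnemulti-inkluze} then yields $M(H')\subset M(H)$ for any intermediate function space $H'\subset H$, using moreover that $M(H)=M^s(H)$ by Proposition~\ref{P:rovnostmulti} (functions in $A_f(X)$ are universally measurable; note however that one should be a little careful here — the cleanest route is to invoke Proposition~\ref{P:rovnostmulti} only when $X$ is standard; in general $M(H)=M^s(H)$ for $H\subset A_f(X)$ needs a separate argument, so I would instead derive the equality $M(H)=M^s(H)$ directly from the conclusion that both are determined by the same system $\B^s_H$, exactly as in Proposition~\ref{P:meritelnost multiplikatoru pomoci topologickych split facu}).

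For the main assertion, I would apply Proposition~\ref{P:meritelnost multiplikatoru pomoci topologickych split facu} with $T=\Fr^b(\ext X)$. Condition $(i)$ (that $af\in T^\uparrow$ for $a\in T$ positive and $f\in T^\uparrow$) holds because $\Fr^b(\ext X)$ is closed under pointwise multiplication (by Theorem~\ref{T:a}, fragmented functions form an algebra) and a monotone limit of products is the product with the monotone limit. Condition $(ii)$, namely $\overline{T^\downarrow}\cap\overline{T^\uparrow}=T$, follows from Lemma~\ref{L:YupcapYdown=Y} applied to $\B=\H_\sigma$ on $\ext X$, since $\Fr^b(\ext X)$ is precisely the space of bounded $\H_\sigma$-measurable functions on $\ext X$ by Theorem~\ref{T:a} (using that $\ext X$, as a subspace of the compact space $X$, is covered by the hereditarily-Baire-type argument — more precisely one just needs that $\sigma(\H)$-measurable bounded functions coincide with $\H_\sigma$-measurable ones, which is Lemma~\ref{L:YupcapYdown=Y}). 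Condition $(iii)$ is obvious. The identification $H=\{f\in A_{sa}(X)\setsep f|_{\ext X}\in\Fr^b(\ext X)\}$ is exactly Theorem~\ref{t:prenos-fsigma}$(ii)$ together with the fact (from Theorem~\ref{T:a} and \cite[Theorem 4.21]{lmns}) that any $f\in A_f(X)$ is strongly affine with fragmented restriction to $\ext X$. Thus Proposition~\ref{P:meritelnost multiplikatoru pomoci topologickych split facu} applies and gives that $M^s(H)=M(H)$ is determined by $\B^s_H$. It then remains to show $\B\subset\B^s_H$: given an $\H_\delta$ measure convex split face $F$ with $F'$ measure convex, write $X\setminus F=\bigcup_n R_n$ with $R_n\in\H$, so $1_{(R_1\cup\dots\cup R_n)\cap\ext X}\nearrow 1_{F'\cap\ext X}=\lambda_{F'}|_{\ext X}$ in $T^\uparrow$, and apply implication $(i)\implies(iii)$ of Theorem~\ref{t:srhnuti-splitfaceu-metriz} (valid since $\ext X$ is $F_\sigma$, hence $K$-analytic) to conclude $\Upsilon_{F'}(a|_{F'})\in A_{sa}(X)$ for every $a\in H$; therefore $\ext X\setminus F=F'\cap\ext X\in\B^s_H$.

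The main obstacle I anticipate is not conceptual but bookkeeping: making sure that every application of Proposition~\ref{P:meritelnost multiplikatoru pomoci topologickych split facu} is legitimate, in particular that $X$ is a \emph{standard} compact convex set (required by that proposition), which here follows from $\ext X$ being $F_\sigma$, hence Lindel\"of, hence $X$ standard by the remark after Definition~\ref{d:standard}. A secondary point requiring care is the precise verification that $\Fr^b(\ext X)=\{$bounded $\H_\sigma$-measurable functions on $\ext X\}$ so that Lemma~\ref{L:YupcapYdown=Y} delivers condition $(ii)$; since $\ext X$ need not be Baire as a topological space, one invokes the characterization of fragmented functions via $\H_\sigma$-measurability from Theorem~\ref{T:a}$(a)$ in the direction that always holds, and uses that the relevant split faces produce genuinely $\H_\sigma$-measurable sets, which is what $\B$ was designed to encode.
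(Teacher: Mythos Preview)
Your approach is essentially identical to the paper's: apply Proposition~\ref{P:meritelnost multiplikatoru pomoci topologickych split facu} with $T=\Fr^b(\ext X)$, use Theorem~\ref{t:prenos-fsigma}$(ii)$ for the identification of $H$, and check $\B\subset\B^s_H$ via Theorem~\ref{t:srhnuti-splitfaceu-metriz}. The `consequently' part and the verification of conditions $(i)$ and $(iii)$ are fine, and your observation that $X$ is standard (since $\ext X$ is $F_\sigma$, hence Lindel\"of) is exactly what is needed to invoke the proposition.

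There is one genuine gap, and you correctly flag it but do not close it: the verification of condition~$(ii)$. You need $\Fr^b(\ext X)$ to coincide with the bounded $\H_\sigma$-measurable functions on $\ext X$ so that Lemma~\ref{L:YupcapYdown=Y} applies. Theorem~\ref{T:a}$(a)$ only gives one inclusion; for the converse you need $\ext X$ to be hereditarily Baire (Theorem~\ref{T:a}$(b)$). Your final paragraph suggests working around this by saying ``$\ext X$ need not be Baire'' and falling back on the direction that always holds --- but that direction is the wrong one, and the workaround you sketch does not produce condition~$(ii)$. The paper fills this gap directly: since $\ext X$ is $F_\sigma$, by \cite[Th\'eor\`eme~2]{tal-kanal} it is $(F\vee G)_\delta$, hence $\H_\delta$; it is also $\H_\sigma$; so by \cite[Proposition~2.1(iii)]{koumou} it is resolvable, and therefore hereditarily Baire. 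With this in hand, Theorem~\ref{T:a}$(b)$ yields $\Fr^b(\ext X)=\{\mbox{bounded }\H_\sigma\mbox{-measurable functions on }\ext X\}$, and Lemma~\ref{L:YupcapYdown=Y} then gives condition~$(ii)$. (This argument already appears in Remark~\ref{r:fsigma-resolvable}(3) and in the paper's proof of Theorem~\ref{t:af-fsigma-hranice} itself.)
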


\begin{proof} The proof is completely analogous to that of Theorem~\ref{t:Bo1-fsigma-hranice}, we just replace functions of the first Borel class by fragmented functions and $\FsG$ sets by resolvable sets. In the proof we additionally use Theorem~\ref{T:a} to identify fragmented functions with $\H_\sigma$-measurable functions.

In this way we may simply copy the above proof except for checking the validity of condition $(ii)$ of Proposition~\ref{P:meritelnost multiplikatoru pomoci topologickych split facu}:
 We additionally observe that by \cite[Th\'eor\`eme 2]{tal-kanal} $\ext X$, being $F_\sigma$, is necessarily an $\FsG_\delta$ set, hence an $\H_\delta$ set. Since it is simultaneously an $\H_\sigma$ set, it easily follows from \cite[Proposition 2.1(iii)]{koumou} that $\ext X$ is in fact a resolvable set, hence hereditarily Baire. Hence, by Theorem~\ref{T:a}$(b)$ the space $T=\Fr^b(\ext X)$ coincides with bounded $\H_\sigma$-measurable functions on $\ext X$. Therefore, condition $(ii)$ follows from Lemma~\ref{L:YupcapYdown=Y}. 
\end{proof}

We continue by a natural open problem.

\begin{ques}
Do Theorem~\ref{t:Bo1-fsigma-hranice} and Theorem~\ref{t:af-fsigma-hranice} hold under the assumption that $\ext X$ is a \lin\ resolvable set?
\end{ques}

If $\ext X$ is $F_\sigma$, then $\ext X$ is Lindel\"of (being $\sigma$-compact) and resolvable (see the proof of Theorem~\ref{t:af-fsigma-hranice}). Hence the positive answer would give a natural generalization of the two above theorems and would be 
a more precise analogue of Theorem~\ref{t:a1-lindelof-h-hranice}.
The key problem is the possibility to generalize Theorem~\ref{t:prenos-fsigma}.

\subsection{Multipliers of strongly affine functions}
\label{ssce:x-kanalytic}

 $A_{sa}(X)$, the space of strongly affine functions should serve as a natural roof for intermediate function spaces determined by extreme points. However, as we have already noticed above, the situation is more difficult. Firstly, by \cite{talagrand} strongly affine functions need not be determined by extreme points and, secondly, due to examples from Section~\ref{sec:strange} below there are intermediate function spaces which are determined by extreme points but not contained in $A_{sa}(X)$. Anyway, the space $A_{sa}(X)$ remains to be a natural object of interest.
The following natural question seems to be open.

\begin{ques}
    Let $X$ be a compact convex set such that $A_{sa}(X)$ is determined by extreme points. Is $M(A_{sa}(X))=M^s(A_{sa}(X))$?
\end{ques}

The answer is positive if $X$ is a standard compact convex set
(see Proposition~\ref{P:rovnostmulti}). But there are compact convex sets which are not standard but still strongly affine functions are determined by extreme points, see, e.g., Proposition~\ref{P:dikous-sa-new} and Lemma~\ref{L:maxmiry-dikous}$(2)$. For these examples, the answer remains to be positive (see Proposition~\ref{p:vztahy-multi-ifs}$(d)$). 

Another intriguing question is the following:

\begin{ques}
     Let $X$ be a compact convex set such that $H=A_{sa}(X)$ is determined by extreme points. Is $\A^s_H=\ms_H$?
     Is it true at least assuming $X$ is standard?
\end{ques}

The answer is positive for the class of simplices addressed in Section~\ref{sec:stacey}, see Proposition~\ref{P:shrnutidikousu}.
The general case remains to be open, but there is one more special case when the answer is positive -- if $\ext X$ is $K$-analytic.
This is the content of the following theorem.

\begin{thm}
\label{t:metriz-sa-splitfacy}
Let $X$ be a  compact convex set with $\ext X$ being $K$-analytic and $H=A_{sa}(X)$. Then 
\[\begin{aligned}
\A_H&=\A^s_H=\ms_H\\&=\{F\cap \ext X\setsep F\text{ is a split face such that both }F,F'\text{ are measure convex}\}.  
\end{aligned}\]
Consequently, $M(H')\subset M(H)$  for each intermediate function space $H' \subset H$.    
\end{thm}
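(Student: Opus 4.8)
The plan is to combine three ingredients already developed in the paper: the general machinery of Proposition~\ref{p:system-aha} and Theorem~\ref{T:meritelnost-strongmulti} for strong multipliers, the equality $M(H)=M^s(H)$ for standard/Baire situations (here we will instead lean on the specific structure of $A_{sa}(X)$), and the characterization of split faces with strongly affine $\lambda_F$ from Theorem~\ref{t:srhnuti-splitfaceu-metriz}, which is exactly where the hypothesis that $\ext X$ is $K$-analytic enters. Note first that a $K$-analytic $\ext X$ is Lindel\"of, hence $X$ is a standard compact convex set by the remark after Definition~\ref{d:standard}; in particular $A_{sa}(X)$ is determined by extreme points (see the discussion at the start of Section~\ref{s:determined}), so $H=A_{sa}(X)$ is a legitimate intermediate function space to which the earlier results apply, and moreover $M(H)=M^s(H)$ by Proposition~\ref{P:rovnostmulti}. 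Also $H=H^\mu$ since $A_{sa}(X)$ is closed under pointwise limits of bounded monotone sequences (dominated convergence), so $H^\uparrow=H$ is determined by extreme points and Theorem~\ref{T:meritelnost-strongmulti} is available.

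The proof of the displayed chain of equalities then proceeds as follows. First, $\A_H=\A_H^s$ is immediate from $M(H)=M^s(H)$ together with the definitions in Theorem~\ref{T:integral representation H}$(i)$ (or Proposition~\ref{p:system-aha}). Next, the inclusion $\A_H^s\subset\ms_H$ is exactly Theorem~\ref{T:meritelnost-strongmulti} (using $H\subset A_{sa}(X)$ and $H^\uparrow$ determined by extreme points). For the inclusion $\ms_H\subset\{F\cap\ext X: F\text{ split face, }F,F'\text{ measure convex}\}$: if $F$ is a split face with $\lambda_F\in H^\uparrow=A_{sa}(X)$, then $\lambda_F$ is strongly affine, and since $\ext X$ is $K$-analytic we may invoke Theorem~\ref{t:srhnuti-splitfaceu-metriz}, implication $(iv)\Rightarrow(i)$, to conclude that both $F=[\lambda_F=1]$ and $F'=[\lambda_F=0]$ are measure convex (in fact measure extremal as well, by $(iv)\Rightarrow(v)$). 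Finally, to close the circle we show that any set $F\cap\ext X$ with $F$ a split face and $F,F'$ both measure convex lies in $\A_H$. By Theorem~\ref{t:srhnuti-splitfaceu-metriz}, implication $(i)\Rightarrow(iv)$, the function $\lambda_F$ is strongly affine; moreover by $(i)\Rightarrow(ii)$, the extension $\Upsilon_F(a|_F)$ is strongly affine whenever $a\in A_{sa}(X)$, and $\Upsilon_F(a|_F)=\lambda_F\cdot a$ on $\ext X$ (Observation~\ref{obs:operator rozsireni} and the remark after it), while $\lambda_F|_F=1$, $\lambda_F|_{F'}=0$. Hence $\lambda_F\in M(H)$, $\lambda_F|_{F\cap\ext X}=1$, $\lambda_F|_{\ext X\setminus(F\cap\ext X)}=0$, so $F\cap\ext X\in\A_H$ by the very definition of $\A_H$ in Theorem~\ref{T:integral representation H}$(i)$. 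This gives all four sets equal.

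For the ``consequently'' part, let $H'\subset H$ be an intermediate function space and take $m\in M(H')$. Then $m\in H'\subset H=A_{sa}(X)$, and by Proposition~\ref{p:system-aha}$(a)$ (applied to $H'$, after noting $M(H')=M^s(H')$ need not hold but the restriction $m|_{\ext X}$ is $\A_{H'}^s$-measurable since $M^s(H')\subset M(H')$ — more directly, $m|_{\ext X}$ is $\A_{H'}$-measurable) we have that $m|_{\ext X}$ is $\A_{H'}$-measurable; since $\ms_{H'}\subset\ms_H$ (the systems $\ms$ clearly respect inclusions of intermediate spaces, as $H'\subset H$ gives $(H')^\uparrow\subset H^\uparrow$) and $\A_{H'}\subset\ms_{H'}$ by Theorem~\ref{T:meritelnost-strongmulti} applied to $(H')^s$ — here one uses $\A_{H'}=\A_{H'}^s$ only if $M(H')=M^s(H')$, so to be safe invoke $\A_{H'}^s\subset\ms_{H'}$ and $m\in M^s(H')$ when that holds, otherwise argue via Proposition~\ref{P: silnemulti-inkluze}. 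In fact the cleanest route for this clause is simply: apply Proposition~\ref{P: silnemulti-inkluze} with $H_2=H=A_{sa}(X)$ and $H_1=H'$, which requires that $H_2^\uparrow$ is determined by extreme points (true, as above) and that $M^s(H_2)$ is determined by $\ms_{H_2}$. The latter follows because $H=H^\mu$, so by Theorem~\ref{T:integral representation H}$(ii)$, $M^s(H)=M(H)$ is determined by the $\sigma$-algebra $\A_H^s=\A_H$, and by the equalities just proved $\A_H=\ms_H$; hence $M^s(H)$ is determined by $\ms_H$. Then Proposition~\ref{P: silnemulti-inkluze} yields $M^s(H')\subset M^s(H)$, and since $M^s(H')\subset M(H')$ and $M(H')\subset M^s(H')$ is false in general, we instead note $M(H')\subset M(H)$ follows from $m|_{\ext X}$ being $\ms_{H'}$-measurable hence $\ms_H$-measurable hence, by $\ms_H=\A_H$ and Theorem~\ref{T:integral representation H}$(ii)$, $m$ extends to an element of $M(H)$; as $H$ is determined by extreme points, $m\in M(H)$.)

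The main obstacle, and the place requiring the most care, is the step $(i)\Rightarrow(iv)$ and $(i)\Rightarrow(ii)$ of Theorem~\ref{t:srhnuti-splitfaceu-metriz}: that is where $K$-analyticity of $\ext X$ is genuinely used (via Proposition~\ref{p:sa-k-analytic} and Lemma~\ref{l:perfect-k-analytic}), and Example~\ref{ex:d+s} shows it cannot be dropped. Since that theorem is already proved in the paper, the remaining work here is purely bookkeeping — checking that $X$ is standard, that $H^\mu=H$, and assembling the inclusions in a cycle — but one must be attentive in the ``consequently'' clause to route through $\ms_H$ (which respects inclusions) rather than $\A_H^s$ (which a priori does not), exactly as in the proofs of Theorems~\ref{t:a1-lindelof-h-hranice}, \ref{t:Bo1-fsigma-hranice} and~\ref{t:af-fsigma-hranice}.
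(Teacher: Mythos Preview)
Your argument for the chain of equalities is correct and essentially identical to the paper's: both cycle through $\A_H=\A_H^s\subset\ms_H\subset\{F\cap\ext X:F,F'\text{ measure convex split face}\}\subset\A_H$, invoking Proposition~\ref{P:rovnostmulti}, Theorem~\ref{T:meritelnost-strongmulti}, and the two directions of Theorem~\ref{t:srhnuti-splitfaceu-metriz} at the corresponding steps.

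Your handling of the ``consequently'' clause, however, has a small gap that you created for yourself. You correctly identify that Proposition~\ref{P: silnemulti-inkluze} gives only $M^s(H')\subset M^s(H)$, and then worry that $M(H')=M^s(H')$ ``is false in general.'' But you already established that $X$ is standard (since $\ext X$ is $K$-analytic, hence Lindel\"of), and $H'\subset A_{sa}(X)$ means every function in $H'$ is universally measurable. Proposition~\ref{P:rovnostmulti} therefore applies to $H'$ as well, giving $M(H')=M^s(H')$. This is exactly the ``using again Proposition~\ref{P:rovnostmulti}'' in the paper's proof, and it closes the argument immediately: $M(H')=M^s(H')\subset M^s(H)=M(H)$. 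Your alternative route (pushing $m|_{\ext X}$ through $\A_{H'}\subset\ms_{H'}\subset\ms_H=\A_H$) suffers from the same oversight, since $\A_{H'}\subset\ms_{H'}$ is only guaranteed via $\A_{H'}=\A_{H'}^s$, which again needs $M(H')=M^s(H')$.
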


\begin{proof} Since $X$ is standard, $\A_H=\A^s_H$ by Proposition~\ref{P:rovnostmulti}. Inclusion $\A^s_H\subset\ms_H$ follows from Theorem~\ref{T:meritelnost-strongmulti}. The last equality follows from Theorem~\ref{t:srhnuti-splitfaceu-metriz} (equivalence $(iv)\Longleftrightarrow(i)$). Finally, if $F$ is a split face such that both $F$ and $F'$ are measure convex, Theorem~\ref{t:srhnuti-splitfaceu-metriz} (implication $(i)\implies (iii)$) shows that $\lambda_F\in M(H)$. Therefore $F\cap\ext X=[\lambda_F=1]\cap\ext X\in\A_H$. This completes the proof.

 Since $M^s(H)$ is determined by $\A^s_H$ due to Theorem~\ref{T:integral representation H}$(ii)$ and $\A^s_H=\ms_H$, 
 the `consequently part' follows  from Proposition \ref{P: silnemulti-inkluze} (using again Proposition~\ref{P:rovnostmulti}).
\end{proof}

\section{Examples of strange intermediate function spaces}\label{sec:strange}

In this section we collect three examples. The first two show, in particular, that there is no relationship between strongly affine functions and determinacy by extreme points. The third one shows that split faces need not induce multipliers (even on a metrizable Bauer simplex).

Let us now pass to the first example. We note that most of the intermediate function spaces we address in this paper are formed by strongly affine functions
and are determined by extreme points. But in general, these two properties are incomparable. On one hand, by \cite{talagrand} there is a strongly affine function not determined by extreme points. It is not hard to find one affine function which is not strongly affine but is determined by extreme points. We present something more -- intermediate function spaces generated by such functions. The first example is the following:

\begin{example}
 \label{ex:deter-ext-body}
 Let $X=M_1([0,1])$ and let $\II$ stand for the set of all irrational points in $[0,1]$. We define a function $G\colon X\to [0,1]$ by $$G(\mu)=\mu_d(\II), \quad\mu\in X,$$
 where $\mu_d$ denotes the discrete part of $\mu$. Let  $H=\span (A_c(X)\cup\{G\})$. Then $X$ is a metrizable Bauer simplex and $H$
 is an intermediate function space on $X$ with the
  following properties.
 \begin{enumerate}[$(a)$]
\item  $H\subset\Ba_2(X)$;
\item  $G$ is not strongly affine and hence $H$ is not contained in $A_{sa}(X)$;
\item  $H$ is determined by $\ext X$ but $H^\mu$ is not determined by $\ext X$; 
\item  $M(H)$ contains only constant functions.
 \end{enumerate}
\end{example}

\begin{proof}
It is clear that $X$ is a metrizable Bauer simplex and $H$ is an intermediate function space on $X$.

To prove properties $(a)$ and $(b)$ it is enough to show that  $G$ is a Baire-two function on $X$ that is not strongly affine.

To see this, we notice that the function $\mu\mapsto \mu(\{x\})$, $\mu\in X$, is upper semicontinuous on $X$ for each $x\in [0,1]$. Hence the function
\[
G_1(\mu)=\mu_d(\qe)=\sum_{q\in\qe} \mu(\{q\}),\quad \mu\in X,
\]
is Baire-two. Since $G(\mu)=\mu_d([0,1])-G_1(\mu)$ and $\mu\mapsto \mu_d([0,1])$ is Baire-two (see \cite[Corollary I.2.9]{alfsen}), the function $G$ is Baire-two as well.

To verify that $G$ is not strongly affine, let $\lambda$ denote the Lebesgue measure on $[0,1]$. If $\phi\colon [0,1]\to X$ is the evaluation mapping from Section~\ref{ssc:ch-fs}, then $\phi(\lambda)$ is a probability measure on $X$ whose barycenter is $\lambda$ (see \cite[Proposition 2.54]{lmns}).  Then $G(\lambda)=\lambda_d(\II)=0$. On the other hand, for $t\in\II$ we have $G(\ep_t)=1$. Since $\phi(\lambda)(\{\ep_t\setsep t\in\II\})=1$, we get $\int_X G(\mu)\di\phi(\lambda)(\mu)=1$. Hence $G$ is not strongly affine.

$(c)$: Note that $H=\{F+cG\setsep F\in A_c(X), c\in\er\}$. 
We will show that $H$ is determined by extreme points. To this end, let $F\in A_c(X)$ and $c\in \er$ be given. By Section~\ref{ssc:ch-fs}, there exists a function $f\in C([0,1])$ such that $F(\mu)=\mu(f)$, $\mu\in X$. Then the function $K=F+cG$ satisfies
\[
K(\ep_t)=\begin{cases} f(t)+c,& t\in \II,\\ 
              f(t),& t\in \qe.
\end{cases}
\]
Since $\sup f(\qe)=\sup f(\II)=\sup f([0,1])$, we obtain
\[
\sup K(\ext X)=\max\left\{c+\sup f(\qe), \sup f(\II)\right\}=\max\left\{c+\sup f([0,1]), \sup f([0,1])\right\}
\]
Analogously we obtain that
\[
\inf K(\ext X)=\min\left\{c+\inf f(\qe), \inf f(\II)\right\}=\min\left\{c+\inf f([0,1]), \inf f([0,1])\right\}.
\]
For any $\mu\in X$ we have
\[
\inf f([0,1])+cG(\mu)\le K(\mu)=\mu(f)+cG(\mu)\le \sup f([0,1])+cG(\mu).
\]
If $c\ge 0$, we have 
\[
\inf K(\ext X)=\inf f([0,1])\le K(\mu)\le \sup f([0,1])+c=\sup K(\ext X).
\]
If $c<0$, we obtain
\[
\inf K(\ext X)=\inf f([0,1])+c\le  K(\mu)\le \sup f([0,1])=\sup K(\ext X).
\]
Hence $H$ is determined be extreme points.

 We continue by showing that $H^\mu$ is not determined by extreme points. 
Let $f=-1_{\qe}$ and $c=-1$. Then the function $K(\mu)=\mu(f)-G(\mu)$, $\mu\in X$, is in $H^\mu$ (as $1_{\qe}\in (C([0,1])^\mu$). Further,
\[
K(\ep_t)=-1,\quad t\in [0,1],
\]
but for the Lebesgue measure $\lambda$ we obtain
$K(\lambda)=0.$
Hence $H^\mu$ is not determined by extreme points.

$(d)$: Given $K\in H$ we set $\widehat K(t)=K(\ep_t)$ for $t\in[0,1]$. Any such function is of the form $\widehat K=f+c 1_{\II}$ for some $f\in C([0,1])$ and $c\in\er$. Then
\begin{equation}\label{eq:rozdil}
    \limsup_{s\to t} \widehat K(s)-\liminf_{s\to t}\widehat K(s)=\abs{c}
\mbox{ for each }t\in[0,1].\end{equation}

 Assume now $K\in H$ is a non-constant function. Then $\widehat{K}$ is not constant either. Let $\widehat K=f+c 1_{\II}$ as above. There are two possibilities:

 Case 1: $c=0$ and $f$ is not constant.  Then there is no $L\in H$ with $L=KG$ on $\ext X$. Indeed, we would have $\widehat{L}=f\cdot 1_{\II}$ and the difference from \eqref{eq:rozdil} would not be constant on $[0,1]$.

 Case 2: $c\ne 0$. Let $F(\mu)=\int_{[0,1]} t\di\mu(t)$ for $\mu\in X$. Then there is no $L\in H$ with $L=KF$ on $\ext X$. Indeed, we would have
 $$\widehat L(t)=tf(t)+t 1_{\II}(t), \qquad t\in[0,1],$$
 and, again, the difference from \eqref{eq:rozdil} would not be constant on $[0,1]$.

 Thus $K\notin M(H)$ and the proof is complete.
\end{proof}

Before presenting another variant of the preceding example  we need the following lemma.

\begin{lemma}
\label{l:baire-one-cont}    
Let $f\in \Ba_1([0,1])$ be such that the restrictions  $f|_{\qe}$ and $f|_{\II}$ are continuous. Then $f\in C([0,1])$.
\end{lemma}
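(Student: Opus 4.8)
\textbf{Proof plan for Lemma~\ref{l:baire-one-cont}.}

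The plan is to argue by contradiction using the Baire category theorem on $[0,1]$, exploiting the fact that a Baire-one function on a Polish space has a dense $G_\delta$ set of continuity points. Suppose $f$ is not continuous at some point; then the set $D$ of discontinuity points of $f$ is nonempty. Since $f\in\Ba_1([0,1])$, the set $C$ of continuity points of $f$ is a dense $G_\delta$ subset of $[0,1]$, so $D$ is a meager $F_\sigma$ set. Write $D=\bigcup_n D_n$ with each $D_n$ closed; by the Baire category theorem applied to the (nonempty, hence non-meager in itself) closed set $\overline D$, some $D_n$ has nonempty interior relative to $\overline D$. Actually the cleaner route is the classical oscillation argument: for $\ep>0$ let $D_\ep=\{t\in[0,1]\setsep \osc(f,t)\ge\ep\}$, which is closed, and $D=\bigcup_{k} D_{1/k}$. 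If $D\ne\emptyset$, fix $k$ with $D_{1/k}\ne\emptyset$ and let $F=D_{1/k}$, a nonempty closed subset of $[0,1]$.

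The key step is then to derive a contradiction from the hypothesis that $f|_\qe$ and $f|_\II$ are continuous. First I would observe that $F$ has empty interior in $[0,1]$: indeed if $F$ contained an interval $I$, then $f|_{I\cap\qe}$ continuous together with density of $\qe$ in $I$ would force $f$ on $I$ to be a continuous extension of $f|_{I\cap\qe}$, hence would make $\osc(f,t)$ small at interior points of $I$, contradicting $I\subset F$. More precisely, at a point $t\in I$ take rationals $q_n\to t$; continuity of $f|_\qe$ gives $f(q_n)\to f(t)$, and similarly irrationals $x_n\to t$ give $f(x_n)\to f(t)$, so every sequence in $[0,1]$ converging to $t$ has its $f$-image converging to $f(t)$ (splitting into its rational and irrational subsequences), i.e. $f$ is continuous at $t$ — contradiction. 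So $F$ is nowhere dense. Now apply the Baire category theorem to the complete metric space $F$ itself: $F=(F\cap\qe)\cup(F\cap\II)$, and since $F\cap\qe$ is countable and $F$ has no isolated points only in the bad case — here I need to be slightly careful. The robust argument is: $F$ is a nonempty closed set, hence a Baire space; $F\cap\II$ is a $G_\delta$ in $F$ (complement of the countable set $F\cap\qe$), and it is dense in $F$ unless $F$ has an isolated point lying in $\qe$, but an isolated point of $F$ cannot be a discontinuity point with $\osc\ge 1/k$ because at an isolated point of $F$ the function agrees locally with its restriction to $\qe\cup\II$ minus that point\dots

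Let me instead use the standard clean finish: since $f|_F$ is a Baire-one function on the compact metric space $F$, by Theorem~\ref{T:b}(b) (or directly the Baire characterization) $f|_F$ has a point of continuity $t_0\in F$ relative to $F$. I claim $f$ is actually continuous at $t_0$ relative to $[0,1]$, which contradicts $t_0\in F\subset D$. To see this, take any sequence $s_n\to t_0$ in $[0,1]$; split it into its rational subsequence and irrational subsequence. Along the rational part, continuity of $f|_\qe$ gives convergence to $f(t_0)$; along the irrational part, continuity of $f|_\II$ gives convergence to $f(t_0)$ — here I use that $t_0$ itself is either rational or irrational, and in either case both $f|_\qe$ and $f|_\II$ extend continuously to $t_0$ taking the common value $f(t_0)$, which follows by approximating $t_0$ from within $\qe$ and within $\II$ separately and comparing with continuity of $f|_F$ at $t_0$ (pick a sequence in $F$ converging to $t_0$; it has a rational or irrational subsequence, pinning the common value). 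Hence $f(s_n)\to f(t_0)$, so $\osc(f,t_0)=0$, contradicting $t_0\in D_{1/k}$. Therefore $D=\emptyset$ and $f\in C([0,1])$.

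\textbf{Main obstacle.} The delicate point is the bookkeeping around whether $t_0$ is rational or irrational and making sure the "common value" $f(t_0)$ is consistently the limit of $f$ along both $\qe$ and $\II$; this is where one must use continuity of $f|_F$ at $t_0$ (and density of $F\cap\qe$ or $F\cap\II$ near $t_0$, or simply the existence of sequences in $F$ converging to $t_0$) rather than just continuity of the global restrictions. An alternative that sidesteps this entirely: prove directly that for every $t\in[0,1]$ and every sequence $s_n\to t$, the image $f(s_n)\to f(t)$, by noting that $\{f(s_n)\}$ splits into at most two convergent subsequences (rationals $\to f(t)$ by continuity of $f|_\qe$ at $t$ if $t\in\qe$, or by the fact that $f|_\qe$ has a continuous extension whose value at $t$ must equal $f(t)$, using continuity of $f|_\II$ at approximating irrationals, and symmetrically). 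I expect the cleanest write-up to combine the oscillation-set contradiction with Theorem~\ref{T:b}(b) to produce a relative continuity point inside the bad set, then run the two-subsequence argument there.
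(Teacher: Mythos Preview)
Your plan has a genuine gap, and it is exactly the obstacle you flag at the end but do not resolve. Continuity of $f|_\qe$ is a statement about $f$ at points of $\qe$ only: it says nothing about $\lim_n f(q_n)$ when $q_n\in\qe$ converges to an irrational $t$. The same holds for $f|_\II$. This error appears twice in your plan. In the ``empty interior'' step you write that for $t\in I$ and rationals $q_n\to t$, ``continuity of $f|_\qe$ gives $f(q_n)\to f(t)$''; this is false when $t\in\II$. In the main step you take a point $t_0\in F$ of relative continuity of $f|_F$ and try to show $f$ is continuous at $t_0$ by splitting an arbitrary sequence into its rational and irrational parts; if, say, $t_0\in\qe$, the irrational part is not controlled by either hypothesis, and your suggested fix via continuity of $f|_F$ does not help because an arbitrary irrational sequence converging to $t_0$ has no reason to lie in $F$ (and $t_0$ may even be isolated in $F$).

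The paper's proof sidesteps this by using \emph{two} anchor points, one in each class. Suppose $f$ is discontinuous at $x\in\qe$. Then (since $f|_\qe$ is continuous at $x$) there must be irrationals $x_n\to x$ with $f(x_n)$ bounded away from $f(x)$; fix one such $x_n$ close to $x$. Now use continuity of $f|_\qe$ at $x$ to pin down $f$ on rationals in a neighborhood $U$ of $x$, and continuity of $f|_\II$ at $x_n$ to pin down $f$ on irrationals in a smaller neighborhood $V\subset U$ of $x_n$. On $V$ the rational and irrational values of $f$ are separated by a fixed gap, so $f$ has no point of continuity in $V$, contradicting $f\in\Ba_1([0,1])$. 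The key move you are missing is to shift from the single point $t_0$ to a nearby point of the opposite class and apply the two continuity hypotheses at the two different points; once you see this, the oscillation-set machinery and the relative continuity point of $f|_F$ become unnecessary.
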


\begin{proof}
Assume that $x\in \qe$ is such that $f$ is not continuous at $x$. Then there exist $\eta>0$ and a sequence $(x_n)$ of irrational numbers converging to $x$ such that $\abs{f(x)-l}>\eta$, where $l=\lim f(x_n)$. Let $U$ be a neighborhood of $x$ such that $\abs{f(x)-f(y)}<\frac\eta4$ for each $y\in U\cap \qe$. Pick $n\in\en$ such that $x_n\in U$ and $\abs{f(x_n)-l}<\frac\eta4$.  Let $V\subset U$ be a neighborhood of $x_n$ such that $\abs{f(x_n)-f(y)}<\frac\eta4$ for each $y\in V\cap \II$. 
Then $f$ has no point of continuity on $V$ because 
\[
f(y)\in (f(x)-\tfrac\eta4,f(x)+\tfrac\eta4),\quad y\in V\cap \qe,
\]
and
\[
f(y)\in (l-\tfrac\eta4,l+\tfrac\eta4), \quad y\in V\cap \II.
\]
But this is a contradiction with the fact that $f\in \Ba_1([0,1])$.

The case $x\in \II$ can be treated similarly, thus the proof is done.
\end{proof}

\begin{example}
\label{ex:hacko-mu-deter} Let $Y=B_{M([0,1])}$.  We define a function $G\colon Y\to [0,1]$ by $$G(\mu)=\mu_d(\II), \quad\mu\in Y,$$
where we use notation from Example~\ref{ex:deter-ext-body}. Set $H=\span\{A_c(Y)\cup\{G\})$. Then $Y$ is a metrizable compact convex set set and $H$ is an  intermediate function space on $Y$ with the following properties:
\begin{enumerate}[$(a)$]
\item  $H\subset \Ba_2(Y)$;
\item  $H$ is not contained in $A_{sa}(Y)$;
\item  $H$ is determined by $\ext X$;
\item  $H=H^\mu$;
\item  $M(H)$ contains only constant functions.
 \end{enumerate}
\end{example}

\begin{proof} 
It is clear that $Y$ is a metrizable compact convex space and $H$ is an intermediate function space on $Y$.

Let $X$ be the compact convex set from Example~\ref{ex:deter-ext-body}. We observe that $X\subset Y$ and $Y=\co (X\cup -X)$. Then $\ext Y=(\ext X)\cup (-\ext X)$. Further, $G|_X$ coincide with the function $G$ from Example~\ref{ex:deter-ext-body}.

$(a)$: We need to show that $G$ is a Baire-two function on $Y$. We know that $G|_X$ and $G|_{-X}$ are Baire-two functions (by Example~\ref{ex:deter-ext-body}(a)) and $G$ is affine. Consider the mapping 
\[
q\colon X\times(-X)\times[0,1]\to Y,\quad q(x_1,-x_2,\lambda)=\lambda x_1-(1-\lambda)x_2.
\]
Then $q$ is a continuous surjection and $\tilde{G}(x_1,-x_2,\lambda)=\lambda G(x_1)-(1-\lambda)G(x_2)$ is Baire-two. Since $\tilde{G}=G\circ q$, $G$ is Baire-two by \cite[Theorem 5.26]{lmns}.

$(b)$: The function $G$ is not strongly affine as Example~\ref{ex:deter-ext-body}(b) says that $G|_X$ is not strongly affine.

$(c)$: The space $H$ is determined by extreme points.
Let $F\in A_c(Y)$ and $c\in\er$ be given. Then there exist a function $f\in C([0,1])$ and $d\in\er$ such that $(F+cG)(\mu)=\mu(f)+d+cG(\mu)$ for $\mu\in Y$. From Example~\ref{ex:deter-ext-body}(c) we know that
$$\sup (F+cG)(X)\le \sup (F+cG)(\ext X)\le \sup (F+cG)(\ext Y)$$
and
$$\begin{aligned}
\sup(F+cG)(-X)&=-\inf(F+cG)(X)\le-\inf(F+cG)(\ext X)\\&=\sup(F+cG)(-\ext X)\le \sup (F+cG)(\ext Y).\end{aligned}$$
Since $F+cG$ is affine and $Y=\co(X\cup(-X))$, we deduce 
$\sup(F+cG)(Y)\le\sup (F+cG)(\ext Y)$. Similarly we infer the inequality for infimum.

$(d)$: Let a bounded non-decreasing sequence $(K_n)$ in $H$ pointwise converge to $K$ on $Y$. Then $K$ is affine on $Y$, and thus we can assume by adding a suitable constant that $K(0)=0$. We find continuous functions $k_n\in C([0,1])$, $d_n, c_n\in\er$ such that $K_n(\mu)=\mu(k_n)+d_n+c_nG(\mu)$.
Then $d_n=K_n(0)\nearrow K(0)=0$, and thus $(d_n)$ converges to $0$.
In particular, the sequence $(d_n)$ is bounded. By the very assumption the sequence $(K_n)$ is bounded, so $(K_n-d_n)$ is bounded as well. For $t\in\qe$ we have $K_n(\ep_t)-d_n=k_n(t)$, so the sequence $(k_n|_{\qe})$ is bounded. Since each $k_n$ is continuous, $\norm{k_n}_\infty=\norm{k_n|_{\qe}}_\infty$, so the sequence $(k_n)$ is bounded as well. Now we deduce that also the sequence $(c_n)$ is bounded.  By choosing a suitable subsequence we may assume that $c_n\to c\in\er$. 

Now we have for $t\in\qe$
\[
\begin{aligned}
K_n(\ep_t)&=k_n(t)+d_n\nearrow K(\ep_t)\quad\text{and}\\
 K_n(-\ep_t)&=-k_n(t)+d_n\nearrow K(-\ep_t)=-K(\ep_t).
\end{aligned}
\]
Hence both the functions $t\mapsto K(\ep_t)$ and $t\mapsto -K(\ep_t)$ are limits of non-decreasing sequences of continuous functions on $\qe$, and hence the function $t\mapsto K(\ep_t)$ is continuous on $\qe$.

Similarly we have for $t\in\II$
\[
\begin{aligned}
K_n(\ep_t)&=k_n(t)+d_n+c_n\nearrow K(\ep_t)\quad\text{and}\\
K_n(-\ep_t)&=-k_n(t)+d_n-c_n\nearrow K(-\ep_t)=-K(\ep_t).
\end{aligned}
\]
As above, $t\mapsto K(\ep_t)$ is continuous on $\II$. Thus the function $k(t)=K(\ep_t)$, $t\in [0,1]$, is continuous on $\qe$ and on $\II$. Further, the functions $k_n$ satisfy
\[
k_n(t)= \begin{cases} k_n(t)+d_n-d_n=K_n(\ep_t)-d_n\to k(t),& t\in \qe,\\
                      k_n(t)+d_n+c_n-d_n-c_n=K_n(\ep_t)-d_n-c_n\to k(t)-c,& t\in\II.
                      \end{cases}
\]
Hence the function $l(t)=k(t)-c1_{\II}$ is Baire-one on $[0,1]$. Also, $l|_{\qe}$ and $l|_{\II}$ are continuous. It follows from Lemma~\ref{l:baire-one-cont} that $l\in C([0,1])$. 

Hence the functions $L_n(\mu)=\mu(k_n)$ and $L(\mu)=\mu(l)$, $\mu\in Y$, satisfy fro $\mu\in Y$
\[
L(\mu)=\lim_{n\to\infty}L_n(\mu)=\lim_{n\to\infty}\left(\mu(k_n)+d_n+c_nG(\mu)-d_n-c_nG(\mu)\right)
=K(\mu)-cG(\mu).
\]
Since $L$ is strongly affine, $K-cG$ is strongly affine as well. Since 
\[
\begin{aligned}
L(\ep_t)&=\begin{cases} k(t), &t\in\qe,\\
                            k(t)-c, &t\in\II,
              \end{cases},\\
L(-\ep_t)&=\begin{cases} -k(t), &t\in\qe,\\
                            -k(t)+c, &t\in\II,            
              \end{cases}                                           
\end{aligned}              
\]
we have $L(\mu)=\mu(l)$, $\mu\in\ext Y$. But the function on the right hand side is continuous on $\ext Y$, and thus $L$ is continuous on $\ext Y$. By \cite[Corollary 5.32]{lmns}, $K-cG$ is continuous on $Y$. Hence $K=(K-cG)+cG\in H$.

$(e)$: $M(H)$ contains only constant functions by Example~\ref{ex:symetricka}.
\end{proof}

We continue by the third example showing that split faces
need not generate multipliers even on a metrizable Bauer simplex.

\begin{example}
\label{ex:multi-analytic}
There exist an intermediate function space $H$ on the metrizable Bauer simplex $X=M_1([0,1])$  and a subset $F\subset X$ such that the following properties hold.
\begin{enumerate}[$(a)$]
\item   $H^\sigma=H$, $H\subset A_{sa}(X)$ and $H$ is determined by extreme points.
\item  $F$ is a  measure convex split face, its complementary face $F'$ is also measure convex and $\lambda_F\in H$.
\item  $\lambda_F\notin M(H)$.
\end{enumerate}
\end{example}

\begin{proof}
$(a)$: Let $\tilde{H}=\span(\Ba^b([0,1])\cup\{\tilde{f}\})$, where $\tilde{f}=1_A$ for $A=A_1\cup A_2$ with $A_1\subset [0,\frac13]$ analytic non-Borel and $A_2\subset [\frac23,1]$ analytic non-Borel.
Using Lemma~\ref{L:function space} we obtain an intermediate function space $H=V(\tilde{H})$ on the Bauer simplex $X=M_1([0,1])$ with $H\subset A_{sa}(X)$. Since $X$ is a standard compact convex set, $H$ is determined by extreme points. 

Now we show that $H^\sigma=H$. To this end, let  a bounded sequence $h_n=b_n+c_n f$, where $b_n=V(\tilde{b}_n)$ for some $\tilde{b}_n\in \Ba^b([0,1])$, $c_n\in\er$ and $f=V(\tilde{f})$, be such that $h_n\to h\in A_{sa}(X)$. If we identify points of $[0,1]$ with $\ext X$, we have $h_n\to h$ on $[0,1]$. 

We claim that $(c_n)$ is bounded. If this is not the case, we may assume that $\abs{c_n}\to \infty$. Then the set $B=\{x\in [0,1]\setsep \abs{b_n(x)}\to\infty\}$ is Borel and contains $A$. Indeed, for $x\in A$ we have
\[
\abs{b_n(x)}=\abs{b_n(x)+c_n-c_n}=\abs{h_n(x)-c_n}\ge \abs{c_n}-\norm{h_n}\to\infty.
\]
Since $A$ is not Borel, there exists $x\in B\setminus A$. Then $h_n(x)=b_n(x)\to \infty$, a contradiction. Hence $(c_n)$ is bounded.
Thus we may assume that $c_n\to c$ for some $c\in\er$. Then 
\[
b_n=h_n-c_nf\to h-cf\text{ on }\ext X.
\]
Hence $b=h-cf$ is a Borel function on $\ext X$, and thus have a Borel strongly affine extension $b$ on $X$. Further, $h=b+cf$ on $\ext X$. Since $h, b,c$ are strongly affine, $h=b+cf$ on $X$. Hence $H=H^\sigma$.

$(b)$: Let $F=[f=1]$. Then $F$ is a measure convex face with $F'=[f=0]$ (this follows from Lemma~\ref{l:complementarni-facy}(i)). We aim to check that $F$ is split face. To this end, let $s\in X$ be given. Then $s$ is a probability measure on $[0,1]$. If $s(A)=1$, then $s\in F$. Similarly, if $s([0,1]\setminus A)=1$, we have $s\in F'$. So let $s(A)\in (0,1)$. Then 
\[
s=s(A)\frac{s|_{A}}{s(A)}+(1-s(A))\frac{s|_{[0,1]\setminus A}}{s([0,1]\setminus A)},
\]
where $\frac{s|_{A}}{s(A)}\in F$ and 
$\frac{s|_{[0,1]\setminus A}}{s([0,1]\setminus A)}\in F'$. Hence this provides a decomposition of $s$ into a convex combination  of elements of $F$ and $F'$. To check its uniqueness, let $s=\lambda t+(1-\lambda)t'$, where $\lambda\in (0,1)$ and $t\in F$, $t'\in F'$. The application of the function $f$ yields $\lambda=s(A)$. Since $t\in A$, we have $1=f(t)=t(1_{{A}})=t({A})$, and thus $t$  is carried by ${A}$. Similarly we obtain that $t'$ is carried by $[0,1]\setminus {A}$.
Given a universally measurable $D\subset {A}$, we have
\[
s(D)=s({A})t(D).
\]
Hence $t=\frac{s|_{{A}}}{s({A})}$. 
Then $t'=\frac{s|_{[0,1]\setminus {A}}}{s([0,1]\setminus{A})}$ and $F$ is a split face.

Since $\lambda_F=f$, we have $\lambda_F\in H$.

$(c)$: Nevertheless, $\lambda_F=f$ is not a multiplier for $H$. Indeed, let $g\colon [0,1]\to[0,1]$ be a continuous function with $g=1$ on $[0,\frac13]$ and $g=0$ on $[\frac23,1]$. Then $\lambda_F\cdot V(g)=fg$ on $\ext X$. Thus it is enough to show that $fg\notin \{h|_{\ext X}\setsep h\in H\}$. But this follows from the fact that any function $h$ from the right hand side is of the form $h=b+c\tilde{f}$ for some bounded Borel $b$ and $c\in\er$. Then $\tilde{f}g=b+c\tilde{f}$ implies $b+c\tilde{f}=0$ on $[\frac23,1]$. This implies $c=0$ as $A_2$ is non-Borel. But then $\tilde{f}=b$ on $[0,\frac13]$, which is impossible. Hence the conclusion follows.
\end{proof}

\section{Examples of Stacey's compact convex sets}
\label{sec:stacey}

The aim of this section is to illustrate the abstract results obtained in the previous sections by means of the concrete examples of Stacey's simplices. Their construction provides (in particular) examples distinguishing intermediate function spaces and their multipliers.

\subsection{Construction}
\label{subs:construction}

Let $L$ be a (Hausdorff) compact topological space and let $A\subset L$ be an arbitrary subset. Let
the set
$$\gls{KLA}=(L\times\{0\}) \cup (A\times \{-1,1\})$$
be equipped with the porcupine topology. I.e., points of $A\times\{-1,1\}$ are isolated and a neighborhood basis of $(t,0)$ for $t\in L$ is formed by
\[
(U\times \{-1,0,1\})\cap K_{L,A} \setminus\{(t,-1),(t,1)\},\quad U\mbox{ is a neighborhood of $t$ in }L.
\]
Then $K_{L,A}$ is a compact Hausdorff space (this was observed in \cite[Section VII]{bi-de-le} and it is easy to check). We further set
\[
E=\gls{ELA}=\{f\in C(K_{L,A})\setsep f(t,0)=\tfrac{1}{2}(f(t,-1)+f(t,1)) \mbox{ for }t\in A\}.
\]
Then $E_{L,A}$ is a function space on $K_{L,A}$ and its Choquet boundary is
\[
\Ch_{E_{L,A}}K_{L,A}=(A\times\{-1,1\}) \cup ((L\setminus A)\times\{0\})
\]
(see \cite[formula (7.1) on p. 328]{bi-de-le}). Denote $X=\gls{XLA}=S(E_{L,A})$ the respective state space. By \cite[Theorem 3]{stacey} each $X_{L,A}$ is a simplex (see the explanation in \cite[Section 2]{kalenda-bpms}).

We will describe several intermediate function spaces, their centers and multipliers on $X$. We adopt the notation from Section~\ref{ssc:ch-fs} and from Lemma~\ref{L:function space}. In particular, $\Phi:E_{L,A}\to A_c(X_{L,A})$ will denote the canonical isometry, $\phi:K_{L,A}\to X_{L,A}$ the evaluation mapping and $V:\ell^\infty(K_{L,A})\cap (E_{L,A})^{\perp\perp} \to A_{sa}(X_{L,A})$ the isometry from Lemma~\ref{L:function space}. We further define mappings $\jmath\colon L\to K_{L,A}$ and $\psi\colon K_{L,A}\to L$ by
\[
\gls{jmath}(t)=(t,0) \mbox{ for }t\in L 
\quad\mbox{ and }\quad \gls{psi}(t,i)=t\mbox{ for }(t,i)\in K_{L,A}.
\]
Then $\jmath$ is a homeomorphic injection, $\psi$ is a continuous surjection and $\psi\circ\jmath$ is the identity on $L$.

We conclude this section by few easy formulas which we will use several times. Assume that $K=K_{L,A}$, $E=E_{L,A}$ and $B\subset \Ch_E K$ is arbitrary. Then we have the following equalities:
\begin{equation}\label{eq:podmny ChEK}
 \begin{aligned}
 \psi(B)\cap\psi(\Ch_E K\setminus B)&=\{t\in A\setsep B\cap\{(t,-1),(t,1)\}\\ &\qquad\qquad\mbox{ contains exactly one point} \},\\
 \psi(B)\setminus\psi(\Ch_E K\setminus B)&=\{t\in L\setminus A\setsep (t,0)\in B\} \\&\qquad\cup\{t\in A\setsep \{(t,-1),(t,1)\}\subset B\}
 \\&=\{t\in L\setsep\psi^{-1}(t)\cap \Ch_E K\subset B\}.\end{aligned}
\end{equation}

\subsection{Description of topological properties of functions on $K_{L,A}$}
\label{ssec:topology}

We start with a characterization of topological properties of bounded functions on $K_{L,A}$.

\begin{prop}\label{p:topol-stacey}
Let $f\in \ell^\infty(K_{L,A})$ and let $K=K_{L,A}$. Then the following assertions hold.
\begin{enumerate}[$(a)$]
    \item $f\in C(K)$ if and only if $f\circ\jmath$ is continuous on $L$ and for all $\ep>0$ the set
    \[
    \{t\in A\setsep \abs{f(t,0)- f(t,-1)}\ge\ep\mbox{ or }\abs{f(t,0)-f(t,1)}\ge\ep\}
    \]
    is  finite.        
    \item  $f\in \Ba_1^b(K)$ if and only if $f\circ\jmath\in \Ba_1^b(L)$ and 
    \[
    \{t\in A\setsep f(t,-1)\ne f(t,0)\mbox{ or }f(t,1)\ne f(t,0)\} \mbox{ is countable}\}.
    \]
     \item  $f\in \Ba^b(K)$ if and only if $f\circ\jmath\in \Ba^b(L)$ and
     \[
     \{t\in A\setsep f(t,-1)\ne f(t,0)\mbox{ or }f(t,1)\ne f(t,0)\} \mbox{ is countable}\}.
     \]
 \item  $f$ is lower semicontinuous if and only if the following two conditions are satisfied:
        \begin{enumerate}[$(i)$]
            \item $f\circ\jmath$ is lower semicontinuous on $L$;
            \item for each $\ep>0$ and each accumulation point $t_0$ of the set $$A_\ep=\{t\in A\setsep f(t,0)\ge\min\{f(t,-1),f(t,1)\}+\ep\}$$ we have $f(t_0,0)\le\liminf\limits_{t\to t_0, t\in A_\ep}f(t,0)-\ep$.
        \end{enumerate}
    \item   $f\in \Bo_1^b(K)$ if and only if $f\circ\jmath\in \Bo_1^b(L).$
 \item  $f\in\Bo^b(K)$ if and only if $f\circ\jmath\in \Bo^b(L).$
   \item  $f\in \Fr^b(K)$ if and only if $f\circ\jmath\in \Fr^b(L).$ 
   \item $f\in (\Fr^b(K))^\mu$ if and only if $f\circ\jmath\in (\Fr^b(L))^\mu.$ 
   \item $f$ is universally measurable if and only if $f\circ\jmath$ is universally measurable.
\end{enumerate}    
\end{prop}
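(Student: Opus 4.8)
The key structural fact is that $K_{L,A}$ carries a continuous surjection $\psi\colon K_{L,A}\to L$ with a continuous section $\jmath\colon L\to K_{L,A}$ (namely $\psi\circ\jmath=\mathrm{id}_L$), and that the ``extra'' points of $K_{L,A}$, i.e.\ those in $A\times\{-1,1\}$, are isolated. The plan is to treat all nine items by the same two-step scheme. First, for the direction ``$f$ has property $P$ on $K_{L,A}$ $\Rightarrow$ $f\circ\jmath$ has property $P$ on $L$'', I would simply invoke Lemma~\ref{L:kvocient}: $\jmath$ is a homeomorphic embedding (onto the closed set $L\times\{0\}$), so any of the descriptive properties in $(a)$--$(i)$ is inherited by restriction. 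In fact for $(c)$--$(i)$ this direction is immediate, while for $(a)$, $(b)$ and $(d)$ there is an additional clause about the behaviour of $f$ on $A\times\{-1,1\}$ near $L\times\{0\}$ which also must be read off from property $P$; I address this below.

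Second, for the converse direction I would use the following decomposition. Every point of $K_{L,A}\setminus(L\times\{0\})$ is isolated, so for any function $g$ on $A$ the function which equals $g(t)$ at $(t,\pm1)$ and $0$ elsewhere is continuous on $K_{L,A}\setminus(L\times\{0\})$; more precisely, a function $f$ on $K_{L,A}$ is determined by the pair $(f\circ\jmath, (f(t,\pm1))_{t\in A})$, and its continuity/Baire/Borel/fragmented class at a point $(t_0,0)\in L\times\{0\}$ is governed by $f\circ\jmath$ together with the ``gaps'' $f(t,\pm1)-f(t,0)$ for $t\in A$ close to $t_0$. For the algebra-of-sets properties $(e)$, $(f)$, $(g)$, $(h)$, $(i)$ the cleanest route is: write $K_{L,A}=\jmath(L)\cup D$ where $D=A\times\{-1,1\}$ is an open discrete subset (hence an $F_\sigma$, in fact a countable union of its points only if $A$ is countable — so one argues via ``$D$ is open and its complement is closed and homeomorphic to $L$''). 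A subset $M\subset K_{L,A}$ is Borel / of ambiguous class / resolvable / in $\H_\sigma$ iff $M\cap D$ is such (automatic, $D$ discrete open) and $M\cap\jmath(L)$ is such in $\jmath(L)\cong L$; and $M\cap\jmath(L)=\jmath(g^{-1}(\cdot))$ where the relevant trace is exactly $f\circ\jmath$ pulled back. Combined with Theorem~\ref{T:a} (fragmented $\iff$ $\H_\sigma$-measurable on compact spaces), Theorem~\ref{T:b}, and the characterisations of Baire and Borel functions by measurability recalled in Section~\ref{ssc:csp}, this yields $(e)$, $(f)$, $(g)$ and hence $(i)$; item $(h)$ follows from $(g)$ by applying the same reasoning to the monotone sequential closures, noting that a monotone bounded sequence $(f_n)$ on $K_{L,A}$ converges pointwise iff $(f_n\circ\jmath)$ does and the values at the isolated points converge, and the limit's gaps are the limits of the gaps.

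The genuinely delicate items are $(a)$, $(b)$, $(d)$ — and $(c)$ which reduces to $(b)$. For $(a)$: $f$ is continuous at $(t_0,0)$ iff for every $\ep>0$ there is a neighbourhood $U$ of $t_0$ in $L$ such that $|f(s,i)-f(t_0,0)|<\ep$ for all $(s,i)$ in the basic porcupine neighbourhood; unpacking this gives ``$f\circ\jmath$ continuous at $t_0$'' \emph{plus} ``for all $\ep$ only finitely many $t\in A$ have a gap $\ge\ep$ accumulating at $t_0$'', and since $L$ is compact the local finiteness at every point upgrades to global finiteness of $\{t\in A:\text{gap}(t)\ge\ep\}$. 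For $(b)$ (and $(c)$, using that a bounded function is Baire iff it is Baire-one here because on $K_{L,A}$ every bounded Baire function restricted to $L\times\{0\}$ is Baire on $L$ but the off-set part contributes only a countable discrete set — this is where the ``countable'' clause is sharp): I would show $f\in\Ba_1^b(K_{L,A})$ forces $f\circ\jmath\in\Ba_1^b(L)$ by Lemma~\ref{L:kvocient}$(c)$, and forces the gap set to be countable because a Baire-one function on a compact space has a dense $G_\delta$ of continuity points in each closed subspace, while a point of $A\times\{-1,1\}$ with nonzero gap that is an accumulation point of such points would destroy the point-of-continuity property on a suitable closed set; conversely, if $f\circ\jmath$ is Baire-one and the gap set $S$ is countable, enumerate $S=\{t_k\}$ and write $f$ as a pointwise limit of continuous functions by correcting $f\circ\jmath\circ\psi$ (which is continuous composed with the continuous $\psi$, hence continuous — wait, $f\circ\jmath$ need only be Baire-one, so take $g_n\to f\circ\jmath$ continuous on $L$, lift to $g_n\circ\psi$, and add finitely many ``spikes'' at $t_1,\dots,t_n$ to match the gaps, obtaining continuous functions converging to $f$). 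Item $(d)$ is the most computational: lower semicontinuity of $f$ at $(t_0,0)$ means $f(t_0,0)\le\liminf$ over the porcupine neighbourhood, which splits into $(i)$ lower semicontinuity of $f\circ\jmath$ and $(ii)$ the stated condition controlling downward jumps of $f$ at the off-set points $(t,\pm1)$ accumulating at $t_0$; this is a direct, if fiddly, unravelling of the neighbourhood basis, and I expect it to be the main obstacle only in the sense of bookkeeping rather than conceptual difficulty. Finally $(i)$: $f$ universally measurable $\iff$ preimages of open sets are universally measurable $\iff$ (since the off-set part is open discrete, automatically universally measurable) the traces on $\jmath(L)$ are universally measurable in $L$ $\iff$ $f\circ\jmath$ universally measurable — here one also uses that $\jmath(L)$ is a closed, hence universally measurable, subset and that any Radon measure on $K_{L,A}$ restricts to Radon measures on $\jmath(L)$ and on the countable-support discrete part.
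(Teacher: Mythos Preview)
Your overall scheme—split $K_{L,A}$ into the closed copy $\jmath(L)$ and the open discrete set $A\times\{-1,1\}$, then observe that every descriptive property is automatic on the discrete part—is exactly the paper's, and your treatment of $(a)$, $(d)$, $(e)$--$(i)$ and the \emph{converse} direction of $(b)$ matches it. Two minor points: Lemma~\ref{L:kvocient} concerns continuous \emph{surjections}, so it is not the right citation for ``$f$ has $P$ on $K$ $\Rightarrow$ $f\circ\jmath$ has $P$ on $L$''; what you actually need is that $\jmath$ is a homeomorphic embedding onto a closed set, and restriction to a closed subspace preserves each of these properties. Also, your parenthetical reduction of $(c)$ reads as though Baire coincides with Baire-one on $K_{L,A}$, which is false (take $g\circ\psi$ for any $g\in\Ba_2(L)\setminus\Ba_1(L)$); the paper merely says the \emph{proof} of $(b)$ adapts.

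The genuine gap is the forward direction of $(b)$. You invoke the point-of-continuity property, but every point of $A\times\{-1,1\}$ is isolated in $K_{L,A}$, so $f$ is automatically continuous there and no PCP obstruction arises at such points; nor do you specify which ``suitable closed set'' in $K_{L,A}$ would witness the failure, and I do not see how to complete this line. The paper's route is much shorter and bypasses this entirely: if $f=\lim_n f_n$ pointwise with $f_n\in C(K)$, then by part $(a)$ each $f_n$ has only finitely many $t\in A$ with $|f_n(t,0)-f_n(t,\pm1)|\ge\ep$, hence only countably many $t$ with a nonzero gap. If $t\in A$ lies in none of these countable sets then $f_n(t,\pm1)=f_n(t,0)$ for all $n$, so the same holds for $f$. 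Thus the gap set of $f$ sits inside a countable union of countable sets. The same containment passes through further pointwise limits, which is exactly how $(c)$ follows from the proof of $(b)$.
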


\begin{proof}
$(a)$: This easily follows from the definitions and is proved in \cite{stacey} within the proof of Theorem 3. 

$(b)$: Inclusion `$\subset$' follows from $(a)$. To prove the converse take any $f$ satisfying the condition. Let $\{t_n\setsep n\in\en\}$ be the respective countable set. Let $(g_n)$ be a bounded sequence in $C(L)$ pointwise converging to $f\circ\jmath$. Define $f_n$ by
  $$f_n(t,i) = \begin{cases}
      f(t,i)&\mbox{if }t=t_k\mbox{ for some }k\le n\mbox{ and }i\in\{-1,1\},\\
      g_n(t) &\mbox{otherwise}.
  \end{cases}$$
  Then $(f_n)$ is a bounded sequence of continuous functions on $K$ pointwise converging to $f$.
  
$(c)$: This follows easily from the proof of $(b)$.

$(d)$: $\implies$: Assume $f$ is lower semicontinuous. Clearly, $f\circ \jmath$ is also lower semicontinuous, hence $(i)$ is satisfied. To prove $(ii)$ fix $\ep>0$ and let $t_0$ be an accumulation point of $A_\ep$. Let $(t_\alpha)$ be a net in $A_\ep$ converging to $t_0$ such that $\lim_\alpha f(t_\alpha,0)=\liminf\limits_{t\to t_0, t\in A_\ep} f(t,0)$.
 Since $t_\alpha\in A_\ep$, we may find $i_\alpha\in\{-1,1\}$ such that $f(t_\alpha,i_\alpha)\le f(t_\alpha,0)-\ep$. In the topology of $K$ we have $(t_\alpha,i_\alpha)\to(t_0,0)$, hence
 \[
 f(t_0,0)\le \liminf_{\alpha} f(t_\alpha,i_\alpha)\le  \liminf_{\alpha} f(t_\alpha,0)-\ep=\liminf\limits_{t\to t_0, t\in A_\ep}f(t,0)-\ep
 \]
    and the proof is complete.

    $\impliedby$: Assume $f$ satisfies conditions $(i)$ and $(ii)$. Let us show $f$ is lower semicontinuous. Since the points of $A\times\{-1,1\}$ are isolated, it is enough to prove the lower semicontinuity at $(t,0)$ for each $t\in L$.  So, fix any $t_0\in L$.
    It follows from $(i)$ that $f(t_0,0)\le\liminf_{s\to t_0}f(s,0)$.
    If $t_0$ is not an accumulation point of $A$, the proof is complete.

    Assume now that $t_0$ is an accumulation point of $A$. We need to prove that $f(t_0,0)\le\liminf\limits_{\alpha}f(t_\alpha, i_\alpha)$ whenever $(t_\alpha)_{\alpha\in I}$ is a net in $A\setminus \{t_0\}$ converging to $t_0$ and $i_\alpha\in\{-1,1\}$. Assume the contrary, i.e., there is a net $\left((t_\alpha, i_\alpha)\right)_{\alpha\in I}$ such that $\lim\limits_{\alpha} f(t_\alpha, i_\alpha)<f(t_0,0)$. Up to passing to a subnet we may assume that $f(t_\alpha,0)\to c\in\er$. Then $c\ge f(t_0,0)$. Let $\ep>0$ be such that $\ep<c-\lim\limits_{\alpha} f(t_\alpha, i_\alpha)$. Then $t_0$ is an accumulation point of $A_\ep$ and  hence by $(ii)$ we deduce 
    that 
    $f(t_0,0)\le \liminf\limits_{\alpha} f(t_\alpha,0)-\ep=c-\ep$. Since $\ep$ may be arbitrarily close to $c-\lim\limits_\alpha f(t_\alpha, i_\alpha)$, we infer $f(t_0,0)\le\lim\limits_\alpha f(t_\alpha,i_\alpha)$, which is a contradiction with our assumption. Hence $f$ is lower semicontinuous at $t_0$.
    
$(e)$: Since $K\setminus \jmath(L)$ is an open discrete set in $K$, any function on $K$ with $f\circ \jmath\in \Bo_1(L)$ satisfies the condition that for each $U\subset \er$ open the set $f^{-1}(U)$ is 
expressible as a countable union of sets contained in the algebra generated by open sets.

On the other hand, if $f\in \Bo_1^b(K)$, then $f\circ \jmath\in \Bo_1^b(L)$ as $\jmath$ is a homeomorphic embedding. 

$(f), (g), (h), (i)$: This also follows from the fact that any function on $K\setminus \jmath(L)$ is Borel and fragmented.
\end{proof}

\subsection{Strongly affine functions on $X_{L,A}$}
\label{ssec:sa-functions-onX}

This section is devoted to describing strongly affine functions and their multipliers on $X_{L,A}$. We start by describing maximal measures and characterizing standard compact convex set among simplices $X_{L,A}$.

\begin{lemma}\label{L:maxmiry-dikous}
    Let $K=K_{L,A}$, $E=E_{L,A}$ and $X=X_{L,A}$.
   \begin{enumerate}[$(1)$] 
   \item A measure $\mu\in M_1(X)$ is maximal if and only if $\mu=\phi(\nu)$ for some $\nu\in M_1(K)$ such that the discrete part of $\nu$ is carried by $\Ch_E K$.
       \item $X$ is a standard compact convex set if and only if $A$ contains no compact perfect subset.
   \end{enumerate} 
\end{lemma}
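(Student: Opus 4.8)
Both parts rest on the correspondence, via the homeomorphic embedding $\phi\colon K\to X$, between Radon measures on $K$ and Radon measures on $X$ carried by the (compact, hence closed) set $\phi(K)$. First I would record the routine facts that $\nu\mapsto\phi(\nu)$ is an affine, injective, weak$^*$-continuous map of $M_1(K)$ onto $\{\mu\in M_1(X)\setsep \mu(\phi(K))=1\}$, that the barycenter of $\phi(\nu)$ equals $\rho(\nu)$ in the notation of Lemma~\ref{L:function space}, and that $\ext X=\phi(\Ch_E K)$, so $\overline{\ext X}=\phi(\overline{\Ch_E K})\subset\phi(K)$. Since every maximal measure on $X$ is carried by $\overline{\ext X}$ by \cite[Theorem 3.79(c)]{lmns}, every maximal $\mu\in M_1(X)$ has the form $\phi(\nu)$ for a (unique) $\nu\in M_1(K)$; hence proving $(1)$ reduces to deciding for which $\nu$ the image measure $\phi(\nu)$ is maximal.

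The easy implication is ``maximal $\Rightarrow$ discrete part of $\nu$ carried by $\Ch_E K$'', which I would prove contrapositively: if $\nu(\{(t,0)\})=c>0$ for some $t\in A$, then $\mu=\phi(\nu)$ has an atom of mass $c$ at $\phi(t,0)$, and since $\phi(t,0)=\tfrac12\bigl(\phi(t,-1)+\phi(t,1)\bigr)$ with $\phi(t,\pm1)\in\ext X$, the probability measure $\mu-c\,\varepsilon_{\phi(t,0)}+\tfrac c2\varepsilon_{\phi(t,-1)}+\tfrac c2\varepsilon_{\phi(t,1)}$ strictly dominates $\mu$ in the dilation order, so $\mu$ is not maximal. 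For the converse I would verify maximality of $\phi(\nu)$ by Bauer's criterion, i.e.\ show $\int_X f^*\di\phi(\nu)=\int_X f\di\phi(\nu)$ for every convex $f\in C(X)$, equivalently $f^*\circ\phi=f\circ\phi$ $\nu$-a.e.\ on $K$. Writing $\nu=\nu_d+\nu_c$ with $\nu_d$ discrete (and carried by $\Ch_E K$) and $\nu_c$ continuous, the equality is immediate on $\Ch_E K$ (where $\phi(k)\in\ext X$, so $f^*(\phi(k))=f(\phi(k))$), and $\nu_c$ gives no mass to the discrete set $A\times\{-1,1\}$, hence is carried by $\jmath(L)$, so only the points $\phi(t,0)$ with $t\in A$ remain, against $\nu_c$. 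Here is the crux: because $X$ is a simplex, the unique maximal representing measure of $\phi(t,0)$ is $\tfrac12\bigl(\varepsilon_{\phi(t,-1)}+\varepsilon_{\phi(t,1)}\bigr)$, so the face of $X$ generated by $\phi(t,0)$ is the segment $[\phi(t,-1),\phi(t,1)]$, whence $f^*(\phi(t,0))=\tfrac12\bigl(f(\phi(t,-1))+f(\phi(t,1))\bigr)$; on the other hand $f\circ\phi\in C(K)$, so Proposition~\ref{p:topol-stacey}$(a)$ shows $f(\phi(t,0))=f(\phi(t,-1))=f(\phi(t,1))$ for all $t\in A$ outside a countable, hence $\nu_c$-null, set. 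On the complement of that set $f^*(\phi(t,0))=f(\phi(t,0))$, giving the required a.e.\ identity.

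For $(2)$ I would first note that $X$ is standard precisely when $\mu^*(\ext X)=1$ for every maximal $\mu\in M_1(X)$, because $\inf\{\mu(B)\setsep B\supset\ext X\text{ universally measurable}\}=\mu^*(\ext X)$ (use a Borel hull of $\ext X$). Writing a maximal $\mu$ as $\phi(\nu)$ with $\nu_d$ carried by $\Ch_E K$ (by $(1)$) and transporting back along the homeomorphism $\phi$, I get $\mu^*(\ext X)=\nu^*(\Ch_E K)=1-\nu_*(A\times\{0\})$, and since the compact subsets of $A\times\{0\}$ are exactly the sets $C_0\times\{0\}$ with $C_0\subset A$ compact, on which $\nu_d$ vanishes, inner regularity yields $\nu_*(A\times\{0\})=\sup\{\nu_c(C_0\times\{0\})\setsep C_0\subset A\text{ compact}\}$. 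As $\nu_c$ ranges over all continuous Radon measures on $K$, i.e.\ (via $\jmath$) over all continuous Radon measures on $L$, this means $X$ fails to be standard exactly when some compact $C_0\subset A$ carries a nonzero atomless Radon measure. I would then close the argument with the classical facts that every Radon measure on a scattered compact space is atomic, that a non-scattered compact space has a nonempty compact perfect (Cantor--Bendixson) kernel, and that a nonempty perfect compact Hausdorff space carries an atomless Radon probability measure (e.g.\ by building a Cantor scheme inside it, hence a continuous surjection onto $[0,1]$, and pulling back Lebesgue measure): together these give that such a $C_0$ exists if and only if $A$ contains a nonempty compact perfect subset.

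The only genuinely delicate point is the maximality of $\phi(\nu_c)$ in $(1)$: it hinges on coupling the simplex property of $X$ (which pins down the face through each $\phi(t,0)$, $t\in A$, and thus the value of the upper envelope there) with the rigidity in Proposition~\ref{p:topol-stacey}$(a)$ (continuous functions on $K$ have at most countably many ``jumps'' over $A$), so that the atomless measure $\nu_c$ cannot detect the non-maximality introduced by the non-extreme points of $\phi(A\times\{0\})$.
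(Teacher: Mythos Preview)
Your proof is correct. The overall structure and part~$(2)$ match the paper's argument closely. For the ``if'' direction of~$(1)$, however, you take a genuinely different route: the paper works on $K$, showing that a continuous $\nu$ carried by $\jmath(L)$ is $E$-maximal by directly building, for each $f\in C(K)$ and $\ep>0$, a function $g\in E$ with $g\ge f$ and $g(t_0,0)\le f(t_0,0)+\ep$ for all $t_0$ outside the finite ``jump set'' from Proposition~\ref{p:topol-stacey}$(a)$, and then invokes the transfer \cite[Proposition~4.28(d)]{lmns} from $E$-maximality to maximality on $X$. You instead work directly on $X$, using the simplex property to identify the unique maximal measure at each $\phi(t,0)$ (hence compute $f^*(\phi(t,0))$ exactly for convex $f\in C(X)$), and then invoke the same topological rigidity to see that $f^*\circ\phi=f\circ\phi$ off a countable set. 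Your route is a touch slicker in that it avoids the explicit construction of $g$ and the passage through $E$-maximality; the paper's route has the minor advantage of not leaning on the simpliciality of $X$ (which is quoted from Stacey) at this point of the argument.
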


\begin{proof}
    $(1)$ Assume that $\mu\in M_1(X)$ is maximal. Then $\mu$ is carried by $\overline{\ext X}$ (see \cite[Proposition I.4.6]{alfsen}), hence $\mu=\phi(\nu)$ for some $\nu\in M_1(K)$. Moreover, if $\nu(\{x\})>0$ for some $x\in K\setminus\Ch_E K$, it is easy to check that $\mu$ is not maximal (cf. the characterization of simple boundary measures in \cite[p. 35]{alfsen}). This proves the `only if' part.

    To prove the converse we observe that discrete measures carried by $\ext X$ are maximal. Further, assume that $\nu$ is a continuous measure on $K$. It is obviously carried by $\jmath(L)$. Then $\nu$ is $E$-maximal by the Mokobodzki maximal test \cite[Theorem 3.58]{lmns}. Indeed, let $f\in C(K)$ and $\ep>0$. Let
    $$C=\{t\in A\setsep \abs{f(t,0)-f(t,1)}\ge\ep\mbox{ or } \abs{f(t,0)-f(t,-1)}\ge\ep\}.$$
    By Proposition~\ref{p:topol-stacey} the set $C$ is finite. 
   Fix any $t_0\in L\setminus C$ and find a continuous function $h:L\to[0,\infty)$ such that $h(t_0)=0$ and
   $$h(t)=\max\{ \abs{f(t,0)-f(t,1)},\abs{f(t,0)-f(t,-1)} \}\mbox{ for }t\in C.$$
   Then 
   $$g=(f\circ\jmath+\ep+h)\circ\psi \in E\mbox{ and }g\ge f.$$
   In particular, 
   $$f^*(t_0,0)\le g(t_0,0)\le f(t_0,0)+\ep.$$
   Hence $\{t\in L\setsep f^*(t,0)>f(t,0)+\ep\}\subset C$, so it is a finite set. We conclude that $[f^*\ne f]\cap \jmath(L)$ is countable and hence $\nu$-null. So, we have verified condition $(iv)$ from \cite[Theorem 3.58]{lmns} and hence we conclude that $\nu$ is $E$-maximal. Thus $\phi(\nu)$ is a maximal measure on $X$ by \cite[Proposition 4.28(d)]{lmns}. Since maximal measures form a convex set, the proof of the `if part' is complete.

   $(2)$ Assume that $A$ contains a compact perfect set $P$. Then $P$ carries a continuous Radon probability $\nu$. By $(1)$ the measure $\mu=\phi(\jmath(\nu))$ is maximal. Since $\mu$ is carried by $\phi(\jmath(P))$ which is a compact set disjoint with $\ext X$, $X$ is not standard.

   To prove the converse, assume that $A$ contains no compact perfect set. Let $N\supset \ext X$ be a universally measurable set. Then $B=\phi^{-1}(N\cap\overline{\ext X})$ is a universally measurable subset of $K$ containing $\Ch_E K$. In particular, $\jmath(L)\setminus B$ is a universally measurable subset of $\jmath(A)$, so it is universal null (as it contains no compact perfect set). Now it easily follows from $(1)$ that $N$ carries all maximal measures.
\end{proof}

We continue by introducing a piece of notation which we will repeatedly use in this section.

\begin{notation}
If $f$ is any function on $K$, we define
\[
\gls{ftilde}(t,i)=\begin{cases}
         \frac12(f(t,-1)+f(t,1)),& t\in A, i=0,\\
         f(t,i)&\mbox{otherwise}.
     \end{cases}
\]
\end{notation}

In the next result we describe the structure of multipliers for the intermediate function system of strongly affine functions on $X_{L,A}$.

\begin{prop} \label{P:dikous-sa-new}
Let $K=K_{L,A}$, $E=E_{L,A}$, $X=X_{L,A}$ and $H=A_{sa}(X)$. Then the following assertions are valid.
\begin{enumerate}[$(a)$]
    \item The space $H$ is determined by extreme points.
    \item $H=V(\ell^\infty(K)\cap E^{\perp\perp})$ and
 $$\begin{aligned}
        \ell^\infty(K)\cap E^{\perp\perp}=\{f\in \ell^\infty(K)\setsep & f\circ\jmath\mbox{ is universally measurable and }
        \\&
    f(t,0)=\tfrac{1}{2}(f(t,-1)+f(t,1)) \mbox{ for }t\in A\}.\end{aligned}$$
   \item 
   $\begin{aligned}[t]
        M^s(H)=M(H)=V(\{f\in  \ell^\infty(K)\cap E^{\perp\perp} \setsep & \{t\in A\setsep f(t,1)\ne f(t,-1)\}\\&\mbox{ is a universally null set}\}).\end{aligned}$
    \item 
    $\begin{alignedat}[t]{4}
      \A_H=\A^s_H&=  \Big\{ &&\phi((B\setminus A)\times\{0\} \cup (B\cap A)\times \{-1,1\}\cup N_1\times\{-1\} \cup N_2\times\{1\})\setsep \\&&&
    B\subset L\mbox{ universally measurable}, N_1,N_2\subset A \mbox{ universally null} \Big\}
    \\&=\Big\{&&\phi(B)\setsep B\subset\Ch_E K, \psi(B)\mbox{ is universally measurable},
    \\&&&\qquad\psi(B)\cap\psi(\Ch_E K\setminus B)\mbox{ is universally null}\Big\}
        \end{alignedat}
    $
    \end{enumerate}
\end{prop}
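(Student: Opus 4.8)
The plan is to work throughout via the canonical isometry $V$ of Lemma~\ref{L:function space}(a) between $\ell^\infty(K)\cap E^{\perp\perp}$ and $A_{sa}(X)$, so that the equality $H=V(\ell^\infty(K)\cap E^{\perp\perp})$ in (b) is automatic and the remaining content of (b) is the explicit description of $\ell^\infty(K)\cap E^{\perp\perp}$. I would first compute $E^\perp$: setting $\mu_t=\varepsilon_{(t,0)}-\tfrac12\varepsilon_{(t,-1)}-\tfrac12\varepsilon_{(t,1)}$ for $t\in A$, the claim is $E^\perp=\{\sum_{t\in A}c_t\mu_t\setsep \sum_{t\in A}\abs{c_t}<\infty\}$. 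For ``$\subseteq$'' one decomposes $\lambda\in E^\perp$ as a part $\lambda_0$ on $\jmath(L)$ plus a $\sigma$-discrete part $\sum_{t\in A}(a_t\varepsilon_{(t,-1)}+b_t\varepsilon_{(t,1)})$ on the isolated points; testing against the functions $g\circ\psi$, $g\in C(L)$, which lie in $E$ and separate $L$, forces $\lambda_0=-\sum_t(a_t+b_t)\varepsilon_{(t,0)}$, and testing against the elementary functions $f_{t_0}\in E$ with $f_{t_0}(t_0,\pm1)=\pm1$ and $f_{t_0}=0$ off $\{(t_0,-1),(t_0,1)\}$ forces $a_t=b_t$ for every $t$; ``$\supseteq$'' is clear since the sum converges in total variation. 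A bounded universally measurable $h$ on $K$ annihilates every $\mu_t$ precisely when $h(t,0)=\tfrac12(h(t,-1)+h(t,1))$ for $t\in A$, and such an $h$ then annihilates every element of $E^\perp$ by absolute convergence; this yields the displayed description. I note for later use that on the open discrete set $A\times\{-1,1\}$ the values of such $h$ are unconstrained, so $h(\cdot,-1)-h(\cdot,1)$ is an arbitrary bounded function on $A$, while $h\circ\jmath$ must be universally measurable on $L$.

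For (a) I would combine (b) with the description of maximal measures in Lemma~\ref{L:maxmiry-dikous}(1). Given $f=V(h)$ and $x\in X$, take the maximal measure $\phi(\nu)\in M_x(X)$; then $f(x)=\int_K h\di\nu$. The continuous part of $\nu$ is carried by $\jmath(L)$ (atomless measures miss isolated points), and there, for $t\in A$, the averaging identity exhibits $h(t,0)$ as the midpoint of the values $h(t,-1),h(t,1)$ taken at points of $\Ch_E K$, while for $t\notin A$ one already has $(t,0)\in\Ch_E K$; the discrete part of $\nu$ is carried by $\Ch_E K$. Hence $h(y)$ lies between $\inf h(\Ch_E K)$ and $\sup h(\Ch_E K)$ for $\nu$-a.e.\ $y$, so $f(x)\in[\inf f(\ext X),\sup f(\ext X)]$ because $\phi(\Ch_E K)=\ext X$ and $h=f\circ\phi$.

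The heart of the argument is (c). As $M^s(H)\subseteq M(H)$ always, it suffices to pin down $M(H)$ and check its members are strong multipliers. For $h,g\in\ell^\infty(K)\cap E^{\perp\perp}$ the unique candidate for an element of $H$ coinciding with $V(h)\cdot V(g)$ on $\ext X$ is $V(k)$ with $k=hg$ on $\Ch_E K$ and $k$ averaged on $\jmath(A)$; a short computation with the averaging identity gives $k=hg+e_{h,g}$, where $e_{h,g}$ is supported on $\jmath(A)$ and $e_{h,g}(t,0)=\tfrac14(h(t,-1)-h(t,1))(g(t,-1)-g(t,1))$. Thus $V(h)\in M(H)$ iff $e_{h,g}$ is universally measurable on $K$ for every $g$. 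If $D_h=\{t\in A\setsep h(t,-1)\neq h(t,1)\}$ is universally null, then $e_{h,g}$ vanishes off the universally null set $\jmath(D_h)$ and hence is universally measurable (atomic parts of Radon measures never obstruct measurability); moreover it vanishes $\nu$-a.e.\ for every maximal $\nu$, since the continuous part of such $\nu$ is atomless on $\jmath(L)$ while its discrete part misses $\jmath(A)$, so in fact $V(h)\in M^s(H)$. Conversely, if $D_h$ is not universally null I would exhibit a $g$ for which $e_{h,g}$ is not universally measurable: if $D_h$ is not even universally measurable, take $g$ vanishing on $\jmath(L)$ with $g(t,-1)=1$ and $g(t,1)=-1$ for $t\in A$, so that $[e_{h,g}\neq0]=\jmath(D_h)$; otherwise fix an atomless Radon probability $\lambda$ on $L$ with $\lambda(D_h)>0$, pick a $\lambda$-nonmeasurable $M\subseteq D_h$, and take $g$ vanishing on $\jmath(L)$ with $g(t,-1)=1_M(t)$ and $g(t,1)=-1_M(t)$ for $t\in A$, so that $[e_{h,g}\neq0]=M$. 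Either way $V(h)\notin M(H)$, and (c) follows.

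Finally, (d) is obtained by unwinding (c). Equality $\A_H=\A_H^s$ is immediate from $M(H)=M^s(H)$. By Proposition~\ref{p:system-aha}, $\A_H=\{[m>0]\cap\ext X\setsep m\in M(H)\}$, and for $m=V(h)$ one has $[m>0]\cap\ext X=\phi(B)$ with $B=\{y\in\Ch_E K\setsep h(y)>0\}$. When $D_h$ is universally null, the set of $t\in A$ for which $B$ contains exactly one of $(t,-1),(t,1)$ is contained in $D_h$, hence universally null, and $\psi(B)$ differs from the universally measurable set $\{h\circ\jmath>0\}$ only by a subset of $D_h$; reading off $B'=\psi(B)\setminus\psi(\Ch_E K\setminus B)$ together with the two one-sided discrepancy sets $N_1,N_2\subseteq D_h$ yields ``$\subseteq$'' in the first displayed description and, via the identities~\eqref{eq:podmny ChEK}, in the second. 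For the reverse inclusion, given universally measurable $B'\subseteq L$ and universally null $N_1,N_2\subseteq A$ (which may be taken pairwise disjoint and disjoint from $B'$), define $h$ to be $1$ on the prescribed set $B$, $-1$ on $\Ch_E K\setminus B$, and averaged on $\jmath(A)$; then $h\circ\jmath$ agrees with $2\cdot 1_{B'}-1$ off $N_1\cup N_2$, so $h\in\ell^\infty(K)\cap E^{\perp\perp}$, while $D_h=N_1\cup N_2$ is universally null and $\{h>0\}\cap\Ch_E K=B$, so $\phi(B)\in\A_H$. I expect the main difficulty to be the bookkeeping in (c) and (d) over which subsets of $A$ and of $K$ are forced to be universally measurable, exploiting systematically that functions in $\ell^\infty(K)\cap E^{\perp\perp}$ are free on $A\times\{-1,1\}$ but rigid on $\jmath(L)$, and that universally null sets are transparent to atomic measures; constructing the witnessing $g$ in the converse half of (c) is where this has to be done with the most care.
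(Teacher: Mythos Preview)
Your proof is correct and follows essentially the same route as the paper's. Two minor points of comparison: for (a) the paper gives a one-line argument via the observation $\overline{\ext X}\subset\co(\ext X)$ (since $\phi(t,0)=\tfrac12(\phi(t,-1)+\phi(t,1))$ for $t\in A$), which is exactly what your maximal-measure computation unwinds; and for (c) the paper handles the converse in a single stroke by taking a continuous $\nu$ with $\nu^*(D_h)>0$ and a $\nu$-nonmeasurable $C\subset D_h$, avoiding your case split on whether $D_h$ itself is universally measurable.
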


\begin{proof}
$(a)$: This follows from an easy observation that $\overline{\ext X}\subset\co(\ext X)$.

$(b)$: The first equality follows from Lemma~\ref{L:function space}$(a)$. Let us prove the second one.
Inclusion `$\subset$' is clear as $\frac12(\ep_{(t,-1)}+\ep_{(t,1)})-\ep_{(t,0)}\in E^\perp$ for each $t\in A$. 

Let us continue by showing the converse. 
     Given $f\in C(K)$, Proposition~\ref{p:topol-stacey}$(a)$  yields that $f$ and $\widetilde{f}$ differ only at a countable set, in particular $\widetilde{f}$ is a Borel function.
    It is proved in \cite[Section 2]{kalenda-bpms} that 
    $$E^\perp=\{\nu\in M(K)\setsep \int_{K}\widetilde f \di\nu=0\mbox{ for each }f\in C(K)\}.$$
    Fix any $\nu\in E^\perp$. By the above formula we deduce that for any $g\in C(L)$ we have $\int_{K} g\circ\psi\di\nu=0$, thus
    $\psi(\nu)=0$. Since $\nu$ is a Radon measure, it may be expressed
    in the form
    $$\nu=\jmath(\nu_0) + \sum_{j\in\en} (a_j \ep_{(t_j,-1)}+b_j\ep_{(s_j,1)}),$$
    where $\nu_0\in M(L)$, $s_j,t_j\in A$ and $\sum_{j\in\en}(\abs{a_j}+\abs{b_j})<\infty$. Equality $\psi(\nu)=0$ then means that $\nu_0+ \sum_{j\in\en} (a_j \ep_{t_j}+b_j\ep_{s_j})=0$. 
    We deduce that $\nu$ may be expressed as
    $$\nu=\sum_{j\in\en} (a_j\ep_{(t_j,-1)}+b_j \ep_{(t_j,1)}-(a_j+b_j)\ep_{(t_j,0)})$$
   for some $t_j\in A$ and real numbers $a_j,b_j$ satisfying   $\sum_{j\in\en}(\abs{a_j}+\abs{b_j})<\infty$.   
   For each $j\in\en$ let $f_j=1_{\{(t_j,1)\}}-1_{\{(t_j,-1)\}}$. Then $f_j\in C(K)$ and $\widetilde{f_j}=f_j$, so $\int f_j\di\nu=0$. It follows that $a_j=b_j$, i.e., 
    $$\nu=\sum_{j\in\en} a_j(\ep_{(t_j,-1)}+ \ep_{(t_j,1)}-2\ep_{(t_j,0)}),$$
    thus clearly $\int f\di\nu=0$ for each $f$ from the set on the right-hand side. This completes the proof of assertion $(b)$.

   $(c)$: 
   It follows from Lemma~\ref{L:function space}, Lemma~\ref{L:maxmiry-dikous}, assertion $(b)$ and the definitions that, given $f\in \ell^\infty(K)\cap E^{\perp\perp}$ we have
   $$\begin{gathered}
     V(f)\in M(A_{sa}(X))\Longleftrightarrow \forall g\in \ell^\infty(K)\cap E^{\perp\perp}\colon \widetilde{fg}\circ\jmath\mbox{ is universally measurable}, \\  
     V(f)\in M^s(A_{sa}(X))\Longleftrightarrow \forall g\in \ell^\infty(K)\cap E^{\perp\perp}\colon [\widetilde{fg}\ne (fg)] \mbox{ is universally null}.\end{gathered}$$
   Therefore, any function from the last set is a strong multiplier. Since $M^s(H)\subset M(H)$ holds always, it remains to prove that any multplier belongs to the last set.
   To this end assume that $B=\{t\in A\setsep f(t,1)\ne f(t,-1)\}$ is not universally null. It follows that there is a continuous measure $\nu\in M_1(L)$ with $\nu^*(B)>0$. Then there is a set $C\subset B$ which is not $\nu$-measurable. Let $g=\chi_{C\times \{1\}}-\chi_{C\times\{-1\}}$. Then $g\in \ell^\infty(K)\cap E^{\perp\perp}$, but $\widetilde{fg}\circ\jmath$ is not $\nu$-measurable. Thus $V(f)\notin M(A_{sa}(X_A))$. 

   $(d)$: This follows from $(c)$ and Theorem~\ref{T:integral representation H} (using moreover \eqref{eq:podmny ChEK}).
\end{proof}

To compare systems $\A^s_H$ and $\ms_H$ for the individual intermediate function spaces $H$ we need a description of nice split faces. Such a description is contained in the following lemma.
%

\begin{lemma}\label{L:dikous-split-new} Let $K=K_{L,A}$, $E=E_{L,A}$ and $X=X_{L,A}$.
Let $B\subset\Ch_{E}K$. Then the following assertions are equivalent:
\begin{enumerate}[$(1)$]
    \item There is a split face $F$ of $X$ such that $F\cap\ext X=\phi(B)$ and $\lambda_F$ is strongly affine.
    \item There is a split face $F$ of $X$ such that $F\cap\ext X=\phi(B)$ and both $F$ and $F'$ are  measure extremal.
    \item $\psi(B)$ is universally measurable and 
    \[
    \{t\in A\setsep B\cap\{(t,-1),(t,1)\} \mbox{ contains exactly one point}\}
    \]
     is a universally null set.
\end{enumerate}
\end{lemma}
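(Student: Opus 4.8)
The plan is to prove the cycle $(1)\Rightarrow(2)\Rightarrow(3)\Rightarrow(1)$; the first implication is immediate, $(3)\Rightarrow(1)$ is a direct construction using the description of $A_{sa}(X_{L,A})$, and $(2)\Rightarrow(3)$ carries the real content.

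\emph{$(1)\Rightarrow(2)$.} If $\lambda_F$ is strongly affine, then, as $0\le \lambda_F\le 1$, $F=[\lambda_F=1]$ and $F'=[\lambda_F=0]$; for $\mu\in M_1(X)$ with $r(\mu)\in F$ we get $\int\lambda_F\di\mu=\lambda_F(r(\mu))=1$, hence $\lambda_F=1$ $\mu$-a.e., i.e. $\mu(F)=1$, and symmetrically for $F'$. So both $F$ and $F'$ are measure extremal.

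\emph{$(3)\Rightarrow(1)$.} Put $N=\{t\in A\setsep B\cap\{(t,-1),(t,1)\}$ has exactly one point$\}$ and $P_1=\psi(B)\setminus N$. Define $m\in\ell^\infty(K_{L,A})$ by $m=1_B$ on $\Ch_{E}K_{L,A}$ and $m(t,0)=\tfrac12(1_B(t,-1)+1_B(t,1))$ for $t\in A$, $m(t,0)=1_B(t,0)$ for $t\in L\setminus A$. Then $m=\widetilde m$ and $m\circ\jmath=1_{P_1}+\tfrac12 1_N$, which is universally measurable since $\psi(B)$ is universally measurable and $N$ is universally null; hence $m\in\ell^\infty(K_{L,A})\cap E_{L,A}^{\perp\perp}$ by Proposition~\ref{P:dikous-sa-new}(b), and $\lambda:=V(m)\in A_{sa}(X)$ satisfies $\lambda\circ\phi=m$. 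Set $F=[\lambda=1]$, $F'=[\lambda=0]$; these are disjoint measure convex and measure extremal faces (level sets of the $[0,1]$-valued strongly affine $\lambda$). The only points of $\phi(K_{L,A})$ outside $F\cup F'$ are the $\phi(t,0)$ with $t\in N$, so $\phi^{-1}(X\setminus(F\cup F'))\cap\jmath(L)=\jmath(N)$; since every maximal measure on $X$ is $\phi(\nu)$ with the discrete part of $\nu$ on $\Ch_E K_{L,A}$ and the continuous part on $\jmath(L)$ (Lemma~\ref{L:maxmiry-dikous}(1)), and $N$ is universally null, every maximal measure carries $F\cup F'$. By Lemma~\ref{l:complementarni-facy}, $F'$ is the complementary face of $F$; by Corollary~\ref{c:simplex-facejesplit}, $F$ is a split face; and since $\lambda$ is affine, equal to $1$ on $F$ and $0$ on $F'$, the uniqueness of the split decomposition forces $\lambda=\lambda_F$. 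Finally $F\cap\ext X=\phi(\{k\in\Ch_E K_{L,A}\setsep m(k)=1\})=\phi(B)$.

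\emph{$(2)\Rightarrow(3)$.} Let $F$ be a split face with $F\cap\ext X=\phi(B)$ and $F,F'$ measure extremal. Since $\ext X\subset F\cup F'$ for a split face, $F'$ is convex, and $\phi(t,0)=\tfrac12\phi(t,-1)+\tfrac12\phi(t,1)$ is a face-midpoint of two extreme points for $t\in A$, one reads off: $\phi(t,0)\in F$ iff both $(t,\pm1)\in B$; $\phi(t,0)\in F'$ iff neither is; $\phi(t,0)\in X\setminus(F\cup F')$ (with $\lambda_F(\phi(t,0))=\tfrac12$) iff exactly one is; and for $t\in L\setminus A$, $\phi(t,0)\in F$ iff $(t,0)\in B$. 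Because measure extremal (indeed measure convex) sets are by definition universally measurable, $F$ and $F'$ are universally measurable, so $P_1:=(\phi\circ\jmath)^{-1}(F)$ and $P_0:=(\phi\circ\jmath)^{-1}(F')$ are universally measurable in $L$; hence $N=L\setminus(P_0\cup P_1)$ and $\psi(B)=P_1\cup N$ are universally measurable. It remains to see $N$ is universally null, i.e. $\rho(N)=0$ for every atomless Radon probability $\rho$ on $L$. For such a $\rho$, the measure $\mu=\phi(\jmath(\rho))$ is maximal by Lemma~\ref{L:maxmiry-dikous}(1); writing $x=r(\mu)=\lambda_F(x)y+(1-\lambda_F(x))y'$ with $y\in F$, $y'\in F'$, the maximal measures $\mu_y,\mu_{y'}$ satisfy $\mu_y(F)=\mu_{y'}(F')=1$ by measure extremality, and $\lambda_F(x)\mu_y+(1-\lambda_F(x))\mu_{y'}$ is a maximal measure with barycenter $x$, hence equals $\mu$ by simpliciality; therefore $\mu(F\cup F')\ge\lambda_F(x)+(1-\lambda_F(x))=1$. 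Since $\phi^{-1}(X\setminus(F\cup F'))\cap\jmath(L)=\jmath(N)$ and $F\cup F'$ is universally measurable, $\rho(N)=\mu(X\setminus(F\cup F'))=0$, as required.

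The main obstacle is the implication $(2)\Rightarrow(3)$, and inside it the verification that the ``defect set'' $N$ is universally null; the argument rests on the explicit form of the maximal measures on $X_{L,A}$ (Lemma~\ref{L:maxmiry-dikous}) and on the fact, special to simplices, that a convex combination of maximal measures is maximal, which is what lets one conclude that $F\cup F'$ carries every maximal measure and hence that $N$ is negligible for every atomless measure.
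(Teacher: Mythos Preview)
Your proof is correct and follows essentially the same approach as the paper's: the cycle $(1)\Rightarrow(2)\Rightarrow(3)\Rightarrow(1)$, with $(3)\Rightarrow(1)$ constructed via Proposition~\ref{P:dikous-sa-new}(b), Lemma~\ref{l:complementarni-facy} and Corollary~\ref{c:simplex-facejesplit}, and $(2)\Rightarrow(3)$ handled by pushing a continuous measure on $L$ to a maximal measure on $X$ and invoking simpliciality together with measure extremality to force $\mu(F\cup F')=1$. The only cosmetic differences are that you argue $(2)\Rightarrow(3)$ directly for every atomless $\rho$ rather than by contradiction with one measure supported in the defect set, and that you spell out the universal measurability of $\psi(B)$ explicitly (the paper leaves this implicit in the line ``Since $[f=0]$ and $[f=1]$ are universally measurable'').
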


 \begin{proof}
 $(1)\implies(2)$: This is obvious.

  $(2)\implies(3)$: Let $f= \lambda_F\circ \phi$  and $$C=\{t\in A\setsep B\cap\{(t,-1),(t,1)\} \mbox{ contains exactly one point}\}.$$ 
  Since $\lambda_F$ is affine and $[\lambda_F=1]\cup[\lambda_F=0]=F\cup F'\supset \ext X$, we deduce that $f$ attains only values $0,\frac12,1$ and
   $$C=\{t\in L\setsep f(t,0)=\tfrac12\}.$$ Since $[f=0]$ and $[f=1]$ are universally measurable, we deduce that $C$ is also a universally measurable set. 
     Assume it is not universally null. It follows there is a continuous probability $\mu\in M_1(L)$ supported by a compact subset of $C$. Let $\nu=\jmath(\mu)$. By Lemma~\ref{L:maxmiry-dikous} we deduce that $\phi(\nu)$ is a maximal measure on $X$.  Let $z$ be the barycenter of $\phi(\nu)$ in $X$. Then $z=\lambda_F(z)y+(1-\lambda_F(z))y'$ for some $y\in F$ and $y'\in F'$. Let $\mu_y$ and $\mu_{y'}$ be maximal measures representing $y$ and $y'$. Since $F$ and $F'$ are measure extremal, we deduce that $\mu_y$ is supported by $F$ and $\mu_{y'}$ is supported by $F'$. Since $\lambda_F(z)\mu_y+(1-\lambda_F(z))\mu_{y'}$ is a maximal measure representing $z$, the simpliciality of $X$ implies
     $\phi(\nu)=\lambda_F(z)\mu_y+(1-\lambda_F(z))\mu_{y'}$, so $\phi(\nu)$ is supported by $F\cup F'=[\lambda_F=1]\cup[\lambda_F=0]$, i.e., $\nu$ is supported by $[f=0]\cup[f=1]$. But by the construction $\nu$ is supported by $C=[\lambda_F=\frac12]$. This contradiction completes the argument.

     $(3)\implies(1)$: Define $$C=\{t\in A\setsep B\cap\{(t,-1),(t,1)\} \mbox{ contains exactly one point}\}.$$ By the assumption $C$ is universally null. Set
     $$f(t,i)=\begin{cases}
     1, & (t,i)\in B,\\
     0, & (t,i)\in \Ch_{E_A}K_A\setminus B,\\
     1, & t\in A, i=0, \{(t,-1),(t,1)\}\subset B,\\
     0, & t\in A, i=0, \{(t,-1),(t,1)\}\cap B=\emptyset,\\
     \frac12, & t\in C, i=0.
     \end{cases}$$
     Then $V(f)$ is strongly affine by Proposition~\ref{P:dikous-sa-new}. Since $0\le V(f)\le 1$,
     clearly $F_1=[V(f)=1]$ and $F_2=[V(f)=0]$ are faces of $X$. Since $V(f)$ is strongly affine, these faces are measure convex and measure extremal. By Lemma~\ref{L:maxmiry-dikous} we deduce that $F_1\cup F_2$ carries all maximal measures. Lemma~\ref{l:complementarni-facy} then yields that $F_2=F_1'$. By Corollary~\ref{c:simplex-facejesplit} we know that $F_1$ is a split face. Since obviously $V(f)=\lambda_{F_1}$, the proof is complete.
 \end{proof}   

\begin{cor}\label{cor:dikous-split-me}  Let $K,E,X$ be as in Proposition~\ref{P:dikous-sa-new}.
  Let $F\subset X$
    be a split face  such that both $F$ and $F'$ are measure extremal. Then  the function $\lambda_F$ is strongly affine.
\end{cor}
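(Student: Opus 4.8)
The plan is to deduce Corollary~\ref{cor:dikous-split-me} directly from Lemma~\ref{L:dikous-split-new}, which has already been established. The corollary is essentially the implication $(2)\implies(1)$ of that lemma, phrased slightly differently: the hypothesis that $F$ and $F'$ are both measure extremal is exactly condition $(2)$ of Lemma~\ref{L:dikous-split-new} once we record what $F\cap\ext X$ is.

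So the first step is to set $B=\phi^{-1}(F\cap\ext X)\subset\Ch_E K$. Since $\phi$ maps $\Ch_E K$ homeomorphically onto $\ext X$ (by the identification $\phi(\Ch_E K)=\ext S(E)$ recalled in Section~\ref{ssc:ch-fs}), we have $F\cap\ext X=\phi(B)$, and $F$ is a split face of $X$ with both $F$ and $F'$ measure extremal. This is precisely statement $(2)$ in Lemma~\ref{L:dikous-split-new}. Invoking the equivalence $(2)\iff(1)$ from that lemma, we obtain a split face $\widehat F$ of $X$ with $\widehat F\cap\ext X=\phi(B)$ and $\lambda_{\widehat F}$ strongly affine.

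The second step is to observe that $\widehat F=F$, so that $\lambda_F=\lambda_{\widehat F}$ is strongly affine. Indeed, a split face of $X$ is determined by its trace on $\ext X$: if $A_1,A_2$ are split faces with $A_1\cap\ext X=A_2\cap\ext X$, then $\lambda_{A_1}$ and $\lambda_{A_2}$ are affine functions on $X$ agreeing (with value $1$) on $A_i\cap\ext X$, with value $0$ on the complementary faces, hence agreeing on $\ext X$; since $X=\co(A_i\cup A_i')$ both take values in $\{0,1\}$ on $\ext X$, and by the Krein--Milman theorem together with the fact that a split face equals $[\lambda_{A_i}=1]$ we get $A_1=[\lambda_{A_1}=1]=[\lambda_{A_2}=1]=A_2$. (Alternatively, one can note that $F$ and $\widehat F$ are both faces of the simplex $X$ whose closures contain the same extreme points, and use that $X$ is a simplex; but the argument via $\lambda_F$ is cleaner.) Therefore $\lambda_F=\lambda_{\widehat F}\in A_{sa}(X)$.

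There is no real obstacle here: the content is entirely contained in Lemma~\ref{L:dikous-split-new}, and the only mildly delicate point is the uniqueness remark that a split face is determined by its trace on the extreme points, which is standard and follows from $X=\co(F\cup F')$ together with $F=[\lambda_F=1]$. The corollary is thus best presented as a one-paragraph consequence of the lemma.
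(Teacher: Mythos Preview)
Your overall strategy is sound and close to the paper's, but the second step has a genuine gap: the claim that a split face is determined by its trace on $\ext X$ is not valid in general, and your justification does not go through. You argue that $\lambda_{A_1}$ and $\lambda_{A_2}$ agree on $\ext X$ and then invoke Krein--Milman to conclude $\lambda_{A_1}=\lambda_{A_2}$ on $X$. But Krein--Milman only gives this for \emph{continuous} affine functions; the functions $\lambda_{A_i}$ are merely affine, and need not be determined by their restrictions to $\ext X$. In fact, a minor modification of Example~\ref{ex:dikous-divnyspitface} shows that in $X_{[0,1],[0,1]}$ one can produce, for every $c\in(0,1)$, a split face $F_1^c$ (defined by replacing the cutoff $\tfrac12$ in the definition of $N_1$ by $c$) whose trace on $\ext X$ is always $\phi(L\times\{1\})$, yet the faces $F_1^c$ are pairwise distinct. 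So uniqueness genuinely fails without further hypotheses, and crucially your argument never uses the measure extremality assumption on $F$ and $F'$ at the uniqueness step.

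The paper's proof fills exactly this gap. Rather than proving abstract uniqueness, it constructs the strongly affine function $V(f)$ (which is the $\lambda_{\widehat F}$ in your notation) and then verifies \emph{directly} that $\lambda_F=V(f)$ by taking $x\in X$, decomposing $x=\lambda_F(x)y+(1-\lambda_F(x))y'$ with $y\in F$, $y'\in F'$, pushing maximal representing measures of $y,y'$ into $F$ and $F'$ using the \emph{measure extremality} hypothesis, and integrating $f$. This is where the hypothesis on $F$ and $F'$ actually enters, and it is not bypassable: your route via Lemma~\ref{L:dikous-split-new} is fine, but to close it you must still carry out this computation (or an equivalent one) rather than appeal to a uniqueness principle that does not hold.
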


\begin{proof}
   Assume that $F$ is such a split face. Define $f$ and $C$ as in the proof of implication $(2)\implies(3)$ of the previous proposition. Then $f$ is universally measurable and $C$ is universally null (by the quoted implication). By Proposition~\ref{P:dikous-sa-new} we deduce that $f\in \ell^\infty(K)\cap E^{\perp\perp}$ and hence $V(f)$ is strongly affine. It is thus enough to prove that $\lambda_F=V(f)$. Let $x\in X$ be arbitrary. Then $x=\lambda_F(x)y+(1-\lambda_F(x))y'$ for some $y\in F$ and $y'\in F'$.  Let $\mu_y$ and $\mu_{y'}$ be maximal measures representing $y$ and $y'$. Since $F$ and $F'$ are measure extremal, we deduce that $\mu_y$ is supported by $F$ and $\mu_{y'}$ is supported by $F'$. Let $\nu_y$ and $\nu_{y'}$ be the unique measures on $K$ with $\phi(\nu_y)=\mu_y$ and  $\phi(\nu_{y'})=\mu_{y'}$.  Let
     $\nu=\lambda_F(x)\nu_y+(1-\lambda_F(x))\nu_{y'}$. Then $\phi(\nu)$ is a maximal measure representing $x$. Therefore
     $$V(f)(x)=\int f\di\nu = \lambda_F(x)\int f\di\nu_y=\lambda_F(x)$$
     as $\nu_y$ is supported by $[f=1]$. This completes the proof.
\end{proof}

\begin{cor}\label{cor:dikous-sa-split}
    Let $K,E,X,H$ be as in Proposition~\ref{P:dikous-sa-new}. Then 
    $$\begin{aligned}
           \A^s_H&= \A_H=\ms_H\\&=\{\ext X\cap F\setsep F\subset X\mbox{ is a split face with $F$ and $F'$ measure extremal}\}
          \subset \Z_H. \end{aligned}  $$
   The converse to the last inclusion holds if and only if $A$ does not contain any compact perfect subset.
\end{cor}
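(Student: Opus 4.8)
The plan is to assemble the four descriptions of the relevant family of subsets of $\ext X$ already obtained in this section and then settle the final equivalence by an explicit construction. Throughout write $H=A_{sa}(X)$; the monotone convergence theorem gives $H^\uparrow=H^\mu=H$, and $H$ is determined by extreme points by Proposition~\ref{P:dikous-sa-new}$(a)$. The equality $\A^s_H=\A_H$ is part of Proposition~\ref{P:dikous-sa-new}$(d)$ (it also follows from $M^s(H)=M(H)$, Proposition~\ref{P:dikous-sa-new}$(c)$), and that proposition records the concrete description
\[
\A_H=\{\phi(B)\setsep B\subset\Ch_E K,\ \psi(B)\text{ universally measurable},\ \psi(B)\cap\psi(\Ch_E K\setminus B)\text{ universally null}\}.
\]

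Next I would identify this with the split-face family. By \eqref{eq:podmny ChEK} the set $\psi(B)\cap\psi(\Ch_E K\setminus B)$ equals $\{t\in A\setsep B\cap\{(t,-1),(t,1)\}\text{ contains exactly one point}\}$, so the condition defining $\A_H$ above is precisely condition $(3)$ of Lemma~\ref{L:dikous-split-new}; by the equivalence $(3)\Longleftrightarrow(2)$ there, $\A_H=\{F\cap\ext X\setsep F\subset X\text{ is a split face with }F\text{ and }F'\text{ measure extremal}\}$. For $\ms_H$: since $H^\uparrow=A_{sa}(X)$, Definition~\ref{d:es-ha} reads $\ms_H=\{F\cap\ext X\setsep F\text{ split face},\ \lambda_F\text{ strongly affine}\}$, which by the equivalence $(1)\Longleftrightarrow(2)$ of Lemma~\ref{L:dikous-split-new} is the same family of traces. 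This gives $\A^s_H=\A_H=\ms_H$ and the displayed set equality. The inclusion $\ms_H\subset\Z_H$ is immediate: if $F$ is a split face with $\lambda_F\in H^\uparrow$, then $\lambda_F(\ext X)\subset\{0,1\}$ and $F\cap\ext X=[\lambda_F=1]\cap\ext X\in\Z_H$.

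It remains to prove the last assertion. If $A$ contains no compact perfect subset, then $X$ is a standard compact convex set by Lemma~\ref{L:maxmiry-dikous}$(2)$; since $X$ is a simplex and $H=A_{sa}(X)\subset A_{sa}(X)$, Lemma~\ref{L:SH=ZH} yields $\ms_H=\Z_H$, so in particular $\Z_H\subset\ms_H$. Conversely, assume $P\subset A$ is a nonempty compact perfect set and set $B=P\times\{1\}\subset\Ch_E K$. Define $g\in\ell^\infty(K)$ by $g=1_B$ on $\Ch_E K$ and $g(t,0)=\tfrac12(g(t,-1)+g(t,1))$ for $t\in A$; then $g\circ\jmath=\tfrac12\cdot 1_P$ is Borel, hence $g\in\ell^\infty(K)\cap E^{\perp\perp}$ by Proposition~\ref{P:dikous-sa-new}$(b)$, and since $Vg\circ\phi=g$ (Lemma~\ref{L:function space}$(a)$) the function $f=Vg\in A_{sa}(X)=H^\uparrow$ satisfies $f(\ext X)\subset\{0,1\}$ and $[f=1]\cap\ext X=\phi(B)$, whence $\phi(B)\in\Z_H$. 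On the other hand $\psi(B)=P$ and, by \eqref{eq:podmny ChEK}, $\psi(B)\cap\psi(\Ch_E K\setminus B)=P$, which is not universally null because a nonempty compact perfect set carries a continuous probability measure; as $\phi$ is injective on $\Ch_E K$, the set $\phi(B)$ is not of the form described above, so $\phi(B)\notin\A_H=\ms_H$. Thus $\phi(B)\in\Z_H\setminus\ms_H$ and $\Z_H\not\subset\ms_H$.

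The argument contains no genuine difficulty: everything reduces to Proposition~\ref{P:dikous-sa-new} and Lemma~\ref{L:dikous-split-new}. The one point that requires care is the coherent translation between the four parametrizations --- strong multipliers, multipliers, the explicit condition on $B\subset\Ch_E K$, and split faces --- which is exactly what \eqref{eq:podmny ChEK} and the injectivity of $\phi$ on $\Ch_E K$ provide, together with the routine check that the witnessing function $Vg$ in the converse direction lies in $A_{sa}(X)$ and has the prescribed level set.
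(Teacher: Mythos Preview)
Your proof is correct and follows essentially the same approach as the paper's: combine the explicit description of $\A_H$ from Proposition~\ref{P:dikous-sa-new}$(d)$ with the equivalences of Lemma~\ref{L:dikous-split-new}, then for the final equivalence invoke Lemma~\ref{L:maxmiry-dikous}$(2)$ and Lemma~\ref{L:SH=ZH} in one direction and construct the witness $g=1_{P\times\{1\}}+\tfrac12\cdot 1_{P\times\{0\}}$ in the other. Your witness and the paper's are identical; you verify $\phi(B)\notin\ms_H$ via the explicit description of $\A_H$, whereas the paper cites Lemma~\ref{L:dikous-split-new} directly, but this is only a cosmetic difference.
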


\begin{proof}
    The equalities follow by combining Proposition~\ref{P:dikous-sa-new}$(d)$ with Lemma~\ref{L:dikous-split-new}. The inclusion is obvious.

    Assume $A$ contains no compact perfect subset. By Lemma~\ref{L:maxmiry-dikous} we get that $X$ is a standard compact convex set. So, Lemma~\ref{L:SH=ZH} shows that the equality holds.

Conversely, assume that $A$ contains a compact perfect subset $D$. 
Let $$f=1_{D\times\{1\}}+\frac12 1_{D\times\{0\}}.$$ Then $V(f)\in A_{sa}(X)$ and $\ext X\subset [V(f)=1]\cup[V(f)=0]$. Since $D$ is not universal null, Lemma~\ref{L:dikous-split-new} shows that $[V(f)=1]\cap \ext X\in\Z_H\setminus\ms_H$.
\end{proof}

The following example show that in condition $(2)$ of Lemma~\ref{L:dikous-split-new} the assumption of measure extremality cannot be replaced by measure convexity. It also witnesses that in Theorem~\ref{t:srhnuti-splitfaceu-metriz} some assumption on $X$ is needed.

\begin{example}\label{ex:dikous-divnyspitface}
    Let $L=A=[0,1]$, $K=K_{L,A}$, $E=E_{L,A}$, $X=X_{L,A}$ and $H=A_{sa}(X)$. Then there is as split face $F\subset X$ with the following properties:
    \begin{enumerate}[$(i)$]
        \item Both $F$ and $F'$ are measure convex.
        \item $\lambda_F$ is not strongly affine.
        \item $F\cap\ext X\notin \ms_H$.
        \item There are maximal measures not carried by $F\cup F'$.
    \end{enumerate}
\end{example}

\begin{proof} The proof will be done in several steps.

{\tt Step 1:} Set
$$\begin{aligned}
   N_1&=\{\mu\in M_1(K)\setsep \mu(L\times\{-1\})=\mu([\tfrac12,1]\times\{0\})=\mu_d(L\times\{0\})=0\},\\ 
    N_2&=\{\mu\in M_1(K)\setsep \mu(L\times\{1\})=\mu([0,\tfrac{1}{2}]\times\{0\})=\mu_d(L\times\{0\})=0\},
\end{aligned}$$
where $\mu_d$ denotes the discrete part of $\mu$.
Then $N_1$ and $N_2$ are Borel measure convex subsets of $M_1(K)$.

\smallskip

It is enough to prove the statement for $N_1$. This set is the intersection of the following three sets:
\begin{itemize}
    \item $\{\mu\in M_1(K)\setsep \mu (L\times\{-1\})=0\}$: This is a closed convex set (note that $L\times\{-1\}$ is an open subset of $K$ and hence the function $\mu\mapsto \mu (L\times\{-1\})$ is lower semicontinuous), so it is a Borel measure convex set.
    \item  $\{\mu\in M_1(K)\setsep\mu([\tfrac12,1]\times\{0\})=0\}$: It follows for example from Lemma~\ref{L:function space}$(b)$ that $\mu\mapsto \mu([\tfrac12,1]\times\{0\})$ is a strongly affine nonnegative Borel function. Hence the set is Borel and measure convex.
    \item $\{\mu\in M_1(K)\setsep \mu_d (L\times\{0\})=0\}$: It follows from \cite[Proposition 2.58]{lmns} that it is a $G_\delta$ set. Further, it is measure convex by the proof of Example~\ref{ex:d+s}. 
\end{itemize}
So, $N_1$ is indeed a Borel measure convex set, being the intersection of three Borel measure convex sets.

\smallskip

{\tt Step 2:} Let $\theta:M_1(K)\to X$ be defined by $\theta(\mu)=r(\phi(\mu))$ for $\mu\in M_1(K)$. Then $\theta$ is an affine continuous surjection of $M_1(K)$ onto $X$. Moreover, $N_1=\theta^{-1}(\theta(N_1))$ and $N_2=\theta^{-1}(\theta(N_2))$.

\smallskip

The mapping $\theta$ is the composition of two mappings -- the mapping $\mu\mapsto\phi(\mu)$ which is an affine homeomorphism of $M_1(K)$ onto $M_1(\overline{\ext X})$ and the barycenter mapping which is affine and continuous and, due to the Krein-Milman theorem maps $M_1(\overline{\ext X})$ onto $X$. Therefore $\theta$ is an affine continuous surjection.

It is enough to prove the equality for $N_1$. Assume that $\mu_1\in N_1$ and $\mu_2\in M_1(K)$ such that $\theta(\mu_1)=\theta(\mu_2)$. Then $\mu_2-\mu_1\in E^\perp$. It follows from the proof of Proposition~\ref{P:dikous-sa-new}$(b)$ that there is a sequence $(t_j)$ in $L$ and a summable sequence $(a_j)$ of real numbers such that
$$\mu_2=\mu_1+\sum_j a_j(\ep_{(t_j,1)}+\ep_{(t_j,-1)}-2\ep_{(t_j,0)}).$$
Since $\mu_2\ge0$ and $\mu_1(\{(t_j,-1)\})=\mu_1(\{(t_j,0)\})=0$, necessarily $a_j=0$ for each $j\in\en$. Thus $\mu_2=\mu_1$ and the argument is complete.

\smallskip

{\tt Step 3:} Let $F_1=\theta(N_1)$ and $F_2=\theta(N_2)$. Then $F_1$ and $F_2$ are disjoint Borel measure convex subsets of $X$.

\smallskip 

Since $N_1$ and $N_2$ are clearly disjoint, by Step 2 we get that also $F_1$ and $F_2$ are disjoint. Further, by Step 1 we know that $N_1$ and $N_2$ are Borel sets, hence $F_1$ and $F_2$ are Borel sets by Lemma~\ref{L:kvocient}$(f)$ and Step 2. Finally, let us prove that $F_1$ is measure convex (the case of $F_2$ is completely analogous): Let $\mu\in M_1(X)$ be carried by $F_1$. Since $\theta$ is a continuous surjection (by Step 2), the mapping from $M_1(M_1(K))$ to $M_1(X)$ assigning to a measure its image measure is also surjective, there is $\nu\in M_1(M_1(K))$ such that $\theta(\nu)=\mu$. Then $\nu(N_1)=\nu(\theta^{-1}(F_1))=\mu(F_1)=1$, thus $\nu$ is carried by $N_1$. Let $\sigma=r(\nu)$ be the barycenter of $\nu$. By Step 1 we know that $\sigma\in N_1$. Since clearly $r(\mu)=r(\theta(\nu))=\theta(r(\nu))=\theta(\sigma)\in F_1$, the argument is complete.

\smallskip

{\tt Step 4:} Given $x\in X$, there are $\lambda\in[0,1]$, $y_1\in F_1$ and $y_2\in F_2$ such that $x=\lambda y_1+(1-\lambda)y_2$. Moreover, $\lambda$ is uniquely determined. If $x\in X\setminus(F_1\cup F_2)$, then $y_1$ and $y_2$ are uniquely determined as well.

\smallskip

Let $x\in X$. Let $\mu$ be a maximal measure representing $x$. Then $\mu$ is carried by $\overline{\ext X}$, so there is $\nu\in M_1(K)$ with $\phi(\nu)=\mu$. It follows from Lemma~\ref{L:maxmiry-dikous} that $\nu_d(L\times\{0\})=0$. Hence there is $\lambda\in[0,1]$, $\nu_1\in N_1$ and $\nu_2\in N_2$ such that $\nu=\lambda\nu_1+(1-\lambda)\nu_2$. Then
$$x=\theta(\nu)=\lambda \theta(\nu_1)+(1-\lambda)\theta(\nu_2),$$
which proves the existence.

To prove the uniqueness, assume that
$$x=ty_1+(1-t)y_2$$
for some $t\in[0,1]$, $y_1\in F_1$ and $y_2\in F_2$. Let
$\sigma_1\in N_1$ and $\sigma_2\in N_2$ be such that $y_1=\theta(\sigma_1)$ and $y_2=\theta(\sigma_2)$. Let
$\sigma=t\sigma_1+(1-t)\sigma_2$. Then $x=\theta(\sigma)$, i.e., $x$ is the barycenter of $\phi(\sigma)$. Since $\sigma_d(L\times\{0\})=0$, Lemma~\ref{L:maxmiry-dikous} says that $\phi(\sigma)$ is maximal. Since $X$ is a simplex, we deduce that $\phi(\sigma)=\mu$, hence $\sigma=\nu$. It follows that
$$t=\sigma(L\times\{0\}\cup [0,\tfrac12]\times\{1\})=\nu(L\times\{0\}\cup [0,\tfrac12]\times\{1\})=\lambda.$$
Moreover, if $x\in X\setminus(F_1\cup F_2)$, then $\lambda\in (0,1)$, so $\sigma_1=\nu_1$ and $\sigma_2=\nu_2$. Hence $y_1=\theta(\nu_1)$ and $y_2=\theta(\nu_2)$ which completes the argument.

\smallskip

{\tt Step 5:} $F_1$ is a split face of $X$ and $F_1'=F_2$.

\smallskip

Let $x\in F_1$ and $x=\frac12(y+z)$ for some $y,z\in X$. Let 
$y=t y_1+(1-t) y_2$ and $z= s z_1+(1-s)z_2$ be the decompositions provided by Step 4. If $s=t=0$, then $x\in F_2$, which is impossible. If $s+t\in(0,2)$, then
$$x=\tfrac12(ty_1+(1-t)y_2+s z_1+(1-s) z_2)= \tfrac{t+s}{2}\cdot\tfrac{t y_1+s z_1}{t+s} +\tfrac{2-t-s}{2}\cdot\tfrac{(1-t)y_2+(1-s)z_2}{2-t-s}.$$
Step 4 then shows that $s+t=2$, a contradiction. Finally, if $s=t=1$, then $y,z\in F_1$. This completes the proof that $F_1$ is a face. In the same way we see that $F_2$ is also a face. Hence clearly $F_2\subset F_1'$. The converse inclusion can be proved by repeating the proof of Lemma~\ref{l:complementarni-facy}$(ii)$.

\smallskip

{\tt Step 6:} $\lambda_{F_1}$ is not strongly affine.

\smallskip

Let $\mu$ be the normalized Lebesgue measure on $[0,\frac12]\times\{0\}$ and $x=\theta(\mu)$. Since $\mu\in N_1$, we get $x\in F_1$ and hence $\lambda_{F_1}(x)=1$. On the other hand, $x$ is the barycenter of $\phi(\mu)$, the measure $\phi(\mu)$ is carried by $\phi([0,\frac12]\times\{0\})$ and hence $\lambda_{F_1}=\frac12$ on the support of $\phi(\mu)$.

\smallskip

{\tt Step 7:} $F_1\cap\ext X\notin \ms_H$.

\smallskip

It is enough to observe that $F_1\cap\ext X=\phi(L\times\{1\})$ and this set is not in $\ms_H$ by Lemma~\ref{L:dikous-split-new}.

\smallskip

{\tt Step 8:} Let $\mu$ be the Lebesgue measure on $L\times\{0\}$. Then $\phi(\mu)$ is a maximal measure (by Lemma~\ref{L:maxmiry-dikous}) not carried by $F_1\cup F_2$.

\smallskip

\end{proof}

\subsection{Continuous affine functions on  $X_{L,A}$}
\label{ssec:cont-af-X}

Now we focus on the space of continuous affine functions on $X_{L,A}$.

\begin{prop}\label{P:dikous-spoj-new}
Let $K=K_{L,A}$, $E=E_{L,A}$, $X=X_{L,A}$ and $H=A_c(X)$. Then the following assertions are valid.
    \begin{enumerate}[$(a)$]
        \item $M^s(H)=M(H)=\Phi(\{f\circ\psi\setsep f\in C(L)\})$.
        \item The facial topology on $\ext X$ is formed by sets
        \[\begin{aligned}
                   \phi((\psi^{-1}(U)\cap \Ch_{E}K) \setminus F),\quad &
        U\subset L\mbox{ open}, F\subset A\times\{-1,1\}\\&\mbox{ such that $\psi(F)$ has no accumulation point in }U.\end{aligned}
        \]
        \item $\A^s_H=\A_H=\{\phi(\psi^{-1}(U)\cap\Ch_E K)\setsep  U\subset L\mbox{ open}\}.$
    \end{enumerate}
\end{prop}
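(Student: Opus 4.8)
The plan is to prove (a) first, with the real work there, and then deduce (b) and (c) from it together with the general theory of Sections~\ref{sec:mult-acx} and~\ref{s:reprez-abstraktni}. For (a) the inclusion $\Phi(\{f\circ\psi\setsep f\in C(L)\})\subseteq M^s(H)$ (hence $\subseteq M(H)$) is the easy direction: if $f\in C(L)$ and $h\in E_{L,A}$, then $(f\circ\psi)\cdot h\in C(K_{L,A})$ and for each $t\in A$ it satisfies $((f\circ\psi)h)(t,0)=f(t)h(t,0)=\tfrac12\bigl(((f\circ\psi)h)(t,-1)+((f\circ\psi)h)(t,1)\bigr)$, so it again lies in $E_{L,A}$; thus $\Phi(f\circ\psi)$ multiplies every element of $A_c(X)$ everywhere on $K_{L,A}$, a fortiori on $\ext X$, and it is even a strong multiplier. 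For the reverse inclusion I would use that $X=X_{L,A}$ is a simplex (by \cite[Theorem 3]{stacey}) and apply Proposition~\ref{P:simplex mult}: $\Phi(g)\in M(A_c(X))$ precisely when $g$ is constant on $\spt\nu$ for every $\nu\in M_1(\overline{\Ch_E K})$ whose barycenter, computed in $X$, lies in $\phi(\overline{\Ch_E K})$.

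The key computation is then to identify those measures. Since $\phi$ is a homeomorphic embedding of the compact space $K_{L,A}$, its image is closed in $X$, so $\overline{\ext X}=\phi(\overline{\Ch_E K})$, and a short inspection of the porcupine topology describes $\overline{\Ch_E K}$ explicitly. A probability measure $\mu$ on $\overline{\ext X}$ is $\phi(\nu)$ for a unique $\nu\in M_1(\overline{\Ch_E K})$, and $r(\mu)=\phi(z)$ for some $z\in\overline{\Ch_E K}$ holds exactly when $\nu\in M_z(E)$. Since $M_z(E)=\{\ep_z\}$ for $z\in\Ch_E K$, while for $t\in A$ the set $M_{(t,0)}(E)$ is the segment joining $\ep_{(t,0)}$ and $\tfrac12(\ep_{(t,-1)}+\ep_{(t,1)})$, the only such $\nu$ with non‑degenerate support are carried by the triples $\{(t,-1),(t,0),(t,1)\}$ with $t\in A$. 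Hence $\Phi(g)\in M(A_c(X))$ iff $g(t,-1)=g(t,1)$ for each such $t\in A$; since $g\in E_{L,A}$ then also forces $g(t,0)=g(t,\pm1)$, this says $g$ is constant on every fibre of $\psi$, i.e.\ $g=(g\circ\jmath)\circ\psi$ with $g\circ\jmath\in C(L)$ because $\jmath$ is continuous. This proves (a), the equality $M^s(H)=M(H)$ being already contained in the chain of equivalences (alternatively one may invoke Proposition~\ref{P:rovnostmulti}).

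For (b) I would invoke the classical description of the facial topology on $\ext X$ as the family of complements of traces of closed split faces (see \cite[II.7]{alfsen}), and then describe the closed split faces of $X_{L,A}$. If $F$ is a closed split face, then $\lambda_F=1_F^*$ is upper semicontinuous, strongly affine, takes only the values $0,1$ on $\ext X$, and takes the value $\tfrac12$ exactly at the points $\phi(t,0)$, $t\in A$, whose fibre straddles $F$ and $F'$; transporting $\lambda_F$ along $\phi$ to an upper semicontinuous $\{0,\tfrac12,1\}$‑valued function on $K_{L,A}$ and analysing its level sets in the porcupine topology shows that $F\cap\ext X=\phi\bigl((\psi^{-1}(C)\cap\Ch_E K)\cup G\bigr)$ for a closed $C\subseteq L$ and a set $G\subseteq A\times\{-1,1\}$ whose $\psi$‑image accumulates only within $C$; taking complements yields exactly the list in the statement, and the converse direction is checked by writing down the candidate $\lambda_F$ directly and applying Corollary~\ref{c:simplex-facejesplit}. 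Then (c) follows quickly: by Proposition~\ref{p:system-aha}(a), $\A_H=\{[m>0]\cap\ext X\setsep m\in M(H)\}$, and for $m=\Phi(f\circ\psi)$ this is $\phi(\psi^{-1}(\{f>0\})\cap\Ch_E K)$ with $\{f>0\}$ a cozero, hence open, subset of $L$; conversely, for any open $U\subseteq L$ the function $1_U\circ\psi$ is an increasing pointwise limit of functions $f_n\circ\psi\in M(H)$, so $\phi(\psi^{-1}(U)\cap\Ch_E K)\in\A_H$ by Proposition~\ref{p:system-aha}(c), and $\A^s_H=\A_H$ because $M^s(H)=M(H)$.

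The main obstacle is the bookkeeping in the second and third paragraphs: getting $\overline{\Ch_E K}$ and the family $M_z(E)\cap M_1(\overline{\Ch_E K})$ exactly right (so that Proposition~\ref{P:simplex mult} produces precisely the constancy on fibres), and, for the facial topology, matching the traces of closed split faces against the porcupine topology so that the delicate clause ``$\psi(F)$ has no accumulation point in $U$'' comes out exactly. The algebraic ingredients — closure of $E_{L,A}$ under multiplication by $f\circ\psi$, and the Urysohn / lower‑semicontinuous approximation used in (c) — are routine.
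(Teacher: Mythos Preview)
Your treatment of (a) is correct and essentially the paper's argument: the paper uses only the necessary condition Lemma~\ref{L:mult-nutne}(i) (the measure $\tfrac12(\ep_{\phi(t,-1)}+\ep_{\phi(t,1)})$ has barycenter $\phi(t,0)$, forcing $g(t,-1)=g(t,1)$), while you invoke the full simplex characterisation Proposition~\ref{P:simplex mult}; both yield the same constancy-on-fibres conclusion, and the $\supseteq$ direction is as you say.

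For (b) you take a genuinely different route. The paper does not go through $\lambda_F$: it works directly with the closed preimage $B=\phi^{-1}(G)$ in $K$ for a closed face $G$ (using that in a simplex every closed face is split), strips off the one-sided part $F$ and reads off the open $U$. For the converse it observes that $\psi^{-1}(L\setminus U)\cup F$ is a Choquet set and cites \cite[Theorem~8.60 and Proposition~8.42]{lmns} to get that its closed convex hull is a split face. Your $\lambda_F$ analysis can be made to work, but the converse via Corollary~\ref{c:simplex-facejesplit} will force you to verify measure-convexity of the candidate face and its complement and that their union carries all maximal measures, which is more work than the paper's Choquet-set shortcut.

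There is a genuine gap in your argument for (c). Your claim that for any open $U\subseteq L$ the function $1_U$ is an increasing pointwise limit of a \emph{sequence} $f_n\in C(L)$ is equivalent to $U$ being a cozero set, and this fails for general compact $L$ (take $U=[0,\omega_1)$ in $L=[0,\omega_1]$); so Proposition~\ref{p:system-aha}(c) does not give you the $\supseteq$ direction as claimed. Note that the paper's proof of (c) is the single line ``follows from (a) and Proposition~\ref{p:system-aha}'': unwinding that via $\A_H=\{[m>0]\cap\ext X:m\in M(H)\}$ and (a) produces exactly the sets $\phi(\psi^{-1}(U)\cap\Ch_E K)$ with $U=[f>0]$ a \emph{cozero} set, which is the content one can extract directly from those two ingredients; you should not try to bridge to arbitrary open $U$ with a sequential argument.
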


\begin{proof}
 $(a)$: The first equality follows from Proposition~\ref{P:rovnostmulti}, let us look at the second one. Inclusion `$\supset$' is clear. The converse follows from Lemma~\ref{L:mult-nutne}(i) since $\phi(t,0)$ is the barycenter of $\frac12(\ep_{\phi(t,-1)}+\ep_{\phi(t,1)})$ for each $t\in A$.

 $(b)$: Since $X$ is a simplex, \cite[Theorem II.6.2]{alfsen} implies that the facial topology is
 $$\{\ext X\setminus G\setsep G\subset X\mbox{ a closed  face}\}.$$
 So, assume $G$ is a closed face of $X$. Then $B=\phi^{-1}(G)$ is a closed subset of $K$. Let 
 $$F=\{(t,i)\in B\cap (A\times\{(-1,1)\})\setsep (t,-i)\notin B\}.$$
 If $t_0$ is an accumulation point of $\psi(F)$ in $L$, then $(t_0,0)\in\overline{F}\subset B$. Hence, if $t_0\in A$, we deduce that $\{t_0\}\times\{-1,0,1\}\subset B$ (as $G$ is a face).
 Then $B'=B\setminus F$ is a closed subset of $K$ with $\psi^{-1}(\psi(B'))=B'$, hence $U=I\setminus\psi(B')$ is an open subset of $L$ and no accumulation point of $\psi(F)$ belongs to $U$. Clearly
 $$\Ch_{E}K\setminus B= \psi^{-1}(U)\cap\Ch_{E}K\setminus F,$$
 so any facially open set is of the given form.

 Conversely, assume $U$ and $F$ satisfy the above conditions. Let
 $$B=\psi^{-1}(L\setminus U)\cup F.$$
 Then $B$ is a closed subset of $K$. Further, it is easy to check that it is a Choquet set in the sense of \cite[Definition 8.27]{lmns}. Since $X$ is a simplex, it follows from \cite[Theorem 8.60 and Proposition 8.42]{lmns} that $\overline{\co\phi(B)}$ is a split face. Since clearly
 $$\ext X\setminus \overline{\co\phi(B)} = \phi(\Ch_{E}K\setminus B)= \phi((\psi^{-1}(U)\cap \Ch_{E}K) \setminus F),$$
 the proof is complete.

 $(c)$: This follows from $(a)$ and Proposition~\ref{p:system-aha}.
\end{proof}

\begin{remark}
    In the situation from the previous proposition, $M(H)$ is determined by $\A_H$ (this follows easily from Theorem~\ref{T: meritelnost H=H^uparrow cap H^downarrow}) and also by the facial topology (see \cite[Theorem II.7.10]{alfsen}). But $\A_H$ is a proper subfamily of the facial topology whenever $A\ne\emptyset$.
    This illustrates the feature addressed in Remark~\ref{rem:o meritelnosti}$(2)$.
\end{remark}

\subsection{Baire strongly affine functions on  $X_{L,A}$}
\label{ssec:baire-af-X}

Next we look at spaces of Baire strongly affine functions. The relevant results are contained in two propositions
-- the first one is devoted to Baire-one functions and the second one to general Baire functions. Before coming to these results we give a topological lemma.

\begin{lemma}\label{L:countable Baire}
    Let $L$ be a compact space and let $B\subset L$ be a countable set. Then $B$ is a Baire set if and only if it consists of $G_\delta$ points of $L$.
\end{lemma}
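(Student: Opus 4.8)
The plan is to prove both implications directly from the characterization of Baire sets in terms of zero sets, using that a countable set in a compact space interacts with zero sets in a controlled way. First I would recall that in a compact (hence normal) space, the Baire $\sigma$-algebra is generated by zero sets, and that a point $x\in L$ is a $G_\delta$ point if and only if $\{x\}$ is a zero set (in a normal space, a closed $G_\delta$ set is a zero set by Urysohn's lemma applied to the point and the complements of a countable base of neighbourhoods, intersecting the resulting functions). So the statement reduces to: a countable set $B$ is Baire iff each singleton $\{x\}$, $x\in B$, is a zero set.

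For the ``if'' direction, suppose every point of $B$ is a $G_\delta$ point. Then for each $x\in B$ the singleton $\{x\}$ is a zero set, hence a Baire set. Since $B=\bigcup_{x\in B}\{x\}$ is a countable union of Baire sets and the Baire sets form a $\sigma$-algebra, $B$ is a Baire set. This direction is immediate and requires no further work.

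For the ``only if'' direction, suppose $B$ is a Baire set; I must show each $x\in B$ is a $G_\delta$ point of $L$. Fix $x\in B$. The key step is that the trace of the Baire $\sigma$-algebra of $L$ on the countable subspace $B$ is contained in the power set of $B$ in a way that forces singletons to be Baire: more precisely, I would argue that since $B$ is Baire and countable, each singleton $\{x\}\subset B$ is itself a Baire subset of $L$. To see this, note that $B\setminus\{x\}$ is again a countable set; it suffices to show $B\setminus\{x\}$ is Baire, equivalently that $\{x\}$ is Baire relative to $B$. Here I invoke the fact that in a compact space, distinct points can be separated from a single point by a zero set only when that point is $G_\delta$ — so the cleanest route is instead: a Baire set $B$ is, by the standard representation, obtained from zero sets by countably many set operations, and every zero set $Z$ containing $x$ but meeting $B$ in a set whose only accumulation behaviour is controlled; one shows $\{x\}=B\cap Z_x$ for a suitable zero set $Z_x$. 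Concretely, since $B$ is Baire, there is a Baire-measurable (indeed, one may take continuous by a further reduction, or argue via the generated $\sigma$-algebra) separation: for the countable set $B$ enumerate $B=\{x_n\}_n$; for each $n\neq m$ use normality to find $f_{n,m}\in C(L)$ with $f_{n,m}(x_n)=0$, $f_{n,m}(x_m)=1$, and set $Z_x=\{f=0\}$ where $f=\sum_{m:x_m\neq x}2^{-m}\min\{1,|f_{x,m}|\}\in C(L)$; then $Z_x$ is a zero set with $x\in Z_x$ and $Z_x\cap B=\{x\}$. This already shows $\{x\}=Z_x\cap B$ is a Baire set, being the intersection of two Baire sets. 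Finally, a countable Baire set all of whose points are ``Baire points'' — in particular $\{x\}$ Baire — forces $\{x\}$ to be a zero set: a Baire set that is a singleton must be a zero set, because the Baire $\sigma$-algebra is the smallest containing zero sets and a simple transfinite/inductive argument on the Baire hierarchy shows that a singleton appearing at any level is already a $G_\delta$. Hence $x$ is a $G_\delta$ point.

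The main obstacle I anticipate is the last step of the ``only if'' direction: extracting from ``$\{x\}$ is a Baire set'' the conclusion ``$\{x\}$ is a $G_\delta$ set''. One must rule out that a singleton could first appear deep in the Baire hierarchy without being $G_\delta$; the clean way is to observe that any Baire set is contained in a $\sigma$-compact (in fact, it is determined by countably many continuous functions, so it lies in the inverse image of a metrizable space under a continuous map $\pi\colon L\to M$), whence $\{x\}=\pi^{-1}(\pi(x))$ up to a Baire null adjustment — and in a metrizable space every point is $G_\delta$, so its preimage under a continuous map is $G_\delta$. Making this ``countably determined'' reduction precise (choosing the countable family of generating continuous functions and verifying $\{x\}$ is a full preimage) is the technical heart of the argument; once that is in place the lemma follows.
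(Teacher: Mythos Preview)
Your overall plan is correct, and the reduction to ``a Baire singleton $\{x\}$ must be $G_\delta$'' is exactly right. The construction of $Z_x$ separating $x$ from the rest of $B$ is fine and matches the paper's first step. Two comments:

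First, your metrizable-reduction step works cleanly without any ``Baire null adjustment'': if $\{x\}$ is Baire, pick countably many $f_n\in C(L)$ generating a $\sigma$-algebra containing $\{x\}$, and let $\pi=(f_n)\colon L\to\er^{\en}$. Then $\{x\}=\pi^{-1}(C)$ for some Borel $C$, and since $\pi^{-1}(\pi(x))\subset\pi^{-1}(C)=\{x\}$, you get $\{x\}=\pi^{-1}(\pi(x))$ exactly. As $\{\pi(x)\}$ is $G_\delta$ in the metrizable image, $\{x\}$ is $G_\delta$ in $L$. So drop the hedging; this is a complete argument. (The vaguer ``transfinite argument on the hierarchy'' you mention first is not a proof and should be discarded.)

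Second, the paper's route for this last step is different and worth knowing: once $\{x\}$ is Baire, so is $L\setminus\{x\}$; every Baire subset of a compact space is Lindel\"of; and an open Lindel\"of subset of a compact Hausdorff space is $\sigma$-compact, hence $F_\sigma$. Thus $\{x\}$ is $G_\delta$. This avoids talking about the generating map $\pi$ explicitly, though of course the Lindel\"of property of Baire sets is proved by essentially the same countable-determination idea you use. Your approach is slightly more self-contained; the paper's is shorter if one quotes the Lindel\"of fact.
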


\begin{proof}
    Any closed $G_\delta$ subset of $L$ is $\Coz_\delta$, hence Baire. Since Baire sets form a $\sigma$-algebra, the proof of the `if part' is complete.

    To prove the converse, assume $B$ is a Baire set and fix any $x\in B$. Enumerate $B\setminus\{x\}=\{b_n\setsep n\in\en\}$. By the Urysohn lemma there is a continuous function $f_n:L\to [0,1]$ such that $f_n(x)=1$ and $f_n(b_j)=0$ for $j\le n$. Then $B_n=B\cap[f_n>0]$ is a Baire set.
    We conclude that $\{x\}=\bigcap_n B_n$ is a Baire set as well. As the complement $L\setminus\{x\}$ is also Baire, hence Lindel\"of, it is $F_\sigma$. It follows that $x$ is a $G_\delta$ point.
\end{proof}

\begin{prop}\label{P:dikous-A1-new}
   Let $K=K_{L,A}$, $E=E_{L,A}$,  $X=X_{L,A}$ and $H=A_1(X)$. Then the following assertions are valid:
   \begin{enumerate}[$(a)$]
         \item $\begin{aligned}[t]
             H=V\Big(\Big\{f\in\ell^\infty(K)\setsep& f\circ\jmath\in\Ba_1(L), f(t,0)=\tfrac12(f(t,-1)+f(t,1))\mbox{ for }t\in A, \\& \{t\in A\setsep f(t,1)\neq f(t,-1)\}\mbox{ is countable}\Big\}\Big).
         \end{aligned}$
       
        \item $\begin{aligned}[t]
            M&^s(H)=M(H)\\&=V\Big(\Big\{f\in\ell^\infty(K)\setsep f\circ\jmath\in\Ba_1(L), f(t,0)=\tfrac12(f(t,-1)+f(t,1))\mbox{ for }t\in A, \\& \quad\forall\ep>0\colon\{t\in A\setsep \abs{f(t,1)- f(t,-1)}\ge\ep\}\mbox{ is a countable $G_\delta$ subset of }L\Big\}\Big)
        \end{aligned}$
          
        \item $\begin{aligned}[t]
            \A_H&=\A^s_H=\{\phi(\psi^{-1}(F)\cap \Ch_{E}(K)\cup  C_1\times\{1\}\cup 
        C_2\times\{-1\})\setsep F\subset L\ \Zer_\sigma\mbox{set},\\
        &\qquad\qquad C_1,C_2\subset A\mbox{ countable  sets  consisting of $G_\delta$ points in }L\} 
        \\ &=\{ \phi(B)\setsep B\subset\Ch_E K, \psi(B)\mbox{ is a $\Zer_\sigma$ subset of }L, \\ & \quad \psi(B)\cap\psi(\Ch_E K\setminus B) 
        \mbox{ is a countable set consisting of $G_\delta$ points in }L\}.
            \end{aligned}$

     \item $\begin{aligned}[t]
             \A_H&=\A^s_H=\ms_H\\&=\{\ext X\setminus F\setsep F\mbox{ is a $\Coz_\delta$ split face such that $F'$ is a Baire set}
             \\&\qquad\qquad \mbox{ and
             $F,F'$ are measure convex}\}=\Z_H. \end{aligned} $
           \end{enumerate}
\end{prop}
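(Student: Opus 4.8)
The plan is to establish the four assertions in order, working throughout with the representation $X=S(E_{L,A})$, the isometry $V$ of Lemma~\ref{L:function space}, the maps $\phi,\jmath,\psi$ and the abbreviation $\delta_f(t)=f(t,1)-f(t,-1)$ for $t\in A$ (extended by $0$ to $L$). Assertion (a) is essentially a bookkeeping step: by Lemma~\ref{L:function space}(b), $A_1(X)=V(\Ba_1^b(K)\cap E^{\perp\perp})$, so it suffices to intersect the description of $\ell^\infty(K)\cap E^{\perp\perp}$ from Proposition~\ref{P:dikous-sa-new}(b) with $\Ba_1^b(K)$ and apply Proposition~\ref{p:topol-stacey}(b); under the averaging condition $f(t,0)=\tfrac12(f(t,-1)+f(t,1))$ the exceptional set in Proposition~\ref{p:topol-stacey}(b) equals $\{t\in A:f(t,1)\ne f(t,-1)\}$, which gives the stated formula.

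For (b): first $M^s(H)=M(H)$ by Proposition~\ref{P:rovnostmulti} since $H\subset\Ba^b(X)$. Since $A_1(X)$ is determined by extreme points, for $m=V(f)$ and $a=V(g)$ (with $g$ in the space of (a)) the element $b\in H$ with $b=ma$ on $\ext X$ is forced to be $V(\widetilde{fg})$, $\widetilde{fg}$ the modification of $fg$; hence $V(f)\in M(H)$ iff $\widetilde{fg}$ lies in the space of (a) for every admissible $g$. A direct computation using the averaging condition yields $\widetilde{fg}\circ\jmath=(f\circ\jmath)(g\circ\jmath)+\tfrac14\,\delta_f\delta_g\,1_A$ and $\widetilde{fg}(t,1)-\widetilde{fg}(t,-1)=f(t,0)\delta_g(t)+g(t,0)\delta_f(t)$. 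The first summand is always Baire-one and the displayed difference vanishes off the countable set $\{\delta_f\ne0\}\cup\{\delta_g\ne0\}$, so the only genuine requirement is that $\delta_f\delta_g1_A$ be Baire-one on $L$ for all admissible $g$. For ``$\supseteq$'', if each $\{t\in A:|\delta_f(t)|\ge\varepsilon\}$ is a countable $G_\delta$ set, decompose $\delta_f\delta_g1_A=\sum_n\delta_f\delta_g1_{C_n}$ along the dyadic annuli $C_n=\{2^{-n-1}\le|\delta_f|<2^{-n}\}\cap A$; each $C_n$ is both $F_\sigma$ (countable) and $G_\delta$, which makes $\delta_f1_{C_n}$, $\delta_g1_{C_n}$ and their product Baire-one, and the series converges uniformly (its tail on $\bigcup_{n>N}C_n$ is bounded by $2^{-N}\cdot2\norm{g}$), so the sum is Baire-one. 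For ``$\subseteq$'', test $m$ against the function $g$ with $g(t,1)=\tfrac12$, $g(t,-1)=-\tfrac12$ for $t$ in a prescribed countable $C\subset A$ and $g\equiv0$ otherwise; this $g$ is admissible for every countable $C$ (indeed $g\circ\jmath\equiv0$), $\widetilde{fg}\circ\jmath=\tfrac14\delta_f1_C$ must then be Baire-one, and taking $C=\{t\in A:|\delta_f(t)|\ge\varepsilon\}$ (countable, since contained in $\{\delta_f\ne0\}$) exhibits $C$ as the preimage of a closed set under the Baire-one function $\tfrac14\delta_f1_C$, hence $G_\delta$.

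For (c): $\A_H=\A^s_H$ is Proposition~\ref{P:A1 determined}, and by Proposition~\ref{p:system-aha}(a) we have $\A_H=\{[m>0]\cap\ext X:m\in M(H)\}$, a family closed under countable unions and finite intersections. For $m=V(f)\in M(H)$ one computes $\psi(\phi^{-1}([m>0]\cap\ext X))=\{t\in L:f(t,0)+\tfrac12|\delta_f(t)|1_A(t)>0\}$, the positivity set of a Baire-one function (use that $|\delta_f|1_A$ is Baire-one by the dyadic argument of (b)), hence $\Zer_\sigma$; and by \eqref{eq:podmny ChEK} the overlap set $\psi(B)\cap\psi(\Ch_EK\setminus B)$ for $B=\phi^{-1}([m>0]\cap\ext X)$ is contained in $\{\delta_f\ne0\}$, a countable set whose points are $G_\delta$ (a point of a countable $G_\delta$ set is a $G_\delta$ point). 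This gives the inclusion into the described family. Conversely, a typical member of the described family splits as $\phi(\psi^{-1}(F)\cap\Ch_EK)\cup\bigcup_{t\in C_1}\phi(\{(t,1)\})\cup\bigcup_{t\in C_2}\phi(\{(t,-1)\})$: the first piece is $[V(h\circ\psi)>0]\cap\ext X$, where $h\ge0$ is a Baire-one function on $L$ with $\{h>0\}=F$ (such $h$ exists because $F$ is $\Zer_\sigma$, e.g.\ $h=\sum_k2^{-k}1_{Z_k}$ with $F=\bigcup_kZ_k$, $Z_k$ zero sets), and each singleton piece is $[m>0]\cap\ext X$ for the test multiplier of (b) with $C=\{t\}$, which is admissible since $\{t\}$ is $G_\delta$; closure of $\A_H$ under countable unions then puts the member in $\A_H$. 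The equivalence of the two displayed descriptions of the family is \eqref{eq:podmny ChEK}.

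For (d): the inclusion $\A_H=\A^s_H\subset\ms_H$ is Theorem~\ref{T:meritelnost-strongmulti}, applicable since $H\subset A_{sa}(X)$ and $H^\uparrow\subset A_{sa}(X)\cap\Ba(X)$ is determined by extreme points; $\ms_H\subset\Z_H$ is immediate, and $\ms_H=\Z_H$ follows from Lemma~\ref{L:SH=ZH} (as $X$ is a simplex and $H\subset\Ba(X)$). To identify this common family with $\{\ext X\setminus F:F\text{ a }\Coz_\delta\text{ split face},\ F'\text{ Baire},\ F,F'\text{ measure convex}\}$, note first that for a split face $G$ with $\lambda_G=\sup_na_n\in(A_1(X))^\uparrow$ one has $G=[\lambda_G=1]$ Baire and $G'=[\lambda_G=0]=\bigcap_n[a_n\le0]$ of type $\Coz_\delta$ (closed preimages under the Baire-one $a_n$), while both are measure convex since $\lambda_G$ is strongly affine; conversely, if $G$ is Baire and measure convex with $G'$ of type $\Coz_\delta$ and measure convex, then Proposition~\ref{p:shrnuti-splitfaceu-baire} gives $\lambda_G=V(f_0)$ strongly affine and Baire, hence $f_0\in\Ba^b(K)\cap E^{\perp\perp}$, and writing $G\cap\ext X$ in the form of (c) one checks $\psi(\phi^{-1}(G\cap\ext X))=(\phi\circ\jmath)^{-1}(X\setminus G')$ is $\Zer_\sigma$ (preimage of a $\Zer_\sigma$ set under the continuous map $\phi\circ\jmath$) and that the overlap set $\{t\in A:\delta_{f_0}(t)\ne0\}$ is countable (Proposition~\ref{p:topol-stacey}(c)) with $G_\delta$ points, so $G\cap\ext X\in\A_H$ by (c). Finally $\Z_H$ is contained in this $\Coz_\delta$-family: a Baire $g\in H^\uparrow$ with $g(\ext X)\subset\{0,1\}$ has $[g=1],[g=0]$ complementary measure convex and measure extremal faces whose Baire union carries all maximal measures, so $[g=1]$ is a split face with $\lambda_{[g=1]}=g$ by Corollary~\ref{c:simplex-facejesplit}, reducing to the previous point. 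I expect the main obstacle to be exactly the last bookkeeping in (d) — establishing that the overlap set has $G_\delta$ points for an arbitrary split face whose $\lambda_F$ merely lies in $H^\uparrow$, and keeping precise track of the descriptive complexity of $\lambda_F$ versus the $\Coz_\delta$/Baire conditions on $F,F'$ — with the Baire-class arguments of (b)–(c) (functions with countable support and $G_\delta$ level sets) the secondary technical point.
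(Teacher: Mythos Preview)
Your approach is essentially correct and close to the paper's in parts $(a)$--$(c)$, with a genuinely different (and in some ways cleaner) route in $(d)$.

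For $(a)$ you do exactly what the paper does. For $(b)$ your arguments are valid variants of the paper's: for ``$\supset$'' the paper works directly with the level sets $\{t:|f(t,0)g(t,0)-\widetilde{fg}(t,0)|\ge\varepsilon\}$, shows they are countable $G_\delta$ (being subsets of $\{|\delta_f|\ge 2\varepsilon/\|g\|\}$), and then invokes the characterization of Baire-one functions on normal spaces via $G_\delta$ level sets; your dyadic decomposition achieves the same end and implicitly uses the same characterization. For ``$\subset$'' the paper simply tests with $g=f$, so that $\widetilde{f^2}\circ\jmath-f^2\circ\jmath=\tfrac14\delta_f^2$ is Baire-one and hence $\{|\delta_f|\ge\varepsilon\}$ is the preimage of a closed set, thus $\Coz_\delta\subset G_\delta$; your test function with $C=\{|\delta_f|\ge\varepsilon\}$ is an equally direct alternative. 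For $(c)$ you and the paper argue the same way.

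For $(d)$ the paper takes a different route for the inclusion $\{\ext X\setminus F:F\ \Coz_\delta\ \text{split face}, \ldots\}\subset\A_H$: rather than verifying the $(c)$-conditions for $B=\phi^{-1}(F'\cap\ext X)$, it constructs an explicit monotone sequence $g_n\in M(H)$ with $V(g_n)\nearrow\lambda_{F'}$, appealing to Proposition~\ref{p:system-aha}$(c)$. Your route via $(c)$ is arguably more transparent, and your acknowledged obstacle is exactly the step the paper isolates too: the paper observes that the overlap set equals $C=\{t:f_0(t,0)=\tfrac12\}=(f_0\circ\jmath)^{-1}(\{\tfrac12\})$, which is a \emph{Baire} subset of $L$ (since $\lambda_F$ is Baire by Proposition~\ref{p:shrnuti-splitfaceu-baire}, hence $f_0\circ\jmath\in\Ba(L)$) and countable (Proposition~\ref{p:topol-stacey}$(c)$), so Lemma~\ref{L:countable Baire} yields that $C$ consists of $G_\delta$ points. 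This single observation fills your gap completely, and is the same lemma the paper invokes. Your final $\Z_H$ argument is redundant once $\ms_H=\Z_H$ is obtained from Lemma~\ref{L:SH=ZH}.
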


\begin{proof}  
  $(a)$: Inclusion `$\subset$' follows from Proposition~\ref{p:topol-stacey}$(b)$ and Proposition~\ref{P:dikous-sa-new}$(b)$. Conversely, assume that $f$ satisfies the conditions on the right-hand side. By Proposition~\ref{p:topol-stacey}$(b)$ and Proposition~\ref{P:dikous-sa-new}$(b)$ we deduce that $f\in \Ba_1^b(K)\cap E^{\perp\perp}$, and thus $V(f)\in A_1(X)$ by Lemma~\ref{L:function space}$(b)$. 

  $(b)$:  The first equality follows from Proposition~\ref{P:rovnostmulti}, let us look at the second one. Fix a function $f$ on $K$ such that $V(f)\in A_1(X)$.   Using $(a)$ and Proposition~\ref{p:topol-stacey}$(b)$  we see that
  $V(f)\in M(H)$ if and only if
  $$V(g)\in H\Longrightarrow \widetilde{fg}\circ\jmath\in\Ba_1(L).$$
   Let us now prove the two inclusions:

  `$\supset$': Assume $f$ satisfies the condition on the right-hand side and fix any $g$ with $V(g)\in A_1(X)$. Then $fg\in \Ba_1(K)$,  $fg=\widetilde{fg}$ on $A\times \{-1,1\}$ and on $(L\setminus A)\times \{0\}$. For $t\in A$ we have
   \begin{equation*}
  \begin{aligned}
 f(t,0)g(t,0)-\widetilde{fg}(t,0)&=
  \tfrac14(f(t,-1)+f(t,1))(g(t,-1)+g(t,1))\\&\qquad-\tfrac12(f(t,-1)g(t,-1)+f(t,1)g(t,1))
  \\&=\tfrac14(f(t,-1)-f(t,1))(g(t,1)-g(t,-1)),
  \end{aligned}
  \end{equation*}
  hence
  $$\abs{f(t,0)g(t,0)-\widetilde{fg}(t,0)}\le\tfrac12\norm{g}_\infty\abs{f(t,1)-f(t,-1)},$$
  so, for each $\ep>0$,
  \[
  \{t\in L\setsep \abs{f(t,0)g(t,0)-\widetilde{fg}(t,0)}\ge\ep\}
 \subset
  \{t\in A\setsep \tfrac12\norm{g}_\infty\abs{f(t,1)-f(t,-1)}\ge \ep\}.
  \]
The latter set is a countable $G_\delta$ subset of $L$, and hence the former set is also a countable $G_\delta$ set in $L$. 
  
So, $(fg-\widetilde{fg})\circ\jmath\in \Ba_1(L)$, because  Baire-one functions on normal spaces are characterized via $G_\delta$ measurability of their level sets (see \cite[Exercise 3.A.1]{lmz}). Thus $\widetilde{fg}\circ \jmath\in\Ba_1(L)$ and therefore $V(f)\in M(A_1(X))$.

  `$\subset$': Assume that $V(f)\in M(H)$ and $\ep>0$ is given. Let
  \[
B=\{t\in A\setsep \abs{f(t,1)-f(t,-1)}\ge\ep\}.
  \]
By Proposition~\ref{p:topol-stacey}$(b)$, $B$ is countable. We use the condition from the beginning of the proof for $g=f$, i.e., we use the fact that $\widetilde{f^2}\circ\jmath\in \Ba_1(L)$. Since $f^2\circ\jmath\in \Ba_1(L)$, the function
\[
t\in L\mapsto \abs{f^2(t,0)-\widetilde{f^2}(t,0)}=\frac14\abs{(f(t,1)-f(t,-1))^2}
\]
is Baire-one on $L$. Hence the level sets of this function are $G_\delta$ set, from which it follows that $B$ is a $G_\delta$ set.

$(c)$: The first equality follows from $(b)$, let us prove the second one:

`$\supset$': Let $F\subset L$ be a $\Zer_\sigma$ set. Then there is a Baire-one function $g:L\to[0,1]$ such that $F=[g>0]$ (see, e.g., \cite[Proposition 2]{kalenda-spurny}). Let $f=g\circ\psi$. Then $V(f)\in M(H)$ by $(b)$. Then
  $[f>0] = \psi^{-1}(F)$  and hence
  $$\phi(\psi^{-1}(F)\cap\Ch_{E}K)\in\A_{H}$$
  by Proposition~\ref{p:system-aha}. 
  
  Further, fix any $t\in A$ such that $\{t\}$ is a $G_\delta$ set in $L$ and $i\in\{-1,1\}$. Let $h=1_{(t,i)}-1_{(t,-i)}$. By $(b)$ we see that $V(h)\in M(H)$ and hence
  \[
  \{\phi(t,i)\}=[V(h)>0]\cap\ext X\in \A_{H}.
  \]
    Since $\A_{H}$ is closed with respect to countable unions by Proposition~\ref{p:system-aha}$(a)$, the proof of inclusion `$\supset$' is complete.
  
  `$\subset$': Let $f$ be such that $V(f)\in M(H)$. For each $n\in\en$ set
  \[
  F_n=\{t\in L\setsep f(t,0)>\tfrac1n\}\quad \mbox{and}\quad C_n=\{t\in A\setsep \abs{f(t,1)-f(t,-1)}>\tfrac1n\}.
  \]
  Then $F_n$ is a $\Zer_\sigma$ set and $C_n$ is a countable $G_\delta$ set in $L$ (by $(b)$). Hence it consists of $G_\delta$ points in $L$. The  equality
  \[
  \begin{aligned}\relax
  [f>0]\cap\Ch_{E}K&= \psi^{-1}\left(\bigcup_{n\in\en} (F_n\setminus C_n)\right)\cap \Ch_{E}K \cup\\
 &\quad \bigcup_{n\in\en}\{(t,i)\in C_n\times\{-1,1\}\setsep f(t,i)>0\}.
  \end{aligned}
  \]
now yields the assertion. Indeed, each set $C_n$, as a countable $G_\delta$ set, is a $\Coz_\delta$ set, and thus $F_n\setminus C_n$ is a $\Zer_\sigma$ set. Hence their union is a $\Zer_\sigma$ set.

Finally, the last equality follows from \eqref{eq:podmny ChEK}.

$(d)$: The first equality is repeated from $(c)$. Inclusion $\A^s_H\subset\ms_H$ follows from Theorem~\ref{T:meritelnost-strongmulti}. 
Inclusion `$\subset$' from the third equality is obvious. 

Let us prove the converse inclusion. Let $F\subset X$ be a $\Coz_\delta$ split face such that $F'$ is a Baire set and both $F$ and $F'$ are measure convex. By Proposition~\ref{p:shrnuti-splitfaceu-baire} we get that $\lambda_F$ is Baire and strongly affine. Let $f$ be the function on $K$ such that $V(f)=\lambda_{F'}$ and let $B=\phi^{-1}(F)$. Let 
$$
\begin{aligned}
    C&=\{t\in L\setsep f(t,0)=\tfrac{1}{2}\}\\
    &=\{t\in A\setsep B\cap\{(t,1),(t,-1)\}\mbox{ contains exactly one point}\}.
\end{aligned}  $$
It follows from Proposition~\ref{p:topol-stacey}$(c)$ that $C$ is a countable Baire set, hence by Lemma~\ref{L:countable Baire} it consists of $G_\delta$ points.

Let us enumerate $C=\{t_n\setsep n\in\en\}$ and for each $n\in\en$ let $i_n\in \{-1,1\}$ be such that $(t_n,i_n)\notin B$. 
 Let $g\in \Ba_1(L)$ be such that $0\le g\le 1$ and $[g=1]=B$. Such a function exists for example by \cite[Proposition 2]{kalenda-spurny}. We define a sequence $(g_n)$ of functions on $K$ by
 $$g_n(t,i)=\begin{cases}
    1- g(t)^n, & t\in L\setminus\{t_1,\dots,t_n\},\\
     f(t,i), & t\in\{t_1,\dots,t_n\}.
 \end{cases}$$
Then $g_n\in \Ba_1(K)\cap E^{\perp\perp}$ and $g_n\nearrow f$. 
It follows that $V(g_n)\in A_1(X)$ and $V(g_n)\nearrow V(f)=\lambda_{F'}$. Thus $\lambda_{F'}\in H^\uparrow$ and the argument is complete.

Finally, $\ms_H=\Z_H$ by Lemma~\ref{L:SH=ZH}.
  \end{proof}

We pass to the case of general strongly affine Baire functions.

\begin{prop}\label{P:dikous-Bair-new}
 Let $K=K_{L,A}$, $E=E_{L,A}$, $X=X_{L,A}$ and $H=(A_c(X))^\mu$. Then the following assertions are valid.
    \begin{enumerate}[$(a)$]
        \item  $H=(A_c(X))^\sigma=\Ba^b(X)\cap A_{sa}(X)$.
        \item $\begin{aligned}[t]
             H=V\Big(\Big\{f\in\ell^\infty(K)\setsep& f\circ\jmath\in\Ba(L), f(t,0)=\tfrac12(f(t,-1)+f(t,1))\mbox{ for }t\in A, \\& \{t\in A\setsep f(t,1)\neq f(t,-1)\}\mbox{ is countable}\Big\}\Big)\end{aligned}$
        \item $\begin{aligned}[t]
             M&^s(H)=M(H)\\&=V\Big(\Big\{f\in\ell^\infty(K)\setsep f\circ\jmath\in\Ba(L), f(t,0)=\tfrac12(f(t,-1)+f(t,1))\mbox{ for }t\in A, \\& \{t\in A\setsep f(t,1)\neq f(t,-1)\}\mbox{ is a countable Baire subset of }L\Big\}\Big)\end{aligned}$
        \item $\begin{aligned}[t]
            \A_H&=\A^s_H=\{\phi(\psi^{-1}(F)\cap \Ch_{E}(K)\cup  C_1\times\{1\}\cup 
        C_2\times\{-1\})\setsep F\subset L\ \mbox{Baire},\\
        &\qquad\qquad C_1,C_2\subset A\mbox{ countable  sets  consisting of $G_\delta$ points in }L\} 
        \\ &=\{ \phi(B)\setsep B\subset\Ch_E K, \psi(B), \psi(\Ch_E K\setminus B)\mbox{ are Baire subsets of }L, \\ & \qquad \qquad\psi(B)\cap\psi(\Ch_E K\setminus B) 
        \mbox{ is  countable}\}.
            \end{aligned}$

       \item  $\begin{aligned}[t]
             \A_H&=\A^s_H=\ms_H\\&=\{\ext X\cap F\setsep F\mbox{ is a split face such that}\\&\qquad\qquad F, F'\mbox{ are Baire and measure convex}\}= \Z_H. \end{aligned} $
               
    \end{enumerate}
\end{prop}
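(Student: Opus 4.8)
\emph{Proof proposal.} The plan is to reduce each part to the general theory developed above, combined with the topological description of functions on $K=K_{L,A}$ from Proposition~\ref{p:topol-stacey}. For assertion $(a)$: since $X=X_{L,A}$ is a simplex by \cite[Theorem 3]{stacey}, Proposition~\ref{P:Baire-srovnani}$(iii)$ immediately gives $(A_c(X))^\mu=(A_c(X))^\sigma=A_{sa}(X)\cap\Ba^b(X)$, so in particular $H=A_{sa}(X)\cap\Ba(X)$ and $H$ is determined by extreme points (see Section~\ref{s:determined}). Assertion $(b)$ then follows by combining Lemma~\ref{L:function space}$(b)$, which identifies $V(\Ba^b(K)\cap E^{\perp\perp})$ with $A_{sa}(X)\cap\Ba(X)=H$, with the description of $\ell^\infty(K)\cap E^{\perp\perp}$ from Proposition~\ref{P:dikous-sa-new}$(b)$ and the equivalence $f\in\Ba^b(K)\iff f\circ\jmath\in\Ba^b(L)$ and $\{t\in A:f(t,-1)\neq f(t,0)\text{ or }f(t,1)\neq f(t,0)\}$ is countable from Proposition~\ref{p:topol-stacey}$(c)$; for $f$ satisfying the averaging condition this last set equals $\{t\in A:f(t,1)\neq f(t,-1)\}$.

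For assertion $(c)$, the equality $M^s(H)=M(H)$ is Proposition~\ref{P:rovnostmulti}. Since $H$ is determined by extreme points, a function $f$ with $V(f)\in H$ lies in $M(H)$ precisely when, for every $g$ with $V(g)\in H$, the modified function $\widetilde{fg}$ (the unique candidate for $fg$ on $\ext X$, which automatically lies in $E^{\perp\perp}$) belongs to $\Ba^b(K)$; using $(b)$ this amounts to $\widetilde{fg}\circ\jmath\in\Ba(L)$. A direct computation with the averaging identities gives, for $t\in A$, the correction term $\widetilde{fg}(t,0)-(fg)(t,0)=\tfrac14(f(t,1)-f(t,-1))(g(t,1)-g(t,-1))$, and since $(fg)\circ\jmath$ is automatically Baire, the condition reduces to the requirement that this correction term, supported on a countable subset of $A$, be Baire on $L$. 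Testing with $g=f$ forces $B_f:=\{t\in A:f(t,1)\neq f(t,-1)\}=\{t:\tfrac14(f(t,1)-f(t,-1))^2>0\}$ to be a Baire subset of $L$; conversely, if $B_f$ is a (necessarily countable) Baire set, then by Lemma~\ref{L:countable Baire} it consists of $G_\delta$ points of $L$, whence for any admissible $g$ the correction term has all its level sets expressible as countable unions of $G_\delta$ singletons and is therefore Baire. This yields the stated description of $M(H)$.

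Assertion $(d)$ is obtained by feeding $(c)$ into Theorem~\ref{T:integral representation H}: since $H^\mu=H$ and $H$ is determined by extreme points, $\A_H$ and $\A^s_H$ are $\sigma$-algebras and measurability with respect to them characterizes extendibility to $M(H)$, respectively $M^s(H)$; as $M(H)=M^s(H)$ we get $\A_H=\A^s_H$. Writing a set $E=\phi(B)$ with $B\subset\Ch_E K$, an element of $M(H)$ equal to $1$ on $E$ and $0$ on $\ext X\setminus E$ corresponds to an $f$ whose restriction to $\jmath(L)$ equals $1_F+\tfrac12\,1_C$ with $F=\psi(B)\setminus\psi(\Ch_E K\setminus B)$ and $C=\psi(B)\cap\psi(\Ch_E K\setminus B)$ (by \eqref{eq:podmny ChEK}); by $(c)$ this $f$ is a multiplier exactly when $F$ and $C$ are Baire and $C$ is countable, which is equivalent to $\psi(B)$ and $\psi(\Ch_E K\setminus B)$ being Baire with countable intersection. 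The first form in $(d)$ then follows by parametrizing $B$ through the Baire set $\psi^{-1}(F)\cap\Ch_E K$ together with $C_1=\{t\in A:(t,1)\in B,\ (t,-1)\notin B\}$ and $C_2=\{t\in A:(t,-1)\in B,\ (t,1)\notin B\}$, using Lemma~\ref{L:countable Baire} once more to rephrase ``$C$ countable Baire'' as ``$C_1,C_2$ countable sets of $G_\delta$ points of $L$''. Finally, assertion $(e)$ is immediate: by $(a)$ we have $H=A_{sa}(X)\cap\Ba(X)$, so the displayed identities are exactly Theorem~\ref{T:baire-multipliers}$(a)$ for the simplex $X=X_{L,A}$. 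The main obstacle in this argument is the bookkeeping in $(c)$ and $(d)$: one must handle the correction term carefully and exploit repeatedly that a function supported on a countable set of $G_\delta$ points is Baire, whereas a function supported on an arbitrary countable set need not be.
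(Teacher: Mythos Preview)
Your proposal is correct and follows essentially the same route as the paper's proof: $(a)$ via Proposition~\ref{P:Baire-srovnani}$(iii)$, $(b)$ via Lemma~\ref{L:function space}$(b)$ combined with Propositions~\ref{P:dikous-sa-new}$(b)$ and~\ref{p:topol-stacey}$(c)$, $(c)$ via Proposition~\ref{P:rovnostmulti} plus the multiplier criterion reduced to $\widetilde{fg}\circ\jmath\in\Ba(L)$, $(d)$ via Theorem~\ref{T:integral representation H} and \eqref{eq:podmny ChEK}, and $(e)$ as a special case of Theorem~\ref{T:baire-multipliers}$(a)$. The only cosmetic differences are in $(c)$: the paper first reduces to $f\circ\jmath=0$ (using that $V(g\circ\psi)\in M(H)$ for Baire $g$) and then tests with a sign function supported on $B_f$, whereas you keep $f$ general, write down the correction term $\widetilde{fg}(t,0)-f(t,0)g(t,0)=\tfrac14(f(t,1)-f(t,-1))(g(t,1)-g(t,-1))$ explicitly and test with $g=f$; both arguments hinge on Lemma~\ref{L:countable Baire} in the same way.
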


\begin{proof}
$(a)$:  Since $X$ is a simplex, the equality $H=(A_c(X))^\sigma=\Ba^b(X)\cap A_{sa}(X)$ follows from Proposition~\ref{P:Baire-srovnani}$(iii)$.

$(b)$: This follows by combining Proposition~\ref{p:topol-stacey}$(c)$, Proposition~\ref{P:dikous-sa-new}$(b)$ and Lemma~\ref{L:function space}$(b)$.

$(c)$:  The first equality follows from Proposition~\ref{P:rovnostmulti}, let us look at the second one.
We first observe that $V(g\circ\psi)\in M(H)$ whenever $g\in \Ba_1^b(L)$. So, given $f$ such that $V(f)\in H$, we get $V(f)\in M(H)\Leftrightarrow V(f-f\circ\jmath\circ\psi)\in M(H)$. Moreover, 
$$f(t,1)-(f\circ\jmath\circ\psi)(t,1)
-(f(t,-1)-(f\circ\jmath\circ\psi)(t,-1))=f(t,1)-f(t,-1),$$
so it is enough to prove the equality for the functions satisfying $f\circ\jmath=0$ on $L$.

`$\subset$': Assume that $f\circ\jmath=0$, $V(f)\in M(H)$ and $B=\{t\in A\setsep f(t,1)\neq f(t,-1)\}$. Then $V(f)\in H$ and hence $f$ satisfies the conditions in $(b)$. 
 Further, the function 
\[
g(t,i)=\begin{cases}
                 1,& t\in B, f(t,i)>0,\\
                 -1,& t\in B, f(t,i)<0,\\
                 0&\text{ otherwise}
\end{cases}
\]
satisfies $V(g)\in H$ by $(b)$. Since $V(f)\in M(H)$, $\wt{fg}\circ\jmath$ is a Baire function on $L$. Since
\[
\wt{fg}(t,0)\begin{cases}
           =0,& t\in L\setminus B,\\
           >0,& t\in B,
\end{cases}
\]
the set $B$ is a Baire subset of $L$. (It is countable by $(b)$.)

`$\supset$': Let $f\circ\jmath=0$ and $f$ satisfy the conditions on the right-hand side. Then $V(f)\in H$ by $(b)$. Further, let $g$  with $V(g)\in H$ be given. If $\wt{fg}(t,0)$ is nonzero for some $t\in A$, then $f(t,1)\neq f(t,-1)$. But the set $$B=\{t\in A\setsep \abs{f(t,1)-f(t,-1)}>0\}$$ is a countable Baire set in $L$, and thus each of its subsets is also Baire (this follows from Lemma~\ref{L:countable Baire}). 
From this we obtain that $\wt{fg}$ is Baire on $K$, and consequently $V(f)\in M(H)$.

$(d)$: The first equality follows from $(c)$, let us continue by the second one. 

`$\supset$': Let $F\subset L$ be a Baire set. Then $1_F$ is a Baire function on $L$. Let $f=1_F\circ\psi$. Then $V(f)\in M(H)$ by $(c)$. Then
  $[f=1] = \psi^{-1}(F)$  and hence
  $$\phi(\psi^{-1}(F)\cap\Ch_{E}K)\in\A_{H}$$
  by Theorem~\ref{T:integral representation H}(i). 
    Further, fix any countable $C\subset A$  such that $C$ is a Baire set in $L$. Let $h=1_{C\times \{1\}}-1_{C\times\{-1\}}$. By $(c)$ we see that $V(h)\in M(H)$ and hence
  \[
  \phi(C\times\{1\})=[V(h)>0]\cap\ext X\in \A_{H}\mbox{ and } \phi(C\times\{-1\})=[V(h)<0]\cap\ext X\in \A_{H}
  \]
  by Theorem~\ref{T:integral representation H}. Since $\A_H$ is a $\sigma$-algebra, the proof of inclusion `$\supset$' is complete.
  
  `$\subset$': Let $f$ be such that $V(f)\in M(H)$ and $\ext X\subset[V(f)=1]\cup [V(f)=0]$. Set
  \[
  F=\{t\in L\setsep f(t,0)=1\}\mbox{ and }C=\{t\in A\setsep f(t,1)\ne f(t,-1)\}.
  \]
  Then $F$ is a Baire set and $C$ is a countable Baire set in $L$ by $(c)$. 
  The equality
  \[
  \begin{aligned}\relax
  [f=1]\cap\Ch_{E}K&= \psi^{-1}\left(F\right)\cap \Ch_{E}K \cup\{(t,i)\in C\times\{-1,1\}\setsep f(t,i)=1\}
  \end{aligned}
  \]
now yields the assertion. 

$(e)$: This is just a special case of Theorem~\ref{T:baire-multipliers}$(a)$.
\end{proof}

\subsection{Fragmented affine functions on  $X_{L,A}$}
\label{ssec:fragmented-dikous}

We continue by analyzing spaces $A_f(X_{L,A})$ and $(A_f(X_{L,A}))^\mu$. Their subclasses consisting of Borel functions will be analyzed in the subsequent section. We start by providing a topological lemma on fragmented functions and resolvable sets.

\begin{lemma}\label{L:dedicneres}   Let $L$ be a compact space.
    \begin{enumerate}[$(a)$]
        \item Let $B\subset L$. Then $B$ is scattered if and only if each subset of $B$ is resolvable.
        \item Let $B\subset L$. Then $B$ is $\sigma$-scattered if and only if each subset of $B$ belongs to $\sigma(\H)$ (the $\sigma$-algebra generated by resolvable sets).
        \item $(\Fr^b(L))^\mu=(\Fr^b(L))^\sigma$ and this system coincide with the family of all bounded $\sigma(\H)$-measurable functions on $L$.
    \end{enumerate}
\end{lemma}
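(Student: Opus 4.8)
I would treat part~(c) first, since it is purely algebraic and uses none of the scatteredness analysis, then part~(a), and deduce part~(b) from~(a).

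\emph{Part (c).} Set $A=(\Fr^b(L))^\mu$. By Theorem~\ref{T:a}(b), applied on the compact space $L$, $\Fr^b(L)$ is exactly the space of bounded $\H_\sigma$-measurable functions on $L$; in particular it is a sublattice of $\ell^\infty(L)$ containing the constants, and hence so is $A$ (if $f_n\nearrow f$ and $g_n\nearrow g$ with $f_n,g_n\in A$, then $f_n\vee g_n\nearrow f\vee g$ and $f_n\wedge g_n\nearrow f\wedge g$, and similarly with $\searrow$). Since $A^\mu=A$, Lemma~\ref{L:muclosed is closed} gives that $A$ is uniformly closed; Lemma~\ref{L:lattice}(c) then shows $A$ is a subalgebra, and Lemma~\ref{L:lattice}(b) shows $A$ is closed under bounded pointwise limits, so $A^\sigma\subset A$, i.e. $(\Fr^b(L))^\mu=(\Fr^b(L))^\sigma$. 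Finally I would identify $A$ with the bounded $\sigma(\H)$-measurable functions: the inclusion ``$\subset$'' holds because the latter form a uniformly closed space containing $\Fr^b(L)$ and closed under bounded pointwise limits; ``$\supset$'' holds because $\{E\subset L\setsep 1_E\in A\}$ is a $\sigma$-algebra containing $\H$, hence contains $\sigma(\H)$, so $A$ contains all $\sigma(\H)$-measurable simple functions, which are uniformly dense in the bounded $\sigma(\H)$-measurable functions while $A$ is uniformly closed.

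\emph{Part (a), and the forward half of (b).} Throughout I would use the identity $\H=\H_\sigma\cap\H_\delta$, which follows from Theorem~\ref{T:a}(b) because $1_C$ is fragmented precisely when both $C$ and $L\setminus C$ lie in $\H_\sigma$. For ``$B$ scattered $\Rightarrow$ every subset of $B$ is resolvable'' I would induct transfinitely on the Cantor--Bendixson rank of the subspace $B$. The base case is the only geometric step: if $B$ is relatively discrete and $C\subset B$, then given a nonempty closed $F\subset L$ with $F\cap C\ne\emptyset$ one picks $x\in F\cap C$ and an open $W$ with $W\cap B=\{x\}$, and either $\{x\}$ is relatively open in $F$ (on which $1_C\equiv1$) or $(W\setminus\{x\})\cap F$ is a nonempty relatively open subset of $F$ on which $1_C\equiv0$; so $1_C$ is fragmented. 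For the inductive step one splits $C=(C\cap(B\setminus B^{(1)}))\cup(C\cap B^{(1)})$, applies the base case and the induction hypothesis to the two parts, and recombines using that $\H$ is an algebra and $\H_\sigma$ is closed under countable unions (limit stages handled by stratifying $B$ into its Cantor--Bendixson levels). The forward implication of (b) is then immediate: if $B=\bigcup_n B_n$ with each $B_n$ scattered and $C\subset B$, then $C=\bigcup_n(C\cap B_n)$ with each $C\cap B_n$ resolvable, so $C\in\H_\sigma\subset\sigma(\H)$.

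\emph{The converse directions and the main obstacle.} For the converse in~(a): if $B$ is not scattered it contains a nonempty dense-in-itself set $S$, and $P:=\overline{S}$ is a nonempty perfect compact Hausdorff space in which $S$ is dense; the plan is to split $S=C\cup D$ with both $C$ and $D$ dense in $P$, whence $C\subset B$ while $C=C\cap P$ and $P\setminus C\supset D$ are both dense in $P$, so $1_C$ is not fragmented (the closed set $P$ witnesses it) and $C$ is not resolvable. For the converse in~(b), suppose $B$ is not $\sigma$-scattered; by a Cantor--Bendixson-type decomposition of $B$ into a $\sigma$-scattered part and a remainder all of whose nonempty relatively open pieces are non-$\sigma$-scattered, one extracts $S\subset B$ crowded and non-meager in its closure $P$, a perfect compact set, and then a Bernstein-type subset $C$ of $S$ (meeting every perfect subset of $P$ together with its complement) cannot lie in $\sigma(\H)$, since $\sigma(\H)$ is contained in the $\sigma$-algebra of sets with the Baire property (resolvable sets have the Baire property, and those form a $\sigma$-algebra). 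The hard part is the splitting step: a transfinite recursion along a $\pi$-base of $P$ fails because $S$ may meet basic open sets in sets far smaller than the $\pi$-weight of $P$. I would circumvent this by reducing to a metrizable model --- $P$ maps onto a perfect metrizable compactum via a Cantor scheme, and an irreducibility argument lets one pull back a Bernstein partition --- or by invoking the classical structure theory of $H$-sets (Kuratowski, Chapter~I, \S12; cf.\ \cite{koumou}), which yields resolvability of perfect compact spaces; producing the non-meager crowded $S$ in~(b) and then the Bernstein-type $C$ is the analogous difficulty. Keeping this whole construction inside the ambient compact space $L$ is where the real care is needed.
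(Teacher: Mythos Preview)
Your part~(c) is correct and essentially the paper's argument. For the forward direction of~(a), note that your transfinite induction stalls at uncountable limit stages (you only get $C$ as an uncountable union of resolvable pieces, which $\H_\sigma$ does not absorb); but your base-case argument, applied directly to the scattered set $C$ itself rather than to $B$, already proves the claim in one line, which is exactly what the paper does. For the converse of~(a) your plan is right; the splitting of a dense subset of a perfect compact Hausdorff space into two dense pieces is a nontrivial fact and the paper invokes it as a black box (a resolvability theorem of Comfort, \cite[Theorem~3.7]{comfort}), after first disposing of the easy case where $\overline{S}\setminus S$ is somewhere dense in $\overline{S}$.

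The real problem is the converse of~(b). Each step of your outline fails in a general compact $L$: the Cantor--Bendixson--type decomposition (removing the union of the relatively open $\sigma$-scattered pieces of $B$) need not produce a $\sigma$-scattered set when $B$ is not hereditarily Lindel\"of; there is no reason the crowded remainder $S$ is non-meager in $P=\overline{S}$; a Bernstein-type construction inside $S$ requires $S$ to meet each perfect subset of $P$ in a set of size at least the number of such subsets, which you have no control over; and finally, even a genuine Bernstein subset of $P$ need not fail the Baire property in a non-metrizable $P$, since comeager subsets of perfect compacta need not contain perfect compacta. The paper's route is completely different and relies on descriptive machinery: assuming $B\in\sigma(\H)$ (otherwise $B$ itself works), one uses that $\sigma(\H)$-sets are \emph{scattered-$K$-analytic} in the sense of Hansell, and then results of Holick\'y and Namioka--Pol show that a scattered-$K$-analytic set which is not $\sigma$-scattered contains a nonempty compact perfect set $K\subset B$. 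One then maps $K$ continuously onto $[0,1]$ and pulls back a non-analytic subset; the preimage $D\subset B$ cannot lie in $\sigma(\H)$, because its continuous image would then be analytic. This bypasses all the cardinality and category obstacles above by working inside a compact metrizable quotient.
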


\begin{proof}
    $(a)$: Assume $B$ is scattered. Then any subset of $B$ is also scattered, hence resolvable. Conversely, assume $B$ is not scattered, i.e., there exists a relatively closed set $H\subset B$ without isolated points. Then $F=\overline{H}$ is a compact set in $L$ without isolated points.  We assume first that $F\setminus H$ is not nowhere dense in $F$. Then there exists a closed set $C\subset F$ such that $\overline{C\cap B}=\overline{C\setminus B}=C$. Thus $B$ is not resolvable.
    In case $F\setminus H$ is nowhere dense in $F$, we use \cite[Theorem 3.7]{comfort} to get a pair $D_1,D_2\subset F$ of disjoint dense sets in $F$. Then both the sets $D_1\cap H$ and $D_2\cap H$ are dense in $H$. It follows that $D_1\cap H$ is a subset of $B$ which is not resolvable.

   $(b)$: The `only if' part follows from $(a)$.
   To prove the `if' part assume that $B$ is not $\sigma$-scattered. If $B\notin\sigma(\H)$, the proof is complete. Thus we assume that $B\in\sigma(\H)$. It follows from \cite[Theorem 6.13]{hansell-DT} that $B$ is `scattered $K$-analytic'. By combining \cite[Theorem 1]{holicky-cmj01} with \cite[Lemma 2.3]{namioka-pol} we deduce that $B$ is `almost \v{C}ech-analytic' in the sense of \cite{namioka-pol}. Hence, we deduce from \cite[Corollary~5.4]{namioka-pol} that $B$ contains a nonempty compact set $K$ without isolated points. Then $K$ can be mapped onto $[0,1]$ by a continuous function $\varphi\colon K\to [0,1]$. We select a set $C\subset [0,1]$ that is not analytic (by \cite[Theorem 29.7]{kechris} it is enough to take a non-measurable set.). Then $D=\varphi^{-1}(C)\subset K$ is not scattered-$K$-analytic. Indeed,  if this were the case, by \cite[Corollary 7]{HoSp} $C$ would be scattered-$K$-analytic and hence analytic by \cite[Proposition 1]{holicky-cmj93} and \cite[Theorem 25.7]{kechris}.) Thus $D\notin\sigma(\H)$ (by  \cite[Theorem 6.13]{hansell-DT}).

   $(c)$: Inclusion $(\Fr^b(L))^\mu\subset (\Fr^b(L))^\sigma$ is obvious. The converse follows easily from the fact that $\Fr^b(L)$ is a lattice. Further, the family of $\sigma(\H)$-measurable functions is closed to limits of pointwise converging sequences and  by Theorem~\ref{T:a} it contains $\Fr^b(L)$. So, it contains $(\Fr^b(L))^\mu$. Conversely, it is easy to see that $1_A\in (\Fr^b(L))^\mu$ for any $A\in\sigma(\H)$ and that simple functions are uniformly dense in bounded $\sigma(\H)$-measurable functions, which proves the converse inclusion.
\end{proof}

\begin{prop}\label{P:dikous-af-new}
Let $K=K_{L,A}$, $E=E_{L,A}$  $X=X_{L,A}$ and $H=A_f(X)$. Then the following assertions are valid.
\begin{enumerate}[$(a)$]
    \item $\begin{aligned}[t]
        H=V\big(\{f\in\ell^\infty(K)\setsep &f\circ\jmath\in \Fr^b(L) \\&\mbox{ and }f(t,0)=\tfrac12(f(t,1)+f(t,-1))\mbox{ for }t\in A\}\big).\end{aligned}$
\item  $\begin{aligned}[t]
        M^s&(H)=M(H)\\&=V\big(\{f\in\ell^\infty(K)\setsep f\circ\jmath\in \Fr^b(L), f(t,0)=\tfrac12(f(t,1)+f(t,-1))\mbox{ for }t\in A
        \\&\qquad\qquad
        \mbox{and }\forall\ep>0\colon \{t\in A\setsep \abs{f(t,1)-f(t,-1)}\ge\ep\}\mbox{ is scattered}\}\big).\end{aligned}$
\item $\begin{aligned}[t]
    \A_H=\A_H^s&= \{\phi(\psi^{-1}(F)\cap \Ch_{E} K\cup  C_1\times\{1\}\cup 
        C_2\times\{-1\})\setsep \\&\qquad\qquad F\subset L\text{ is a } \H_\sigma\mbox{ set},
        C_1,C_2\subset A\text{ are $\sigma$-scattered}\}
        \\&=\{\phi(B)\setsep B\subset \Ch_E K, \psi(B)\in\H_\sigma, \\&\qquad\qquad\psi(B)\cap\psi(\Ch_E K\setminus B)\mbox{ is $\sigma$-scattered}\}.
\end{aligned}$

 \item  $\begin{aligned}[t]
      \A_H&=\A^s_H=\ms_H\\&=\{\ext X\setminus F\setsep F\subset X \mbox{ is a split face}, F\in\H_\delta, F'\in\sigma(\H),\\&\qquad\qquad F, F'\mbox{ are measure extremal}\}\subset\Z_H\end{aligned}$      

             Moreover, the converse to the last inclusion holds if and only if $A$ contains no compact perfect subset.
       \end{enumerate}
       
\end{prop}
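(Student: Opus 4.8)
The proof of Proposition~\ref{P:dikous-af-new} will follow the same scheme as the proofs of Proposition~\ref{P:dikous-A1-new} and Proposition~\ref{P:dikous-Bair-new}, combining the topological descriptions from Proposition~\ref{p:topol-stacey} with the characterization of strongly affine functions and their multipliers from Proposition~\ref{P:dikous-sa-new}.

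For $(a)$, the plan is to intersect the description of $A_f(X)=V(\Fr^b(K)\cap E^{\perp\perp})$ from Lemma~\ref{L:function space}$(b)$ with the description of $\Fr^b(K)$ from Proposition~\ref{p:topol-stacey}$(g)$ and of $\ell^\infty(K)\cap E^{\perp\perp}$ from Proposition~\ref{P:dikous-sa-new}$(b)$. Since $f\in\Fr^b(K)$ is equivalent to $f\circ\jmath\in\Fr^b(L)$ with no extra condition on the isolated points, the formula follows immediately.

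For $(b)$, the first equality $M^s(H)=M(H)$ is not a consequence of Proposition~\ref{P:rovnostmulti} in general (fragmented functions need not be Baire and $X$ need not be standard), so I would first reduce, as in Proposition~\ref{P:dikous-A1-new}$(b)$, to checking when $\widetilde{fg}\circ\jmath$ is fragmented whenever $V(g)\in H$. Here the key algebraic identity is again $f(t,0)g(t,0)-\widetilde{fg}(t,0)=\tfrac14(f(t,-1)-f(t,1))(g(t,1)-g(t,-1))$, so for the inclusion ``$\supset$'' I would take $g$ arbitrary with $V(g)\in H$, observe that $fg\circ\jmath$ is fragmented (fragmented functions form an algebra by Theorem~\ref{T:a}), and note that $\{t\in L\setsep\abs{(fg-\widetilde{fg})(t,0)}\ge\ep\}$ is contained in a scattered set, hence $\H_\sigma$-measurable with scattered closure, which does not affect fragmentedness; invoking Lemma~\ref{L:dedicneres}$(a)$ that every subset of a scattered set is resolvable, I get that $\widetilde{fg}\circ\jmath$ differs from a fragmented function on a $\sigma$-scattered set, hence is fragmented. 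For ``$\subset$'', plugging $g=f$ shows that $\{t\in A\setsep\abs{f(t,1)-f(t,-1)}\ge\ep\}$ must be such that $\widetilde{f^2}\circ\jmath$ is fragmented; since this set is discrete in $K$ (all its points isolated) but sits inside $L$, fragmentedness of $\widetilde{f^2}\circ\jmath$ forces every subset of it to be resolvable in $L$, which by Lemma~\ref{L:dedicneres}$(a)$ means it is scattered. This step — extracting ``scattered'' from ``fragmented'' on the set of jump points — is the main technical point and the analogue of the ``$G_\delta$ point'' argument in the Baire-one case; I expect it to be the main obstacle, requiring careful use of Lemma~\ref{L:dedicneres}.

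For $(c)$, I would derive the $\sigma$-algebra from $(b)$ via Theorem~\ref{T:integral representation H}, exactly as in Proposition~\ref{P:dikous-A1-new}$(c)$: for ``$\supset$'' use that the characteristic function of an $\H_\sigma$ set $F\subset L$ is $\H_\sigma$-measurable hence (by Theorem~\ref{T:a}, since $L$ is compact) a $\sigma(\H)$-measurable function which is a monotone limit of fragmented functions, pull it back by $\psi$, and use that $\sigma$-scattered sets of jump points are allowed; for ``$\subset$'' decompose $[f>0]\cap\Ch_E K$ as in the earlier proofs into the pull-back of a countable union $\bigcup_n(F_n\setminus C_n)$ together with the contributions of the scattered sets $C_n=\{t\in A\setsep\abs{f(t,1)-f(t,-1)}>\tfrac1n\}$, noting that $F_n\setminus C_n$ is $\H_\sigma$ because $C_n$, being scattered (hence an $\H_\delta$ set by compactness considerations), has $\H$-complement. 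The last rewriting uses \eqref{eq:podmny ChEK}. Finally for $(d)$: inclusion $\A_H^s\subset\ms_H$ is Theorem~\ref{T:meritelnost-strongmulti}; for the split-face description I would use Corollary~\ref{cor:dikous-split-me} (a split face with $F,F'$ measure extremal has $\lambda_F$ strongly affine) together with $(c)$ to identify $F\cap\ext X$ with a set of the stated form, and conversely Lemma~\ref{l:rozsir-splitface} / Theorem~\ref{t:af-fsigma-hranice}-type reasoning to produce the required monotone approximation showing $\lambda_{F'}\in H^\uparrow$; the inclusion into $\Z_H$ is trivial, and the ``moreover'' part follows from Lemma~\ref{L:maxmiry-dikous}$(2)$ and Lemma~\ref{L:SH=ZH} in the standard direction, while in the non-standard direction one builds, from a compact perfect $D\subset A$, the function $1_{D\times\{1\}}+\tfrac12 1_{D\times\{0\}}$ exactly as in Corollary~\ref{cor:dikous-sa-split}, whose level set gives an element of $\Z_H\setminus\ms_H$ since $D$ is not $\sigma$-scattered by Lemma~\ref{L:dedicneres}$(b)$.
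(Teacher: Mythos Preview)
Your outline for $(a)$, $(c)$, and $(d)$ follows the paper's argument closely and is essentially correct. The substantive problems are all in $(b)$.

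\textbf{The ``$\subset$'' direction in $(b)$ has a genuine gap.} Testing only with $g=f$ does \emph{not} force every subset of $B_\ep=\{t\in A:\abs{f(t,1)-f(t,-1)}\ge\ep\}$ to be resolvable. Consider $L=A=[0,1]$, $f\circ\jmath=0$, and $f(t,1)=1=-f(t,-1)$ for all $t$. Then $\widetilde{f^2}\circ\jmath\equiv 1$ is trivially fragmented, yet $B_\ep=[0,1]$ is not scattered and $V(f)\notin M(H)$ (test with $g=1_{C\times\{1\}}-1_{C\times\{-1\}}$ for a non-resolvable $C$). The paper's argument is precisely this contrapositive: assume $B_\ep$ is not scattered, use Lemma~\ref{L:dedicneres}$(a)$ to pick a non-resolvable $C\subset B_\ep$, and build $g$ with $g(t,i)=\operatorname{sgn} f(t,i)$ on $C\times\{-1,1\}$ and $0$ elsewhere; then $\widetilde{fg}\circ\jmath$ equals $\tfrac12\abs{f(\cdot,1)-f(\cdot,-1)}\cdot 1_C$, which is $\ge\ep/2$ exactly on the non-resolvable set $C$, hence not fragmented. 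The freedom to choose an \emph{arbitrary} $C\subset B_\ep$ is what extracts ``scattered'' from the multiplier condition.

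\textbf{The ``$\supset$'' direction and the equality $M^s(H)=M(H)$ need more care.} Your sentence ``differs from a fragmented function on a $\sigma$-scattered set, hence is fragmented'' is false as stated: $1_{\mathbb Q}$ differs from $0$ on a countable (hence $\sigma$-scattered) set but is not fragmented. What is true---and what you almost say---is that the difference has each level set $\{|\cdot|\ge\ep\}$ contained in a \emph{scattered} set, and a bounded function with scattered level sets \emph{is} fragmented (use resolvability of scattered sets on any closed $F$). The paper sidesteps this by approximating $f$ by $f_\ep=f\cdot 1_{\psi^{-1}(B_\ep)}$, so that $\widetilde{f_\ep g}\circ\jmath$ is supported on the scattered set $B_\ep$ and is automatically fragmented. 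Crucially, this truncation is also how the paper gets $M^s(H)\supset\text{RHS}$ (which you do not address): since $B_\ep$ is scattered, hence universally null, Lemma~\ref{L:maxmiry-dikous} gives $V(\widetilde{f_\ep g})=V(f_\ep)V(g)$ $\mu$-a.e.\ for every maximal $\mu$, so $V(f_\ep)\in M^s(H)$; then closedness of $M^s(H)$ yields $V(f)\in M^s(H)$. The chain $\text{RHS}\subset M^s(H)\subset M(H)\subset\text{RHS}$ then gives all three equalities at once.
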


\begin{proof}
$(a)$: The assertion follows from Proposition~\ref{p:topol-stacey}$(g)$, Proposition~\ref{P:dikous-sa-new}$(b)$ and Lemma~\ref{L:function space}$(b)$.

$(b)$: Inclusion $M^s(H)\subset M(H)$ holds always. To prove the remaining ones we proceed similarly as for Baire functions. First observe that $V(g\circ\psi)\in M^s(H)$ for each $g\in \Fr^b(L)$. Hence $V(f)\in M^s(H)$ if and only if $V(f-f\circ\jmath\circ\psi)\in M^s(H)$ and also  $V(f)\in M(H)$ if and only if $V(f-f\circ\jmath\circ\psi)\in M(H)$. So, as above, it is enough to prove the remaining inclusions within the functions satisfying $f\circ\jmath=0$.

Let us continue by inclusion `$\subset$' from the second equality.
Assume that $f\circ\jmath=0$ and $V(f)\in M(H)$. Fix $\ep>0$ and let $$B=\{t\in A\setsep \abs{f(t,1)-f(t,-1)}\ge \ep\}.$$ Assume $B$ is not scattered. By Lemma~\ref{L:dedicneres}$(a)$ we find a subset $C\subset B$ which is not resolvable.
Let $g$ be the function on $K$ defined by
\[
g(t,i)=\begin{cases}
              1, & t\in C,  f(t,i)>0,\\
              -1, &t\in C, f(t,i)<0,\\
              0, &\text{otherwise}.
\end{cases}
\]
Then $V(g)\in H$ (by $(a)$). Since $V(f)\in M(H)$, we deduce $V(\widetilde{fg})\in H$. But for $t\in C$ we have
\[
\wt{fg}(t,0)=\begin{cases}
                0,& t\in L\setminus C,\\
                \frac12\abs{f(t,1)-f(t,-1)}\ge\frac{\ep}{2}, & t\in C.
\end{cases}
\]
Since $C$ is not resolvable, we easily deduce that  $\wt{fg}\circ \jmath$ is not fragmented, a contradiction.

To prove the remaining inclusion assume that $f$ satisfies the conditions in the right-hand side and $f\circ\jmath=0$. We aim to prove that $V(f)\in M^s(H)$. 
Given $\ep>0$ let 
$$B_\ep=\{t\in A\setsep \abs{f(t,1)-f(t,-1)}\ge \ep\} \quad\mbox{and}\quad f_\ep(t,i)=\begin{cases}
   f(t,i), & t\in B_\ep,\\ 0, & \mbox{otherwise}.
\end{cases}$$
Then $\|f-f_\ep\|\le\frac\ep2$. Since $M^s(H)$ is closed, it is enough to show that $V(f_\ep)\in M^s(H)$. 
So, let a function $g$ with $V(g)\in H$ be given. 
Then $\widetilde{f_\ep g}(t,0)=0$ for $t\in L\setminus B_\ep$. Since $B_\ep$ is scattered, it follows that $\widetilde{f_\ep g}\circ \jmath$ is fragmented and so $\widetilde{f_\ep g}\in H$. Moreover,
$$[\widetilde{f_\ep g}\ne f_\ep g]\subset B_\ep\times\{0\}.$$
Since $B_\ep$ is scattered and hence universally null, it follows from Lemma~\ref{L:maxmiry-dikous} that $V(\widetilde{f_\ep g})=V(f_\ep)V(g)$ $\mu$-almost everywhere for each maximal measure $\mu\in M^1(X)$. Thus $V(f_\ep)\in M^s(H)$ and the proof is complete.

$(c)$: The first equality follows from $(b)$, let us look at the second one:

`$\supset$': Let $F\subset L$ be an $\H_\sigma$ set. Then there exists a function $g\in \Fr^b(L)$ such that $[g>0]=F$. Then $f=g\circ\psi$ satisfies $V(f)\in M(H)$, and thus $\phi(\psi^{-1}(F)\cap \Ch_E K)\in \A_H$. If $C\subset A$ is scattered, the function $f=1_{C\times \{1\}}-1_{C\times \{-1\}}$ satisfies $V(f)\in M(H)$ by $(b)$. Hence $\phi(C\times\{1\})\in \A_H$, because $C\times\{1\}=[f>0]$. Similarly $\phi(C\times\{-1\})\in\A_H$. Now we conclude by noticing that $\A_H$ is closed with respect to finite intersections and countable unions, which finishes the proof of the inclusion `$\supset$'.

 `$\subset$': Let $f$ with $V(f)\in M(H)$ be given.  For each $n\in\en$ set
  \[
  F_n=\{t\in L\setsep f(t,0)>\tfrac1n\}\quad \mbox{and}\quad C_n=\{t\in A\setsep \abs{f(t,1)-f(t,-1)}\ge\tfrac1n\}.
  \]
  Then $F_n$ is an $\H_\sigma$ set and $C_n$ is a scattered set in $L$ (by $(b)$). The equality
  \[
  \begin{aligned}\relax
  [f>0]\cap\Ch_{E}K&= \psi^{-1}\left(\bigcup_{n\in\en} (F_n\setminus C_n)\right)\cap \Ch_{E}K \cup\\
 &\quad \bigcup_{n\in\en}\{(t,i)\in C_n\times\{-1,1\}\setsep f(t,i)>0\}.
  \end{aligned}
  \]
now yields the assertion. Indeed, each $C_n$ is a resolvable set, and thus $F_n\setminus C_n$ is an $\H_\sigma$ set. Hence their union is an $\H_\sigma$ set.

     $(d)$:  The first equality is repeated from $(c)$. Inclusion $\A^s_H\subset\ms_H$ follows from Theorem~\ref{T:meritelnost-strongmulti}. Inclusion `$\subset$' from the third equality is obvious. 

     Next assume that $F$ is a split face such that $F\in\H_\delta$, $F'\in\sigma(H)$ and $F,F'$ are measure extremal. By Corollary~\ref{cor:dikous-split-me} we know that $\lambda_F$ is strongly affine. Let $f$ be the function on $K$ such that $\lambda_F=V(f)$. Then $f$ attains only values $0,\frac12,1$. Since $[f=1]\in\H_\delta$ and $[f=0]\in\sigma(\H)$, we deduce that $[f=\frac12]\in\sigma(\H)$ as well. Further, by Lemma~\ref{L:dikous-split-new} the set $[f=\frac12]$ is universally null and hence it contains no compact perfect subset.
     It follows from the proof of Lemma~\ref{L:dedicneres}$(b)$ that $[f=\frac12]$ is $\sigma$-scattered. Let
     $$B=\phi^{-1}(\ext X\setminus F)=[f=0]\cap\Ch_E K.$$
     Then 
     $$L\setminus\psi(B)=\jmath^{-1}([f=1]\cap\jmath(L))\in\H_\delta,$$
     thus $\psi(B)\in\H_\sigma$. Further, $\psi(B)\cap\psi(\Ch_E K\setminus B)=\jmath^{-1}([f=\frac12])$ is $\sigma$-scattered.
     Thus $\phi(B)\in\A_H$ by $(c)$. This completes the proof of the equalities.    
   
The last inclusion is obvious. 

Assume $A$ contains no compact perfect subset. By Lemma~\ref{L:maxmiry-dikous} we know that $X$ is standard, so the equality follows from Lemma~\ref{L:SH=ZH}.

Conversely, assume that $A$ contains a compact perfect subset $D$. Let $f=1_{D\times\{1\}}+\frac12 1_{D\times\{0\}}$. Then $V(f)\in A_f(X)$, $\ext X\subset [V(f)=1]\cup[V(f)=0]$, but
$[V(f)>0]\cap \ext X\notin \A_H$ by $(c)$.
\end{proof}

\begin{prop}\label{P:dikous-Afmu-new}
Let $K=K_{L,A}$, $E=E_{L,A}$, $X=X_{L,A}$ and $H=(A_f(X))^\mu$.
Then the following assertions are valid.
    \begin{enumerate}[$(a)$] 
    \item  $\begin{aligned}[t]
            H=(A_f(X))^\sigma=V\big(\{f\in&\ell^\infty(K)\setsep f\circ\jmath\in (\Fr^b(L))^\sigma\mbox{ and }\\&f(t,0)=\tfrac12(f(t,1)+f(t,-1))\mbox{ for }t\in A\}\big).\end{aligned}$
     \item  $\begin{aligned}[t]
            M^s(H)=M(H)=V\big(\{f\in&\ell^\infty(K)\setsep f\circ\jmath\in (\Fr^b(L))^\sigma,\\&f(t,0)=\tfrac12(f(t,1)+f(t,-1))\mbox{ for }t\in A
            \\ & \{t\in A\setsep f(t,1)\neq f(t,-1)\} \mbox{ is $\sigma$-scattered}
            \}\big).\end{aligned}$
    
     \item  $\begin{aligned}[t]
     \A_H=\A^s_H=&
        \{\phi(\psi^{-1}(F)\cap \Ch_{E} K\cup  C_1\times\{1\}\cup 
        C_2\times\{-1\})\setsep\\&\qquad\qquad F\subset L, F\in\sigma(\H),
        C_1,C_2\subset A\mbox{ are $\sigma$-scattered}\}
        \\=&\{\phi(B)\setsep B\subset\Ch_E K, \psi(B)\in\sigma(\H), \\&\qquad\qquad\psi(B)\cap\psi(\Ch_E K\setminus B)\mbox{ is $\sigma$-scattered}\}.
        \end{aligned}
        $
 \item  $\begin{aligned}[t]
             \A_H&=\A_H^s=\ms_H\\&=\{F\cap\ext X\setsep F\mbox{ is a split face}, F,F' \mbox{belong to }\sigma(\H)\\&\qquad\mbox{ and are measure extremal}\}\subset\Z_H. \end{aligned} $      

             Moreover, the converse to the last inclusion holds if and only if $A$ contains no compact perfect subset.

      \end{enumerate}      
\end{prop}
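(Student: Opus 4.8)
The plan is to follow the same four-step pattern already used for the spaces $A_f(X)$ and $A_{sa}(X)\cap\Ba(X)$ in Propositions~\ref{P:dikous-af-new} and~\ref{P:dikous-Bair-new}, now for $H=(A_f(X))^\mu$. The main inputs are: (i) the description of $\Fr^b(L)^\mu=\Fr^b(L)^\sigma$ as bounded $\sigma(\H)$-measurable functions (Lemma~\ref{L:dedicneres}(c)), (ii) the abstract integral representation Theorem~\ref{T:integral representation H} (applicable since $H^\mu=H$ by Lemma~\ref{L:muclosed is closed} and $(A_f(X))^\mu$ is determined by extreme points by Theorem~\ref{extdeter}), (iii) Proposition~\ref{p:topol-stacey}(h) relating fragmented-$\mu$ functions on $K_{L,A}$ to those on $L$, (iv) the structural results on $\ell^\infty(K)\cap E^{\perp\perp}$ and maximal measures from Proposition~\ref{P:dikous-sa-new} and Lemma~\ref{L:maxmiry-dikous}, (v) Lemma~\ref{L:dikous-split-new} and Corollary~\ref{cor:dikous-split-me} characterizing nice split faces, and (vi) Lemma~\ref{L:SH=ZH} together with Lemma~\ref{L:maxmiry-dikous}(2).

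For part $(a)$: the equality $(A_f(X))^\mu=(A_f(X))^\sigma$ follows because, by Lemma~\ref{L:function space} and Proposition~\ref{p:topol-stacey}(h) and (g), the monotone (resp.\ sequential) closure of $A_f(X)$ corresponds to functions $f$ on $K$ with $f\circ\jmath\in(\Fr^b(L))^\mu$ (resp.\ $(\Fr^b(L))^\sigma$), and these two classes coincide on $L$ by Lemma~\ref{L:dedicneres}(c); the averaging condition at points of $A$ is inherited from membership in $\ell^\infty(K)\cap E^{\perp\perp}$ (Proposition~\ref{P:dikous-sa-new}(b)), and one checks it passes to monotone limits. For part $(b)$: $M^s(H)=M(H)$ holds by Proposition~\ref{P:rovnostmulti} (functions in $H$ are universally measurable; but we do not need $X$ standard — instead use that $H\subset A_{sa}(X)$ and appeal to Proposition~\ref{p:multi-pro-mu} reducing to $M^s(A_f(X))$ via $H=(A_f(X))^\mu$, or more directly copy the argument of Proposition~\ref{P:dikous-af-new}(b) replacing $\Fr^b$ by $(\Fr^b)^\sigma$ throughout). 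The inclusion $\subset$ is obtained by testing a candidate multiplier $f$ against the sign-function $g$ of $f$ on the set $\{t\in A\setsep f(t,1)\neq f(t,-1)\}$ and deducing, via Lemma~\ref{L:dedicneres}(b), that this set must be $\sigma$-scattered; the reverse inclusion uses that when the exceptional set is $\sigma$-scattered it is universally null (Lemma~\ref{L:maxmiry-dikous}(2)-style argument) so $\widetilde{fg}$ differs from $fg$ only on a universally null set and equals a function in $H$.

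For part $(c)$: apply Theorem~\ref{T:integral representation H}(i),(ii) — $\A_H=\A_H^s$ is the $\sigma$-algebra of sets $\phi(B)$ with $1_{\phi(B)}$ extendable to an element of $M(H)$; translate $(b)$ through $V$ and the formulas \eqref{eq:podmny ChEK} to get the two displayed descriptions, checking closure under countable unions (which is automatic for a $\sigma$-algebra and matches $\sigma(\H)$ being a $\sigma$-algebra). For part $(d)$: inclusion $\A_H^s\subset\ms_H$ is Theorem~\ref{T:meritelnost-strongmulti}; the reverse reduces to showing that a split face $F$ with $F,F'\in\sigma(\H)$ and both measure extremal has $\lambda_F\in H^\uparrow$ — here use Corollary~\ref{cor:dikous-split-me} to get $\lambda_F$ strongly affine, then build an explicit nondecreasing sequence in $A_f(X)$ converging to $\lambda_{F'}$ (as in the proof of Proposition~\ref{P:dikous-A1-new}(d), enumerating the $\sigma$-scattered exceptional set and using a fragmented-$\sigma$ function whose one-level is $F$). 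The inclusion into $\Z_H$ is trivial, and the equality $\ms_H=\Z_H\iff A$ has no compact perfect subset follows from Lemma~\ref{L:SH=ZH} plus Lemma~\ref{L:maxmiry-dikous}(2) (for the forward direction) and, for the converse, the counterexample $f=1_{D\times\{1\}}+\tfrac12 1_{D\times\{0\}}$ with $D\subset A$ compact perfect, exactly as in Corollary~\ref{cor:dikous-sa-split} and Proposition~\ref{P:dikous-af-new}(d).

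The main obstacle I anticipate is part $(b)$, specifically verifying that the exceptional set being $\sigma$-scattered is \emph{necessary}: this requires Lemma~\ref{L:dedicneres}(b), whose hypothesis is that \emph{every} subset of the set lies in $\sigma(\H)$, so one must produce, for a non-$\sigma$-scattered exceptional set, a \emph{specific} subset $C$ together with a function $g\in H$ (i.e.\ with $g\circ\jmath\in(\Fr^b(L))^\sigma$, equivalently $\sigma(\H)$-measurable) such that $\widetilde{fg}\circ\jmath$ fails to be $\sigma(\H)$-measurable — the point being that $1_C$ itself is not $\sigma(\H)$-measurable while $g$ (taking values in $\{-1,0,1\}$ on the two copies) still needs to be a legitimate member of $H$, which it is since it is supported on the isolated part $A\times\{-1,1\}$ and hence automatically fragmented-$\sigma$. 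Care is needed that the resulting $\widetilde{fg}$ really witnesses non-membership in $M(H)$ rather than merely in $M(A_f(X))$; this is handled by the determinacy of $H$ by extreme points, so non-extendability on $\ext X$ suffices.
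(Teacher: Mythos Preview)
Your plan is essentially the paper's approach, and the ``main obstacle'' paragraph correctly identifies the key step in $(b)$. Two places need sharpening:

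In $(a)$, the inclusion `$\supset$' does not follow from a mere correspondence between monotone closures on $K$ and on $L$. The paper uses the decomposition $f=(f-f\circ\jmath\circ\psi)+f\circ\jmath\circ\psi$: the first summand has $\cdot\circ\jmath=0$, hence lies already in $A_f(X)$ by Proposition~\ref{P:dikous-af-new}$(a)$; the second is $g\circ\psi$ with $g\in(\Fr^b(L))^\mu$, and since $V(h\circ\psi)\in A_f(X)$ for $h\in\Fr^b(L)$, monotone limits in $g$ transfer directly to $(A_f(X))^\mu$. Your phrasing ``one checks it passes to monotone limits'' hides this splitting, which is the actual content.

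In $(d)$, the logical chain is $\A_H=\A_H^s\subset\ms_H\subset\{\text{split-face description}\}\subset\A_H$. You propose to close the loop by showing $\lambda_{F'}\in H^\uparrow$ via an explicit sequence, but since $H^\mu=H$ we already have $H^\uparrow=H$, so this only gives membership in $\ms_H$, not in $\A_H$. What you need is $\lambda_F\in M(H)$ (equivalently, that $F\cap\ext X$ satisfies the description in $(c)$). The paper obtains this directly: from Corollary~\ref{cor:dikous-split-me} and Lemma~\ref{L:dikous-split-new} the set $[f=\tfrac12]$ is universally null, and being also in $\sigma(\H)$ it is $\sigma$-scattered by the proof of Lemma~\ref{L:dedicneres}$(b)$; then $(c)$ applies. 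Your ``enumerating the $\sigma$-scattered exceptional set'' already contains this observation, so drop the sequence construction and invoke $(b)$ or $(c)$ instead.
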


\begin{proof}
    $(a)$: Inclusion `$\subset$' in the first equality is obvious, in the second one it follows from Proposition~\ref{P:dikous-af-new}$(b)$. 

For the proof of the converse inclusions we set
\[
\begin{aligned}
\A&=\{f\in\ell^\infty(K)\setsep f\circ \jmath=0 \text{ on } L\mbox{ and }f(t,0)=\tfrac12(f(t,1)+f(t,-1))\mbox{ for }t\in A\},\\
  \B&=\{f\in\ell^\infty(K)\setsep f\circ\jmath\in (\Fr^b(L))^\mu\mbox{ and }f(t,0)=\tfrac12(f(t,1)+f(t,-1))\mbox{ for }t\in A\},\\
\C&=\{f\in\ell^\infty(K)\setsep f\circ\jmath\in (\Fr^b(L))^\mu\mbox{ and }f=f\circ j\circ\psi\},\\
\D&=\{f\in\ell^\infty(K)\setsep f\circ\jmath\in \Fr^b(L)\mbox{ and }f=f\circ j\circ\psi\}.\\
\end{aligned}
\]
Note that due to Lemma~\ref{L:dedicneres}$(c)$ it is enough to
verify that $V(\B)\subset (A_f(X))^\mu$. To this end, let $b\in \B$ be given. Then we set 
$c=b\circ\jmath\circ\psi$ and $a=b-c$.
Then $a\in \A$, $c\in \C$ and $b=a+c$. It follows that $\B\subset\A+\C$. Since 
 $V(\A)\subset A_f(X)\subset (A_f(X))^\mu$ by Proposition~\ref{P:dikous-af-new}$(a)$,  it is enough to check that $V(\C)\subset (A_f(X))^\mu$. But $V(\D)\subset A_f(X)$ by  Proposition~\ref{P:dikous-af-new}$(a)$ and clearly $\C=\D^\mu$, so $V(\C)=V(\D^\mu)=V(D)^\mu\subset (A_f(X))^\mu$, which finishes the proof.

$(b)$: We proceed similarly as in the proof of Proposition~\ref{P:dikous-af-new}$(b)$: Inclusion $M^s(H)\subset M(H)$ holds always. Again, $V(f\circ\psi)\in M^s(H)$ whenever $f\in(\Fr^b(L))^\sigma$, hence it is enough to prove the remaining inclusions within functions satisfying 
 $f\circ\jmath=0$ (i.e., $f\in\A$ using the notation from the proof of $(a)$).

Let us prove inclusion `$\subset$' from the second equality. Assume that $f\circ\jmath=0$ and $V(f)\in M(H)$.  Let 
\[
B=\{t\in A\setsep f(t,1)\neq f(t,-1)\}.
\]
We want to prove that $B$ is $\sigma$-scattered. Assume not. Then Lemma~\ref{L:dedicneres}$(b)$ provides a subset $C\subset B$ such that $C\notin\sigma(\H)$.
Set
\[
g(t,i)=\begin{cases}
         1,&t\in C, f(t,i)>0,\\
         -1,&t\in C, f(t,i)<0,\\
         0&\text{ otherwise}.
\end{cases}
\]
Then $g$ satisfies $V(g)\in A_f(X)\subset H$. It follows from $(a)$ that $\wt{fg}\circ \jmath\in (\Fr^b(L))^\sigma$.  But
\[
\{t\in L\setsep \wt{fg}(t,0)>0\}=C\notin\sigma(\H).
\]
 Hence $\wt{fg}\circ j\notin (\Fr^b(L))^\sigma$ (by Lemma~\ref{L:dedicneres}$(c)$), a contradiction.
 Hence $B$ is $\sigma$-scattered.

To prove the remaining inclusion assume that $f\circ\jmath=0$ and that the set
\[
B=\{t\in A\setsep f(t,1)\neq f(t,-1)\}
\]
is $\sigma$-scattered. Let now $g$ with $V(g)\in H$ be given. Then $\wt{fg}$ is nonzero only on some subset $C$ of $B$, which is $\sigma$-scattered. Hence it follows by Lemma~\ref{L:dedicneres}$(c)$ that $\wt{fg}\circ j\in (\Fr^b(L))^\sigma$, which proves that $V(\wt{fg})\in H$ by $(a)$. Moreover, $[\wt{fg}\ne fg]\subset B\times\{0\}$. Since $B$ is $\sigma$-scattered and hence universally null, it follows from Lemma~\ref{L:maxmiry-dikous} that $V(\wt{fg})=V(f)V(g)$ $\mu$-almost everywhere for each maximal measure $\mu\in M_1(X)$. 
Hence  $V(f)\in M^s(H)$.

$(c)$: The first equality follows from $(b)$. Let us prove the second one.

`$\subset$': Let $f$ with $V(f)\in M(H)$ and $\ext X\subset [V(f)=0]\cup[V(f)=1]$ be given. 
Set
  \[
  F=\{t\in L\setsep f(t,0)=1\}\quad \mbox{and}\quad C=\{t\in A\setsep f(t,1)\ne f(t,-1)\}.
  \]
Then $F\in\sigma(\H)$ and $C$ is a $\sigma$-scattered set in $L$ (by $(b)$).
The equality
  \[
  \begin{aligned}\relax
  [f=1]\cap\Ch_{E}K&= \psi^{-1}\left(F\right)\cap \Ch_{E}K \cup
 \{(t,i)\in C\times\{-1,1\}\setsep f(t,i)=1\}
  \end{aligned}
  \]
now yields the assertion.

`$\supset$': Let $F\subset L$ be a $\sigma(\H)$ set. Then $1_F\in (\Fr^b(L))^\sigma$ (by Lemma~\ref{L:dedicneres}$(c)$), hence $f=1_F\circ\psi$ satisfies $V(f)\in M(H)$, and thus $\phi(\psi^{-1}(F)\cap \Ch_E K)\in \A_H$. If $C\subset A$ is $\sigma$-scattered, the function $f=1_{C\times \{1\}}+\frac12\cdot 1_{C\times \{0\}}$ satisfies $V(f)\in M(H)$ by $(b)$. Hence $\phi(C\times\{1\})\in \A_H$. Similarly $\phi(C\times\{-1\})\in\A_H$. Now we conclude by noticing that $\A_H$ is closed with respect to finite intersections and countable unions, which finishes the proof of the inclusion ``$\supset$''.

The last equality follows from \eqref{eq:podmny ChEK}.

$(d)$: The first equality is repeated from $(c)$. Inclusion $\A_H^s\subset \ms_H$ follows from Theorem~\ref{T:meritelnost-strongmulti}. Inclusion `$\subset$' from the third equality is obvious.

 Next assume that $F$ is a split face such that $F,F'\in\sigma(\H)$ and $F,F'$ are measure extremal. By Corollary~\ref{cor:dikous-split-me} we know that $\lambda_F$ is strongly affine. Let $f$ be the function on $K$ such that $\lambda_F=V(f)$. Then $f$ attains only values $0,\frac12,1$. Since $[f=1]\in\sigma(\H)$ and $[f=0]\in\sigma(\H)$, we deduce that $[f=\frac12]\in\sigma(\H)$ as well. Further, by Lemma~\ref{L:dikous-split-new} the set $[f=\frac12]$ is universally null and hence it contains no compact perfect subset.
     It follows from the proof of Lemma~\ref{L:dedicneres}$(b)$ that $[f=\frac12]$ is $\sigma$-scattered.        
     Let
     $$B=\phi^{-1}(F\cap\ext X)=[f=1]\cap\Ch_E K.$$
     Then 
     $$\psi(B)=\jmath^{-1}(([f=1]\cup[f=\tfrac{1}{2}]\cap\jmath(L))\in\sigma(\H)$$
    and $\psi(B)\cap\psi(\Ch_E K\setminus B)=\jmath^{-1}([f=\frac12])$ is $\sigma$-scattered.
     Thus $\phi(B)\in\A_H$ by $(c)$. This completes the proof of the equalities.    
   
The last inclusion is obvious. 

Assume $A$ contains no compact perfect subset. By Lemma~\ref{L:maxmiry-dikous} we know that $X$ is standard, so the equality follows from Lemma~\ref{L:SH=ZH}.

Finally, assume that $A$ contains a compact perfect subset $D$. Let $f=1_{D\times\{1\}}+\frac12 1_{D\times\{0\}}$. Then $V(f)\in A_f(X)\subset H$, $\ext X\subset [V(f)=1]\cup[V(f)=0]$, but
$[V(f)=1]\cap \ext X\notin \A_H$ by $(c)$.
\end{proof}

\subsection{Borel strongly affine functions on $X_{L,A}$}

In this section we investigate spaces $\overline{A_s(X_{L,A})}$, $(A_s(X_{L,A}))^\mu$, $A_b(X_{L,A})\cap \Bo_1(X_{L,A})$ and $(A_b(X_{L,A})\cap \Bo_1(X_{L,A}))^\mu$. So, these spaces consist of Borel strongly affine functions which are not necessarily Baire.
We start by describing the space $\overline{A_s(X)}$. To this end, we need a basic information on derivation of a topological space. If $B$ is a topological space, let $B^{(0)}=B$, $B^{(1)}$ denote the set of all accumulation points of $B$ (in $B$). Inductively we define $B^{(n+1)}=(B^{(n)})^{(1)}$ for $n\in\en$. We say that $B$ is  \emph{of finite height} provided $B^{(n)}=\emptyset$ for some $n\in\en\cup\{0\}$. We further recall that a set $B$ is \emph{isolated} if all its points are isolated (i.e., $B^{(1)}=\emptyset$).

\begin{lemma}\label{L:finite rank}
    Let $B$ be a topological space. Then the following two assertions are equivalent.
    \begin{enumerate}[$(i)$]
        \item $B$ is of finite height.
        \item $B$ is a finite union of isolated subsets of $B$.
    \end{enumerate}
\end{lemma}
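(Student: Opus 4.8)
The plan is to prove the equivalence $(i)\iff(ii)$ by two straightforward inductions on $n$, the first providing a decomposition once we know $B^{(n)}=\emptyset$, the second controlling derivatives once we know $B$ is a finite union of isolated sets.

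\textbf{Direction $(i)\implies(ii)$.} First I would prove by induction on $n$ the claim: if $B^{(n)}=\emptyset$, then $B$ is a union of at most $n$ isolated subsets. The base case $n=0$ is trivial (empty space). For the inductive step, suppose $B^{(n+1)}=\emptyset$. Set $C=B^{(1)}$, so that $C^{(n)}=(B^{(1)})^{(n)}=B^{(n+1)}=\emptyset$; here one must be slightly careful that iterated derivation inside the subspace $C$ agrees with the relative derivation inherited from $B$, which is the routine point that for $A\subset B$ the set of accumulation points of $A$ computed in $B$ and the derived set of $A$ as a space coincide (this is the only mildly delicate bookkeeping in this direction). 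By the inductive hypothesis $C=B^{(1)}$ is a union of at most $n$ isolated subsets $C_1,\dots,C_n$. The set $B\setminus B^{(1)}$ consists exactly of the isolated points of $B$, hence is itself an isolated subset of $B$; call it $C_0$. Then $B=C_0\cup C_1\cup\dots\cup C_n$ is a union of at most $n+1$ isolated subsets, completing the induction.

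\textbf{Direction $(ii)\implies(i)$.} Suppose $B=B_1\cup\dots\cup B_n$ with each $B_i$ isolated in $B$. I would show by induction on $n$ that $B^{(n)}=\emptyset$. For $n=1$ this is the definition of $B$ being isolated. For the inductive step, consider the open-in-$B$? — not quite, so instead I would argue directly: let $x\in B^{(1)}$. Since $B_n$ is isolated, $x\notin B_n$ would still need an argument, so the cleaner route is: for each $i$, pick (using that $B_i$ is isolated in $B$) that every point of $B_i$ has a $B$-neighbourhood meeting $B_i$ only in itself. Then $B^{(1)}\subset \bigcup_{i<n}\overline{B_i}\cap B$ is not immediate either; the efficient formulation is to observe that $B^{(1)}\cap B_n=\emptyset$ is false in general, so one truncates differently: set $B'=B_1\cup\dots\cup B_{n-1}$. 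One checks $B^{(1)}\subset B'$: if $x\in B_n$, isolation of $B_n$ gives a neighbourhood $U$ of $x$ with $U\cap B_n=\{x\}$, so every other point of $U$ lies in $B'$, whence $x$ is an accumulation point of $B$ iff it is an accumulation point of $B'$; but since $x\in B_n$, we must still rule out $x\in B^{(1)}$ — here note $x$ could accumulate points of $B'$. So in fact $B^{(1)}$ need not avoid $B_n$, and the correct statement is $B^{(1)}\subset B'$ only after noting $B^{(1)}\cap B_n$ consists of points isolated in $B_n$ but accumulating $B'$, which are still in the closure of $B'$. The clean fix is: $B^{(1)}$ is a subspace which is the union of the $n$ isolated sets $B_i\cap B^{(1)}$, and each $B_i\cap B^{(1)}$ is isolated \emph{in $B^{(1)}$}; moreover $B_n\cap B^{(1)}$ is actually open in $B^{(1)}$ and isolated, so removing it, $B^{(1)}$ minus an isolated-open piece is covered by $n-1$ isolated sets, giving $(B^{(1)})^{(n-1)}=\emptyset$, i.e. $B^{(n)}=\emptyset$. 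I would phrase this cleanly via the auxiliary observation that if a space is covered by $n$ isolated subsets then its derived set is covered by $n-1$ isolated subsets (because the points lying in at least one of the subsets as an isolated-in-$B$ point drop out upon one derivation), and then iterate.

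\textbf{Main obstacle.} The genuine content is modest; the only place demanding care is the second direction, where one must correctly argue that covering a space by $n$ isolated subsets forces the derived set to be covered by $n-1$ isolated subsets. The subtlety is that ``isolated in $B$'' is not inherited verbatim by $B^{(1)}$, so I expect to spend the bulk of the write-up making precise the reduction ``$B^{(1)}$ is covered by $n-1$ isolated subsets''; concretely, one shows that the subset $B_n\setminus B^{(1)}$ of isolated-in-$B$ points can be discarded and that $B^{(1)}\subset (B_1\setminus B^{(1)})\cup\dots$ fails, so one instead works with $B^{(1)}$ intrinsically and notes its points all lie in some $B_i$ where they are \emph{not} isolated-in-$B$, which for $i=n$ is impossible, yielding $B^{(1)}\subset B_1\cup\dots\cup B_{n-1}$ as subsets, each still isolated in the subspace $B^{(1)}$. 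Everything else is bookkeeping about derived sets of subspaces, which I would state as a one-line lemma and not belabor.
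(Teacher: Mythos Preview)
Your direction $(i)\implies(ii)$ is correct and is the same argument as the paper's, just phrased recursively rather than by writing down the layers $B^{(k)}\setminus B^{(k+1)}$ explicitly.

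Your direction $(ii)\implies(i)$ has a genuine gap. After correctly noticing that the naive inclusion $B^{(1)}\subset B_1\cup\dots\cup B_{n-1}$ is not obvious, your final ``Main obstacle'' paragraph asserts it anyway, with the justification that points of $B^{(1)}$ are ``not isolated-in-$B$, which for $i=n$ is impossible''. This conflates two different notions: saying that $B_n$ is an isolated subset of $B$ means every point of $B_n$ is isolated \emph{in the subspace $B_n$}, not that it is isolated \emph{in $B$}. A point of $B_n$ can perfectly well be an accumulation point of $B$ (through points of the other $B_i$). Concretely, take $B=\{0\}\cup\{1/k:k\ge1\}\subset\er$, $B_1=\{1/k:k\ge1\}$, $B_2=\{0\}$. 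Both are isolated subsets of $B$, yet $B^{(1)}=\{0\}=B_2\not\subset B_1$. So your claimed reduction ``derived set is covered by $n-1$ isolated subsets'' does not follow from the argument you give, and the induction does not close.

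The paper avoids this trap by organising the induction differently: instead of inducting on $n$ and trying to shrink the cover of $B^{(1)}$, it inducts on the height of $B_1$ in the decomposition $B=B_1\cup B_2$ (with $B_1$ of finite height and $B_2$ isolated), showing $B^{(2)}\subset (B_1)^{(1)}\cup B_2$. The point is that near an isolated point $x$ of $B_1$ one has an open $U\subset B_2\cup\{x\}$, which forces $U^{(2)}=\emptyset$; hence $B^{(2)}$ misses all isolated points of $B_1$. This gives a set of strictly smaller height plus the same isolated piece, and the induction goes through.
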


\begin{proof}
    $(i)\implies(ii)$: Assume $B^{(n)}=\emptyset$. Then sets 
    $$B^{(0)}\setminus B^{(1)},B^{(1)}\setminus B^{(2)},\dots B^{(n-1)}\setminus B^{(n)}$$
    are isolated and their union is $B$.

    $(ii)\implies(i)$: If $B$ is isolated, then $B^{(1)}=\emptyset$ and hence $B$ has finite height. To complete the proof it remains to show that $B$ has finite height provided $B=B_1\cup B_2$, where $B_1$ has finite height and $B_2$ is isolated. 
    This may be proved by induction on the height of $B_1$. If $(B_1)^{(0)}=\emptyset$, then $B=B_2$ is isolated, hence of finite height. Assume that $n\in\en\cup\{0\}$ and the statement holds if $(B_1)^{(n)}=\emptyset$.    
    Next, assume $(B_1)^{(n+1)}=\emptyset$. Let $x\in B_1$ be any isolated point of $B_1$. Then there is an open set $U\subset B$ with $U\cap B_1=\{x\}$, so $U\subset B_2\cup\{x\}$. It follows that $U^{(2)}=\emptyset$. Since $x$ is arbitrary, we deduce that $B^{(2)}\subset (B_1)^{(1)}\cup B_2$. Since $((B_1)^{(1)})^{(n)}=(B_1)^{(n+1)}=\emptyset$, the induction hypothesis says that $B^{(2)}$ is of finite height. Thus $B$ is of finite height as well.
\end{proof}

\begin{lemma}\label{L:sigma isolated} Let $L$ be a compact space and $B\subset L$.
\begin{enumerate}[$(a)$]
    \item If $B$ is isolated, then $B$ is an $(F\wedge G)$ set.
    \item If $B$ is of finite height, then $1_B\in\Bo_1(L)$.
    \item If $B$ is $\sigma$-isolated, then each subset of $B$ is an $(F\wedge G)_\sigma$ set.
\end{enumerate} 
\end{lemma}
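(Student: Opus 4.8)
The plan is to prove each of the three assertions of Lemma~\ref{L:sigma isolated} in turn, with $(a)$ being the base case, $(b)$ following from $(a)$ together with Lemma~\ref{L:finite rank}, and $(c)$ following from $(a)$ by a countable-union argument.

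For $(a)$, suppose $B$ is isolated. For each $x\in B$ pick an open set $U_x\subset L$ with $U_x\cap B=\{x\}$, and set $G=\bigcup_{x\in B}U_x$, which is open and contains $B$. I claim $B$ is closed in $G$: if $y\in G\cap\overline B$, then $y\in U_x$ for some $x\in B$, and since $U_x$ is a neighbourhood of $y$ meeting $B$ only in $x$, necessarily $y=x\in B$. Hence $B=\overline B\cap G$ is the intersection of a closed set and an open set, i.e. an $(F\wedge G)$ set. (Here I use the notation $F\wedge G$ for intersections of a closed and an open set, as introduced in Section~\ref{ssc:csp}.)

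For $(b)$, assume $B$ is of finite height. By Lemma~\ref{L:finite rank} we may write $B=\bigcup_{k=1}^n B_k$ with each $B_k$ isolated in $B$; but an isolated subset of $B$ need not be isolated in $L$, so I would instead use the derived-set filtration directly: the sets $B_k=B^{(k-1)}\setminus B^{(k)}$ for $k=1,\dots,n$ (where $B^{(n)}=\emptyset$) partition $B$, and each $B_k$, being an isolated subset of the space $B^{(k-1)}$, is an $(F\wedge G)$ set \emph{relative to} $B^{(k-1)}$ by the argument of $(a)$. Since $B^{(k-1)}$ is itself closed in $L$ (as the iterated set of accumulation points of a subset of a compact, hence of $\overline B$, and more carefully: $B^{(j)}$ is closed in $\overline{B^{(j-1)}}$, so by induction each $\overline{B^{(j)}}$ works; I would take closures throughout to stay inside $L$), a relative $(F\wedge G)$ set in a closed subset of $L$ is an $(F\wedge G)$ set in $L$. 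Therefore $1_{B_k}$ is $(F\wedge G)$-measurable, hence of the first Borel class, and $1_B=\sum_{k=1}^n 1_{B_k}$ is a finite sum of functions in $\Bo_1(L)$, so $1_B\in\Bo_1(L)$. The mild technical point to watch is keeping all the intermediate sets closed in $L$ (not merely locally closed); taking successive closures of the derived sets handles this cleanly, and one checks that $\overline{B^{(j)}}\setminus \overline{B^{(j+1)}}$ is still isolated in $\overline{B^{(j)}}$ up to absorbing finitely many extra isolated points, which does not affect Borel class one.

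For $(c)$, assume $B=\bigcup_{n\in\en}B_n$ with each $B_n$ isolated in $B$ (hence isolated in $L$, since an isolated subset of $L$ restricted further is still isolated in $L$ — actually I must be careful: isolated in $B$ does not imply isolated in $L$, so I would instead invoke the standard fact that a $\sigma$-isolated set of a topological space can be rewritten as a countable union of sets each of which is isolated in $L$, obtained by intersecting with the open sets witnessing isolation; alternatively simply take the definition of $\sigma$-isolated to mean a countable union of subsets isolated in $L$). Given $C\subset B$, write $C=\bigcup_n (C\cap B_n)$. Each $C\cap B_n$ is an isolated subset of $L$, hence an $(F\wedge G)$ set by $(a)$, so $C$ is a countable union of $(F\wedge G)$ sets, i.e. an $(F\wedge G)_\sigma$ set. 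The main obstacle across the lemma is the bookkeeping about whether ``isolated'' or ``$(F\wedge G)$'' is meant relatively (in $B$) or absolutely (in $L$); once the definitions are pinned down so that the $\sigma$-isolated hypothesis provides subsets isolated in $L$, the rest is routine and follows from part $(a)$ and Lemma~\ref{L:finite rank}.
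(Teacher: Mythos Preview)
Your proof of $(a)$ is correct and coincides with the paper's argument.

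For $(b)$ and $(c)$, however, you manufacture a difficulty that is not there. The property ``isolated'' is defined intrinsically: a set $C$ is isolated if every point of $C$ is isolated in $C$, i.e.\ $C$ is discrete in its subspace topology. Since the subspace topology on $C$ inherited from $B$ coincides with the one inherited from $L$, an isolated subset of $B$ is automatically an isolated subset of $L$. Hence Lemma~\ref{L:finite rank} gives you a finite decomposition $B=\bigcup_{k=1}^n B_k$ with each $B_k$ isolated in $L$, and part $(a)$ applies directly to each $B_k$; this is exactly what the paper does. The same observation dissolves your hesitation in $(c)$: any subset of an isolated set is isolated, so $C\cap B_n$ is isolated in $L$ and $(a)$ applies.

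Your alternative route for $(b)$ via the derived-set filtration runs into a genuine problem: you assert that $B^{(k-1)}$ is closed in $L$, but this is false in general (take $B=\{1/n:n\in\en\}\subset[0,1]$, which has $B^{(1)}=\emptyset$ yet is not closed). Your attempted patch (``taking closures throughout'', ``absorbing finitely many extra isolated points'') is vague and would require real work to make precise. All of this is avoided once you recognise that the relative/absolute distinction for ``isolated'' is vacuous.
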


\begin{proof} These results are known and easy to check. For the sake of completeness we provide the short proofs.

$(a)$: Assume $B$ is isolated. For each $t\in B$ let $U_t\subset L$ be open such that $U_t\cap B=\{t\}$. Then $G=\bigcup_{t\in B}U_t$ is an open set and $B=\overline{B}\cap G$.

$(b)$: If $B$ is isolated, the assertion follows from $(a)$. The general case then follows by Lemma~\ref{L:finite rank}.

Assertion $(c)$ follows from $(a)$.
\end{proof}

This easy lemma will be used in the following propositions. There are some interesting related open problems discussed in the following section.

\begin{prop}\label{P:dikous-lsc--new}
 Let $K=K_{L,A}$, $E=E_{L,A}$, $X=X_{L,A}$ and $H=\overline{A_s(X)}$. Then the following assertions are valid.
    \begin{enumerate}[$(a)$]
       \item Let $f\in\ell^\infty(K)\cap E^{\perp\perp}$ be such that  $V(f)\in A_l(X)$. Then $f\circ\jmath$ is lower semicontinuous on $L$ and for each $\ep>0$ the set $B_\ep=\{t\in A\setsep \abs{f(t,1)-f(t,-1)}\ge \ep\}$ is of finite height.
    \item  $\begin{aligned}[t]
        H=V\big(\{f\in& \ell^\infty(K)\setsep f\circ \jmath\in \overline{\Lb(L)}, f(t,0)=\tfrac{1}{2}(f(t,1)+f(t,-1))\mbox{ for }t\in A,\\ &
        \forall\ep>0\colon \{t\in A\setsep \abs{f(t,1)-f(t,-1)}\ge \ep\}\mbox{ is of finite height}\}\big).
    \end{aligned}$
    
    \item  $M^s(H)=M(H)=H$.
    \item $\begin{aligned}[t]
        \A_H=\A^s_H&= \{\phi(\psi^{-1}(F)\cap \Ch_{E} K\cup  C_1\times\{1\}\cup 
        C_2\times\{-1\})\setsep\\
        &\qquad\qquad F\subset L\text{ is an } (F\wedge G)_\sigma\mbox{ set},C_1,C_2\subset A\text{ are $\sigma$-isolated}\}
        \\&=\{\phi(B)\setsep B\subset\Ch_E K, \psi(B)\mbox{ is an $\FpG_\sigma$ set},\\&\qquad\qquad
        \psi(B)\cap\psi(\Ch_E K\setminus B)\mbox{ is $\sigma$-isolated}\}.
    \end{aligned}$
    
     \item  $\A_H=\A^s_H=\ms_H=\Z_H$.
     
      \item In case $L=[0,1]$ the space $M(H)=M^s(H)$ is not determined by the family $\A_H=\A^s_H$.
              \end{enumerate}
\end{prop}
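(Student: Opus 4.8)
The plan is as follows. By assertion $(c)$ we have $M(H)=M^s(H)=H$, and by Proposition~\ref{p:system-aha}$(a)$ the restriction to $\ext X$ of every member of $H$ is automatically $\A_H$-measurable; hence the only way $M(H)=M^s(H)$ can fail to be determined by $\A_H=\A^s_H$ is that there exists a bounded $\A_H$-measurable function on $\ext X$ that does not extend to a member of $H$. I would construct such a function from a Baire-one function on $[0,1]$ lying outside $\overline{\Lb([0,1])}$.

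Concretely, I would invoke \cite[Proposition 5.1]{odell-rosen} (already used in the proof of Proposition~\ref{P:vetsi prostory srov}$(b)$) to fix $g\in\Ba_1^b([0,1])\setminus\overline{\Lb([0,1])}$ and set $h=g\circ\psi\in\ell^\infty(K_{[0,1],A})$. Since $g$ is Baire, $h$ is universally measurable, and $h$ is constant on the fibres of $\psi$, so the averaging identity $h(t,0)=\tfrac12(h(t,-1)+h(t,1))$ holds for $t\in A$; thus $h\in\ell^\infty(K)\cap E^{\perp\perp}$ by Proposition~\ref{P:dikous-sa-new}$(b)$, whence $V(h)\in A_{sa}(X)$. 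On the other hand $h\circ\jmath=g\notin\overline{\Lb([0,1])}$, so $V(h)\notin H$ by assertion $(b)$. The candidate function is $u=V(h)|_{\ext X}$, which is clearly bounded.

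Next I would verify that $u$ is $\A_H$-measurable. Using $V(h)(\phi(t,i))=h(t,i)=g(t)$ for $(t,i)\in\Ch_E K$, one obtains for each $c\in\er$ that $[u>c]\cap\ext X=\phi(\psi^{-1}([g>c])\cap\Ch_E K)$ and $[u<c]\cap\ext X=\phi(\psi^{-1}([g<c])\cap\Ch_E K)$. Since $g$ is of the first Baire class on the metric space $[0,1]$, the sets $[g>c]$ and $[g<c]$ are $F_\sigma$, hence of type $\FpG_\sigma$; so both of the above sets lie in $\A_H$ by the description in assertion $(d)$ (take the $\sigma$-isolated sets $C_1,C_2$ empty). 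As $\A_H$ is closed under finite intersections and countable unions (Proposition~\ref{p:system-aha}$(a)$), it follows that $u^{-1}(U)\cap\ext X\in\A_H$ for every open $U\subset\er$, i.e., $u$ is $\A_H$-measurable.

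Finally I would rule out an extension: if $m\in M^s(H)=M(H)=H$ satisfied $m|_{\ext X}=u$, then $m$ and $V(h)$ would be two elements of $A_{sa}(X)$ agreeing on $\ext X$, and since $A_{sa}(X)$ is determined by extreme points (Proposition~\ref{P:dikous-sa-new}$(a)$) we would get $m=V(h)\notin H$, a contradiction. Thus $u$ witnesses that $M(H)=M^s(H)$ is not determined by $\A_H=\A^s_H$. I do not expect a genuine obstacle here: the descriptions $(b)$, $(c)$, $(d)$, the determinacy of $A_{sa}(X)$, and the existence of $g$ are all already at hand, so the proof is essentially an assembly; the only points needing a little care are identifying the level sets of $u$ with images under $\phi$ of cylinders over level sets of $g$, and noting that $F_\sigma$ sets are of type $\FpG_\sigma$ so that assertion $(d)$ applies.
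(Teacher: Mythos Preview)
Your proof of assertion $(f)$ is correct and follows essentially the same approach as the paper: both take a function $g\in\Ba_1^b([0,1])\setminus\overline{\Lb([0,1])}$ (from \cite{odell-rosen}), form $h=g\circ\psi$, and observe that $V(h)|_{\ext X}$ is $\A_H$-measurable by the description in $(d)$ while $V(h)\notin H$ by $(b)$. The paper's argument is more terse, simply invoking $(b)$ and $(d)$ directly; you spell out the verification of $\A_H$-measurability via level sets and make explicit the uniqueness-of-extension step using determinacy of $A_{sa}(X)$, but the substance is identical.
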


\begin{proof}
$(a)$: Assume that $V(f)\in A_l(X)$. Then $f$ is lower semicontinuous on $K$ and hence $f\circ \jmath$ is lower semicontinuous on $L$. Fix $\ep>0$ and let $B=B_\ep$.  
Let $M>\norm{f}=\norm{V(f)}$ be chosen.  We claim that $f(t,0)\le M-k\frac{\ep}{2}$ for each $t\in B^{(k)}$ (where $k\in\en\cup\{0\}$).

To this end we proceed inductively. For $k=0$ the assertion obviously holds true.
Assume that for some $k\in\en\cup\{0\}$ the claim holds. Let $t\in B^{(k+1)}$ be given. Then there exists a net $(t_\alpha)\subset B^{(k)}$ such that $t_\alpha\to t$. Since $\abs{f(t_\alpha, 1)-f(t_\alpha,-1)}\ge \ep$, we can select $i_\alpha\in \{-1,1\}$ such that $f(t_\alpha, i_\alpha)\le f(t_\alpha,0)-\frac{\ep}{2}$. By Proposition~\ref{p:topol-stacey}$(d)$ and the induction hypothesis  we get
\[
f(t,0)\le  \liminf_\alpha f(t_\alpha,0)-\tfrac{\ep}{2}\le M-k\tfrac{\ep}{2}-\tfrac{\ep}{2}= M-(k+1)\tfrac{\ep}{2}.
\]
This proves the claim.

The claim now yields that $B^{(n)}=\emptyset$ whenever $n\in\en$ satisfies $M-n\frac{\ep}2<-M$.
Hence $B$ is of finite height.

$(b)$: '$\subset$': If $V(f)\in \overline{A_s(X)}$, then clearly $f\circ\jmath\in\overline{\Lb(L)}$. Further, let
$$\F=\{f\in\ell^\infty(K)\cap E^{\perp\perp}\setsep \forall\ep>0\colon B_\ep(f)\mbox{ has finite height}\},$$
where $B_\ep(f)$ is given by the formula from $(a)$.
By assertion $(a)$ we know that $f\in\F$ whenever $V(f)$ is lower semicontinuous. It is easy to see that $\F$ is a closed linear subspace (note that Lemma~\ref{L:finite rank} implies that
the union of two sets of finite height has finite height). It follows that  $f\in\F$ whenever $V(f)\in H=\overline{A_s(X)}$.

\smallskip

Inclusion `$\supset$' will be proved in several steps:

\smallskip

{\tt Step 1:} Let $B\subset A$ be isolated and $f\in\ell^\infty(K)\cap E^{\perp\perp}$ satisfy $f=0$ on $\jmath(L)\cup ((A\setminus B)\times\{1,-1\})$. Then there exists a bounded lower semicontinuous function $g$ on $L$ such that $g\circ\psi+f$ is lower semicontinuous function on $K$. In particular, $V(f)\in A_s(X)$.

\smallskip

To prove the claim, fix $M>\norm{f}$. Since $B$ is isolated, for each $t\in B$ we find and open neighborhood $U_t$ of $t$ in $L$ such that $U_t\cap B=\{t\}$. Let $G=\bigcup\{U_t\setsep t\in  B\}$. Then  $G$ is an open set and we set $g=M\cdot 1_G$.
Clearly, $g$ is bounded and lower semicontinuous. 
We will check that $h=g\circ\psi +f$ is lower semicontinuous on $K$.

To this end fix $(t,i)\in K$. If $i\in\{1,-1\}$, then $(t,i)$ is an isolated point of $K$ and hence the function $h$ even continuous at $(t,i)$. So, assume $i=0$. If $t\in L\setminus G$, then $h(t)=0$. Since clearly $h\ge0$ on $K$, $h$ is lower semicontinuous at $t$. Finally, assume $t\in G$. It means that there is some $s\in B$ with $t\in V_s$. Then
$U=\psi^{-1}(V_s)\setminus\{(s,1),(s,-1)\}$ is an open neighborhood of $(t,0)$ and $h=M$ on $U$. Thus $h$ is continuous at $(t,0)$.

\smallskip

{\tt Step 2:} Let $B\subset A$ be of finite height and $f\in\ell^\infty(K)\cap E^{\perp\perp}$ satisfy $f=0$ on $\jmath(L)\cup ((A\setminus B)\times\{1,-1\})$. Then $V(f)\in A_s(X)$.

\smallskip

Since $B$ is a finite union of isolated sets (by Lemma~\ref{L:finite rank}), $f$ is the sum of a finite number of functions satisfying assumptions of Step 1. It is now enough to use that $A_s(X)$ is a linear space.

\smallskip

{\tt Step 3:} We complete the proof. Let $f$ satisfy the conditions on the right-hand side. Let $\ep>0$ be given. Then there is $g\in \Lb(L)$ such that $\norm{f\circ\jmath-g}<\ep$. Further, the set $B=\{t\in A\setsep \abs{f(t,1)-f(t,-1)}\ge \ep\}$  is of finite height. Let  
\[
h(t,i)=\begin{cases} 
               f(t,i)-f(t,0),& t\in B, i\in\{-1,1\},\\
               0,&\text{otherwise}.
         \end{cases}      
\]
By Step 2 we deduce that $V(h)\in A_s(X)$. By Proposition~\ref{P:dikous-sa-new}$(v)$ and Lemma~\ref{L:function space}$(b)$  we deduce that $V(g\circ \psi)\in A_s(X)$. So, $V(h+g\circ\psi)\in A_s(X)$ as well. Let us estimate $\|f-h-g\circ\psi\|$:

If $t\in L$, then
\[
\begin{aligned}
\abs{f(t,0)-h(t,0)-g(t))}&=\abs{(f\circ \jmath)(t)-g(t)}<\ep.
\end{aligned}
\]
If $t\in A\setminus B$ and $i\in\{1,-1\}$, then 
\[
\begin{aligned}
\abs{f(t,i)-h(t,i)-g(t)}&\le\abs{f(t,i)-f(t,0)}+\abs{f(t,0)-g(t)}\\&=\tfrac12\abs{f(t,1)-f(t,-1)}+\abs{(f\circ \jmath)(t)-g(t)} <\tfrac\ep2+\ep<2\ep
\end{aligned}
\]
Finally, if $t\in B$ and $i\in\{-1,1\}$, then
\[
\begin{aligned}
\abs{f(t,i)-h(t,i)-g(t)}&=\abs{f(t,0)-g(t))}=\abs{(f\circ \jmath)(t)-g(t)}<\ep.
\end{aligned}
\]
Thus $\norm{f-h-g\circ\psi}< 2\ep$. Since $\ep>0$ is arbitrary, we deduce $V(f)\in\overline{A_s(X)}=H$.

$(c)$:  The first equality follows from Proposition~\ref{P:multi strong pro As}. Since $X$ is a simplex, the second equality follows from Proposition~\ref{P:As pro simplex}.

$(d)$: The first equality follows from $(c)$. Let us prove the second one.

`$\subset$': Let $f$ with $V(f)\in M(H)=H$ be given.  For each $n\in\en$ set
  \[
  F_n=\{t\in L\setsep f(t,0)>\tfrac1n\}\quad \mbox{and}\quad C_n=\{t\in A\setsep \abs{f(t,1)-f(t,-1)}\ge\tfrac1n\}.
  \]
  Since $f\circ\jmath\in\overline{\Lb(L)}\subset\Bo_1(L)$, we deduce that $F_n$ is an $(F\wedge G)_\sigma$ set. By $(b)$ each $C_n$ is of finite height. The equality
  \[
  \begin{aligned}\relax
  [f>0]\cap\Ch_{E}K&= \psi^{-1}\left(\bigcup_{n\in\en} (F_n\setminus C_n)\right)\cap \Ch_{E}K \cup\\
 &\quad \bigcup_{n\in\en}\{(t,i)\in C_n\times\{-1,1\}\setsep f(t,i)>0\}.
  \end{aligned}
  \]
now yields the assertion. Indeed, each $C_n$ is a finite union of isolated sets (by Lemma~\ref{L:finite rank}) and thus $F_n\setminus C_n$ is an $(F\wedge G)_\sigma$ set (by Lemma~\ref{L:sigma isolated}(b)). Hence their union is an $(F\wedge G)_\sigma$ set.

`$\supset$': Let $G\subset L$ be an open set. Then $1_G\circ \psi\in A_s(X)$, hence $\psi^{-1}(G)\cap\Ch_E K\in\A_H$. The same holds for $F\subset L$ closed. Hence, by Proposition~\ref{p:system-aha} we deduce that $\psi^{-1}(F)\cap\Ch_E K\in\A_H$ for any $(F\wedge G)_\sigma$ set $F\subset L$.
 If $C\subset A$ is isolated, the function $f=1_{C\times \{1\}}-1_{C\times \{-1\}}$ satisfies $V(f)\in H$ by $(b)$. Hence $\phi(C\times\{1\})\in \A_H$, because $C\times\{1\}=[f>0]$. Similarly $\phi(C\times\{-1\})\in\A_H$. Now we conclude by noticing that $\A_H$ is closed with respect to finite intersections and countable unions, which finishes the proof of the inclusion `$\supset$'.

Finally, the last equality follows from \eqref{eq:podmny ChEK}.

 $(e)$:  The first equality is repeated from $(d)$. The remaining equalities follow from Proposition~\ref{P:As pro simplex}.

$(f)$: Assume $L=[0,1]$. Within the proof of Proposition~\ref{P:vetsi prostory srov}$(b)$
we found a function $f\in\Bo_1^b(L)\setminus\overline{\Lb(L)}$. Then $V(f\circ\psi)\notin H$ (by $(b)$), but $V(f\circ\psi)|_{\ext X}$ is $\A_H$-measurable by $(d)$.
\end{proof}

\begin{prop}\label{P:dikous-Asmu-new}
    Let $K=K_{L,A}$, $E=E_{L,A}$  $X=X_{L,A}$ and $H=(A_s(X))^\mu$. Then the following assertions are valid.
    \begin{enumerate}[$(a)$]
        \item $\begin{aligned}[t]
                    H=(A_s(X))^\sigma=V\big(\{f\in \ell^\infty(K)&\setsep
                      f\circ\jmath\in\Bo^b(L),\\&f(t,0)=\tfrac{1}{2}(f(t,1)+f(t,-1))\mbox{ for }t\in A, \\&  \{t\in A\setsep f(t,1)\ne f(t,-1)\}\mbox{ is $\sigma$-isolated}\}\big)\end{aligned}$ 
        \item $M^s(H)=M(H)=H$.
        \item $\begin{aligned}[t]
            \A_H=\A_H^s&= \{\phi(\psi^{-1}(F)\cap \Ch_{E} K\cup  C_1\times\{1\}\cup 
        C_2\times\{-1\})\setsep\\ &\qquad\qquad F\subset L\mbox{ is a Borel set, }
        C_1,C_2\subset A\mbox{ are $\sigma$-isolated}\}
        \\&=\{\phi(B)\setsep B\subset\Ch_E K, \psi(B)\mbox{ is a Borel set},\\&\qquad\qquad \psi(B)\cap\psi(\Ch_E K\setminus B)\mbox{ is $\sigma$-isolated}\}.
        \end{aligned}$
    
     \item $\A_H=\A^s_H=\ms_H=\Z_H=\Sigma'$, where $\Sigma'$ is the $\sigma$-algebra from Lemma~\ref{L:miry na ext}.  
    \end{enumerate}
\end{prop}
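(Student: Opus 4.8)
\textbf{Proof proposal for Proposition~\ref{P:dikous-Asmu-new}.}
The plan is to follow the same scheme as in Propositions~\ref{P:dikous-lsc--new}, \ref{P:dikous-af-new} and \ref{P:dikous-Afmu-new}. For $(a)$ I would first recall that $X=X_{L,A}$ is a simplex, so $(A_s(X))^\mu=(A_s(X))^\sigma$ by Proposition~\ref{P:vetsi prostory srov}$(d)$. To obtain the representation, I would take the monotone sequential closure of the description of $\overline{A_s(X)}$ from Proposition~\ref{P:dikous-lsc--new}$(b)$: the condition ``$f\circ\jmath\in\overline{\Lb(L)}$'' turns into ``$f\circ\jmath\in\Bo^b(L)$'' because $\Lb(L)\subset\overline{\Lb(L)}\subset(\Lb(L))^\mu=\Bo^b(L)$ and the latter is $\mu$-closed (Lemma~\ref{L:borel=Lbmu}$(b)$), while the condition ``$\{t\in A\setsep|f(t,1)-f(t,-1)|\ge\ep\}$ is of finite height for every $\ep>0$'' relaxes to ``$\{t\in A\setsep f(t,1)\neq f(t,-1)\}$ is $\sigma$-isolated'' — here one uses Lemma~\ref{L:finite rank} together with the hereditariness of $\sigma$-isolatedness, noting that an increasing countable union of finite-height sets is $\sigma$-isolated and a finite union of isolated sets is of finite height. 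The inclusion ``$\subset$'' is then immediate (a finite union of finite-height sets has finite height, so the set of $f$ in question is a closed subspace containing $\overline{A_s(X)}$ and stable under bounded monotone limits). For ``$\supset$'' I would split $f=f\circ\jmath\circ\psi+(f-f\circ\jmath\circ\psi)$; the first summand is $V(g\circ\psi)$ with $g=f\circ\jmath\in\Bo^b(L)=(\Lb(L))^\mu$, hence lies in $(A_l(X))^\mu\subset H$; the second summand vanishes on $\jmath(L)$ and, writing $\{t\in A\setsep f(t,1)\neq f(t,-1)\}$ as an increasing union of isolated sets $B_n$, I would approximate it (as in Step 3 of the proof of Proposition~\ref{P:dikous-lsc--new}) by functions supported on finitely many $B_n$, which lie in $A_s(X)$ by Steps 1--2 there, and then pass to a bounded monotone limit.

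For $(b)$, the equality $M^s(H)=M(H)$ is immediate from Proposition~\ref{P:multi strong pro As} (since $H\subset(A_s(X))^\sigma$), and $M(H)=H$ is exactly Proposition~\ref{P:As pro simplex} (since $X$ is a simplex). For $(c)$, $\A_H=\A^s_H$ follows at once from $(b)$, because with $M(H)=M^s(H)$ the two families in Proposition~\ref{p:system-aha} coincide; moreover $H^\mu=H$, so by Proposition~\ref{p:system-aha}$(e)$ and Theorem~\ref{T:integral representation H}$(i)$ we may use the description $\A_H=\{E\subset\ext X\setsep\exists\,m\in M(H),\ m|_E=1,\ m|_{\ext X\setminus E}=0\}$. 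Combining this with $M(H)=H$ and the representation in $(a)$: such an $m$ equals $V(f)$ for an $f$ attaining only the values $0,1$ on $\Ch_E K$, hence $f\circ\jmath\in\Bo^b(L)$ attains only $0,\tfrac12,1$, the set $F=\{t\in L\setsep f(t,0)=1\}$ is Borel, and $C=\{t\in A\setsep f(t,1)\neq f(t,-1)\}$ is $\sigma$-isolated; the decomposition of $[f=1]\cap\Ch_E K$ into its $\psi^{-1}(F)$ part and its part over $C$ gives the first displayed form of $\A_H$, and \eqref{eq:podmny ChEK} converts it into the second. The converse inclusion is obtained by constructing, for given Borel $F\subset L$ and $\sigma$-isolated $C_1,C_2\subset A$, a function $f$ with the prescribed zero set out of $1_F\in\Bo^b(L)$ and functions supported on $C_i\times\{\pm1\}$; that this $f$ yields $V(f)\in H=M(H)$ is precisely what $(a)$ and $(b)$ guarantee.

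For $(d)$, the chain $\A_H=\A^s_H=\ms_H=\Z_H$ is already supplied by $(b)$ together with Proposition~\ref{P:As pro simplex} (and $\A^s_H\subset\ms_H$ by Theorem~\ref{T:meritelnost-strongmulti}), so the only new content is $\A_H=\Sigma'$, where $\Sigma'$ is the $\sigma$-algebra of Lemma~\ref{L:miry na ext}. Here I would analyze $\Sigma'$ directly: it is generated by traces on $\ext X$ of closed extremal sets of $X$ and of Baire sets of $X$. Closed extremal sets of $X_{L,A}$ differ from closed faces only by adjoining or deleting isolated extreme points $\phi(t,\pm1)$, $t\in A$ (each of which is itself a closed face, hence closed extremal), so by Proposition~\ref{P:dikous-spoj-new}$(b)$ their traces generate the $\sigma$-algebra containing all $\phi(\psi^{-1}(C)\cap\Ch_E K)$ for $C\subset L$ closed and all singletons $\{\phi(t,\pm1)\}$; in particular it contains $\phi(\psi^{-1}(F)\cap\Ch_E K)$ for every Borel $F\subset L$ and $\phi(C_i\times\{\pm1\})$ for every $\sigma$-isolated $C_i\subset A$ (using Lemma~\ref{L:sigma isolated} to see that these are $\sigma$-unions of traces of closed extremal sets). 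Conversely, traces of Baire sets of $X$ pull back through $\phi$ to traces of the form $[g\le c]\cap\Ch_E K$, $g\in E=E_{L,A}$, which are already contained in the family of $(c)$. Matching both inclusions against the explicit description of $\A_H$ from $(c)$ yields $\A_H=\Sigma'$.

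The main obstacle I expect is twofold. First, the ``$\supset$'' half of $(a)$: producing, from the list of conditions, a genuine element of $(A_s(X))^\mu$ requires the monotone-limit construction over an exhausting sequence of isolated subsets of $A$, leaning on the step lemmas behind Proposition~\ref{P:dikous-lsc--new}. Second, and more delicate, the identification $\A_H=\Sigma'$ in $(d)$: since Baire sets and Borel sets of $L$ need not coincide when $L$ is non-metrizable, one must be careful that \emph{all} Borel $F\subset L$ arise — which they do, but only because closed extremal sets of $X$ pull back to \emph{all} closed subsets of $L$ (whence the whole Borel $\sigma$-algebra of $L$), not because Baire sets of $X$ suffice on their own; pinning this down, and matching the ``$\sigma$-isolated'' coordinate pieces with traces of closed extremal sets, is the technical heart of the argument.
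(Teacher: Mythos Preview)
Your proposal is correct and follows essentially the same strategy as the paper: parts $(b)$ and $(c)$ match exactly (via Propositions~\ref{P:multi strong pro As} and~\ref{P:As pro simplex}), and for $(a)$ both you and the paper split $f=f\circ\jmath\circ\psi+(f-f\circ\jmath\circ\psi)$ and handle the two pieces separately --- your handling of the second summand via an exhaustion by finite-height sets is in fact the right argument (the paper's shortcut ``$V(\mathcal A)\subset\overline{A_s(X)}$'' tacitly uses the $\sigma$-isolated hypothesis that you make explicit).

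For $(d)$, your outline is on target, including the key observation that the Borel $\sigma$-algebra of $L$ enters through closed extremal sets rather than through Baire sets of $X$. One sharpening: to show $\phi(C\times\{1\})\in\Sigma'$ for $C\subset A$ isolated, Lemma~\ref{L:sigma isolated} is not the tool --- instead, with $D=\overline{C}\setminus C$, the set $\phi\big((\psi^{-1}(D)\cap\Ch_E K)\cup C\times\{1\}\big)$ is the trace on $\ext X$ of a closed face (this is exactly a facially closed set in the description of Proposition~\ref{P:dikous-spoj-new}$(b)$, with $U=L\setminus D$ and $F=C\times\{1\}$), and subtracting the trace $\phi(\psi^{-1}(D)\cap\Ch_E K)$ of another closed face gives $\phi(C\times\{1\})\in\Sigma'$. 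With this in hand, both inclusions $\A_H\subset\Sigma'$ and $\Sigma'\subset\A_H$ go through as you describe.
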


\begin{proof}
    $(a)$: The first equality follows from Proposition~\ref{P:vetsi prostory srov}$(d)$ as $X$ is a simplex. Inclusion `$\subset$' of the second inclusion follows from Proposition~\ref{P:dikous-lsc--new}$(b)$ using Lemma~\ref{L:finite rank}. To prove the converse we proceed similarly as in the proof of Proposition~\ref{P:dikous-Afmu-new}$(a)$: We set
    \[
\begin{aligned}
\A&=\{f\in\ell^\infty(K)\setsep f\circ \jmath=0 \text{ on } L\mbox{ and }f(t,0)=\tfrac12(f(t,1)+f(t,-1))\mbox{ for }t\in A\},\\
  \B&=\{f\in\ell^\infty(K)\setsep f\circ\jmath\in (\Lb(L))^\mu\mbox{ and }f(t,0)=\tfrac12(f(t,1)+f(t,-1))\mbox{ for }t\in A\},\\
\C&=\{f\in\ell^\infty(K)\setsep f\circ\jmath\in (\Lb(L))^\mu\mbox{ and }f=f\circ j\circ\psi\},\\
\D&=\{f\in\ell^\infty(K)\setsep f\circ\jmath\in \overline{\Lb(L)}\mbox{ and }f=f\circ j\circ\psi\},\\
\end{aligned}
\]

Note that due to Lemma~\ref{L:borel=Lbmu}$(b)$ it is enough to
verify that $V(\B)\subset (A_s(X))^\mu$. To this end, let $b\in \B$ be given. We set 
$c=b\circ\jmath\circ\psi$ and $a=b-c$.
Then $a\in \A$, $c\in \C$ and $b=a+c$. It follows that $\B\subset\A+\C$. Since 
 $V(\A)\subset \overline{A_s(X)}\subset (A_s(X))^\mu$ by Proposition~\ref{P:dikous-lsc--new}$(b)$,  it is enough to check that $V(\C)\subset (A_s(X))^\mu$. But $V(\D)\subset \overline{A_s(X)}$ by  Proposition~\ref{P:dikous-lsc--new}$(b)$ and clearly $\C=\D^\mu$, so $V(\C)=V(\D^\mu)=V(D)^\mu\subset (A_s(X))^\mu$, which finishes the proof.

$(b)$: The first equality follows from Proposition~\ref{P:multi strong pro As}. Since $X$ is a simplex, the second equality follows from Proposition~\ref{P:As pro simplex}.

$(c)$: The first equality follows from $(b)$, let us prove the second one.

`$\subset$': Let $f$ with $V(f)\in M(H)=H$ and $\ext X\subset [V(f)=0]\cup[V(f)=1]$ be given. 
Set
  \[
  F=\{t\in L\setsep f(t,0)=1\}\mbox{ and }C=\{t\in A\setsep f(t,1)\ne f(t,-1)\}.
  \]
Then $F$ is Borel and $C$ is a $\sigma$-isolated set in $L$ (by $(a)$).
The equality
  \[
  \begin{aligned}\relax
  [f=1]\cap\Ch_{E}K&= \psi^{-1}\left(F\right)\cap \Ch_{E}K \cup
 \{(t,i)\in C\times\{-1,1\}\setsep f(t,i)=1\}.
  \end{aligned}
  \]
now yields the assertion.

`$\supset$': Let $F\subset L$ be a Borel set. Then $1_F$ is a Borel function, hence $f=1_F\circ\psi$ satisfies $V(f)\in H= M(H)$ by $(a)$, and thus $\phi(\psi^{-1}(F)\cap \Ch_E K)\in \A_H$. If $C\subset A$ is $\sigma$-isolated, the function $f=1_{C\times \{1\}}+\frac12\cdot 1_{C\times \{0\}}$ satisfies $V(f)\in H= M(H)$ by $(a)$ (and Lemma~\ref{L:sigma isolated}$(c)$). Hence $\phi(C\times\{1\})\in \A_H$. Similarly $\phi(C\times\{-1\})\in\A_H$. Now we conclude by noticing that $\A_H$ is closed with respect to finite intersections and countable unions, which finishes the proof of the inclusion `$\supset$'.

Finally, the last equality follows from \eqref{eq:podmny ChEK}.

$(d)$: The first equality is repeated from $(c)$, the second and the third one follow from Proposition~\ref{P:As pro simplex}. Let us prove the last one. We will prove two inclusions:

\smallskip

$\Sigma'\subset \A_H$: Let $B\subset X$ be a Baire set. Then $B'=\phi^{-1}(B)$ is a Baire subset of $K$. Let 
$$\begin{gathered}
    C=\{t\in A\setsep B'\cap \{(t,-1),(t,0),(t,1)\} \mbox{ contains one or two points}\},\\ D=\{t\in L\setsep (t,0)\in B'\}.\end{gathered}$$
By Proposition~\ref{p:topol-stacey}$(c)$  we deduce that $D$ is a Baire set and $C$ is countable. Thus $D\setminus C$ is Borel and $C$ is $\sigma$-isolated and
$$B\cap\ext X=\phi(B'\cap\Ch_E K)=\phi( \psi^{-1}(D\setminus C)\cap\Ch_E K\cup (C\times\{-1,1\}\cap B'))\in\A_H$$
by $(c)$.

Further, let $F\subset X$ be a closed extremal set. Let $F'=\phi^{-1}(F)$. Then $F'$ is a closed set.
Let 
$$\begin{gathered}
    C=\{t\in A\setsep F'\cap \{(t,-1),(t,0),(t,1)\} \mbox{ contains one or two points}\},\\ D=\{t\in L\setsep (t,0)\in F'\}.\end{gathered}$$
Then $D$ is closed and, moreover, $\psi^{-1}(D)\subset F'$ as $F$ is extremal. Therefore $(t,0)\notin F'$ whenever $t\in C$. It follows that any accumulation point of $C$ belongs to $D$, thus $C$ is isolated. We get that
$$F\cap\ext X=\phi(F'\cap\Ch_E K)=\phi( \psi^{-1}(D)\cap\Ch_E K\cup (C\times\{-1,1\}\cap F'))\in\A_H$$
by $(c)$. 

This completes the proof of inclusion $\Sigma'\subset\A_H$.

\smallskip

$\A_H\subset\Sigma'$: Assume $F\subset L$ is closed. Then $F'=\phi(\psi^{-1}(F))$ is a closed extremal set, thus
$$\phi(\psi^{-1}(F)\cap\Ch_E L)=F'\cap\ext X\in \Sigma'.$$
Since $\Sigma'$ is a $\sigma$-algebra,  we deduce that
$\phi(\psi^{-1}(B)\cap\Ch_E L)\in\Sigma'$ for any Borel set $B\subset L$.

Further, let $C\subset A$ be an isolated set. Then $D=\overline{C}\setminus C$ is a closed subset of $L$ and 
$\phi(\psi^{-1}(D)\cup C\times\{1\})$ is a closed extremal set in $X$. Thus 
$$\phi(\psi^{-1}(D)\cap\Ch_E K\cup C\times\{1\})=\phi(\psi^{-1}(D)\cup C\times\{1\})\cap\ext X\in\Sigma'.$$
Since $\Sigma'$ is a $\sigma$-algebra, using the previous paragraph we deduce that
\[
\phi(C\times\{1\})=\phi(\psi^{-1}(D)\cap\Ch_E K\cup C\times\{1\})\setminus \phi(\psi^{-1}(D)\cap\Ch_E K)\in\Sigma'.
\]
Similarly $\phi(C\times\{-1\})\in\Sigma'$. Using $(c)$ we now easily conclude that $\A_H\subset\Sigma'$ and the proof is complete.
\end{proof}

Now we come to the case of affine functions of the first Borel class on $X$. To simplify its formulation we introduce the following piece of notation. If $L$ is a compact space, we set
$$\begin{aligned} 
\H_B&=\{D\subset L\setsep \forall C\subset D\colon 1_C\in\Bo_1(L)\} \\&= \{D\subset L\setsep \forall C\subset D\colon C\in \FpG_\sigma\ \&\ C\in\FsG_\delta\}.\end{aligned}$$
It follows from Lemma~\ref{L:sigma isolated}$(b)$ that any set of finite height belongs to $\H_B$. Further, by Lemma~\ref{L:dedicneres}$(a)$ we get that any element of $\H_B$ is scattered.

\begin{prop}\label{P:dikous-Bo1-new}
Let $K=K_{L,A}$, $E=E_{L,A}$  $X=X_{L,A}$ and $H=\Bo_1(X)\cap A_b(X)$. Then the following assertions are valid.
\begin{enumerate}[$(a)$]
    \item $\begin{aligned}[t]
            H=V(\{f\in\ell^\infty(K)\setsep f\circ\jmath&\in \Bo_1(L), \mbox{ and }\\&f(t,0)=\tfrac12(f(t,1)+f(t,-1))\mbox{ for }t\in A\}.\end{aligned}$
\item $\begin{aligned}[t]
            M^s(H)=M(H)=V\big(\{f\in&\ell^\infty(K)\setsep f\circ\jmath\in \Bo_1(L),\\&f(t,0)=\tfrac12(f(t,1)+f(t,-1))\mbox{ for }t\in A,\\
            & \forall\ep>0\colon \{t\in A\setsep \abs{f(t,1)-f(t,-1)}\ge\ep\}\in\H_B \}.\end{aligned}$
\item $\begin{aligned}[t]
    \A_H=\A^s_H&=\{\phi(\psi^{-1}(F)\cap \Ch_{E} K\cup  C_1\times\{1\}\cup 
        C_2\times\{-1\})\setsep \\
        &\qquad\qquad F\subset L\text{ is an } \FpG_\sigma\mbox{ set},\ C_1,C_2\subset A,\ C_1,C_2\in(\H_B)_\sigma\}
        \\&=\{\phi(B)\setsep B\subset \Ch_E K, \psi(B)\in\FpG_\sigma, \\ &\qquad\qquad
        \psi(B)\cap\psi(\Ch_E K\setminus B)\in (\H_B)_\sigma\}
\end{aligned}$

   \item  $\A_H=\A^s_H\subset\ms_H\subset\Z_H$.  
            The first inclusion is proper if, for example, $A$ contains a homeomorphic copy of the ordinal interval $[0,\omega_1)$.
            The converse to the second inclusion holds if and only if $A$ contains no compact perfect subset.
       \end{enumerate}
\end{prop}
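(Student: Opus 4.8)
The plan is to follow the scheme used for $A_f(X_{L,A})$ in Proposition~\ref{P:dikous-af-new} and for $A_1(X_{L,A})$ in Proposition~\ref{P:dikous-A1-new}, since $H=A_b(X)\cap\Bo_1(X)$ plays here for the first Borel class exactly the role $A_f(X)$ plays for fragmented functions; the one genuinely new feature is that the ``only finitely many big jumps'' condition for $A_c$, and the ``countable set of big jumps'' condition for $A_1$, gets replaced by the condition that each jump set $B_\ep(f):=\{t\in A\setsep \abs{f(t,1)-f(t,-1)}\ge\ep\}$ lies in $\H_B$. Part $(a)$ is immediate: combine the identity for $\ell^\infty(K)\cap E^{\perp\perp}$ from Proposition~\ref{P:dikous-sa-new}$(b)$ with the topological characterisation $f\in\Bo_1^b(K)\iff f\circ\jmath\in\Bo_1^b(L)$ from Proposition~\ref{p:topol-stacey}$(e)$ and the identity $V(\Bo_1^b(K)\cap E^{\perp\perp})=A_b(X)\cap\Bo_1(X)$ from Lemma~\ref{L:function space}$(b)$.

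For $(b)$ I would argue as in Proposition~\ref{P:dikous-af-new}$(b)$. Since $V(g\circ\psi)\in M^s(H)$ whenever $g\in\Bo_1^b(L)$, one reduces to $f$ with $f\circ\jmath=0$, so that $f(t,1)=-f(t,-1)$ on $A$; for such $f$ and any $g$ with $V(g)\in H$ one computes $\wt{fg}(t,0)=\tfrac12 f(t,1)(g(t,1)-g(t,-1))$, whence $\abs{\wt{fg}(t,0)}\le\tfrac{\norm g}{2}\abs{f(t,1)-f(t,-1)}$ and $\wt{fg}\circ\jmath$ vanishes off $\bigcup_n B_{1/n}(f)$. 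If every $B_\ep(f)\in\H_B$: for $U\subset\er$ open not containing $0$ the set $(\wt{fg}\circ\jmath)^{-1}(U)$ is a countable union of subsets of sets $B_{\ep'}(f)\in\H_B$, hence in $\FpG_\sigma$; if $0\in U$, boundedness of $\wt{fg}$ lets one write the complement as a subset of a single $B_{\ep'}(f)\in\H_B$, hence in $\FsG_\delta$, so again $(\wt{fg}\circ\jmath)^{-1}(U)\in\FpG_\sigma$. Thus $\wt{fg}\circ\jmath\in\Bo_1(L)$, so $V(\wt{fg})\in H$ by $(a)$; moreover $\H_B$-sets are scattered (Lemma~\ref{L:dedicneres}$(a)$, since subsets of $\H_B$-sets are $\FpG_\sigma$-measurable hence resolvable), so $\bigcup_n B_{1/n}(f)$ is universally null and carries no maximal measure by Lemma~\ref{L:maxmiry-dikous}$(1)$; hence $V(\wt{fg})=V(f)V(g)$ $\mu$-a.e.\ for every maximal $\mu$, and $V(f)\in M^s(H)$. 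Conversely, if $V(f)\in M(H)$, testing against the functions $g_C=1_{C\times\{1\}}-1_{C\times\{-1\}}$ for $C\subset B_\ep(f)$ (which satisfy $V(g_C)\in H$) yields $V(\wt{fg_C})\in H$ with $\abs{\wt{fg_C}\circ\jmath}=\abs{f(\cdot,1)}1_C\ge\tfrac\ep2 1_C$ vanishing off $C$, so $C=[\,\abs{\wt{fg_C}\circ\jmath}>0\,]\in\FpG_\sigma$, and taking $C=B_\ep(f)$ also gives $B_\ep(f)=[\,\abs{\wt{fg}\circ\jmath}\ge\tfrac\ep2\,]\in\FsG_\delta$; together these give $B_\ep(f)\in\H_B$. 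Since $M^s(H)\subset M(H)$ always, the three spaces coincide and equal the set on the right-hand side, and consequently $\A_H=\A_H^s$.

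Part $(c)$ is the routine translation of $(b)$ via the definition $\A_H=\{[m>0]\cap\ext X\setsep m\in M(H)\}$ from Proposition~\ref{p:system-aha}$(a)$, following the computation in Proposition~\ref{P:dikous-af-new}$(c)$: for ``$\supseteq$'' realise an $\FpG_\sigma$ set $F\subset L$ as $[g>0]$ with $g\in\Bo_1^b(L)$ and use $V(g\circ\psi)\in M(H)$, realise the isolated contributions $C_i\times\{\pm1\}$ (with $C_i\in(\H_B)_\sigma$) via indicator-difference multipliers, and close up using that $\A_H$ is stable under finite intersections and countable unions; for ``$\subseteq$'' write, for $m=V(f)\in M(H)$, $F_n=[f(\cdot,0)>\tfrac1n]$ and $C_n=B_{1/n}(f)\in\H_B$,
\[
[f>0]\cap\Ch_E K=\psi^{-1}\Bigl(\bigcup_n (F_n\setminus C_n)\Bigr)\cap\Ch_E K\ \cup\ \bigcup_n\{(t,i)\in C_n\times\{-1,1\}\setsep f(t,i)>0\},
\]
using that $F_n\setminus C_n\in\FpG_\sigma$ (as $F_n\in\FpG_\sigma$, $L\setminus C_n\in\FpG_\sigma$ since $C_n\in\H_B\subset\FsG_\delta$, and $\FpG_\sigma$ is closed under finite intersection). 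The second description then follows from \eqref{eq:podmny ChEK}. For $(d)$: $\A_H=\A_H^s$ is from $(b)$; $\A_H^s\subset\ms_H$ is Theorem~\ref{T:meritelnost-strongmulti}, applicable because $H\subset A_{sa}(X)$ and $H^\uparrow\subset(A_b(X)\cap\Bo_1(X))^\mu$ is determined by extreme points; $\ms_H\subset\Z_H$ is immediate. Properness of the first inclusion: take $L=A=[0,\omega_1]$ and $D=[0,\omega_1)$ (relatively open, scattered, hence universally null), and $f=1_{D\times\{1\}}+\tfrac12 1_{D\times\{0\}}$; then $f\circ\jmath=\tfrac12 1_D$ is l.s.c., so $V(f)\in H\subset H^\uparrow$, and by Lemma~\ref{L:dikous-split-new} $F:=[V(f)=1]$ is a split face with $\lambda_F=V(f)$, whence $F\cap\ext X=\phi(D\times\{1\})\in\ms_H$; but the corresponding ``gap set'' is $D=[0,\omega_1)\notin(\H_B)_\sigma$ --- if $D=\bigcup_n D_n$ with $D_n\in\H_B$, a stationary co-stationary subset $S$ of the limit ordinals of $D$ would meet some $D_n$ in a stationary set, which would be $F_\sigma$ (a subset of $D_n\in\H_B\subset\FpG_\sigma$), contradicting the non-$F_\sigma$-ness of stationary sets in $\omega_1$ --- so $F\cap\ext X\notin\A_H$ by $(c)$. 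Finally, if $A$ contains no compact perfect subset then $X$ is standard by Lemma~\ref{L:maxmiry-dikous}$(2)$, so $\ms_H=\Z_H$ by Lemma~\ref{L:SH=ZH}; and if $P\subset A$ is compact perfect, then for $f=1_{P\times\{1\}}+\tfrac12 1_{P\times\{0\}}$ one has $V(f)\in H\subset H^\uparrow$ and $[V(f)=1]\cap\ext X=\phi(P\times\{1\})\in\Z_H$, while $P$ is not universally null, so by Lemma~\ref{L:dikous-split-new} $\phi(P\times\{1\})$ is not the trace of any split face with strongly affine $\lambda$, hence $\notin\ms_H$.

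The main obstacle I expect is part $(b)$: pinning down that $\H_B$ is exactly the right condition in both directions --- in particular checking that the bounded function $\wt{fg}\circ\jmath$ genuinely lies in $\Bo_1(L)$ once the jump sets are known to be in $\H_B$ (the interplay between the $\FpG_\sigma$ and $\FsG_\delta$ halves of the definition of $\H_B$ and the closure properties of these families), and dually extracting both halves of membership in $\H_B$ from the multiplier property by a judicious choice of test functions. The ordinal-combinatorial fact $[0,\omega_1)\notin(\H_B)_\sigma$ needed in $(d)$ is the other slightly delicate point, but it is a short argument.
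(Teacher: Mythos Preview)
Your approach is essentially the one the paper takes, and the overall architecture is correct. Two small slips need tightening.

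In $(b)$, necessary direction: from ``every subset $C\subset B_\ep(f)$ is $\FpG_\sigma$'' and ``$B_\ep(f)$ itself is $\FsG_\delta$'' you write ``together these give $B_\ep(f)\in\H_B$''; the missing line is that for any $C\subset B_\ep(f)$ one has $L\setminus C=(L\setminus B_\ep(f))\cup(B_\ep(f)\setminus C)\in\FpG_\sigma$, whence $C\in\FsG_\delta$. (The paper sidesteps this by arguing contrapositively: if some $C\subset B_\ep$ has $1_C\notin\Bo_1(L)$, then $\wt{fg_C}\circ\jmath\notin\Bo_1(L)$ directly, since that function has a gap separating $C$ from its complement.) In the sufficient direction your direct $\Bo_1$-measurability check of $\wt{fg}\circ\jmath$ is a legitimate alternative to the paper's route, which instead approximates $f$ uniformly by the truncations $f_\ep=f\cdot 1_{\psi^{-1}(B_\ep(f))}$ and uses closedness of $M^s(H)$.

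In $(d)$, your argument that $[0,\omega_1)\notin(\H_B)_\sigma$ says a stationary $S\cap D_n$ ``would be $F_\sigma$'', but subsets of $\H_B$-sets are only $\FpG_\sigma$, and $\FpG_\sigma\ne F_\sigma$ in $[0,\omega_1]$. The fix is that $S\cap D_n\subset D_n\in\H_B$ forces $1_{S\cap D_n}\in\Bo_1(L)$, hence $S\cap D_n$ is Borel; but $S\cap D_n$ is stationary co-stationary and therefore non-Borel (this is exactly the raorao71/Jech fact the paper cites). The paper phrases this more simply: $B\cong[0,\omega_1)$ contains a non-Borel subset, while every subset of a $(\H_B)_\sigma$-set is Borel.
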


\begin{proof}
 $(a)$: This follows by combining Proposition~\ref{p:topol-stacey}$(e)$, Proposition~\ref{P:dikous-sa-new}$(b)$ and Lemma~\ref{L:function space}$(b)$.

 $(b)$: Inclusion $M^s(H)\subset M(H)$ holds always. To prove the remaining ones we proceed similarly as in the previous proofs. First observe that $V(g\circ\psi)\in M^s(H)$ for each $g\in \Bo_1^b(L)$.  So, as above, it is enough to prove the remaining inclusions within functions satisfying $f\circ\jmath=0$.

We continue by inclusion `$\subset$' from the second equality. Assume that $f\circ\jmath=0$ and $V(f)\in M(H)$. Fix $\ep>0$ and let $B=\{t\in A\setsep \abs{f(t,1)-f(t,-1)}\ge \ep\}$. Assume there is $C\subset B$ such that $1_C\notin \Bo_1(L)$.
Let $g$ be the function on $K$ defined by
\[
g(t,i)=\begin{cases}
              1, & t\in C,  f(t,i)>0,\\
              -1, &t\in C, f(t,i)<0,\\
              0, &\text{otherwise}.
\end{cases}
\]
Then $V(g)\in H$ (by $(a)$). Since $V(f)\in M(H)$, we deduce $V(\widetilde{fg})\in H$. But  for $t\in C$ we have
\[
\wt{fg}(t,0)=\begin{cases}
                0,& t\in L\setminus C,\\
                \frac12\abs{f(t,1)-f(t,-1)}\ge\frac{\ep}{2} & t\in C.
\end{cases}
\]
Since $1_C\notin\Bo_1(L)$, we easily deduce that  $\wt{fg}\circ\jmath\notin\Bo_1(L)$, a contradiction.

To prove the remaining inclusion assume that $f$ satisfies the conditions on the right-hand side and $f\circ\jmath=0$. 
Given $\ep>0$ let 
$$B_\ep=\{t\in A\setsep \abs{f(t,1)-f(t,-1)}\ge \ep\} \quad\mbox{and}\quad f_\ep(t,i)=\begin{cases}
   f(t,i) & t\in B_\ep,\\ 0 & \mbox{otherwise}.
\end{cases}$$
Then $\|f-f_\ep\|\le\frac\ep2$. Since $M^s(H)$ is closed, it is enough to show that $V(f_\ep)\in M^s(H)$. 
So, let $g$ with $V(g)\in H$ be given. Then $\widetilde{f_\ep g}(t,0)=0$ for $t\in L\setminus B_\ep$. Our assumption implies that
any function on $L$ which is zero outside $B_\ep$ belongs to $\Bo_1(L)$. Hence $V(\widetilde{f_\ep g})\in H$. Moreover, 
$[\widetilde{f_\ep g}\ne f_\ep g]\subset B_\ep\times\{0\}$. Since $B_\ep\in\H_B$, it is scattered (see the remarks before this proposition) and hence universally null. It follows from Lemma~\ref{L:maxmiry-dikous} that $V(\widetilde{f_\ep g})=V(f_\ep)V(g)$ $\mu$-almost everywhere for each maximal $\mu\in M_1(X)$. Hence $V(f_\ep)\in M^s(H)$ and this completes the proof.

$(c)$: The first equality follows from $(b)$. Let us prove the second one.

`$\supset$': Let $F\subset L$ be a $\FpG_\sigma$ set. Then there exists a bounded function $g\in \Bo_1(L)$ such that $[g>0]=F$. Then $f=g\circ\psi$ satisfies $V(f)\in M(H)$, and thus $\phi(\psi^{-1}(F)\cap \Ch_E K)\in \A_H$. If $C\subset A$ is  such that $C\in\H_B$, the function $f=1_{C\times \{1\}}-1_{C\times \{-1\}}$ satisfies $V(f)\in M(H)$ by $(b)$. Hence $\phi(C\times\{1\})\in \A_H$, because $C\times\{1\}=[f>0]$. Similarly  $\phi(C\times\{-1\})\in \A_H$. Now we conclude by noticing that $\A_H$ is closed with respect to finite intersections and countable unions, which finishes the proof of the inclusion `$\supset$'.

 `$\subset$': Let $f$ with $V(f)\in M(H)$ be given.  For each $n\in\en$ set
  \[
  F_n=\{t\in L\setsep f(t,0)>\tfrac1n\}\mbox{ and }C_n=\{t\in A\setsep \abs{f(t,1)-f(t,-1)}\ge\tfrac1n\}.
  \]
  Then $F_n$ is a $\FpG_\sigma$ set and $C_n\in\H_B$ (by $(b)$). The  equality
  \[
  \begin{aligned}\relax
  [f>0]\cap\Ch_{E}K&= \psi^{-1}\left(\bigcup_{n\in\en} (F_n\setminus C_n)\right)\cap \Ch_{E}K \cup\\
 &\quad \bigcup_{n\in\en}\{(t,i)\in C_n\times\{-1,1\}\setsep f(t,i)>0\}.
  \end{aligned}
  \]
now yields the assertion. Indeed, each $C_n$ is also a $\FsG_\delta$ set, and thus $F_n\setminus C_n$ is a $\FpG_\sigma$ set. Hence their union is a $\FpG_\sigma$ set.

Finally, the last equality follows from \eqref{eq:podmny ChEK}.

$(d)$: The first equality is repeated from $(c)$. Inclusion $\A_H^s\subset\ms_H$ follows from Theorem~\ref{T:meritelnost-strongmulti}.
The second inclusion is obvious.

Assume that $B\subset A$ is homeomorphic to $[0,\omega_1)$. Let $f=1_{B\times\{1\}}+\frac12 1_{B\times\{0\}}$. Then $V(f)\in H$ (as $f\circ\jmath=\frac12\cdot 1_B\in\Bo_1(L)$). Further, $[V(f)=1]$ is a split face by Lemma~\ref{L:dikous-split-new} as $B$ is scattered and hence universal null. Hence $\ext X\cap [V(f)=1]\in\ms_H$.
However, $\ext X \cap [V(f)=1] \notin\A_H$ by $(c)$ as $B$ contains a non-Borel subset. (It is enough to take a stationary set whose complement is also stationary provided for example by \cite[Lemma 8.8]{jech-book} and use \cite[Lemma 1]{raorao71}.)

Assume $A$ contains no compact perfect subset. By Lemma~\ref{L:maxmiry-dikous} we know that $X$ is standard and hence $\ms_H=\Z_H$ by Lemma~\ref{L:SH=ZH}.

Finally, assume that $A$ contains a compact perfect subset $D$. Let $f=1_{D\times\{1\}}+\frac12 1_{D\times\{0\}}$. Then $V(f)\in H$, $\ext X\subset [V(f)=1]\cup[V(f)=0]$, but
$[V(f)=1]\cap\ext X\notin\ms_H$  by Lemma~\ref{L:dikous-split-new}.
\end{proof}

\begin{prop}\label{P:dikous-bo1-mu-new}
Let $K=K_{L,A}$, $E=E_{L,A}$, $X=X_{L,A}$ and $H=(\Bo_1(X)\cap A_b(X))^\mu$. Then the following assertions are valid.
\begin{enumerate}[$(a)$]
         
    \item  $\begin{aligned}[t]
           H=(\Bo_1(X)\cap A_b(X))^\sigma=V&(\{f\in\ell^\infty(K)\setsep f\circ\jmath\in \Bo^b(L)\mbox{ and }\\&f(t,0)=\tfrac12(f(t,1)+f(t,-1))\mbox{ for }t\in A\}). \end{aligned}$
     \item $\begin{aligned}[t]
           M^s&(H)=M(H)\\&=V\big(\{f\in\ell^\infty(K)\setsep f\circ\jmath\in \Bo^b(L),f(t,0)=\tfrac12(f(t,1)+f(t,-1))\mbox{ for }t\in A,\\& \qquad\qquad
           \{t\in A\setsep f(t,1)\neq f(t,-1)\} \mbox{ is a hereditarily Borel subset of }L
           \}\big). \end{aligned}$

      \item  $\begin{aligned}[t]
          \A_H=\A^s_H&=\{\phi(\psi^{-1}(F)\cap \Ch_{E} K\cup  C_1\times\{1\}\cup 
        C_2\times\{-1\})\setsep \\& \qquad\qquad F\subset L\text{ Borel},      C_1,C_2\subset A\mbox{ hereditarily Borel subsets of }L\}
        \\&=\{\phi(B)\setsep B\subset\Ch_E K, \psi(B)\mbox{ Borel},\\&
        \qquad\qquad\psi(B)\cap\psi(\Ch_E K\setminus B)\mbox{ hereditarily Borel}\}.
      \end{aligned}$

      \item  $\A_H=\A^s_H\subset\ms_H\subset\Z_H$. 
            The first inclusion is proper if, for example, $A$ contains a homeomorphic copy of the ordinal interval $[0,\omega_1)$.
            The converse to the second inclusion holds if and only if $A$ contains no compact perfect subset.
         \end{enumerate}
\end{prop}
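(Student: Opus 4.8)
The plan is to follow closely the proofs of Proposition~\ref{P:dikous-Bo1-new} and of Proposition~\ref{P:dikous-Afmu-new}, replacing functions of the first Borel class by bounded Borel functions and $\FpG$ sets by Borel sets, and using that passing to the monotone sequential closure does not change anything beyond the Borel level.

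\emph{Assertion $(a)$.} Since $(\Bo_1^b(L))^\mu=(\Bo_1^b(L))^\sigma=\Bo^b(L)$ by Lemma~\ref{L:borel=Lbmu}$(b)$, both the monotone and the sequential closure of $\Bo_1(X)\cap A_b(X)$ are contained in $V(\{f\in\ell^\infty(K)\cap E^{\perp\perp}\setsep f\circ\jmath\in\Bo^b(L)\})$, and the latter set is closed under pointwise sequential limits; so it suffices to prove that $V(f)\in H$ whenever $f\circ\jmath\in\Bo^b(L)$ and $f$ obeys the averaging condition. This I would do by the four-family decomposition used in Proposition~\ref{P:dikous-Afmu-new}$(a)$ and Proposition~\ref{P:dikous-Asmu-new}$(a)$: let $\A,\C,\D$ be the subspaces of $\ell^\infty(K)\cap E^{\perp\perp}$ of those $g$ with, respectively, $g\circ\jmath=0$; $g\circ\jmath\in(\Bo_1^b(L))^\mu$ and $g=g\circ\jmath\circ\psi$; $g\circ\jmath\in\Bo_1^b(L)$ and $g=g\circ\jmath\circ\psi$. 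For $f$ as above put $c=f\circ\jmath\circ\psi$ and $a=f-c$; then $a\in\A$, $c\in\C$, $V(\A)\subset\Bo_1(X)\cap A_b(X)$ and $V(\D)\subset\Bo_1(X)\cap A_b(X)$ by Proposition~\ref{P:dikous-Bo1-new}$(a)$, and $\C=\D^\mu$, so $V(\C)=V(\D)^\mu\subset H$ and $V(f)=V(a)+V(c)\in H$. The equality $H=(\Bo_1(X)\cap A_b(X))^\sigma$ drops out, since the same argument bounds the sequential closure by the same set.

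\emph{Assertion $(b)$.} As $V(g\circ\psi)\in M^s(H)$ for every $g\in\Bo^b(L)$, it is enough to treat $f$ with $f\circ\jmath=0$; then $f(t,1)=-f(t,-1)$ for $t\in A$, so $B=\{t\in A\setsep f(t,1)\ne f(t,-1)\}=\{t\in A\setsep f(t,1)\ne 0\}$. If some $C\subset B$ is not Borel, let $g$ be $1$ on $\{(t,i)\setsep t\in C,\ i\in\{-1,1\},\ f(t,i)>0\}$, $-1$ on $\{(t,i)\setsep t\in C,\ i\in\{-1,1\},\ f(t,i)<0\}$ and $0$ elsewhere; then $g\circ\jmath=0$, $g\in E^{\perp\perp}$ and $V(g)\in H$ by $(a)$, while $\widetilde{fg}\circ\jmath$ is positive exactly on $C$, hence not Borel, so (using that $H$ is determined by extreme points) $V(\widetilde{fg})\notin H$ and $V(f)\notin M(H)$. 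Conversely, if $B$ is hereditarily Borel then for any $g$ with $V(g)\in H$ the function $\widetilde{fg}\circ\jmath$ is supported on the Borel set $B$ and all its level sets are subsets of $B$, hence Borel, so $\widetilde{fg}\circ\jmath\in\Bo^b(L)$ and $V(\widetilde{fg})\in H$ by $(a)$; moreover $[\widetilde{fg}\ne fg]\subset B\times\{0\}$ and a hereditarily Borel set is universally null (a positive-measure subset of a continuous measure contains a non-measurable, hence non-Borel, subset), so by Lemma~\ref{L:maxmiry-dikous}$(1)$ no maximal measure on $X$ charges $\phi(B\times\{0\})$; thus $V(f)V(g)=V(\widetilde{fg})$ $\mu$-a.e.\ for every maximal $\mu$, i.e.\ $V(f)\in M^s(H)$. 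The two directions pin down the same set, whence $M^s(H)=M(H)$. I expect this reduction to $f\circ\jmath=0$ together with the ``hereditarily Borel $\Rightarrow$ universally null'' observation to be the one mildly delicate point of the whole proposition.

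\emph{Assertions $(c)$ and $(d)$.} Since $H^\mu=H$ and $M(H)=M^s(H)$, Theorem~\ref{T:integral representation H} gives that $\A_H=\A_H^s$ is a $\sigma$-algebra characterizing $M(H)$ by measurability on $\ext X$. The first displayed formula in $(c)$ then follows by the usual two inclusions: ``$\supset$'' because $1_F\circ\psi$ ($F\subset L$ Borel) and $1_{C\times\{1\}}-1_{C\times\{-1\}}$ ($C\subset A$ hereditarily Borel) lie in $M(H)$ by $(b)$ and $\A_H$ is a $\sigma$-algebra; ``$\subset$'' by splitting, for $m=V(f)\in M(H)$ with $m(\ext X)\subset\{0,1\}$, the set $[f=1]\cap\Ch_E K$ into $\psi^{-1}(\{t\setsep f(t,0)=1\})\cap\Ch_E K$ and a subset of $\{t\in A\setsep f(t,1)\ne f(t,-1)\}\times\{-1,1\}$, invoking $(b)$. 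The second formula is a reformulation via \eqref{eq:podmny ChEK}. For $(d)$: $\A_H^s\subset\ms_H$ is Theorem~\ref{T:meritelnost-strongmulti} and $\ms_H\subset\Z_H$ is trivial; if $A$ contains a copy $B$ of $[0,\omega_1)$, then $B$ is scattered and universally null, so by Lemma~\ref{L:dikous-split-new} the function $f=1_{B\times\{1\}}+\tfrac12 1_{B\times\{0\}}$ has $V(f)\in H$ with $[V(f)=1]$ a split face (and $\lambda_{[V(f)=1]}=V(f)\in H=H^\uparrow$), so its trace lies in $\ms_H$, yet $B$ contains a non-Borel subset (as in the proof of Proposition~\ref{P:dikous-Bo1-new}$(d)$), hence that trace is not in $\A_H$ by $(c)$; finally, if $A$ has no compact perfect subset then $X$ is standard by Lemma~\ref{L:maxmiry-dikous}$(2)$ and $\ms_H=\Z_H$ by Lemma~\ref{L:SH=ZH}, whereas a compact perfect $D\subset A$ gives $f=1_{D\times\{1\}}+\tfrac12 1_{D\times\{0\}}$ with $V(f)\in H$ and $\ext X\subset[V(f)=1]\cup[V(f)=0]$, so $\phi(D\times\{1\})\in\Z_H$, while Lemma~\ref{L:dikous-split-new} (together with $H^\uparrow=H\subset A_{sa}(X)$) shows $\phi(D\times\{1\})\notin\ms_H$.
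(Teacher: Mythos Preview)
Your proof is correct and follows essentially the same route as the paper: the four-family decomposition for $(a)$, the reduction to $f\circ\jmath=0$ together with the sign-function test $g$ for $(b)$, the two-inclusion argument based on Theorem~\ref{T:integral representation H} for $(c)$, and for $(d)$ exactly the argument the paper indicates by ``copy the proof of Proposition~\ref{P:dikous-Bo1-new}$(d)$''.

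The one place where you diverge from the paper is the justification that a hereditarily Borel subset of $L$ is universally null. You argue: ``a positive-measure subset of a continuous measure contains a non-measurable, hence non-Borel, subset''. The paper instead invokes Lemma~\ref{L:dedicneres}$(b)$: hereditarily Borel $\Rightarrow$ hereditarily in $\sigma(\H)$ $\Rightarrow$ $\sigma$-scattered $\Rightarrow$ universally null. Your conclusion is right, but the intermediate step through \emph{non-measurable} subsets is shaky as stated: for an arbitrary compact $L$, the existence of $\mu$-non-measurable subsets of every positive-measure set is not obviously a ZFC theorem (it would fail if the measure extends to the full power set, which is tied to real-valued measurable cardinals). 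What is provable in ZFC --- and what you actually need --- is the existence of a non-\emph{Borel} subset: a positive-measure Borel set contains a compact $K$ with $\mu(K)>0$; since compact scattered spaces carry no atomless Radon measure, $K$ contains a compact perfect $P$; $P$ maps continuously onto $[0,1]$, and pulling back a non-analytic subset of $[0,1]$ gives a subset of $P$ whose Borel-ness would force the image to be bi-analytic, hence Borel, a contradiction. This is precisely the content of (the proof of) Lemma~\ref{L:dedicneres}$(b)$, so you should cite that lemma rather than appeal to non-measurability.
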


\begin{proof}
(a) Inclusions `$\subset$' in both equalities are obvious. 
 For the proof of the converse inclusion we proceed as in the preceding proofs. We set
\[
\begin{aligned}
\A&=\{f\in\ell^\infty(K)\setsep f\circ \jmath=0 \text{ on } L\mbox{ and }f(t,0)=\tfrac12(f(t,1)+f(t,-1))\mbox{ for }t\in A\},\\
  \B&=\{f\in\ell^\infty(K)\setsep f\circ\jmath\in \Bo^b(L)\mbox{ and }f(t,0)=\tfrac12(f(t,1)+f(t,-1))\mbox{ for }t\in A\},\\
\C&=\{f\in\ell^\infty(K)\setsep f\circ\jmath\in \Bo^b(L)\mbox{ and }f=f\circ \jmath\circ\psi\},\\
\D&=\{f\in\ell^\infty(K)\setsep f\circ\jmath\in \Bo_1^b(L)\mbox{ and }f=f\circ \jmath\circ\psi\}
\end{aligned}
\]
We want to verify that $V(\B)\subset H$. To this end, let $b\in \B$ be given. Then we set 
$c=b\circ\jmath\circ\psi$ and $a=b-c$.
Then $a\in \A$, $c\in \C$ and $b=a+c$. Thus $\B\subset\A+\C$. Since $V(\A)\subset A_b(X)\cap\Bo_1(X)\subset H$ by Proposition~\ref{P:dikous-Bo1-new}$(a)$, it is enough to show that $V(\C)\subset H$. But $\C=\D^\mu$ by Lemma~\ref{L:borel=Lbmu}, so 
$$V(\C)=V(\D^\mu)\subset V(\D)^\mu\subset(A_b(X)\cap\Bo_1(X))^\mu=H,$$ where we used Proposition~\ref{P:dikous-Bo1-new}$(a)$.

$(b)$: Inclusion $M^s(H)\subset M(H)$ holds always. Similarly as above, it is enough to prove the remaining inclusions within functions satisfying $f\circ\jmath=0$.

Let us prove inclusion `$\subset$' from the second equality. Let $f$ with $f\circ\jmath=0$ and $V(f)\in M(H)$ be given. Let $C\subset B=\{t\in A\setsep f(t,1)\neq f(t,-1)\}$ be arbitrary. Then the function
\[
g(t,i)=\begin{cases}
         1,&t\in C, h(t,i)>0,\\
         -1,&t\in C, h(t,i)<0,\\
         0,&\text{ otherwise}
\end{cases}
\]
satisfies $V(g)\in H$, and thus $\wt{fg}\circ j\in \Bo^b(L)$. Since $C=\{t\in L\setsep \wt{fg}(t,0)>0\}$, the set $C$ is a Borel subset of $L$.

To prove the remaining inclusion assume that $f$ satisfies the condition on the right-hand side and $f\circ\jmath=0$. Set 
$$B=\{t\in A\setsep f(t,1)\ne f(t,-1)\}.$$
We shall show that $V(f)\in M^s(H)$. Let $g$ be given such that $V(g)\in H$. 
Then $\{t\in L\setsep \wt{fg}(t,0)\ne 0\}\subset B$ and thus $\wt{fg}\circ j\in\Bo^b(L)$. Moreover, 
$[\wt{fg}\ne fg]\subset B\times\{0\}$. Since $B$ is hereditarily Borel, it is $\sigma$-scattered by Lemma~\ref{L:dedicneres}$(b)$, in particular it is universally null. It follows from Lemma~\ref{L:maxmiry-dikous} that $V(\wt{fg})=V(f)V(g)$ $\mu$-almost everywhere for each maximal $\mu\in M_1(X)$. This completes the proof that $V(f)\in M^s(H)$.

$(c)$: The first equality follows from $(b)$, let us prove the second one.

`$\subset$': Let $f$ with $V(f)\in M(H)$ and $\ext X\subset [V(f)=0]\cup[V(f)=1]$ be given.  
Set
  \[
  F=\{t\in L\setsep f(t,0)=1\}\quad \mbox{and}\quad C=\{t\in A\setsep f(t,1)\ne f(t,-1)\}.
  \]
Then $F$ is Borel and $C$ is hereditarily Borel in $L$ (by $(b)$).
The  equality
  \[
  \begin{aligned}\relax
  [f=1]\cap\Ch_{E}K&= \psi^{-1}\left(F\right)\cap \Ch_{E}K \cup
 \{(t,i)\in C\times\{-1,1\}\setsep f(t,i)=1\}.
  \end{aligned}
  \]
now yields the assertion.

`$\supset$': Let $F\subset L$ be a Borel set. Then $1_F$ is a Borel function, hence $f=1_F\circ\psi$ satisfies $V(f)\in M(H)$ by $(b)$, and thus $\phi(\psi^{-1}(F)\cap \Ch_E K)\in \A_H$. If $C\subset A$ is hereditarily Borel in $L$, the function $f=1_{C\times \{1\}}+\frac12\cdot 1_{C\times \{0\}}$ satisfies $V(f)\in M(H)$ by $(b)$. Hence $\phi(C\times\{1\})\in \A_H$. Similarly $\phi(C\times\{-1\})\in\A_H$. Now we conclude by noticing that $\A_H$ is closed with respect to finite intersections and countable unions, which finishes the proof of the inclusion `$\supset$'.

$(d)$: The proof can be done by copying the proof of Proposition~\ref{P:dikous-Bo1-new}$(d)$. 
\end{proof}

\subsection{Overview and counterexamples}
\label{ssec:relations-stacey-ifs}

The aim of this part is to summarize relations between the considered intermediate function spaces on Stacey's simplices. We also relate these examples to the general theory, give some counterexamples and formulate some questions.
First we provide a basic overview of the considered spaces in the following table

\begin{prop}
\label{p:vztahy-multi-ifs}
Let $K=K_{L,A}$, $E=E_{L,A}$ and $X=X_{L,A}$. Then the following assertions hold
(we omit the letter $X$).
\begin{enumerate}[$(a)$]
\item We have the following inclusions between intermediate function spaces:
\begin{equation}
    \label{eq:table-ifs}
\begin{array}{ccccccccc}
          &        &A_c           &\subset &A_1                &\subset  &(A_{sa}\cap \Ba^b)& =& (A_{c})^\mu \\
          &        &\cap       &        &\cap            &         &                &  &  \\
          &        &\overline{A_s}&\subset &A_b\cap \Bo_1      & \subset & A_f            &  & \\  
          &        & \cap      &        &\cap            &         &\cap         &  &   \\
 (A_c)^\mu&\subset &(A_s)^\mu     &\subset & (A_b\cap\Bo_1)^\mu&\subset  &(A_f)^\mu      & \subset & A_{sa}.
\end{array}
\end{equation}
Moreover, it may happen that all the inclusions are proper and no more inclusions hold.
 \item  Whenever we have an inclusion in table~\eqref{eq:table-ifs}, the same inclusion holds for the spaces of multipliers. Moreover, it may happen that all the inclusions are strict except possibly for $M((A_s)^\mu)\subset M((A_b\cap\Bo_1)^\mu)$.
 \item  If $H$ is any of the spaces $A_c, A_1,\overline{A_s}, A_b\cap \Bo_1, A_f$, then $H^\mu=H^\sigma$.
    \item If $H$ is any of the spaces
    listed in table~\eqref{eq:table-ifs},     then $Z(H)=M(H)=M^s(H)$.
   \item If $H$ is any of the spaces $A_1$, $\overline{A_s}$, $A_f$, then $M(H^\mu)=(M(H))^\mu$.  
      Further, $(M(A_c))^\mu = M((A_c)^\mu)$ if and only if  $A$ contains no $G_\delta$-point of $L$.   
\end{enumerate}
\end{prop}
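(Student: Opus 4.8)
The statement to be proved is Proposition~\ref{p:vztahy-multi-ifs}, consisting of assertions $(a)$--$(e)$. My plan is to treat each assertion separately, drawing on the detailed analyses of the spaces $A_c(X)$, $A_1(X)$, $\overline{A_s(X)}$, $A_b(X)\cap\Bo_1(X)$, $A_f(X)$ and their $\mu$-closures carried out in Sections~\ref{ssec:cont-af-X}--\ref{ssec:fragmented-dikous} and the Borel-function section, together with the comparison results in Sections~\ref{sec:baire} and~\ref{sec:beyond}.

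For $(a)$, I would first assemble the inclusions: the vertical inclusions in the table follow from the general inclusions~\eqref{eq:prvniinkluze} and the fact (Lemma~\ref{L:muclosed is closed}) that $\mu$-closures are closed; the equality $(A_{sa}(X)\cap\Ba^b(X))=(A_c(X))^\mu=(A_c(X))^\sigma$ is Proposition~\ref{P:Baire-srovnani}$(iii)$, valid since $X$ is a simplex; the top-right inclusion $A_1(X)\subset (A_{sa}(X)\cap\Ba^b(X))$ is then clear. For properness and the claim that no further inclusions hold, I would invoke the explicit descriptions of each space as $V(\cdot)$ of a function class on $K$ (Propositions~\ref{P:dikous-sa-new}$(b)$, \ref{P:dikous-A1-new}$(a)$, \ref{P:dikous-Bair-new}$(b)$, \ref{P:dikous-af-new}$(a)$, \ref{P:dikous-Afmu-new}$(a)$, \ref{P:dikous-lsc--new}$(b)$, \ref{P:dikous-Asmu-new}$(a)$, \ref{P:dikous-Bo1-new}$(a)$, \ref{P:dikous-bo1-mu-new}$(a)$) and reduce to the strictness of the corresponding inclusions between function classes on a suitable compact $L$ (or $K$); here one can reuse the separating functions exhibited in the proofs of Proposition~\ref{P:vetsi prostory srov}$(b)$ and Proposition~\ref{P:srovnani baire a vetsich}, observing that $A$ must be chosen infinite (and, for the Borel-versus-Baire distinctions, so that $A$ carries e.g. a copy of $[0,\omega_1)$) and that $L$ must be non-metrizable where the table forces it.

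For $(b)$, the spaces of multipliers for all these $H$ coincide with $M^s(H)$ by Propositions~\ref{P:rovnostmulti} and~\ref{P:multi strong pro As} (noting $H\subset(A_s(X))^\sigma$ for the semicontinuous-type spaces and $H\subset\Ba^b(X)\cap A_{sa}(X)$ or $H$ standard otherwise). Monotonicity of $H\mapsto M(H)$ along the inclusions is not automatic in general, but here it follows from the explicit formulas: each $M(H)$ is described as $V$ of an explicit subclass of $\ell^\infty(K)\cap E^{\perp\perp}$ (see parts $(b)$ of Propositions~\ref{P:dikous-spoj-new}, \ref{P:dikous-A1-new}, \ref{P:dikous-Bair-new}, \ref{P:dikous-af-new}, \ref{P:dikous-Afmu-new}, \ref{P:dikous-lsc--new}, \ref{P:dikous-Asmu-new}, \ref{P:dikous-Bo1-new}, \ref{P:dikous-bo1-mu-new}), and inspection of these subclasses shows directly that $H_1\subset H_2$ in the table implies the inclusion of the multiplier classes (the ``gap condition'' on $\{t\in A\colon f(t,1)\ne f(t,-1)\}$ only becomes weaker as $H$ grows). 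Strictness is again witnessed by the same separating functions, now checking they fail the relevant gap/measurability condition. The missing comparison $M((A_s)^\mu)\subset M((A_b\cap\Bo_1)^\mu)$ is flagged as open, consistent with the open problems in Section~\ref{sec:beyond}.

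For $(c)$, each named space is a lattice of bounded functions (they are all closed function-class images under $V$, and the corresponding function classes $\Zer_\sigma$-measurable, $\overline{\Lb}$, $\FpG_\sigma$-measurable, $\Fr^b$ are lattices), so $H^\mu=H^\sigma$ by the general remark after the definitions (or by Lemma~\ref{L:lattice}). For $(d)$, $Z(H)=M(H)$ follows from Corollary~\ref{cor:iotax} since $X$ is a simplex (every $x\in\ext X$ forms a split face); $M(H)=M^s(H)$ was already noted in $(b)$. For $(e)$, the equalities $M(H^\mu)=(M(H))^\mu$ for $H\in\{A_1(X),\overline{A_s(X)},A_f(X)\}$ follow from Proposition~\ref{p:multi-pro-mu}$(i)$ once we know $H^\mu$ is determined by extreme points (Theorem~\ref{extdeter}, Theorem~\ref{t:assigma-deter} and its corollary), combined with the closedness of the relevant multiplier classes under monotone limits, which can be read off the explicit formulas. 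The last assertion, $(M(A_c(X)))^\mu=M((A_c(X))^\mu)$ iff $A$ contains no $G_\delta$-point of $L$, is the one requiring genuine work: I would compute both sides using Proposition~\ref{P:dikous-spoj-new}$(a)$ (so $M(A_c(X))=\Phi(\{f\circ\psi\colon f\in C(L)\})$, whence $(M(A_c(X)))^\mu=V(\{f\circ\psi\colon f\in\Ba^b(L)\})$ by sequential continuity of $V$ and Proposition~\ref{p:topol-stacey}) and Proposition~\ref{P:dikous-Bair-new}$(c)$ for $M((A_c(X))^\mu)$; comparing the two, $M((A_c(X))^\mu)$ additionally contains functions of the form $V(1_{\{(t,1)\}}-1_{\{(t,-1)\}})$ for each $G_\delta$-point $t\in A$, and such a function lies in $V(\{f\circ\psi\colon f\in\Ba^b(L)\})$ iff it is constant on $\psi^{-1}(t')$ for all $t'$, which forces $\{(t,1),(t,-1)\}$ to not be separated, i.e.\ forces $t$ not to be in $A$ as a $G_\delta$-point. \textbf{The main obstacle} I anticipate is precisely this last equivalence: one must argue carefully, in both directions, that the presence of a single $G_\delta$-point of $L$ inside $A$ produces a multiplier of $(A_c(X))^\mu$ that is not a monotone-sequential limit of multipliers of $A_c(X)$, and conversely that in the absence of such points every element of $M((A_c(X))^\mu)$ is of the form $V(f\circ\psi)$ with $f$ Baire on $L$; here the countability/$G_\delta$ bookkeeping from Lemma~\ref{L:countable Baire} and Proposition~\ref{p:topol-stacey}$(c)$ is essential.
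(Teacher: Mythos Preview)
Your overall plan for $(a)$, $(b)$ and $(e)$ is essentially the same as the paper's: exploit the explicit $V$-descriptions of each space and its multiplier space, and produce concrete separating functions for a well-chosen $L,A$ (the paper takes $L=A=[0,1]^{\er}$, so that one has enough room for both the Baire/Borel and the scattered/$\sigma$-isolated distinctions). The argument for the last equivalence in $(e)$ via Proposition~\ref{P:dikous-spoj-new}$(a)$, Proposition~\ref{P:dikous-Bair-new}$(c)$ and Lemma~\ref{L:countable Baire} is correct.

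There is, however, a genuine error in your argument for $(c)$. You claim that each of the spaces $A_c,A_1,\overline{A_s},A_b\cap\Bo_1,A_f$ is a lattice because the underlying function classes on $K$ are lattices, and then conclude $H^\mu=H^\sigma$. But the preimages $V^{-1}(H)\subset\ell^\infty(K)$ carry the averaging constraint $f(t,0)=\tfrac12(f(t,1)+f(t,-1))$ for $t\in A$, and this constraint is \emph{not} preserved under pointwise maximum or minimum. So none of these intermediate function spaces is a lattice (in the pointwise sense on $X$ or on $K$) as soon as $A\ne\emptyset$, and the ``lattice $\Rightarrow H^\mu=H^\sigma$'' shortcut is unavailable. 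The paper instead quotes the specific results where each equality $H^\mu=H^\sigma$ is established directly: Proposition~\ref{P:Baire-srovnani}$(iii)$ for $A_c$ and $A_1$ (using that $X$ is a simplex), and assertions $(a)$ of Propositions~\ref{P:dikous-Asmu-new}, \ref{P:dikous-bo1-mu-new} and \ref{P:dikous-Afmu-new} for the remaining three, where the equality is obtained by computing both closures explicitly (the key being that the $\mu$-closure already contains the full ``$\A+\C$'' decomposition, and the $f\circ\jmath$-part ranges over a genuine lattice on $L$).

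There is also a gap in $(d)$. Corollary~\ref{cor:iotax} requires $A_s(X)\subset H$, and this fails for $H=A_c(X)$, $H=A_1(X)$ and $H=(A_c(X))^\mu$ when $L$ is non-metrizable (upper semicontinuous affine functions on $X_{L,A}$ need not be Baire). The paper handles these three cases separately: \cite[Theorem~II.7.10]{alfsen} for $A_c$, and Proposition~\ref{p:postacproa1}$(a)$ (which needs only $A_1(X)\subset H\subset (A_c(X))^\sigma$ and uses a separable-reduction argument) for $A_1$ and $(A_c)^\mu$. Only for the remaining spaces, which do contain $A_s(X)$, does Corollary~\ref{cor:iotax} apply.
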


\begin{proof}
$(a)$ and $(b)$:
The relations in table \eqref{eq:table-ifs} are valid for any simplex $X$. The validity in our case is also witnessed by the above-given characterizations.  The validity of the same inclusions of the spaces of multipliers follows from the above description of these spaces (together with Lemmata~\ref{L:dedicneres}, \ref{L:finite rank} and~\ref{L:sigma isolated}). 

\smallskip

To distinguish these spaces we choose
$L=A=[0,1]^{\er}$:

Denote by $p:L\to[0,1]$ the projection onto the first coordinate. Then the function $1_{\{0\}}\circ p\circ\psi$ witnesses that $A_c\subsetneqq A_1$ and $1_{\qe}\circ p\circ \psi$ witnesses that $A_1\subsetneqq (A_c)^\mu$. The same functions witness that $M(A_c)\subsetneqq M(A_1)\subsetneqq M((A_c)^\mu)$.

Let $f\in \Ba^b_1([0,1])\setminus\overline{\Lb([0,1])}$ be the function provided by \cite[Proposition 5.1]{odell-rosen}. Then $f\circ p\circ \psi$ witnesses that $A_1\not\subset\overline{A_s}$ and hence $\overline{A_s}\subsetneqq A_b\cap\Bo_1$.
The same function witnesses that
$M(A_1)\not\subset M(\overline{A_s})$
and $M(\overline{A_s})\subsetneqq M(A_b\cap\Bo_1)$. 
We also deduce that $\overline{A_s}\subsetneqq(A_s)^\mu$ and $M(\overline{A_s})\subsetneqq M((A_s)^\mu)$.

Let $x\in L$ be arbitrary. Since $x$ is not a $G_\delta$ point, the function $1_{\{x\}}\circ \psi$ witnesses that  $\overline{A_s}\not\subset (A_c)^\mu$ and hence $A_c\subsetneqq \overline{A_s}$, $A_1\subsetneqq A_b\cap\Bo_1$ and  $(A_c)^\mu\subsetneqq (A_s)^\mu$. The same function shows that 
$M(\overline{A_s})\not\subset M((A_c)^\mu)$, $M(A_c)\subsetneqq M(\overline{A_s})$, $M(A_1)\subsetneqq M(A_b\cap\Bo_1)$ and $M((A_c)^\mu)\subsetneqq M((A_s)^\mu)$.

The function $1_{L\times\{1\}}-1_{L\times\{-1\}}$ witnesses that  $(A_s)^\mu\subsetneqq (A_b\cap \Bo_1)^\mu$. (Note that it says nothing on the respective spaces of multipliers.)

Further, the space $L$ is separable, so we may take a countable dense set $C\subset L$.
 The function $1_C\circ\psi$ witnesses that $(A_c)^\mu\not\subset A_f$ and $A_b\cap\Bo_1\subsetneqq(A_b\cap\Bo_1)^\mu$. The same function witnesses  that $M((A_c)^\mu)\not\subset M(A_f)$ and $M(A_b\cap\Bo_1)\subsetneqq M((A_b\cap\Bo_1)^\mu)$.

There is also $D\subset L$ homeomorphic to the ordinal interval $[0,\omega_1]$ and $N\subset D$ non-Borel (this follows from \cite[Lemma 8.8]{jech-book} and \cite[Lemma 1]{raorao71} as remarked in the proof of Proposition~\ref{P:dikous-Bo1-new}$(d)$) . Then $1_N\circ \psi$ witnesses that $A_b\cap\Bo_1\subsetneqq A_f$ and $A_f\not\subset (A_b\cap \Bo_1)^\mu$.  The same function shows that  $M(A_b\cap\Bo_1)\subsetneqq M(A_f)$ and $M(A_f)\not\subset M((A_b\cap \Bo_1)^\mu)$. 
Moreover, $(1_N+1_C)\circ\psi$ witnesses that $A_f\subsetneqq (A_f)^\mu$ and $(A_b\cap\Bo_1)^\mu\subsetneqq (A_f)^\mu$. The same function shows that  $M(A_f)\subsetneqq M((A_f)^\mu)$ and $M((A_b\cap\Bo_1)^\mu)\subsetneqq M((A_f)^\mu)$.
Finally, if $S\subset [0,1]$ is an analytic non-Borel set, then $1_S\circ p\circ\psi$ witnesses that $(A_f)^\mu\subsetneqq A_{sa}$ and $M((A_f)^\mu)\subsetneqq M(A_{sa})$.

$(c)$: The case of $A_c$ and $A_1$ follows from Proposition~\ref{P:Baire-srovnani}(iii).
The remaining cases follow Propositions~\ref{P:dikous-Asmu-new}, \ref{P:dikous-bo1-mu-new} and~\ref{P:dikous-Afmu-new}.

 $(d)$: Equality $M(H)=M^s(H)$ in all the cases follows from the above propositions. Equality $Z(H)=M(H)$ follows from \cite[Theorem II.7.10]{alfsen} for $A_c$, from Proposition~\ref{p:postacproa1}$(a)$ for $A_1$ and $(A_c)^\mu$, 
 from Corollary~\ref{cor:iotax} and Corollary~\ref{cor:rovnost na ext}$(b)$ for the remaining spaces.
 
 $(e)$: This follows easily from the above characterizations.
\end{proof}

We point out that  assertion $(d)$ of the previous proposition provides a partial positive answer to Question~\ref{q:m=ms}. On the other hand,  the following question seems to be open:

\begin{ques}
    Let $X=X_{L,A}$. Is it true that
   $$\begin{gathered}
         M((A_s(X))^\mu)=M((A_b(X)\cap\Bo_1(X))^\mu)\mbox{ and }\\ M((A_b(X)\cap\Bo_1(X))^\mu)=(M(A_b(X)\cap\Bo_1(X)))^\mu \ ? \end{gathered}$$   
\end{ques}

In view of the above characterizations this question is related to the following topological problem which is, up to our knowledge, open.

\begin{ques}
    Let $L$ be a compact space and let $B\subset L$ be a hereditarily Borel set. Is $B$ necessarily $\sigma$-isolated?
\end{ques}

Such a set $B$ must be $\sigma$-scattered by Lemma~\ref{L:dedicneres}$(b)$, but we do not know whether this stronger conclusion is valid.

We continue by an overview of the important special case of metrizable starting space $L$.

\begin{prop}
 Let $K=K_{L,A}$, $E=E_{L,A}$ and $X=X_{L,A}$. Assume that $L$ is metrizable. 
 Then the following assertions hold (we omit the letter $X$).
\begin{enumerate}[$(a)$]
    \item We have the following inclusions between intermediate function spaces:
\begin{equation}
    \label{eq:table-ifs-metriz}
\begin{array}{ccccccccc}
          A_c&   \subset     & \overline{A_s}       &\subset &A_1          &\subset  &(A_{sa}\cap \Ba^b)& =& (A_{c})^\mu \\
          &        &       &        &\cap            &         &   \cap             &  & \parallel \\
          &        && &A_b\cap \Bo_1      & \subset & (A_b\cap\Bo_1)^\mu            &  & (A_s)^\mu\\  
          &        &    &        &\parallel            &         &\parallel         &  &   \\
 & &     & &A_f&\subset &(A_f)^\mu      & \subset & A_{sa}.
\end{array}\end{equation}
Moreover, it may happen that all the inclusions are proper and no more inclusions hold.
\item  We have the following inclusions between the respective spaces of multipliers:
    \[\begin{aligned}
    M(A_c)&\subset M(\overline{A_s})\subset M(A_1)=M(A_b\cap \Bo_1)=M(A_f)
    \subset A_1\\&\subset  M((A_c)^\mu)=M((A_b\cap \Bo_1)^\mu)=M((A_f)^\mu)=(A_c)^\mu\subset M(A_{sa}).\end{aligned}
    \]  
  Moreover, it may happen that all the inclusions are proper.  
\end{enumerate}
  \end{prop}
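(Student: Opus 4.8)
The statement asserts, for metrizable $L$, the inclusion table~\eqref{eq:table-ifs-metriz} for intermediate function spaces together with the corresponding chain for the spaces of multipliers, plus the claim that all inclusions can be made proper and no others hold. The natural approach is to deduce everything from the concrete descriptions obtained in the preceding subsections (Propositions~\ref{P:dikous-spoj-new}, \ref{P:dikous-A1-new}, \ref{P:dikous-Bair-new}, \ref{P:dikous-af-new}, \ref{P:dikous-Afmu-new}, \ref{P:dikous-lsc--new}, \ref{P:dikous-Asmu-new}, \ref{P:dikous-Bo1-new}, \ref{P:dikous-bo1-mu-new}, \ref{P:dikous-sa-new}), specialized to the case when $L$ is metrizable. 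The first step is to note that for metrizable $L$ the descriptive hierarchy collapses: by Theorem~\ref{T:b}$(b)$ a bounded function on $L$ is fragmented if and only if it is of the first Borel class if and only if it is Baire-one; also every subset of a scattered metrizable space is countable, so a subset of finite height is finite, a $\sigma$-scattered set is countable, and $\H_B$ consists exactly of the countable sets (each countable set in a metric space being $F_\sigma$ and $G_\delta$, hence in $\FpG_\sigma\cap\FsG_\delta$). Feeding these simplifications into the function-space descriptions gives $A_b\cap\Bo_1=A_f$, $(A_b\cap\Bo_1)^\mu=(A_f)^\mu$, $\overline{A_s}\subset A_1$ (since $\overline{\Lb(L)}\subset\Ba_1^b(L)$ on metrizable $L$), and, using Proposition~\ref{P:Baire-srovnani}$(iii)$ together with $(A_s(X))^\mu=(A_c(X))^\mu$ valid on simplices (Proposition~\ref{P:vetsi prostory srov}$(d)$), the equalities $(A_s)^\mu=(A_c)^\mu=A_{sa}\cap\Ba^b$. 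This establishes table~\eqref{eq:table-ifs-metriz}.

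\textbf{The multiplier chain.} For the spaces of multipliers I would run the same substitution through parts $(b)$ of the quoted propositions. From Proposition~\ref{P:dikous-Bo1-new}$(b)$ the multipliers of $A_b\cap\Bo_1$ are generated by functions $f$ with $f\circ\jmath\in\Bo_1(L)$, the averages condition, and $\{t\in A\colon \abs{f(t,1)-f(t,-1)}\ge\ep\}\in\H_B$ for each $\ep>0$; since $\H_B$ = countable sets and $\Bo_1(L)=\Fr^b(L)$ on metric $L$, this coincides with the description of $M(A_f)$ from Proposition~\ref{P:dikous-af-new}$(b)$, and likewise matches $M(A_1)$ from Proposition~\ref{P:dikous-A1-new}$(b)$ once one observes that a countable subset of a metric space is automatically a countable $G_\delta$ set. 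This yields $M(A_1)=M(A_b\cap\Bo_1)=M(A_f)$. The same reasoning on the $\mu$-closures (Propositions~\ref{P:dikous-A1-new}, \ref{P:dikous-Bair-new}, \ref{P:dikous-bo1-mu-new}, \ref{P:dikous-Afmu-new}) gives $M((A_c)^\mu)=M((A_b\cap\Bo_1)^\mu)=M((A_f)^\mu)$, and all of these equal $(A_c)^\mu$: indeed each $\A_H$ becomes the Baire $\sigma$-algebra on $\ext X$ modulo countable sets, so by Theorem~\ref{T:integral representation H}$(ii)$ every element of $(A_c)^\mu$ restricted to $\ext X$ is measurable and hence a multiplier; this also gives $M(A_1)\subset A_1$ directly. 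The inclusions $M(A_c)\subset M(\overline{A_s})\subset M(A_1)$ come from Proposition~\ref{p:vztahy-multi-ifs}$(b)$ (or equivalently from the nesting of the corresponding $\A_H$'s), and the final link $(A_c)^\mu\subset M(A_{sa})$ is part of the general chain from Proposition~\ref{P:baire-multipliers-inclusions}. Properness of each inclusion is witnessed by explicit functions built exactly as in the proof of Proposition~\ref{p:vztahy-multi-ifs}$(a)$--$(b)$ but now using a metrizable $L$ in which the needed phenomena still occur: take $L=A=[0,1]^\en$ (still metrizable), with $p\colon L\to[0,1]$ the first projection; then $1_{\{0\}}\circ p\circ\psi$ separates $M(A_c)$ from $M(\overline{A_s})$ (a $G_\delta$ singleton that is not facially open), a function in $\Ba_1^b([0,1])\setminus\overline{\Lb([0,1])}$ composed with $p\circ\psi$ separates $M(\overline{A_s})$ from $M(A_1)$, a dense countable $C\subset L$ gives $1_C\circ\psi\in M((A_c)^\mu)\setminus M(A_1)$, and an analytic non-Borel $S\subset[0,1]$ gives $1_S\circ p\circ\psi\in M(A_{sa})\setminus M((A_c)^\mu)$.

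\textbf{Main obstacle.} The only genuinely delicate point is checking that in the metrizable case the upper edge of the multiplier table really does collapse to $(A_c)^\mu$ and nothing larger, i.e. that $M((A_c)^\mu)=(A_c)^\mu$ and not merely $M((A_c)^\mu)\subsetneqq(A_c)^\mu$; this is where one must be careful that $\A_{(A_c)^\mu}$, as described in Proposition~\ref{P:dikous-Bair-new}$(d)$ specialized to metric $L$, is actually the full Baire $\sigma$-algebra on $\ext X$ and hence makes every strongly affine Baire function a multiplier. Once that is pinned down, the rest is bookkeeping: collate the nine concrete descriptions, substitute the metrizable simplifications (Theorem~\ref{T:b}$(b)$; countable $=$ finite-height $=$ $\sigma$-scattered $=\H_B$ up to the obvious distinctions; countable metric sets are $F_\sigma\cap G_\delta$), and record the witnessing functions. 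I would also remark, after the proof, that the position of $\overline{A_s}$ strictly below $A_1$ in~\eqref{eq:table-ifs-metriz} (rather than incomparable, as in the non-metrizable table~\eqref{eq:table-ifs}) is the one structural change caused by metrizability, and it is responsible for the linear rather than branching shape of the multiplier chain in $(b)$.
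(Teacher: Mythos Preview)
Your plan matches the paper's: feed the metrizable simplifications into the concrete descriptions of Propositions~\ref{P:dikous-spoj-new}--\ref{P:dikous-sa-new} to obtain both tables, then exhibit witnesses (the paper uses $L=A=[0,1]$ rather than the Hilbert cube, but either works). One point needs correcting, however.

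Your claim that in metrizable $L$ the family $\H_B$ consists of the countable sets, with the parenthetical ``each countable set in a metric space being $F_\sigma$ and $G_\delta$'', is false: $\qe\subset[0,1]$ is countable but not $G_\delta$, so $1_\qe\notin\Bo_1$ and $\qe\notin\H_B$. The correct identification is $\H_B=\{\text{scattered sets}\}=\{\text{countable }G_\delta\text{ sets}\}$; the paper establishes scattered $\Rightarrow$ countable and $G_\delta$ via the Hurewicz theorem. Likewise ``countable $=$ finite-height'' fails, since a copy of $[0,\omega^\omega]$ is countable and scattered but not of finite height; this distinction is exactly what separates $M(\overline{A_s})$ from $M(A_1)$, and the paper's witness $1_{D\times\{1\}}-1_{D\times\{-1\}}$ with $D\cong[0,\omega^\omega]$ exploits it directly. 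Your loose line ``countable $=$ finite-height $=$ $\sigma$-scattered $=\H_B$'' conflates four genuinely different classes; only $\sigma$-scattered $=$ countable is valid. With the correct identifications the equalities $M(A_1)=M(A_b\cap\Bo_1)=M(A_f)$ do follow (all three side conditions reduce to ``scattered''), so your conclusions survive, but the reasoning as written would not. You are also missing a witness for the strict inclusion $M(A_1)\subsetneqq A_1$: the paper uses $1_{\qe\times\{1\}}-1_{\qe\times\{-1\}}$, which lies in $A_1$ (exceptional set countable) but not in $M(A_1)$ (since $\qe$ is not $G_\delta$).
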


\begin{proof} 
Since $L$ is metrizable, any lower semicontinuous function on $L$ is of the first Baire class and any isolated set in $L$ is countable. It follows that $\overline{A_s}\subset A_1$. Further, $A_c\subset A_s\subset A_1$ and $(A_c)^\mu=(A_1)^\mu$ implies that $(A_s)^\mu=(A_c)^\mu$. Moreover, on $L$ metrizable, Borel one and Baire one functions coincide (see Theorem~\ref{T:b}), thus $A_f=A_b\cap\Bo_1$. These facts together with table \eqref{eq:table-ifs} yield the validity of \eqref{eq:table-ifs-metriz}.

Let us continue by proving the inclusions and equalities from assertion $(b)$. To this end we use the above-mentioned fact that fragmented, Borel one and Baire one functions on $L$ coincide and that any scattered subset of a  $L$ is countable and $G_\delta$. Indeed, assume $B$ is a scattered subset of $L$. If $B$ were uncountable, $B$ would contain a uncountable set without isolated points, which is impossible. Hence $B$ is countable.  If it were not $G_\delta$,  using the Hurewicz theorem (see \cite[Theorem 21.18]{kechris}) we would find a relatively closed subset of $B$ homeomorphic to $\qe$. This is again impossible, and thus $B$ is of type $G_\delta$. 

Using the two mentioned fact and the above characterizations we easily see that the chain of inclusions and equalities in $(b)$ is valid.

Finally, let $L=A=[0,1]$. We will check that no more inclusions hold.
The function $1_{\{0\}}\circ\psi$ witnesses that $A_c\subsetneqq \overline{A_s}$ and $M(A_c)\subsetneqq M(\overline{A_s})$. There is a subset $D\subset[0,1]$ homeomorphic to the ordinal interval $[0,\omega^\omega]$. Then the function $1_{D\times \{1\}}-1_{D\times\{-1\}}$ witnesses that $\overline{A_s}\subsetneqq A_1$ and  $M(\overline{A_s})\subsetneqq M(A_1)$.
The function $1_{L\times\{1\}}-1_{L\times\{-1\}}$ witnesses that $A_1\subsetneqq A_b\cap \Bo_1$. The function $1_{\qe\times\{1\}}-1_{\qe\times\{-1\}}$ shows that $M(A_1)\subsetneqq A_1$. The function $1_{\qe}\circ\psi$ witnesses that $A_1\subsetneqq (A_c)^\mu$. The function $1_{L\times\{1\}}-1_{L\times\{-1\}}+1_{\qe}\circ\psi$ 
witnesses that $(A_c)^\mu\subsetneqq (A_b\cap\Bo_1)^\mu$ and  $A_b\cap\Bo_1\subsetneqq (A_b\cap\Bo_1)^\mu$. If $S\subset [0,1]$ is an analytic non-Borel set, the function $1_S\circ \psi$ witnesses that $(A_f)^\mu\subsetneqq A_{sa}$ and $(A_c)^\mu\subsetneqq M(A_{sa})$.
\end{proof}

We continue by the following example showing, in particular, difference between multipliers and strong multipliers. 

\begin{example}
    \label{ex:dikous-mezi-new}
Let $K=K_{L,A}$, $E=E_{L,A}$ and $X=X_{L,A}$.
 For any bounded function $f$ on $K$ let us define three functions on $L$:
 $$\begin{gathered}
     f_0(t)=f(t,0),\ t\in L,\quad f_1(t)=\begin{cases}
     f(t,1), & t\in A,\\
     f(t,0), & t\in L\setminus A,
 \end{cases} \\
  f_{-1}(t)=\begin{cases}
     f(t,-1), & t\in A,\\
     f(t,0), & t\in L\setminus A.
 \end{cases} \end{gathered}$$
We set
 $$\wt{H}=\{f\in \ell^\infty (K)\setsep f_0,f_1,f_{-1}\mbox{ are Borel functions and }f_0=\tfrac12(f_1+f_{-1})\}$$
 and $H=V(\wt{H})$.
 Then we have the following:
 \begin{enumerate}[$(a)$]
     \item $H$ is an intermediate function space such that $H^\mu=H$.
     \item $(A_s(X))^\mu\subset H\subset (A_f(X))^\mu$.
     \item $M(H)=H$.
     \item $M^s(H)=V(\{f\in \wt{H}\setsep\{t\in A\setsep f(t,1)\ne f(t,-1)\}\mbox{ is $\sigma$-scattered}\})$.
     \item The following assertions are equivalent:
     \begin{enumerate}[$(i)$]
         \item $M^s(H)=M(H)$;
         \item $M(H)\subset M((A_f(X)^\mu)$;
         \item $A$ contains no compact perfect subset.
     \end{enumerate}
          In particular, if, for example, $L=A=[0,1]$, we have $M(H)\not\subset M((A_f(X))^\mu)$ and $M^s(H)\subsetneqq M(H)$.
     \item $\begin{aligned}[t]
        \A_H&=\{\phi(B)\setsep B\subset\Ch_E K, \psi(B) \mbox{ and }\psi (\Ch_E K\setminus B)\mbox{ are Borel subsets of }L\}\\ &= \Z_H,\\
     \A_H^s&=\{\phi(B)\setsep B\subset\Ch_E K, \phi(B)\in\A_H, \\&\qquad\{t\in A\setsep B\cap\{(t,1),(t,-1)\}\mbox{ contains exactly one point$\}$ is $\sigma$-scattered}\} 
     \\&=\{\phi(B)\setsep B\subset\Ch_E K, \psi(B) \mbox{ and }\psi (\Ch_E K\setminus B)\mbox{ are Borel subsets of }L\\&\qquad
     \psi(B)\cap\psi(\Ch_E K\setminus B)\mbox{ is $\sigma$-scattered}\}=\ms_H.    
     \end{aligned}$
      \end{enumerate}
\end{example}

\begin{proof}
Assertions $(a)$ and $(c)$ are obvious. Assertion $(b)$ follows by combining Propositions~\ref{P:dikous-Asmu-new} and~\ref{P:dikous-Afmu-new} with Lemma~\ref{L:sigma isolated}.

$(d)$: `$\supset$': Assume $B=\{t\in A\setsep f(t,1)\ne f(t,-1)\}$ is $\sigma$-scattered.  It follows that for each $g\in\wt{H}$ we have $fg\circ\jmath=\wt{fg}\circ\jmath$ for $t\in L\setminus B$. Since $B$ is universal null, we deduce that $V(\wt{fg})=V(f)V(g)$ $\mu$-almost everywhere for any maximal measure $\mu$. It follows that $V(g)\in M^s(H)$,

`$\subset$': Assume that the set $B=\{t\in A\setsep f(t,1)\ne f(t,-1)\}$ is not $\sigma$-scattered. Since it is Borel, by Lemma~\ref{L:dedicneres}$(b)$ it contains a compact perfect set $D$. Then there is a continuous probability measure $\mu$ carried by $D$. By Lemma~\ref{L:maxmiry-dikous} $\phi(\mu)$ is maximal and $\{t\in L:(f(t,0))^2=\wt{f^2}(t,0)\}=L\setminus B$ has $\mu$-measure zero. Thus $V(f)\notin M^s(H)$.

$(e)$: Assume $A$ contains no compact perfect set. Let $f\in \wt{H}$. Then 
$$B=\{t\in A\setsep f(t,1)\ne f(t,-1)\}$$ is a Borel subset of $A$. Since $A$ contains no compact perfect set, by Lemma~\ref{L:dedicneres} we deduce that $B$ is $\sigma$-scattered. Thus
$V(f)\in M^s(H)$ by $(d)$ and $V(f)\in M((A_f(X)^\mu)$ by Proposition~\ref{P:dikous-Afmu-new}.

Conversely, assume that $A$ contains a compact perfect set $D$. Let $f=1_{D\times\{1\}}-1_{D\times\{-1\}}$. Then $f\in \wt{H}$, so $V(f)\in M(H)=H$. By $(d)$ we get $V(f)\notin M^s(H)$ and by Proposition~\ref{P:dikous-Afmu-new} we deduce $V(f)\notin M((A_f(X))^\mu)$.

The `in particular part' is obvious.

$(f)$: Since $M(H)=H$, $\A_H=\Z_H$ by the very definition. Further, it is easy to see that the second and the third sets coincide.

The formulas for $\A_H^s$ follow easily from the proof of $(d)$,  \eqref{eq:podmny ChEK} and Lemma~\ref{L:dikous-split-new}.
\end{proof}

We continue by an overview of the relationship of the systems $\A_H$, $\A_H^s$, $\ms_H$ and $\Z_H$. It is contained in the following proposition.

\begin{prop}\label{P:shrnutidikousu}
    Let $K=K_{L,A}$, $E=E_{L,A}$ and $X=X_{L,A}$. Then the following assertions hold.
    \begin{enumerate}[$(a)$]
        \item If $H$ is one of the spaces 
        $$A_1,\overline{A_s},A_f, (A_1)^\mu,(A_s)^\mu,(A_f)^\mu,A_{sa},$$
        then $\A_H=\A^s_H=\ms_H$.
       \item If $H$ is one of the spaces
       $$A_b\cap\Bo_1, (A_b\cap \Bo_1)^\mu,$$
       then $\A_H=\A^s_H\subset\ms_H$. The inclusion may be proper even if $X$ is standard.
  
      \item  If $H$ is one of the spaces $A_1$, $(A_c)^\mu$, $\overline{A_s}$, $(A_s)^\mu$  then $\A_H=\A^s_H=\ms_H=\mathcal{Z}_H$.    
      \item If $H$ is one of the spaces
      $$ A_b\cap\Bo_1, A_f, (A_b\cap \Bo_1)^\mu,(A_f)^\mu,A_{sa},$$
      Then $\ms_H=\mathcal{Z}_H$ if and only if $X$ is a standard compact convex set. Moreover, it may happen that this is not satisfied.
      \item If $H$ is the space from Example~\ref{ex:dikous-mezi-new}, then
      $$\A^s_H=\ms_H\subset\A_H=\Z_H.$$
      The equality holds  if and only if $X$ is a standard compact convex set. Moreover, it may happen that this is not satisfied.
    \end{enumerate}
\end{prop}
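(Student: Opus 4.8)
The plan is to treat Proposition~\ref{P:shrnutidikousu} as a book\-keeping statement: for every concrete space $H$ appearing in it, the mutual position of the systems $\A_H$, $\A^s_H$, $\ms_H$ and $\Z_H$ has already been pinned down in the corresponding proposition of Sections~\ref{ssec:baire-af-X}, \ref{ssec:fragmented-dikous} and~\ref{ssec:sa-functions-onX} or in Example~\ref{ex:dikous-mezi-new}. So the proof would just assemble these facts, use Lemma~\ref{L:maxmiry-dikous}(2) to rewrite the recurring hypothesis ``$A$ contains no compact perfect subset'' as ``$X$ is a standard compact convex set'', and supply the promised counterexamples. I do not expect a genuine obstacle; the only points needing mild care are the reduction $(A_1(X))^\mu=(A_c(X))^\mu$ used in part~(a) and the choice of a suitable standard $X$ for the ``even if $X$ is standard'' clause of~(b).

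For (a) and (c) I would simply quote: Proposition~\ref{P:dikous-A1-new}(d) for $H=A_1(X)$; Proposition~\ref{P:dikous-Bair-new}(e) for $H=(A_c(X))^\mu$, which also settles $H=(A_1(X))^\mu$ since $(A_1(X))^\mu=(A_c(X))^\mu$ by Proposition~\ref{P:Baire-srovnani}(iii) (recall $X_{L,A}$ is a simplex); Proposition~\ref{P:dikous-lsc--new}(e) for $H=\overline{A_s(X)}$; Proposition~\ref{P:dikous-Asmu-new}(d) for $H=(A_s(X))^\mu$; Proposition~\ref{P:dikous-af-new}(d) for $H=A_f(X)$; Proposition~\ref{P:dikous-Afmu-new}(d) for $H=(A_f(X))^\mu$; and the combination of Proposition~\ref{P:dikous-sa-new}(d) with Corollary~\ref{cor:dikous-sa-split} for $H=A_{sa}(X)$. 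Each of these yields the equalities $\A_H=\A^s_H=\ms_H$ required in~(a), and the first four additionally yield $\ms_H=\Z_H$, which is~(c).

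For (b) I would quote Proposition~\ref{P:dikous-Bo1-new}(d) and Proposition~\ref{P:dikous-bo1-mu-new}(d), which give $\A_H=\A^s_H\subset\ms_H$ and also that the inclusion $\A^s_H\subset\ms_H$ is proper once $A$ contains a homeomorphic copy of $[0,\omega_1)$. Taking $L=\omega_1+1$ with the order topology and $A=[0,\omega_1)$, this copy is present (it is $A$ itself), while $A$ is scattered and hence contains no nonempty compact perfect set, so $X=X_{L,A}$ is standard by Lemma~\ref{L:maxmiry-dikous}(2); this proves the last clause of~(b). For (d) I would observe that for each of the five spaces the quoted results (Proposition~\ref{P:dikous-Bo1-new}(d), Proposition~\ref{P:dikous-bo1-mu-new}(d), Proposition~\ref{P:dikous-af-new}(d), Proposition~\ref{P:dikous-Afmu-new}(d) and Corollary~\ref{cor:dikous-sa-split}) state precisely that $\ms_H=\Z_H$ holds if and only if $A$ contains no compact perfect subset, which by Lemma~\ref{L:maxmiry-dikous}(2) is equivalent to $X$ being standard; and $L=A=[0,1]$, where $A$ is itself a nonempty compact perfect set so that $X$ is not standard, shows the equality may fail.

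Finally, for (e), where $H$ is the space of Example~\ref{ex:dikous-mezi-new}, the plan is to use that $H^\mu=H$ (Example~\ref{ex:dikous-mezi-new}(a)), so Theorem~\ref{T:integral representation H}(i)--(ii) identifies $M(H)$ and $M^s(H)$ with the bounded functions on $\ext X$ measurable with respect to the $\sigma$-algebras $\A_H$ and $\A^s_H$ respectively; consequently $\A_H=\A^s_H$ if and only if $M(H)=M^s(H)$, and the latter is, by Example~\ref{ex:dikous-mezi-new}(e) together with Lemma~\ref{L:maxmiry-dikous}(2), equivalent to $X$ being standard. Since $M^s(H)\subset M(H)$ always, this yields $\A^s_H\subset\A_H$ with equality exactly in the standard case, and $L=A=[0,1]$ provides the failure. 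The identifications $\A^s_H=\ms_H$ and $\A_H=\Z_H$ claimed in~(e) are exactly Example~\ref{ex:dikous-mezi-new}(f), which closes the argument.
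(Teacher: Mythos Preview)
Your proposal is correct and follows essentially the same approach as the paper, which simply says the result ``follows from the above propositions on the individual intermediate function spaces and Lemma~\ref{L:maxmiry-dikous}(2)''; your version is just a more explicit unpacking of this bookkeeping. The only cosmetic difference is that for the standard counterexample in~(b) the paper takes $L=A=[0,\omega_1]$, while you take $L=\omega_1+1$ with $A=[0,\omega_1)$; both work, since in each case $A$ is scattered (hence $X$ is standard by Lemma~\ref{L:maxmiry-dikous}(2)) and contains a copy of $[0,\omega_1)$.
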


\begin{proof} This follows from the above  propositions on the individual intermediate function spaces and Lemma~\ref{L:maxmiry-dikous}$(2)$. As an example of a non-standard $X$ may serve the case $L=A=[0,1]$. An example of a standard $X$ illustrating the proper inclusion in $(b)$ is provided by choosing, for example, $L=A=[0,\omega_1]$.
\end{proof}

The previous proposition show that the assumptions in Lemma~\ref{L:SH=ZH} are natural and cannot be simply dropped. It also illustrates that the assumptions of Proposition~\ref{P: silnemulti-inkluze} are not satisfied automatically. Further, it provides a partial positive answer to Question~\ref{q:A1-split} and Question~\ref{q:fr-split}.

We continue by examples illustrating limits of possible extensions of Theorem~\ref{T:Assigma-reprez}. The first one shows that the quoted theorem cannot be extended to functions of the first Borel class.

\begin{example}\label{ex:teleman-ne}
    Let $L=A=[0,1]$ and $X=X_{L,A}$. Then there are a function $f\in A_b(X)\cap\Bo_1(X)$ and a maximal measure $\mu\in M_1(X)$ such that
    $f|_{\ext X}$ is not $\mu'$-measurable (using the notation from Lemma~\ref{L:miry na ext}).
\end{example}

\begin{proof} Let $K=K_{L,A}$ and $E=E_{L,A}$. Let $h=1_{L\times \{1\}}-1_{L\times\{-1\}}$. By Proposition~\ref{P:dikous-Bo1-new}$(a)$ we get that $f=V(h)\in A_b(X)\cap\Bo_1(X)$. Let $\lambda$ be the Lebesgue measure on $L=[0,1]$. Then $\mu=\phi(\jmath(\lambda))$ is a maximal measure on $X$ (by Lemma~\ref{L:maxmiry-dikous}). 
Let us prove that the set $U=[f>0]\cap\ext X=\phi(L\times\{1\})$ is not $\mu'$-measurable. 

Let $B\subset X$ be a closed extremal set with $B\cap\ext X\subset U$. Let $F=\phi^{-1}(B)$. Then $F$ is a closed subset of $K$ contained in $L\times\{0,1\}$. The extremality of $B$ implies that $F\subset L\times\{1\}$, so $F$ is finite.
By \eqref{eq:mira1} we deduce that $\mu'(B\cap \ext X)=0$. Since $B$ is arbitrary, $\mu'(U)=0$. In the same way we see that $\mu'(\ext X\setminus U)=0$, so $\mu'=0$, a contradiction. 
\end{proof}

In the second example we address the weaker version of the representation addressed in the end of Section~\ref{s:determined}. It appears that it depends on additional axioms of the set theory.

\begin{example}\label{ex:AMC}
    Let $L=A=[0,1]$ and $X=X_{L,A}$.
    \begin{enumerate}[$(1)$]
        \item Assume the continuum hypothesis. Then there is $x\in X$ such that there is no probability measure $\mu$ defined on some $\sigma$-algebra on $\ext X$ such that
        $$\forall f\in A_b(X)\cap\Bo_1(X)\colon f|_{\ext X}\mbox{ is $\mu$-measurable and } f(x)=\int f\di\mu.$$
        \item Assume there  is an atomless measurable cardinal. Then for any $x\in X$ there is a 
        probability measure $\mu$ defined on some $\sigma$-algebra on $\ext X$ such that
        $$\forall f\in A_{sa}(X)\colon f|_{\ext X}\mbox{ is $\mu$-measurable and } f(x)=\int f\di\mu.$$
    \end{enumerate}
\end{example}

\begin{proof}
    Let $K=K_{L,A}$. 

    $(1)$: Let $B\subset L$ be arbitrary. Then $V(1_{B\times\{1\}}-1_{B\times\{-1\}})\in A_b(X)\cap\Bo_1(X)$. It follows that the domain $\sigma$-algebra of any $\mu$ with the required property should be the power set of $\ext X$. But, by the continuum hypothesis this set has cardinality $\aleph_1$ and thus by Ulam's theorem (see, e.g., \cite[Lemma 10.13]{jech-book}) any measure defined on this $\sigma$-algebra is discrete. Let $\lambda$ be the Lebesgue measure on $L=[0,1]$ and let $x$ be the barycenter of $\phi(\jmath(\lambda))$. Assume that $\mu$ is a measure with the required properties. Since it is discrete, there is some $t\in L$ such that $\mu(\{\phi(t,1),\phi(t,-1)\})>0$. Let $f=1_{\{t\}}\circ\psi$. Then $V(f)\in A_1(X)\subset A_b(X)\cap\Bo_1(X)$ and
    $$0<\int V(f)\di\mu= V(f)(x)=\int V(f)\di\phi(\jmath(\lambda))=\int f\di\jmath(\lambda) =\int 1_{\{t\}}\di\lambda=0,$$
    a contradiction.

    $(2)$: Under this assumption any continuous Borel measure $\mu$ on $[0,1]$ admits a $\sigma$-additive extension $\widehat{\mu}$ defined on the power set of $[0,1]$. Indeed, let $\mu$ be any continuous measure on $[0,1]$. Then the function $t\mapsto\mu([0,t])$ is a continuous non-decreasing mapping of $[0,1]$ onto $[0,1]$. Thus we may define a kind of inverse:
    $$m(t)=\min\{ s\in[0,1]\setsep \mu([0,s])=t\}, \quad t\in [0,1].$$
    Then $m$ is increasing and it is easily seen to be left continuous and upper semicontinuous. In particular it is a Borel injection of $[0,1]$ into $[0,1]$. Denote by $\lambda$ the Lebesgue measure on $[0,1]$. We claim that $\mu=m(\lambda)$. Indeed, for any $t\in(0,1]$
    $$\begin{aligned}
      m(\lambda)([0,t])&=\lambda(m^{-1}([0,t]))=  \sup\{s\in[0,1]\setsep m(s)\le t\}
     \\&=\sup\{s\in[0,1]\setsep m(s)< t\} = \sup\{s\in [0,1]\setsep \mu([0,s])<t\}=\mu([0,t]),
    \end{aligned}$$
    hence the equality holds for all intervals and thus for all Borel sets. Further, it is proved in \cite[p. 123]{jech-book} that there is the required extension $\widehat{\lambda}$ for the Lebesgue measure. It remains to set $\widehat{\mu}=m(\widehat{\lambda})$.
    
    Next, given $x\in X$ let $\delta_x$ be the unique maximal representing measure. It follows from Lemma~\ref{L:maxmiry-dikous} that
    $$\delta_x=\phi(\jmath(\nu_x)+\sigma_x),$$
    where $\nu_x$ is a continuous measure on $L=[0,1]$ and $\sigma_x$ is a discrete measure on $L\times\{-1,1\}$. Define 
    $$\mu_x(F\times\{1\}\cup G\times\{-1\})=\tfrac12(\widehat{\nu_x}(F)+\widehat{\nu_x}(G)),\quad F,G\subset L.$$
    We claim that $\phi(\mu_x+\sigma_x)$ is the right choice.
    Indeed, let $f$ be a function such that $V(f)\in A_{sa}(X)$. Then
    $$\begin{aligned}
            \int V(f)\di \phi(\mu_x)&=\frac12\left(\int f(t,1)\di\widehat{\nu_x}(t)+  \int f(t,-1)\di\widehat{\nu_x}(t)\right)=
    \int f(t,0)\di\widehat{\nu_x}(t)\\&=\int f(t,0)\di\nu_x(t)=
    \int V(f)\di\phi(\jmath(\nu_x)),\end{aligned}$$
    hence 
    $$\int V(f)\di \phi(\mu_x+\sigma_x)=\int V(f)\di \phi(\jmath(\nu_x)+\sigma_x)=\int V(f)\di\delta_x=V(f)(x).$$
    This completes the proof.
    \end{proof}

The previous example inspires the following question.

\begin{ques}
    Assume that there is an atomless measurable cardinal.
    Let $X$ be a compact convex set.
    \begin{enumerate}[$(a)$]
        \item Is it true that for each $x\in X$ there is a probability measure $\mu_x$ defined on some $\sigma$-algebra on $\ext X$ such that
        $$\forall f\in (A_{f}(X))^\mu\colon f|_{\ext X}\mbox{ is $\mu_x$-measurable and } f(x)=\int f\di\mu_x\ ?$$
        \item Assume  moreover that $A_{sa}(X)$ is determined by extreme points. Is the representation from $(a)$ valid for all strongly affine functions?
    \end{enumerate}
\end{ques}


\bibliography{krs-multip}\bibliographystyle{acm}

\newpage

\printglossary[title=List of notation,type=symbols,style=list,nogroupskip]


\printindex


\end{document}